\theoremstyle{plain} 
\newtheorem{theorem}{\indent\sc Theorem}[section] 
\newtheorem{lemma}[theorem]{\indent\sc Lemma}
\newtheorem{corollary}[theorem]{\indent\sc Corollary}
\newtheorem{proposition}[theorem]{\indent\sc Proposition}
\newtheorem{claim}[theorem]{\indent\sc Claim}
\theoremstyle{definition} 
\newtheorem{definition}[theorem]{\indent\sc Definition}
\newtheorem{remark}[theorem]{\indent\sc Remark}
\begin{document}

\title[Ricci curvature]{Elliptic PDEs on compact Ricci limit spaces and applications} 

\author[Shouhei Honda]{Shouhei Honda} 

\subjclass[2000]{Primary 53C20.}

\keywords{Gromov-Hausdorff convergence, Ricci curvature, Elliptic PDEs.}

\address{ 
Faculty of Mathmatics \endgraf
Kyushu University \endgraf 
744, Motooka, Nishi-Ku, \endgraf 
Fukuoka 819-0395 \endgraf 
Japan
}
\email{honda@math.kyushu-u.ac.jp}

\address{ 
New address: \endgraf
Mathematical Institute \endgraf 
Tohoku University \endgraf
Sendai 980-8578 \endgraf 
Japan
}
\email{shonda@m.tohoku.ac.jp}

\maketitle
\centerline{\textit{Dedicated to the memory of Kentaro Nagao.}}
\begin{abstract}
In this paper we study elliptic PDEs on compact Gromov-Hausdorff limit spaces of Riemannian manifolds with lower Ricci curvature bounds.
In particular we establish continuities of geometric quantities, which include solutions of Poisson's equations, eigenvalues of Schr$\ddot{\text{o}}$dinger operators, generalized Yamabe constants and eigenvalues of the Hodge Laplacian, with respect to the Gromov-Hausdorff topology.
We apply these to the study of second-order differential calculus on such limit spaces.
\end{abstract}
\tableofcontents
\section{Introduction}
Let $(X, \upsilon)$ be a compact metric measure space, which means that $X$ is a compact metric space and $\upsilon$ is a Borel probability measure on $X$.
We say that $(X, \upsilon)$ is a \textit{smooth $n$-dimensional compact metric measure space} if $X$ is a smooth $n$-dimensional closed Riemannian manifold and $\upsilon=H^n/H^n(X)$, where $H^n$ is the $n$-dimensional Hausdorff measure.

Let $n \in \mathbf{N}_{\ge 2}$, $K \in \mathbf{R}$ and $d \in \mathbf{R}_{>0}$.
We denote by $M(n, K, d)$ the set of (isometry classes of) smooth $n$-dimensional compact metric measure spaces $(Y, \nu)$ with $\mathrm{diam}\,Y \le d$ and $\mathrm{Ric}_Y \ge K(n-1)$, where $\mathrm{diam}\,Y$ is the diameter and $\mathrm{Ric}_Y$ is the Ricci curvature. 

Let $\overline{M(n, K, d)}$ be the set of (measured) Gromov-Hausdorff limit compact metric measure spaces of sequences in $M(n, K, d)$.
See \cite{fu, gr} or subsection $2.2$ for the definition of the Gromov-Hausdorff convergence.
Note that $\overline{M(n, K, d)}$ is compact with respect to the Gromov-Hausdorff topology.
A compact metric measure space which belongs to $\overline{M(n, K, d)}$ is said to be a \textit{Ricci limit space} in this paper.

Assume $(X, \upsilon) \in \overline{M(n, K, d)}$ with $\mathrm{diam}\,X>0$.

The main purpose of this paper is to study elliptic PDEs on $(X, \upsilon)$.
In order to introduce the detail we first recall the Laplacian $\Delta^{\upsilon}$ on $X$ defined by Cheeger-Colding in \cite{ch-co3}.

Cheeger-Colding proved in \cite{ch-co3} that $(X, \upsilon)$ is rectifiable. 
As a corollary they constructed the canonical cotangent bundle $T^*X$ of $X$.
The fundamental properties include the following:
\begin{itemize}
\item Every fiber $T^*_xX$ is a finite dimensional Hilbert space. We denote the inner product by $\langle \cdot, \cdot \rangle$ for short.
\item For any open subset $U$ of $X$, $1<p<\infty$ and $f \in H^{1, p}(U)$, $f$ has the canonical differential $df(x) \in T^*_xX$ for a.e. $x \in U$, where $H^{1, p}(U)$ is the Sobolev space. In particular every Lipschitz function $g$ on $X$ is differentiable at a.e. $x \in X$ in this sense. 
\end{itemize}
See for instance subsections $2.3.2$ and $2.5$ for the details.

For an open subset $U$ of $X$, let us denote by $\mathcal{D}^2(\Delta^{\upsilon}, U)$ the set of $f \in H^{1, 2}(U)$ satisfying that there exists $g \in L^2(U)$ such that
\[\int_U\langle df, dh\rangle d\upsilon =\int_Ughd\upsilon\] 
for every Lipschitz function $h$ on $U$ with compact support.
Since $g$ is unique if it exists, we denote it by $\Delta^{\upsilon}f$.
\subsection{Poisson's equations}
Let us consider \textit{Poisson's equation} for given $g \in L^2(X)$:
\begin{equation}\label{0}
\Delta^{\upsilon}f=g
\end{equation}
on $X$.
We introduce a continuity of solutions of Poisson's equations with respect to the Gromov-Hausdorff topology.
\begin{theorem}\label{pois}
We have the following:
\begin{enumerate}
\item A solution $f \in \mathcal{D}^2(\Delta^{\upsilon}, X)$ of (\ref{0}) exists if and only if 
\begin{align}\label{55}
\int_Xgd\upsilon=0.
\end{align}
\item If (\ref{55}) holds, then there exists a unique solution $f \in \mathcal{D}^2(\Delta^{\upsilon}, X)$ of (\ref{0}) with
\[\int_Xfd\upsilon=0.\]
We denote $f$ by $(\Delta^{\upsilon})^{-1}g$.
\item Let $(X_i, \upsilon_i)$ Gromov-Hausdorff converges to $(X_{\infty}, \upsilon_{\infty})$ (we write it $(X_i, \upsilon_i) \stackrel{GH}{\to} (X_{\infty}, \upsilon_{\infty})$ for short in this paper) in $\overline{M(n, K, d)}$ with $\mathrm{diam}\,X_{\infty}>0$, and let $\{g_i\}_{i \le \infty}$ be an $L^2$-weak convergent sequence on $X_{\infty}$ of $g_i \in L^2(X_i)$ with 
\[\int_{X_i}g_id\upsilon_i=0.\]
Then $(\Delta^{\upsilon_i})^{-1}g_i, \nabla ((\Delta^{\upsilon_i})^{-1}g_i)$ $L^2$-converge strongly to $(\Delta^{\upsilon_{\infty}})^{-1}g_{\infty}, \nabla ((\Delta^{\upsilon_{\infty}})^{-1}g_{\infty})$ on $X_{\infty}$, respectively.
\end{enumerate}
\end{theorem}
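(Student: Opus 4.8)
\emph{Strategy.} I would treat (1) and (2) as consequences of the standard Dirichlet-form and Poincar\'e-inequality structure on a Ricci limit space, and concentrate the work on the stability statement (3), which I would prove by a compactness argument combined with the weak--strong pairing machinery for Gromov--Hausdorff convergent sequences. For (1), necessity of (\ref{55}) is immediate: since $X$ is compact the constant $h\equiv 1$ is an admissible test function and $d1=0$ gives $\int_Xg\,d\upsilon=\int_X\langle df,d1\rangle\,d\upsilon=0$. Conversely, assume (\ref{55}) and put $V:=\{u\in H^{1,2}(X):\int_Xu\,d\upsilon=0\}$, a closed subspace of $H^{1,2}(X)$. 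The space $(X,\upsilon)\in\overline{M(n,K,d)}$ is connected, being a Gromov--Hausdorff limit of connected manifolds, and satisfies the global Poincar\'e inequality $\|u-\int_Xu\,d\upsilon\|_{L^2}\le C\|du\|_{L^2}$ (a consequence of the local Poincar\'e inequality and volume doubling of Cheeger--Colding); hence the symmetric bilinear form $\mathcal{E}(u,v):=\int_X\langle du,dv\rangle\,d\upsilon$ is bounded and coercive on $V$, and $v\mapsto\int_Xgv\,d\upsilon$ is a bounded linear functional on $V$. Lax--Milgram (or the Riesz representation theorem) produces a unique $f\in V$ with $\mathcal{E}(f,v)=\int_Xgv\,d\upsilon$ for all $v\in V$; since $g$ has vanishing mean, this identity extends from $V$ to all of $H^{1,2}(X)$, hence to every Lipschitz $h$, so $f\in\mathcal{D}^2(\Delta^{\upsilon},X)$ with $\Delta^{\upsilon}f=g$. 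This $f$ is the solution in (2); if $u$ is the difference of two mean-zero solutions then $\mathcal{E}(u,u)=0$, so $du=0$ a.e., so $u$ is constant by connectedness and the Poincar\'e inequality, hence $u=0$.

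For (3), write $f_i:=(\Delta^{\upsilon_i})^{-1}g_i$. First I record an a priori bound. Since $\{g_i\}$ converges $L^2$-weakly we have $\sup_i\|g_i\|_{L^2(X_i)}<\infty$; using $h=f_i$ as a test function (legitimate after the density extension above) and the Poincar\'e inequality with constant $C_P$ uniform over $\overline{M(n,K,d)}$,
\[
\int_{X_i}\langle df_i,df_i\rangle\,d\upsilon_i=\int_{X_i}g_if_i\,d\upsilon_i\le\|g_i\|_{L^2}\|f_i\|_{L^2}\le C_P\|g_i\|_{L^2}\left(\int_{X_i}\langle df_i,df_i\rangle\,d\upsilon_i\right)^{1/2},
\]
so $\|df_i\|_{L^2}\le C_P\|g_i\|_{L^2}$ and then $\sup_i\|f_i\|_{H^{1,2}(X_i)}<\infty$. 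By the Rellich-type compactness for $H^{1,2}$-bounded sequences along Gromov--Hausdorff convergent sequences, after passing to a subsequence $f_i$ converges $L^2$-strongly to some $f_{\infty}\in H^{1,2}(X_{\infty})$ and $df_i$ converges $L^2$-weakly to $df_{\infty}$.

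Now I pass to the limit. Given a test function $h_{\infty}$ on $X_{\infty}$, choose an approximating sequence $h_i$ on $X_i$ with $h_i\to h_{\infty}$ and $dh_i\to dh_{\infty}$ both $L^2$-strongly. The weak--strong pairing for sections of the cotangent bundles gives $\int_{X_i}\langle df_i,dh_i\rangle\,d\upsilon_i\to\int_{X_{\infty}}\langle df_{\infty},dh_{\infty}\rangle\,d\upsilon_{\infty}$, and the weak--strong pairing for functions gives $\int_{X_i}g_ih_i\,d\upsilon_i\to\int_{X_{\infty}}g_{\infty}h_{\infty}\,d\upsilon_{\infty}$; hence $f_{\infty}\in\mathcal{D}^2(\Delta^{\upsilon_{\infty}},X_{\infty})$ with $\Delta^{\upsilon_{\infty}}f_{\infty}=g_{\infty}$. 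Pairing the $L^2$-weak convergences against the constant $1$ yields $\int_{X_{\infty}}f_{\infty}\,d\upsilon_{\infty}=0$ and $\int_{X_{\infty}}g_{\infty}\,d\upsilon_{\infty}=0$, so by (2) $f_{\infty}=(\Delta^{\upsilon_{\infty}})^{-1}g_{\infty}$; as this limit is independent of the chosen subsequence, the full sequence converges $L^2$-strongly. It remains to improve $df_i\to df_{\infty}$ from $L^2$-weak to $L^2$-strong; for this it suffices to show the norms converge, and indeed $\int_{X_i}\langle df_i,df_i\rangle\,d\upsilon_i=\int_{X_i}g_if_i\,d\upsilon_i\to\int_{X_{\infty}}g_{\infty}f_{\infty}\,d\upsilon_{\infty}=\int_{X_{\infty}}\langle df_{\infty},df_{\infty}\rangle\,d\upsilon_{\infty}$ by the weak--strong pairing ($g_i$ weak, $f_i$ strong), and $L^2$-weak convergence together with convergence of norms gives $L^2$-strong convergence.

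\emph{Main obstacle.} The proof of (3) rests entirely on the convergence package for Gromov--Hausdorff convergent sequences in $\overline{M(n,K,d)}$: the Rellich-type compactness for $H^{1,2}$-bounded families, the existence of $H^{1,2}$-strongly approximating sequences for test functions on the limit, and the weak--strong stability of the pairings for functions and for sections of the cotangent bundle. I expect the construction of the approximating sequences $h_i$ (equivalently, the density of test functions that lift well along the convergence) and the weak--strong stability of the gradient pairing to be the delicate technical points; once these are available the rest is soft, and I would expect these tools to have been developed in the earlier sections on Sobolev spaces and second-order calculus on Ricci limit spaces.
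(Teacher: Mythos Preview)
Your proposal is correct and follows essentially the same strategy as the paper: Poincar\'e plus Riesz on the mean-zero subspace for (1)--(2), and for (3) a uniform $H^{1,2}$ bound via Poincar\'e, Rellich-type compactness along the GH sequence, identification of the limit through the weak formulation and uniqueness, and finally upgrading the gradient convergence from weak to strong.

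The one place where you differ is in that last upgrade. The paper first isolates a separate compactness lemma (its Theorem~\ref{convlap}): for any sequence with $\sup_i(\|f_i\|_{L^2}^2+\|\Delta^{\upsilon_i}f_i\|_{L^2}^2)<\infty$, one has subsequential $L^2$-strong convergence of both $f_i$ and $\nabla f_i$. There the strong gradient convergence is obtained via the variational inequality
\[
\int_{X_i}|\nabla h|^2\,d\upsilon_i-2\int_{X_i}g_ih\,d\upsilon_i\ \ge\ \int_{X_i}|\nabla f_i|^2\,d\upsilon_i-2\int_{X_i}g_if_i\,d\upsilon_i,
\]
applied with $h=h_i$ a strong $H^{1,2}$-approximation of $f_\infty$ (from \cite[Theorem~4.2]{holip}). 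You instead go directly through the energy identity $\int|df_i|^2=\int g_if_i$ and the weak--strong pairing ($g_i$ weak, $f_i$ strong) to get norm convergence. Your argument is a bit shorter for this particular theorem; the paper's detour buys a standalone compactness statement that it reuses several times later (e.g.\ in the proofs of Theorems~\ref{app2} and~\ref{hessc}). The step you flag as the main obstacle---existence of $H^{1,2}$-strong lifts $h_i$ of test functions $h_\infty$---is indeed supplied earlier in the paper by citing \cite[Theorem~4.2]{holip}, and the closedness of the Laplacian under GH convergence (your ``pass to the limit'' step) is what the paper records separately as \cite[Theorem~4.1]{holp}.
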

See \cite{holp, KS, ks2} or subsection $2.5.2$ for the definition of the $L^p$-convergence with respect to the Gromov-Hausdorff topology.
It is worth pointing out that the regularity of Poisson's equations given in \cite{jiang} by Jiang yields that if $g \in L^{\infty}(X)$ with (\ref{55}) and $||g||_{L^{\infty}}\le L$, then 
\begin{align}\label{lipreg}
||\nabla (\Delta^{\upsilon})^{-1}g||_{L^{\infty}} \le C(n, K, d, L),
\end{align}
where $C(n, K, d, L)$ is a positive constant depending only on $n, K, d$ and $L$.  
See \cite[Theorem 3.1]{jiang}.

Let $B_r(x):=\{y \in X; d_X(x, y)<r\}$ and let $\overline{B}_r(x):=\{y \in X; d_X(x, y) \le r\}$, where $d_X(x, y)$ is the distance between $x$ and $y$ in $X$.
By combining Theorem \ref{pois} with the existence of a good cut-off function constructed by Cheeger-Colding in \cite{ch-co}, we have the following local version of $(4)$ of Theorem \ref{pois}:
\begin{theorem}\label{app3}
Let $(X_i, \upsilon_i) \stackrel{GH}{\to} (X_{\infty}, \upsilon_{\infty})$ in $\overline{M(n, K, d)}$,
let $R>0$, let $\{x_i\}_{i \le \infty}$ be a convergent sequence of $x_i \in X_i$, and let $f_{\infty} \in \mathcal{D}^2(\Delta^{\upsilon_{\infty}}, B_R(x_{\infty}))$.
Then for any $r<R$ with $X_{\infty} \neq \overline{B}_r(x_{\infty})$ and $L^2$-weak convergent sequence $\{g_i\}_{i \le \infty}$ on $B_r(x_{\infty})$ of $g_i \in L^2(B_r(x_i))$ with $g_{\infty}=\Delta^{\upsilon_{\infty}}f_{\infty}|_{B_r(x_{\infty})}$, there exist a subsequence $\{i(j)\}_j$ and a sequence $\{f_{i(j)}\}_j$ of $f_{i(j)} \in \mathcal{D}^2(\Delta^{\upsilon_{i(j)}}, B_r(x_{i(j)}))$ such that $g_{i(j)}=\Delta^{\upsilon_{i(j)}}f_{i(j)}$ and that $f_{i(j)}, \nabla f_{i(j)}$ $L^2$-converge strongly to $f_{\infty}, \nabla f_{\infty}$ on $B_r(x_{\infty})$, respectively.
\end{theorem}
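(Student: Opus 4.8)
The plan is to realise the local equations on the balls $B_r(x_i)$ as restrictions of a \emph{global} Poisson equation on $X_i$, to which Theorem \ref{pois} applies, the passage from local to global being effected by cutting off $f_\infty$ with a good cut-off function on the limit space. To begin, I would work on $X_\infty$. Since $X_\infty\neq\overline B_r(x_\infty)$, pick $y_0\in X_\infty$ with $d_{X_\infty}(x_\infty,y_0)>r$, fix $t\in(r,\min\{R,d_{X_\infty}(x_\infty,y_0)\})$ (so that also $X_\infty\neq\overline B_t(x_\infty)$), and take a good cut-off function $\phi_\infty$ on $X_\infty$ (see \cite{ch-co}): a Lipschitz function with $0\le\phi_\infty\le1$, $\phi_\infty\equiv1$ on $\overline B_r(x_\infty)$, $\mathrm{supp}\,\phi_\infty\subset B_t(x_\infty)$, $\phi_\infty\in\mathcal D^2(\Delta^{\upsilon_\infty},X_\infty)$, and $|\nabla\phi_\infty|,|\Delta^{\upsilon_\infty}\phi_\infty|\in L^\infty(X_\infty)$. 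Because $\mathrm{supp}\,\phi_\infty$ is a compact subset of $B_R(x_\infty)$ and $f_\infty\in\mathcal D^2(\Delta^{\upsilon_\infty},B_R(x_\infty))$, the Leibniz rule for $\Delta^{\upsilon_\infty}$ — which is read off from the definition of $\mathcal D^2$ by integration by parts, using that Lipschitz functions are dense in $H^{1,2}$ — yields $\phi_\infty f_\infty\in\mathcal D^2(\Delta^{\upsilon_\infty},X_\infty)$ with
\[
H_\infty:=\Delta^{\upsilon_\infty}(\phi_\infty f_\infty)=\phi_\infty\,\Delta^{\upsilon_\infty}f_\infty+f_\infty\,\Delta^{\upsilon_\infty}\phi_\infty-2\langle\nabla\phi_\infty,\nabla f_\infty\rangle\in L^2(X_\infty).
\]
I note for later use that $H_\infty=g_\infty$ on $B_r(x_\infty)$, since $\phi_\infty\equiv1$ and $\nabla\phi_\infty=0$ there, and that $\int_{X_\infty}H_\infty\,d\upsilon_\infty=0$, by testing the defining identity of $\mathcal D^2$ with the constant $1$.

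Next I would transfer $H_\infty$ to the spaces $X_i$. By the general theory of $L^2$-convergence with respect to the Gromov--Hausdorff topology there is a sequence $\tilde H_i\in L^2(X_i)$ with $\tilde H_i\to H_\infty$ $L^2$-strongly. I would then modify it so that it matches the prescribed datum on the ball and is normalised, setting
\[
H_i:=g_i\,\mathbf 1_{B_r(x_i)}+(\tilde H_i-\mu_i)\,\mathbf 1_{X_i\setminus B_r(x_i)},\qquad
\mu_i:=\frac{1}{\upsilon_i(X_i\setminus B_r(x_i))}\left(\int_{B_r(x_i)}g_i\,d\upsilon_i+\int_{X_i\setminus B_r(x_i)}\tilde H_i\,d\upsilon_i\right).
\]
Then $H_i|_{B_r(x_i)}=g_i$, $\int_{X_i}H_i\,d\upsilon_i=0$, and $\sup_i\|H_i\|_{L^2}<\infty$. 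Using that geodesic spheres are $\upsilon_\infty$-negligible — so that $\upsilon_i(B_r(x_i))\to\upsilon_\infty(B_r(x_\infty))$ and restrictions to $B_r$ of $L^2$-strongly convergent sequences stay $L^2$-strongly convergent — together with $g_i\to g_\infty$ $L^2$-weakly on $B_r(x_\infty)$, $\tilde H_i\to H_\infty$ $L^2$-strongly, $\upsilon_\infty(X_\infty\setminus B_r(x_\infty))>0$ (from $X_\infty\neq\overline B_r(x_\infty)$ and $\mathrm{supp}\,\upsilon_\infty=X_\infty$), and the relation $\int_{B_r(x_\infty)}g_\infty\,d\upsilon_\infty+\int_{X_\infty\setminus B_r(x_\infty)}H_\infty\,d\upsilon_\infty=\int_{X_\infty}H_\infty\,d\upsilon_\infty=0$, one checks that $\mu_i\to0$ and hence that $H_i\to H_\infty$ $L^2$-weakly on $X_\infty$.

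Finally I would invoke Theorem \ref{pois}. By (1) and (2) there are unique $u_i:=(\Delta^{\upsilon_i})^{-1}H_i\in\mathcal D^2(\Delta^{\upsilon_i},X_i)$ with $\int_{X_i}u_i\,d\upsilon_i=0$ and $\Delta^{\upsilon_i}u_i=H_i$, and likewise $u_\infty:=(\Delta^{\upsilon_\infty})^{-1}H_\infty$; by (3), $u_i,\nabla u_i$ $L^2$-converge strongly to $u_\infty,\nabla u_\infty$ on $X_\infty$. On the other hand $\phi_\infty f_\infty$ also solves $\Delta^{\upsilon_\infty}(\cdot)=H_\infty$, and since two solutions in $\mathcal D^2(\Delta^{\upsilon_\infty},X_\infty)$ differ by a constant (apply (1) and (2) to the homogeneous equation), $u_\infty=\phi_\infty f_\infty-c$ with $c:=\int_{X_\infty}\phi_\infty f_\infty\,d\upsilon_\infty$; in particular $u_\infty=f_\infty-c$ and $\nabla u_\infty=\nabla f_\infty$ on $B_r(x_\infty)$. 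I would then put $f_i:=u_i|_{B_r(x_i)}+c$, the constant $c$ being regarded as a function. Restriction and addition of a constant keep $f_i$ in $\mathcal D^2(\Delta^{\upsilon_i},B_r(x_i))$ with $\Delta^{\upsilon_i}f_i=H_i|_{B_r(x_i)}=g_i$, and — since constant functions $L^2$-converge strongly and $\partial B_r(x_\infty)$ is $\upsilon_\infty$-negligible — $f_i,\nabla f_i$ $L^2$-converge strongly to $u_\infty|_{B_r(x_\infty)}+c=f_\infty$ and $\nabla f_\infty$ on $B_r(x_\infty)$. This already proves the statement along the full sequence $\{i\}$, so the passage to a subsequence in the statement turns out not to be necessary.

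The step I expect to be the main obstacle is the construction and analysis of the right-hand sides $H_i$ in the second paragraph. It would not suffice to produce merely \emph{some} sequence converging weakly to \emph{some} $L^2$-limit that happens to agree with $g_\infty$ on $B_r(x_\infty)$: such a limit solves $\Delta^{\upsilon_\infty}v=g_\infty$ on $B_r(x_\infty)$ only up to an a priori nontrivial $\Delta^{\upsilon_\infty}$-harmonic function, and then the identification $u_\infty=f_\infty-c$ on $B_r(x_\infty)$ — which is what pins down the limit of $f_i$ — collapses. One must therefore force $H_i$ to equal $g_i$ \emph{exactly} on $B_r(x_i)$, to have exactly vanishing $\upsilon_i$-mean, and yet to converge precisely to $\Delta^{\upsilon_\infty}(\phi_\infty f_\infty)$; this requires careful bookkeeping of strong versus weak $L^2$-convergence under restriction to $B_r$ and rests on the negligibility of the geodesic sphere $\partial B_r(x_\infty)$. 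The other ingredients — the good cut-off function with bounded gradient and Laplacian on $X_\infty$, the Leibniz rule for $\Delta^{\upsilon_\infty}$, and the global continuity statement — are quoted from \cite{ch-co} or are precisely Theorem \ref{pois}.
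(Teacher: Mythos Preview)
Your proposal is correct and takes a genuinely different route from the paper. The paper proceeds in two stages: first (Theorem~\ref{app2}) it uses a \emph{sequence} of good cut-offs $\phi_i$ on each $X_i$ together with Lipschitz approximants $\tilde f_i$ of $f_\infty$ to produce $f_i$ solving $\Delta^{\upsilon_i}f_i=g_i+c_i$ on $B_r(x_i)$ with $c_i\to 0$; then (Theorem~\ref{app3}) it removes the error $c_i$ by solving an auxiliary global Poisson problem $\Delta^{\upsilon_i}k_i=h_i$ with step-function right-hand side $h_i=\mathbf 1_{B_r(x_i)}+a_i\mathbf 1_{X_i\setminus B_r(x_i)}$ of zero mean and subtracting $c_i k_i$. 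Your approach uses a single cut-off $\phi_\infty$ on the limit space only, and engineers the global right-hand side $H_i$ so that it equals $g_i$ \emph{exactly} on $B_r(x_i)$ and has zero mean from the outset, thereby eliminating the correction step. Both routes hinge on the same two ingredients---Theorem~\ref{pois}(3) and the Leibniz rule for $\Delta^{\upsilon_\infty}$ (which is precisely Theorem~\ref{nju} in the paper)---but your construction is more direct and, as you note, runs along the full sequence rather than a subsequence.

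Two small remarks. First, your citation of \cite{ch-co} for the good cut-off on $X_\infty$ is strictly speaking for the smooth case; on the limit space this is obtained by passing the smooth cut-offs of \cite[Theorem~6.33]{ch-co} to the limit (as in the proof of Theorem~\ref{app2}; see also \cite[Corollary~4.29]{holp}). Second, the negligibility of $\partial B_r(x_\infty)$---which you correctly identify as the point making restriction and pasting behave well under $L^2$-convergence---follows from the continuity of $r\mapsto\upsilon_\infty(B_r(x_\infty))$, itself a consequence of Bishop--Gromov.
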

Recall that Ding proved in \cite{di2} that the uniform limit function of a sequence of harmonic functions with respect to the Gromov-Hausdorff topology is also harmonic (see also \cite{holip, holp} for a different approach). 
As a corollary of Theorem \ref{app3} we have the converse:
\begin{corollary}\label{apphar}
Let $x \in X$, let $R>0$ and let $h$ be a harmonic function on $B_R(x)$, i.e., $h \in \mathcal{D}^2(\Delta^{\upsilon}, B_R(x))$ with $\Delta^{\upsilon}h=0$.
Then for every $r<R$, we see that $h|_{B_r(x)}$ is the uniform limit function of a sequence of harmonic functions with respect to the Gromov-Hausdorff topology in the following sense:
Let $\{(X_i, \upsilon_i)\}_i$ be a convergent sequence to $(X, \upsilon)$ in $\overline{M(n, K, d)}$ and let $\{x_i\}_i$ be a convergent sequence of $x_i \in X_i$ to $x$.
Then there exist a subsequence $\{i(j)\}_j$ and a sequence $\{h_{i(j)}\}_j$ of harmonic functions $h_{i(j)}$ on $B_r(x_{i(j)})$ such that $h_{i(j)}$ converges uniformly to $h$ on $B_r(x)$ (see subsection $2.3$ for the definition of the uniform convergence in this setting).
In particular if $h \ge 0$ on $B_R(x)$, then we have the following Cheng-Yau type gradient estimate \cite{ch-yau}:
\begin{align}\label{6554}
\mathrm{Lip} h(y)\le \frac{C(n)R^2(R|K|(n-1)+1)}{(R^2-r^2)R}h(y)
\end{align}
for every $y \in B_r(x)$, where 
\begin{align}\label{55m}
\mathrm{Lip}h(y):= \lim_{t \to 0}\left(\sup_{z \in B_t(y) \setminus \{y\}}\frac{|h(y)-h(z)|}{d_X(y, z)}\right).
\end{align}
\end{corollary}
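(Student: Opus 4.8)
The plan is to derive everything from Theorem \ref{app3} applied with vanishing right-hand side, to upgrade the resulting $L^{2}$-convergence to uniform convergence by interior elliptic estimates on the approximating manifolds, and then to transfer the classical Cheng--Yau estimate to the limit space.

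First a reduction. If $X=\overline{B}_{r}(x)$ then $B_{R}(x)=X$, so $h$ is harmonic on the whole compact connected Ricci limit space $X$ and is therefore constant; in that case the first assertion holds trivially with $h_{i(j)}\equiv h$, and $\mathrm{Lip}\,h\equiv 0$ makes \eqref{6554} trivial as well. So I may assume there is $r'\in(r,R)$ with $X\neq\overline{B}_{r'}(x)$: were this to fail we would have $\overline{B}_{r'}(x)=X$ for all $r'\in(r,R)$, hence $\overline{B}_{r}(x)=X$ on letting $r'\downarrow r$. I would then apply Theorem \ref{app3} to $(X_{i},\upsilon_{i})\stackrel{GH}{\to}(X,\upsilon)$, $x_{i}\to x$, with the radius $r'$ in place of $r$, with $f_{\infty}:=h$, and with $g_{\infty}:=\Delta^{\upsilon}h|_{B_{r'}(x)}=0$ and $g_{i}:=0\in L^{2}(B_{r'}(x_{i}))$ (trivially an $L^{2}$-weak convergent sequence). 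This produces a subsequence $\{i(j)\}_{j}$ and functions $h_{i(j)}\in\mathcal{D}^{2}(\Delta^{\upsilon_{i(j)}},B_{r'}(x_{i(j)}))$ with $\Delta^{\upsilon_{i(j)}}h_{i(j)}=0$, i.e.\ harmonic, such that $h_{i(j)}$ and $\nabla h_{i(j)}$ converge $L^{2}$-strongly to $h$ and $\nabla h$ on $B_{r'}(x)$; in particular $\sup_{j}\|h_{i(j)}\|_{L^{2}(B_{r'}(x_{i(j)}))}<\infty$.

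Next I would upgrade this to uniform convergence. Since $\mathrm{Ric}_{X_{i}}\geq K(n-1)$ and $\mathrm{diam}\,X_{i}\leq d$, the measures $\upsilon_{i}$ satisfy volume doubling and a local Poincar\'e inequality on balls of radius $\leq r'$ with constants depending only on $n,K,d$; hence, for any $s\in(r,r')$, the De Giorgi--Nash--Moser estimate together with the interior gradient estimate for harmonic functions gives
\[
\sup_{B_{s}(x_{i(j)})}|h_{i(j)}|+\sup_{B_{s}(x_{i(j)})}|\nabla h_{i(j)}|\leq C(n,K,d,r',s)\,\|h_{i(j)}\|_{L^{2}(B_{r'}(x_{i(j)}))}\leq C .
\]
Joining nearby points of $B_{r}(x_{i(j)})$ by minimizing geodesics lying in $B_{s}(x_{i(j)})$, this shows that $\{h_{i(j)}\}_{j}$ is uniformly bounded and equi-Lipschitz on $B_{r}(x_{i(j)})$. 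By the generalized Arzel\`a--Ascoli theorem for functions on Gromov--Hausdorff converging spaces (subsection $2.3$), every subsequence has a further subsequence converging uniformly on $\overline{B}_{r}(x)$; such a uniform limit is also the $L^{2}$-strong limit, hence equals $h$. Therefore $h_{i(j)}\to h$ uniformly on $B_{r}(x)$, which is the first assertion.

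It remains to prove \eqref{6554}. We may assume $h\not\equiv 0$, so that, by the strong maximum principle (Harnack inequality) available on Ricci limit spaces (see e.g.\ \cite{jiang}) and the connectedness of the ball, $h>0$ on $B_{R}(x)$. Fix $\rho\in(r,R)$; arguing as in the reduction we may take $\rho^{+}\in(\rho,R)$ with $X\neq\overline{B}_{\rho^{+}}(x)$, and the construction above (with radii $\rho^{+},\rho$) furnishes harmonic functions $h_{i}$ on $B_{\rho^{+}}(x_{i})$ converging uniformly to $h$ on $B_{\rho}(x)$. Since $h\geq\min_{\overline{B}_{\rho}(x)}h>0$ we get $h_{i}>0$ on $B_{\rho}(x_{i})$ for large $i$, so the classical Cheng--Yau gradient estimate \cite{ch-yau} for the positive harmonic function $h_{i}$ on the smooth manifold $X_{i}$, after elementary manipulations, reads
\[
\mathrm{Lip}\,h_{i}(z)=|\nabla h_{i}|(z)\leq\frac{C(n)\,\rho\,(\rho|K|(n-1)+1)}{\rho^{2}-d_{X_{i}}(x_{i},z)^{2}}\,h_{i}(z)\qquad(z\in B_{\rho}(x_{i})).
\]
For $y\in B_{r}(x)$ and $z\in X$ near $y$, pick $y_{i}\to y$ and $z_{i}\to z$ in $X_{i}$, integrate this inequality along a minimizing geodesic from $y_{i}$ to $z_{i}$ (which stays in a small ball around $y_{i}$ inside $B_{\rho}(x_{i})$), and pass to the limit in $i$ using $h_{i}(y_{i})\to h(y)$, $d_{X_{i}}(x_{i},\cdot)\to d_{X}(x,\cdot)$ and continuity of $h$; then let the radius of that small ball tend to $0$ and finally $\rho\uparrow R$, obtaining
\[
\mathrm{Lip}\,h(y)\leq\frac{C(n)R(R|K|(n-1)+1)}{R^{2}-d_{X}(x,y)^{2}}\,h(y)\leq\frac{C(n)R^{2}(R|K|(n-1)+1)}{(R^{2}-r^{2})R}\,h(y),
\]
since $d_{X}(x,y)<r$; this is \eqref{6554}. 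The step I expect to be the main obstacle is exactly this last limit passage: $\mathrm{Lip}\,h$ is a genuinely pointwise quantity and is not lower semicontinuous under arbitrary uniform limits, so one must use that the approximants are harmonic — hence satisfy the interior gradient bounds above on the smooth spaces $X_{i}$ — and argue along geodesics of $X_{i}$ rather than of $X$; the remaining ingredients (the edge-case reductions, the uniform doubling and Poincar\'e constants, and the bookkeeping of the constant) are routine. An alternative for this passage is to deduce $|\nabla h|\leq C(n)(\,\cdots\,)h$ a.e.\ directly from the $L^{2}$-strong convergence $\nabla h_{i}\to\nabla h$ and then promote it to a pointwise bound on $\mathrm{Lip}\,h$ via the Sobolev-to-Lipschitz property of Ricci limit spaces.
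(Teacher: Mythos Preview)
Your proof is correct and follows the same route as the paper: the paper records only that the corollary is a direct consequence of Theorem \ref{app3}, with the Cheng--Yau inequality obtained by taking the approximating sequence $(X_i,\upsilon_i)$ in $M(n,K,d)$ and passing the classical estimate to the limit. Your write-up fleshes out the upgrade from $L^2$-strong to uniform convergence via uniform interior $L^{\infty}$ and Lipschitz bounds for harmonic functions plus Arzel\`a--Ascoli, which is exactly what is implicit in the paper's one-line proof (cf.\ Remark \ref{equivlp} and \eqref{lipreg}).

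One small imprecision worth flagging: the sequence $(X_i,\upsilon_i)$ in the statement lies in $\overline{M(n,K,d)}$, not necessarily in $M(n,K,d)$, so your phrases ``$\mathrm{Ric}_{X_i}\ge K(n-1)$'' and ``the smooth manifold $X_i$'' are not literally justified. For the first part this is harmless, since Ricci limit spaces inherit doubling, Poincar\'e, and Jiang's Lipschitz regularity \eqref{lipreg}, so your equicontinuity argument still goes through. For the Cheng--Yau step, since \eqref{6554} concerns only $h$ on $X$, you are free to pick a \emph{smooth} approximating sequence for that part and apply the classical estimate there --- which is precisely the remark the paper makes after the statement.
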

Note that (\ref{6554}) follows directly from applying the original Cheng-Yau gradient estimate \cite{ch-yau} to $h_{i(j)}$ in the case that $(X_i, \upsilon_i) \in M(n, K,d)$ for every $i$
and that a Cheng-Yau type gradient estimate for harmonic functions on metric measure spaces in more general setting was already known via a different approach by Hua-Kell-Xia in \cite{hkz}.

We now introduce an application of Corollary \ref{apphar}.

In \cite{ho0} we gave the notion of a weakly second-order differential structure (or system) on rectifiable metric measure spaces and we knew that $(X, \upsilon)$
has such a structure associated with a convergent sequence $(X_i, \upsilon_i) \stackrel{GH}{\to} (X, \upsilon)$ of $(X_i, \upsilon_i) \in M(n, K, d)$. For example if $(X, \upsilon)$ is a smooth metric measure space, then a smooth coordinate system of $X$ gives an example of such systems.
See subsection $2.3$ for the precise definitions.

Let $\{(X_i^j, \upsilon_i^j)\}_{i<\infty, j \in \{1, 2\}}$ be two convergent sequences of $(X_i^j, \upsilon_i^j) \in M(n, K, d)$ to $(X, \upsilon)$ and let $\mathcal{A}^j_{2\mathrm{nd}}$ be the weakly second-order differential system on $(X, \upsilon)$ associated with $\{(X_i^j, \upsilon_i^j)\}_i$ for every $j \in \{1, 2\}$.

Let us consider the following natural question:

$\\ $
\textbf{Question $1$.} Are $\mathcal{A}_{2\mathrm{nd}}^1$ and $\mathcal{A}_{2\mathrm{nd}}^2$ compatible?
$\\ $

Note that we say that two weakly second-order differential systems $\mathcal{A}^1$ and $\mathcal{A}^2$ on $(X, \upsilon)$ are \textit{compatible} if $\mathcal{A}^1 \cup \mathcal{A}^2$ is also a weakly second-order differential system on $(X, \upsilon)$.
Note that the rectifiable version of this question has always a positive answer, i.e., for any two rectifiable systems $\mathcal{A}_{\mathrm{rec}}^1$ and  $\mathcal{A}_{\mathrm{rec}}^2$ on $(X, \upsilon)$ we easily see that  $\mathcal{A}_{\mathrm{rec}}^1 \cup \mathcal{A}_{\mathrm{rec}}^2$ is also a rectifiable system on $(X, \upsilon)$.

By Corollary \ref{apphar} we can give an answer to this question:
\begin{theorem}\label{221}
$\mathcal{A}_{2\mathrm{nd}}^1$ and $\mathcal{A}_{2\mathrm{nd}}^2$ are compatible.
In particular the weakly second-order differential structure and the Levi-Civita connection $\nabla^{g_X}$ on $(X, \upsilon)$ defined in \cite{ho0} are canonical.
\end{theorem}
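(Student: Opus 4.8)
The plan is to reduce the compatibility of $\mathcal{A}_{2\mathrm{nd}}^1$ and $\mathcal{A}_{2\mathrm{nd}}^2$ to a statement about harmonic functions on $(X,\upsilon)$, and then to feed those harmonic functions into Corollary \ref{apphar}. Recall that a weakly second-order differential system on $(X,\upsilon)$ consists (roughly) of a family of charts together with, for each chart, a prescribed class of ``admissible'' functions that are twice differentiable in a weak sense along the chart, the key compatibility requirement being that the second-order transition data between two charts of the same system are consistent. Thus to show $\mathcal{A}_{2\mathrm{nd}}^1\cup\mathcal{A}_{2\mathrm{nd}}^2$ is again such a system, the only nontrivial point is the cross-compatibility: given a chart from $\mathcal{A}_{2\mathrm{nd}}^1$ and a chart from $\mathcal{A}_{2\mathrm{nd}}^2$ with overlapping domains, one must check that the second-order transition maps between them behave correctly on a suitable dense/full-measure class of test functions.

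First I would recall from \cite{ho0} that each system $\mathcal{A}_{2\mathrm{nd}}^j$ is built out of harmonic (or more generally $\Delta^{\upsilon}$-controlled) coordinate functions: the charts are, up to the usual reductions, tuples of harmonic functions on small balls, obtained as Gromov-Hausdorff limits of harmonic coordinate functions on the approximating manifolds $X_i^j$. The compatibility \emph{within} $\mathcal{A}_{2\mathrm{nd}}^j$ is precisely what makes $\mathcal{A}_{2\mathrm{nd}}^j$ a system, and it uses that limits of harmonic functions along the fixed sequence $\{(X_i^j,\upsilon_i^j)\}_i$ are again harmonic (Ding's theorem \cite{di2}). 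The obstruction to cross-compatibility is that a chart of $\mathcal{A}_{2\mathrm{nd}}^1$ is a limit along the first sequence while a chart of $\mathcal{A}_{2\mathrm{nd}}^2$ is a limit along the second sequence, and a priori there is no common approximating sequence realizing both simultaneously.

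This is exactly where Corollary \ref{apphar} enters, and it is the crux of the argument. Take a harmonic coordinate function $h$ belonging to a chart of $\mathcal{A}_{2\mathrm{nd}}^2$ on a ball $B_R(x)\subset X$. By Corollary \ref{apphar}, for every $r<R$ the restriction $h|_{B_r(x)}$ is a uniform limit of harmonic functions $h_{i(j)}$ on $B_r(x_{i(j)})\subset X_{i(j)}^1$ \emph{along (a subsequence of) the first sequence}. Hence the very same function $h$ that is ``second-order admissible'' for $\mathcal{A}_{2\mathrm{nd}}^2$ is also realizable as a GH-limit of harmonic functions along the sequence defining $\mathcal{A}_{2\mathrm{nd}}^1$; by the construction in \cite{ho0}, such limits automatically carry the weak second-order structure of $\mathcal{A}_{2\mathrm{nd}}^1$, and the two second-order derivatives (Hessians / connection coefficients) must agree because both are characterized intrinsically via $\upsilon$-integration against $\langle d(\cdot),d(\cdot)\rangle$ and the operator $\Delta^{\upsilon}$, which does not depend on the approximating sequence. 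Applying the symmetric statement with the roles of the two systems exchanged, every chart of each system is compatible with every chart of the other, so $\mathcal{A}_{2\mathrm{nd}}^1\cup\mathcal{A}_{2\mathrm{nd}}^2$ is a weakly second-order differential system.

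Finally, the ``in particular'' clause: applying the above with $\mathcal{A}_{2\mathrm{nd}}^1=\mathcal{A}_{2\mathrm{nd}}^2$ ranging over \emph{all} admissible approximating sequences shows that the weakly second-order differential structure and the induced Levi-Civita connection $\nabla^{g_X}$ of \cite{ho0} are independent of the chosen sequence $(X_i,\upsilon_i)\stackrel{GH}{\to}(X,\upsilon)$, hence canonical. I expect the main technical obstacle to be bookkeeping: matching the precise definition of ``weakly second-order differential system'' from \cite{ho0} (in particular, verifying that the limiting procedure in Corollary \ref{apphar} interacts correctly with the second-order transition data, not merely with the functions themselves), together with checking that a countable dense family of harmonic coordinate functions suffices so that the full-measure exceptional sets can be handled simultaneously.
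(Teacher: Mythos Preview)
Your proposal is correct in outline and uses the right key ingredient, but the paper organizes the argument differently, and the reorganization is worth noting.

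You argue cross-compatibility directly: given a harmonic coordinate function $h$ from $\mathcal{A}_{2\mathrm{nd}}^2$, invoke Corollary~\ref{apphar} to realize $h$ as a uniform limit of harmonics along the \emph{first} approximating sequence, and then appeal to the construction in \cite{ho0} to say that $h$ therefore sits inside the second-order framework of $\mathcal{A}_{2\mathrm{nd}}^1$. This works, but the step ``such limits automatically carry the weak second-order structure of $\mathcal{A}_{2\mathrm{nd}}^1$'' still requires rerunning the argument of \cite{ho0} for the enlarged family of charts; the bookkeeping you flag is real and is exactly where the paper takes a cleaner path.

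The paper instead proves a genuinely intrinsic statement, Theorem~\ref{aarr}: every $f\in\mathcal{D}^2(\Delta^{\upsilon},B_R(x))$ is weakly twice differentiable on $B_R(x)$, with quantitative bounds on $\mathrm{Hess}_f^{g_X}$, $|\nabla f|^2$, etc. This uses Theorem~\ref{app2} (smooth Poisson approximation along \emph{any} manifold sequence) together with the a~priori estimates of Proposition~\ref{998io} and \cite[Theorem~1.3]{holp}. Once this is in hand, the conclusion is immediate: the coordinate functions of \emph{both} $\mathcal{A}_{2\mathrm{nd}}^1$ and $\mathcal{A}_{2\mathrm{nd}}^2$ are (restrictions of) harmonic functions, hence lie in $\mathcal{D}^2(\Delta^{\upsilon})$, hence are in $\Gamma_2$; an argument as in \cite[Proposition~3.25]{ho0} then shows that any rectifiable system whose coordinate functions extend to $\mathcal{D}^2(\Delta^{\upsilon})$ is a weakly second-order system. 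In particular $\mathcal{A}_{2\mathrm{nd}}^1\cup\mathcal{A}_{2\mathrm{nd}}^2$ is such a system, and no reference to either approximating sequence survives in the final statement.

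Both routes rest on the same analytic input (Theorem~\ref{pois} and its local consequences Theorem~\ref{app2}/Corollary~\ref{apphar}). Your approach is more hands-on with the two sequences; the paper's approach trades that for a stronger intermediate result (membership in $\mathcal{D}^2(\Delta^{\upsilon})$ alone forces weak twice differentiability), which is what makes the structure ``canonical'' in a transparent way and is reused later (e.g.\ in Theorem~\ref{33766}).
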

Let $\nu$ be a Borel probability measure on $X$ with $(X, \nu) \in \overline{M(n, K, d)}$.
Then Cheeger-Colding proved in \cite{ch-co3} that $\upsilon$ and $\nu$ are mutually absolutely continuous on $X$.  
This gives that the weakly second-order differential structure on $(X, \upsilon)$ does not depend on the choice of limit measures and that it depends only on the (Riemannian) metric structure.
Thus in this paper we write the Levi-Civita connection on $(X, \upsilon)$ defined in \cite{ho0} by $\nabla^{g_X}$ in order to distinguish this from Gigli's Levi-Civita connection $\nabla^{\upsilon}$ defined in \cite{gigli}, where $g_X$ is the Riemannian metric of $X$.
See subsections $1.4$ and $2.5.4$ for the detail.
\subsection{Schr$\ddot{\text{o}}$dinger equations}
Let us discuss \textit{the Schr$\ddot{\text{o}}$dinger operator} with a potential $g \in L^q(X)$:
\begin{align}\label{778}
\Delta^{\upsilon}+g
\end{align}
on $X$, where $q>2$.

Fukaya conjectured in \cite{fu} that eigenvalues of the Laplacian behave continuously on $\overline{M(n, K, d)}$. 
Cheeger-Colding solved this conjecture in \cite{ch-co3}.
We will give a generalization of the continuity to Schr$\ddot{\text{o}}$dinger operators.

It is easy to check that if $(X, \upsilon)$ satisfies the $(2q/(q-2), 2)$-Sobolev inequality on $X$ for some pair $(A, B)$ of nonnegative constants $A, B$, then the spectrum of (\ref{778}) is discrete,
where we say that \textit{$(X, \upsilon)$ satisfies the $(q, p)$-Sobolev inequality on $X$ for $(A, B)$} if 
\[\left(\int_X|f|^qd\upsilon\right)^{p/q} \le A \int_X|df|^pd\upsilon+B\int_X|f|^pd\upsilon\]
for every Lipschitz function $f$ on $X$. See Proposition \ref{discrete}.
Then we write the spectrum by 
\[\lambda^g_{0}(X)\le \lambda^g_1(X) \le \lambda_{2}^g(X) \le \cdots.\]
\begin{theorem}\label{schro}
Let $2<p < \infty$ and let $(X_i, \upsilon_i) \stackrel{GH}{\to} (X_{\infty}, \upsilon_{\infty})$ in $\overline{M(n, K, d)}$ with $\mathrm{diam}\,X_{\infty}>0$.
Assume that there exist $A, B \ge 0$ such that for every $i<\infty$, $(X_i, \upsilon_i)$ satisfies the $(2p/(p-2), 2)$-Sobolev inequality on $X_i$ for $(A, B)$.
Then we have the following:
\begin{enumerate}
\item $(X_{\infty}, \upsilon_{\infty})$ satisfies the $(2p/(p-2), 2)$-Sobolev inequality on $X_{\infty}$ for $(A, B)$. In particular the spectrum of $\Delta^{\upsilon_{\infty}} + g_{\infty}$ is discrete.
\item For any $q>p/2$ and $L^q$-weak convergent sequence $\{g_i\}_{i \le \infty}$ on $X_{\infty}$ of $g_i \in L^q(X_i)$ we have
\begin{align}\label{schco}
\lim_{i \to \infty}\lambda_{k}^{g_i}(X_i)=\lambda_{k}^{g_{\infty}}(X_{\infty})
\end{align}
for every $k \ge 0$. Moreover for every $k$, if $f_{\infty} \in L^2(X_{\infty})$ is the $L^2$-weak limit on $X_{\infty}$ of 
a sequence $\{f_i\}_{i<\infty}$ of $\lambda_k^{g_i}(X_i)$-eigenfunctions $f_i$ of $\Delta^{\upsilon_i}+g_i$ with $||f_i||_{L^2}=1$, then we see that $f_{\infty}$ is a $\lambda_{k}^{g_{\infty}}(X_{\infty})$-eigenfunction of $\Delta^{\upsilon_{\infty}}+g_{\infty}$, that $f_i$ $L^{2p/(p-2)}$-converges strongly to $f_{\infty}$ on $X_{\infty}$
and that $d f_i$ $L^2$-converges strongly to $df_{\infty}$ on $X_{\infty}$.
\end{enumerate}
\end{theorem}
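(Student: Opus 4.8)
The plan is to obtain (1) by transporting the Sobolev inequality to the limit through recovery sequences, and (2) by a Courant--Fischer min--max argument resting on the stability of $H^{1,2}$ under measured Gromov--Hausdorff convergence; the genuinely new ingredient, absent from the Cheeger--Colding treatment of the pure Laplacian, will be the continuity of the potential term $\int g\,h^2\,d\upsilon$ along strongly $H^{1,2}$-convergent functions, and this is where the hypothesis $q>p/2$ does its work.

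For part (1), I would fix a Lipschitz function $f_\infty$ on $X_\infty$, regard it as an element of $H^{1,2}(X_\infty)$, and choose a recovery sequence $f_i\in H^{1,2}(X_i)$ with $f_i\to f_\infty$ $L^2$-strongly and $df_i\to df_\infty$ $L^2$-strongly (standard stability theory; cf. subsection $2.5$). Since Lipschitz functions are dense in $H^{1,2}(X_i)$, the $(2p/(p-2),2)$-Sobolev inequality for $(A,B)$ applies to each $f_i$; letting $i\to\infty$, the right-hand side converges by the strong $L^2$-convergence of $f_i$ and $df_i$, while the left-hand side, being an $L^{2p/(p-2)}$-norm, is lower semicontinuous under strong $L^2$-convergence. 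Hence the inequality passes to $f_\infty$, and the discreteness of the spectrum of $\Delta^{\upsilon_\infty}+g_\infty$ then follows from Proposition \ref{discrete}.

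For part (2), I would first note that $q>p/2$ forces $2q/(q-1)<2p/(p-2)$, so that H\"older's inequality, interpolation between $L^2$ and $L^{2p/(p-2)}$, and the Sobolev inequality yield, for every $\varepsilon>0$, a constant $C$ depending only on $\varepsilon,A,B$ and $\sup_i\|g_i\|_{L^q}<\infty$ with $\int|g_i|h^2\,d\upsilon_i\le\varepsilon\int|dh|^2\,d\upsilon_i+C\int h^2\,d\upsilon_i$ for all $i$ and all $h\in H^{1,2}(X_i)$; in particular the forms $Q_i(h):=\int|dh|^2\,d\upsilon_i+\int g_ih^2\,d\upsilon_i$ (and $Q_\infty$) are densely defined, closed, and uniformly bounded below, and the $\lambda_k^{g_i}(X_i)$ are uniformly bounded. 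The key lemma I would isolate is: if $h_i\to h_\infty$ $L^2$-strongly with $\sup_i\|h_i\|_{H^{1,2}}<\infty$, then $h_i$ is bounded in $L^{2p/(p-2)}$ by the uniform Sobolev inequality, hence $h_i\to h_\infty$ $L^{2q/(q-1)}$-strongly, hence $h_i^2\to h_\infty^2$ $L^{q/(q-1)}$-strongly, and pairing against $g_i\to g_\infty$ $L^q$-weakly gives $\int g_ih_i^2\,d\upsilon_i\to\int g_\infty h_\infty^2\,d\upsilon_\infty$ (with the analogous statement for products). With this, the min--max would run as follows. For $\limsup_i\lambda_k^{g_i}(X_i)\le\lambda_k^{g_\infty}(X_\infty)$, I take $L^2$-orthonormal eigenfunctions of $\Delta^{\upsilon_\infty}+g_\infty$ with eigenvalues $\lambda_0^{g_\infty}(X_\infty),\dots,\lambda_k^{g_\infty}(X_\infty)$, spanning $V_\infty$, and lift them by recovery sequences to $V_i\subset H^{1,2}(X_i)$; the $L^2$-Gram matrices converge to the identity (so $\dim V_i=k+1$ for large $i$), the Dirichlet energies converge, and, by the lemma, the potential terms converge, all uniformly in the coefficient vector on the unit sphere, so $\sup_{V_i}Q_i/\|\cdot\|_{L^2}^2\to\lambda_k^{g_\infty}(X_\infty)$. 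For $\liminf_i\lambda_k^{g_i}(X_i)\ge\lambda_k^{g_\infty}(X_\infty)$, I take $L^2$-orthonormal eigenfunctions $f_0^i,\dots,f_k^i$ of $\Delta^{\upsilon_i}+g_i$ with eigenvalues $\le\lambda_k^{g_i}(X_i)$; the form lower bound forces $\sup_i\|f_j^i\|_{H^{1,2}}<\infty$, so along a subsequence $f_j^i\to f_j^\infty$ $L^2$-strongly to an $L^2$-orthonormal system in $H^{1,2}(X_\infty)$ with $df_j^i\to df_j^\infty$ $L^2$-weakly; lower semicontinuity of the Dirichlet energy together with the lemma then gives $Q_\infty(f_\infty)\le(\liminf_i\lambda_k^{g_i}(X_i))\,\|f_\infty\|_{L^2}^2$ for every $f_\infty$ in the span of the $f_j^\infty$, and min--max over this $(k+1)$-dimensional subspace closes the estimate. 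The two bounds together give (\ref{schco}).

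For the eigenfunction statement, I would observe that $\{f_i\}$, being bounded in $H^{1,2}$, has its $L^2$-weak limit $f_\infty$ as a genuine $L^2$-strong limit along a subsequence (Rellich-type compactness), so $\|f_\infty\|_{L^2}=1$ and $df_i\to df_\infty$ $L^2$-weakly; testing $\int\langle df_i,d\psi_i\rangle\,d\upsilon_i+\int g_if_i\psi_i\,d\upsilon_i=\lambda_k^{g_i}(X_i)\int f_i\psi_i\,d\upsilon_i$ against a recovery sequence $\psi_i$ of an arbitrary Lipschitz $\psi_\infty$ on $X_\infty$ and letting $i\to\infty$ (the middle term via the lemma) shows $f_\infty$ is a $\lambda_k^{g_\infty}(X_\infty)$-eigenfunction. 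Choosing $\psi_\infty=f_\infty$ gives $\int|df_i|^2\,d\upsilon_i=\lambda_k^{g_i}(X_i)-\int g_if_i^2\,d\upsilon_i\to\lambda_k^{g_\infty}(X_\infty)-\int g_\infty f_\infty^2\,d\upsilon_\infty=\int|df_\infty|^2\,d\upsilon_\infty$, so the weak $L^2$-convergence of $df_i$ improves to strong; and the strong $L^2$-convergence of $f_i$ with the uniform Sobolev bound already yields strong $L^r$-convergence for every $r<2p/(p-2)$. The hard part, I expect, will be upgrading this to the endpoint exponent $2p/(p-2)$: this requires ruling out concentration of $L^{2p/(p-2)}$-mass, which I would do by exploiting the now strong $L^2$-convergence of the gradients together with the local form of the Sobolev inequality. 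More broadly, the potential lemma and its endpoint counterpart are the crux --- the place where the merely $L^q$ regularity of the potentials must be reconciled with the varying base spaces, and the reason the strict inequality $q>p/2$ is imposed rather than $q\ge p/2$.
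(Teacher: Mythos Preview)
Your proposal is correct and follows essentially the same route as the paper. Part (1) is exactly Theorem \ref{stasob}; the ``key lemma'' you isolate on potential-term continuity is what the paper uses implicitly (via $L^r$-strong convergence for $r<2p/(p-2)$ from Theorem \ref{tt}); the upper bound is Theorem \ref{hbv}; the eigenfunction argument and the upgrade of $df_i$ from weak to strong via the energy identity match the paper verbatim; and your intuition for the endpoint $L^{2p/(p-2)}$ convergence---apply the Sobolev inequality to $f_i$ minus a strongly $H^{1,2}$-convergent approximating sequence---is precisely the content of Theorem \ref{sobo3}.

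The only organizational difference worth noting is in the lower bound. The paper proves $(\star k)$ by induction: knowing the first $k$ eigenfunctions already converge \emph{strongly} in $H^{1,2}$, all cross terms $\int\langle\nabla f_{i,l},\nabla f_{i,m}\rangle\,d\upsilon_i$ with $(l,m)\neq(k+1,k+1)$ genuinely converge, and only the diagonal $(k+1,k+1)$ term needs lower semicontinuity. Your version bypasses the induction by observing directly that for any fixed coefficient vector $(c_j)$, the combination $\sum_j c_j f_j^i$ converges $L^2$-strongly with bounded $H^{1,2}$-norm, so $Q_\infty(\sum_j c_j f_j^\infty)\le\liminf_i Q_i(\sum_j c_j f_j^i)\le(\liminf_i\lambda_k^{g_i}(X_i))\sum_j|c_j|^2$, and then take the supremum over the unit sphere. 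This is a legitimate and slightly cleaner shortcut; the paper's inductive version has the advantage of yielding, at each stage, the strong $H^{1,2}$-convergence of the lower eigenfunctions as part of the output rather than as a separate afterthought.
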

Combining this with the $(2n/(n-2), 2)$-Poincar\'e inequality given by Maheux-Saloff-Coste in \cite{mas} yields the following.
\begin{corollary}\label{horr}
Assume $n \ge 3$.
Let $q>n/2$ and let $(X_i, \upsilon_i) \stackrel{GH}{\to} (X_{\infty}, \upsilon_{\infty})$ in $\overline{M(n, K, d)}$ with $\mathrm{diam}\,X_{\infty}>0$. 
Then for every $L^q$-weak convergent sequence $\{g_i\}_{i \le \infty}$ on $X_{\infty}$ of $g_i \in L^q(X_i)$ 
we have
\begin{align}\label{108}
\lim_{i \to \infty}\lambda_{k}^{g_i}(X_i)=\lambda_{k}^{g_{\infty}}(X_{\infty})
\end{align}
for every $k \ge 0$.
\end{corollary}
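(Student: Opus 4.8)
The plan is to obtain Corollary~\ref{horr} as the special case $p=n$ of Theorem~\ref{schro}. With this choice one has $2p/(p-2)=2n/(n-2)$, and the constraint $q>p/2$ of Theorem~\ref{schro}(2) becomes precisely $q>n/2$, which is the hypothesis here; moreover the assumption $n\ge 3$ is exactly what guarantees $2<p<\infty$, so that Theorem~\ref{schro} applies. The only point not immediately contained in Theorem~\ref{schro} is the verification of its Sobolev hypothesis, namely that each $(X_i,\upsilon_i)$ with $i<\infty$ satisfies the $(2n/(n-2),2)$-Sobolev inequality for one pair $(A,B)$ independent of~$i$.

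First I would invoke the $(2n/(n-2),2)$-Poincar\'e inequality of Maheux--Saloff-Coste \cite{mas}, which in the terminology introduced above is the $(2n/(n-2),2)$-Sobolev inequality: there exist $A=A(n,K,d)\ge 0$ and $B=B(n,K,d)\ge 1$ such that every $(Y,\nu)\in M(n,K,d)$ satisfies
\[\left(\int_Y|f|^{2n/(n-2)}\,d\nu\right)^{(n-2)/n}\le A\int_Y|df|^2\,d\nu+B\int_Y|f|^2\,d\nu\]
for all Lipschitz $f$ on $Y$; the dependence of $A,B$ only on $n,K,d$ comes from writing $Y$ as a single metric ball of radius $\le d$ and using the Bishop--Gromov volume comparison under $\mathrm{Ric}_Y\ge K(n-1)$ to normalize the constants.

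Next I would promote this inequality to all of $\overline{M(n,K,d)}$ by means of Theorem~\ref{schro}(1). Given $(Z,\mu)\in\overline{M(n,K,d)}$, pick $(Y_j,\nu_j)\in M(n,K,d)$ with $(Y_j,\nu_j)\stackrel{GH}{\to}(Z,\mu)$; if $\mathrm{diam}\,Z=0$ the inequality is immediate because $B\ge 1$, and if $\mathrm{diam}\,Z>0$ then Theorem~\ref{schro}(1) with $p=n$ shows that $(Z,\mu)$ satisfies the $(2n/(n-2),2)$-Sobolev inequality for the very same pair $(A,B)$. Applying this with $(Z,\mu)=(X_i,\upsilon_i)$ for each $i<\infty$ produces the uniform Sobolev hypothesis required by Theorem~\ref{schro}.

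Finally, Theorem~\ref{schro} with $p=n$ and this pair $(A,B)$ yields, for every $q>n/2=p/2$ and every $L^q$-weak convergent sequence $\{g_i\}_{i\le\infty}$ of $g_i\in L^q(X_i)$ on $X_\infty$, the convergence $\lim_{i\to\infty}\lambda_k^{g_i}(X_i)=\lambda_k^{g_\infty}(X_\infty)$ for all $k\ge 0$, i.e.\ (\ref{108}). The one step demanding genuine care is the first, that the Maheux--Saloff-Coste constants can be taken to depend only on $n,K,d$; the remainder of the argument is a direct application of the two parts of Theorem~\ref{schro}.
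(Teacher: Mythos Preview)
Your approach is essentially the paper's: invoke Maheux--Saloff-Coste (Theorem~\ref{sobo}) to get uniform constants on $M(n,K,d)$, push them to all of $\overline{M(n,K,d)}$ by stability (you use Theorem~\ref{schro}(1), which is Theorem~\ref{stasob}), and then apply Theorem~\ref{schro}(2) with $p=n$. The one slip is terminological: in this paper the Maheux--Saloff-Coste result is recorded as a $(2n/(n-2),2)$-\emph{Poincar\'e} inequality in the sense of Definition~\ref{muneri}(2), not a Sobolev inequality in the sense of Definition~\ref{muneri}(1); the passage from the former to the latter, with constants depending only on $n,K,d$ (since $\upsilon(X)=1$ and $R\le d$), is supplied by Remark~\ref{akl}(3), which is exactly the extra citation in the paper's own proof.
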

Note that (\ref{108}) for $g_i \equiv 0$ corresponds to the continuity of eigenvalues of the Laplacian proved by Cheeger-Colding in \cite{ch-co3}
and that if we consider 
\[g_i =\frac{n-2}{4(n-1)}s_{X_i} (=:\hat{s}_{X_i})\]
when $(X_i, \upsilon_i)$ is a smooth $n$-dimensional compact metric measure space for every $i<\infty$, then Corollary \ref{horr} gives the continuity of eigenvalues of the conformal Laplacian, where
$s_{X_i}$ is the scalar curvature of $X_i$.
\subsection{Yamabe type equations}
Assume $n \ge 3$.
Let us consider the following quantity, called \textit{the generalized ($n$-) Yamabe constant of $(X, \upsilon)$} associated with $g \in L^q(X)$:
\begin{align}\label{yama7}
Y^g(X)=\inf_f \int_X\left(|df|^2+g|f|^2\right)d\upsilon,
\end{align}
where $f$ runs over all Lipschitz functions on $X$ with $||f||_{L^{2n/(n-2)}}=1$ and $1 \le q \le \infty$.
This is introduced by Akutagawa-Carron-Mazzeo in \cite{acm2} for more general setting.

We first recall the original Yamabe constant $\hat{Y}^{s_M}(M)$ of a closed $n$-dimensional Riemannian manifold $M$.
It is defined by
\begin{align}\label{mjnh}
\hat{Y}^{\hat{s}_M}(M):=\inf_f\int_M\left(|df|^2+\hat{s}_M|f|^2\right)dH^n,
\end{align}
where $f$ runs over all Lipschitz functions on $M$ with $||f||_{L^{2n/(n-2)}}=1$.

Then it is known that every positively valued minimizer $f$ of (\ref{mjnh}) satisfies the following equation:
\[\Delta f+\hat{s}_Mf-\hat{Y}^{\hat{s}_M}(M)|f|^{(n+2)/(n-2)}=0.\] 
We now recall the Yamabe problem on $M$ which means finding a minimizer of (\ref{mjnh}).
The following is well-known:
\begin{enumerate}
\item \textit{Aubin's inequality}
\[\hat{Y}^{\hat{s}_M}(M)\le \hat{Y}^{\hat{s}_{\mathbf{S}^n}}(\mathbf{S}^n)\]
holds.
\item For every $\epsilon>0$ there exists $B>0$ such that $(M, H^n)$ satisfies the $(2n/(n-2), 2)$-Sobolev inequality on $M$ for $((\hat{Y}^{\hat{s}_{\mathbf{S}^n}}(\mathbf{S}^n))^{-1}+\epsilon, B)$.
\item If $\hat{Y}^{\hat{s}_M}(M)<\hat{Y}^{\hat{s}_{\mathbf{S}^n}}(\mathbf{S}^n)$, then there exists a smooth positively valued minimizer of (\ref{mjnh}).
\item If $\hat{Y}^{\hat{s}_M}(M)=\hat{Y}^{\hat{s}_{\mathbf{S}^n}}(\mathbf{S}^n)$, then $M$ is conformally equivalent to $\mathbf{S}^n$.
In particular we can also find a smooth positively valued minimizer of $(\ref{mjnh})$.
\end{enumerate}
See \cite{au0, au, schoe, sc-ya, tru, yamabe} for the details.
Note that from above we see that the Yamabe problem is solvable in the smooth case and that by definition if $(X, \upsilon)$ is a smooth $n$-dimensional compact metric measure space and 
$g=\hat{s}_{X}$,
then we have 
\[Y^{\hat{s}_X}(X)=\left(H^n(X)\right)^{-2/n}\hat{Y}^{\hat{s}_X}(X).\]

We now turn to the nonsmooth case.

By combining Akutagawa-Carron-Mazzeo's works in \cite{acm2} with Cheeger-Colding's works in \cite{ch-co3}, we see that if $q>n/2$, $(X, \upsilon)$ satisfies
the $(2n/(n-2), 2)$-Sobolev inequality on $X$ for some $(A, B)$ and an Aubin type strict inequality
\begin{align*}\label{3124}
Y^g(X)<A^{-1}
\end{align*}
holds,
then there exists a minimizer $f \in H^{1, 2}(X)$ of (\ref{yama7}).
Thus we can regard this as a generalization of $(3)$ above. 

We now introduce a main result on generalized Yamabe constants.
It means that roughly speaking, a uniform Sobolev inequality and a uniform Aubin type strict inequality yield the continuity of generalized Yamabe constants with respect to the Gromov-Hausdorff topology:
\begin{theorem}\label{yamayama}
Let $q>n/2$, let $(X_i, \upsilon_i) \stackrel{GH}{\to} (X_{\infty}, \upsilon_{\infty})$ in $\overline{M(n, K, d)}$ with $\mathrm{diam}\,X_{\infty}>0$, and let $\{g_i\}_{i \le \infty}$ be an $L^q$-weak convergent sequence on $X_{\infty}$ of $g_i \in L^q(X_i)$.
Assume that there exist $A, B>0$ such that for every $i <\infty$, $(X_i, \upsilon_i)$ satisfies the $(2n/(n-2), 2)$-Sobolev inequality on $X_i$ for $(A, B)$ and that
\[\limsup_{i \to \infty}Y^{g_i}(X_i)<A^{-1}.\]
Then we have
\[\lim_{i \to \infty}Y^{g_i}(X_i)=Y^{g_{\infty}}(X_{\infty}).\]
In particular we have
\[Y^{g_{\infty}}(X_{\infty})<A^{-1}.\]
Moreover if $f_{\infty}$ is the $L^{2}$-weak limit on $X_{\infty}$ of a sequence
$\{f_i\}_{i<\infty}$ of minimizers $f_i \in H^{1, 2}(X_i)$ of $Y^{g_i}(X_i)$ with $||f_i||_{L^{2n/(n-2)}}=1$, then we see that $f_i$ $L^{2n/(n-2)}$-converges strongly to $f_{\infty}$ on $X_{\infty}$, that $df_i$ $L^2$-converges strongly to $df_{\infty}$ on $X_{\infty}$ and that $f_{\infty}$ is also a minimizer of $Y^{g_{\infty}}(X_{\infty})$.
\end{theorem}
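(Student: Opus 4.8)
The plan is to prove the two inequalities $\limsup_{i}Y^{g_i}(X_i)\le Y^{g_\infty}(X_\infty)$ and $\liminf_{i}Y^{g_i}(X_i)\ge Y^{g_\infty}(X_\infty)$ separately, and then to read off the strong convergence of the minimizers and the strict bound $Y^{g_\infty}(X_\infty)<A^{-1}$ from the proof of the second one. The ingredients I would rely on are: the stability of the Sobolev inequality, i.e. Theorem \ref{schro}~(1) with $p=n$, which gives that $(X_\infty,\upsilon_\infty)$ satisfies the $(2n/(n-2),2)$-Sobolev inequality for $(A,B)$, and hence, with the Akutagawa-Carron-Mazzeo existence result recalled above, that genuine minimizers exist on $X_i$ whenever $Y^{g_i}(X_i)<A^{-1}$ and on $X_\infty$ once $Y^{g_\infty}(X_\infty)<A^{-1}$; the compactness (a Rellich-Kondrachov type statement), the closedness of the differential, and the ``weak times strong'' product property of the $L^p$-convergence with respect to the Gromov-Hausdorff topology; and the elementary fact that $q>n/2$ forces the conjugate exponent $q'$ to satisfy $2q'<2n/(n-2)$, so that $|f|^2$ always lies in a subcritical Lebesgue space.

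For the upper bound, fix $\epsilon>0$ and a Lipschitz function $f$ on $X_\infty$ with $\|f\|_{L^{2n/(n-2)}}=1$ and $\int_{X_\infty}(|df|^2+g_\infty|f|^2)\,d\upsilon_\infty<Y^{g_\infty}(X_\infty)+\epsilon$. Transplanting $f$ along the convergence (using a good cut-off function of Cheeger-Colding, or the standard transplantation/density of Lipschitz functions, with uniform Lipschitz and sup bounds) I would produce Lipschitz functions $f_i$ on $X_i$ with $f_i\to f$ strongly in $L^s$ for every $s<\infty$ and $df_i\to df$ strongly in $L^2$; in particular $\|f_i\|_{L^{2n/(n-2)}}\to 1$, so the normalized functions $\hat f_i:=f_i/\|f_i\|_{L^{2n/(n-2)}}$ are admissible competitors for $Y^{g_i}(X_i)$. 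Since $\hat f_i\to f$ strongly in $L^{2q'}$ we get $|\hat f_i|^2\to|f|^2$ strongly in $L^{q'}$, hence $\int g_i|\hat f_i|^2\,d\upsilon_i\to\int g_\infty|f|^2\,d\upsilon_\infty$ by the $L^q$-weak convergence of the $g_i$, while $\int|d\hat f_i|^2\,d\upsilon_i\to\int|df|^2\,d\upsilon_\infty$. Therefore $\limsup_i Y^{g_i}(X_i)\le\int_{X_\infty}(|df|^2+g_\infty|f|^2)\,d\upsilon_\infty<Y^{g_\infty}(X_\infty)+\epsilon$, and letting $\epsilon\to0$ gives the claim.

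For the lower bound, pass to a subsequence along which $Y^{g_i}(X_i)\to L:=\liminf_i Y^{g_i}(X_i)$; by hypothesis $L<A^{-1}$. For such $i$ choose $f_i$ on $X_i$ with $\|f_i\|_{L^{2n/(n-2)}}=1$ and $\int(|df_i|^2+g_i|f_i|^2)\,d\upsilon_i\to L$ (a genuine minimizer when $Y^{g_i}(X_i)<A^{-1}$, an almost-minimizer otherwise). The Sobolev inequality for $(A,B)$ and the uniform $L^q$-bound on the $g_i$ bound $\|df_i\|_{L^2}$, so after a further subsequence $f_i\to f_\infty$ strongly in $L^s$ for all $s<2n/(n-2)$ and $L^{2n/(n-2)}$-weakly, and $df_i\to df_\infty$ $L^2$-weakly, with $f_\infty\in H^{1,2}(X_\infty)$. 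A Brezis-Lieb type decomposition adapted to the Gromov-Hausdorff setting, applied to $f_i$ relative to the transplanted limit, produces $a:=\|f_\infty\|_{L^{2n/(n-2)}}^{2n/(n-2)}\in[0,1]$, $\mu:=1-a$ and $\nu:=\lim_i\int|df_i|^2\,d\upsilon_i-\int|df_\infty|^2\,d\upsilon_\infty\ge0$, together with $\int g_i|f_i|^2\,d\upsilon_i\to\int g_\infty|f_\infty|^2\,d\upsilon_\infty$ (weak times strong again) and, applying the Sobolev inequality on $X_i$ to the remainder part (which $L^2$-converges to $0$), $\mu^{(n-2)/n}\le A\nu$. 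Since $f_\infty$, when nonzero, gives after normalization an admissible competitor on $X_\infty$, we also have $\int(|df_\infty|^2+g_\infty|f_\infty|^2)\,d\upsilon_\infty\ge a^{(n-2)/n}Y^{g_\infty}(X_\infty)$, and combining these relations yields
\[ L\ \ge\ a^{(n-2)/n}\,Y^{g_\infty}(X_\infty)+A^{-1}\mu^{(n-2)/n}. \]
Feeding in $L\le Y^{g_\infty}(X_\infty)<A^{-1}$ (from the upper bound and the hypothesis) together with the elementary inequality $a^{(n-2)/n}+(1-a)^{(n-2)/n}>1$ for $a\in(0,1)$ (the cases $a=0$ and the sign cases $Y^{g_\infty}(X_\infty)\le0$ being handled by the same arithmetic) forces $a=1$: there is no loss of mass and $\|f_\infty\|_{L^{2n/(n-2)}}=1$. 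Then $f_\infty$ is itself an admissible competitor, so $L\ge\int(|df_\infty|^2+g_\infty|f_\infty|^2)\,d\upsilon_\infty+\nu\ge Y^{g_\infty}(X_\infty)$, whence $L=Y^{g_\infty}(X_\infty)$, $\nu=0$, $f_\infty$ is a minimizer, and, with the upper bound, $\lim_i Y^{g_i}(X_i)=Y^{g_\infty}(X_\infty)<A^{-1}$.

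Finally, rerunning the last paragraph with the given genuine minimizers $f_i$: the conclusion $\nu=0$ upgrades the weak convergence $df_i\to df_\infty$ to strong $L^2$-convergence (norm convergence plus weak convergence), $a=1$ in the Brezis-Lieb splitting upgrades $f_i\to f_\infty$ to strong $L^{2n/(n-2)}$-convergence, and $f_\infty$ has already been identified as a minimizer of $Y^{g_\infty}(X_\infty)$. I expect the main obstacle to be precisely the no-concentration step: since $2n/(n-2)$ is the critical Sobolev exponent, the embedding $H^{1,2}\hookrightarrow L^{2n/(n-2)}$ is not compact, so a priori mass of the $f_i$ could escape into a concentrating bubble; the quantitative balance above, powered by the uniform Sobolev inequality and the strict Aubin type inequality $\limsup_i Y^{g_i}(X_i)<A^{-1}$, is what rules this out, and carrying out the Brezis-Lieb decomposition across the varying spaces $X_i$ rather than on a single fixed space is the technical heart of the argument.
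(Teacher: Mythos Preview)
Your proposal is correct and follows essentially the same concentration--compactness strategy as the paper (which proves this as the special case $p_i\equiv n$ of Theorem~\ref{contya}): upper semicontinuity via transplanted Lipschitz competitors, then a Brezis--Lieb splitting of the minimizers together with the uniform Sobolev inequality and the strict bound $AL<1$ to exclude mass loss; the paper's Section~3.3 ($w$-convergence, Corollary~\ref{fatou2}, Theorem~\ref{nn}) is precisely the ``Brezis--Lieb across varying spaces'' you identify as the technical heart. One small slip: you invoke $Y^{g_\infty}(X_\infty)<A^{-1}$ before it is established---the upper bound gives only $L\le Y^{g_\infty}$ and the hypothesis only $L<A^{-1}$; the paper sidesteps this by deducing $\nu=0$ directly from $AL\nu\ge\nu$, but your route is easily repaired by first noting that $Y^{g_\infty}\ge A^{-1}$ would force $L\ge A^{-1}(a^{(n-2)/n}+(1-a)^{(n-2)/n})\ge A^{-1}$ via your main inequality, a contradiction.
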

As a corollary, 
we have the following continuity of \textit{almost nonpositive} generalized Yamabe constants:
\begin{corollary}\label{yamabe}
There exists $\delta:=\delta(n, K, d)>0$ such that the following holds:
Let $(X_i, \upsilon_i) \stackrel{GH}{\to} (X_{\infty}, \upsilon_{\infty})$ in $\overline{M(n, K, d)}$ with $\mathrm{diam}\,X_{\infty}>0$, let $q>n/2$, and let $\{g_i\}_{i \le \infty}$ be an $L^q$-weak convergent sequence on $X_{\infty}$ of $g_i \in L^q(X_i)$ with
\[\limsup_{i \to \infty}Y^{g_i}(X_i)\le \delta.\]
Then
\[\lim_{i \to \infty}Y^{g_i}(X_i)=Y^{g_{\infty}}(X_{\infty}).\]
\end{corollary}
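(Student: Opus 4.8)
\section*{Proof proposal for Corollary \ref{yamabe}}

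The plan is to deduce Corollary \ref{yamabe} from Theorem \ref{yamayama} by first producing a \emph{uniform} Sobolev inequality valid on the whole of $\overline{M(n,K,d)}$, and then taking $\delta$ to be strictly below the reciprocal of the corresponding Sobolev constant.

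First I would record the following: there exist constants $A_0=A_0(n,K,d)>0$ and $B_0=B_0(n,K,d)>0$ such that every $(X,\upsilon)\in\overline{M(n,K,d)}$ satisfies the $(2n/(n-2),2)$-Sobolev inequality on $X$ for $(A_0,B_0)$. For the smooth members $(Y,\nu)\in M(n,K,d)$ this follows by combining the Bishop--Gromov inequality (which gives volume doubling with a constant depending only on $n,K,d$, recalling that $\nu$ is just a rescaling of $H^n$ and so has the same doubling constant) and the diameter bound $\mathrm{diam}\,Y\le d$ with the $(2n/(n-2),2)$-Neumann--Poincar\'e inequality of Maheux--Saloff-Coste \cite{mas}, via the standard passage from doubling plus Poincar\'e to a global Sobolev inequality; this is exactly the ingredient already exploited in Corollary \ref{horr}. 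Note $B_0>0$ is forced on a compact space by testing against constants, matching the hypothesis $B>0$ of Theorem \ref{yamayama}. To pass to a general $(X,\upsilon)\in\overline{M(n,K,d)}$, which by definition is a measured Gromov--Hausdorff limit of some sequence $(Y_i,\nu_i)\in M(n,K,d)$, I would invoke the stability of the Sobolev inequality under Gromov--Hausdorff convergence precisely as in part $(1)$ of Theorem \ref{schro} with $p=n$ (here $2p/(p-2)=2n/(n-2)$, and $2<p<\infty$ holds since $n\ge 3$); this yields that $(X,\upsilon)$ satisfies the $(2n/(n-2),2)$-Sobolev inequality for the same pair $(A_0,B_0)$.

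With $A_0$ fixed, set $\delta:=\delta(n,K,d):=(2A_0)^{-1}>0$; any positive number strictly below $A_0^{-1}$ would serve equally well. Now let $(X_i,\upsilon_i)\stackrel{GH}{\to}(X_{\infty},\upsilon_{\infty})$ in $\overline{M(n,K,d)}$ with $\mathrm{diam}\,X_{\infty}>0$, let $q>n/2$, and let $\{g_i\}_{i\le\infty}$ be an $L^q$-weak convergent sequence on $X_{\infty}$ of $g_i\in L^q(X_i)$ with $\limsup_{i\to\infty}Y^{g_i}(X_i)\le\delta$. By the previous paragraph every $(X_i,\upsilon_i)$ with $i<\infty$ satisfies the $(2n/(n-2),2)$-Sobolev inequality on $X_i$ for $(A_0,B_0)$, and $\limsup_{i\to\infty}Y^{g_i}(X_i)\le\delta<A_0^{-1}$. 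Hence all hypotheses of Theorem \ref{yamayama} are met with $A=A_0$ and $B=B_0$, and that theorem gives $\lim_{i\to\infty}Y^{g_i}(X_i)=Y^{g_{\infty}}(X_{\infty})$, which is the claim.

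The only step requiring genuine care is the uniform Sobolev inequality on the full closure $\overline{M(n,K,d)}$, i.e.\ the passage from the smooth spaces $M(n,K,d)$ to their measured Gromov--Hausdorff limits; everything else is a direct application of Theorem \ref{yamayama}. I expect this to be the main (if modest) obstacle, and it is handled by the doubling-plus-Poincar\'e machinery behind \cite{mas} together with the Gromov--Hausdorff stability of Sobolev inequalities already encapsulated in part $(1)$ of Theorem \ref{schro}.
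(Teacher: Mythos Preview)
Your proof is correct and follows essentially the same route as the paper's own one-line proof, which cites Remark \ref{akl}, Theorems \ref{yamayama}, \ref{sobo} and \ref{stapo}. The only cosmetic difference is the order of operations: the paper stabilizes the Poincar\'e inequality under Gromov--Hausdorff convergence (Theorem \ref{stapo}) and then converts to a Sobolev inequality via Remark \ref{akl}(3), whereas you convert Poincar\'e to Sobolev on the smooth spaces first and then invoke the stability of the Sobolev inequality (which is Theorem \ref{stasob}, packaged in your reference to Theorem \ref{schro}(1)); since $\upsilon(X)=1$ for probability measures the constants in Remark \ref{akl}(3) are uniform, and both orderings yield the same $(A_0,B_0)$ depending only on $n,K,d$.
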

\subsection{Hodge Laplace equations}
Let us consider \textit{the Hodge Laplacian}:
\[\Delta_{H, k}=\delta d + d \delta\]
acting on differential $k$-forms.
We first recall the notion of $RCD(K, \infty)$-spaces.

The notion of $RCD(K, \infty)$-spaces is introduced by Ambrosio-Gigli-Savar\'e in \cite{ags0} based on the study of $CD(K, \infty)$ spaces by Lott-Villani in \cite{lv} and Sturm in \cite{sturm1, sturm2}.
Roughly speaking,  a metric measure space said to be an $RCD(K, \infty)$ space if the Sobolev space $H_{1, 2}$ for functions is a Hilbert space and the Ricci curvature bounded from below by $K$. 
They developed the study of $RCD(K, \infty)$-spaces in \cite{ags0, ags}.
In particular by the stability with respect to the Gromov-Hausdorff topology proven in \cite{ags0} we knew that $(X, \upsilon)$ is an $RCD(K(n-1), \infty)$ space.

Gigli established in \cite{gigli} a second-order differential calculus on $RCD(K, \infty)$-spaces.
Let us consider the following question:

$\\ $
\textbf{Question $2$.} Is the second-order differential calculus on $(X, \upsilon)$ given in \cite{ho0} compatible with Gigli's one?

$\\ $
In order to give more precise statements, we introduce Gigli's notation and terminology in \cite{gigli} for the case of $(X, \upsilon)$ only.

In a similar way of the construction of $T^*X$ we can define the tangent bundle $TX$.
More generally, for any $r, s \in \mathbf{Z}_{\ge 0}$, the tensor bundle 
\[T^r_sX:=\bigotimes_{i=1}^rTX \otimes \bigotimes_{i=r+1}^{r+s} T^*X\]
is well-defined.
We denote the set of $L^p$-tensor fields of type $(r, s)$ on a Borel subset $A$ of $X$ by $L^p(T^r_sA)$.
Let 
\[\mathrm{Test}F(X):=\{f \in \mathcal{D}^2(\Delta^{\upsilon}, X); \Delta^{\upsilon}f \in H^{1, 2}(X),  df \in L^{\infty}(T^*X)\}.\]
By using this Gigli defined 
\begin{itemize}
\item the Sobolev spaces $W^{1, 2}_C(TX)$,$W^{1, 2}_d(\bigwedge^kT^*X)$ for vector fields, differential $k$-forms on $X$, respectively,
\item the covariant derivative $\nabla^{\upsilon}V \in L^2(T^1_1X)$ of $V \in W^{1, 2}_C(TX)$, and
\item the differential $d^{\upsilon}\omega \in L^2(\bigwedge^{k+1}T^*X)$ of $\omega \in W^{1, 2}_d(\bigwedge^kT^*X)$.
\end{itemize}
Note that we used a slight modified version of Gigli's covariant derivative because it is simple to use the modified version for discussing the compatibility between \cite{gigli} and \cite{ho0} (for example our manner is based on \cite{sakai}).
The original Gigli's covariant derivative of $V \in W^{1, 2}_C(TX)$ is in $L^2(T^2_0X)$, however it is essentially same to our terminology via the identification:
\[TX \otimes TX \cong TX \otimes T^*X\]
\begin{align}\label{hondahonda}u \otimes v \mapsto v\otimes u^*,
\end{align}
where $u^* \in T^*X$ is the dual element of $u \in T_X$ by the Riemannian metric $g_X$.
See Remark \ref{gigliremark}.

In a similar way of Gigli's manner in \cite{gigli} we can define the Sobolev space $W^{1, 2}_C(T^r_sX)$ for tensor fields and the covariant derivative $\nabla^{\upsilon}T \in L^2(T^r_{s+1}X)$ for $T \in W^{1, 2}_C(T^r_sX)$.
In particular define $W^{1, 2}_C(\bigwedge^kT^*X):=W^{1, 2}_C(T^0_kX) \cap L^2(\bigwedge^kT^*X)$ via the canonical embedding:
\[\bigwedge^kT^*X \hookrightarrow T^0_kX.\]
See subsection $2.5.4$ for the details.

Let $W^{2, 2}(X)$ be the set of $f \in H^{1, 2}(X)$ with $\nabla f \in W^{1, 2}_C(TX)$ (note that this holds if and only if $df \in W^{1, 2}_C(T^*X)$ holds), where $\nabla f \in L^{2}(TX)$ is the dual section of $df \in L^{2}(T^*X)$. Put 
\[\mathrm{TestForm}_k(X):=\left\{\sum_{i=1}^Nf_{0, i}df_{1, i} \wedge \cdots \wedge df_{k, i}; N \in \mathbf{N}, f_{j, i} \in \mathrm{Test}F(X)\right\}.\]
Gigli proved  $\mathcal{D}^2(\Delta^{\upsilon}, X) \subset W^{2, 2}(X)$ and $\mathrm{TestForm}_k(X) \subset W^{1, 2}_d(\bigwedge^kT^*X)$. 
We define the Hessian $\mathrm{Hess}_f^{\upsilon} \in L^2(T^0_2X)$ of $f \in W^{2, 2}(X)$ in Gigli's sense by
\[\mathrm{Hess}_f^{\upsilon}:=\nabla^{\upsilon}df.\]

Let us consider the following question.

$\\ $
\textbf{Question $3$.} When does $\mathrm{Hess}_f^{\upsilon}=\mathrm{Hess}_f^{g_X}$ hold?

$\\ $
Note that $\mathrm{Hess}_f^{g_X}:=\nabla^{g_X}df$ is the Hessian of a weakly twice differentiable function $f$ on $X$ defined in \cite{ho0}.

The following is an answer to this question:
\begin{theorem}\label{33766}
We have the following.
\begin{enumerate}
\item For any open subset $U$ of $X$ and $f \in \mathcal{D}^2(\Delta^{\upsilon}, U)$, we see that $f$ is weakly twice differentiable on $U$ with respect to the canonical weakly second-order differential structure of $(X, \upsilon)$ in the sense of Theorem \ref{221}.
Moreover if $(X, \upsilon)$ is the noncollapsed Gromov-Hausdorff limit of a sequence in $M(n, K, d)$, then
\begin{align}\label{34221}
\Delta^{\upsilon}f=\Delta^{g_X}f,
\end{align}
where $\Delta^{g_X}f:=-\mathrm{tr}(\mathrm{Hess}_f^{g_X})$ and $\mathrm{tr}$ is the trace.
\item For every $f \in \mathcal{D}^2(\Delta^{\upsilon}, X)$ we have
\[\mathrm{Hess}_f^{\upsilon}=\mathrm{Hess}_f^{g_X}.\]
\item If $(X, \upsilon)$ is the noncollapsed Gromov-Hausdorff limit of a sequence in $M(n, K, d)$, then  we have $\mathcal{D}^2(\Delta^{\upsilon}, X)=H^{2, 2}(X)$, where $H^{2, 2}(X)$ is the closure of $\mathrm{Test}F(X)$ in $W^{2, 2}(X)$ with respect to the $W^{2, 2}$-norm
\begin{align*}
||f||_{W^{2, 2}}:=\left(||f||_{L^2}^2+||\nabla f||_{L^2}^2+||\mathrm{Hess}_f^{\upsilon}||_{L^2}^2\right)^{1/2}.
\end{align*}
\end{enumerate}
\end{theorem}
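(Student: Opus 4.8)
The plan is to deduce all three assertions from the approximation of solutions of Poisson's equation on $(X, \upsilon)$ by solutions on the smooth approximating spaces $(X_i, \upsilon_i) \in M(n, K, d)$, provided by Theorems \ref{pois} and \ref{app3}, combined with a uniform $L^2$-bound on Hessians coming from Bochner's formula and the lower Ricci bound. For (1), since weak twice differentiability is a local notion I reduce to $f \in \mathcal{D}^2(\Delta^{\upsilon}, B_R(x))$. I fix a convergent sequence $(X_i, \upsilon_i) \stackrel{GH}{\to} (X, \upsilon)$ of elements of $M(n, K, d)$ realizing the canonical weakly second-order differential structure (permissible by Theorem \ref{221}) and a convergent sequence $x_i \to x$. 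For $r < R$ with $X \neq \overline{B}_r(x)$, Theorem \ref{app3} applied to $g_{\infty} := \Delta^{\upsilon}f|_{B_r(x)}$ and a strongly $L^2$-convergent approximation $\{g_i\}$ of it yields, along a subsequence, $f_i \in \mathcal{D}^2(\Delta^{\upsilon_i}, B_r(x_i))$ with $\Delta^{\upsilon_i}f_i = g_i$ and with $f_i, \nabla f_i$ converging strongly in $L^2$ to $f, \nabla f$ on $B_r(x)$. On the smooth $X_i$ one has $f_i \in W^{2, 2}_{\mathrm{loc}}(B_r(x_i))$ by elliptic regularity, and testing Bochner's formula against the square of a good cut-off function (\cite{ch-co}) supported in $B_r(x_i)$, together with $\mathrm{Ric}_{X_i} \ge K(n-1)$, gives $\sup_i \int_{B_{r'}(x_i)}|\mathrm{Hess}^{g_{X_i}}_{f_i}|^2\, d\upsilon_i < \infty$ for each $r' < r$. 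Passing to a further subsequence, $\mathrm{Hess}^{g_{X_i}}_{f_i}$ converges $L^2$-weakly on $B_{r'}(x)$ to some $T$; by the very construction of the weakly second-order differential structure in \cite{ho0}, which is designed so that an $L^2$-weak limit of functions carrying uniformly $L^2$-bounded Hessians is weakly twice differentiable with Hessian the corresponding weak limit, $f$ is weakly twice differentiable on $B_{r'}(x)$ with $\mathrm{Hess}_f^{g_X} = T$. In the noncollapsed case, where $\upsilon$ is proportional to $H^n$ and the Riemannian metrics converge nondegenerately, the trace commutes with these convergences, so letting $i \to \infty$ in $\Delta^{\upsilon_i}f_i = -\mathrm{tr}(\mathrm{Hess}^{g_{X_i}}_{f_i})$ gives $\Delta^{\upsilon}f = -\mathrm{tr}(T) = \Delta^{g_X}f$.

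For (2), I use Gigli's characterization of $\mathrm{Hess}_f^{\upsilon}$ by the polarized Bochner-type integration-by-parts identities tested against $\nabla f_1, \nabla f_2$ with $f_1, f_2 \in \mathrm{Test}F(X)$ and against $h \in \mathrm{Test}F(X)$, which determine the tensor uniquely. Given $f \in \mathcal{D}^2(\Delta^{\upsilon}, X)$, I approximate $f$ on $X_i$ by $f_i$ as in (1) and the test functions $h, f_1, f_2$ by $h_i, f_{1, i}, f_{2, i}$ on $X_i$; choosing the latter in a dense subclass of the form $(\Delta^{\upsilon})^{-1}(\text{bounded})$, the regularity estimate (\ref{lipreg}) of \cite{jiang} makes the gradients $\nabla f_{1, i}, \nabla f_{2, i}$ uniformly bounded and strongly $L^2$-convergent. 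On the smooth $X_i$ the Bochner identity holds classically; letting $i \to \infty$, its left-hand side $\int h_i\, \mathrm{Hess}^{g_{X_i}}_{f_i}(\nabla f_{1, i}, \nabla f_{2, i})\, d\upsilon_i$ converges, by (1) for the weak convergence of $\mathrm{Hess}^{g_{X_i}}_{f_i}$ and by strong convergence of the remaining factors, to $\int h\, \mathrm{Hess}_f^{g_X}(\nabla f_1, \nabla f_2)\, d\upsilon$, while its first-order right-hand side converges to the quantity which by \cite{gigli} equals $\int h\, \mathrm{Hess}_f^{\upsilon}(\nabla f_1, \nabla f_2)\, d\upsilon$. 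Hence $\mathrm{Hess}_f^{g_X}$ obeys Gigli's defining identities and therefore equals $\mathrm{Hess}_f^{\upsilon}$.

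For (3), the inclusion $H^{2, 2}(X) \subseteq \mathcal{D}^2(\Delta^{\upsilon}, X)$ follows by closedness: $\mathrm{Test}F(X) \subseteq \mathcal{D}^2(\Delta^{\upsilon}, X)$ and $\Delta^{\upsilon}g = -\mathrm{tr}(\mathrm{Hess}_g^{\upsilon})$ there, so if $f_j \to f$ in $W^{2, 2}(X)$ with $f_j \in \mathrm{Test}F(X)$, then (using $\mathcal{D}^2(\Delta^{\upsilon}, X) \subseteq W^{2, 2}(X)$ so that $\mathrm{Hess}^{\upsilon}_{f_j} \to \mathrm{Hess}^{\upsilon}_f$ in $L^2$) one gets $\Delta^{\upsilon}f_j \to -\mathrm{tr}(\mathrm{Hess}_f^{\upsilon})$ in $L^2$ and $\nabla f_j \to \nabla f$ in $L^2$, whence $f \in \mathcal{D}^2(\Delta^{\upsilon}, X)$ with $\Delta^{\upsilon}f = -\mathrm{tr}(\mathrm{Hess}_f^{\upsilon})$. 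For the reverse inclusion, given $f \in \mathcal{D}^2(\Delta^{\upsilon}, X)$ I take its mollified heat flow $f^{\epsilon} \in \mathrm{Test}F(X)$, which satisfies $f^{\epsilon} \to f$ in $H^{1, 2}(X)$ and $\Delta^{\upsilon}f^{\epsilon} \to \Delta^{\upsilon}f$ in $L^2(X)$; the dimensional Bochner inequality $\int_X|\mathrm{Hess}^{\upsilon}_g|^2\, d\upsilon \le \int_X(\Delta^{\upsilon}g)^2\, d\upsilon + |K|(n-1)\int_X|\nabla g|^2\, d\upsilon$ for $g \in \mathrm{Test}F(X)$, available since $(X, \upsilon)$ is a finite-dimensional $RCD$ space, applied to $g = f^{\epsilon} - f^{\epsilon'}$ shows that $\{f^{\epsilon}\}$ is $W^{2, 2}$-Cauchy; its limit being $f$ by closedness of the Hessian operator, $f \in H^{2, 2}(X)$, so $\mathcal{D}^2(\Delta^{\upsilon}, X) = H^{2, 2}(X)$.

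The main obstacle is the identification step carried out in (1): checking that the $L^2$-weak subsequential limit $T$ of the smooth Hessians really is the Hessian of $f$ with respect to the canonical weakly second-order differential structure — so that $T$ does not depend on the chosen approximating sequence — and, in the noncollapsed case, legitimately interchanging the trace with the Gromov-Hausdorff convergence of the (here nondegenerating) Riemannian metrics. Once (1) is established, (2) and (3) reduce to careful, but essentially routine, limit passages together with the machinery of \cite{gigli}.
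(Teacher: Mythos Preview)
Your approach to (1) and (2) is essentially the same as the paper's: the paper packages the Bochner-plus-cut-off Hessian bound and the weak convergence of Hessians into its Theorems \ref{aarr} and \ref{hessc}, and proves (2) via the same staged approximation you describe (first assume all Laplacians are Lipschitz, then relax the test functions using $(\Delta^{\upsilon})^{-1}$ of Lipschitz data, then relax $f$ the same way). For (3) the paper is slightly more economical: it quotes Gigli's result \cite[Proposition 3.3.18]{gigli} that $H^{2,2}(X)$ equals the $W^{2,2}$-closure of $\mathcal{D}^2(\Delta^{\upsilon}, X)$, so the inclusion $\mathcal{D}^2(\Delta^{\upsilon}, X)\subset H^{2,2}(X)$ is automatic, and only the closedness of $\mathcal{D}^2(\Delta^{\upsilon}, X)$ in $W^{2,2}(X)$ remains; this it proves exactly as you do for $H^{2,2}\subset\mathcal{D}^2$, via $\Delta^{\upsilon}f=-\mathrm{tr}(\mathrm{Hess}_f^{\upsilon})$ in the noncollapsed case. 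Your direct heat-flow argument for $\mathcal{D}^2\subset H^{2,2}$ is thus unnecessary, and as written has a small gap: for a general $f\in\mathcal{D}^2(\Delta^{\upsilon},X)$ it is not immediate that the mollified heat flow $\tilde h_\epsilon f$ lies in $\mathrm{Test}F(X)$, since membership requires $d\tilde h_\epsilon f\in L^\infty$, which does not follow without some boundedness on $f$ (cf.\ the paper's Proposition \ref{approx}, which starts from $f\in\mathrm{LIP}(X)$). This does not affect the theorem, since that inclusion holds for free by Gigli's result.
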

Note that we proved in \cite{holp} that (\ref{34221}) holds for \textit{most} functions, that (\ref{34221}) is new even if $(X, \upsilon) \in M(n, K, d)$ and that Gigli proved in \cite{gigli} that $H^{2, 2}(X)$ coincides with the closure of $\mathcal{D}^2(\Delta^{\upsilon}, X)$ in $W^{2, 2}(X)$.
Theorem \ref{33766} gives a positive answer to \textbf{Question $2$} for functions.

Next let us discuss differential $k$-forms on $X$.

Let $\omega$ be a differential $k$-form on $X$ which is differentiable at a.e. $x \in X$ with respect to the canonical weakly second-order differential structure.
Recall that in \cite{ho0} we defined the differential $d\omega$ and the covariant derivative $\nabla^{g_X}\omega$.

Let us consider the following question:

$\\ $
\textbf{Question $4$.} When do $\nabla \omega=\nabla^{\upsilon}\omega$ and $d\omega=d^{\upsilon}\omega$ hold?

$\\ $
From the divergence theorem and Theorem \ref{33766} we easily see that this question has a positive answer in the following two cases: 
\begin{itemize}
\item  $(X, \upsilon) \in M(n, K, d)$ and $\omega \in C^{\infty}(\bigwedge^kT^*X)$.
\item  $\omega=df$ for some $f \in \mathcal{D}^2(\Delta^{\upsilon}, X)$.   
\end{itemize}
We introduce a generalization of these:
\begin{theorem}\label{aarrtt}
Assume that there exist a convergent sequence $\{(X_i, \upsilon_i)\}_i$ in $\overline{M(n, K, d)}$ of $(X_i, \upsilon_i) \in M(n, K, d)$ to $(X, \upsilon)$ and an $L^2$-strong convergent sequence $\{\omega_i\}_i$ of $\omega_i \in C^{\infty}(\bigwedge^kTX_i)$ to $\omega$ on $X$ with
\begin{align}\label{gyy}
\sup_i\int_{X_i}|\nabla \omega_i|^2d\upsilon_i<\infty.
\end{align}
Then we see that $\omega$ is differentiable at a.e. $x \in X$, that
 $\omega \in W^{1, 2}_C(\bigwedge^kT^*X) \cap W^{1, 2}_d(\bigwedge^kT^*X)$, that $\nabla\omega=\nabla^{\upsilon}\omega$ and that  $d\omega=d^{\upsilon}\omega$.
\end{theorem}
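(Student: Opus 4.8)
The plan is to pass the identities $\nabla\omega=\nabla^{\upsilon}\omega$ and $d\omega=d^{\upsilon}\omega$ through the Gromov-Hausdorff limit, using that on each smooth $(X_i,\upsilon_i)$ the smooth forms $\omega_i$ trivially satisfy $\nabla\omega_i=\nabla^{\upsilon_i}\omega_i$ and $d\omega_i=d^{\upsilon_i}\omega_i$. First I would record the compactness input: the hypothesis (\ref{gyy}) together with the $L^2$-strong convergence $\omega_i\to\omega$ gives, after passing to a subsequence, that $\nabla\omega_i$ $L^2$-converges weakly to some $T\in L^2(T^0_{k+1}X)$ (respectively $d\omega_i$ converges weakly to some $\eta\in L^2(\bigwedge^{k+1}T^*X)$), by the standard weak-compactness of bounded sequences under measured Gromov-Hausdorff convergence (as in \cite{holp}, see subsection $2.5$). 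The content of the theorem is then the identification of these weak limits with the intrinsically defined objects on both sides.

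The key step is to show that $T=\nabla^{\upsilon}\omega$ and $\eta=d^{\upsilon}\omega$ on the Gigli side, and simultaneously $T=\nabla^{g_X}\omega$ and $\eta=d\omega$ on the side of \cite{ho0}; combining these gives $\omega\in W^{1,2}_C\cap W^{1,2}_d$ and the asserted equalities. For the Gigli side I would test against elements of $\mathrm{TestForm}_k(X)$, i.e. sums of wedges $f_{0}df_{1}\wedge\cdots\wedge df_{k}$ with $f_{j}\in\mathrm{Test}F(X)$: by Theorem \ref{app3} and Corollary \ref{apphar} (and the cut-off functions of \cite{ch-co}) each such test form can be approximated by analogous smooth test forms on $X_i$ built from $L^2$-strongly convergent sequences of approximate solutions of Poisson's equations whose gradients also converge strongly; then the integration-by-parts identities defining $\nabla^{\upsilon_i}$ and $d^{\upsilon_i}$ on $X_i$ pass to the limit because a weak-times-strong product of $L^2$-convergent sequences converges to the product of the limits. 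This yields exactly the defining weak formulations of $\nabla^{\upsilon}\omega$ and $d^{\upsilon}\omega$ on $X_\infty=X$. For the \cite{ho0} side one argues in local harmonic charts: the weakly second-order structure of Theorem \ref{221} is built from the very limits of harmonic coordinates, so $\nabla^{g_X}\omega$ and $d\omega$ are by construction the $L^2$-limits of the coordinate expressions of $\nabla\omega_i$ and $d\omega_i$, which forces them to equal $T$ and $\eta$; here one also uses that $\omega$ is differentiable a.e., which follows once its coordinate components lie in the relevant Sobolev class.

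The main obstacle I expect is establishing the strong $L^2$-convergence (not merely weak) of the approximating test forms and their covariant derivatives on $X_i$ — in particular controlling products like $f_{0,i}\,df_{1,i}\wedge\cdots\wedge df_{k,i}$ and $\nabla(f_{0,i}\,df_{1,i}\wedge\cdots)$ so that the Leibniz expansions converge term by term. This requires the $L^\infty$-gradient bound (\ref{lipreg}) of Jiang (to keep the factors $df_{j,i}$ uniformly bounded) together with the strong convergence of $f_{i}$, $\nabla f_{i}$ and, crucially, $\mathrm{Hess}^{\upsilon_i}_{f_i}=\mathrm{Hess}^{g_{X_i}}_{f_i}$ (from Theorem \ref{33766}) so that the second-order terms in $\nabla(df_{1,i}\wedge\cdots\wedge df_{k,i})$ also converge strongly. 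A secondary technical point is that the density of $\mathrm{TestForm}_k(X)$ in the relevant Sobolev spaces must be invoked to upgrade the pairing identities to the genuine Sobolev membership $\omega\in W^{1,2}_C(\bigwedge^kT^*X)\cap W^{1,2}_d(\bigwedge^kT^*X)$; this density is part of Gigli's calculus in \cite{gigli} and can be cited. Once these convergences are in place, the identification of limits — and hence $\nabla\omega=\nabla^{\upsilon}\omega$ and $d\omega=d^{\upsilon}\omega$ — follows, and since the limit is independent of the chosen subsequence the full sequence converges.
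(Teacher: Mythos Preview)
Your strategy matches the paper's in spirit, but there is a gap in the approximation step that the paper resolves by an organizational device you omit. To pass the defining identity (\ref{appp}) (respectively (\ref{rrer})) through the limit, you must pair the $L^2$-strongly convergent $\omega_i$ against products like $\nabla^r_s g_i$ and second-order terms $\mathrm{div}^{\upsilon_i}(g_{0,i}\nabla g_{r+s+1,i})$, $\nabla^{\upsilon_i}_{\nabla g_{r+s+1,i}}\nabla g_{j,i}$. These pairings converge only if $\sup_i\|\nabla g_{j,i}\|_{L^\infty}<\infty$, and Jiang's estimate (\ref{lipreg}) gives this only when $\sup_i\|\Delta^{\upsilon_i}g_{j,i}\|_{L^\infty}<\infty$. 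A generic $g_j\in\mathrm{Test}F(X)$ has $\Delta^\upsilon g_j\in H^{1,2}(X)$ but not $L^\infty$, so Theorem~\ref{app3} with merely $L^2$ Laplacian data does not produce uniformly Lipschitz approximants on $X_i$; your appeal to (\ref{lipreg}) is therefore not yet justified. (Also, Hessians converge only $L^2$-weakly by Theorem~\ref{hessc}(5), not strongly as you write; but weak convergence suffices once the other factors are $L^\infty$-bounded and $L^2$-strong.)

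The paper's remedy is to introduce the subclass $\widetilde{\mathrm{Test}}F(X):=\{f\in\mathcal{D}^2(\Delta^\upsilon,X):\Delta^\upsilon f\ \text{is Lipschitz}\}$ and the corresponding Sobolev spaces $\widetilde W^{1,2}_C$, $\widetilde W^{1,2}_d$. For $g_j\in\widetilde{\mathrm{Test}}F(X)$ one can transport $\Delta^\upsilon g_j$ to $X_i$ as a uniformly Lipschitz function, solve Poisson's equation there (Theorem~\ref{pois}), and invoke (\ref{lipreg}) to get the required $L^\infty$ gradient bounds; this yields closedness of $\widetilde W^{1,2}_C$ and $\widetilde W^{1,2}_d$ under Gromov--Hausdorff convergence (Theorem~\ref{techni2}). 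A separate argument via the mollified heat flow (Proposition~\ref{approx}, Theorem~\ref{198183}) then shows $\widetilde W^{1,2}_C=W^{1,2}_C$ and $\widetilde W^{1,2}_d=W^{1,2}_d$. Theorem~\ref{aarrtt} follows in one line, since smooth $\omega_i$ on smooth $X_i$ already satisfy $\nabla^{\upsilon_i}\omega_i=\nabla\omega_i$ and $d^{\upsilon_i}\omega_i=d\omega_i$; the identification with $\nabla^{g_X}\omega$ and the a.e.\ differentiability come from Theorem~\ref{mthm} (built on Theorem~\ref{uuy}). Your direct route can be completed, but it needs exactly this intermediate regularization of the test functions on $X$ before transporting them to $X_i$.
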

Note that in Theorem \ref{aarrtt}, if $k=1$, then the assumption (\ref{gyy}) can be replaced by a weaker condition:
\begin{align}\label{mmune}
\sup_{i}\int_{X_i}\left( |d\omega_i|^2+|\delta \omega_i|^2\right)d\upsilon_i<\infty.
\end{align}
See Theorems \ref{mnj}, \ref{techni2}, \ref{198183} and Proposition \ref{bounds}.

In order to give a sufficient condition for satisfying the assumption of Theorem \ref{aarrtt},
we consider the following question:

$\\ $
\textbf{Question $5$.} When is there a smooth approximation to $\omega$ in the following sense?
\begin{itemize}
\item There exist a convergent sequence $\{(X_i, \upsilon_i)\}_i$ in $\overline{M(n, K, d)}$ of $(X_i, \upsilon_i) \in M(n, K, d)$ to $(X, \upsilon)$ and a sequence $\{\omega_i\}_i$ of $\omega_i \in C^{\infty}(\bigwedge^kT^*X_i)$ such that $\omega_i, d\omega_i, \delta \omega_i$ $L^2$-converge strongly to $\omega, d^{\upsilon}\omega, \delta^{\upsilon}\omega$ on $X$, respectively (we call $\{\omega_i\}_i$ \textit{a smooth $W^{1, 2}_H$-approximation to $\omega$ with respect to $\{(X_i, \upsilon_i)\}_i$}).
\end{itemize}
We now recall the definition of the codifferential $\delta^{\upsilon}$ as above defined by Gigli in \cite{gigli}.

Let $\mathcal{D}^2(\delta^{\upsilon}_k, X)$ be the set of $\omega \in L^2(\bigwedge^kT^*X)$ satisfying that there exists $\eta \in L^2(\bigwedge^{k-1}T^*X)$ such that 
\begin{align}\label{codifferential}
\int_X\langle \omega, d^{\upsilon}\alpha \rangle d\upsilon=\int_X\langle \eta, \alpha \rangle d\upsilon
\end{align}
for every $\alpha \in \mathrm{TestForm}_{k-1}(X)$.
Since $\eta$ is unique if it exists because $\mathrm{TestForm}_{k-1}(X)$ is dense in $L^2(\bigwedge^{k-1}T^*X)$, we denote it by $\delta^{\upsilon}_k\omega$ or by $\delta^{\upsilon}\omega$ for short.
Note that $\mathcal{D}^2(\delta^{\upsilon}_k, X)$ is a Hilbert space equipped with the norm
\[||\omega||_{\delta_k^{\upsilon}}:=\left( ||\omega_k||_{L^2}^2+||\delta_k^{\upsilon}\omega||_{L^2}^2\right)^{1/2}.\]

Let $W^{1, 2}_H(\bigwedge^kT^*X):=W^{1, 2}_d(\bigwedge^kT^*X) \cap \mathcal{D}^2(\delta^{\upsilon}_k, X)$.
It is a Hilbert space equipped with the norm 
\[||\omega||_{W^{1, 2}_H}=\left(||\omega||_{L^2}^2+||d^{\upsilon}\omega ||_{L^2}^2+||\delta^{\upsilon} \omega||_{L^2}^2 \right)^{1/2}.\]
Gigli proved that $\mathrm{Test}\mathrm{Form}_k(X) \subset W^{1, 2}_H(\bigwedge^kT^*X)$.  
Let $H^{1, 2}_H(\bigwedge^kT^*X)$ be the closure of $\mathrm{Test}\mathrm{Form}_k(X)$ in $W^{1, 2}_H(\bigwedge^kT^*X)$.

We now give an answer to \textbf{Question $5$}:
\begin{theorem}\label{app6}
Let $\{(X_i, \upsilon_i)\}_i$ be a convergent sequence in $\overline{M(n, K, d)}$ to $(X, \upsilon)$. 
Assume $\omega \in H^{1, 2}_H(T^*X)$.
Then there exist a subsequence $\{i(j)\}_j$ and a sequence $\{\omega_{i(j)}\}_j$ of $\omega_{i(j)} \in \mathrm{Test}\mathrm{Form}_1(X_{i(j)})$ such that $\omega_{i(j)}, d^{\upsilon_{i(j)}}\omega_{i(j)}, \delta^{\upsilon_{i(j)}} \omega_{i(j)}$ $L^2$-converge strongly to $\omega, d^{\upsilon}\omega, \delta^{\upsilon}\omega$ on $X$, respectively.
Moreover if $(X_{i}, \upsilon_{i}) \in M(n, K, d)$ for every $i<\infty$, then we can choose $\{\omega_{i(j)}\}_j$ as $C^{\infty}$-differential $1$-forms. Therefore there exists a smooth $W^{1, 2}_H$-approximation to $\omega$ with respect to $\{(X_{i(j)}, \upsilon_{i(j)})\}_{j}$.
In particular the assumption of Theorem \ref{aarrtt} for $\omega$ holds. 
\end{theorem}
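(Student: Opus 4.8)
The plan is to approximate $\omega$ in two stages: first, on the fixed space $X$, by test $1$-forms built from functions with essentially bounded Laplacian; then to transplant those to the $X_i$ by solving Poisson's equation on $X_i$ and appealing to Theorem \ref{pois}.

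\emph{Stage 1 (reduction on $X$).} Since $H^{1,2}_H(T^*X)$ is by definition the $W^{1,2}_H$-closure of $\mathrm{TestForm}_1(X)$, and since the desired conclusion is stable under a standard diagonal argument for $L^2$-strong convergence with respect to the Gromov--Hausdorff topology, it suffices to treat $\omega=\sum_{l=1}^Nf_{0,l}\,df_{1,l}$ with $f_{j,l}\in\mathrm{Test}F(X)$. I would then replace each $f_{j,l}$ by its heat-flow regularization $h_tf_{j,l}$, where $(h_t)_{t>0}$ is the heat semigroup of $(X,\upsilon)$ generated by $\Delta^\upsilon$, and let $t\downarrow 0$. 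Using that $h_t$ is an $L^\infty$- and $L^2$-contraction, the Bakry--\'Emery gradient estimate $|d\,h_tf|\le e^{|K|(n-1)t}\bigl(h_t|df|^2\bigr)^{1/2}$ (valid since $(X,\upsilon)$ is $RCD(K(n-1),\infty)$), the convergences $h_tg\to g$ in $L^2(X)$ for $g\in L^2(X)$ and $h_tf\to f$ in $H^{1,2}(X)$, the ultracontractivity $h_t(L^2(X))\subset L^\infty(X)$ for $t>0$ (a consequence of the uniform doubling and $L^2$-Poincar\'e inequalities on Ricci limit spaces), and the Leibniz rules $d^\upsilon(f_0\,df_1)=df_0\wedge df_1$ and $\delta^\upsilon(f_0\,df_1)=f_0\,\Delta^\upsilon f_1-\langle df_0,df_1\rangle$, one checks that $\sum_l(h_tf_{0,l})\,d(h_tf_{1,l})\to\omega$ in $W^{1,2}_H(X)$, while $h_tf_{j,l}\in\mathrm{Test}F(X)$ with $\Delta^\upsilon(h_tf_{j,l})=h_t\Delta^\upsilon f_{j,l}\in L^\infty(X)\cap H^{1,2}(X)$. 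After one more diagonal step we may therefore assume in addition that $g_{j,l}:=\Delta^\upsilon f_{j,l}\in L^\infty(X)\cap H^{1,2}(X)$ for all $j,l$; setting $\bar f_{j,l}:=\int_Xf_{j,l}\,d\upsilon$, Theorem \ref{pois} gives $f_{j,l}=(\Delta^\upsilon)^{-1}g_{j,l}+\bar f_{j,l}$.

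\emph{Stage 2 (transplanting to $X_i$).} Every $L^2$-function on $X$ is an $L^2$-strong limit of a sequence on the $X_i$, and for a function in $L^\infty(X)\cap H^{1,2}(X)$ this can be arranged within $L^\infty(X_i)\cap H^{1,2}(X_i)$ with a uniform $L^\infty$ bound (truncate an $H^{1,2}$-strongly convergent sequence) and, when the $X_i$ are smooth, with $C^\infty$ representatives (mollify). Accordingly, after passing to a subsequence, I would choose $g^i_{j,l}\in L^\infty(X_i)\cap H^{1,2}(X_i)$ with $\int_{X_i}g^i_{j,l}\,d\upsilon_i=0$, $\sup_i\|g^i_{j,l}\|_{L^\infty}\le\|g_{j,l}\|_{L^\infty}+1$ and $g^i_{j,l}\to g_{j,l}$ $L^2$-strongly on $X$, and set $f^i_{j,l}:=(\Delta^{\upsilon_i})^{-1}g^i_{j,l}+\bar f_{j,l}$, $\omega_i:=\sum_lf^i_{0,l}\,df^i_{1,l}$. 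Then $\Delta^{\upsilon_i}f^i_{j,l}=g^i_{j,l}\in H^{1,2}(X_i)$ and, by Jiang's estimate (\ref{lipreg}), $\|df^i_{j,l}\|_{L^\infty}\le C(n,K,d,\|g_{j,l}\|_{L^\infty}+1)$ uniformly in $i$; hence $f^i_{j,l}\in\mathrm{Test}F(X_i)$ and $\omega_i\in\mathrm{TestForm}_1(X_i)$ — a $C^\infty$ $1$-form in the smooth case, by elliptic regularity on $X_i$. By Theorem \ref{pois}, $f^i_{j,l}$ and $df^i_{j,l}$ converge $L^2$-strongly to $f_{j,l}$ and $df_{j,l}$ on $X$, while $\Delta^{\upsilon_i}f^i_{j,l}=g^i_{j,l}\to g_{j,l}=\Delta^\upsilon f_{j,l}$ $L^2$-strongly by construction. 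Since $f^i_{j,l}$, $df^i_{j,l}$ and $\Delta^{\upsilon_i}f^i_{j,l}$ are all uniformly bounded in $L^\infty$, converge $L^2$-strongly, and have limits in $L^\infty(X)$ (here one uses $\Delta^\upsilon f_{j,l}\in L^\infty(X)$ from Stage 1), each product appearing in $\omega_i$, in $d^{\upsilon_i}\omega_i=\sum_ldf^i_{0,l}\wedge df^i_{1,l}$ and in $\delta^{\upsilon_i}\omega_i=\sum_l\bigl(f^i_{0,l}\Delta^{\upsilon_i}f^i_{1,l}-\langle df^i_{0,l},df^i_{1,l}\rangle\bigr)$ converges $L^2$-strongly to the corresponding product on $X$, by the stability of $L^2$-strong convergence under products with a uniformly $L^\infty$-bounded factor (Theorems \ref{mnj}, \ref{techni2}, \ref{198183} and Proposition \ref{bounds}). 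Therefore $\omega_i,d^{\upsilon_i}\omega_i,\delta^{\upsilon_i}\omega_i$ converge $L^2$-strongly to $\omega,d^\upsilon\omega,\delta^\upsilon\omega$ on $X$, and the remaining assertions follow from the definition of a smooth $W^{1,2}_H$-approximation together with Theorem \ref{aarrtt}.

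The main obstacle is Stage 1. Jiang's Lipschitz estimate (\ref{lipreg}) is the only tool at hand that yields the \emph{uniform} Lipschitz bounds on the $X_i$ needed to upgrade the product convergences above from $L^1$-strong to $L^2$-strong, and it requires the Poisson data $\Delta^{\upsilon_i}f^i_{j,l}$ to be bounded in $L^\infty$; this forces one to first represent $\omega$ by a test form whose generating functions have essentially bounded Laplacian. Carrying out this replacement while keeping all three quantities $\|\cdot\|_{L^2}$, $\|d^\upsilon\cdot\|_{L^2}$ and $\|\delta^\upsilon\cdot\|_{L^2}$ under control simultaneously is the delicate point, and is where the contractivity of $h_t$, the Bakry--\'Emery estimate, ultracontractivity and the Leibniz rules must be combined with care.
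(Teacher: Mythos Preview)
Your argument is correct and follows essentially the same scheme as the paper: reduce to a single $\mathrm{TestForm}_1$, arrange the Laplacian data to lie in $L^\infty$, transplant to $X_i$ by solving Poisson's equation and invoking Theorem~\ref{pois} together with Jiang's Lipschitz bound~(\ref{lipreg}), and then read off the convergence of $\omega_i$, $d^{\upsilon_i}\omega_i$, $\delta^{\upsilon_i}\omega_i$ from the Leibniz identities $d^{\upsilon}(f_0\,dg)=df_0\wedge dg$ and $\delta^{\upsilon}(f_0\,dg)=\pm(f_0\,\Delta^{\upsilon}g-\langle df_0,dg\rangle)$.

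Two minor points of comparison. First, the paper's Stage~1 is slightly leaner than yours: only the \emph{differentiated} function $g$ needs to be replaced by $(\Delta^{\upsilon})^{-1}G$ with $G$ Lipschitz (the coefficient $f_0$ is simply transported as a Lipschitz function via \cite[Theorem 4.2]{holip}), because only $\Delta^{\upsilon}g$ enters the formula for $\delta^{\upsilon}(f_0\,dg)$. Your heat-flow regularization and the symmetric treatment of $f_0,f_1$ work too, but do more than necessary. Second, your citation for ``stability of $L^2$-strong convergence under products with a uniformly $L^\infty$-bounded factor'' is misplaced: Theorems~\ref{mnj}, \ref{techni2}, \ref{198183} and Proposition~\ref{bounds} are later, deeper results (closedness of Sobolev spaces under GH limits, Bochner estimates) and are not what you use here. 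The relevant elementary fact is the product stability of $L^p$-strong convergence in the GH setting, e.g.\ \cite[Propositions~3.32, 3.70]{holp}; citing the later theorems risks apparent circularity since Theorem~\ref{app6} precedes them in the paper.
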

By the density of $\mathrm{TestForm}_kX$ in $L^2(\bigwedge^kT^*X)$, since $H^{1, 2}_H(\bigwedge^kT^*X)$ is dense in $L^2(\bigwedge^kT^*X)$, we see that 
\textbf{Questions $4$} and $5$ have positive answers for \textit{most} $1$-forms.
Note that we can also establish an approximation for differential $k$-forms which is similar to Theorem \ref{app6} in some weak sense.
See Remark \ref{ggtttt}.
These give also positive answers to \textbf{Question $2$} for differential forms.

Next we discuss the Hodge Laplacian $\Delta_{H, k}^{\upsilon}$ on $X$ defined by Gigli in \cite{gigli}. 

Let $\mathcal{D}^2(\Delta_{H, k}^{\upsilon}, X)$ be the set of $\omega \in W^{1, 2}_H(\bigwedge^kT^*X)$ satisfying that there exists $\eta \in L^2(\bigwedge^kT^*X)$ such that
\begin{align}\label{mkoiop}\int_X\langle \eta, \alpha\rangle d\upsilon=\int_X\langle d^{\upsilon}\omega, d^{\upsilon}\alpha\rangle d\upsilon+\int_X\langle \delta^{\upsilon}\omega, \delta^{\upsilon}\alpha \rangle d\upsilon
\end{align}
for every $\alpha \in \mathrm{TestForm}_k(X)$.
Since $\eta$ is unique if it exists by the same reason above 
we denote $\eta$ by $\Delta_{H, k}^{\upsilon}\omega$ (we will use the notation $\Delta_{H, k}\omega:=\Delta_{H, k}^{\upsilon}\omega$ if $(X, \upsilon) \in M(n, K, d)$ and $\omega \in C^{\infty}(\bigwedge^kT^*X)$ for brevity).
Note that Gigli established in \cite{gigli} the Hodge theorem for harmonic forms by restricting the domain of $\Delta_{H, k}^{\upsilon}$ to $\mathcal{D}^2(\Delta_{H, k}^{\upsilon}, X) \cap H_H^{1, 2}(\bigwedge^kT^*X)$.

Let us consider the following question.

$\\ $
\textbf{Question $6$.} How do eigenvalues of the Hodge Laplacian behave with respect to the Gromov-Hausdorff topology?

$\\ $
Note that in the case of $0$-forms, i.e., functions, the continuity of eigenvalues of the Laplacian acting on functions proved by Cheeger-Colding in \cite{ch-co3} (i.e., Theorem \ref{schro} in the case that $g_i \equiv 0$) gives the complete answer to this question.  

However it seems that in general it is difficult to study the behavior of eigenvalues of the Hodge Laplacian with respect to the Gromov-Hausdorff topology than that in the case of functions.
See for instance \cite{ac, cc90, cc00, lott, lott2, pere, taka}.
In particular Perelman showed in \cite{pere} that there exist a sequence $\{(X_i, \upsilon_i)\}_{i<\infty}$ in $M(4, 0, 1)$ and the noncollapsed Gromov-Hausdorff limit $(X_{\infty}, \upsilon_{\infty})$ of them such that 
\[b_2(X_i) \to \infty,\] 
where $b_2(X_i)$ is the second Betti number of $X_i$.
This shows 
that even for a noncollapsed sequence, it is not easy to control the behavior of harmonic $2$-forms with respect to the Gromov-Hausdorff topology under lower Ricci curvature bounds only. 
Note that for the first Betti number $b_1$, by Gallot and Gromov in \cite{gallot, gromov}, it is known a uniform bound:
\[b_1(X) \le C(n, K, d)\]
for every $(X, \upsilon) \in M(n, K, d)$.

We can give an answer to this question.
It means that for $1$-forms, the $L^2$-strong limit of a sequence of eigenforms is also an eigenform in some weak sense. 
\begin{theorem}\label{eigenfcont}
Let $\{\lambda_i\}_{i<\infty}$ be a bounded sequence in $\mathbf{R}$, let $\{(X_i, \upsilon_i)\}_{i < \infty}$ be a sequence in $M(n, K, d)$, let $(X_{\infty}, \upsilon_{\infty})$ be the Gromov-Hausdorff limit of them with $\mathrm{diam}\,X_{\infty}>0$, let $\{\omega_i\}_{i <\infty}$ be a sequence of $\lambda_i$-eigenforms $\omega_i \in C^{\infty}(T^*X_i)$ with $||\omega_i||_{L^2(X_i)}=1$, i.e., 
\begin{align*}
\Delta_{H, 1}^{\upsilon_i}\omega_i=\lambda_i\omega_i
\end{align*}
and let $\omega_{\infty}$ be the $L^2$-strong limit on $X_{\infty}$ of them.
Then we see that the limit
\[\lim_{i \to \infty}\lambda_i\]
exists and that
 $\omega_{\infty} \in \mathcal{D}^2(\Delta^{\upsilon_{\infty}}_{H, 1}, X_{\infty})$ with 
\[\Delta_{H, 1}^{\upsilon_{\infty}}\omega_{\infty}=\left(\lim_{i \to \infty}\lambda_i\right)\omega_{\infty}.\]
\end{theorem}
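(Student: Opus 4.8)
The plan is to convert the eigenvalue equation into a uniform $L^2$-energy bound, feed this into the stability statement of Theorem~\ref{aarrtt}, and then pass to the limit in Gigli's weak formulation (\ref{mkoiop}), using Theorem~\ref{app6} to produce the competitor test forms on the $X_i$.

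First, since each $X_i$ is a closed Riemannian manifold and $\omega_i$ is smooth, pairing $\Delta_{H,1}^{\upsilon_i}\omega_i=\lambda_i\omega_i$ with $\omega_i$ and integrating by parts gives the energy identity $\int_{X_i}(|d\omega_i|^2+|\delta\omega_i|^2)\,d\upsilon_i=\lambda_i\|\omega_i\|_{L^2}^2=\lambda_i$. Since $\{\lambda_i\}$ is bounded, $\sup_i\int_{X_i}(|d\omega_i|^2+|\delta\omega_i|^2)\,d\upsilon_i<\infty$, which is exactly the hypothesis (\ref{mmune}) under which Theorem~\ref{aarrtt} applies in the case $k=1$. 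Hence $\omega_\infty$ is differentiable a.e., $\omega_\infty\in W^{1,2}_C(T^*X_\infty)\cap W^{1,2}_d(T^*X_\infty)$ with $d\omega_\infty=d^{\upsilon_\infty}\omega_\infty$ and $\nabla\omega_\infty=\nabla^{\upsilon_\infty}\omega_\infty$, and consequently $\omega_\infty\in W^{1,2}_H(T^*X_\infty)$ with $\delta^{\upsilon_\infty}\omega_\infty=-\mathrm{tr}\,\nabla^{\upsilon_\infty}\omega_\infty$; moreover the technical results behind Theorem~\ref{aarrtt} (Theorems~\ref{mnj}, \ref{techni2}, \ref{198183} and Proposition~\ref{bounds}) give that $d\omega_i$ and $\delta\omega_i$ converge $L^2$-weakly on $X_\infty$ to $d^{\upsilon_\infty}\omega_\infty$ and $\delta^{\upsilon_\infty}\omega_\infty$, respectively. (Alternatively, the Bochner formula for $1$-forms and $\mathrm{Ric}_{X_i}\ge K(n-1)$ yield $\sup_i\int_{X_i}|\nabla\omega_i|^2\,d\upsilon_i\le\sup_i\lambda_i-K(n-1)<\infty$, so the original form of Theorem~\ref{aarrtt} applies directly.)

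Next I would pass to the limit in the weak formulation. Fix $\alpha\in\mathrm{TestForm}_1(X_\infty)\subset H^{1,2}_H(T^*X_\infty)$. By Theorem~\ref{app6}, after passing to a subsequence there exist $\alpha_i\in C^\infty(T^*X_i)$ such that $\alpha_i$, $d\alpha_i$ and $\delta\alpha_i$ converge $L^2$-strongly on $X_\infty$ to $\alpha$, $d^{\upsilon_\infty}\alpha$ and $\delta^{\upsilon_\infty}\alpha$, respectively. On the closed manifold $X_i$ the usual integration by parts gives
\[\lambda_i\int_{X_i}\langle\omega_i,\alpha_i\rangle\,d\upsilon_i=\int_{X_i}\langle d\omega_i,d\alpha_i\rangle\,d\upsilon_i+\int_{X_i}\langle\delta\omega_i,\delta\alpha_i\rangle\,d\upsilon_i.\]
Letting $i\to\infty$, each term on the right pairs an $L^2$-weakly convergent sequence with an $L^2$-strongly convergent one, so the right-hand side converges to $\int_{X_\infty}\langle d^{\upsilon_\infty}\omega_\infty,d^{\upsilon_\infty}\alpha\rangle\,d\upsilon_\infty+\int_{X_\infty}\langle\delta^{\upsilon_\infty}\omega_\infty,\delta^{\upsilon_\infty}\alpha\rangle\,d\upsilon_\infty$, while $\int_{X_i}\langle\omega_i,\alpha_i\rangle\,d\upsilon_i\to\int_{X_\infty}\langle\omega_\infty,\alpha\rangle\,d\upsilon_\infty$ since both factors converge $L^2$-strongly. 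As $L^2$-strong convergence preserves norms, $\|\omega_\infty\|_{L^2}=1$, hence $\omega_\infty\neq0$; since $\mathrm{TestForm}_1(X_\infty)$ is dense in $L^2(T^*X_\infty)$, we may fix $\alpha$ with $\int_{X_\infty}\langle\omega_\infty,\alpha\rangle\,d\upsilon_\infty\neq0$, and then $\{\lambda_i\}$ converges along the chosen subsequence to the ratio of the two (subsequence-independent) limits above. Applying this to an arbitrary subsequence shows that $\lambda_\infty:=\lim_{i\to\infty}\lambda_i$ exists.

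Finally, letting $i\to\infty$ in the displayed identity for arbitrary $\alpha\in\mathrm{TestForm}_1(X_\infty)$ yields
\[\int_{X_\infty}\langle d^{\upsilon_\infty}\omega_\infty,d^{\upsilon_\infty}\alpha\rangle\,d\upsilon_\infty+\int_{X_\infty}\langle\delta^{\upsilon_\infty}\omega_\infty,\delta^{\upsilon_\infty}\alpha\rangle\,d\upsilon_\infty=\lambda_\infty\int_{X_\infty}\langle\omega_\infty,\alpha\rangle\,d\upsilon_\infty,\]
which, together with $\omega_\infty\in W^{1,2}_H(T^*X_\infty)$, is exactly the statement that $\omega_\infty\in\mathcal{D}^2(\Delta_{H,1}^{\upsilon_\infty},X_\infty)$ with $\Delta_{H,1}^{\upsilon_\infty}\omega_\infty=\lambda_\infty\omega_\infty$, as desired. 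The main obstacle is not this soft limiting argument but the two analytic inputs it rests on: the Gromov-Hausdorff stability of the exterior differential and the codifferential under a uniform $L^2$-energy bound (Theorem~\ref{aarrtt} and the results behind it), which is what identifies the $L^2$-weak limits of $d\omega_i$ and $\delta\omega_i$ with Gigli's operators applied to $\omega_\infty$; and the existence, for each Gigli test $1$-form on $X_\infty$, of a smooth $W^{1,2}_H$-approximation along the sequence (Theorem~\ref{app6}), which furnishes the competitors $\alpha_i$ on $X_i$ needed to pass to the limit in (\ref{mkoiop}).
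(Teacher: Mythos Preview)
Your proof is correct and follows essentially the same route as the paper: the energy identity gives the uniform $W^{1,2}_H$-bound, Theorem~\ref{app6} supplies smooth test $1$-forms $\alpha_i$ on $X_i$, and passing to the limit in the weak formulation (\ref{mkoiop}) yields $\omega_\infty\in\mathcal{D}^2(\Delta^{\upsilon_\infty}_{H,1},X_\infty)$ with the right eigenvalue; the paper packages this as Theorem~\ref{hodgelaplacian} via Theorem~\ref{hodg}, and identifies $\lambda_\infty$ as $\|\Delta_{H,1}^{\upsilon_\infty}\omega_\infty\|_{L^2}$ rather than via a fixed test form, but the substance is the same. One small correction: the identity $\delta^{\upsilon_\infty}\omega_\infty=-\mathrm{tr}\,\nabla^{\upsilon_\infty}\omega_\infty$ is only established in the paper in the noncollapsed case (Claim~\ref{deltaformula}), so you should instead obtain $\omega_\infty\in\mathcal{D}^2(\delta^{\upsilon_\infty},X_\infty)$ and the $L^2$-weak convergence of $\delta\omega_i$ directly from Corollary~\ref{contiad}, which is exactly what the paper does.
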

Note that in Theorem \ref{eigenfcont}, by using a mean value inequality by Li-Tam given in \cite{LT} we will prove a uniform $L^{\infty}$-estimate for $\omega_i$:
\[\sup_{i}||\omega_i||_{L^{\infty}}<\infty.\]
In particular we see that $\omega_{\infty} \in L^{\infty}(\bigwedge^kT^*X_{\infty})$ and that $\omega_{\infty}$ is the $L^p$-strong limit of $\{\omega_i\}_{i <\infty}$ for every $p \in (1, \infty)$.
See Proposition \ref{aa33}.
We will also prove a similar continuity of eigenvalues of the Hodge Laplacian for differential $k$-forms with an additional assumption.
See Theorem \ref{hodgelaplacian} and Corollary \ref{harmonicha}.

By Theorems \ref{aarrtt} and \ref{eigenfcont} we can easily see that the following final question is important:

$\\ $
\textbf{Question $7$.} Is there a nice compactness for the $L^2$-strong convergence?

$\\ $
Note that in \cite{KS, KS2, holp} we knew the $L^2$-weak compactness with respect to the Gromov-Hausdorff topology which means that every $L^2$-bounded sequence has an $L^2$-weak convergent subsequence (this also holds in the $L^p$-case for every $1<p<\infty$).

The following is an answer to this question:
\begin{theorem}\label{3ew3}
Let $\{(X_i, \upsilon_i)\}_{i<\infty}$ be a sequence in $M(n, K, d)$, let $(X_{\infty}, \upsilon_{\infty})$ be the noncollapsed Gromov-Hausdorff limit of them and let $\{\omega_i\}_{i<\infty}$ be a sequence of $\omega_i \in C^{\infty}(\bigwedge^kT^*X_i)$ with $||\omega_i||_{L^2(X_i)}=1$ and (\ref{gyy}).
Then there exist a subsequence $\{i(j)\}_j$ and $\omega_{\infty} \in L^2(\bigwedge^kT^*X_{\infty})$ such that $\omega_{i(j)}$ $L^2$-converges strongly to $\omega_{\infty}$ on $X_{\infty}$.
\end{theorem}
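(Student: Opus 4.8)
\section*{Proof proposal for Theorem \ref{3ew3}}

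The plan is to combine the already available $L^2$-weak compactness with a mollification by the heat flow on forms and a localized Rellich-type argument, the crucial enabling observation being that the noncollapsing hypothesis together with Cheeger--Colding's structure theory lets one write a \emph{mollified} form in an almost orthonormal frame whose coefficient functions obey uniform $H^{1,2}$-bounds. First I would extract, using the $L^2$-weak compactness on Ricci limit spaces recalled above, a subsequence (not relabeled) along which $\omega_i$ $L^2$-converges weakly to some $\omega_{\infty} \in L^2(\bigwedge^kT^*X_{\infty})$; by lower semicontinuity of the $L^2$-norm, $||\omega_{\infty}||_{L^2}\le 1$. Recall that $L^2$-strong convergence is equivalent to $L^2$-weak convergence together with convergence of the $L^2$-norms; since $||\omega_i||_{L^2}=1$, it therefore suffices to prove $||\omega_{\infty}||_{L^2}=1$.

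Next, for each $t>0$ let $\omega_i^t$ denote the solution at time $t$ of the heat equation $\partial_s\omega=-\nabla^*\nabla\omega$ on $X_i$ with initial datum $\omega_i$, where $\nabla^*\nabla$ is the connection Laplacian on $\bigwedge^kT^*X_i$ (everything here is classical since $X_i$ is a smooth closed manifold). Spectral calculus applied to $\nabla^*\nabla$ gives $||\omega_i^t-\omega_i||_{L^2}^2\le t\,||\nabla\omega_i||_{L^2}^2$, which by (\ref{gyy}) is at most $Ct$ uniformly in $i$; moreover the energy is nonincreasing, $||\nabla\omega_i^t||_{L^2}\le ||\nabla\omega_i||_{L^2}$, and $\omega_i^t$ is smooth. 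Finally, the Bochner formula yields $\partial_s(|\omega_i^s|^2/2)+\Delta^{g_{X_i}}(|\omega_i^s|^2/2)=-|\nabla\omega_i^s|^2\le 0$, so $|\omega_i^t|^2$ is a subsolution of the scalar heat equation on $X_i$; comparing with the heat kernel of $X_i$, which admits an upper bound depending only on $n,K,d$ and $t$ by the Li--Yau estimate and Bishop--Gromov, and using $||\,|\omega_i|^2\,||_{L^1}=||\omega_i||_{L^2}^2=1$, we obtain a uniform bound $||\omega_i^t||_{L^{\infty}}\le C(n,K,d,t)$. Thus for each fixed $t$ the mollified forms $\omega_i^t$ are smooth and uniformly bounded in $L^{\infty}$ and in energy.

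Now fix $t>0$ and prove that $\{\omega_i^t\}_i$ is $L^2$-strongly precompact. Near a point $x_{\infty}$ of the (full-measure) regular set of the noncollapsed limit $X_{\infty}$ and on a small ball $B_r(x_{\infty})$, Cheeger--Colding's almost splitting provides an $\epsilon(r)$-splitting map, i.e. harmonic functions $b_1,\dots,b_n$ which, via the approximation theory for harmonic functions under Gromov--Hausdorff convergence, can be approximated by harmonic functions $b_1^i,\dots,b_n^i$ on $B_r(x_i)$ with uniform Lipschitz bounds and with $\int_{B_r(x_i)}|\mathrm{Hess}_{b_j^i}|^2d\upsilon_i$ small (controlled by $\epsilon(r)$ and Bishop--Gromov). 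Expanding $\omega_i^t$ in the almost orthonormal frame $\{db_J^i\}_{|J|=k}$ on the large-measure good set where this frame is nondegenerate and cutting off, the component functions $c_{i,J}^t$ satisfy $||c_{i,J}^t||_{L^{\infty}}\le C(t)$ and, because $|\nabla c_{i,J}^t|$ is pointwise bounded by $|\nabla\omega_i^t|+|\omega_i^t||\mathrm{Hess}_{b^i}|$, also $||\nabla c_{i,J}^t||_{L^2(B_r)}\le C(t)$; this is exactly where the $L^{\infty}$-bound produced by the mollification is indispensable, the Hessians of the frame functions being controlled only in $L^2$. By the Rellich theorem for functions of Cheeger--Colding, a subsequence has $c_{i,J}^t\to c_{\infty,J}^t$ $L^2$-strongly on $B_r(x_{\infty})$, and since $db_J^i\to db_J^{\infty}$ $L^2$-strongly with uniform $L^{\infty}$-bounds, $\omega_i^t$ converges $L^2$-strongly on $B_r(x_{\infty})$ to $\omega_{\infty}^t:=\sum_J c_{\infty,J}^t\,db_J^{\infty}$. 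Covering $X_{\infty}$ up to arbitrarily small measure by finitely many such balls, using the measure convergence $\upsilon_i\to\upsilon_{\infty}$ and the uniform $L^{\infty}$-bound to make the tails negligible, and then diagonalizing over a countable family of balls and over $t=1/m$, $m\in\mathbf{N}$, I obtain a single subsequence along which $\omega_i^t\to\omega_{\infty}^t$ $L^2$-strongly on $X_{\infty}$ for every $t=1/m$, while still $\omega_i\to\omega_{\infty}$ $L^2$-weakly.

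Finally, let $t\to 0$. From $||\omega_i^t-\omega_i||_{L^2}\le \sqrt{Ct}$, weak convergence of $\omega_i^t-\omega_i$ to $\omega_{\infty}^t-\omega_{\infty}$ gives $||\omega_{\infty}^t-\omega_{\infty}||_{L^2}\le\sqrt{Ct}$, and the same estimate together with $||\omega_i||_{L^2}=1$ gives $||\omega_i^t||_{L^2}\in[1-\sqrt{Ct},1+\sqrt{Ct}]$, hence $||\omega_{\infty}^t||_{L^2}=\lim_i||\omega_i^t||_{L^2}\in[1-\sqrt{Ct},1+\sqrt{Ct}]$. Letting $t\to 0$ forces $||\omega_{\infty}||_{L^2}=1$, which combined with the $L^2$-weak convergence yields the asserted $L^2$-strong convergence of $\omega_i$ to $\omega_{\infty}$. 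I expect the main obstacle to be the third step: there is no off-the-shelf Rellich--Kondrachov compactness for differential forms under measured Gromov--Hausdorff convergence, so it must be built, and the delicate points are the expansion in a frame adapted to a splitting map (the non-pointwise orthonormality and the bad set), the uniform $H^{1,2}$-control of the coefficients, and the passage from balls to all of $X_{\infty}$ --- all of which rely essentially on the noncollapsing hypothesis and on the a priori $L^{\infty}$-bound furnished by the heat-flow mollification.
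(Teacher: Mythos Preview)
Your argument is correct, and the core mechanism --- expand the form in an almost-orthonormal harmonic coframe near regular points, reduce to the scalar Rellich theorem for the coefficient functions, and handle the small bad set where the frame degenerates --- is exactly the one the paper uses (Theorem~\ref{pplki}, which feeds into Corollary~\ref{nnbbmm}). The genuine difference lies in how you obtain the higher integrability needed to control the product $|\omega_i|\cdot|\mathrm{Hess}_{b^i}|$ and the tails on the bad set. You mollify by the connection-Laplacian heat flow and use a Li--Yau heat-kernel bound to get a uniform $L^{\infty}$-estimate for $\omega_i^t$, then pass $t\to 0$ at the end. The paper instead bypasses mollification entirely: from $|\nabla|\omega_i|^2|\le |\nabla\omega_i|^2+|\omega_i|^2\in L^1$ and the Maheux--Saloff-Coste $(n/(n-1),1)$-Poincar\'e inequality (Theorem~\ref{sobo}) it gets $\omega_i\in L^{2n/(n-1)}$ uniformly, and this mild gain above $L^2$ already suffices, via H\"older, to kill the bad set (see the proof of Theorem~\ref{pplki}, estimate~(\ref{weddee})) and to place the coefficient functions in $H^{1,2n/(2n-1)}$ rather than $H^{1,2}$ (Proposition~\ref{linea}, Proposition~\ref{l}). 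The paper's route is shorter and slots into the general $W_{2p}$-framework for arbitrary tensor fields (Theorem~\ref{rr}); your route trades that economy for a self-contained $L^{\infty}$-bound and avoids invoking higher-order Poincar\'e inequalities, at the cost of an extra limiting layer in $t$.
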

In Theorem \ref{3ew3} if we consider the case of $k=0$, i.e., functions, then we have the same conclusion without the noncollapsed assumption.
This is a Rellich type compactness in the Gromov-Hausdorff setting proven in \cite{holp, KS}.
See Theorem \ref{srell}.
However, in the case of differential forms, the noncollapsed assumption is essential. See Remark \ref{harharhar}.
\subsection{Organization of the paper}
In Section $2$ we introduce our notation and terminology. 

In Section $3$ we give several new results on the $L^p$-convergence. 
In particular we discuss the stabilities of Poincar\'e and Sobolev inequalities with respect to the Gromov-Hausdorff topology.
We also give generalizations of Fatou's lemma and Sobolev embeddings to the Gromov-Hausdorff setting.
They play crucial roles to prove Theorem \ref{yamayama}.

In Section $4$ we discuss Poisson's equations. In particular we prove the results stated in subsection $1.1$.
As applications we also prove Theorems \ref{33766} and \ref{app6}.

In Section $5$ we study Schr$\ddot{\text{o}}$dinger operators and generalized Yamabe constants.
We also give a generalization of Theorem \ref{yamayama}. 
See Theorem \ref{contya}.

In Section $6$ we establish a Rellich type compactness for tensor fields with respect to the Gromov-Hausdorff topology which is a generalization of Theorem \ref{3ew3}.
See Corollary \ref{nnbbmm}.
This plays a crucial role to prove results stated in subsection $1.4$ and gives a positive answer to \textbf{Question $7$} for tensor fields.
As an application we show that the noncollapsed Gromov-Hausdorff limit of a sequence of compact K$\ddot{\text{a}}$hler manifolds is also K$\ddot{\text{a}}$hler in some weak sense.
See Theorem \ref{uuhhff}.

Section $7$ is mainly devoted to the proofs of the remained results stated in subsection $1.4$.
We also establish new Bochner inequalities.
See Theorem \ref{boch} and Corollary \ref{expb}.  

It is worth pointing out that we will introduce a new `test class'
\begin{align}\label{newtest}
\widetilde{\mathrm{Test}}F(X):=\{f \in \mathcal{D}^2(\Delta^{\upsilon}, X); \Delta^{\upsilon}f\,\mathrm{is}\,\mathrm{a}\,\mathrm{Lipschitz}\,\mathrm{function}\,\mathrm{on}\,X\}.
\end{align}
Note that Theorem \ref{pois} yields that $\widetilde{\mathrm{Test}}F(X)$ is a linear subspace of $\mathrm{Test}F(X)$.

By using this test class instead of $\mathrm{Test}F(X)$, we will define new Sobolev spaces, $\widetilde{H}^{2, 2}(X), \widetilde{W}^{1, 2}_C(TX)$, and so on in the same manner of Gigli \cite{gigli}.
By using Theorem \ref{pois} we can check that these new Sobolev spaces (i.e., `$\sim$-versions') behave nicely with respect to the Gromov-Hausdorff topology.

On the other hand, by definition,  we can easily check the following trivial relationship between them and Gigli's one:
\begin{align}\label{HW1}
\widetilde{H} \subset H \subset W \subset \widetilde{W}.
\end{align}
A key result in proofs of theorems stated in subsection $1.4$ is to establish that the original Gigli's Sobolev spaces and the $\sim$-versions are coincide, i.e.
\[\widetilde{H} = H, W = \widetilde{W}.\]
In particular Gigli's original Sobolev spaces behave nicely with respect to the Gromov-Hausdorff topology.
See Theorems \ref{techni2} and \ref{198183}.

\textbf{Acknowledgments.}
The author is grateful to Kazuo Akutagawa for his suggestions on the Yamabe problem and giving him a long informal seminar on the topic at Tokyo Institute of Technology.
He would like to express his appreciation to Shin-ichi Ohta and Takao Yamaguchi for giving him a long informal seminar on the paper at Kyoto University and giving valuable suggestions on the paper.
He wishes to thank the all participants in these informal seminars.
He would like to express his appreciation to Akito Futaki for giving valuable comments on the preliminary version of the paper.
A revision of the paper was written 
during the stay of 
the Junior Hausdorff Trimester Program
on ``Optimal Transportation” in Hausdorff Research Institute for Mathematics.
He would like to thank HIM for warm hospitality.  
He is also grateful to the referee for careful reading, pointing out errors in subsection 3.3 of the previous version, and valuable suggestions which are greatly improved the overall mathematical quality of the paper.
This paper is dedicated to the memory of Kentaro Nagao.
I would like to express my deep respect to him for his kindness of heart, his strength of mind and his enthusiasm to mathematics during his short life.
This work was supported  by Grant-in-Aid for Young Scientists (B) $24740046$.
\section{Preliminaries}
In this section we fix our notation and terminology on metric measure geometry. We also recall several results on Ricci limit spaces.
\subsection{Metric measure spaces}
Let $X$ be a metric space. 
We say that \textit{$X$ is proper} if every bounded closed subset of $X$ is compact. 
We also say that \textit{$X$ is a geodesic space} if for any $p, q \in X$ there exists an isometric embedding $\gamma : [0, d_X(p, q)] \to X$ such that $\gamma (0)=p$ and $\gamma (d_X(p, q))=q$ hold (we call $\gamma$ \textit{a minimal geodesic from $p$ to $q$}).

We denote the space of Lipschitz functions on $X$ by $\mathrm{LIP}(X)$, the space of locally Lipschitz functions on $X$ by $\mathrm{LIP}_{\mathrm{loc}}(X)$, and the space of $f \in \mathrm{LIP}(X)$ with compact support by $\mathrm{LIP}_c(X)$.
For every $f \in \mathrm{LIP}(X)$, let us denote the Lipschitz constant of $f$ by $\mathbf{Lip}f$, i.e., 
\[\mathbf{Lip}f:= \sup_{a \neq b}\frac{|f(a)-f(b)|}{d_X(a, b)}.\]
We sometimes denote the local Lipschitz constant $\mathrm{Lip}f(x)$ of $f$ by $|\nabla f|(x)$ (see (\ref{55m})).

Let $\upsilon$ be a Borel measure on $X$.
In this paper we say that a pair $(X, \upsilon)$ is a \textit{metric measure space} if $0<\upsilon (B_r(x))<\infty$ for any $x \in X$ and $r>0$.
For two metric measure spaces $\{(Z_i, \nu_i)\}_{i=1, 2}$, we say that \textit{$(Z_1, \nu_1)$ is isometric to $(Z_2, \nu_2)$} if there exists an
isometry $\phi: Z_1 \to Z_2$ (as metric spaces) such that $\nu_1(A_1)=\nu_2(\phi(A_1))$ for every Borel subset $A_1$ of $Z_1$. 

Let $A$ be a Borel subset of $X$, let $Y$ be a metric space and let $G$ be a Borel map from $A$ to $Y$.
We say that  $G$ is \textit{differentiable at a.e. $a \in A$} if there exists a countable collection $\{A_i\}_{i}$ of Borel subsets $A_i$ of $A$ such that $\upsilon(A \setminus \bigcup_iA_i)=0$ and that $G|_{A_i}$ is Lipschitz for every $i$.
It is important that if $X, Y$ are Riemannian manifolds and $A$ is an open subset of $X$, then this notion coincides with that in the ordinary sense. 
\subsection{Gromov-Hausdorff convergence}
\subsubsection{Compact case}
Let $\{X_i\}_{i \le \infty}$ be a sequence of compact metric spaces and let $\{\upsilon_i\}_i$ be a sequence of Borel probability measures $\upsilon_i$ on $X_i$.
We say that $(X_i, \upsilon_i)$ Gromov-Hausdorff converges to $(X_{\infty}, \upsilon_{\infty})$ if there exist a sequence $\{\phi_i\}_i$ of Borel maps $\phi_i: X_i \to X_{\infty}$ and a sequence $\{\epsilon_i\}_i$ of $\epsilon_i >0$ such that the following four conditions hold:
\begin{itemize}
\item $\lim_{i \to \infty}\epsilon_i=0$.
\item $\left| d_{X_i}(x, y)-d_{X_{\infty}}(\phi_i(x), \phi_i(y))\right|<\epsilon_i$ for any $i$ and $x, y \in X_i$.
\item $X_{\infty}=B_{\epsilon_i}(\phi_i(X_i))$, where $B_{\epsilon}(A)$ is the $\epsilon$-open neighborhood of $A$.
\item We have
\[\lim_{i \to \infty}\upsilon_i(B_r(z_i))=\upsilon_{\infty}(B_r(z_{\infty}))\]
for any sequence $\{z_{i}\}_{i \le \infty}$ of $z_i \in X_i$ with $\phi_i(z_i) \to z_{\infty}$ in $X_{\infty}$ (we denote it $z_i \stackrel{GH}{\to} z_{\infty}$ for short and we call it a convergent sequence) and $r>0$.
\end{itemize} 
Then we denote it $(X_i, \upsilon_i) \stackrel{GH}{\to} (X_{\infty}, \upsilon_{\infty})$ for short.
\subsubsection{General case}
Let $\{X_i\}_{i \le \infty}$ be a sequence of proper metric spaces, let $\{x_i\}_{i \le \infty}$ be a sequence of $x_i \in X_i$ and let $\{\upsilon_i\}_{i \le \infty}$ be a sequence of Borel measures $\upsilon_i$ on $X_i$.
We say that \textit{$(X_i, x_i, \upsilon_i)$ Gromov-Hausdorff converges to $(X_{\infty}, x_{\infty}, \upsilon_{\infty})$} if there exist 
 sequences $\{R_i, \epsilon_i\}_i$ of $R_i, \epsilon_i >0$ and a sequence $\{\phi_i\}_i$ of Borel maps $\phi_i: B_{R_i}(x_i) \to X_{\infty}$ such that the following five conditions hold:
\begin{itemize} 
\item $\lim_{i \to \infty}\epsilon_i=0$ and $\lim_{i \to \infty}R_i=\infty$.
\item $\left| d_{X_i}(x, y)-d_{X_{\infty}}(\phi_i(x), \phi_i(y))\right|<\epsilon_i$ for any $i$ and $x, y \in B_{R_i}(x_i)$.
\item $B_{R_{i}}(x_{\infty})\subset B_{\epsilon_i}(\phi_i(B_{R_i}(x_i)))$ holds for every $i<\infty$.
\item $\phi_i(x_i) \to x_{\infty}$ in $X_{\infty}$. We also denote it by $x_i \stackrel{GH}{\to} x_{\infty}$ and we call it a convergent sequence.
\item We have
\[\lim_{i \to \infty}\upsilon_i(B_r(z_i))=\upsilon_{\infty}(B_r(z_{\infty}))\]
for any convergent sequence $\{z_{i}\}_{i \le \infty}$ of $z_i \in B_{R_i}(x_i)$ and $r>0$.
\end{itemize}
Then we also denote it by $(X_i, x_i, \upsilon_i) \stackrel{GH}{\to} (X_{\infty}, x_{\infty}, \upsilon_{\infty})$ for short.

Assume $(X_i, x_i, \upsilon_i) \stackrel{GH}{\to} (X_{\infty}, x_{\infty}, \upsilon_{\infty})$.
Let $\{C_i\}_{i\le \infty}$ be a sequence of subsets $C_i$ of $X_i$ satisfying that there exists $L>0$ such that $C_i \subset B_L(x_i)$ for every $i \le \infty$.
Then we denote $\limsup_{i \to \infty}C_i \subset C_{\infty}$ if for every $\epsilon>0$ there exists $i_0$ such that $\phi_i(C_i)\subset B_{\epsilon}(C_{\infty})$ for every $i \ge i_0$.
We also denote $C_{\infty} \subset \liminf_{i \to \infty}C_i$ if for every $\epsilon>0$ there exists $i_0$ such that $C_{\infty}\subset B_{\epsilon}(\phi_i(C_i))$ for every $i \ge i_0$.
We say that \textit{a subset $K$ of $X_{\infty}$ is a (Gromov-Hausdorff) limit of $\{C_i\}_{i<\infty}$ (with respect to the convergence $(X_i, x_i, \upsilon_i) \stackrel{GH}{\to} (X_{\infty}, x_{\infty}, \upsilon_{\infty})$)} if $\limsup_{i \to \infty}C_i \subset K$ and $K \subset \liminf_{i \to \infty}C_i$.
It is easy to check the following:
\begin{itemize}
\item If $K_1$ and $K_2$ are limits of $\{C_i\}_{i<\infty}$, then $\overline{K_1}=\overline{K_2}$, where $\overline{K_i}$ is the closure of $K_i$ in $X_{\infty}$. 
\item There exists a subsequence $\{i(j)\}_j$ such that a limit of $\{C_{i(j)}\}_j$ exists.
\end{itemize} 
From above we see that the compact limit of $\{C_i\}_i$ is unique if it exists.
Thus we denote it by $\lim_{i \to \infty}C_i$.

Assume that $C_{\infty}$ is a limit of $\{C_i\}_i$.
Let $Y$ be a metric space.
For a sequence $\{f_i\}_{i \le \infty}$ of continuous maps $f_i: C_i \to Y$, we say that \textit{$f_i$ converges uniformly to $f_{\infty}$ on $C_{\infty}$}
if for every $\epsilon>0$ there exist $i_0$ and $\delta>0$ such that 
$d_Y(f_{\infty}(y), f_i(x))<\epsilon$ for any $i \ge i_0$, $x \in C_i$ and $y \in C_{\infty}$ with $d_{X_{\infty}}(\phi_i(x), y)<\delta$.
See also subsection $2.2$ in \cite{holip}.

Let us denote by $M(n, K)$ the set of (isometry classes of)  pointed proper metric measure spaces $(X, x, H^n/H^n(B_1(x)))$, where $X$ is an $n$-dimensional complete Riemannian manifold with $\mathrm{Ric}_{X}\ge K(n-1)$.
We denote by $\overline{M(n, K)}$ the set of Gromov-Hausdorff limits of sequences in $M(n, K)$.
Note that $\overline{M(n, K)}$ is compact with respect to the Gromov-Hausdorff topology. See \cite{ch-co1, fu, gr}.

In this paper we also call a pointed metric measure space which belongs to $\overline{M(n, K)}$ \textit{a Ricci limit space} for short.
\subsection{Rectifiable metric measure spaces and weakly second-order differential structure}
\subsubsection{Euclidean spaces}
Let $A$ be a Borel subset of $\mathbf{R}^k$, let $F=(f_1, \ldots, f_m)$ be a Lipschitz map from $A$ to $\mathbf{R}^m$ and let $y \in \mathrm{Leb}\,A:=\{ a \in A; \lim_{r \to 0}H^k(A \cap B_r(a))/H^k(B_r(a))=1\}$.
Then we say that \textit{$F$ is differentiable at $y$} if there exists a Lipschitz map $\hat{F}$ from $\mathbf{R}^k$ to $\mathbf{R}^m$ such that $\hat{F}|_A\equiv F$ and that $\hat{F}$ is  differentiable at $y$.
Note that if $F$ is differentiable at $y$, then the Jacobi matrix 
\[J(\hat{F})(y)\]
of $\hat{F}$ does not depend on the choice of such $\hat{F}$.
Thus we denote it by 
\[J(F)(y)=(\partial f_i/\partial x_j(y))_{ij}.\]
Note that by Rademacher's theorem \cite{rad} we see that $F$ is differentiable at a.e. $x \in A$.
This is compatible with the similar notion introduced in subsection $2.1$.

Let $G$ be a Borel map from $A$ to $\mathbf{R}^m$.
Assume that $G$ is differentiable at a.e. $x \in A$.
Note that from above we easily see that $J(G)(x)$ is well-defined for a.e. $x \in A$.
We say that $G$ is \textit{weakly twice differentiable on $A$} if $J(G)$ is differentiable at a.e. $a \in A$. 
It is important that if $A$ is open and $G$ is a $C^{1, 1}$-map, then $G$ is weakly twice differentiable on $A$.

Let $T=\sum_{\lambda \in \Lambda} T_{\lambda}\bigotimes_{i=1}^r\nabla x_{\lambda(i)} \otimes \bigotimes_{i=r+1}^{r+s} dx_{\lambda(i)}$ be a tensor field of type $(r, s)$ on $A$, where $\Lambda$ is the set of maps from $\{1, \ldots, r+s\}$ to $\{1, \ldots, k\}$.
We say that $T$ is a \textit{Borel tensor field on $A$} if  $T_{\lambda}$ is a Borel function for every $\lambda$.
We also say that $T$ is \textit{differentiable at a.e. $a \in A$} if $T_{\lambda}$ is differentiable at a.e.  $a \in A$ for every $\lambda$.

For two Borel tensor fields $\{T_i\}_{i=1,2}$ of type $(r, s)$ on $A$, we say that \textit{$T_1$ is equivalent to $T_2$ on $A$} if $T_1(a)=T_2(a)$ holds for a.e. $a \in A$.
Let us denote by $[T]$ the equivalent class of $T$, denote by $\Gamma_{0}(T^r_sA)$ the set of equivalent classes of Borel tensor fields of type $(r, s)$, and denote
by $\Gamma_{1}(T^r_sA)$ the set of $[T] \in \Gamma_0(T^r_sA)$ represented by a Borel tensor field $T$ of type $(r, s)$ which is differentiable at a.e. $a \in A$.
We often write $T=[T]$ for brevity. 
See subsection $3.1$ in \cite{ho0} for the details of this subsection.
\begin{remark}
Throughout this paper we make no distinction between two objects which are coincide at a.e. for simplicity.
For instance if a function $f$ on a metric measure space $(X, \upsilon)$ satisfies that there exists a Borel subset $A$ of $X$ such that $\upsilon(X \setminus A)=0$ and that $f|_A$ is Lipschitz, then we also say that $f$ is Lipschitz on $X$.
\end{remark}
\subsubsection{Rectifiable metric measure spaces.}
Let $(X, \upsilon)$ be a metric measure space.
We say that \textit{$(X, \upsilon)$ is rectifiable} if there exist $m \in \mathbf{N}$,  a collection $\{C_{i}^l\}_{1 \le l \le m, i \in \mathbf{N}}$ of Borel subsets $C_i^l$ of $X$, and a collection $\{ \phi_{i}^l\}_{l, i}$ of bi-Lipschitz embedding maps $\phi_{i}^l: C_{i}^l \hookrightarrow \mathbf{R}^l$ such that the following three conditions hold:
\begin{itemize}
\item $\upsilon(X \setminus \bigcup_{l,i}C_{i}^l)=0$.
\item For any $i, l$ and $x \in C_{i}^l$ we have
\[0< \liminf_{t \to 0}\left(\frac{\upsilon (B_t(x))}{t^l}\right) \le \limsup_{t \to 0}\left(\frac{\upsilon (B_t(x))}{t^l}\right)<\infty.\] 
\item For any $l$, $x \in \bigcup_{i \in \mathbf{N}}C_{i}^l$ and $0 < \delta < 1$ there exists $i \in \mathbf{N}$ such that $x \in C_{i}^l$ and
\[\max \left\{\mathbf{Lip}\phi_i^l, \mathbf{Lip}(\phi_i^l)^{-1}\right\} \le 1+\delta.\]
\end{itemize}
See \cite[Definition $5.3$]{ch-co3} and the condition $iii)$ of page $60$ in \cite{ch-co3} (see also \cite{fed}).

We say that a family $\mathcal{A}:=\{(C_{i}^l, \phi_{i}^l)\}_{l, i}$ as above is  a \textit{rectifiable coordinate system of $(X, \upsilon)$} and that each $(C_i^l, \phi_i^l)$ is an \textit{$l$-dimensional rectifiable coordinate patch of $\mathcal{A}$}.

Assume that $(X, \upsilon)$ is rectifiable.
We introduce several fundamental properties of rectifiable metric measure spaces which include a generalization of Rademacher's theorem to such spaces:
\begin{theorem}\cite{ch1, ch-co3}\label{29292}
There exist a topological space $T^*X$ and a Borel map  $\pi:T^*X \rightarrow X$ such that the following hold.
\begin{enumerate}
\item $\upsilon(X \setminus \pi (T^*X))=0$.
\item For every $w \in \pi(T^*X)$, $(\pi)^{-1}(w) (=:T^*_wX)$ is a finite dimensional real Hilbert space. We denote the inner product by $\langle \cdot, \cdot \rangle_w$. Let $|v|:= \sqrt{\langle v, v\rangle_w}$ for every $v \in T^*_wX$.
\item For every $f \in \mathrm{LIP}_{\mathrm{loc}}(X)$, there exist a Borel subset $V$ of $X$, and a Borel map $df: V \to T^*X$ such that $\upsilon(X \setminus V)=0$, that $\pi \circ df\equiv id_V$ and that $|df|(w)=\mathrm{Lip}f(w)$ for every $w \in V$.
\end{enumerate} 
\end{theorem}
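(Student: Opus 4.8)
The plan is to reduce everything to the classical Rademacher theorem on $\mathbf{R}^l$ by working chart by chart, and then to use the asymptotic-isometry condition (the third bullet in the definition of rectifiability) to pin down both the norm and the Hilbert structure on the fibers. Fix a rectifiable coordinate system $\mathcal{A}=\{(C_i^l,\phi_i^l)\}_{l,i}$. Given $f\in\mathrm{LIP}_{\mathrm{loc}}(X)$, the composition $f\circ(\phi_i^l)^{-1}$ is locally Lipschitz on $\phi_i^l(C_i^l)\subset\mathbf{R}^l$, hence extends to a locally Lipschitz function $\widehat{f}{}_i^l$ on $\mathbf{R}^l$, which by Rademacher's theorem is differentiable at a.e. point; moreover at every point of $\mathrm{Leb}\,\phi_i^l(C_i^l)$ the Jacobi row vector of $\widehat{f}{}_i^l$ is independent of the chosen extension, so that a candidate covector $d_i^lf(x):=J(\widehat{f}{}_i^l)(\phi_i^l(x))$ is a.e. well defined on $C_i^l$. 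First I would observe that the pointwise dimension is a.e. well defined: if $x$ is a density point of both $C_i^l$ and $C_j^{l'}$, the second bullet of the definition forces $t^{l-l'}$ to stay pinched between two positive constants as $t\to0$, hence $l=l'$; thus we may set $l(x)$ to be this common value and take the fiber $T_x^*X$ to be a copy of $(\mathbf{R}^{l(x)})^*$.

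To make the fiber canonical I would glue overlapping patches of the same dimension. If $x$ is a density point of $C_i^l$ and of $C_j^l$, the transition map $\phi_j^l\circ(\phi_i^l)^{-1}$ is bi-Lipschitz between subsets of $\mathbf{R}^l$, hence Rademacher-differentiable, and at $x$ its differential is a linear automorphism of $\mathbf{R}^l$; by the chain rule this automorphism carries $d_i^lf(x)$ to $d_j^lf(x)$, so a single covector $df(x)$ is well defined on the Borel set $V$ of density points covered by $\mathcal{A}$, and $\upsilon(X\setminus V)=0$ by the first bullet together with the fact that bi-Lipschitz images of null sets are null. Realizing $T^*X$ as a Borel subset of $X\times\mathbf{R}^N$ with $N=\sup l$, so that $\pi$ is the (continuous, hence Borel) projection and each section $df$ is Borel, is then routine bookkeeping over the countable family $\{C_i^l\}$, using measurability of the Jacobians of the extensions.

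For the norm and the Hilbert structure I would invoke the third bullet: at a.e. $x$ and for every $\delta\in(0,1)$ there is a patch $C_i^{l(x)}\ni x$ whose chart is $(1+\delta)$-bi-Lipschitz, i.e. $\max\{\mathbf{Lip}\,\phi_i^{l(x)},\mathbf{Lip}\,(\phi_i^{l(x)})^{-1}\}\le1+\delta$. Pulling back the Euclidean inner product of $\mathbf{R}^{l(x)}$ along such a chart gives an inner product on $T_x^*X$; any two such choices are related by a $(1+\delta)^2$-bi-Lipschitz linear map, so letting $\delta\to0$ yields a well-defined norm $|\cdot|$ on the fixed finite-dimensional space $T_x^*X$, and since each approximating norm is Euclidean, the limit is again an inner-product norm, giving the Hilbert structure $\langle\cdot,\cdot\rangle_x$. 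The same $(1+\delta)$-comparison, now applied to the two-sided inequalities relating the metric slope $\mathrm{Lip}f(x)$ to the Euclidean slope $|J(\widehat{f}{}_i^{l(x)})(\phi_i^{l(x)}(x))|$ through the chart $\phi_i^{l(x)}$, gives $|df|(x)=\mathrm{Lip}f(x)$ after sending $\delta\to0$, which is exactly assertion (3).

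The main obstacle is the consistency step: arranging that the identification of fibers across overlapping patches, the independence of $df(x)$ from the chart, and above all the independence of the inner product from the near-isometric chart used to construct it all hold off one common $\upsilon$-null set. This is the technical heart; it is handled by combining Rademacher's theorem on each $\mathbf{R}^l$ with a measure-theoretic exhaustion of $X$ by the countably many patches, using that $\upsilon$-a.e. point of $C_i^l$ lies in $\mathrm{Leb}\,C_i^l$ and that bi-Lipschitz maps preserve null sets, so that all the a.e. statements can be intersected into one. The asymptotic-isometry hypothesis is precisely what upgrades the a priori Banach-space fibers to Hilbert-space fibers and forces agreement of the fiber norm with the metric quantity $\mathrm{Lip}f$; everything else is the classical first-order calculus on $\mathbf{R}^l$ transported along charts, as in \cite{ch1,ch-co3}.
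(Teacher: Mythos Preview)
The paper does not give its own proof of this theorem; it is stated as a citation to \cite{ch1,ch-co3}, with a pointer to Section~6 of \cite{ch-co3} and pages~458--459 of \cite{ch1}. Your construction of the bundle, the chart-wise differential, the dimension function $l(x)$, and the limiting inner product via the asymptotic-isometry condition is indeed the approach of \cite{ch-co3}, and that part is fine as a sketch.

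The gap is in your argument for $|df|(x)=\mathrm{Lip}f(x)$. The $(1+\delta)$-bi-Lipschitz bound you invoke controls distances only between pairs of points \emph{in} $C_i^l$; the chart $\phi_i^l$ is not assumed to be defined on an open neighborhood of $x$. Consequently your comparison yields
\[
(1+\delta)^{-1}\,\bigl|J(\widehat{f}{}^l_i)(\phi_i^l(x))\bigr|\;\le\;\mathrm{Lip}\bigl(f|_{C_i^l}\bigr)(x)\;\le\;(1+\delta)\,\bigl|J(\widehat{f}{}^l_i)(\phi_i^l(x))\bigr|,
\]
which after $\delta\to 0$ gives $|df|(x)=\mathrm{Lip}(f|_{C_i^l})(x)$, not $\mathrm{Lip}f(x)$. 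The inequality $\mathrm{Lip}(f|_{C_i^l})(x)\le \mathrm{Lip}f(x)$ is trivial, but the reverse is not: a priori, nearby points of $X\setminus C_i^l$ could produce larger difference quotients, and density of $C_i^l$ at $x$ does not by itself force every nearby $y\in X$ to be $o(d(x,y))$-close to a point of $C_i^l$. Closing this gap is precisely where the deeper input enters. In \cite{ch1} it is the doubling condition together with a Poincar\'e inequality that forces the minimal generalized upper gradient to agree a.e.\ with $\mathrm{Lip}f$; in \cite{ch-co3} the rectifiable charts on a Ricci limit space are built from harmonic (or distance) functions defined on genuine open balls, so the near-isometry actually compares $d_X$ to the Euclidean distance on a full neighborhood, not just on $C_i^l$. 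Either route is substantial and is not captured by ``the same $(1+\delta)$-comparison.'' You should at least flag which of these two mechanisms you are invoking and why it is available in the setting at hand.
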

See Section $6$ in \cite{ch-co3} or page $458-459$ of \cite{ch1} (or subsection $2.3$ in \cite{holp}) for the details.

Let $A$ be a Borel subset of $X$.
As an important corollary of Theorem \ref{29292} we see that if a function $f$ on $A$ which is differentiable at a.e. $a \in A$, then $df(a) \in T^*_aX$ is well-defined for a.e. $a \in A$.
Let us denote by $\Gamma_0(A)$  the set of Borel functions on $A$ and denote by $\Gamma_1(A)$ the set of $f \in \Gamma_0(A)$ which is differentiable at a.e. $a \in A$.

Moreover for any $r, s \in \mathbf{Z}_{\ge 0}$ we can define the ($L^{\infty}$-)vector bundle
\[\pi^r_s: \bigotimes_{i=1}^r TX \otimes \bigotimes_{j=1}^sT^*X \to X.\]
For convenience we use the following notation: 
\[T^r_sA :=(\pi^r_s)^{-1}(A).\]
We call a Borel measurable section $T: A \to T^r_sA$ \textit{a Borel tensor field of type $(r, s)$ on $A$}. 
Let $\Gamma_{0}(T^r_sA)$ be the space of Borel tensor fields of type $(r, s)$ on $A$.
It is important that for every $T \in \Gamma_{0}(T^r_sA)$, each restriction $T|_{C_i^l \cap A}$ of $T$ to $C_i^l \cap A$ can be regarded as in $\Gamma_{0}(T^r_s\phi_i^l(C_i^l \cap A))$.

We also denote by $\langle \cdot, \cdot \rangle$ the canonical metric on each fiber of $T^r_sX$ which is defined by that of $T^*X$ for short. 
In particular we call the canonical metric on $TX$ the \textit{Riemannian metric of} $(X, \upsilon)$ and denote it sometimes by $g_{X}$.

For every $1 \le p \le \infty$, let 
\[L^p(T^r_sA):=\{T \in \Gamma_{0}(T^r_sA); |T| \in L^p(A)\}.\]
Note that $L^p(T^r_sA)$ is a Banach space equipped with the $L^p$-norm and that $g_{X} \in L^{\infty}(T^0_2X)$.
For any $V \in \Gamma_{0}(TA)$ and $f \in \Gamma_1(A)$, let $\nabla f:=(df)^* \in \Gamma_{0}(TA)$ and let $V(f):=\langle V, \nabla f\rangle \in \Gamma_0(A)$, where $^*$ is the canonical isometry $T^*_xX \cong T_xX$ defined by the Riemannian metric $g_X$.

Let $U$ be an open subset of $X$.
Let us denote by $\mathcal{D}^2(\mathrm{div}^{\upsilon}, U)$ the set of $T \in L^2(TU)$ satisfying that there exists a unique $h \in L^2(U)$ such that 
\[-\int_{U}fhd\upsilon=\int_{U}\langle \nabla f, T\rangle d\upsilon\]
holds for every $f \in \mathrm{LIP}_c(U)$.
Write $\mathrm{div}^{\upsilon}T:=h$.
We also denote the set of $\omega \in L^2(T^*U)$ satisfying  $\omega^* \in \mathcal{D}^2(\mathrm{div}^{\upsilon}, U)$ by $\mathcal{D}^2(\delta^{\upsilon}, U)$.
Then write $\delta^{\upsilon} \omega:=-\mathrm{div}^{\upsilon}\omega^*$.
See subsection $2.3$ in \cite{ho0} for the details of this subsection.
\subsubsection{Weakly second-order differential structure.}
Let $(X, \upsilon)$ be a rectifiable metric measure space and let $\mathcal{A}:=\{(C_i^l, \phi_i^l)\}_{i, l}$ be a rectifiable coordinate system of $(X, \upsilon)$.

We say that \textit{$\mathcal{A}$ is a weakly second-order differential system of $(X, \upsilon)$} if $\phi_i^l \circ (\phi_j^l)^{-1}$ is weakly twice differentiable on $\phi_j^l(C_i^l \cap C_j^l)$ for any $i, j$.

Assume that $\mathcal{A}$ is a weakly second-order differential system of $(X, \upsilon)$.

Let $A$ be a Borel subset of $X$.
We say that a Borel tensor field \textit{$T \in \Gamma_{0}(T^r_sA)$ is differentiable at a.e. $a \in A$ (with respect to $\mathcal{A}$)} if each $T|_{C_i^l \cap A} $ (recall that it can be regarded as in $\Gamma_{0}(T^r_s\phi_i^l(C_i^l \cap A)))$ is in $\Gamma_{1}(T^r_s\phi_i^l(C_i^l \cap A))$.
Let us denote by $\Gamma_1(T^r_sA; \mathcal{A})$ the set of $T \in \Gamma_0(T^r_sA)$ which is differentiable at a.e. $a \in A$.
We often write $\Gamma_1(T^r_sA):=\Gamma_1(T^r_sA; \mathcal{A})$ for brevity.

We say that a function $f \in \Gamma_0(A)$ is \textit{weakly twice differentiable on $A$ (with respect to $\mathcal{A}$)} if $f \in \Gamma_1(A)$ and $df \in \Gamma_1(T^*A)$.  
Let us denote by $\Gamma_2(A)=\Gamma_2(A; \mathcal{A})$ the set of weakly twice differentiable functions on $A$.
Note that for any $U, V \in \Gamma_1(TA)$, the Lie bracket $[U, V] \in \Gamma_{0}(TA)$ is well-defined by satisfying
\[[U, V]f=U(V(f))-V(U(f))\]
for every $f \in \Gamma_2(A)$.

Similarly we can define $\Gamma_1(\bigwedge^kT^*A)$ and the differential $d\omega \in \Gamma_0(\bigwedge^{k+1}T^*A)$ of $\omega \in \Gamma_1(\bigwedge^kT^*A)$.

Assume $g_X \in \Gamma_1(T^0_2X)$.  
We are now in a position to introduce a main result of \cite{ho0}.
\begin{theorem}\cite{ho0}\label{levi3}
There exists the Levi-Civita connection $\nabla^{g_X}$ on $X$ uniquely in the following sense:  
\begin{enumerate}
\item $\nabla^{g_X}$ is a map from $\Gamma_{0} (TX) \times \Gamma_{1} (TX)$ to $\Gamma_{0} (TX)$. Let $\nabla^{g_X}_UV:=\nabla^{g_X}(U, V)$.
\item $\nabla^{g_X}_U(V+W)=\nabla^{g_X}_UV + \nabla^{g_X}_U W$ for any $U \in \Gamma_{0} (TX)$ and  $V, W \in \Gamma_{1} (TX)$.
\item $\nabla^{g_X}_{fU+hV}W =f\nabla^{g_X}_UW + h\nabla^{g_X}_VW$ for any $U, V \in \Gamma_{0} (TX)$, $W \in \Gamma_{1} (TX)$ and $f, h \in \Gamma_0(X)$.
\item $\nabla^{g_X}_U(fV)=U(f)V + f\nabla^{g_X}_UV$ for any $U \in \Gamma_{0} (TX)$, $V \in \Gamma_{1} (TX)$ and $f \in \Gamma_1(X)$.
\item $\nabla^{g_X}_UV - \nabla^{g_X}_VU=[U, V]$ for any $U, V \in \Gamma_{1} (TX)$.
\item $Ug_X(V, W) = {g_X}( \nabla^{g_X}_UV, W) + g_X(V, \nabla^{g_X}_UW)$ for any $U \in \Gamma_{0} (TX)$ and  $V, W \in \Gamma_{1} (TX)$.   
\end{enumerate}
\end{theorem}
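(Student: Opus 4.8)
The plan is to follow the classical route to the fundamental theorem of Riemannian geometry (Koszul's formula), being careful that every computation is performed $\upsilon$-a.e.\ and that the local constructions on rectifiable coordinate patches glue together; this last point is exactly where the \emph{weakly second-order} hypothesis on $\mathcal{A}$ enters.

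First I would settle uniqueness. Restricting to $U, V, W \in \Gamma_1(TX)$, all the Lie brackets $[U,V]$, $[V,W]$, $[W,U]$ are defined, and since $g_X \in \Gamma_1(T^0_2X)$ the functions $g_X(V,W)$, etc.\ lie in $\Gamma_1(X)$, so the usual cyclic combination of $(6)$ together with the torsion-free identity $(5)$ yields the Koszul identity
\[
2g_X(\nabla^{g_X}_UV, W) = Ug_X(V,W)+Vg_X(W,U)-Wg_X(U,V)+g_X([U,V],W)-g_X([V,W],U)+g_X([W,U],V)
\]
a.e.\ on $X$. The right-hand side does not involve $\nabla^{g_X}$; taking $W$ to range over the coordinate vector fields $\partial/\partial x_j$ on a rectifiable coordinate patch, which span each fiber, and using that $\langle\cdot,\cdot\rangle$ is a genuine Hilbert inner product on every fiber, this determines $\nabla^{g_X}_UV$ a.e. For general $U\in\Gamma_0(TX)$ one writes $U=\sum_jU^j\,\partial/\partial x_j$ locally with $U^j\in\Gamma_0$ and invokes $(3)$ to reduce to the case just treated. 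Hence any two maps satisfying $(1)$--$(6)$ coincide.

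For existence I would build $\nabla^{g_X}$ patchwise. On a fixed $l$-dimensional patch $(C_i^l,\phi_i^l)$, transport everything through the bi-Lipschitz chart to $\phi_i^l(C_i^l)\subset\mathbf{R}^l$, where $g_X$ becomes a Borel, symmetric, fibrewise positive definite tensor field $(g_{jk})$ which is differentiable a.e.; its inverse $(g^{jk})$ is then Borel and again differentiable a.e.\ (matrix inversion being smooth on the invertible locus). Define the Christoffel symbols a.e.\ by $\Gamma^i_{jk}:=\tfrac12 g^{im}(\partial_jg_{mk}+\partial_kg_{mj}-\partial_mg_{jk})$ and set, for $U\in\Gamma_0$ and $V\in\Gamma_1$,
\[
(\nabla^{g_X}_UV)^i := \sum_jU^j\Bigl(\partial_jV^i+\sum_k\Gamma^i_{jk}V^k\Bigr),
\]
a Borel vector field, hence an element of $\Gamma_0(TX)$ over this patch. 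Properties $(1)$--$(4)$ are immediate from the formula; $(6)$ is the standard pointwise identity built into the definition of $\Gamma^i_{jk}$; and $(5)$ follows from the symmetry $\Gamma^i_{jk}=\Gamma^i_{kj}$ together with the coordinate expression $[U,V]^i=\sum_j(U^j\partial_jV^i-V^j\partial_jU^i)$, which matches the characterization of the Lie bracket recalled above.

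The remaining, and principal, point is that these patchwise definitions agree a.e.\ on overlaps, so that a single map $\nabla^{g_X}\colon\Gamma_0(TX)\times\Gamma_1(TX)\to\Gamma_0(TX)$ is well defined. Two patches of different dimensions meet in a $\upsilon$-null set, by the a.e.\ constancy of the local dimension in the Cheeger--Colding rectifiability, so only same-dimensional overlaps $C_i^l\cap C_j^l$ matter. There the transition map $\phi_i^l\circ(\phi_j^l)^{-1}$ is weakly twice differentiable on $\phi_j^l(C_i^l\cap C_j^l)$, precisely because $\mathcal{A}$ is a weakly second-order differential system; hence its Jacobian is differentiable a.e., and the Christoffel symbols obey the classical transformation law (the one carrying the second derivatives of the transition map) at a.e.\ point of the overlap, which forces the two locally defined connections to agree a.e.\ there. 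Discarding the countable union of the exceptional null sets over all pairs of patches produces a globally defined map inheriting $(1)$--$(6)$ from the local verifications. I expect this gluing step to be the crux: it is the only place where the second-order regularity of $\mathcal{A}$, as opposed to mere rectifiability, is essential, and it requires careful bookkeeping of a.e.\ statements under the bi-Lipschitz charts.
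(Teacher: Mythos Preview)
The paper does not give a proof of this theorem; it is quoted verbatim from the author's earlier work \cite{ho0} (reference [Hon14] in the bibliography) and stated here without argument. There is therefore nothing in the present paper to compare against.

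Your proposal is the expected route and is essentially what is carried out in \cite{ho0}: uniqueness via the Koszul identity, existence via the patchwise Christoffel symbols built from the a.e.\ derivatives of $g_X$, and gluing on same-dimensional overlaps using the classical transformation law for the $\Gamma^i_{jk}$, which is available precisely because the transition maps $\phi_i^l\circ(\phi_j^l)^{-1}$ are weakly twice differentiable. One small sharpening: you do not need any appeal to ``a.e.\ constancy of the local dimension'' to dispose of different-dimensional overlaps. The second bullet in the definition of a rectifiable system already forces $C_i^l\cap C_j^{l'}=\emptyset$ whenever $l\neq l'$, since a point cannot simultaneously satisfy $0<\liminf_{t\to 0}\upsilon(B_t(x))/t^l\le\limsup<\infty$ for two distinct exponents. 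Apart from this, the bookkeeping you outline (checking that products and contractions of $\Gamma_1$ objects remain in $\Gamma_1$, that matrix inversion preserves a.e.\ differentiability, and that countably many null exceptional sets can be discarded) is routine and your plan is sound.
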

Note that $\nabla^{g_X}$ is local in the following sense: 
\begin{itemize}
\item The Levi-Civita connection induces the map $\nabla^{g_X}|_A:\Gamma_{0} (TA) \times \Gamma_{1} (TA) \to \Gamma_{0} (TA)$ by letting $\nabla^{g_X}|_A(U, V):=\nabla^{g_X}_{1_AU}(1_AV)$.
\end{itemize}
Thus we use the same notation: $\nabla^{g_X}=\nabla^{g_X}|_A$ for brevity.

Next we introduce several key notions in this paper.
\begin{proposition}\cite{ho0}\label{hess2}
Let $f \in \Gamma_2(A)$, let $\omega \in \Gamma_1(T^*A)$ and let $W \in \Gamma_{1}(TA)$.
Then there exist uniquely
\begin{enumerate}
\item the covariant derivative $\nabla^{g_X} \omega \in \Gamma_{0}(T^0_2A)$ of $\omega$ such that $\nabla^{g_X} \omega(U, V)=g_{X}(\nabla_{V}^{g_X}\omega^*, U)$ for any $U, V \in \Gamma_{0}(TA)$;
\item the Hessian $\mathrm{Hess}^{g_X}_f :=\nabla^{g_X}df \in \Gamma_{0}(T^0_2A)$ of $f$; 
\item the divergence $\mathrm{div}^{g_X}\,W:= \mathrm{tr} (\nabla^{g_X}W^*) \in \Gamma_0(A)$ of $W$; 
\item the codifferential $\delta^{g_X}\omega := - \mathrm{div}^{g_X}\omega^* \in \Gamma_0(A)$ of $\omega$;
\item the Laplacian $\Delta^{g_X} f:=-\mathrm{div}^{g_X}\,(\nabla^{g_X}f)=\delta^{g_X}(df)=- \mathrm{tr} (\mathrm{Hess}_{f}^{g_X}) \in \Gamma_0(A)$ of $f$.
\end{enumerate}
Moreover we have the following:
\begin{itemize}
\item[(a)] $\mathrm{Hess}^{g_X}_f(x)$ is symmetric for a.e. $x \in A$.
\item[(b)] $\mathrm{div}^{g_X}\,(hW)=h\mathrm{div}^{g_X} W +{g_X}(\nabla h, W)$ for every $h \in \Gamma_1(A)$. 
\item[(c)] $\Delta^{g_X} (fh)=h\Delta^{g_X} f-2{g_X}(\nabla f, \nabla h)+f\Delta^{g_X} h$ for every $h \in \Gamma_2(A)$.
\end{itemize}
\end{proposition}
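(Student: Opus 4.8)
The plan is to \emph{define} each of the five objects by the very formula it is asked to satisfy, built out of the Levi-Civita connection $\nabla^{g_X}$ of Theorem \ref{levi3}, and then to verify that the formula produces a well-defined element of the asserted space. Since $\nabla^{g_X}$ itself is unique, uniqueness of all five objects is automatic once they are shown to exist, so the work reduces to existence together with the identities (a)--(c).

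First I would record the bookkeeping fact that the musical isomorphism $v \mapsto v^{*}$ induced by $g_X \in \Gamma_1(T^0_2X)$ preserves a.e.-differentiability: since $g_X$ and its fiberwise inverse are a.e.-differentiable and products of a.e.-differentiable tensor fields are a.e.-differentiable, one has $\omega \in \Gamma_1(T^{*}A) \Rightarrow \omega^{*} \in \Gamma_1(TA)$, $W \in \Gamma_1(TA) \Rightarrow W^{*} \in \Gamma_1(T^{*}A)$, and $f \in \Gamma_2(A) \Rightarrow \nabla f = (df)^{*} \in \Gamma_1(TA)$. Hence for $\omega \in \Gamma_1(T^{*}A)$ and $U,V \in \Gamma_0(TA)$ the expression $g_X(\nabla^{g_X}_V \omega^{*}, U)$ is meaningful in $\Gamma_0(A)$, with $\omega^{*}$ in the $\Gamma_1$-slot and $V$ in the $\Gamma_0$-slot of $\nabla^{g_X}$. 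That this defines an honest $(0,2)$-tensor comes from $\Gamma_0(A)$-bilinearity in $(U,V)$: linearity in $U$ is bilinearity of $g_X$, and linearity in $V$ is axioms (2)--(3) of Theorem \ref{levi3} (together with $\nabla^{g_X}_{\lambda V}W = \lambda\nabla^{g_X}_V W$ for constant $\lambda$, the $f\equiv\lambda$ case of (4)). Combined with the locality of $\nabla^{g_X}$ noted after Theorem \ref{levi3}, this yields $\nabla^{g_X}\omega \in \Gamma_0(T^0_2A)$ as in (1), and specializing to $\omega = df$ gives the Hessian in (2). The $g_X$-trace of a $(0,2)$-tensor is well-defined a.e. because $g_X$ is a genuine inner product on each fiber $T_xX$ by Theorem \ref{29292}(2), hence nondegenerate a.e.; this makes (3) meaningful, and (4) is then an immediate composition of already-constructed operations. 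For (5) I would merely unwind definitions: $-\mathrm{div}^{g_X}(\nabla f) = -\mathrm{tr}(\nabla^{g_X}(\nabla f)^{*}) = -\mathrm{tr}(\nabla^{g_X}df) = -\mathrm{tr}(\mathrm{Hess}^{g_X}_f)$, while $\delta^{g_X}(df) = -\mathrm{div}^{g_X}((df)^{*}) = -\mathrm{div}^{g_X}(\nabla f)$.

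For the symmetry (a), I would note that on each rectifiable patch of $\mathcal{A}$ the coordinate frame consists of vector fields lying in $\Gamma_1(TA)$ — this is exactly why both $g_X \in \Gamma_1(T^0_2X)$ and the weakly-second-order condition on $\mathcal{A}$ are imposed — and these span the tangent spaces a.e.; since $\{C_i^l\}$ is $\upsilon$-conull it therefore suffices to check $\mathrm{Hess}^{g_X}_f(U,V) = \mathrm{Hess}^{g_X}_f(V,U)$ a.e. for $U,V \in \Gamma_1(TA)$. For such fields, axiom (6) with $W = \nabla f \in \Gamma_1(TA)$ gives $g_X(\nabla^{g_X}_U \nabla f, V) = U(Vf) - (\nabla^{g_X}_U V)(f)$ and, symmetrically, $g_X(\nabla^{g_X}_V \nabla f, U) = V(Uf) - (\nabla^{g_X}_V U)(f)$; subtracting and invoking the torsion-free identity (5) together with the defining relation $[U,V]f = U(Vf) - V(Uf)$ of the Lie bracket (legitimate since $f \in \Gamma_2(A)$) makes everything cancel. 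For (b) I would pass to a local frame $\{e_j\}$ on a patch, use $(hW^{*})^{*} = hW$ and axiom (4) to obtain $\nabla^{g_X}(hW^{*})(e_j,e_j) = e_j(h)\,g_X(W,e_j) + h\,g_X(\nabla^{g_X}_{e_j}W,e_j)$, and then take the $g_X$-trace, recognizing the resulting sums as $g_X(\nabla h, W)$ and $\mathrm{div}^{g_X}W$. Finally (c) follows formally from $\nabla(fh) = f\nabla h + h\nabla f$, additivity of $\mathrm{div}^{g_X}$, property (b) applied to $f\nabla h$ and to $h\nabla f$, and $\mathrm{div}^{g_X}(\nabla h) = -\Delta^{g_X}h$.

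Apart from the routine discipline of tracking which fields are merely Borel ($\Gamma_0$), a.e.-differentiable ($\Gamma_1$), or a.e.-twice-differentiable ($\Gamma_2$), and reading every identity as a.e.-equality on the conull union of the charts $C_i^l$, the two points that genuinely need care are the $\Gamma_0(A)$-tensoriality in the differentiation slot of $\nabla^{g_X}\omega$ — which is precisely where axiom (3) of Theorem \ref{levi3} is used — and the reduction of the Hessian symmetry to $\Gamma_1$ test fields before applying the torsion-free axiom. I expect the latter to be the main obstacle, as it is the one place where a nontrivial interaction between $\nabla^{g_X}$ and the a.e.-differentiable structure (the availability of enough $\Gamma_1$-local frames and the meaningfulness of $[U,V]f$ on $\Gamma_2$) is really needed rather than just formal manipulation.
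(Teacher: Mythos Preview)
Your argument is correct, but note that the paper does not actually prove this proposition: it is quoted from the author's earlier work \cite{ho0} and stated without proof here, so there is no in-paper argument to compare against. Your proposal supplies exactly the standard Riemannian-geometry verification (tensoriality from the connection axioms, symmetry of the Hessian from torsion-freeness plus the definition of the Lie bracket on $\Gamma_2$ functions, and the Leibniz identities by local-frame computation), carried out with the appropriate care about which slots require $\Gamma_1$ versus $\Gamma_0$ regularity; this is precisely the intended content of the cited reference and nothing more is needed.
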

More generally we can also define the \textit{covariant derivative of tensor fields} $\nabla^{g_X}:\Gamma_1(T^r_sA) \to \Gamma_{0}(T^r_{s+1}A)$ in the ordinary way of Riemannian geometry (c.f. \cite{sakai}),
i.e., for every $T \in \Gamma_1(T^r_sA)$,  $\nabla^{g_X}T \in \Gamma_0(T^r_{s+1}A)$ is defined by satisfying that
\begin{align}\label{huuh}
&\left\langle \nabla^{g_X}T, \bigotimes_{i=1}^rV_i \otimes \bigotimes_{j=1}^{s+1}\omega_j\right\rangle \nonumber \\
&=\omega_{s+1}^*\left(\left\langle T, \bigotimes_{i=1}^rV_i \otimes \bigotimes_{j=1}^{s}\omega_j \right\rangle \right) \nonumber \\
&-\sum_{i=1}^r\left\langle T, V_1 \otimes \cdots \otimes V_{i-1} \otimes \nabla^{g_X}_{\omega_{s+1}^*}V_i \otimes V_{i+1} \otimes \cdots \otimes V_{r} \otimes \bigotimes_{j=1}^{s}\omega_j \right\rangle \nonumber \\
&-\sum_{j=1}^{s}\left\langle T, \bigotimes_{i=1}^rV_i \otimes \omega_1 \otimes \cdots \otimes \omega_{j-1} \otimes \left(\nabla^{g_X}_{\omega_{s+1}^*}\omega_j^*\right)^* \otimes \omega_{j+1} \otimes \cdots \otimes \omega_{s}\right\rangle
\end{align}
for any $V_i \in \Gamma_1(TA)$ and $\omega_j \in \Gamma_1(T^*A)$.
For any $T \in \Gamma_1(T^r_sA)$ and $V \in \Gamma_0(TA)$ we define $\nabla^{g_X}_VT \in \Gamma_0(T^r_sA)$ by satisfying that
\[\left\langle \nabla^{g_X}_VT, \bigotimes_{i=1}^rV_i \otimes \bigotimes_{j=1}^s\omega_j \right\rangle=\left\langle \nabla^{g_X}T, \bigotimes_{i=1}^rV_i \otimes \bigotimes_{j=1}^s\omega_j \otimes V^*\right\rangle\]
for any $V_i \in \Gamma_0(TA)$ and $\omega_j \in \Gamma_0(T^*A)$.
Then it is easy to check that
\[\nabla^{g_X}g_X\equiv 0\]
and that
\begin{align}\label{dna}
d\omega (V_0, \ldots, V_k)= \sum_{i=0}^k(-1)^i(\nabla_{V_i}^{g_X}\omega)(V_0, \ldots, V_{i-1}, V_{i+1}, \ldots, V_k)
\end{align}
for any $\omega \in \Gamma_1(\bigwedge ^kT^*A)$ and $V_i \in \Gamma_0(TA)$.

Let $\hat{\mathcal{A}}$ be a weakly second-order differential system of $(X, \upsilon)$.
It is trivial that if $\mathcal{A}$ and $\hat{\mathcal{A}}$ are compatible (see subsection $1.1$ for the definition), then the notions introduced here are compatible, i.e., for instance we see that 
$\Gamma_2(A; \mathcal{A})=\Gamma_2(A; \hat{\mathcal{A}})$, $\Gamma_1(T^r_sA; \mathcal{A})=\Gamma_1(T^r_sA;  \hat{\mathcal{A}})$ and so on.
\subsection{Sobolev and Poincar\'e inequalities}
Let $(X, \upsilon)$ be a metric measure space and let $p, q \in [1, \infty)$.
\begin{definition}\label{muneri}
Let $U$ be an open subset of $X$.
\begin{enumerate}
\item We say that \textit{$(X, \upsilon)$ satisfies the $(q, p)$-Sobolev inequality on $U$ for a pair $(A, B)$ of some $A, B \ge 0$}
if 
\begin{align}\label{sobos}
\left(\int_U|f|^{q}d\upsilon \right)^{p/q}\le A \int_U|\mathrm{Lip} f|^pd\upsilon+B\int_U|f|^pd\upsilon
\end{align}
holds for every $f \in \mathrm{LIP}_c(U)$. 
\item We say that \textit{$(X, \upsilon)$ satisfies the $(q, p)$-Poincar\'e inequality on  $U$ for some $\tau \ge 0$}
if 
\begin{align}\label{poinc}
\left(\frac{1}{\upsilon (B_r(x))}\int_{B_r(x)}\left|f-\frac{1}{\upsilon (B_r(x))}\int_{B_r(x)}fd\upsilon \right|^qd\upsilon\right)^{1/q}\le \tau r \left(\frac{1}{\upsilon (B_r(x))}\int_{B_r(x)}|\mathrm{Lip} f|^pd\upsilon \right)^{1/p}
\end{align}
holds for any $x \in U$, $r>0$ with $B_r(x) \subset U$, and $f \in \mathrm{LIP}_{\mathrm{loc}}(U)$. 
\end{enumerate}
\end{definition}
\begin{remark}\label{85}
In Definition \ref{muneri}
let us consider the case that $U=B_R(z)$ for some $R>0$ and $z \in X$.

By using a cut-off function, it is not difficult to check that if (\ref{poinc}) holds for any $f \in \mathrm{LIP}_c(B_R(z))$, $x \in B_R(z)$ and $r>0$ with $B_r(x) \subset B_R(z)$, then $(X, \upsilon)$ satisfies the $(q, p)$-Poincar\'e inequality on $B_R(z)$ for $\tau$.
In particular (\ref{poinc}) holds for any $f \in \mathrm{LIP}(X)$, $x \in B_R(z)$ and $r>0$ with $B_r(x) \subset B_R(z)$ if and only if $(X, \upsilon)$ satisfies the $(q, p)$-Poincar\'e inequality on $B_R(z)$ for $\tau$.
\end{remark}
\begin{remark}\label{akl}
Let $U$ be a bounded open subset of $X$, let $\tau, R>0$ and let $z \in X$.
Then it is not difficult to check the following.
\begin{enumerate}
\item If $q\le p$, then $(X, \upsilon)$ satisfies the $(q, p)$-Sobolev inequality on  $U$ for $(0, \upsilon(U)^{p/q-1})$.
\item If $(X, \upsilon)$ satisfies the $(q, p)$-Sobolev inequality on $U$ for some $(A, B)$, then for every $\hat{q} \le q$, $(X, \upsilon)$ satisfies the $(\hat{q}, p)$-Sobolev inequality on $U$ for $(\upsilon(U)^{(q-\hat{q})/(q\hat{q})}A, \upsilon(U)^{(q-\hat{q})/(q\hat{q})}B)$. 
\item If $(X, \upsilon)$ satisfies the $(q, p)$-Poincar\'e inequality on $B_R(z)$ for $\tau$, then $(X, \upsilon)$ satisfies the $(q, p)$-Sobolev inequality on $B_R(z)$ for $(2^{p-1}\tau^p R^p\upsilon (B_R(z))^{p/q-1}, 2^{p-1}\upsilon (B_R(z))^{p/q-1})$.
\end{enumerate}
\end{remark}
We introduce the following Poincar\'e inequality given by Maheux-Saloff-Coste in \cite[Th\'eor\`eme $1.1$]{mas} (c.f. \cite[Theorem $3.21$]{he2} and \cite[Theorem $5.1$]{HK1}):
\begin{theorem}\cite{mas}\label{sobo}
Let $R>0$ and let $(X, x, \upsilon) \in M(n, K)$.
Then for any $1\le p<n$ and $p \le q \le np/(n-p)$, $(X, \upsilon)$ satisfies the $(q, p)$-Poincar\'e inequality on $B_R(x)$ for $C_1e^{C_2(1+\sqrt{|K|}R)}$,
where $C_1:=C_1(p, q)>0$ and $C_2:=C_2(n)>0$.
\end{theorem}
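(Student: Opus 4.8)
The plan is to deduce the whole family of $(q,p)$-Poincar\'e inequalities from two scale-invariant ingredients on the ball $B_R(x)$ and then to run the standard ``Sobolev-from-Poincar\'e'' self-improvement, keeping careful track of constants. First I would invoke the Bishop--Gromov relative volume comparison on the complete manifold $X$ to record, for all $B_s(z)$ with $B_{2s}(z)\subset B_R(x)$, the doubling estimate $\upsilon(B_{2s}(z))\le 2^n e^{C(n)\sqrt{|K|}R}\upsilon(B_s(z))$ and, for all $B_r(y)\subset B_R(x)$, the lower volume bound $\upsilon(B_r(y))\ge c(n)e^{-C(n)\sqrt{|K|}R}(r/R)^n\upsilon(B_R(x))$. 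The second bound is the crucial one: it says the measure is $n$-regular relative to $B_R(x)$ up to the dimensional exponential factor, and this is exactly what forces the Sobolev exponent $np/(n-p)$ — rather than one attached to a generic ``homogeneous dimension'' — and keeps the exponential rate dimensional, as demanded by the claimed constant $C_1 e^{C_2(1+\sqrt{|K|}R)}$ with $C_2=C_2(n)$.

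The second ingredient is a weak $(1,1)$-Poincar\'e inequality on sub-balls: for $B_r(y)\subset B_R(x)$ and $f\in\mathrm{LIP}_{\mathrm{loc}}(B_R(x))$,
\[\frac{1}{\upsilon(B_r(y))}\int_{B_r(y)}\Big|f-\frac{1}{\upsilon(B_r(y))}\int_{B_r(y)}f\,d\upsilon\Big|\,d\upsilon\le C(n)e^{C(n)\sqrt{|K|}R}\,r\,\frac{1}{\upsilon(B_{\Lambda r}(y))}\int_{B_{\Lambda r}(y)}|\nabla f|\,d\upsilon,\]
with a fixed dilation $\Lambda\ge 1$. I would obtain this either from Buser's inequality (controlling the Cheeger constant and first Neumann eigenvalue of a geodesic ball under $\mathrm{Ric}\ge K(n-1)$ via the heat semigroup), or more elementarily from the segment inequality of Cheeger--Colding: estimating $|f(a)-f(b)|$ by $\int_{\gamma_{ab}}|\nabla f|$ along minimal geodesics and integrating over $a,b\in B_r(y)$, the segment inequality bounds the resulting double integral by $C(n)e^{C(n)\sqrt{|K|}R}r\,\upsilon(B_r(y))\int_{B_{\Lambda r}(y)}|\nabla f|\,d\upsilon$; the ball enlargement on the right is harmless and is absorbed later by doubling.

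With doubling and a weak $(1,1)$-Poincar\'e in hand, I would run the telescoping argument of the Haj{\l}asz--Koskela theory (as in \cite[Theorem $5.1$]{HK1} and \cite[Theorem $3.21$]{he2}): summing oscillations over dyadic sub-balls gives, for a.e.\ $z\in B_r(y)$, a pointwise estimate of $|f(z)-\frac{1}{\upsilon(B_r(y))}\int_{B_r(y)}f\,d\upsilon|$ by $r$ times a restricted fractional maximal function of $|\nabla f|$, equivalently by a truncated Riesz potential $r^{1-n}\int_{B_{2r}(y)}|\nabla f|(w)\,d(z,w)^{1-n}\,d\upsilon(w)$ once the lower volume bound $\upsilon(B_s(w))\gtrsim(s/R)^n\upsilon(B_R(x))$ is used. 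Integrating this in $L^{n/(n-1)}$ — the mapping property of the Riesz potential of order one in the $n$-dimensional scale — upgrades the weak $(1,1)$-Poincar\'e to an honest $(n/(n-1),1)$-Poincar\'e on $B_r(y)$ with a constant of the required shape. A truncation step, applying the $(n/(n-1),1)$-inequality to $|g|^{\gamma}$ with $g=f-\frac{1}{\upsilon(B_r(y))}\int_{B_r(y)}f\,d\upsilon$ and $\gamma=p(n-1)/(n-p)$ and then using H\"older, converts this into the $(np/(n-p),p)$-Poincar\'e inequality for each fixed $1\le p<n$; and H\"older on the left over the fixed ball then yields the whole range $p\le q\le np/(n-p)$. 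Finally, passing from sub-balls $B_r(y)\subset B_R(x)$ to the statement in Definition \ref{muneri} proceeds as in Remark \ref{85}.

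I expect the genuine difficulty to be quantitative rather than conceptual: ensuring that the self-improvement produces \emph{precisely} the exponent $np/(n-p)$, and that the constant splits as $C_1(p,q)\,e^{C_2(n)(1+\sqrt{|K|}R)}$ with the exponential rate depending on $n$ alone. This forces one to keep the Bishop--Gromov inputs in their sharp form throughout the chaining and maximal-function estimates — doubling constant $2^n e^{C(n)\sqrt{|K|}R}$ and relative volume $\gtrsim(r/R)^n e^{-C(n)\sqrt{|K|}R}$, never lumped into a single opaque doubling constant — and to track the (harmless) dependence on $p$ and $q$ carefully through the truncation and H\"older steps.
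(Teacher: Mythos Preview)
The paper does not supply its own proof of this theorem: it is stated as a citation of Maheux--Saloff-Coste \cite[Th\'eor\`eme $1.1$]{mas}, with pointers to \cite[Theorem $3.21$]{he2} and \cite[Theorem $5.1$]{HK1} for related arguments. So there is nothing to compare your sketch against in the paper itself.

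That said, your outline is along the standard lines one would expect and, in fact, invokes exactly the references the paper points to (Haj{\l}asz--Koskela self-improvement from a weak $(1,1)$-Poincar\'e plus doubling). The strategy --- Bishop--Gromov for doubling and lower volume bounds, a segment-inequality or Buser-type argument for the weak $(1,1)$-Poincar\'e, telescoping/maximal-function chaining to reach $(n/(n-1),1)$, and then the Moser truncation to lift to $(np/(n-p),p)$ --- is correct in broad shape. Your emphasis on keeping the Bishop--Gromov inputs in their sharp $n$-dependent form (rather than absorbing them into a single doubling constant with exponent equal to the homogeneous dimension) is exactly the point that delivers the Euclidean Sobolev exponent $np/(n-p)$ and the claimed splitting $C_1(p,q)\,e^{C_2(n)(1+\sqrt{|K|}R)}$; this is the substance of the Maheux--Saloff-Coste result and you have identified it correctly.
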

\subsection{Ricci limit spaces}
\subsubsection{Rectifiability, weakly second-order differential structure and dimension}
Let $(X, x, \upsilon) \in \overline{M(n, K)}$ with $\mathrm{diam}\,X>0$.
We say that a pointed proper metric measure space \textit{$(Y, y, \nu)$ is a tangent cone of $(X, \upsilon)$ at a point $z \in X$} if there exists a sequence $\{r_i\}_i$ of $r_i>0$ with $r_i \to 0$ such that $(X, z, \upsilon/\upsilon (B_{r_i}(z)), r^{-1}_id_X) \stackrel{GH}{\to} (Y, y, \nu)$.
For every $1 \le k \le n$, let us denote by $\mathcal{R}_k$ the set of $z \in X$ satisfying that every tangent cone $(Y, y, \nu)$ of $(X, \upsilon)$ at $z$ is isometric to $(\mathbf{R}^k, 0_k, H^k/H^k(B_1(0_k)))$, where $0_k=(0, \ldots, 0) \in \mathbf{R}^k$. 
Let
\[\mathcal{R}:=\bigcup_{k=1}^n\mathcal{R}_k.\]
We first introduce several Cheeger-Colding's results:
\begin{theorem}\cite{ch-co1, ch-co2, ch-co3}\label{fundpr}
We have the following:
\begin{enumerate}
\item For any $1<p<\infty$ and open subset $U$ of $X$, the Sobolev space $H^{1, p}(U)$ is well-defined.
Moreover if $U=B_R(z)$ for some $z \in X$ and $R>0$, then $\mathrm{LIP}_{\mathrm{loc}}(B_R(z)) \cap H^{1, p}(B_R(z))$ is dense in $H^{1, p}(B_R(z))$.
\item $(X, \upsilon)$ is rectifiable. Moreover for every open subset $U$ of $X$ we see that $H^{1, p}(U) \subset \Gamma_1(U)$ 
 and that
\[||f||_{H_{1, p}}=\left(||f||_{L^p}^p+||df||_{L^p}^p\right)^{1/p}\]
for every $f \in H^{1, p}(U)$.
In particular for every $f \in H^{1, p}(U) \cap \mathrm{LIP}_{\mathrm{loc}}(U)$ we have $||f||_{H_{1, p}}=\left(||f||_{L^p}^p+||\mathrm{Lip}f||_{L^p}^p\right)^{1/p}$.
\item The bilinear form
\[\int_X\langle df, dg \rangle d\upsilon\]
on $H^{1, 2}(X)$ gives a canonical Dirichlet form on $L^2(X)$ (see \cite{fuku}).
\item $\upsilon(X \setminus \mathcal{R})=0$.
\item The following four conditions (called \textit{noncollapsed conditions}) are equivalent:
\begin{enumerate}
\item The Hausdorff dimension of $X$ is equal to $n$.
\item $\mathcal{R}_n \neq \emptyset$.
\item $\mathcal{R}_i =\emptyset$ for every $i<n$.
\item $\upsilon=H^n/H^n(B_1(x))$.
\end{enumerate}
\end{enumerate}
\end{theorem}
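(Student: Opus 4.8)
The plan is to reduce the five assertions to two foundational inputs of Cheeger and Colding: that every Ricci limit space is a doubling metric measure space supporting a weak Poincar\'e inequality (a ``PI space''), and the almost splitting theorem together with the volume convergence theorem. First I would record that $M(n,K)$ enjoys a uniform doubling bound (Bishop--Gromov volume comparison) and a uniform weak $(1,1)$-Poincar\'e inequality (via the segment inequality of \cite{ch-co1}), and that both properties are stable under measured Gromov--Hausdorff convergence, so $(X,\upsilon)$ is a PI space. Granting this, (1) is Cheeger's theory of Sobolev spaces over PI spaces: $H^{1,p}(U)$ is well defined and reflexive, and locally Lipschitz functions are dense, by the relaxation and Lipschitz-density results of \cite{ch1}. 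For (3), closedness of the bilinear form is completeness of $H^{1,2}(X)$, the Markov property follows from truncation together with the chain rule for $|df|$, strong locality and the Leibniz rule hold because $\langle df,dg\rangle$ is a genuine carr\'e du champ, and regularity is the density statement from (1); hence one obtains a strongly local regular Dirichlet form.

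For (2) I would invoke the measure-theoretic a.e.-Euclidean tangent cone theorem of \cite{ch-co3}, namely that for $\upsilon$-a.e. $x$ every tangent cone of $(X,\upsilon)$ at $x$ is some $(\mathbf{R}^{k(x)},0_{k(x)})$; this is obtained by iterating the almost splitting theorem on renormalized limit measures. Distance-based (or harmonic) coordinate maps then produce the bi-Lipschitz charts $\phi_i^l$, with the $1+\delta$ control coming from the almost-metric-cone estimates, which is exactly rectifiability in the sense defined above. The cotangent bundle $T^*X$ and the Rademacher-type statement of Theorem \ref{29292} are Cheeger's differentiability theorem for Lipschitz functions on PI spaces \cite{ch1}, and the identity $||f||_{H_{1,p}}=(||f||_{L^p}^p+||df||_{L^p}^p)^{1/p}$ follows once the minimal $p$-weak upper gradient of $f$ is identified with $|df|$ a.e., again from Cheeger's theory plus the Poincar\'e inequality.

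For (4), $\upsilon(X\setminus\mathcal{R})=0$ is the a.e.-regularity statement of \cite{ch-co2, ch-co3}: one shows that for $\upsilon$-a.e. $x$ the maximal number of independent Euclidean directions splitting off a tangent cone at $x$ is attained, and that no positive-measure set of ``mixed-dimensional'' points occurs, via a renormalized-limit-measure compactness argument built on the almost splitting theorem. For the equivalence in (5): (d)$\Rightarrow$(a), and more generally the link between $H^n$-normalization and Hausdorff dimension $n$, is Colding's volume convergence theorem, a uniform lower volume bound forcing $\upsilon$ to be a constant multiple of $H^n$; (a)$\Rightarrow$(b) because positive $H^n$-measure yields a density-one point whose tangent cones must be $\mathbf{R}^n$; (b)$\Rightarrow$(c) is volume monotonicity, an $n$-regular point giving a uniform lower bound on volume ratios that is incompatible with any lower-dimensional regular point; and (c)$\Rightarrow$(d) combines (4) with the fact that on $\mathcal{R}_n$ the measure $\upsilon$ agrees, up to a constant, with $H^n$.

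The main obstacle in all of this is the almost splitting theorem of \cite{ch-co1}, on which (2), (4) and (5) all rest: proving that a ball volume-close to a Euclidean ball must split off a line, and then iterating this on limit measures, requires the delicate harmonic-function and segment-inequality estimates of Cheeger--Colding; converting the resulting a.e.-Euclidean information into honest bi-Lipschitz charts with $1+\delta$ distortion, hence rectifiability, is the technical heart of \cite{ch-co3}.
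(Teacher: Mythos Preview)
The paper does not give its own proof of this theorem. It is stated as a background result and attributed directly to Cheeger--Colding via the citation \cite{ch-co1, ch-co2, ch-co3}; no argument follows the statement, and the paper immediately moves on to the next cited result (Colding--Naber's Theorem~\ref{4455}). So there is no ``paper's own proof'' to compare your proposal against.

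That said, your outline is a fair high-level summary of how these facts are obtained in the original sources: doubling and Poincar\'e pass to the limit (hence $(X,\upsilon)$ is a PI space), Cheeger's differentiation theory in \cite{ch1} yields (1), (2) (the Sobolev/Rademacher part) and (3), while the structure theory based on almost-splitting and volume convergence in \cite{ch-co1, ch-co2, ch-co3} gives rectifiability, $\upsilon(X\setminus\mathcal{R})=0$, and the noncollapsed equivalences. One small correction: the a.e.\ uniqueness of tangent-cone dimension used implicitly in (4) and (5) is already in \cite{ch-co1}, not only \cite{ch-co2, ch-co3}; and in your sketch of (b)$\Rightarrow$(c) the actual mechanism is not ``volume monotonicity'' per se but the connectedness of the regular set together with the lower semicontinuity of density/splitting dimension along geodesics, as developed in \cite{ch-co1}. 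These are minor attribution points; the logical skeleton you describe is the correct one.
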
 
Next we introduce a Colding-Naber's result:
\begin{theorem}\cite{co-na1}\label{4455}
There exists a unique $k$ such that 
\[\upsilon (\mathcal{R} \setminus \mathcal{R}_k)=0.\]
We call $k$ \textit{the dimension of $X$} and we denote it by $\mathrm{dim}\,X$.
\end{theorem}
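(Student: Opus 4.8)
The plan is to follow Colding--Naber's geodesic argument; the statement splits into an essentially trivial uniqueness part and a hard existence part. For uniqueness, suppose $\upsilon(\mathcal{R}\setminus\mathcal{R}_k)=\upsilon(\mathcal{R}\setminus\mathcal{R}_j)=0$ with $j\neq k$. Since $\upsilon(B_1(x))=1$ (the basepoint being $x$, $\upsilon$ being a limit of normalized measures) and $\upsilon(X\setminus\mathcal{R})=0$ by Theorem~\ref{fundpr}, we get $\upsilon(\mathcal{R}_k\cap B_1(x))\ge\upsilon(B_1(x))-\upsilon(X\setminus\mathcal{R})-\upsilon(\mathcal{R}\setminus\mathcal{R}_k)=1$ and likewise $\upsilon(\mathcal{R}_j\cap B_1(x))\ge1$; as $\mathcal{R}_j$ and $\mathcal{R}_k$ are disjoint subsets of $B_1(x)$ this forces $2\le\upsilon(B_1(x))=1$, absurd. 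Hence at most one $k$ works, and the real task is to produce one.

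For existence I would first reduce to a statement about minimizing geodesics. Applying Cheeger--Colding's segment inequality (see \cite{ch-co1}) on balls $B_R(x)$ to the Borel function $g:=\mathbf{1}_{X\setminus\mathcal{R}}$, whose $\upsilon$-integral vanishes, shows that for $(\upsilon\times\upsilon)$-a.e.\ pair $(p,q)$ there is a minimizing geodesic $\gamma$ from $p$ to $q$ with $\gamma(s)\in\mathcal{R}$ for a.e.\ $s$; after replacing $\mathcal{R}$ by a slightly more robust co-null ``quantitatively regular'' set, closed along a.e.\ geodesic, one upgrades this to $\gamma(s)\in\mathcal{R}$ for every interior $s$. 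Thus each interior point $\gamma(s)$ has a well-defined dimension $k(\gamma(s))$ with $\gamma(s)\in\mathcal{R}_{k(\gamma(s))}$, and it suffices to show: (i) $s\mapsto k(\gamma(s))$ is constant on the interior of a.e.\ such geodesic; and (ii) the resulting value is $(\upsilon\times\upsilon)$-a.e.\ the same.

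The heart of the matter, and the step I expect to be the main obstacle, is the sharp H\"older continuity of tangent cones along the interior of geodesics proved by Colding--Naber in \cite{co-na1}: there exist $C=C(n)$ and $\alpha=\alpha(n)\in(0,1)$ so that, along any minimizing geodesic in a space of $\overline{M(n,K)}$, the pointed rescaled geometries at $\gamma(s)$ and $\gamma(t)$ are $C|s-t|^{\alpha}$-Gromov--Hausdorff close for $s,t$ bounded away from the endpoints. This estimate is genuinely deep --- it rests on a heat-flow regularization of the distance functions and a second-variation analysis of families of nearby geodesics, and it is precisely the ingredient that was missing before \cite{co-na1}. Granting it, $s\mapsto(\text{tangent cone at }\gamma(s))$ is continuous in the pointed Gromov--Hausdorff topology on the open interval, and since a Gromov--Hausdorff limit of copies of $(\mathbf{R}^k,0_k,H^k/H^k(B_1(0_k)))$ that is isometric to some $(\mathbf{R}^j,0_j,H^j/H^j(B_1(0_j)))$ forces $j=k$, the function $s\mapsto k(\gamma(s))$ is locally constant, hence constant on the connected interval; this gives (i).

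For (ii) I would propagate the common value using shrinking geodesics together with the almost-splitting theorem. Fix a generic $p\in\mathcal{R}$ and, for $\upsilon$-a.e.\ $q$ near $p$, a minimizing geodesic $\gamma_{p,q}$. Rescaling at $p$ by $d_X(p,q)$, the pictures converge to $(\mathbf{R}^{k(p)},0)$ with $\gamma_{p,q}$ converging to a segment through the origin, so for $s$ in a fixed compact subinterval the rescaled tangent cone at $\gamma_{p,q}(s)$ is Gromov--Hausdorff close to $\mathbf{R}^{k(p)}$ by the H\"older estimate; being some $\mathbf{R}^j$, it must equal $\mathbf{R}^{k(p)}$, so the constant from (i) for $\gamma_{p,q}$ is $k(p)$. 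Reading the same geodesic from $q$ yields $k(q)$, hence $k(q)=k(p)$ for $\upsilon$-a.e.\ $q$ in a neighborhood of $p$; that is, $x\mapsto k(x)$ is $\upsilon$-essentially locally constant on $X$. As $X$ is a connected geodesic space of positive measure, each $\{x:k(x)=k_0\}$ is then $\upsilon$-essentially open and $\upsilon$-essentially closed, so exactly one of them is co-null. Its index is the desired $k$, and $\upsilon(\mathcal{R}\setminus\mathcal{R}_k)=\sum_{j\neq k}\upsilon(\mathcal{R}_j)=0$, which is the assertion; one then sets $\mathrm{dim}\,X:=k$.
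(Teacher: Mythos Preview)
The paper does not give its own proof of this statement: Theorem~\ref{4455} is quoted directly from Colding--Naber \cite{co-na1} as a cited result, with no proof environment following it. So there is nothing in the paper to compare your proposal against.

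That said, your sketch is a faithful outline of the Colding--Naber argument itself: the uniqueness is indeed trivial from $\upsilon(X\setminus\mathcal{R})=0$ and disjointness of the $\mathcal{R}_j$, and the existence hinges on their H\"older continuity of tangent cones along the interior of minimizing geodesics, which forces the regular dimension to be constant along a.e.\ geodesic and then globally constant by a connectedness/propagation argument. Your identification of the H\"older estimate as the deep step is exactly right. One minor point: in step~(ii) you don't really need the almost-splitting theorem or the rescaling argument as you've written it; once (i) gives that the dimension is constant along the interior of a.e.\ geodesic between a.e.\ pair of points, a Fubini-type argument with the segment inequality already yields that the value is $(\upsilon\times\upsilon)$-a.e.\ independent of the pair, since two generic geodesics can be linked through a common generic point.
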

By Theorem \ref{4455} with an argument similar to the proof of $(2)$ of Theorem \ref{fundpr} we have the following (see also \cite{holip}):
\begin{itemize}
\item For every convergent sequence $\{(X_i, x_i, \upsilon_i)\}$ of $(X_i, x_i, \upsilon_i) \in M(n, K)$ to $(X, x, \upsilon)$, there exist
\begin{itemize}
\item a subsequence $\{i(j)\}_j$,
\item a rectifiable system $\mathcal{A}=\{(C_l, \phi_l)\}_l$ satisfying that $(C_l, \phi_l)$ is $k$-dimensional for every $l$,
\item a collection $\{B_{r_l}(y_{l})\}_{l}$ of $B_{r_l}(y_{l}) \subset X$ with $C_l \subset B_{r_l}(y_l)$,
\item a collection $\{h_l\}_l$ of $C(n, K)$-Lipschitz harmonic maps $h_l: B_{r_l}(y_{l}) \to \mathbf{R}^k$ (which means that $h_{l, j}$ is harmonic for every $j$, where $h_l:=(h_{l, 1}, \ldots, h_{l, k})$) with 
\[h_l|_{C_l}=\phi_l.\]
\item sequences $\{y_{i(j), l}\}_{j<\infty, l}$ of $y_{i(j), l} \in X_{i(j)}$ with $y_{i(j), l} \stackrel{GH}{\to} y_l$ as $j \to \infty$ and
\item sequences $\{h_{i(j), l}\}_{j<\infty, l}$ of $C(n, K)$-Lipschitz harmonic maps $h_{i(j), l}: B_{r_l}(y_{i(j), l}) \to \mathbf{R}^k$ satisfying that $h_{i(j), l}$ converges uniformly to $h_{l}$ on $B_{r_l}(y_l)$ as $j \to \infty$.
\end{itemize}
\end{itemize}
A main result of \cite{ho0} is the following.
\begin{theorem}\cite{ho0}\label{2n}
For $\mathcal{A}$ as above, we see that $\mathcal{A}$ is a weakly second-order differential system of $(X, \upsilon)$.
We say that $\mathcal{A}$ is a \textit{weakly second-order differential system of $(X, \upsilon)$ associated with $\{(X_{i(j)}, x_{i(j)}, \upsilon_{i(j)})\}_j$} or more simply, a \textit{harmonic rectifiable system of $(X, \upsilon)$ associated with $\{(X_{i(j)}, x_{i(j)}, \upsilon_{i(j)})\}_j$}. 
\end{theorem}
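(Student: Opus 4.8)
The plan is to follow \cite{ho0}, via the following reduction. Since the notion of a weakly second-order differential system is local and $\upsilon(X\setminus\bigcup_lC_l)=0$, it suffices to fix two coordinate patches $(C_l,\phi_l)$ and $(C_{l'},\phi_{l'})$ of $\mathcal{A}$ and to prove that the transition map $F:=\phi_l\circ(\phi_{l'})^{-1}$ is weakly twice differentiable on the Borel set $\phi_{l'}(C_l\cap C_{l'})\subset\mathbf{R}^k$, i.e. that the Jacobi matrix $J(F)$ is differentiable at a.e. point. Writing $h_l=(h_{l,1},\dots,h_{l,k})$ and $h_{l'}=(h_{l',1},\dots,h_{l',k})$ for the ambient $C(n,K)$-Lipschitz harmonic maps, I would first record, via the chain rule for the canonical differentials (Theorem \ref{29292}), that on $\phi_{l'}(C_l\cap C_{l'})$
\[
J(F)=\bigl(\Theta\,G^{-1}\bigr)\circ(\phi_{l'})^{-1},\qquad \Theta:=\bigl(\langle dh_{l,i},dh_{l',j}\rangle\bigr)_{ij},\quad G:=\bigl(\langle dh_{l',i},dh_{l',j}\rangle\bigr)_{ij}.
\]
Since the coordinate patches may be chosen bi-Lipschitz with constant arbitrarily close to $1$, $\det G$ is bounded away from $0$ and from above on $C_{l'}$, so every entry of $J(F)$ is obtained from the bounded Borel functions $\theta_{ij}:=\langle dh_{l,i},dh_{l',j}\rangle$ and $g_{ij}:=\langle dh_{l',i},dh_{l',j}\rangle$ on $X$ by sums, products and division by a function bounded below. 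As these operations, together with composition with the bi-Lipschitz map $(\phi_{l'})^{-1}$, preserve the property of being Lipschitz on each member of a countable Borel partition of full measure, the statement reduces to showing that $\theta_{ij},g_{ij}\in\Gamma_1(X)$.

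To prove this, I would show that $\theta_{ij}$ and $g_{ij}$ belong to $H^{1,2}$ on every ball compactly contained in $B_{r_l}(y_l)\cap B_{r_{l'}}(y_{l'})$, and then invoke the inclusion $H^{1,2}\subset\Gamma_1$ from $(2)$ of Theorem \ref{fundpr}, since such balls cover $C_l\cap C_{l'}$. On the approximating manifold $X_m$, let $u:=h^{(m)}_{l,i}$ and $v:=h^{(m)}_{l',j}$ be the harmonic functions converging uniformly to $h_{l,i}$ and $h_{l',j}$, and set $\theta^{(m)}:=g_{X_m}(\nabla u,\nabla v)$, so that $|\nabla\theta^{(m)}|\le|\mathrm{Hess}\,u|\,|\nabla v|+|\nabla u|\,|\mathrm{Hess}\,v|$. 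Integrating the Bochner formula $\frac{1}{2}\Delta|\nabla u|^2=|\mathrm{Hess}\,u|^2+\mathrm{Ric}_{X_m}(\nabla u,\nabla u)$ for the harmonic function $u$ against a Lipschitz cut-off, and using $\mathrm{Ric}_{X_m}\ge K(n-1)g_{X_m}$ together with the uniform gradient bound $|\nabla u|\le C(n,K)$, gives a bound on $\int|\mathrm{Hess}\,u|^2\,d\upsilon_m$, uniform in $m$, over any ball compactly contained in the relevant domain; likewise for $v$. Hence $\{\theta^{(m)}\}_m$ is bounded in $H^{1,2}$ locally. Since the gradients of the fixed harmonic coordinate functions converge strongly in $L^2$ on the limit space (by the convergence theory of harmonic functions with respect to the Gromov-Hausdorff topology; see \cite{ch-co3,di2,holip,holp}) and $\theta^{(m)}$ is uniformly bounded in $L^\infty$, $\theta^{(m)}$ $L^2$-converges strongly to $\theta_{ij}$, and lower semicontinuity of the Dirichlet energy under $L^2$-convergence then gives $\theta_{ij}\in H^{1,2}$ locally; the argument for $g_{ij}$ is identical, taking $v=u$. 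This completes the reduction and hence the proof.

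The hard part is this passage to the Gromov-Hausdorff limit: one must simultaneously know that the gradients of the specific harmonic coordinate functions $h_{l,i}$ converge strongly in $L^2$ on the limit space, and that the uniform $H^{1,2}$-bound on $\theta^{(m)}$---whose source is the uniform $L^2$-bound on the Hessians of harmonic functions provided by Bochner's formula---survives in the limit attached to the correct limit function $\theta_{ij}$; this is where the $L^p$-convergence machinery and the Mosco-type lower semicontinuity of the Dirichlet forms are essential. The remaining measure-theoretic point, that $\phi_{l'}$ carries $\upsilon|_{C_{l'}}$-null sets to Lebesgue-null subsets of $\mathbf{R}^k$, so that the $\Gamma_1$-property transports from $X$ to $\phi_{l'}(C_l\cap C_{l'})$, is routine from the rectifiability conditions defining $\mathcal{A}$; everything else is elementary multilinear algebra together with the stability of ``Lipschitz on pieces'' under algebraic operations.
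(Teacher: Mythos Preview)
The paper does not prove Theorem \ref{2n}; it is quoted as a preliminary result from \cite{ho0} with no argument supplied here. Your sketch is a faithful reconstruction of the proof in \cite{ho0}: reduce to showing the entries of $J(F)$ are in $\Gamma_1$, express them algebraically in terms of $\langle dh_{l,i},dh_{l',j}\rangle$, and prove those inner products lie in a local Sobolev space by transferring the Bochner-formula Hessian bound for harmonic functions across the Gromov--Hausdorff approximation. The formula $J(F)=(\Theta G^{-1})\circ(\phi_{l'})^{-1}$ and the stability of ``Lipschitz on pieces'' under sums, products, and inversion of a matrix with determinant bounded away from zero are exactly the algebraic reductions used there.

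One small sharpening: rather than invoking ``lower semicontinuity of the Dirichlet energy'' to conclude $\theta_{ij}\in H^{1,2}$ locally, the cleaner route---and the one matching the machinery of this paper---is to apply the Rellich-type compactness of Theorem \ref{srell}: the uniform $H^{1,2}$-bound on $\theta^{(m)}$ plus its $L^2$-strong convergence to $\theta_{ij}$ (from the $L^2$-strong convergence of $\nabla h^{(m)}_{l,i}$ and $\nabla h^{(m)}_{l',j}$, both bounded in $L^\infty$) force $\theta_{ij}\in H^{1,2}$ on each compactly contained ball, with $\nabla\theta^{(m)}$ converging $L^2$-weakly to $\nabla\theta_{ij}$. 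This is precisely how the later Theorem \ref{aarr} in the present paper reproves and extends Theorem \ref{2n} intrinsically; your outline and that argument agree.
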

\subsubsection{$L^p$-convergence}
Let $R>0$, let $\{p_i\}_{i \le \infty}$ be a convergent sequence in $(1, \infty)$ and let $(X_i, x_i, \upsilon_i) \stackrel{GH}{\to} (X_{\infty}, x_{\infty}, \upsilon_{\infty})$ in $\overline{M(n, K)}$ with $\mathrm{diam}\,X_{\infty}>0$. 
We first introduce a generalization of the notion of the $L^p$-convergence for functions with respect to the Gromov-Hausdorff topology defined by Kuwae-Shioya in \cite{KS, KS2}.
\begin{definition}\cite{KS, KS2, holp}\label{lpf}
Let $\{f_i\}_{i\le \infty}$ be a sequence of $f_i \in L^{p_i}(B_R(x_i))$. 
\begin{enumerate}
\item We say that \textit{$f_i$ $\{L^{p_i}\}_i$-converges weakly to $f_{\infty}$ on $B_R(x_{\infty})$} if the following two conditions hold:
\begin{enumerate} 
\item $\sup_i||f_i||_{L^{p_i}(B_R(x_i))}<\infty$.
\item For any convergent sequence $\{z_i\}_{i \le \infty}$ of $z_i \in B_R(x_{i})$ and $r>0$ with $B_r(z_{\infty}) \subset B_R(x_{\infty})$,  we have 
\[\lim_{i \to \infty}\int_{B_r(z_i)}f_i d\upsilon_i=\int_{B_r(z_{\infty})}f_{\infty} d\upsilon_{\infty}.\]
Moreover if $p_i \equiv p$, then we call this an \textit{$L^p$-weak convergence (with respect to the Gromov-Hausdorff topology)}.
\end{enumerate}
\item We say that \textit{$f_i$ $\{L^{p_i}\}_i$-converges strongly to $f_{\infty}$ on $B_R(x_{\infty})$} if the following two conditions hold:
\begin{enumerate}
\item $f_i$ $\{L^{p_i}\}_i$-converges weakly to $f_{\infty}$ on $B_R(x_{\infty})$.
\item $\limsup_{i \to \infty}||f_i||_{L^{p_i}(B_R(x_{i}))}\le ||f_{\infty}||_{L^{p_{\infty}}(B_R(x_{\infty}))}$.
\end{enumerate}
Moreover if $p_i \equiv p$, then we also call this an \textit{$L^p$-strong convergence (with respect to the Gromov-Hausdorff topology)}.
\end{enumerate}
\end{definition}
Note that if $p_i \equiv p$, then the notions above are equivalent to that defined by Kuwae-Shioya in \cite{KS, KS2}.
In particular if $(X_i, x_i, \upsilon_i) \equiv (X, x, \upsilon)$, then these coincide with that in the ordinary sense.
See \cite[Remark $3.77$]{holp}.
\begin{remark}\label{equivlp}
Let $\{f_i\}_{i\le \infty}$ be a sequence of $f_i \in C^0(B_R(x_i))$.
Assume that $\sup_{i \le \infty}||f_i||_{L^{\infty}}<\infty$ and that $\{f_i\}_{i<\infty}$ is asymptotically uniformly equicontinuous on $B_R(x_{\infty})$, i.e., for every $\epsilon>0$ there exist $i_0 \in \mathbf{N}$ and $\delta>0$ such that for any $i \ge i_0$ and $\alpha, \beta \in B_R(x_i)$ with $d_{X_i}(\alpha, \beta)<\delta$ we have $|f_i(\alpha)-f_i(\beta)|<\epsilon$ (see \cite[Definition $3.2$]{holp}).
Then the following three conditions are equivalent:
\begin{itemize}
\item $f_i$ converges uniformly to $f_{\infty}$ on $B_R(x_{\infty})$.
\item $f_i$ $\{L^{p_i}\}_i$-converges weakly to $f_{\infty}$ on $B_R(x_{\infty})$ for some convergent sequence $\{p_i\}_i$ in $(1, \infty)$.
\item $f_i$ $\{L^{p_i}\}_i$-converges strongly to $f_{\infty}$ on $B_R(x_{\infty})$ for every convergent sequence $\{p_i\}_i$ in $(1, \infty)$.
\end{itemize}
See \cite[Remark $3.8$ and Proposition $3.32$]{holp}.
\end{remark}
Next we consider the case of tensor fields.
We denote by $r_z$ the distance function from $z$.
\begin{definition}\cite{holp}\label{lpt}
Let $r, s \in \mathbf{Z}_{\ge 0}$ and let $\{T_i\}_{i \le \infty}$ be a sequence of $T_i \in L^{p_i}(T^r_sB_R(x_i))$.
\begin{enumerate}
\item We say that \textit{$T_i$ $\{L^{p_i}\}_i$-converges weakly to $T_{\infty}$ on $B_R(x_{\infty})$} if the following two conditions hold:
\begin{enumerate}
\item $\sup_i||T_i||_{L^{p_i}(B_R(x_i))}<\infty$.
\item For any convergent sequence $\{z_i\}_{i \le \infty}$ of $z_i \in B_R(x_i)$, 
 sequences $\{z_{i, j}\}_{i \le \infty, 1 \le j \le r+s}$ of $z_{i, j} \in X_i$ with $z_{i, j} \stackrel{GH}{\to} z_{\infty, j}$ as $i \to \infty$,
and $r>0$ with $B_r(z_{\infty}) \subset B_R(x_{\infty})$, we have 
\[\lim_{i \to \infty}\int_{B_r(z_i)}\left\langle T_i, \bigotimes _{j=1}^r\nabla r_{z_{i, j}} \otimes \bigotimes_{j=r+1}^{r+s}dr_{z_{i, j}}\right\rangle d\upsilon_i=\int_{B_r(z_{\infty})}\left\langle T_{\infty}, \bigotimes _{j=1}^r\nabla r_{z_{\infty, j}} \otimes \bigotimes_{j=r+1}^{r+s}dr_{z_{\infty, j}} \right\rangle d\upsilon_{\infty}.\]
\end{enumerate}
\item We say that \textit{$T_i$ $\{L^{p_i}\}_i$-converges strongly to $T_{\infty}$ on $B_R(x_{\infty})$} if the following two conditions hold:
\begin{enumerate}
\item $T_i$ $\{L^{p_i}\}_i$-converges weakly to $T_{\infty}$ on $B_R(x_{\infty})$.
\item $\limsup_{i \to \infty}||T_i||_{L^{p_i}(B_R(x_{i}))}\le ||T_{\infty}||_{L^{p_{\infty}}(B_R(x_{\infty}))}$.
\end{enumerate}
\end{enumerate}
\end{definition}
The fundamental properties of these convergence include the following (see \cite{holp}):
\begin{itemize}
\item Every $\{L^{p_i}\}_i$-bounded sequence has an $\{L^{p_i}\}_i$-weak convergent subsequence.
\item $L^{p_i}$-norms are lower semicontinuous with respect to the $\{L^{p_i}\}_i$-weak convergence.
\item Every $\{L^{p_i}\}_i$-weak (or $\{L^{p_i}\}_i$-strong, respectively) convergent sequence is also an $\{L^{q_i}\}_i$-weak (or $\{L^{p_i}\}_i$-strong, respectively) convergent sequence for every convergent sequence $\{q_i\}_{i \le \infty}$ in $(1, \infty)$ with $q_i \le p_i$ for every $i$.
\end{itemize}
We also proved in \cite{holp} that Riemannian metrics of Ricci limit spaces behave $L^p$-weak continuously with respect to the Gromov-Hausdorff topology for every $1<p<\infty$ (see also Remark \ref{riemlp}).
As a corollary we knew the following.
\begin{theorem}\cite{holp}
The function $\mathrm{dim}: \overline{M(n, K)} \to \{0, 1, \ldots, n\}$ is lower semicontinuous, where $\mathrm{dim}\,X:=0$ when $X$ is a single point.
\end{theorem}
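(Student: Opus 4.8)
The plan is to derive the statement directly from the $L^p$-weak continuity of Riemannian metrics under Gromov-Hausdorff convergence recalled just above, together with the lower semicontinuity of $L^p$-norms under $L^p$-weak convergence. Given a convergent sequence $(X_i, x_i, \upsilon_i) \stackrel{GH}{\to} (X_\infty, x_\infty, \upsilon_\infty)$ in $\overline{M(n, K)}$, I must show $\mathrm{dim}\,X_\infty \le \liminf_{i \to \infty}\mathrm{dim}\,X_i$; the case where $X_\infty$ is a single point is trivial, so I may assume $\mathrm{diam}\,X_\infty > 0$ (and hence $\mathrm{diam}\,X_i > 0$ for $i$ large). The first step is the elementary pointwise identity $|g_Z|^2 = \mathrm{dim}\,Z$, valid $\zeta$-a.e.\ on any Ricci limit space $(Z, \zeta)$: at a point $w$ where the fiber $T_wZ$ has dimension $k$, choosing an orthonormal basis $e_1, \dots, e_k$ with dual basis $e^1, \dots, e^k$ one has $g_Z = \sum_{a} e^a \otimes e^a$, hence $\langle g_Z, g_Z\rangle = \sum_{a, b}\langle e^a, e^b\rangle^2 = k$; and by the rectifiability of $(Z, \zeta)$ (Theorem \ref{fundpr}) together with Colding-Naber's Theorem \ref{4455}, this fiber dimension equals $\mathrm{dim}\,Z$ for $\zeta$-a.e.\ $w$.

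Next I would fix any $R > 0$; by the volume convergence built into the definition of Gromov-Hausdorff convergence used in this paper, $\upsilon_i(B_R(x_i)) \to \upsilon_\infty(B_R(x_\infty)) \in (0, \infty)$. Using the identity above, $||g_{X_i}||_{L^2(B_R(x_i))}^2 = (\mathrm{dim}\,X_i)\,\upsilon_i(B_R(x_i))$, so $\{g_{X_i}\}_i$ is $L^2$-bounded on $B_R(x_\infty)$, and by the $L^2$-weak continuity of Riemannian metrics it $L^2$-converges weakly to $g_{X_\infty}$ there. Applying lower semicontinuity of the $L^2$-norm then gives
\[
(\mathrm{dim}\,X_\infty)\,\upsilon_\infty(B_R(x_\infty)) = ||g_{X_\infty}||_{L^2(B_R(x_\infty))}^2 \le \liminf_{i \to \infty}||g_{X_i}||_{L^2(B_R(x_i))}^2 = \liminf_{i \to \infty}\big((\mathrm{dim}\,X_i)\,\upsilon_i(B_R(x_i))\big),
\]
and since $\upsilon_i(B_R(x_i)) \to \upsilon_\infty(B_R(x_\infty)) > 0$ the right-hand side equals $\upsilon_\infty(B_R(x_\infty))\,\liminf_{i \to \infty}\mathrm{dim}\,X_i$; dividing through by $\upsilon_\infty(B_R(x_\infty))$ yields the desired inequality.

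I do not expect a genuine obstacle here, since the real work is the $L^p$-weak continuity of Riemannian metrics, which I am taking as given. Within the argument, the one point deserving care is the a.e.\ identity $|g_Z|^2 = \mathrm{dim}\,Z$: it rests on the $\zeta$-a.e.\ coincidence of the cotangent fiber dimension with $\mathrm{dim}\,Z$, which follows from the rectifiable structure of $(Z, \zeta)$ and Theorem \ref{4455} but should be recorded explicitly. Everything else — the $L^2$-boundedness, the weak convergence of the metrics, and the evaluation of $\liminf_{i\to\infty}\big((\mathrm{dim}\,X_i)\,\upsilon_i(B_R(x_i))\big)$ as a product of a convergent positive sequence with a bounded nonnegative one — is immediate from the results already recalled.
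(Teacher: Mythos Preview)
Your proposal is correct and follows exactly the approach the paper indicates: the theorem is stated there as a corollary of the $L^p$-weak continuity of Riemannian metrics (cited from \cite{holp}), and your argument simply spells out that deduction via the pointwise identity $|g_Z|^2=\mathrm{dim}\,Z$ together with lower semicontinuity of $L^p$-norms under weak convergence. The paper itself does not reproduce a proof, so there is nothing further to compare.
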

We say that a convergent sequence $(X_i, x_i, \upsilon_i) \stackrel{GH}{\to} (X_{\infty}, x_{\infty}, \upsilon_{\infty})$ in $\overline{M(n, K)}$ is \textit{noncollapsed} if 
\[\lim_{i \to \infty}\mathrm{dim}\,X_i=\mathrm{dim}\,X_{\infty}.\]
Note that this notion is well-known if $(X_i, x_i, \upsilon_i) \in M(n, K)$ for every $i<\infty$ (see Theorem \ref{fundpr}).

A main result of \cite{holp} is the following which is a Rellich type compactness for functions with respect to the Gromov-Hausdorff topology.
\begin{theorem}\cite{holp}\label{srell}
Let $\{f_i\}_{i<\infty}$ be a sequence of $f_i \in H^{1, p_i}(B_R(x_i))$ with $\sup_i||f_i||_{H^{1, p_i}}<\infty$.
Then there exist  $f_{\infty} \in H^{1, p_{\infty}}(B_R(x_{\infty}))$ and a subsequence $\{f_{i(j)}\}_j$ such that 
$f_{i(j)}$ $\{L^{p_{i(j)}}\}_j$-converges strongly to $f_{\infty}$ on $B_R(x_{\infty})$ and that $\nabla f_{i(j)}$ $\{L^{p_{i(j)}}\}_j$-converges weakly to $\nabla f_{\infty}$ on $B_R(x_{\infty})$. 
\end{theorem}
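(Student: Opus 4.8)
\medskip
\noindent\emph{Proof strategy.} The plan is to upgrade the $\{L^{p_i}\}_i$-weak compactness to $\{L^{p_i}\}_i$-strong compactness by a discretization argument built on the $(1,1)$-Poincar\'e inequality. First I would apply the $\{L^{p_i}\}_i$-weak compactness (a fundamental property of the $L^p$-convergence) separately to the bounded sequences $\{f_i\}_i$ and $\{\nabla f_i\}_i$: after passing to a subsequence there exist $f_\infty \in L^{p_\infty}(B_R(x_\infty))$ and $W \in L^{p_\infty}(TB_R(x_\infty))$ with $f_i \to f_\infty$ and $\nabla f_i \to W$ $\{L^{p_i}\}_i$-weakly on $B_R(x_\infty)$. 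It then remains to prove (a) that $f_i \to f_\infty$ $\{L^{p_i}\}_i$-\emph{strongly}, and (b) that $W=\nabla f_\infty$; (b) in particular forces $f_\infty \in H^{1,p_\infty}(B_R(x_\infty))$, since then $f_\infty$ has an $L^{p_\infty}$ differential.

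For (a), I would use that each Ricci limit space $X_i$ is a PI space with doubling and $(1,1)$-Poincar\'e constants depending only on $n,K$ and the radius (Theorem \ref{sobo} together with the stability of these properties under Gromov--Hausdorff convergence). Fix $r<R$. For each small $\rho>0$ choose a maximal $\rho$-separated set $\{w^\infty_1,\dots,w^\infty_{N}\}$ in $\overline{B_r(x_\infty)}$, so that $\{B_\rho(w^\infty_k)\}_k$ covers $\overline{B_r(x_\infty)}$, the enlarged balls $\{B_{2\rho}(w^\infty_k)\}_k$ have multiplicity bounded by $C(n,K)$, and $N=N(\rho,n,K,r)$. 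Pick $w^i_k\in X_i$ with $w^i_k\stackrel{GH}{\to}w^\infty_k$ and build partitions of unity $\{\psi^i_k\}_k$ on $B_r(x_i)$ subordinate to $\{B_{2\rho}(w^i_k)\}_k$ with $\mathbf{Lip}\,\psi^i_k\le C(n,K,r)/\rho$; by the Gromov--Hausdorff Arzel\`a--Ascoli theorem (cf. \cite{holip}) these converge, along a further subsequence, uniformly to a partition of unity $\{\psi^\infty_k\}_k$ on $B_r(x_\infty)$. Let $a^i_k:=\upsilon_i(B_\rho(w^i_k))^{-1}\int_{B_\rho(w^i_k)}f_i\,d\upsilon_i$ be the mean of $f_i$ there; Bishop--Gromov gives $\upsilon_i(B_\rho(w^i_k))\ge c(n,K,r,\rho)>0$, so $\sup_{i,k}|a^i_k|<\infty$, and after a diagonal extraction over a sequence $\rho_j\downarrow 0$ we may assume all the convergences above hold simultaneously and $a^i_k\to a^\infty_k$ for every $k$ and every $\rho=\rho_j$. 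Put $\tilde f_i:=\sum_k a^i_k\psi^i_k$; then $\tilde f_i\to\tilde f_\infty:=\sum_k a^\infty_k\psi^\infty_k$ uniformly, hence $\{L^{p_i}\}_i$-strongly by Remark \ref{equivlp}. The key uniform estimate is that, using the multiplicity bound, doubling and the $(1,1)$-Poincar\'e inequality (allowing a bounded enlargement of the balls),
\[
\|f_i-\tilde f_i\|_{L^{p_i}(B_r(x_i))}\le C(n,K,r)\,\rho\,\sup_i\|\nabla f_i\|_{L^{p_i}}\le C\rho
\]
for all $i$. Since $f_i-\tilde f_i\to f_\infty-\tilde f_\infty$ $\{L^{p_i}\}_i$-weakly, lower semicontinuity of the $L^{p_i}$-norms yields $\|f_\infty-\tilde f_\infty\|_{L^{p_\infty}}\le C\rho$; combining,
\[
\limsup_{i\to\infty}\|f_i\|_{L^{p_i}(B_r(x_i))}\le C\rho+\|\tilde f_\infty\|_{L^{p_\infty}(B_r(x_\infty))}\le 2C\rho+\|f_\infty\|_{L^{p_\infty}(B_r(x_\infty))}.
\]
Letting $\rho=\rho_j\downarrow 0$ gives $\limsup_i\|f_i\|_{L^{p_i}(B_r(x_i))}\le\|f_\infty\|_{L^{p_\infty}(B_r(x_\infty))}$, which together with the weak convergence is exactly $\{L^{p_i}\}_i$-strong convergence on $B_r(x_\infty)$; a standard uniform-integrability argument on the annulus $B_R\setminus B_r$ (via the Sobolev embedding on balls of Ricci limit spaces implied by Theorem \ref{sobo}) promotes this to $B_R(x_\infty)$.

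For (b), once $f_i\to f_\infty$ $\{L^{p_i}\}_i$-strongly I would identify $W$ with $\nabla f_\infty$ by the closedness of the differential under Gromov--Hausdorff convergence: pairing $\nabla f_i$ against the gradients of the local harmonic coordinate functions of Theorem \ref{2n} (whose strong $L^p$-behaviour is controlled by Cheeger--Colding), integrating by parts on $X_i$, and passing to the limit using the strong convergence of $f_i$ shows that $W$ is the weak gradient of $f_\infty$; equivalently one invokes the lower semicontinuity and Mosco-type closedness of the Cheeger energies established in \cite{KS, KS2, holp}. In particular $f_\infty\in H^{1,p_\infty}(B_R(x_\infty))$ with $\nabla f_\infty=W$.

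The step I expect to be the main obstacle is (a): every constant in the discretization — the doubling and Poincar\'e constants, the multiplicity bounds, the lower volume bounds for the small balls, and the Lipschitz constants of the partitions of unity — must be taken uniformly in $i$ while $p_i$ varies, and the Gromov--Hausdorff convergence of the auxiliary nets and partitions of unity has to be tracked carefully; the passage from $B_r$ to the full ball $B_R$ is a further (purely technical) point. An alternative route to (a), perhaps cleaner, replaces the Poincar\'e discretization by the heat semigroup $\mathsf{h}_t$: ultracontractivity and the Bakry--\'Emery gradient estimate make $\mathsf{h}_t f_i$ uniformly bounded and uniformly Lipschitz for each fixed $t>0$, hence uniformly convergent along a subsequence by Gromov--Hausdorff Arzel\`a--Ascoli, while $\|f_i-\mathsf{h}_t f_i\|_{L^{p_i}}\le\epsilon(t)$ with $\epsilon(t)\to0$ uniformly in $i$; the same $\limsup$-of-norms computation as above then applies with $\mathsf{h}_t f_i$ in place of $\tilde f_i$.
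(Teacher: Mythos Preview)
The present paper does not prove this theorem; it is quoted from \cite{holp}, with the reference ``See \cite[Remark~3.77 and Theorem~4.9]{holp}'' immediately following the statement. So there is no in-paper argument to compare your proposal against. Your outline---weak compactness, then a discretization via Poincar\'e (or heat-flow regularization) to upgrade to strong convergence of $f_i$, then identification of the weak limit of $\nabla f_i$---is one of the standard strategies for Rellich-type compactness in the Gromov--Hausdorff setting, and is in the spirit of what \cite{holp} does.

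A few points deserve attention. First, the key estimate $\|f_i-\tilde f_i\|_{L^{p_i}}\le C\rho\,\|\nabla f_i\|_{L^{p_i}}$ does not follow from the $(1,1)$-Poincar\'e inequality alone; you need the $(p_i,p_i)$-Poincar\'e inequality with constants uniform in $i$. This is available (Theorem~\ref{sobo} gives it for $p_i<n$, and in general it follows from the $(1,1)$-inequality plus doubling via self-improvement), but you should invoke it explicitly. Second, the passage from $B_r$ to $B_R$ is not purely formal: you need $\sup_i\|f_i\|_{L^{p_i}(B_R(x_i)\setminus B_r(x_i))}\to 0$ as $r\uparrow R$, which requires uniform higher integrability (Sobolev embedding) combined with $\sup_i\upsilon_i(B_R(x_i)\setminus B_r(x_i))\to 0$; both ingredients are available but the argument has to be written out, and the case $p_\infty\ge n$ needs separate treatment. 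Third, for part~(b) be careful about logical order: several results in this paper on harmonic functions (e.g.\ Corollary~\ref{apphar}) are proved \emph{using} Theorem~\ref{srell}, so you should not invoke them here. The identification $W=\nabla f_\infty$ is better done directly, either by testing against $\nabla r_z$ for distance functions (which is how the $L^p$-weak convergence of tensor fields is defined in Definition~\ref{lpt}) together with a characterization of $H^{1,p}$ via Poincar\'e-type inequalities as in \cite{hkt}, or via the Mosco-convergence route you mention. None of these points is fatal, but they are exactly where the work in \cite{holp} lies.
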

See \cite[Remark $3.77$ and Theorem $4.9$]{holp}.
\subsubsection{Generalized Yamabe constants}
Let $(X, \upsilon) \in \overline{M(n, K, d)}$ with $\mathrm{diam}\,X>0$, let $2<p \le \infty$ and let $g \in L^{p/2}(X)$.
We define \textit{the $p$-Yamabe constant $Y^g_p(X)$ of $(X, \upsilon)$ associated with $g$ (in the sense of Akutagawa-Carron-Mazzeo)} by 
\begin{align}\label{yama}Y^g_p(X):=\inf_{f}\int_{X}\left(|\nabla f|^2+g|f|^2\right)d\upsilon,
\end{align}
where $f$ runs over all $f \in \mathrm{LIP}(X)$ with $||f||_{L^{2p/(p-2)}}=1$. 

Note that by definition we have the following:
\begin{itemize}
\item $Y^g_n(X)=Y^g(X)$ ($Y^g(X)$ is defined in subsection $1.3$).
\item $Y^{g}_{\infty}(X)=\lambda^g_0(X)$ if $(X, \upsilon)$ satisfies the $(2p/(p-2), 2)$-Sobolev inequality on $X$ for some $(A, B)$.
\item It follows from the  H$\ddot{\text{o}}$lder inequality that
\begin{align}\label{ybound}
-\left(\int_X|g|^{p/2}d\upsilon \right)^{2/p}\le Y^g_p(X)\le \int_Xg d\upsilon.
\end{align}
\end{itemize}
By Theorem \ref{fundpr} and \cite[Theorem $3.1$]{acm2} we have the following:
\begin{theorem}\cite{acm2}\label{acmex}
Assume that there exist $A, B>0$ such that $(X, \upsilon)$ satisfies the $(2p/(p-2), 2)$-Sobolev inequality on $X$ for $(A, B)$ and that  
\[Y^g_p(X)<A^{-1}.\]
Then there exists a minimizer $f \in H^{1, 2}(X)$ of (\ref{yama}).
\end{theorem}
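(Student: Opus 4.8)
The plan is to deduce the statement from \cite[Theorem~3.1]{acm2}, whose abstract hypotheses are supplied in the present setting by Theorem~\ref{fundpr}. First I would record that, by Theorem~\ref{fundpr}, $(X,\upsilon)$ is a compact rectifiable metric measure space on which the Sobolev space $H^{1,2}(X)$ is well defined with $\|f\|_{H^{1,2}}^{2}=\|f\|_{L^{2}}^{2}+\|df\|_{L^{2}}^{2}$, and on which $f\mapsto\int_{X}|df|^{2}\,d\upsilon$ is a strongly local Dirichlet form on $L^{2}(X)$; in addition $(X,\upsilon)$ is doubling and supports a Poincar\'e inequality (cf. Theorem~\ref{sobo} and the stability of Poincar\'e inequalities under measured Gromov-Hausdorff convergence). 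Together with the assumed $(2p/(p-2),2)$-Sobolev inequality for $(A,B)$ and the strict inequality $Y^{g}_{p}(X)<A^{-1}$, these are exactly the inputs under which \cite{acm2} produces a minimizer, so what remains is to recall why that argument runs.

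Next I would carry out the standard subcritical approximation. Writing $2^{*}:=2p/(p-2)$, for each $q\in(2,2^{*})$ I consider
\[
Y^{g}_{p,q}(X):=\inf\left\{\int_{X}\big(|df|^{2}+g|f|^{2}\big)\,d\upsilon\ :\ f\in H^{1,2}(X),\ \|f\|_{L^{q}}=1\right\}.
\]
Since $g\in L^{p/2}(X)$ and $q<2^{*}$, H\"older's inequality and the Sobolev inequality make this functional bounded below and coercive on the constraint set, while the embedding $H^{1,2}(X)\hookrightarrow L^{q}(X)$ is compact for subcritical $q$ --- a consequence of the Rellich-type compactness of Theorem~\ref{srell} combined with the Sobolev inequality. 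Hence a minimizer $f_{q}$ exists, which I take nonnegative and normalized by $\|f_{q}\|_{L^{q}}=1$; testing its Euler--Lagrange equation against $f_{q}$ gives a uniform $H^{1,2}$-bound as $q\uparrow 2^{*}$, so along a subsequence $f_{q}$ converges weakly in $H^{1,2}(X)$ and strongly in $L^{r}(X)$ for every $r<2^{*}$ to some $f\ge 0$, with $Y^{g}_{p}(X)\le\liminf_{q\uparrow 2^{*}}Y^{g}_{p,q}(X)$.

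The hard part will be upgrading the $L^{r}$-convergence to $L^{2^{*}}$-convergence, equivalently ruling out loss of mass of the measures $\mu_{q}:=|f_{q}|^{2^{*}}\upsilon$ as $q\uparrow 2^{*}$. For this I would run a concentration-compactness dichotomy for $\{\mu_{q}\}$ on the compact space $X$: vanishing is excluded by $\|f_{q}\|_{L^{q}}=1$ together with $\upsilon(X)<\infty$, while concentration of mass at a point $x_{0}$ would, after testing the Sobolev inequality against cut-offs supported near $x_{0}$ to obtain a localized Sobolev inequality with the same constant $A$, force $Y^{g}_{p}(X)\ge A^{-1}$, contradicting the hypothesis. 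Thus $\mu_{q}$ converges to $|f|^{2^{*}}\upsilon$ with $\|f\|_{L^{2^{*}}}=1$, so $f\neq 0$, and lower semicontinuity of the Dirichlet energy together with the strong $L^{r}$-convergence controlling $\int_{X}g|f_{q}|^{2}\,d\upsilon$ (using $g\in L^{p/2}$ and $r$ close to $2^{*}$) gives $\int_{X}(|df|^{2}+g|f|^{2})\,d\upsilon\le Y^{g}_{p}(X)$; hence $f$ is the desired minimizer in $H^{1,2}(X)$ of (\ref{yama}). The delicate point throughout is making the notion of concentration at a point and the resulting sharp local Sobolev inequality rigorous in the rectifiable metric-measure framework --- precisely where the structure recorded in Theorem~\ref{fundpr} and the strict Aubin-type inequality $Y^{g}_{p}(X)<A^{-1}$ are used.
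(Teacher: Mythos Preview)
Your proposal is correct and aligns with the paper's approach: the paper does not give an independent proof but simply states that the result follows from \cite[Theorem~3.1]{acm2} once the structural hypotheses (Sobolev space, Dirichlet form, etc.) are supplied by Theorem~\ref{fundpr}, which is exactly your opening reduction. Your additional sketch of the subcritical approximation and concentration-compactness argument is a faithful outline of how \cite{acm2} proceeds, so it goes beyond what the paper records but is consistent with it.
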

Note that they discussed a regularity of minimizers of (\ref{yama}) in \cite{acm2}.
\subsubsection{Gigli's Sobolev spaces on Ricci limit spaces}
Let $(X, \upsilon) \in \overline{M(n, K, d)}$ with $\mathrm{diam}\,X>0$.
We first recall Gigli's Sobolev space $W^{1, 2}_C(TX)$ for vector fields on $X$ defined in \cite{gigli}:
\begin{itemize}
\item Let $W^{2, 2}(X)$ be the set of $f \in H^{1, 2}(X)$ satisfying that there exists $A \in L^2(T^0_2X)$ such that 
\begin{align}\label{78}
&2\int_{X}g_0\left\langle A, dg_1 \otimes dg_2 \right\rangle d\upsilon \nonumber \\
&=\int_X\left(-\langle \nabla f, \nabla g_1 \rangle \mathrm{div}^{\upsilon}(g_0\nabla g_2)-\langle \nabla f, \nabla g_2\rangle \mathrm{div}^{\upsilon}(g_0\nabla g_1)- g_0 \left\langle \nabla f, \nabla \left\langle \nabla g_1, \nabla g_2 \right\rangle\right\rangle \right)d\upsilon
\end{align}
for any $g_i \in \mathrm{Test}F(X)$.
Since $A$ is unique if it exists because 
\[\mathrm{Test}T^0_2X:=\left\{ \sum_{i=1}^Ng_{0, i}dg_{1, i} \otimes dg_{2, i}; N \in \mathbf{N}, g_{i, j} \in \mathrm{Test}F(X)\right\}\]
is dense in $L^2(T^0_2X)$, we denote $A$ by $\mathrm{Hess}^{\upsilon}_f$.
Moreover we see that $\mathrm{Test}F(X) \subset W^{2, 2}(X)$ and that $W^{2, 2}(X)$ is a Hilbert space equipped with the norm
\[||f||_{W^{2, 2}}:= \left(||f||_{H^{1, 2}}^2+||\mathrm{Hess}^{\upsilon}_f||_{L^2}^2\right)^{1/2}.\]
\item Let $W^{1, 2}_C(TX)$ be the set of $V \in L^2(TX)$ satisfying that there exists $T \in L^2(T^1_1X)$ such that 
\[\int_Xg_0\left\langle T, \nabla g_1 \otimes d g_2 \right\rangle d\upsilon=-\int_X\left(\langle V, \nabla g_1\rangle \mathrm{div}^{\upsilon}(g_0\nabla g_2)+g_0\left\langle \mathrm{Hess}^{\upsilon}_{g_1}, V^* \otimes d g_2\right\rangle \right)d\upsilon\]
for any $g_i \in \mathrm{Test}F(X)$.
Since $T$ is unique if it exists, we denote it by $\nabla^{\upsilon}V$.
Moreover we see that $W^{1, 2}_C(TX)$ is also a Hilbert space equipped with the norm
\[||V||_{W^{1, 2}_C}:=\left( ||V||_{L^2}^2+||\nabla^{\upsilon}V||_{L^2}^2\right)^{1/2}.\]
\end{itemize}
See subsection $3.2$ in \cite{gigli} for the detail.
Note that as we mentioned in subsection $1.4$, we use a slight modified version of the original Gigli's definition of $W^{1, 2}_C(TX)$ in this paper.
\begin{remark}\label{gigliremark}
The original definition of $W^{1, 2}_C(TX)$ by Gigli in \cite{gigli} is as follows:
Let $W^{1, 2}_C(TX)$ be the set of $V \in L^2(TX)$ satisfying that there exists $\hat{T} \in L^2(T^2_0X)$ such that
\[\int_Xg_0\langle \hat{T}, \nabla g_1 \otimes \nabla g_2\rangle d\upsilon=-\int_X\left(\langle V, \nabla g_2\rangle\mathrm{div}^{\upsilon}(g_0\nabla g_1)+g_0\langle \mathrm{Hess}_{g_2}^{\upsilon}, V^*\otimes dg_1\rangle \right)d\upsilon\]
for any $g_i \in \mathrm{Test}F(X)$.
Thus $\hat{T}$ coincides with $\nabla^{\upsilon}V$ via the identification (\ref{hondahonda}).
\end{remark}
For any $V \in W^{1, 2}_C(TX)$ and $W \in L^{p}(TX)$ we define $\nabla^{\upsilon}_{W}V \in L^{2p/(p+2)}(TX)$ by satisfying
\[\left\langle \nabla^{\upsilon}_{W}V, Z \right\rangle=\left\langle \nabla^{\upsilon}V, Z \otimes W^*\right\rangle\]
for every $Z \in \Gamma_0(TX)$, where $p \in [2, \infty]$. Compare with \cite[(3.4.6)]{gigli}.
For any $V, W \in W^{1, 2}_C(TX)$ we define the Lie bracket $[V, W]^{\upsilon} \in L^1(TX)$ in Gigli's sense by
\[[V, W]^{\upsilon}:=\nabla^{\upsilon}_VW-\nabla^{\upsilon}_WV.\]
For convenience we use the following notation:
\[\nabla^r_sh:=\nabla h_1 \otimes \cdots \otimes \nabla h_r \otimes dh_{r+1} \otimes \cdots \otimes dh_{r+s} \in \Gamma_0(T^r_sX)\]
for any $h_i \in \Gamma_1(X)$, where $h=(h_1, \ldots, h_{r+s})$.

We are now in a position to define the Sobolev space $W^{1, 2}_C(T^r_sX)$ for tensor fields in the same manner of \cite{gigli} by Gigli:
\begin{definition}
Let $W^{1, 2}_C(T^r_sX)$ be the set of $T \in L^2(T^r_sX)$ satisfying that there exists $S \in L^2(T^r_{s+1}X)$ such that 
\begin{align}\label{appp}
\int_{X}g_0\left\langle S, \nabla^r_{s+1}g\right\rangle d\upsilon &=-\int_{X}\left(\left\langle T, \nabla^{r}_sg^{(r+s+1)}\right\rangle \mathrm{div}^{\upsilon}(g_0\nabla g_{r+s+1})+\sum_{j=1}^{r+s}g_0\left\langle T^*, R_{j}^X(g)\right\rangle \right)d\upsilon
\end{align}
holds for any $g_{j} \in \mathrm{Test}F(X)$, where $g:=(g_{1}, \ldots, g_{r+s+1})$, $g^{(r+s+1)}:=(g_{1}, \ldots, g_{r+s})$,
\begin{align}\label{0098}
R_{j}^X(g):=\nabla g_{1} \otimes \cdots \otimes \nabla g_{j-1} \otimes \left(\nabla ^{\upsilon}_{\nabla g_{r+s+1}}\nabla g_{j}\right) \otimes \nabla g_{j+1} \otimes \cdots \otimes \nabla g_{r+s+1}
\end{align}
and $^*$ is the canonical isometry: 
\[T^r_sX \cong T^{r+s}_0X\]
\[\bigotimes_{i=1}^{r} v_i \otimes \bigotimes_{j=1}^sw_j \mapsto \bigotimes_{i=1}^{r} v_i \otimes \bigotimes_{j=1}^sw_j^*.\] 
Since $S$ is unique if it exists we write it by $\nabla^{\upsilon}T$.
\end{definition}
Note that if $(X, \upsilon)$ is a smooth compact metric measure space and $T \in C^{\infty}(T^r_sX)$, then $\nabla T=\nabla^{\upsilon}T$, where $\nabla T$ is the covariant derivative as a smooth tensor field in the manner of \cite{sakai}.

By an argument similar to the proof of \cite[Theorem $3.4.2$]{gigli} we see that $W^{1, 2}_C(T^r_sX)$ is a Hilbert space equipped with the norm
\[||T||_{W^{1, 2}_C}:=\left(||T||_{L^2}^2+||\nabla^{\upsilon}T||_{L^2}^2\right)^{1/2},\]
that 
\[\mathrm{Test}T^r_sX:=\left\{ \sum_{k=1}^N h_0^k \nabla^{r}_sh^k; N \in \mathbf{N}, \,\,h^k=(h^k_1, \ldots, h^k_{r+s}),\,\,h^k_i \in \mathrm{Test}F(X)\right\}\]
is a linear subspace of $W^{1, 2}_C(T^r_sX)$ and that $\{(T, \nabla^{\upsilon}T); T \in W^{1, 2}_C(T^r_sX)\}$ is a closed subset of $L^2(T^r_sX) \times L^2(T^r_{s+1}X)$.
Let $H^{1, 2}_C(T^r_sX)$ be the closure of $\mathrm{Test}T^r_sX$ in $W^{1, 2}_C(T^r_sX)$.

We also recall Gigli's exterior derivative $d^{\upsilon}$:
\begin{itemize}
\item Let $W^{1, 2}_d(\bigwedge^kT^*X)$ be the set of $\omega \in L^2(\bigwedge^kT^*X)$ satisfying that there exists $\eta \in L^2(\bigwedge^{k+1}T^*X)$ such that 
\begin{align}\label{rrer}
\int_X \eta (V_0, \ldots, V_k) d\upsilon &=\sum_i(-1)^{i+1}\int_X\omega (V_0, \ldots, V_{i-1}, V_{i+1}, \ldots V_k)\mathrm{div}^{\upsilon}V_id\upsilon \nonumber \\
&+\sum_{i<j}(-1)^{i+j}\int_X\omega ([V_i, V_j]^{\upsilon}, V_0, \ldots, V_{i-1}, V_{i+1}, \ldots, V_{j-1}, V_{j+1}, \ldots, V_k)d\upsilon
\end{align}
for any $V_i \in \mathrm{Test}T^1_0X$. Since $\eta$ is unique if it exists we denote it by $d^{\upsilon}\omega $.
Moreover we see that $\mathrm{TestForm}_kX \subset W^{1, 2}_d(\bigwedge^kT^*X)$ and that $W^{1, 2}_d(\bigwedge^kT^*X)$ is a Hilbert space equipped with the norm
\[||\omega||_{W^{1, 2}_d}:=\left(||\omega||_{L^2}^2+||d^{\upsilon}\omega||_{L^2}^2\right)^{1/2}.\]
Let $H^{1, 2}_d(\bigwedge^kT^*X)$ be the closure of $\mathrm{TestForm}_kX$ in $W^{1, 2}_d(\bigwedge^kT^*X)$.
\end{itemize} 
See subsection $1.4$ for the definition of the codifferential $\delta^{\upsilon}_k$ and the Hodge Laplacian $\Delta^{\upsilon}_{H, k}$ for differential $k$-forms.
Note that $\delta^{\upsilon}_1$ coincides with $\delta^{\upsilon}$ introduced in subsection $2.3.2$. See \cite[$(3.5.13)$]{gigli}.
\begin{remark}\label{appremark}
Recall a mollified heat flow from \cite[(3.2.3)]{gigli},
\begin{align}\label{mollified}
\tilde{h}_{\epsilon}f:=\frac{1}{\epsilon}\int_0^{\infty}h_sf\phi (s\epsilon^{-1})ds,
\end{align}
where $h_s$ is the heat flow and $\phi$ is a nonnegatively valued smooth function on $(0, 1)$ with compact support and
\[\int_0^1\phi(s) ds=1.\]
By using this Gigli proved an existence of the following approximation:
For every $f \in \mathrm{LIP}(X)$ there exists a sequence $\{f_i\}_i$ of $f_i \in \mathrm{Test}F(X)$ such that $\sup_{i}\mathbf{Lip}f_i<\infty$, that $f_i \to f$ in $H^{1, 2}(X)$ and that $\Delta^{\upsilon}f_i \in L^{\infty}(X)$ for every $i$.

By using this approximation, it is not difficult to check the following:
\begin{itemize}
\item For every $f \in W^{2, 2}(X)$, we see that (\ref{78}) holds for any $g_1, g_2 \in \mathrm{Test}F(X)$ and $g_0 \in \mathrm{LIP}(X)$. 
\item For every $T \in W^{1, 2}_C(T^r_sX)$, we see that (\ref{appp}) holds for any $\{g_{i}\}_{1 \le i \le r+s+1} \subset \mathrm{Test}F(X)$ and $g_0 \in \mathrm{LIP}(X)$.
\end{itemize}
\end{remark}
\section{$L^p$-convergence revisited}
In this section we prove several new results on the $L^p$-convergence  with respect to the Gromov-Hausdorff topology.
They play crucial roles in the proofs of results introduced in Section $1$.
\subsection{Stabilities of Sobolev and Poincar\'e inequalities}
In this subsection we give two `stabilities'. 
The first one is the following which is a stability of Sobolev inequalities with respect to the Gromov-Hausdorff topology.
\begin{theorem}\label{stasob}
Let $R>0$, let $\{q_i\}_{i \le \infty}$ and $\{p_i\}_{i \le \infty}$ be convergent sequences in $[1, \infty)$, let $\{\hat{A}_i, \hat{B}_i\}_{i<\infty}$ be bounded sequences in $[0, \infty)$, and let $(X_i, x_i, \upsilon_i) \stackrel{GH}{\to} (X_{\infty}, x_{\infty}, \upsilon_{\infty})$ in $\overline{M(n, K)}$ with $\mathrm{diam}\,X_{\infty}>0$.
Assume that for every $i<\infty$, $(X_i, \upsilon_i)$ satisfies the $(q_i, p_i)$-Sobolev inequality on $B_R(x_i)$ for $(\hat{A}_i, \hat{B}_i)$.
Then we see that $(X_{\infty}, \upsilon_{\infty})$ satisfies the $(q_{\infty}, p_{\infty})$-Sobolev inequality on $B_R(x_{\infty})$ for $(\liminf_{i \to \infty}\hat{A}_i, \liminf_{i \to \infty}\hat{B}_i)$.
\end{theorem}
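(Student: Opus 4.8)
\medskip

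The plan is to verify the $(q_\infty,p_\infty)$-Sobolev inequality on $B_R(x_\infty)$ by testing it against an arbitrary $f_\infty\in\mathrm{LIP}_c(B_R(x_\infty))$ and obtaining it as a limit of the corresponding inequalities on the spaces $(X_i,\upsilon_i)$. After passing to a subsequence I may assume $\hat A_i\to A_\infty$ and $\hat B_i\to B_\infty$; choosing this subsequence so that $A_\infty=\liminf_i\hat A_i$ and $B_\infty=\liminf_i\hat B_i$ (and using that the Sobolev inequality only weakens when $(A,B)$ is increased), it suffices to prove that $(X_\infty,\upsilon_\infty)$ satisfies the $(q_\infty,p_\infty)$-Sobolev inequality on $B_R(x_\infty)$ for $(A_\infty,B_\infty)$. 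So fix $f_\infty\in\mathrm{LIP}_c(B_R(x_\infty))$ and pick $r_0<r_0'<R$ with $\mathrm{supp}\,f_\infty\subset B_{r_0}(x_\infty)$.

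The core of the argument is the construction of a recovery sequence: functions $f_i\in\mathrm{LIP}_c(B_R(x_i))$ with $\mathrm{supp}\,f_i\subset\overline{B}_{r_0'}(x_i)$ and $\sup_i\mathbf{Lip}\,f_i<\infty$ such that $f_i$ converges uniformly to $f_\infty$ on $B_R(x_\infty)$ and $df_i$ $\{L^{p_i}\}_i$-converges strongly to $df_\infty$ on $B_R(x_\infty)$. For this I would use the $L^p$-convergence techniques of \cite{holp} (see also \cite{holip}): a Lipschitz function on a ball of the limit space admits, along the given convergent sequence, a Lipschitz approximation with uniformly bounded Lipschitz constant whose differentials converge strongly in the $\{L^{p_i}\}_i$-sense; then multiplying by a good cut-off function in the sense of Cheeger--Colding \cite{ch-co}, supported in $B_{r_0'}(x_i)$ and equal to $1$ on $B_{r_0}(x_i)$ (which has uniformly bounded differential), one forces the compact support inside $B_R(x_i)$ without destroying the strong convergence of the differentials, since on the annular region carrying the differential of the cut-off the functions $f_i$ are uniformly small.

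Granting such a sequence, apply the assumed $(q_i,p_i)$-Sobolev inequality on $B_R(x_i)$ to $f_i$:
\[
\left(\int_{B_R(x_i)}|f_i|^{q_i}\,d\upsilon_i\right)^{p_i/q_i}\le\hat A_i\int_{B_R(x_i)}|\mathrm{Lip}\,f_i|^{p_i}\,d\upsilon_i+\hat B_i\int_{B_R(x_i)}|f_i|^{p_i}\,d\upsilon_i .
\]
By Theorem \ref{29292} we have $|\mathrm{Lip}\,f_i|=|df_i|$ $\upsilon_i$-almost everywhere, so the first term on the right equals $\hat A_i\|df_i\|_{L^{p_i}(B_R(x_i))}^{p_i}$, which tends to $A_\infty\int_{B_R(x_\infty)}|\mathrm{Lip}\,f_\infty|^{p_\infty}\,d\upsilon_\infty$ by the strong convergence of $df_i$. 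The uniform convergence of $f_i$, the uniform bound on their supports, and the convergence $\upsilon_i(B_{r_0'}(x_i))\to\upsilon_\infty(B_{r_0'}(x_\infty))$ give $\int|f_i|^{p_i}\,d\upsilon_i\to\int|f_\infty|^{p_\infty}\,d\upsilon_\infty$ and $\int|f_i|^{q_i}\,d\upsilon_i\to\int|f_\infty|^{q_\infty}\,d\upsilon_\infty$; since $p_i/q_i\to p_\infty/q_\infty$ and $t\mapsto t^{p_\infty/q_\infty}$ is continuous, passing to the limit yields the $(q_\infty,p_\infty)$-Sobolev inequality on $B_R(x_\infty)$ for $(A_\infty,B_\infty)$ evaluated at $f_\infty$. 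As $f_\infty\in\mathrm{LIP}_c(B_R(x_\infty))$ was arbitrary, the theorem follows.

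The step I expect to be the main obstacle is the construction of the recovery sequence, and in particular the bound $\limsup_i\int|\mathrm{Lip}\,f_i|^{p_i}\,d\upsilon_i\le\int|\mathrm{Lip}\,f_\infty|^{p_\infty}\,d\upsilon_\infty$. This is a ``$\Gamma$-$\limsup$'' type inequality and does not follow from uniform convergence alone, which only yields the reverse, lower semicontinuous inequality for the differentials; one genuinely needs approximations compatible with the Gromov--Hausdorff convergence in the strong $L^{p_i}$-sense, which is exactly what the $L^p$-convergence theory of \cite{holp}, combined with the good cut-off functions available under lower Ricci curvature bounds \cite{ch-co}, provides. Everything else reduces to routine manipulations with the $\{L^{p_i}\}_i$-convergence recalled in subsection $2.5.2$.
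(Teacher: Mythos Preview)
Your proposal is correct and follows essentially the same route as the paper: pick $f_\infty\in\mathrm{LIP}_c(B_R(x_\infty))$, produce a recovery sequence $f_i\in\mathrm{LIP}_c(B_R(x_i))$ with $f_i,\nabla f_i$ converging strongly in the relevant $L^{p}$-sense, apply the Sobolev inequality on $X_i$, and pass to the limit. The only difference is in how the compactly supported approximants are obtained: the paper invokes \cite[Theorem~4.2]{holip} directly with the closed sets $A_i=\overline{B}_R(x_i)\setminus B_r(x_i)$ (so the compact support is built into the approximation), whereas you first approximate and then multiply by a Cheeger--Colding cut-off; both work, but the paper's citation is more economical.
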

\begin{proof}
Without loss of generality we can assume that the limits 
$\lim_{i \to \infty}\hat{A}_i$ and $\lim_{i \to \infty}\hat{B}_i$
exist.

Let $f_{\infty} \in \mathrm{LIP}_c(B_R(x_{\infty}))$ and let $r \in (0, R)$ with $\mathrm{supp}\,f_{\infty} \subset B_r(x_{\infty})$.
By applying \cite[Theorem $4.2$]{holip} for $A_i:=\overline{B}_R(x_i) \setminus B_r(x_i)$ and $A_{\infty}:=\overline{B}_R(x_{\infty}) \setminus B_r(x_{\infty})$ there, without loss of generality we can assume that there exists a sequence $\{f_i\}_{i} \in \mathrm{LIP}_c(B_R(x_{i}))$ such that $f_i, \nabla f_i$ $L^r$-converge strongly to $f_{\infty}, \nabla f_{\infty}$ on $B_R(x_{\infty})$ for every $1<r<\infty$, respectively.
Since
\[\left(\int_{B_R(x_i)}|f_i|^{q_i}d\upsilon_i\right)^{p_i/q_i}\le \hat{A}_i \int_{B_R(x_i)}|\nabla f_i|^{p_i}d\upsilon_i + \hat{B}_i\int_{B_R(x_i)}|f_i|^{p_i}d\upsilon_i \]
for every $i<\infty$, letting $i \to \infty$ completes the proof.
\end{proof}
See \cite{he2, he1, he-va} for the sharp Sobolev inequality on Riemannian manifolds.

Similarly, by Remark \ref{85}, we have the following stability of Poincar\'e inequalities with respect to the Gromov-Hausdorff topology.
See \cite{ch1} and \cite{keith2} for related works. 
\begin{theorem}\label{stapo}
Let $R>0$, let $\{q_i\}_{i \le \infty}$ and $\{p_i\}_{i \le \infty}$ be convergent sequences in $[1, \infty)$, let $\{\tau_i\}_i$ be a bounded sequence in $[0, \infty)$,
and let $(X_i, x_i, \upsilon_i) \stackrel{GH}{\to} (X_{\infty}, x_{\infty}, \upsilon_{\infty})$ in $\overline{M(n, K)}$ with $\mathrm{diam}\,X_{\infty}>0$.
Assume that for every $i<\infty$, $(X_i, \upsilon_i)$ satisfies the $(q_i, p_i)$-Poincar\'e inequality on $B_R(x_i)$ for $\tau_i$.
Then we see that $(X_{\infty}, \upsilon_{\infty})$ satisfies the $(q_{\infty}, p_{\infty})$-Poincar\'e inequality on $B_R(x_{\infty})$ for $\liminf_{i \to \infty}\tau_i$.
\end{theorem}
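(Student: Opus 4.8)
The plan is to repeat the argument proving Theorem \ref{stasob} almost verbatim, prefacing it with Remark \ref{85} to reduce to compactly supported test functions and then tracking the ball averages appearing in (\ref{poinc}). As in that proof, after passing to a subsequence I may assume $\tau_\infty:=\lim_{i\to\infty}\tau_i$ exists, and it is this $\tau_\infty$ (which equals the original $\liminf_{i\to\infty}\tau_i$) that I must exhibit as an admissible Poincar\'e constant for $(X_\infty,\upsilon_\infty)$ on $B_R(x_\infty)$. By Remark \ref{85} it suffices to verify (\ref{poinc}) for every $f_\infty\in\mathrm{LIP}_c(B_R(x_\infty))$ and every ball $B_\rho(w_\infty)\subset B_R(x_\infty)$; and since $\overline{B}_{\rho'}(w_\infty)\subset B_\rho(w_\infty)$ whenever $\rho'<\rho$, it is enough to treat radii with $\overline{B}_\rho(w_\infty)\subset B_R(x_\infty)$ and then let $\rho'\to\rho$, the integrals and averages over $B_{\rho'}(w_\infty)$ converging to those over $B_\rho(w_\infty)$ by dominated convergence (using $\upsilon_\infty(B_{\rho'}(w_\infty))\to\upsilon_\infty(B_\rho(w_\infty))>0$ and boundedness of $f_\infty$).

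Fix such $f_\infty$, a convergent sequence $w_i\stackrel{GH}{\to}w_\infty$ with $w_i\in B_R(x_i)$, and $\rho$ with $\overline{B}_\rho(w_\infty)\subset B_R(x_\infty)$. Applying \cite[Theorem $4.2$]{holip} exactly as in the proof of Theorem \ref{stasob} (to the annuli $\overline{B}_R(x_i)\setminus B_r(x_i)$, where $\mathrm{supp}\,f_\infty\subset B_r(x_\infty)$), I may assume there are $f_i\in\mathrm{LIP}_c(B_R(x_i))$ such that $f_i$ and $\nabla f_i$ $L^s$-converge strongly to $f_\infty$ and $\nabla f_\infty$ on $B_R(x_\infty)$ for every $s\in(1,\infty)$. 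A routine Gromov-Hausdorff perturbation argument (using compactness of $\overline{B}_\rho(w_\infty)$) gives $B_\rho(w_i)\subset B_R(x_i)$ for all large $i$, so (\ref{poinc}) applies to $f_i$ on $B_\rho(w_i)$ with constant $\tau_i$, where $\mathrm{Lip}\,f_i=|df_i|=|\nabla f_i|$ $\upsilon_i$-a.e. by Theorem \ref{29292}. Put $c_i:=\upsilon_i(B_\rho(w_i))^{-1}\int_{B_\rho(w_i)}f_i\,d\upsilon_i$ and $c_\infty:=\upsilon_\infty(B_\rho(w_\infty))^{-1}\int_{B_\rho(w_\infty)}f_\infty\,d\upsilon_\infty$. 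Then $\upsilon_i(B_\rho(w_i))\to\upsilon_\infty(B_\rho(w_\infty))>0$ by the definition of Gromov-Hausdorff convergence, $c_i\to c_\infty$ since $f_i\to f_\infty$ is in particular $L^1$-weak, and hence $f_i-c_i\to f_\infty-c_\infty$ $L^s$-strongly. Using the standard calculus of the (varying-exponent) $L^p$-convergence from \cite{holp, KS} --- stability under composition with the continuous functions $t\mapsto|t|^{q_i}$, $t\mapsto|t|^{p_i}$, under linear combinations, and the defining property of weak convergence on the sub-ball $B_\rho(w_\infty)\subset B_R(x_\infty)$ --- I obtain
\[\int_{B_\rho(w_i)}|f_i-c_i|^{q_i}\,d\upsilon_i\to\int_{B_\rho(w_\infty)}|f_\infty-c_\infty|^{q_\infty}\,d\upsilon_\infty\quad\text{and}\quad\int_{B_\rho(w_i)}|\nabla f_i|^{p_i}\,d\upsilon_i\to\int_{B_\rho(w_\infty)}|\nabla f_\infty|^{p_\infty}\,d\upsilon_\infty,\]
where $s$ is chosen large enough that $s/q_i,s/p_i\in(1,\infty)$. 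Combining these limits with $\tau_i\to\tau_\infty$ in the Poincar\'e inequality written for $f_i$ yields (\ref{poinc}) for $f_\infty$, $w_\infty$, $\rho$ with constant $\tau_\infty$, as required.

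Conceptually this is the same argument as for Theorem \ref{stasob}; the only genuinely new points are the appeal to Remark \ref{85} and the presence of the averages $c_i$. The part demanding the most attention is the bookkeeping in the last step: checking that the nonlinear operations $t\mapsto|t|^{q_i},|t|^{p_i}$ and the restriction of strongly convergent sequences to the sub-ball $B_\rho(w_\infty)$ behave correctly within the $L^p$-convergence framework of \cite{holp}, together with the monotone limiting step that disposes of the borderline radii; none of this is serious. (The endpoint cases $q_i=1$ or $p_i=1$ are handled in the same way, approximating by slightly larger exponents.)
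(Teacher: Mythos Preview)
Your proposal is correct and follows exactly the approach the paper itself indicates: the paper does not give a detailed proof but simply states that the result follows ``similarly'' to Theorem \ref{stasob} by invoking Remark \ref{85}. You have faithfully expanded this sketch, correctly identifying the two genuinely new ingredients (the reduction via Remark \ref{85} to compactly supported test functions, and the convergence of the ball averages $c_i\to c_\infty$), and the remaining bookkeeping with the $L^p$-convergence calculus is routine as you note.
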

\subsection{Sobolev embeddings}
The main purpose of this subsection is to introduce two `embedding' theorems.
In order to introduce them we first give the following:
\begin{proposition}\label{ssrt}
Let $r, s \in \mathbf{Z}_{\ge 0}$, let $R>0$, let $1<p<q<\infty$, let $(X_i, x_i, \upsilon_i) \stackrel{GH}{\to} (X_{\infty}, x_{\infty}, \upsilon_{\infty})$ in $\overline{M(n, K)}$, and let $\{T_i\}_{i \le \infty}$ be an $L^p$-strong convergent sequence on $B_R(x_{\infty})$ of $T_i \in L^q(T^r_sB_R(x_i))$ with $\sup_i||T_i||_{L^q}<\infty$.
Then we see that $T_i$ $L^t$-converges strongly to $T_{\infty}$ on $B_R(x_{\infty})$ for every $1<t<q$.
\end{proposition}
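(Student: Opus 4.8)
The plan is to split the range of $t$ and, for $p<t<q$, to interpolate between the $L^p$-strong convergence and the uniform $L^q$-bound by a truncation. For $1<t\le p$ there is nothing to prove, since by the fundamental properties of the $L^p$-convergence recalled in subsection $2.5.2$ an $L^p$-strong convergent sequence is automatically $L^t$-strong convergent for every $t\le p$. So fix $t$ with $p<t<q$.

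First I would record the weak statement for this $t$. By Bishop--Gromov volume comparison (valid also on Ricci limit spaces), $\sup_i\upsilon_i(B_R(x_i))\le C(n,K,R)<\infty$, so H\"older's inequality gives $\sup_i\|T_i\|_{L^t(B_R(x_i))}\le C(n,K,R)^{1/t-1/q}\sup_i\|T_i\|_{L^q(B_R(x_i))}<\infty$. Since $T_i$ $L^p$-converges strongly, hence weakly, to $T_{\infty}$ on $B_R(x_{\infty})$, the defining integral identity in Definition \ref{lpt} holds; that identity does not involve the integrability exponent, so $T_i$ $L^t$-converges weakly to $T_{\infty}$ on $B_R(x_{\infty})$. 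By the lower semicontinuity of the $L^t$-norm under $L^t$-weak convergence, $\|T_{\infty}\|_{L^t(B_R(x_{\infty}))}\le\liminf_{i\to\infty}\|T_i\|_{L^t(B_R(x_i))}$, so it remains only to prove
\[\limsup_{i\to\infty}\int_{B_R(x_i)}|T_i|^t\,d\upsilon_i\le\int_{B_R(x_{\infty})}|T_{\infty}|^t\,d\upsilon_{\infty};\]
together with the $L^t$-weak convergence just established, this yields $L^t$-strong convergence.

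For the $\limsup$-bound I would truncate. For $\lambda>0$ set $\Psi_{\lambda}(a):=\min\{a,\lambda\}^t$, a bounded Lipschitz function on $[0,\infty)$ with $\Psi_{\lambda}(0)=0$, and decompose
\begin{align*}
\int_{B_R(x_i)}|T_i|^t\,d\upsilon_i=\int_{B_R(x_i)}\Psi_{\lambda}(|T_i|)\,d\upsilon_i+\int_{B_R(x_i)}\bigl(|T_i|^t-\Psi_{\lambda}(|T_i|)\bigr)\,d\upsilon_i.
\end{align*}
On $\{|T_i|>\lambda\}$ the last integrand is at most $|T_i|^t\le\lambda^{t-q}|T_i|^q$ (here $t<q$ is used), so the last term is at most $\lambda^{t-q}\sup_i\|T_i\|_{L^q}^q$, which tends to $0$ as $\lambda\to\infty$ uniformly in $i$. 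For the first term, since the fiberwise norm is $1$-Lipschitz and vanishes at the origin, $|T_i|$ $L^p$-converges strongly to $|T_{\infty}|$ on $B_R(x_{\infty})$; invoking the stability of $L^p$-strong convergence under post-composition with a bounded Lipschitz function vanishing at $0$ (a standard property of the $L^p$-convergence; see \cite{holp}), $\Psi_{\lambda}(|T_i|)$ $L^2$-converges strongly to $\Psi_{\lambda}(|T_{\infty}|)$ on $B_R(x_{\infty})$, whence $\int_{B_R(x_i)}\Psi_{\lambda}(|T_i|)\,d\upsilon_i\to\int_{B_R(x_{\infty})}\Psi_{\lambda}(|T_{\infty}|)\,d\upsilon_{\infty}\le\int_{B_R(x_{\infty})}|T_{\infty}|^t\,d\upsilon_{\infty}$. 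Combining the two estimates and letting $\lambda\to\infty$ gives the desired inequality.

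The main obstacle is the step asserting that post-composition of an $L^p$-strong convergent sequence with a bounded Lipschitz function vanishing at the origin yields an $L^2$-strong convergent sequence, together with the companion fact that the fiberwise norm of an $L^p$-strong convergent sequence of tensor fields converges $L^p$-strongly; these are where one leans on the structure theory of the $L^p$-convergence developed in \cite{holp} (informally: $L^p$-strong convergence forces convergence ``in measure'' with respect to the Gromov--Hausdorff topology, after which a Vitali-type argument applies). Everything else---H\"older's inequality, Bishop--Gromov volume comparison, lower semicontinuity of $L^p$-norms, and the exponent-independence of the test in Definition \ref{lpt}---is routine.
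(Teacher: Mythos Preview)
Your truncation argument is a legitimate alternative to the paper's proof, and the overall architecture is sound: reduce to the norm inequality $\limsup_i\int|T_i|^t\le\int|T_\infty|^t$, split off a tail controlled by the uniform $L^q$-bound, and handle the bounded truncated part via the functional calculus for $L^p$-strong convergence in \cite{holp}. The paper instead proceeds by first producing (via \cite[Propositions~3.22, 3.56]{holp}) uniformly bounded approximants $T_{i,j}\in L^\infty$ with $T_{i,j}\to T_{\infty,j}$ strongly in every $L^t$ and $T_{\infty,j}\to T_\infty$ in $L^q$, diagonalizes to $\hat T_i:=T_{i,j(i)}$ which $L^q$-strongly converges to $T_\infty$, and then shows $\|T_i-\hat T_i\|_{L^t}\to 0$ by a Chebyshev--H\"older split on the set $\{|T_i-\hat T_i|^p\ge\epsilon_i^{1/2}\}$. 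So the paper trades your height-truncation for an approximation-by-bounded-tensors step; what the paper's route buys is that the only nontrivial black box is the existence of those bounded approximants, after which everything is elementary measure theory, whereas your route needs two facts from \cite{holp} (norms of tensors and composition with Lipschitz functions behave well under strong convergence).

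One point deserves tightening. Your sentence ``since the fiberwise norm is $1$-Lipschitz and vanishes at the origin, $|T_i|$ $L^p$-converges strongly to $|T_\infty|$'' reads as if you are invoking $||T_i|-|T_\infty||\le|T_i-T_\infty|$, but in the Gromov--Hausdorff setting there is no pointwise difference $T_i-T_\infty$: the sections live on different spaces. The conclusion you want is correct, but it is a theorem in \cite{holp} (part of the structure theory you allude to at the end), not a consequence of Lipschitzness of the norm map on a single fiber. Likewise, the stability of $L^p$-strong convergence of scalar functions under post-composition with a bounded Lipschitz function vanishing at $0$ is available in \cite{holp}, but you should cite it precisely rather than justify it heuristically; your parenthetical about ``convergence in measure with respect to the Gromov--Hausdorff topology'' is the right intuition but is not itself a proof. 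With those two citations made exact, your argument goes through.
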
 
\begin{proof}
By \cite[Propositions $3.22$ and $3.56$]{holp} there exist a sequence $\{T_{i, j}\}_{i \le \infty, j<\infty}$ of $T_{i, j} \in L^{\infty}(T^r_sB_R(x_i))$ such that $T_{i, j}$ $L^t$-converges strongly to $T_{\infty, j}$ on $B_R(x_{\infty})$ for any $t \in (1, \infty)$ and $j$, that $T_{\infty, j} \to T_{\infty}$ in $L^q(T^r_sB_R(x_{\infty}))$, and that 
\[\sup_i||T_{i, j}||_{L^{\infty}}<\infty\]
for every $j$ (see also \cite[Proposition 3.69]{holp}).
Thus there exists a subsequence $\{j(i)\}_i$ of $\mathbf{N}$ such that $\hat{T}_{i}:=T_{i, j(i)}$ $L^q$-converges strongly to $T_{\infty}$ on $B_R(x_{\infty})$.
For every $i<\infty$, let
\[\epsilon_i:=\int_{B_R(x_i)}|T_i-\hat{T}_i|^pd\upsilon_i+i^{-1}\]
and let $A_i:=\{y \in B_R(x_i); |T_i-\hat{T}_i|^p(y) \ge \epsilon_i^{1/2}\}$.
Note that $\epsilon_i \to 0$ as $i \to \infty$.
Then we have
\begin{align*}
\upsilon_i(A_i)\le \epsilon_i^{-1/2}\int_{A_i}|T_i-\hat{T}_i|^pd\upsilon_i \le \epsilon_i^{-1/2}\int_{B_R(x_i)}|T_i-\hat{T}_i|^pd\upsilon_i \le \epsilon_i^{1/2} \to 0
\end{align*}
as $i \to \infty$.
Therefore
\begin{align*}
\int_{B_R(x_i)}|T_i-\hat{T}_i|^td\upsilon_i &= \int_{A_i}|T_i-\hat{T}_i|^td\upsilon_i+\int_{B_R(x_i) \setminus A_i}|T_i-\hat{T}_i|^td\upsilon_i \\
&\le \left(\upsilon_i(A_i)\right)^{(q-t)/q}\left(\int_{B_R(x_i)}|T_i-\hat{T}_i|^qd\upsilon_i\right)^{t/q} + \upsilon_i(B_R(x_i))\epsilon_i^{t/(2p)} \to 0 
\end{align*}
as $i \to \infty$.
This completes the proof.
\end{proof}
We now recall that in \cite{HK1} Haj\l asz-Koskela gave a generalization of the Rellich-Kondrachov compactness to the metric measure space setting in the following sense:
If a metric measure space $(X, \upsilon)$ satisfies a doubling condition and the $(q, p)$-Poincar\'e inequality on a ball $B_R(x) \subset X$ for some $\tau$, then we see that $H^{1, p}(B_R(x)) \hookrightarrow L^{q}(B_R(x))$ and that the embedding $H^{1, p}(B_R(x)) \hookrightarrow L^r(B_R(x))$ is compact for every $r<q$.

We now give a generalization of this to the Gromov-Hausdorff setting which is the first embedding theorem:
\begin{theorem}\label{tt}
Let $R >0$, let $\{q_i\}_{i \le \infty}$ and $\{p_i\}_{i \le \infty}$ be convergent sequences in $(1, \infty)$, 
let $(X_i, x_i, \upsilon_i) \stackrel{GH}{\to} (X_{\infty}, x_{\infty}, \upsilon_{\infty})$ in $\overline{M(n, K)}$ with $\mathrm{diam}\,X_{\infty}>0$, and let $\{f_i\}_{i < \infty}$ be a sequence of $f_i \in H^{1, p_i}(B_R(x_i))$ with $\sup_{i}||f_i||_{H^{1, p_i}(B_R(x_i))}<\infty$.
Assume 
\begin{align}\label{whty}
\sup_{i<\infty}||f_i||_{L^{q_i}(B_R(x_i))}<\infty.
\end{align}
Then there exist $f_{\infty} \in L^{q_{\infty}}(B_R(x_{\infty}))$ and a subsequence $\{f_{i(j)}\}_j$ such that $f_{i(j)}$ $\{L^{q_i}\}_i$-converges weakly to $f_{\infty}$ on $B_R(x_{\infty})$ and that $f_{i(j)}$ $L^r$-converges strongly to $f_{\infty}$ on $B_R(x_{\infty})$ for every $1<r<q_{\infty}$.
\end{theorem}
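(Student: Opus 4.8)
The plan is to deduce the statement from the Rellich-type compactness for functions (Theorem \ref{srell}), the $L^{p_i}$-weak compactness of bounded sequences recalled in subsection $2.5.2$, and the interpolation-type Proposition \ref{ssrt}; no essentially new estimate is needed, only a careful combination of these.

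First I would invoke Theorem \ref{srell}: since $\sup_i\|f_i\|_{H^{1,p_i}(B_R(x_i))}<\infty$, after passing to a subsequence there is $g_\infty\in H^{1,p_\infty}(B_R(x_\infty))$ such that $f_i$ $\{L^{p_i}\}_i$-converges strongly to $g_\infty$ on $B_R(x_\infty)$. Next, by \eqref{whty} and the $L^{p_i}$-weak compactness of bounded sequences, after passing to a further subsequence $f_i$ $\{L^{q_i}\}_i$-converges weakly to some $f_\infty\in L^{q_\infty}(B_R(x_\infty))$. For every convergent sequence $z_i\stackrel{GH}{\to}z_\infty$ and every $t>0$ with $B_t(z_\infty)\subset B_R(x_\infty)$, both convergences give
\[
\int_{B_t(z_\infty)}g_\infty\,d\upsilon_\infty=\lim_{i\to\infty}\int_{B_t(z_i)}f_i\,d\upsilon_i=\int_{B_t(z_\infty)}f_\infty\,d\upsilon_\infty,
\]
and since the underlying spaces are doubling, $\upsilon_\infty$-a.e.\ point of $B_R(x_\infty)$ is a Lebesgue point of $g_\infty$ and of $f_\infty$, whence $g_\infty=f_\infty$ $\upsilon_\infty$-a.e. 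In particular $f_\infty\in H^{1,p_\infty}(B_R(x_\infty))\cap L^{q_\infty}(B_R(x_\infty))$ and $f_i$ $\{L^{p_i}\}_i$-converges strongly to $f_\infty$.

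It then remains to upgrade this to $L^r$-strong convergence for each fixed $r\in(1,q_\infty)$. Set $\bar p:=\inf_i p_i$; since $p_i\to p_\infty>1$ and $p_i>1$ for every $i$ one checks $\bar p>1$, while $\bar p\le p_i$ for all $i$, so by the compatibility of $\{L^{p_i}\}_i$-strong convergence with decreasing the exponents (subsection $2.5.2$) the sequence $f_i$ $L^{\bar p}$-converges strongly to $f_\infty$. Now put $q:=(r+q_\infty)/2$ and $p:=\min\{\bar p,(1+q)/2\}$, so that $1<p<q<q_\infty$ and $p\le\bar p$; hence $f_i$ also $L^p$-converges strongly to $f_\infty$. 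Since $q<q_\infty=\lim_iq_i$ we have $q\le q_i$ for all large $i$, so the H\"older inequality, together with $\sup_i\upsilon_i(B_R(x_i))<\infty$ and \eqref{whty}, yields $\sup_i\|f_i\|_{L^q(B_R(x_i))}<\infty$. Regarding the $f_i$ as $(0,0)$-tensor fields, Proposition \ref{ssrt} applies with the exponents $p<q$ and shows that $f_i$ $L^t$-converges strongly to $f_\infty$ on $B_R(x_\infty)$ for every $1<t<q$; taking $t=r$ completes the argument.

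The only mildly delicate point is the exponent bookkeeping forced by the absence of any order relation between $\{p_i\}$ and $\{q_i\}$ and by the fact that $q_\infty$ may be smaller than, equal to, or larger than $p_\infty$; the two substantive ingredients, Theorem \ref{srell} and Proposition \ref{ssrt}, are already available, so the proof is essentially their combination and I do not anticipate a genuine obstacle beyond keeping the chosen exponents consistent.
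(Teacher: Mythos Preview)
Your proof is correct and follows exactly the approach the paper takes: the paper's own proof is the single line ``It is a direct consequence of Theorem \ref{srell} and Proposition \ref{ssrt},'' and you have simply (and carefully) filled in the exponent bookkeeping needed to combine those two ingredients. The identification $g_\infty=f_\infty$ via averages over balls and the reduction to a fixed exponent $\bar p=\inf_i p_i>1$ before invoking Proposition \ref{ssrt} are appropriate details that the paper leaves implicit.
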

\begin{proof}
It is a direct consequence of Theorem \ref{srell} and Proposition \ref{ssrt}
\end{proof}
The following is the second embedding theorem:
\begin{theorem}\label{sobo3}
Let $\{q_i\}_{i \le \infty}$ and $\{p_i\}_{i \le \infty}$ be convergent sequences in $(1, \infty)$,
let $(X_i, \upsilon_i) \stackrel{GH}{\to} (X_{\infty}, \upsilon_{\infty})$ in $\overline{M(n, K, d)}$ with $\mathrm{diam}\,X_{\infty}>0$, and let
$\{f_i\}_{i \le \infty}$ be a sequence of $f_i \in H^{1, p_i}(X_i)$ satisfying that $f_i$ $\{L^{p_i}\}_{i}$-converges weakly to $f_{\infty}$ on $X_{\infty}$ and that $\nabla f_i$ $\{L^{p_i}\}_i$-converges strongly to $\nabla f_{\infty}$ on $X_{\infty}$.
Assume that there exist $A, B>0$ such that for every $i<\infty$, $(X_i, \upsilon_i)$ satisfies the $(q_i, p_i)$-Sobolev inequality on $X_i$ for $(A, B)$.
Then we see that $f_i$ $\{L^{r_i}\}_i$-converges strongly to $f_{\infty}$ on $X_{\infty}$, where $r_i=\max\{p_i, q_i\}$. 
\end{theorem}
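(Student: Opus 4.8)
The plan is to reduce the claim to two facts: that $f_i$ converges $L^{p_i}$-strongly to $f_\infty$, and that $\limsup_{i\to\infty}\|f_i\|_{L^{q_i}(X_i)}\le\|f_\infty\|_{L^{q_\infty}(X_\infty)}$; the latter is the crux, and it is precisely here that the hypothesis that the gradients converge \emph{strongly} (not merely weakly) is used. First I would note that $\sup_i\|f_i\|_{H^{1,p_i}(X_i)}<\infty$, since the weak $L^{p_i}$-convergence of $f_i$ bounds $\sup_i\|f_i\|_{L^{p_i}}$ and the strong $L^{p_i}$-convergence of $\nabla f_i$ bounds $\sup_i\|\nabla f_i\|_{L^{p_i}}$. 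By Theorem \ref{srell} a subsequence of $\{f_i\}$ converges $L^{p_i}$-strongly to an element of $H^{1,p_\infty}(X_\infty)$, which must equal $f_\infty$ by uniqueness of $L^{p_i}$-weak limits; as every subsequence admits such a further subsequence, the whole sequence $f_i$ converges $L^{p_i}$-strongly to $f_\infty$. Next, since $\mathrm{LIP}(X_i)\cap H^{1,p_i}(X_i)$ is dense in $H^{1,p_i}(X_i)$ and $|d\varphi|=\mathrm{Lip}\,\varphi$ $\upsilon_i$-a.e. for $\varphi\in\mathrm{LIP}(X_i)$ (Theorem \ref{29292}), the $(q_i,p_i)$-Sobolev inequality for $(A,B)$ extends from $\mathrm{LIP}_c(X_i)=\mathrm{LIP}(X_i)$ to all of $H^{1,p_i}(X_i)$ (pass to the limit in $H^{1,p_i}$ on the right-hand side and apply Fatou's lemma on the left). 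Applying it to $f_i$ gives $\sup_i\|f_i\|_{L^{q_i}(X_i)}<\infty$, and since the ball-integral condition defining weak convergence is independent of the exponent, $f_i$ converges $L^{q_i}$-weakly to $f_\infty$. Finally, by Theorem \ref{stasob}, $(X_\infty,\upsilon_\infty)$ satisfies the $(q_\infty,p_\infty)$-Sobolev inequality for $(A,B)$, so $H^{1,p_\infty}(X_\infty)$ embeds boundedly in $L^{q_\infty}(X_\infty)$.

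For the norm inequality, fix $\epsilon>0$ and choose $h_\infty\in\mathrm{LIP}(X_\infty)$ with $\|f_\infty-h_\infty\|_{H^{1,p_\infty}(X_\infty)}<\epsilon$. As in the proof of Theorem \ref{stasob}, by \cite[Theorem 4.2]{holip} I would lift $h_\infty$ to a sequence $h_i\in\mathrm{LIP}(X_i)$ with $h_i,\nabla h_i$ converging $L^r$-strongly to $h_\infty,\nabla h_\infty$ on $X_\infty$ for every $1<r<\infty$ (and with uniformly bounded Lipschitz constants, so the convergence also holds for the varying exponents $p_i$ and $q_i$). Then $f_i-h_i\in H^{1,p_i}(X_i)$, and by linearity of strong convergence, the $L^{p_i}$-strong convergence of $f_i$ established above, and the strong convergence of $\nabla f_i$, both $f_i-h_i\to f_\infty-h_\infty$ and $\nabla(f_i-h_i)\to\nabla f_\infty-\nabla h_\infty$ $L^{p_i}$-strongly; inserting this into the extended $(q_i,p_i)$-Sobolev inequality on $X_i$ for $(A,B)$ yields
\[\limsup_{i\to\infty}\|f_i-h_i\|_{L^{q_i}(X_i)}^{\,p_\infty}\le A\,\|\nabla f_\infty-\nabla h_\infty\|_{L^{p_\infty}(X_\infty)}^{\,p_\infty}+B\,\|f_\infty-h_\infty\|_{L^{p_\infty}(X_\infty)}^{\,p_\infty}<(A+B)\epsilon^{p_\infty}.\]
Writing $f_i=h_i+(f_i-h_i)$, using that $h_i$ converges $L^{q_i}$-strongly to $h_\infty$ and that $\|h_\infty\|_{L^{q_\infty}(X_\infty)}\le\|f_\infty\|_{L^{q_\infty}(X_\infty)}+\|f_\infty-h_\infty\|_{L^{q_\infty}(X_\infty)}\le\|f_\infty\|_{L^{q_\infty}(X_\infty)}+C\epsilon$ by the Sobolev embedding on $X_\infty$, I obtain $\limsup_{i\to\infty}\|f_i\|_{L^{q_i}(X_i)}\le\|f_\infty\|_{L^{q_\infty}(X_\infty)}+C\epsilon$ for a constant $C$ independent of $\epsilon$; letting $\epsilon\to0$ gives the desired inequality, and hence $f_i$ converges $L^{q_i}$-strongly to $f_\infty$.

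It then remains to pass to $r_i=\max\{p_i,q_i\}$: for the indices with $q_i\le p_i$ the general restriction property of $L^p$-strong convergence upgrades the $L^{p_i}$-strong convergence of $f_i$ to $L^{q_i}$-strong convergence, while for $q_i>p_i$ we have just proved $L^{q_i}$-strong convergence; gluing the two subsequences, and noting that on the probability spaces $\|f_\infty\|_{L^{r_\infty}(X_\infty)}=\max\{\|f_\infty\|_{L^{p_\infty}(X_\infty)},\|f_\infty\|_{L^{q_\infty}(X_\infty)}\}$ with $r_\infty=\max\{p_\infty,q_\infty\}$, gives the asserted $L^{r_i}$-strong convergence of $f_i$ to $f_\infty$. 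I expect the genuine difficulty to lie in the second paragraph: ordinary Rellich--Kondrachov compactness returns strong convergence only below the exponent $q_\infty$, and concentration of mass can destroy strong convergence exactly at the critical exponent $q_\infty$; it is the \emph{strong} convergence of $\nabla f_i$, transported through the Sobolev inequality applied to the differences $f_i-h_i$, that rules this out and closes the argument.
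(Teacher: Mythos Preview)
Your proof is correct and follows essentially the same approach as the paper: first obtain $\{L^{p_i}\}_i$-strong convergence via the Rellich-type compactness (Theorem \ref{srell}), then approximate $f_\infty$ in $H^{1,p_\infty}$ by Lipschitz functions, lift these to $X_i$ via \cite[Theorem 4.2]{holip}, and apply the $(q_i,p_i)$-Sobolev inequality to the differences $f_i-h_i$ to force $\{L^{q_i}\}_i$-strong convergence. The paper's write-up is terser---it uses a family $\{f_{i,j}\}$ and records only the key inequality $\big(\int_{X_i}|f_i-f_{i,j}|^{q_i}\,d\upsilon_i\big)^{p_i/q_i}\le A\int_{X_i}|\nabla(f_i-f_{i,j})|^{p_i}\,d\upsilon_i+B\int_{X_i}|f_i-f_{i,j}|^{p_i}\,d\upsilon_i$, then lets $i\to\infty$ and $j\to\infty$---whereas you spell out the weak-convergence step, the extension of the Sobolev inequality to $H^{1,p_i}$, and the final passage to $r_i=\max\{p_i,q_i\}$; but the substance is identical.
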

\begin{proof}
Theorem \ref{srell} yields that $f_i$ $\{L^{p_i}\}_i$-converges strongly to $f_{\infty}$ on $X_{\infty}$.
Thus it suffices to check that this is also an $\{L^{q_i}\}_i$-strong convergence.

By Theorem \ref{fundpr} and an argument similar to the proof of Theorem \ref{stasob} without loss of generality we can assume that there exist sequences $\{f_{i, j}\}_{i \le \infty, j<\infty}$ of $f_{i, j} \in \mathrm{LIP}(X_i)$ such that $f_{i, j}, \nabla f_{i, j}$ $L^r$-converge strongly to $f_{\infty, j}, \nabla f_{\infty, j}$ on $X_{\infty}$ as $i \to \infty$ for any $r, j$, respectively and that
$f_{\infty, j} \to f_{\infty}$ in $H^{1, p_{\infty}}(X_{\infty})$ as $j \to \infty$.

Note
\begin{align}\label{hou}
\left( \int_{X_i}\left|f_i-f_{i, j}\right|^{q_i}d\upsilon_i \right)^{p_i/q_i} \le A \int_{X_i}\left|\nabla (f_i-f_{i, j})\right|^{p_i}d\upsilon_i + B\int_{X_i}|f_i -f_{i, j}|^{p_i}d\upsilon_i.
\end{align}
Since the right hand side of (\ref{hou}) goes to $0$ as $i \to \infty$ and $j \to \infty$,  we have
\[\lim_{j \to \infty}\left(\limsup_{i \to \infty}\int_{X_i}\left|f_i-f_{i, j}\right|^{q_i}d\upsilon_i\right)=0.\]
This completes the proof.
\end{proof}
\subsection{Fatou's lemma in the Gromov-Hausdorff setting}
Throughout this subsection we always consider the following setting:
\begin{itemize}
\item Let $R>0$.
\item Let $(X_i, x_i, \upsilon_i) \stackrel{GH}{\to} (X_{\infty}, x_{\infty}, \upsilon_{\infty})$ in $\overline{M(n, K)}$ with $\mathrm{diam}\,X_{\infty}>0$.
\item Let $\{f_i\}_{i \le \infty}$ be a sequence of $f_i \in L^1_{\mathrm{loc}}(B_R(x_i))$, where $L^1_{\mathrm{loc}}(B_R(x_i))$ is the space of locally $L^1$-functions on $B_R(x_i)$.
\end{itemize}
The main purpose of this subsection is to introduce a generalization of Fatou's lemma to the Gromov-Hausdorff setting.
In order to give the precise statement we introduce the following.
\begin{definition}\label{co}
\begin{enumerate}
\item We say that \textit{$\{f_i\}_i$ is $L^1_{\mathrm{loc}}$-weakly lower semicontinuous at a point $z_{\infty} \in B_R(x_{\infty})$} if  for every $\epsilon>0$ there exist $r_0>0$ and a convergent sequence $\{z_{i}\}_{i \le \infty}$ of $z_{i} \in B_R(x_{i})$ such that 
\[\liminf_{i \to \infty}\left(\frac{1}{\upsilon_{i}(B_r(z_{i}))}\int_{B_r(z_{i})}f_{i}d\upsilon_{i}\right)- \frac{1}{\upsilon_{\infty}(B_r(z_{\infty}))}\int_{B_r(z_{\infty})}f_{\infty}d\upsilon_{\infty} \ge -\epsilon\] 
holds for every $r<r_0$.
\item We say that \textit{$\{f_i\}_i$ is $L^1_{\mathrm{loc}}$-weakly upper semicontinuous at a point $z_{\infty} \in B_R(x_{\infty})$} if  for every $\epsilon>0$ there exist $r_0>0$ and a convergent sequence $\{z_{i}\}_{i \le \infty}$ of $z_{i} \in B_R(x_{i})$ such that 
\[\limsup_{i \to \infty}\left(\frac{1}{\upsilon_{i}(B_r(z_{i}))}\int_{B_r(z_{i})}f_{i}d\upsilon_{i}\right)- \frac{1}{\upsilon_{\infty}(B_r(z_{\infty}))}\int_{B_r(z_{\infty})}f_{\infty}d\upsilon_{\infty} \le \epsilon\] 
holds for every $r<r_0$.
\item We say that \textit{$\{f_i\}_i$ is $L^1_{\mathrm{loc}}$-weakly continuous at a point $z_{\infty} \in B_R(x_{\infty})$} if  for every $\epsilon>0$ there exist $r_0>0$ and a convergent sequence $\{z_{i}\}_{i \le \infty}$ of $z_{i} \in B_R(x_{i})$ such that 
\[\limsup_{i \to \infty}\left|\frac{1}{\upsilon_{i}(B_r(z_{i}))}\int_{B_r(z_{i})}f_{i}d\upsilon_{i}- \frac{1}{\upsilon_{\infty}(B_r(z_{\infty}))}\int_{B_r(z_{\infty})}f_{\infty}d\upsilon_{\infty}\right| \le \epsilon\] 
holds for every $r<r_0$.
\end{enumerate}
\end{definition}
Compare with \cite[Definition $3.4$]{holp}.

The following is a generalization of Fatou's lemma to the Gromov-Hausdorff setting.
The proof is essentially same to that of \cite[Proposition $3.5$]{holp}, however we give a proof for reader's convenience:
\begin{proposition}\label{fatou1}
Assume that $f_i \ge 0$ for every $i\le \infty$ and that $\{f_i\}_i$ $L^1_{\mathrm{loc}}$-weakly lower semicontinuous at a.e. $z_{\infty} \in B_R(x_{\infty})$.
Then we have
\[\liminf_{i \to \infty}\int_{B_R(x_i)}f_id\upsilon_i \ge \int_{B_R(x_{\infty})}f_{\infty}d\upsilon_{\infty}.\]
\end{proposition}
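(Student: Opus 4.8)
The plan is to mimic the Vitali covering argument used for \cite[Proposition 3.5]{holp}. First I would dispose of the trivial case: if $\liminf_{i\to\infty}\int_{B_R(x_i)}f_id\upsilon_i=\infty$ there is nothing to prove, so after passing to a subsequence I may assume this quantity is a finite limit $L$. Fix $\epsilon>0$ and let $G\subset B_R(x_\infty)$ be the set of full $\upsilon_\infty$-measure on which $\{f_i\}_i$ is $L^1_{\mathrm{loc}}$-weakly lower semicontinuous; for $z\in G$ write $r_0(z)>0$ for the radius furnished by Definition \ref{co} with this $\epsilon$. Since $(X_\infty,\upsilon_\infty)\in\overline{M(n,K)}$, Bishop--Gromov makes $\upsilon_\infty$ doubling on $B_R(x_\infty)$, so I can apply the Vitali covering theorem to the fine cover of $G$ consisting of the balls $B_r(z)$ with $z\in G$ and $0<r<\min\{r_0(z),\,R-d_{X_\infty}(x_\infty,z)\}$ (the second constraint guarantees $\overline{B_r(z)}\subset B_R(x_\infty)$). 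This produces a countable pairwise disjoint subfamily $\{B_{r_k}(z_{\infty,k})\}_k$ covering $B_R(x_\infty)$ up to a $\upsilon_\infty$-null set, so that $\int_{B_R(x_\infty)}f_\infty\,d\upsilon_\infty=\sum_k\int_{B_{r_k}(z_{\infty,k})}f_\infty\,d\upsilon_\infty$ (using $f_\infty\ge0$).

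Next I would fix $N\in\mathbf{N}$ and estimate the finite partial sum. For each $k\le N$, Definition \ref{co} gives a convergent sequence $z_{i,k}\stackrel{GH}{\to}z_{\infty,k}$ such that $\liminf_i \upsilon_i(B_{r_k}(z_{i,k}))^{-1}\int_{B_{r_k}(z_{i,k})}f_i\,d\upsilon_i$ is at least $\upsilon_\infty(B_{r_k}(z_{\infty,k}))^{-1}\int_{B_{r_k}(z_{\infty,k})}f_\infty\,d\upsilon_\infty-\epsilon$; multiplying by $\upsilon_i(B_{r_k}(z_{i,k}))$, which converges to $\upsilon_\infty(B_{r_k}(z_{\infty,k}))$ by the definition of measured Gromov--Hausdorff convergence, yields $\liminf_i\int_{B_{r_k}(z_{i,k})}f_i\,d\upsilon_i\ge\int_{B_{r_k}(z_{\infty,k})}f_\infty\,d\upsilon_\infty-\epsilon\,\upsilon_\infty(B_{r_k}(z_{\infty,k}))$. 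Because the closed balls $\overline{B_{r_k}(z_{\infty,k})}$ ($k\le N$) are pairwise disjoint and sit inside the open ball $B_R(x_\infty)$, for all large $i$ the balls $B_{r_k}(z_{i,k})$ ($k\le N$) are pairwise disjoint and contained in $B_R(x_i)$; hence $f_i\ge0$ gives $\int_{B_R(x_i)}f_i\,d\upsilon_i\ge\sum_{k=1}^N\int_{B_{r_k}(z_{i,k})}f_i\,d\upsilon_i$. Summing the per-ball bound over $k\le N$ and taking $\liminf_i$ I obtain $L\ge\sum_{k=1}^N\int_{B_{r_k}(z_{\infty,k})}f_\infty\,d\upsilon_\infty-\epsilon\,\upsilon_\infty(B_R(x_\infty))$, and then letting $N\to\infty$ followed by $\epsilon\to0$ finishes the proof.

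The routine calculations are just the manipulations of averages; the only parts that require care are (i) checking that the displayed collection of balls really is an admissible fine cover so that the Vitali theorem applies in the doubling space $(X_\infty,\upsilon_\infty)$, and (ii) the Gromov--Hausdorff bookkeeping that transfers pairwise disjointness and the inclusion $\overline{B_{r_k}(z_{\infty,k})}\subset B_R(x_\infty)$ to the approximating balls $B_{r_k}(z_{i,k})\subset B_R(x_i)$ for $i$ large. I expect (ii) to be the main technical obstacle, though it is entirely standard.
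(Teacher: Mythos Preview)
Your approach is essentially the same as the paper's: both use a Vitali covering argument on the full-measure set of lower-semicontinuity points, restrict to finitely many small disjoint balls, transfer the balls to $X_i$ via Gromov--Hausdorff convergence, and use $f_i\ge 0$ to bound $\int_{B_R(x_i)}f_i\,d\upsilon_i$ from below by the finite sum. One small point to fix: you apply Vitali to \emph{open} balls and then assert that the corresponding \emph{closed} balls $\overline{B_{r_k}(z_{\infty,k})}$ are pairwise disjoint, which does not follow in general (disjoint open balls only give $d(z_{\infty,k},z_{\infty,l})\ge r_k+r_l$, and without the strict inequality the disjointness need not transfer to $B_{r_k}(z_{i,k})\subset X_i$); run Vitali with closed balls from the start, as the paper does, and the issue disappears.
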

\begin{proof}
Without loss of generality we can assume that $\sup_{i<\infty}||f_i||_{L^1(B_R(x_i))}<\infty$.

Let $K$ be a Borel subset of $B_R(x_{\infty})$ satisfying that $\upsilon_{\infty}(B_R(x_{\infty}) \setminus K)=0$ and that $\{f_i\}_i$  is $L^1_{\mathrm{loc}}$-weakly lower semicontinuous at every $z_{\infty} \in K$.

Fix $\epsilon>0$.
By a standard covering argument (c.f. \cite{le} or \cite[Proposiiton $2.2$]{holip}) and our assumption, there exist a  pairwise disjoint countable collection 
$\{\overline{B}_{r_k}(w_{\infty, k})\}_k$ and sequences $\{w_{i, k}\}_{i \le \infty, k}$ of $w_{i, k} \in B_R(x_i)$ with $w_{i, k} \stackrel{GH}{\to} w_{\infty, k}$ as $i \to \infty$ such that $\overline{B}_{5r_k}(w_{\infty, k}) \subset B_R(x_{\infty})$, that $w_{\infty, k} \in K$, that 
$K \setminus \bigcup_{k=1}^N\overline{B}_{r_k}(w_{\infty, k}) \subset \bigcup_{k=N+1}^{\infty}\overline{B}_{5r_k}(w_{\infty, k})$ for every $N \in \mathbf{N}$, and that
\[\liminf_{i \to \infty}\left(\frac{1}{\upsilon_{i}(B_{r_k}(w_{i, k}))}\int_{B_{r_k}(w_{i, k})}f_{i}d\upsilon_{i}\right) \ge \frac{1}{\upsilon_{\infty}(B_{r_k}(w_{\infty, k}))}\int_{B_{r_k}(w_{\infty, k})}f_{\infty}d\upsilon_{\infty}-\epsilon.\]

Let $N_0 \in \mathbf{N}$ with $\sum_{k=N_0 +1}^{\infty}\upsilon_{\infty}(B_{5r_k}(w_{\infty, k}))<\epsilon$ and let $K^{\epsilon}:=K \cap \bigcup_{k=1}^{N_0}\overline{B}_{r_k}(w_{\infty, k})$.
Then we have 
\begin{align*}
\int_{K^{\epsilon}}f_{\infty}d\upsilon_{\infty}\le \sum_{k=1}^{N_0}\int_{B_{r_k}(w_{\infty, k})}f_{\infty}d\upsilon_{\infty}&\le \sum_{k=1}^{N_0}\left(\int_{B_{r_k}(w_{i, k})}f_id\upsilon_{i} + 2\epsilon \upsilon_{i}(B_{r_k}(w_{i, k}))\right) \\
&\le \int_{B_R(x_{i})}f_{i}d\upsilon_{i} +2\epsilon \upsilon_{i}(B_R(x_{i}))
\end{align*}
for every sufficiently large $i$.
Since $\upsilon_{\infty}(B_R(x_{\infty}) \setminus K^{\epsilon})<\epsilon$, by letting $i \to \infty$ and $\epsilon \to 0$, the dominated convergence theorem yields the assertion.
\end{proof}
Next we give a fundamental property of the $L^1_{\mathrm{loc}}$-weak upper semicontinuity:
\begin{proposition}\label{upperpro}
Assume that $f_{\infty} \in L^1(B_R(x_{\infty}))$, that $\{f_i\}_i$ is $L^1_{\mathrm{loc}}$-weakly upper semicontinuous at a.e. $z_{\infty} \in B_R(x_{\infty})$ and that there exists $p>1$ such that $\sup_{i < \infty}||f_i||_{L^p(B_R(x_i))}<\infty$.
Then we have
\[\limsup_{i \to \infty}\int_{B_R(x_i)}f_id\upsilon_i \le \int_{B_R(x_{\infty})}f_{\infty}d\upsilon_{\infty}.\]
\end{proposition}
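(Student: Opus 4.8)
The plan is to repeat the Vitali covering argument used in the proof of Proposition~\ref{fatou1}, but with the inequalities reversed, and to use the uniform $L^p$-bound with $p>1$ in place of nonnegativity in order to control the part of $B_R(x_i)$ that the covering misses. Since nonnegativity is no longer available, discarding the uncovered region is not harmless: the new point is that the $L^p$-bound prevents mass of $f_i$ from concentrating there.

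Concretely, set $C:=\sup_{i<\infty}\|f_i\|_{L^p(B_R(x_i))}$ and fix $\epsilon>0$. Let $K\subset B_R(x_\infty)$ be a Borel set with $\upsilon_\infty(B_R(x_\infty)\setminus K)=0$ on which $\{f_i\}_i$ is $L^1_{\mathrm{loc}}$-weakly upper semicontinuous. A standard covering argument (choosing all radii so that the relevant spheres are $\upsilon_\infty$-null) produces a pairwise disjoint countable family $\{\overline{B}_{r_k}(w_{\infty,k})\}_k$ with $\overline{B}_{5r_k}(w_{\infty,k})\subset B_R(x_\infty)$, $w_{\infty,k}\in K$, $K\setminus\bigcup_{k=1}^N\overline{B}_{r_k}(w_{\infty,k})\subset\bigcup_{k=N+1}^\infty\overline{B}_{5r_k}(w_{\infty,k})$ for every $N$, together with convergent sequences $w_{i,k}\stackrel{GH}{\to}w_{\infty,k}$ such that
\[
\limsup_{i\to\infty}\left(\frac{1}{\upsilon_i(B_{r_k}(w_{i,k}))}\int_{B_{r_k}(w_{i,k})}f_i\,d\upsilon_i\right)\le\frac{1}{\upsilon_\infty(B_{r_k}(w_{\infty,k}))}\int_{B_{r_k}(w_{\infty,k})}f_\infty\,d\upsilon_\infty+\epsilon .
\]
Pick $N_0$ with $\sum_{k>N_0}\upsilon_\infty(B_{5r_k}(w_{\infty,k}))<\epsilon$ and put $U_\infty:=\bigcup_{k=1}^{N_0}\overline{B}_{r_k}(w_{\infty,k})$, so that $\upsilon_\infty(B_R(x_\infty)\setminus U_\infty)<\epsilon$.

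Then I would split $\int_{B_R(x_i)}f_i\,d\upsilon_i$ as the sum over the finitely many balls $B_{r_k}(w_{i,k})$, $k\le N_0$, plus the remainder over $R_i:=B_R(x_i)\setminus\bigcup_{k=1}^{N_0}\overline{B}_{r_k}(w_{i,k})$. Multiplying the displayed inequality by $\upsilon_i(B_{r_k}(w_{i,k}))\to\upsilon_\infty(B_{r_k}(w_{\infty,k}))>0$ (the averages $\upsilon_i(B_{r_k}(w_{i,k}))^{-1}\int_{B_{r_k}(w_{i,k})}f_i$ stay bounded by the $L^p$-bound and H\"older, so the $\limsup$ of the product is the product of the limits), and summing the finitely many $\limsup$ terms, gives
\[
\limsup_{i\to\infty}\sum_{k=1}^{N_0}\int_{B_{r_k}(w_{i,k})}f_i\,d\upsilon_i\le\int_{U_\infty}f_\infty\,d\upsilon_\infty+\epsilon\,\upsilon_\infty(B_R(x_\infty))\le\int_{B_R(x_\infty)}f_\infty\,d\upsilon_\infty+\int_{B_R(x_\infty)\setminus U_\infty}|f_\infty|\,d\upsilon_\infty+\epsilon\,\upsilon_\infty(B_R(x_\infty)).
\]
For the remainder, disjointness of the $\overline{B}_{r_k}(w_{\infty,k})$ (hence of the $B_{r_k}(w_{i,k})$ for large $i$) together with the measure convergence built into the definition of measured Gromov-Hausdorff convergence gives $\limsup_{i\to\infty}\upsilon_i(R_i)\le\upsilon_\infty(B_R(x_\infty)\setminus U_\infty)<\epsilon$, so H\"older's inequality yields $\limsup_{i\to\infty}\int_{R_i}f_i\,d\upsilon_i\le\limsup_{i\to\infty}\|f_i\|_{L^p}\upsilon_i(R_i)^{1-1/p}\le C\epsilon^{1-1/p}$. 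Adding the two estimates, letting $\epsilon\to0$, and using the absolute continuity of $A\mapsto\int_A|f_\infty|\,d\upsilon_\infty$ (so that $\int_{B_R(x_\infty)\setminus U_\infty}|f_\infty|\,d\upsilon_\infty\to0$) finishes the proof.

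The main obstacle — indeed the only essential difference from Proposition~\ref{fatou1} — is the control of the uncovered region $R_i$: in the Fatou case nonnegativity made the discarded piece automatically favorable, whereas here one genuinely needs uniform integrability of $\{f_i\}_i$, which is exactly what the hypothesis $\sup_i\|f_i\|_{L^p}<\infty$ with $p>1$ supplies via H\"older's inequality once $\upsilon_i(R_i)$ is shown to be small. A minor technical point to be careful about is selecting the covering radii so that the spheres $\partial B_{r_k}(w_{\infty,k})$ and $\partial B_{5r_k}(w_{\infty,k})$ are $\upsilon_\infty$-null, which is possible since only countably many radii fail this.
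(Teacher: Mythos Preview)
Your proposal is correct and follows essentially the same approach as the paper's proof: both use the Vitali-type covering from Proposition~\ref{fatou1}, bound the finite sum over balls via the upper semicontinuity inequality, and control the uncovered remainder $R_i$ by H\"older's inequality together with the uniform $L^p$-bound and the smallness of $\upsilon_i(R_i)$. The only cosmetic difference is that the paper absorbs the $f_\infty$-remainder into the choice of $N_0$ (requiring in addition $\bigl|\int_{B_R(x_\infty)}f_\infty - \int_{U_\infty}f_\infty\bigr|<\epsilon$), whereas you invoke absolute continuity of $A\mapsto\int_A|f_\infty|\,d\upsilon_\infty$ at the end; these are equivalent.
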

\begin{proof}
Let $L:=\sup_{i < \infty}||f_i||_{L^p(B_R(x_i))}$, let $\epsilon>0$ and let $K$ be a Borel subset of $B_R(x_{\infty})$ satisfying that $\upsilon_{\infty}(B_R(x_{\infty})\setminus K)=0$ and that $\{f_i\}_i$ is $L^1_{\mathrm{loc}}$-weakly upper semicontinuous at every $z_{\infty} \in K$.
By an argument similar to the proof of Proposition \ref{fatou1}, there exist a pairwise disjoint countable collection 
$\{\overline{B}_{r_k}(w_{\infty, k})\}_k$ and sequences $\{w_{i, k}\}_{i \le \infty, k}$ of $w_{i, k} \in B_R(x_i)$ such that $w_{i, k} \stackrel{GH}{\to} w_{\infty, k}$ as $i \to \infty$, that $\overline{B}_{5r_k}(w_{\infty, k}) \subset B_R(x_{\infty})$, that $w_{\infty, k} \in K$, that 
$K \setminus \bigcup_{k=1}^N\overline{B}_{r_k}(w_{\infty, k}) \subset \bigcup_{k=N+1}^{\infty}\overline{B}_{5r_k}(w_{\infty, k})$ for every $N \in \mathbf{N}$, and that
\[\limsup_{i \to \infty}\left(\frac{1}{\upsilon_{i}(B_{r_k}(w_{i, k}))}\int_{B_{r_k}(w_{i, k})}f_{i}d\upsilon_{i}\right) \le \frac{1}{\upsilon_{\infty}(B_{r_k}(w_{\infty, k}))}\int_{B_{r_k}(w_{\infty, k})}f_{\infty}d\upsilon_{\infty}+\epsilon.\]

Let $N_0 \in \mathbf{N}$ with $\sum_{k=N_0 +1}^{\infty}\upsilon_{\infty}(B_{5r_k}(w_{\infty, k}))<\epsilon$ and
\[\left|\int_{B_R(x_{\infty})}f_{\infty}d\upsilon_{\infty} - \int_{\bigcup_{k=1}^{N_0}B_{r_k}(w_{\infty, k})}f_{\infty}d\upsilon_{\infty} \right| <\epsilon.\]
Then we have 
\begin{align*}
\int_{B_R(x_{\infty})}f_{\infty}d\upsilon_{\infty} &\ge \sum_{k=1}^{N_0}\int_{B_{r_k}(w_{\infty, k})}f_{\infty}d\upsilon_{\infty}-\epsilon \\
&\ge \sum_{k=1}^{N_0}\int_{B_{r_k}(w_{i, k})}f_{i}d\upsilon_{i}-\epsilon\left(1+\upsilon_{\infty}(B_R(w_{\infty, k})) \right) \\
\end{align*}
for every sufficiently large $i<\infty$.
On the other hand the H$\ddot{\text{o}}$lder inequality yields
\begin{align*}
\left| \int_{B_R(x_i)}f_id\upsilon_i- \sum_{k=1}^{N_0}\int_{B_{r_k}(w_{i, k})}f_{i}d\upsilon_{i} \right| &\le \int_{B_R(x_i)\setminus \bigcup_{k=1}^{N_0}B_{r_k}(w_{\infty, k})}|f_i|d\upsilon_i \\
&\le \left(\upsilon_i \left( B_R(x_i)\setminus \bigcup_{k=1}^{N_0}B_{r_k}(w_{i, k})\right)\right)^{(p-1)/p}||f_i||_{L^p(B_R(x_i))} \\
&\le \epsilon^{(p-1)/p}L
\end{align*}
for every sufficiently large $i<\infty$.
Thus 
\[\int_{B_R(x_{\infty})}f_{\infty}d\upsilon_{\infty}\ge \int_{B_R(x_i)}f_id\upsilon_i -\epsilon\left(1+\upsilon_{\infty}(B_R(w_{\infty, k})) \right)- \epsilon^{(p-1)/p}L.\]
Letting $i \to \infty$ and $\epsilon \to 0$ completes the proof.
\end{proof}
In order to introduce another upper semicontinuity of $L^1$-norms (Corollary \ref{fatou2}) we also consider the following notion:
\begin{definition}\label{ll}
We say that \textit{$f_i$ $w$-converges $f_{\infty}$ on $B_R(x_{\infty})$} if 
for any $\epsilon>0$ and subsequence $\{i(j)\}_j$ there exist a subsequence $\{j(k)\}_k$ of $\{i(j)\}_j$, and a sequence $\{A(j(k), \epsilon)\}_{k \le \infty}$ of compact subsets $A(j(k), \epsilon)$ of $B_R(x_{j(k)})$ such that the following three conditions hold:
\begin{enumerate}
\item $\upsilon_{j(k)}\left(B_R(x_{j(k)}) \setminus A(j(k), \epsilon)\right)\le \epsilon$ for every $k \le \infty$.
\item $A(\infty, \epsilon) \subset \liminf_{k \to \infty}A(j(k), \epsilon)$.
\item We have
\[\lim_{r \to 0}\left(\limsup_{l \to \infty}\left|\frac{1}{\upsilon_{k(l)}(B_r(z_{k(l)}))}\int_{B_r(z_{k(l)})}f_{k(l)}d\upsilon_{k(l)}-\frac{1}{\upsilon_{\infty}(B_r(z_{\infty}))}\int_{B_r(z_{\infty})}f_{\infty}d\upsilon_{\infty}\right|\right)=0\]
for any subsequence $\{k(l)\}_l$ of $\{j(k)\}_k$ and  convergent sequence $\{z_{k(l)}\}_{l \le \infty}$ of $z_{k(l)} \in A(k(l), \epsilon)$.
\end{enumerate}
\end{definition}
A fundamental property of this convergence is the following `linearity':
\begin{proposition}\label{li}
Let $\{g_i\}_{i \le \infty}$ be a sequence of $g_i \in L^1_{\mathrm{loc}}(B_R(x_i))$.
Then we have the following:
\begin{enumerate}
\item If $f_i$ w-converges to $f_{\infty}$ on $B_R(x_{\infty})$, then for every subsequence $\{i(j)\}_j$ of $\mathbf{N}$, there exists a subsequence $\{j(k)\}_k$ of $\{i(j)\}_j$ such that $f_{j(k)}$ $L^1_{\mathrm{loc}}$-converges to $f_{\infty}$ at a.e. $z_{\infty} \in B_R(x_{\infty})$.
\item If $f_{i}, g_i$ w-converge to $f_{\infty}, g_{\infty}$ on $B_R(x_{\infty})$, respectively,
then we see that $a_if_i+b_ig_i$ w-converges to $a_{\infty}f_{\infty}+b_{\infty}g_{\infty}$ on $B_R(x_{\infty})$ for any convergent sequences $\{a_i\}_{i \le \infty}, \{b_i\}_{i \le \infty}$ in $\mathbf{R}$.
\end{enumerate}
\end{proposition}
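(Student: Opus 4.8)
The plan is to argue directly from the structure of Definition \ref{ll}, using a diagonal argument for part (1) and a careful combination of the compact exhaustions attached to $\{f_i\}_i$ and $\{g_i\}_i$ for part (2). Throughout write $M^i_r(h,z):=\upsilon_i(B_r(z))^{-1}\int_{B_r(z)}h\,d\upsilon_i$ for the rescaled ball average, and recall that $h\mapsto M^i_r(h,z)$ is linear. For part (1), given the subsequence $\{i(j)\}_j$, I apply the defining property of $w$-convergence successively with $\epsilon=1/m$, $m=1,2,\dots$, obtaining nested subsequences $S_1\supset S_2\supset\cdots$ of $\{i(j)\}_j$ and, on each $S_m\cup\{\infty\}$, compact sets $A(\cdot,1/m)$. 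Let $\{j(k)\}_k$ be the diagonal subsequence, so that for each $m$ the tail $\{j(k):k\ge m\}$ is a subsequence of $S_m$, and set $A_\infty:=\bigcup_m A(\infty,1/m)$; since $\upsilon_\infty(B_R(x_\infty)\setminus A(\infty,1/m))\le 1/m$ for all $m$, we get $\upsilon_\infty(B_R(x_\infty)\setminus A_\infty)=0$. Fix $z_\infty\in A_\infty$ and pick $m$ with $z_\infty\in A(\infty,1/m)$. Condition (2) of Definition \ref{ll} at scale $1/m$, together with the elementary fact that a Gromov-Hausdorff liminf of sets along a subsequence contains that along the full sequence, gives $z_\infty\in\liminf_k A(j(k),1/m)$; hence there is a convergent sequence $z_{j(k)}\in A(j(k),1/m)$ with $z_{j(k)}\stackrel{GH}{\to}z_\infty$ (the finitely many terms $k<m$ being irrelevant). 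Then condition (3) of Definition \ref{ll} yields $\lim_{r\to0}\limsup_k|M^{j(k)}_r(f_{j(k)},z_{j(k)})-M^{\infty}_r(f_\infty,z_\infty)|=0$, which is the asserted $L^1_{\mathrm{loc}}$-convergence of $\{f_{j(k)}\}_k$ to $f_\infty$ at $z_\infty$; letting $z_\infty$ range over $A_\infty$ proves part (1).

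For part (2), fix $\epsilon>0$ and a subsequence. I apply the defining property of $w$-convergence to $\{f_i\}_i$ with parameter $\epsilon/4$, then to $\{g_i\}_i$ with parameter $\epsilon/4$ along the resulting subsequence, and finally pass to a further subsequence along which the compact sets $\hat A_l:=A^f(j(l),\epsilon/4)\cap A^g(j(l),\epsilon/4)\subset B_R(x_{j(l)})$ Gromov-Hausdorff converge to a compact set $\hat A_\infty\subset B_R(x_\infty)$ (possible by the usual selection of GH-limits of uniformly bounded closed sets from subsection $2.2$). By the standard semicontinuity of measures of complements under GH convergence of sets (e.g.\ via weak convergence of the pushforward measures), $\upsilon_\infty(B_R(x_\infty)\setminus\hat A_\infty)\le\limsup_l\upsilon_{j(l)}(B_R(x_{j(l)})\setminus\hat A_l)\le\epsilon/2$. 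Now I set $A(j(l),\epsilon):=\hat A_l$ and $A(\infty,\epsilon):=A^f(\infty,\epsilon/4)\cap A^g(\infty,\epsilon/4)\cap\hat A_\infty$. Condition (1) of Definition \ref{ll} holds because $\upsilon_{j(l)}(B_R(x_{j(l)})\setminus\hat A_l)\le\epsilon/2$ and $\upsilon_\infty(B_R(x_\infty)\setminus A(\infty,\epsilon))\le\epsilon/4+\epsilon/4+\epsilon/2=\epsilon$; condition (2) holds because $A(\infty,\epsilon)\subset\hat A_\infty\subset\liminf_l\hat A_l$; and condition (3) holds because for any convergent sequence $z_l\in\hat A_l$ with limit $z_\infty\in A(\infty,\epsilon)$ one has $z_l\in A^f(j(l),\epsilon/4)$ and $z_\infty\in A^f(\infty,\epsilon/4)$ (and likewise for $g$), so condition (3) of Definition \ref{ll} applies to $\{f_i\}_i$ and to $\{g_i\}_i$ separately; then linearity of $M_r$, together with $a_l\to a_\infty$, $b_l\to b_\infty$, $\sup_l(|a_l|+|b_l|)<\infty$ and the finiteness of $M^\infty_r(f_\infty,z_\infty)$, $M^\infty_r(g_\infty,z_\infty)$ for fixed $r$, gives $\lim_{r\to0}\limsup_l|M^{j(l)}_r(a_lf_l+b_lg_l,z_l)-M^\infty_r(a_\infty f_\infty+b_\infty g_\infty,z_\infty)|=0$. (For pure scalar multiplication, i.e.\ $g_i\equiv 0$, one simply reuses the sets $A^f(\cdot,\epsilon)$ verbatim and no intersection is needed.)

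The main obstacle is precisely condition (2) of Definition \ref{ll} for the combined sequence in part (2). The naive choice $A(\infty,\epsilon)=A^f(\infty,\epsilon/2)\cap A^g(\infty,\epsilon/2)$ with $A(j(l),\epsilon)=A^f(j(l),\epsilon/2)\cap A^g(j(l),\epsilon/2)$ makes conditions (1) and (3) of Definition \ref{ll} immediate but can fail $A(\infty,\epsilon)\subset\liminf_l A(j(l),\epsilon)$, because a Gromov-Hausdorff liminf of intersections may be strictly smaller than the intersection of the Gromov-Hausdorff liminfs. Repairing this — by intersecting $A(\infty,\epsilon)$ with the genuine GH-limit $\hat A_\infty$ of the sequence of intersections and budgeting the three measure deficits as $\epsilon/4+\epsilon/4+\epsilon/2$ — is the technical heart of the argument and the only point where the measure-convergence part of the Gromov-Hausdorff hypotheses is really used, through the semicontinuity bound on $\upsilon_\infty(B_R(x_\infty)\setminus\hat A_\infty)$.
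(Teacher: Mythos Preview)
Your proof is correct and follows essentially the same route as the paper's: a diagonal argument over $\epsilon=1/m$ for part~(1), and for part~(2) the same key idea of intersecting the compact exhaustions for $f$ and $g$, passing to a further subsequence along which these intersections Gromov--Hausdorff converge, and then defining $A(\infty,\epsilon)$ as the triple intersection, with the measure of the complement controlled via the lower-semicontinuity estimate (the paper cites \cite[Proposition~2.3]{holip} for this, which is exactly the ``semicontinuity of measures of complements'' you invoke). Your identification of the obstacle --- that the naive intersection $A^f(\infty,\cdot)\cap A^g(\infty,\cdot)$ need not lie in the $\liminf$ of the intersections --- and your repair match the paper's argument point for point.
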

\begin{proof}
We first check (1).
Let $\{\epsilon_l\}_l$ be a sequence of positive numbers with $\epsilon_l \to 0$, and let $\{i(j)\}_j$ be a subsequence of $\mathbf{N}$.
There exist a subsequence $\{i_1(j)\}_j$ of $\{i(j)\}_j$ and a sequence $\{A(i_1(j), \epsilon_1)\}_{j \le \infty}$ as in Definition \ref{ll}.
Applying Definition \ref{ll} for $\{i_1(j)\}_j$ and $\epsilon_2$ yields that there exist a subsequence $\{i_2(j)\}_j$ of $\{i_1(j)\}_j$ and a sequence $\{A(i_2(j), \epsilon_2)\}_{j \le \infty}$ as in Definition \ref{ll}.
By iterating this argument we construct sequences of $\{i_k(j)\}_j$ and $\{A(i_k(j), \epsilon_k)\}_{j \le \infty}$ for every $k \ge 1$.

Let us consider a subsequence $\{i_l(l)\}_l$ and a Borel subset 
\[A:=\bigcup_k A(\infty, \epsilon_k).\]
Note that $\upsilon_{\infty}(B_R(x_{\infty}) \setminus A)=0$. 
Since $\{i_l(l)\}_l$ is a subsequence of $\{i_k(j)\}_j$ for every $k$, (2) of Definition \ref{ll} yields
\[A(\infty, \epsilon_k) \subset \liminf_{l \to \infty}A(i_l(l), \epsilon_k)\]
for every $k \ge 1$.
Thus for every $z_{\infty} \in A$ there exist $k \ge 1$ and a sequence $\{z_{i_l(l)}\}_l$ of $z_{i_l(l)} \in A(i_l(l), \epsilon_k)$ such that $z_{i_l(l)} \stackrel{GH}{\to} z_{\infty}$.
Then (3) of Definition \ref{ll} yields that $f_{i_l(l)}$ $L^1_{\mathrm{loc}}$-weakly converges to $f_{\infty}$ at a.e. $z_{\infty} \in B_R(x_{\infty})$.
This completes the proof of (1).

Next we prove $(2)$.

Let $\epsilon>0$ and let $\{i(j)\}_j$ be a subsequence of $\mathbf{N}$.
Then there exist a subsequence $\{j(k)\}_k$ of $\{i(j)\}_j$, and sequences $\{A_l(j(k), \epsilon)\}_{1 \le l \le 2, k \le \infty}$ of compact subsets $A_l(j(k), \epsilon)$ of $B_R(x_{j(k)})$ such that $\upsilon_{j(k)}(B_R(x_{j(k)}) \setminus A_l(j(k), \epsilon))\le \epsilon/4$ for any $k \le \infty$ and $l$, that $A_l(\infty, \epsilon) \subset \lim_{k \to \infty}A_l(j(k), \epsilon)$ for every $l$, and that 
\[\lim_{r \to 0}\left(\limsup_{k \to \infty}\left|\frac{1}{\upsilon_{j(k)}(B_r(z_{j(k)}))}\int_{B_r(z_{j(k)})}f_{j(k)}d\upsilon_{j(k)}-\frac{1}{\upsilon_{\infty}(B_r(z_{\infty}))}\int_{B_r(z_{\infty})}f_{\infty}d\upsilon_{\infty}\right|\right)=0\]
for any $l$ and  convergent sequence $\{z_{j(k)}\}_{k \le \infty}$ of $z_{j(k)} \in A_l(j(k), \epsilon)$.

Let $\hat{A}(j(k), \epsilon):=A_1(j(k), \epsilon) \cap A_2(j(k), \epsilon)$ for every $k \le \infty$.
Without loss of generality we can assume that the limit $\lim_{k \to \infty}\hat{A}(j(k), \epsilon) ( \subset \overline{B}_R(x_{\infty}))$ exists.
Note that
\[\upsilon_{j(k)}\left(B_R(x_{j(k)})\setminus \hat{A}(j(k), \epsilon)\right) \le \epsilon/2\]
for every $k \le \infty$.
Let 
\[A(j(k), \epsilon):= \begin{cases}
\hat{A}(j(k), \epsilon) & (k <\infty)\\
\lim_{l \to \infty}\hat{A}(j(l), \epsilon) \cap \hat{A}(\infty, \epsilon) & (k=\infty).
\end{cases}
\]
Then \cite[Proposition $2.3$]{holp} yields
\begin{align*}
&\upsilon_{\infty}\left( \overline{B}_R(x_{\infty}) \setminus A(\infty, \epsilon) \right) \\
&\le \upsilon_{\infty}\left( \overline{B}_R(x_{\infty}) \setminus \lim_{k \to \infty}\hat{A}(j(k), \epsilon) \right) + \upsilon_{\infty}\left(  \overline{B}_R(x_{\infty}) \setminus \hat{A}(\infty, \epsilon) \right)\\ 
& \le \liminf_{k \to \infty}\upsilon_{j(k)}\left(\overline{B}_R(x_{j(k)}) \setminus \hat{A}(j(k), \epsilon)\right) + \epsilon/2 \le \epsilon.
\end{align*}
Since it is easy to check 
\begin{align*}
\lim_{r \to 0}\Bigl(\limsup_{l \to \infty}\Bigl|\frac{1}{\upsilon_{k(l)}(B_r(z_{k(l)}))}\int_{B_r(z_{k(l)})}(a_{k(l)}f_{k(l)}+b_{k(l)}g_{k(l)})d\upsilon_{j(k)}\\
-\frac{1}{\upsilon_{\infty}(B_r(z_{\infty}))}\int_{B_r(z_{\infty})}(a_{\infty}f_{\infty}+b_{\infty}g_{\infty})d\upsilon_{\infty}\Bigl|\Bigl)=0
\end{align*}
for any subsequence $\{k(l)\}_l$ of $\{j(k)\}_k$, and convergent sequence $\{z_{k(l)}\}_{l\le \infty}$ of $z_{k(l)} \in A(k(l), \epsilon)$,
this completes the proof.
\end{proof}
The following is a key result to prove Theorem \ref{yamayama}.
\begin{corollary}\label{fatou2}
Let $\{g_i\}_{i \le \infty}$ be a sequence of $g_i \in L^1(B_R(x_i))$.
Assume that the following four conditions hold:
\begin{enumerate}
\item $f_i \in L^1(B_R(x_i))$ for every $i \le \infty$.
\item $f_{i}, g_i$ w-converge to $f_{\infty}, g_{\infty}$ on $B_R(x_{\infty})$, respectively.
\item $f_i \le g_i$ on $B_R(x_{\infty})$ for every $i<\infty$.
\item We have
\[\lim_{i \to \infty}\int_{B_R(x_i)}g_id\upsilon_i=\int_{B_R(x_{\infty})}g_{\infty}d\upsilon_{\infty}.\]
\end{enumerate}
Then we have 
\[\limsup_{i \to \infty}\int_{B_R(x_i)}f_id\upsilon_i \le \int_{B_R(x_{\infty})}f_{\infty}d\upsilon_{\infty}.\]
\end{corollary}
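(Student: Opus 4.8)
The plan is to reduce the statement to the Fatou-type inequality already established in Proposition \ref{fatou1} by passing to the nonnegative functions $h_i := g_i - f_i$. First I would record that, by hypotheses $(1)$ and $(3)$, each $h_i$ with $i<\infty$ lies in $L^1(B_R(x_i))$ and is nonnegative, and that by hypothesis $(2)$ together with the linearity of $w$-convergence (Proposition \ref{li} $(2)$, applied with constant coefficients $a_i \equiv -1$, $b_i \equiv 1$) the sequence $h_i$ $w$-converges to $h_{\infty} := g_{\infty} - f_{\infty}$ on $B_R(x_{\infty})$.

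Next, by the standard subsequence criterion for the limit superior, it suffices to show that every subsequence of $\mathbf{N}$ admits a further subsequence $\{j(k)\}_k$ along which $\limsup_k \int_{B_R(x_{j(k)})} f_{j(k)}\,d\upsilon_{j(k)} \le \int_{B_R(x_{\infty})} f_{\infty}\,d\upsilon_{\infty}$. Given such a subsequence, I would apply Proposition \ref{li} $(1)$ to $\{h_i\}$ to extract a further subsequence $\{j(k)\}_k$ for which $h_{j(k)}$ $L^1_{\mathrm{loc}}$-converges to $h_{\infty}$ at a.e.\ point of $B_R(x_{\infty})$; in particular $\{h_{j(k)}\}_k$ is $L^1_{\mathrm{loc}}$-weakly lower semicontinuous at a.e.\ point, and, since the local averages of the nonnegative $h_{j(k)}$ are nonnegative, the Lebesgue differentiation theorem forces $h_{\infty} \ge 0$ a.e. Passing to one more subsequence I may assume that $\int_{B_R(x_{j(k)})} f_{j(k)}\,d\upsilon_{j(k)}$ converges in $[-\infty,\infty]$; it is bounded above by hypotheses $(3)$ and $(4)$, so the limit is either finite (the case to treat) or $-\infty$ (trivial).

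Then Proposition \ref{fatou1} applied to $\{h_{j(k)}\}_k$ gives $\liminf_k \int_{B_R(x_{j(k)})} h_{j(k)}\,d\upsilon_{j(k)} \ge \int_{B_R(x_{\infty})} h_{\infty}\,d\upsilon_{\infty}$. Writing $\int h_{j(k)} = \int g_{j(k)} - \int f_{j(k)}$ and $\int h_{\infty} = \int g_{\infty} - \int f_{\infty}$, and using hypothesis $(4)$ to pass to the limit in the $g$-integrals, the common term $\int_{B_R(x_{\infty})} g_{\infty}\,d\upsilon_{\infty}$ cancels and one is left with $\lim_k \int_{B_R(x_{j(k)})} f_{j(k)}\,d\upsilon_{j(k)} \le \int_{B_R(x_{\infty})} f_{\infty}\,d\upsilon_{\infty}$, which is the required inequality along the subsequence.

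I do not expect a serious obstacle here: the substance lies entirely in Propositions \ref{li} and \ref{fatou1}, already available. The only points needing a little care are the verification that $h_{\infty} \ge 0$ a.e.\ and that $L^1_{\mathrm{loc}}$-convergence at a.e.\ point entails $L^1_{\mathrm{loc}}$-weak lower semicontinuity at a.e.\ point (both immediate from Definitions \ref{co} and \ref{ll} and Lebesgue differentiation), and the bookkeeping of the two nested subsequence extractions so that the $\limsup$ criterion applies cleanly.
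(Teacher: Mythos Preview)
Your proposal is correct and follows essentially the same approach as the paper's proof: both pass to the nonnegative difference $h_i=g_i-f_i$, use Proposition~\ref{li} to obtain $w$-convergence of $h_i$ and then extract a subsequence along which $h_{j(k)}$ is $L^1_{\mathrm{loc}}$-weakly continuous at a.e.\ point, and finally apply Proposition~\ref{fatou1} together with hypothesis~$(4)$ to conclude via the subsequence criterion. The paper's version is terser (it writes the argument in three lines and takes $h_\infty\ge 0$ for granted), while you spell out the verification of $h_\infty\ge 0$ and the subsequence bookkeeping more explicitly, but there is no substantive difference.
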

\begin{proof}
Let $\{i(j)\}_j$ be a subsequence of $\mathbf{N}$.
Proposition \ref{li} gives that there exists a subsequence $\{j(k)\}_k$ of $\{i(j)\}_j$ such that $g_{j(k)}-f_{j(k)}$ $L^1_{\mathrm{loc}}$-weakly converges $g_{\infty}-f_{\infty} (\ge 0)$ at a.e. $z_{\infty} \in B_R(x_{\infty})$. 
Thus since $\{i(j)\}_j$ is arbitrary, applying Proposition \ref{fatou1} to $\{g_{j(k)}-f_{j(k)}\}_{k \le \infty}$ completes the proof.
\end{proof}
The following is an important example of the $w$-convergence.
\begin{theorem}\label{nn}
Let $\{q_i\}_{i \le \infty}$ and $\{p_i\}_{i \le \infty}$ be convergent sequences in $(1, \infty)$, let $\{f_i\}_{i<\infty}$ be a sequence of $f_i \in H^{1, p_i}(B_R(x_i))$ with $\sup_{i<\infty}||f_i||_{H^{1, p_i}}<\infty$, and let $f_{\infty}$ be the $\{L^{p_i}\}_i$-weak limit on $B_R(x_{\infty})$ of $\{f_i\}_i$.
Assume that there exists $\tau>0$ such that for every $i<\infty$, $(X_i, \upsilon_i)$ satisfies the $(q_i, p_i)$-Poincar\'e inequality on $B_R(x_i)$ for $\tau$. 
Then we see that $|f_i|^{q_i}$ $w$-converges $|f_{\infty}|^{q_{\infty}}$ on $B_R(x_{\infty})$.
\end{theorem}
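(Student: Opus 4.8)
The plan is to verify Definition~\ref{ll} directly, after first normalizing the sequence. I would begin by passing to a subsequence so that, by the Rellich-type compactness (Theorem~\ref{srell}), $f_i$ $\{L^{p_i}\}_i$-converges strongly to $f_\infty$ on $B_R(x_\infty)$ and $\nabla f_i$ $\{L^{p_i}\}_i$-converges weakly to $\nabla f_\infty$, with $f_\infty\in H^{1,p_\infty}(B_R(x_\infty))$ the given $\{L^{p_i}\}_i$-weak limit. The uniform $(q_i,p_i)$-Poincar\'e inequality, together with Remarks~\ref{85} and~\ref{akl} and the uniform upper and lower bounds for $\upsilon_i(B_r(z_i))$ valid on $\overline{M(n,K)}$, produces a uniform $(q_i,p_i)$-Sobolev inequality on $B_R(x_i)$ for a fixed pair $(A,B)$; hence $\sup_i\|f_i\|_{L^{q_i}(B_R(x_i))}<\infty$, and Theorem~\ref{tt} then gives $f_\infty\in L^{q_\infty}(B_R(x_\infty))$ as well as $L^r$-strong convergence of $f_i$ to $f_\infty$ on $B_R(x_\infty)$ for every $1<r<q_\infty$. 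Passing to a further subsequence, I would also arrange that the finite measures $|f_i|^{q_i}\upsilon_i$ converge, in the Gromov--Hausdorff sense, to a measure of the form $|f_\infty|^{q_\infty}\upsilon_\infty+\sigma$ with $\sigma\ge 0$; the bound $\sup_i\|f_i\|_{H^{1,p_i}}<\infty$ should force the defect $\sigma$ to be singular with respect to $\upsilon_\infty$ (a metric-space version of P.-L.~Lions' second concentration lemma, exploiting the uniform Poincar\'e inequality and the uniform gradient bound).

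Next, given $\epsilon>0$ and a subsequence, I would build the sets $A(i,\epsilon)$. By Chebyshev's inequality and the uniform $L^{q_i}$-bound, the sets $E_i^N:=\{x\in B_R(x_i):|f_i|(x)>N\}$ satisfy $\sup_i\upsilon_i(E_i^N)\le CN^{-q_i}\to 0$ as $N\to\infty$, so I fix $N_0=N_0(\epsilon)$ with $\sup_i\upsilon_i(E_i^{N_0})\le\epsilon$. Using the $L^r$-strong convergence of $f_i$ together with an Egorov-type extraction from the $L^p$-convergence calculus of \cite{holp}, I pass to a further subsequence $\{j(k)\}_k$ and choose compact sets $A(j(k),\epsilon)\subset B_R(x_{j(k)})\setminus E_{j(k)}^{N_0}$ with $\upsilon_{j(k)}(B_R(x_{j(k)})\setminus A(j(k),\epsilon))\le\epsilon$, with $A(\infty,\epsilon)\subset\liminf_k A(j(k),\epsilon)$, on which $f_{j(k)}$ is bounded by $N_0$ and converges uniformly to $f_\infty$, and such that every $z_\infty\in A(\infty,\epsilon)$ is simultaneously a $\upsilon_\infty$-Lebesgue point of $|f_\infty|^{q_\infty}$ and a point of vanishing $\sigma$-density. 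The last requirement costs only a further $\epsilon$ in measure, since the points failing either property form a $\upsilon_\infty$-null set by the Lebesgue differentiation theorem on the doubling space $X_\infty$ and by $\sigma\perp\upsilon_\infty$, and $\upsilon_\infty$ is inner regular.

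It then remains to check Definition~\ref{ll}(3). For a subsequence $\{k(l)\}_l$ of $\{j(k)\}_k$ and a convergent sequence $z_{k(l)}\in A(k(l),\epsilon)$, $z_{k(l)}\stackrel{GH}{\to}z_\infty$, I would split, with $T_{N_0}$ the truncation at level $N_0$,
\[
\int_{B_r(z_{k(l)})}|f_{k(l)}|^{q_{k(l)}}d\upsilon_{k(l)}=\int_{B_r(z_{k(l)})}|T_{N_0}f_{k(l)}|^{q_{k(l)}}d\upsilon_{k(l)}+\int_{B_r(z_{k(l)})\cap E_{k(l)}^{N_0}}\bigl(|f_{k(l)}|^{q_{k(l)}}-N_0^{q_{k(l)}}\bigr)d\upsilon_{k(l)}.
\]
For the first term, $T_{N_0}f_{k(l)}$ is uniformly bounded and converges to $T_{N_0}f_\infty$ in measure, so $|T_{N_0}f_{k(l)}|^{q_{k(l)}}$ converges $L^1$-strongly to $|T_{N_0}f_\infty|^{q_\infty}$; hence its average over $B_r(z_{k(l)})$ has the same $\limsup_l$ as the average of $|T_{N_0}f_\infty|^{q_\infty}$ over $B_r(z_\infty)$, which tends as $r\to0$ to $|T_{N_0}f_\infty|^{q_\infty}(z_\infty)=|f_\infty|^{q_\infty}(z_\infty)$ at the chosen Lebesgue points (using $|f_\infty(z_\infty)|\le N_0$ on $A(\infty,\epsilon)$, and $\frac1{\upsilon_\infty(B_r(z_\infty))}\int_{B_r(z_\infty)}|f_\infty|^{q_\infty}d\upsilon_\infty\to|f_\infty|^{q_\infty}(z_\infty)$ as well). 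The second term, after division by $\upsilon_{k(l)}(B_r(z_{k(l)}))$, has $\limsup_l$ bounded by $\sigma(B_r(z_\infty))/\upsilon_\infty(B_r(z_\infty))$, since the truncated part of $|f_i|^{q_i}\upsilon_i$ sends no mass into $\sigma$; and this tends to $0$ as $r\to0$ because $z_\infty$ has vanishing $\sigma$-density. Combining the two estimates gives exactly Definition~\ref{ll}(3), hence the claim.

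The hard part will be the very first step's passage to a subsequence along which the defect measure $\sigma$ is $\upsilon_\infty$-singular: this is precisely the concentration--compactness (bubbling) phenomenon in the critical Sobolev regime, and it is the reason the conclusion is phrased as the weak $w$-convergence rather than $L^1$-strong convergence of $|f_i|^{q_i}$. Making the metric-space version of Lions' lemma precise here, using only the uniform Poincar\'e inequality and the uniform gradient bound, is the technical heart of the argument; everything else is bookkeeping with the $L^p$-convergence calculus of \cite{holp}, Chebyshev's inequality, and the Lebesgue differentiation theorem on doubling spaces.
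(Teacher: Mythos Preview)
Your route is genuinely different from the paper's, and the difference is instructive. The paper never mentions defect measures or concentration--compactness. Instead it defines the good sets $A(i,\epsilon)$ by a maximal-function condition on the \emph{gradient}: $\hat A(i,\epsilon)$ is the set of $w_i$ where $\frac{1}{\upsilon_i(B_r(w_i))}\int_{B_r(w_i)}|\nabla f_i|^{p_i}\,d\upsilon_i\le\epsilon^{-p_i}$ for all $0<r<\epsilon$. On these sets the $(q_i,p_i)$-Poincar\'e inequality gives directly
\[
\Bigl(\tfrac{1}{\upsilon_i(B_r(z))}\textstyle\int_{B_r(z)}|f_i|^{q_i}\Bigr)^{1/q_i}=\tfrac{1}{\upsilon_i(B_r(z))}\textstyle\int_{B_r(z)}|f_i|\ \pm\ \tau r\epsilon^{-1},
\]
so the $L^{q_i}$-average is reduced to an $L^1$-average, and the latter passes to the limit by Theorem~\ref{tt} (strong $L^s$-convergence for $s<q_\infty$). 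The chain $L^{q_i}\text{-avg of }f_i\approx L^1\text{-avg of }f_i\to L^1\text{-avg of }f_\infty\approx L^{q_\infty}\text{-avg of }f_\infty$ then yields Definition~\ref{ll}(3) immediately. This is short and uses nothing beyond Poincar\'e and a Chebyshev-type bound to show the bad set is small.

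Your approach has a real gap at the step you yourself flag: the claim that the defect measure $\sigma$ is singular with respect to $\upsilon_\infty$. Lions' second concentration lemma gives this (indeed atomicity) only at the \emph{critical} Sobolev exponent and on a fixed space; here $q_i$ is an arbitrary exponent for which a Poincar\'e inequality happens to hold, and the spaces vary along a Gromov--Hausdorff sequence. No off-the-shelf concentration--compactness delivers $\sigma\perp\upsilon_\infty$ in that generality. Worse, once you unwind what ``$\sigma$ has vanishing $\upsilon_\infty$-density at a.e.\ point'' means, you are asking that for a.e.\ $z_\infty$ the normalized defect $\frac{1}{\upsilon_\infty(B_r(z_\infty))}\bigl(\int_{B_r}|f_i|^{q_i}-\int_{B_r}|f_\infty|^{q_\infty}\bigr)\to 0$ as $r\to0$ uniformly in $i$ along subsequences---which is essentially Definition~\ref{ll}(3) itself. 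So the hard step of your plan is equivalent to the theorem, and the natural way to prove it is precisely the paper's Poincar\'e trick (gradient-controlled good sets reduce $L^q$-averages to $L^1$-averages). I would rework the argument around that idea rather than around a defect measure.
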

\begin{proof}
We use the following notation for convenience:
\begin{itemize}
\item Let us denote by $\Psi (\epsilon_1, \epsilon _2 ,\ldots ,\epsilon_k  ; c_1, c_2, \ldots, c_l)$
some positive valued function on $\mathbf{R}_{>0}^k \times \mathbf{R}^l $ satisfying 
\begin{align}\label{89890}
\lim_{\epsilon_1, \epsilon_2,\ldots ,\epsilon_k \to 0}\Psi (\epsilon_1, \epsilon_2,\ldots ,\epsilon_k  ; c_1, c_2 ,\ldots ,c_l)=0
\end{align} 
for fixed real numbers $c_1, c_2,\ldots ,c_l$.
\item For any $a, b \in \mathbf{R}$ and $\epsilon>0$,
\begin{align}\label{33we}
a=b \pm \epsilon \Longleftrightarrow |a-b|<\epsilon.
\end{align}
\end{itemize}
Note that by Theorem \ref{stapo} we see that $(X_{\infty}, \upsilon_{\infty})$ satisfies the $(q_{\infty}, p_{\infty})$-Poincar\'e inequality on $B_R(x_{\infty})$ for $\tau$.

Let $\epsilon>0$, let $L:=\sup_i||f_i||_{H^{1, p_i}}$ and let $\hat{A}(i, \epsilon)$ be the set of $w_i \in \overline{B}_{R-\epsilon}(x_i)$ with
\[\frac{1}{\upsilon_i(B_r(w_i))}\int_{B_r(w_i)}|\nabla f_i|^{p_i}d\upsilon_i \le \epsilon^{-p_i}\]
for every $0<r<\epsilon$. 
It is easy to check that $\hat{A}(i, \epsilon)$ is compact.
Then (the proof of) \cite[Lemma $3.1$]{holip} yields 
\[\upsilon_i(B_R(x_i) \setminus \hat{A}(i, \epsilon))\le \Psi(\epsilon; n, K, d, L, R)\]
for every $i \le \infty$.

Note that the $(q_i, p_i)$-Poincar\'e inequality yields
\[\frac{1}{\upsilon_i(B_r(z))}\int_{B_r(z)}|f_i|d\upsilon_i=\left(\frac{1}{\upsilon_i(B_r(z))}\int_{B_r(z)}|f_i|^{q_i}d\upsilon_i\right)^{1/q_i} \pm \tau r\epsilon^{-1}\]
for any $i\le \infty$, $z \in \hat{A}(i, \epsilon)$ and $0<r<\epsilon$.

Let $\{i(j)\}_j$ be a subsequence of $\mathbf{N}$.
There exists a subsequence $\{j(k)\}_k$ of $\{i(j)\}_j$ such that the limit  $\lim_{k \to \infty}\hat{A}(j(k), \epsilon)$ $(\subset \overline{B}_{R-\epsilon}(x_{\infty}))$ exists.
Let 
\[A(j(k), \epsilon):= \begin{cases}
\hat{A}(j(k), \epsilon) & (k <\infty)\\
\lim_{l \to \infty}\hat{A}(j(l), \epsilon) \cap \hat{A}(\infty, \epsilon) & (k=\infty).
\end{cases}
\]
Note that by \cite[Proposition $2.3$]{holip} we have
\[\upsilon_{\infty}\left(\overline{B}_R(x_{\infty}) \setminus A(\infty, \epsilon)\right)\le \Psi(\epsilon; n, K, d, L, R).\]

Let $\{k(l)\}_l$ be a subsequence of $\{j(k)\}_k$ and let $\{z_{k(l)}\}_{l \le \infty}$ be a convergent sequence of $z_{k(l)} \in A(k(l), \epsilon)$.

Note that Theorem \ref{tt} yields that $f_i$ $L^s$-converges strongly to $f_{\infty}$ on $B_R(x_{\infty})$ for every $s<q_{\infty}$.
Thus for every $r<\epsilon$ we have
\begin{align*}
&\left(\frac{1}{\upsilon_{\infty}(B_r(z_{\infty}))}\int_{B_r(z_{\infty})}|f_{\infty}|^{q_{\infty}}d\upsilon_{\infty}\right)^{1/q_{\infty}} \\
&= \frac{1}{\upsilon_{\infty}(B_r(z_{\infty}))}\int_{B_r(z_{\infty})}|f_{\infty}|d\upsilon_{\infty} \pm \tau r \epsilon^{-1} \\
&=\frac{1}{\upsilon_{k(l)}(B_r(z_{k(l)}))}\int_{B_r(z_{k(l)})}|f_{k(l)}|d\upsilon_{k(l)} \pm \tau r \epsilon^{-1} \\
&=\left(\frac{1}{\upsilon_{k(l)}(B_r(z_{k(l)}))}\int_{B_r(z_{k(l)})}|f_{k(l)}|^{q_{k(l)}}d\upsilon_{k(l)}\right)^{1/q_{k(l)}} \pm 2\tau r\epsilon^{-1}\\
\end{align*}
for every sufficiently large $l$.
This completes the proof.
\end{proof}
\section{Poisson's equations}
This section is devoted to the proofs of the results stated in subsection $1.1$. 
We also give related results.
In order to prove Theorem \ref{pois} we start this section by giving the following:
\begin{theorem}\label{convlap}
Let $(X_i, \upsilon_i) \stackrel{GH}{\to} (X_{\infty}, \upsilon_{\infty})$ in $\overline{M(n, K, d)}$ with $\mathrm{diam}\,X_{\infty}>0$, and let $\{f_i\}_{i<\infty}$ be a sequence of $f_i \in \mathcal{D}^2(\Delta^{\upsilon_i}, X_i)$ with 
\[\sup_{i<\infty}\int_{X_i}\left(|f_i|^2+|\Delta^{\upsilon_i}f_i|^2\right)d\upsilon_i<\infty.\]
Then there exist $f_{\infty} \in \mathcal{D}^2(\Delta^{\upsilon_{\infty}}, X_{\infty})$ and a subsequence $\{f_{i(j)}\}_j$ such that 
$f_{i(j)}, \nabla f_{i(j)}$ $L^2$-converge strongly to $f_{\infty}, \nabla f_{\infty}$ on $X_{\infty}$, respectively and that $\Delta^{\upsilon_{i(j)}}f_{i(j)}$ $L^2$-converges weakly to $\Delta^{\upsilon_{\infty}}f_{\infty}$ on $X_{\infty}$.
\end{theorem}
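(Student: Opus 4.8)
The plan is to combine the a priori $H^{1,2}$-estimate coming from integration by parts with the Rellich-type compactness of Theorem \ref{srell} and the $L^2$-weak compactness for functions, and then to identify the limit by testing the limiting Dirichlet form against approximating Lipschitz functions.

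First I would record a uniform Sobolev bound. Since $X_i$ is compact, every Lipschitz function on $X_i$ has compact support, so after approximating $f_i \in H^{1,2}(X_i)$ by Lipschitz functions in the $H^{1,2}$-norm the defining identity of $\mathcal{D}^2(\Delta^{\upsilon_i},X_i)$ extends to all test functions in $H^{1,2}(X_i)$; taking $f_i$ itself as the test function yields
\[
\int_{X_i}|df_i|^2\,d\upsilon_i=\int_{X_i}(\Delta^{\upsilon_i}f_i)\,f_i\,d\upsilon_i\le \|\Delta^{\upsilon_i}f_i\|_{L^2(X_i)}\,\|f_i\|_{L^2(X_i)},
\]
whence $\sup_{i<\infty}\|f_i\|_{H^{1,2}(X_i)}<\infty$. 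By Theorem \ref{srell} (applied with a basepoint and a radius exceeding $d$, so that the relevant ball is all of $X_i$) there are $f_\infty\in H^{1,2}(X_\infty)$ and a subsequence, which I relabel $\{f_i\}_i$, such that $f_i$ $L^2$-converges strongly to $f_\infty$ and $\nabla f_i$ $L^2$-converges weakly to $\nabla f_\infty$ on $X_\infty$. Since $\{\Delta^{\upsilon_i}f_i\}_i$ is $L^2$-bounded, the $L^2$-weak compactness lets us, after passing to a further subsequence, assume that $\Delta^{\upsilon_i}f_i$ $L^2$-converges weakly on $X_\infty$ to some $g_\infty\in L^2(X_\infty)$.

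Next I would identify $g_\infty$ with $\Delta^{\upsilon_\infty}f_\infty$. Fix $h_\infty\in\mathrm{LIP}(X_\infty)$, and, as in the proof of Theorem \ref{stasob} (cf.\ \cite[Theorem 4.2]{holip}), choose a sequence $h_i\in\mathrm{LIP}(X_i)$ such that $h_i$ and $\nabla h_i$ $L^2$-converge strongly to $h_\infty$ and $\nabla h_\infty$ on $X_\infty$, respectively. Letting $i\to\infty$ in
\[
\int_{X_i}\langle df_i,dh_i\rangle\,d\upsilon_i=\int_{X_i}(\Delta^{\upsilon_i}f_i)\,h_i\,d\upsilon_i,
\]
the right-hand side converges to $\int_{X_\infty}g_\infty h_\infty\,d\upsilon_\infty$ by the weak-strong convergence of functions, while the left-hand side converges to $\int_{X_\infty}\langle df_\infty,dh_\infty\rangle\,d\upsilon_\infty$ by the analogous weak-strong compatibility for the $L^2$-convergence of $1$-forms established in \cite{holp}. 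As $h_\infty$ is arbitrary, $f_\infty\in\mathcal{D}^2(\Delta^{\upsilon_\infty},X_\infty)$ with $\Delta^{\upsilon_\infty}f_\infty=g_\infty$; in particular $\Delta^{\upsilon_i}f_i$ $L^2$-converges weakly to $\Delta^{\upsilon_\infty}f_\infty$.

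Finally I would upgrade the weak convergence of the gradients to strong convergence. Using the integration-by-parts identity on $X_i$ and on $X_\infty$, together with $f_i\to f_\infty$ $L^2$-strongly and $\Delta^{\upsilon_i}f_i\to\Delta^{\upsilon_\infty}f_\infty$ $L^2$-weakly,
\[
\lim_{i\to\infty}\|\nabla f_i\|_{L^2(X_i)}^2=\lim_{i\to\infty}\int_{X_i}(\Delta^{\upsilon_i}f_i)\,f_i\,d\upsilon_i=\int_{X_\infty}(\Delta^{\upsilon_\infty}f_\infty)\,f_\infty\,d\upsilon_\infty=\|\nabla f_\infty\|_{L^2(X_\infty)}^2,
\]
so the $L^2$-norms of $\nabla f_i$ converge to that of $\nabla f_\infty$, and combined with the weak convergence this gives the asserted $L^2$-strong convergence. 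The step needing the most care is the passage to the limit in the Dirichlet-form identity, namely the compatibility of $L^2$-weak and $L^2$-strong convergence of gradients with the pointwise inner product on $X_\infty$; the remaining ingredients are a routine combination of compactness and integration by parts.
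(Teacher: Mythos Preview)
Your argument is correct and follows the same overall skeleton as the paper's proof: the $H^{1,2}$-bound via integration by parts, Theorem~\ref{srell} for compactness, identification of the weak limit of $\Delta^{\upsilon_i}f_i$ (the paper cites \cite[Theorem~4.1]{holp} for this step, which is exactly the testing-against-Lipschitz argument you spell out), and then upgrading gradient convergence from weak to strong.

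The one genuine difference is in the last step. The paper proves $\limsup_i\|\nabla f_i\|_{L^2}^2\le\|\nabla f_\infty\|_{L^2}^2$ via a variational inequality (Claim~\ref{abc}): it observes that $f_i$ minimizes $h\mapsto\int|\nabla h|^2-2\int g_i h$ over $H^{1,2}(X_i)$, chooses competitors $h_i$ with $h_i,\nabla h_i\to f_\infty,\nabla f_\infty$ strongly, and passes to the limit. Your route is more direct: since $\int_{X_i}|\nabla f_i|^2=\int_{X_i}(\Delta^{\upsilon_i}f_i)f_i$ and the right-hand side converges by weak--strong pairing, you get the exact limit of the norms immediately. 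Your argument is shorter here; the paper's variational template, on the other hand, is reused later (Theorem~\ref{444}) in a setting for differential forms where no such one-line identity is available, which explains why the author sets it up this way.
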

\begin{proof}
Note that since
\[\int_{X_i}|\nabla f_i|^2d\upsilon_i=\int_{X_i}f_i\Delta^{\upsilon_i}f_id\upsilon_i \le \left(\int_{X_i}|f_i|^2d\upsilon_i\right)^{1/2}\left(\int_{X_i}|\Delta^{\upsilon_i}f_i|^2d\upsilon_i\right)^{1/2},\]
we have $\sup_i||f_i||_{H_{1, 2}}<\infty$.

Thus by Theorem \ref{srell} without loss of generality we can assume that there exists $f_{\infty} \in H^{1, 2}(X_{\infty})$ such that $f_i$ $L^2$-converges strongly to $f_{\infty}$ on $X_{\infty}$ and that $\nabla f_i$ $L^2$-converges weakly to $\nabla f_{\infty}$ on $X_{\infty}$.
Then \cite[Theorem $4.1$]{holp} yields that $f_{\infty} \in \mathcal{D}^2(\Delta^{\upsilon_{\infty}}, X_{\infty})$ and that $\Delta^{\upsilon_i}f_i$ $L^2$-converges weakly to $\Delta^{\upsilon_{\infty}}f_{\infty}$ on $X_{\infty}$.
Thus it suffices to check that $\nabla f_i$ $L^2$-converges strongly to $\nabla f_{\infty}$ on $X_{\infty}$.

Let $g_i:=\Delta^{\upsilon_i}f_i$.
\begin{claim}\label{abc}
We have 
\begin{align*}
\int_{X_i}|\nabla h|^2d\upsilon_i-2\int_{X_i}g_ihd\upsilon_i \ge \int_{X_i}|\nabla f_i|^2d\upsilon_i-2\int_{X_i}g_if_id\upsilon_i
\end{align*}
for any $i\le \infty$ and $h \in H^{1, 2}(X_i)$.
\end{claim}
It follows from the identity
\[\int_{X_i}|\nabla h|^2d\upsilon_i-2\int_{X_i}g_ihd\upsilon_i = \int_{X_i}|\nabla f_i|^2d\upsilon_i-2\int_{X_i}g_if_id\upsilon_i + \int_{X_i}|\nabla (f_i-h)|^2d\upsilon_i.\]
By Theorem \ref{fundpr} and \cite[Theorem $4.2$]{holip} without loss of generality we can assume that there exists a sequence $\{h_{i}\}_{i <\infty}$ of $h_{i} \in H^{1, 2}(X_i)$ such that $h_i, \nabla h_i$ $L^2$-converge strongly to $f_{\infty}, \nabla f_{\infty}$ on $X_{\infty}$, respectively.
In particular we have
\begin{align}\label{poij}
\lim_{i \to \infty}\int_{X_i}g_ih_id\upsilon_i=\lim_{i \to \infty}\int_{X_i}g_if_id\upsilon_i=\int_{X_{\infty}}g_{\infty}f_{\infty}d\upsilon_{\infty}.
\end{align}
By Claim \ref{abc}, since
\begin{align}\label{dfg}
\int_{X_i}|\nabla h_i|^2d\upsilon_i-2\int_{X_i}g_ih_id\upsilon_i \ge \int_{X_i}|\nabla f_i|^2d\upsilon_i-2\int_{X_i}g_if_id\upsilon_i,
\end{align}
letting $i \to \infty$ in (\ref{dfg}) with (\ref{poij}) yields
\[\limsup_{i \to \infty}\int_{X_i}|\nabla f_i|^2d\upsilon_i \le \int_{X_{\infty}}|\nabla f_{\infty}|^2d\upsilon_{\infty}.\]
This completes the proof.
\end{proof}
We are now in a position to prove Theorem \ref{pois}.

\textit{Proof of Theorem \ref{pois}.}

We first prove $(1)$.

Note that for every $f \in \mathcal{D}^2(\Delta^{\upsilon}, X)$ we have
\[\int_{X}\Delta^{\upsilon}fd\upsilon=\int_{X}\langle \nabla 1, \nabla f\rangle d\upsilon=0.\]
Thus it suffices to check `if' part of $(1)$.

Let $\hat{H}^{1, 2}(X)$ be the closed subspace of $f \in H^{1, 2}(X)$ with
\[\int_{X}fd\upsilon=0.\]
By the $(2, 2)$-Poincar\'e inequality we have
\[\int_{X}|f|^2d\upsilon \le C(n, K, d)\int_{X}|\nabla f|^2d\upsilon\]
for every $f \in \hat{H}^{1, 2}(X)$.
Thus we see that
$||f||_{\hat{H}^{1, 2}}:=||\nabla f||_{L^2}$
is a norm on $\hat{H}^{1, 2}(X)$ and that $||\cdot||_{H^{1, 2}}$ and $||\cdot||_{\hat{H}^{1, 2}}$ are equivalent on $\hat{H}^{1, 2}(X)$.
Therefore the Riesz representation theorem yields that there exists a unique $f \in \hat{H}^{1, 2}(X)$ such that 
\begin{align*}\label{polll}
\int_{X}\langle \nabla f, \nabla h \rangle d\upsilon=\int_{X}ghd\upsilon
\end{align*}
holds for every $h \in \hat{H}^{1, 2}(X)$.
Let $h \in H^{1, 2}(X)$ and let
\[\hat{h}:=h-\int_{X}hd\upsilon.\]
Then since $\hat{h} \in \hat{H}^{1, 2}(X)$ we have
\begin{align*}
\int_{X}\langle \nabla f, \nabla h \rangle d\upsilon = \int_{X}\langle \nabla f, \nabla \hat{h} \rangle d\upsilon =\int_{X}g\hat{h}d\upsilon = \int_{X}ghd\upsilon.
\end{align*}
This completes the proof of $(1)$. 
Note that the argument above also gives $(2)$.

We next prove $(3)$.

Let $f_i:= (\Delta^{\upsilon_i})^{-1}g_i$.
The $(2, 2)$-Poincar\'e inequality yields
\begin{align}\label{star}
\int_{X_i}|f_i|^2d\upsilon_i &\le C(n, K, d)\int_{X_i}|\nabla f_i|^2d\upsilon_i \nonumber \\
&=  C(n, K, d)\int_{X_i}f_ig_id\upsilon_i \nonumber \\
&\le C(n, K, d)\left(\int_{X_i}|f_i|^2d\upsilon_i \right)^{1/2}\left(\int_{X_i}|g_i|^2d\upsilon_i \right)^{1/2}.
\end{align}
This gives $\sup_i||f_i||_{H^{1, 2}}<\infty$.

Thus by Theorem \ref{srell} without loss of generality we can assume that there exists $\hat{f}_{\infty} \in H^{1, 2}(X_{\infty})$ such that 
$f_i$ $L^2$-converges strongly to $\hat{f}_{\infty}$ on $X_{\infty}$ and that $\nabla f_i$ $L^2$-converges weakly to $\nabla \hat{f}_{\infty}$ on $X_{\infty}$.
In particular we have
\[\int_{X_{\infty}}\hat{f}_{\infty}d\upsilon_{\infty}=\lim_{i \to \infty}\int_{X_i}f_id\upsilon_i=0.\]
On the other hand, Theorem \ref{convlap} yields $\hat{f}_{\infty} \in \mathcal{D}^2(\Delta^{\upsilon_{\infty}}, X_{\infty})$ and  $\Delta^{\upsilon_{\infty}}\hat{f}_{\infty}=\Delta^{\upsilon_{\infty}}f_{\infty}$.
Thus $(2)$ yields $f_{\infty}=\hat{f}_{\infty}$.
This completes the proof of $(3)$.  $\,\,\,\,\,\,\,\,\,\,\,\,\,\,\,\,\,\,\,\,\Box$

We are now in a position to prove Theorem \ref{app6}.

\textit{Proof of Theorem \ref{app6}}

We only check the assertion in the case that $(X_i, \upsilon_i) \in M(n, K, d)$ for every $i<\infty$ only because the proof of the other case is similar.

Let $\epsilon>0$ and let $\{f_{\infty, l}, g_{\infty, l}\}_{1 \le l \le k}$ be a collection in $\mathrm{Test}F(X_{\infty})$ with
\[||\omega_{\infty}-\sum_{l=1}^kf_{\infty, l}dg_{\infty, l}||_{W^{1, 2}_H(X_{\infty})}<\epsilon.\]
Let $\{G_{\infty, l, m}\}_m$ be a sequence in $\mathrm{LIP}(X_{\infty})$ with $G_{\infty, l, m} \to \Delta^{\upsilon_{\infty}}g_{\infty, l}$ in $H^{1, 2}(X_{\infty})$ as $m \to \infty$ and
\[\int_{X_{\infty}}G_{\infty, l, m}d\upsilon_{\infty}=0.\]
Let $g_{\infty, l, m} := (\Delta^{\upsilon_{\infty}})^{-1}G_{\infty, l, m}$.
Theorem \ref{pois} with (\ref{lipreg}) yields that $g_{\infty, l, m} \in \mathrm{Test}F(X_{\infty})$ and that $g_{\infty, l, m} \to g_{\infty, l}$ in $H^{1, 2}(X_{\infty})$ as $m \to \infty$.

Note that by \cite[Remark $4.3$]{holp} (or \cite[($2.3.13$)]{gigli})
\begin{align}\label{new}
\delta^{\upsilon_{\infty}}(f_{\infty, l}d g_{\infty, l, m})=\langle df_{\infty, l}, dg_{\infty, l, m} \rangle-f_{\infty, l}\Delta^{\upsilon_{\infty}}g_{\infty, l, m}
\end{align}
and that \cite[Theorem $3.5.2$]{gigli} yields 
\begin{align}\label{8670}
d^{\upsilon_{\infty}}(f_{\infty, l}d g_{\infty, l, m})=df_{\infty, l} \wedge d g_{\infty, l, m}.
\end{align}
Thus we see that $f_{\infty, l}d g_{\infty, l, m} \to f_{\infty, l}d g_{\infty, l}$ in $W^{1, 2}_H(T^*X_{\infty})$ as $m \to \infty$.

Let $m \in \mathbf{N}$ with
\[||\sum_{l=1}^kf_{\infty, l}dg_{\infty, l}-\sum_{l=1}^kf_{\infty, l}dg_{\infty, l, m}||_{W_H^{1, 2}(X_{\infty})}<\epsilon.\]
By \cite[Theorem 4.2]{holip} without loss of generality we can assume that 
there exist sequences $\{f_{i, l}, G_{i, l, m}\}_{i<\infty, 1 \le l \le k}$ of $f_{i, l}, G_{i, l, m} \in \mathrm{LIP}(X_{i})$ such that the following hold:
\begin{itemize}
\item $\sup_{i, l}\left(\mathbf{Lip}f_{i, l} +\mathbf{Lip}G_{i, l, m}\right)<\infty$.
\item For any $i, l$,
\[\int_{X_i}G_{i, l, m}d\upsilon_i=0.\]
\item $f_{i, l}, \nabla f_{i, l}, G_{i, l, m}, \nabla G_{i, l, m}$ $L^2$-converge strongly to $f_{\infty, l}, \nabla f_{\infty, l},  G_{\infty, l, m}, \nabla G_{\infty, l, m}$ on $X_{\infty}$, respectively for every $l$.
\end{itemize}
Moreover by the smoothing via the heat flow (c.f. \cite{ags, grigo}) without loss of generality we can assume that $f_{i, l}, G_{i, l, m} \in C^{\infty}(X_{i})$.

Let $g_{i, l, m} := (\Delta^{\upsilon_i})^{-1}G_{i, l, m} \in C^{\infty}(X_i)$ (note that the smoothness of $g_{i, l, m}$ follows from the elliptic regularity theorem).
Note that Theorem \ref{pois} with (\ref{lipreg}) implies that $\sup_{i<\infty, l}\mathbf{Lip}g_{i, l, m}<\infty$ and that $g_{i, l, m}, \nabla g_{i, l, m}$ $L^2$-converge strongly to $g_{\infty, l, m}, \nabla g_{\infty, l, m}$ on $X_{\infty}$, respectively.

For $i \le \infty$, let 
\[\omega_{i, m}:=\sum_{l=1}^kf_{i, l}dg_{i, l, m}.\]
Then by (\ref{new}) and (\ref{8670}), $\omega_{i, m}, d\omega_{i, m}, \delta \omega_{i, m}$ $L^2$-converge strongly to $\omega_{\infty, m}, d^{\upsilon_{\infty}}\omega_{\infty, m}, \delta^{\upsilon_{\infty}} \omega_{\infty, m}$ on $X_{\infty}$, respectively.
This completes the proof. $\,\,\,\,\,\,\,\,\,\,\,\,\,\,\,\,\Box$ 

Similarly we have the following:
\begin{theorem}\label{kkii}
Let $(X_i, \upsilon_i) \stackrel{GH}{\to} (X_{\infty}, \upsilon_{\infty})$ in $\overline{M(n, K, d)}$ and let $\omega_{\infty} \in H^{1, 2}_d(\bigwedge^kT^*X_{\infty})$.
Then there exist a subsequence $\{i(j)\}_j$ and a sequence $\{\omega_{i(j)}\}_j$ of $\omega_{i(j)} \in \mathrm{Test}\mathrm{Form}_kX_{i(j)}$ such that $\omega_{i(j)}, d^{\upsilon_{i(j)}}\omega_{i(j)}$ $L^2$-converge strongly to $\omega_{\infty}, d^{\upsilon_{\infty}}\omega_{\infty}$ on $X_{\infty}$, respectively.
Moreover if $(X_{i}, \upsilon_{i}) \in M(n, K, d)$ for every $i<\infty$, then we can choose $\{\omega_{i(j)}\}_j$ as $C^{\infty}$-differential $k$-forms.
\end{theorem}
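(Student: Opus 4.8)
The plan is to follow the proof of Theorem \ref{app6} almost verbatim, the point being that for this statement we only have to control the exterior derivative $d^{\upsilon}$, so the Poisson-equation step used there to tame $\delta^{\upsilon}$ is unnecessary and the argument becomes strictly simpler. As in that proof I would treat only the case $(X_i,\upsilon_i)\in M(n,K,d)$, so that the approximants are genuinely $C^{\infty}$; the general case is the same once one replaces $C^{\infty}$-smoothing by the (mollified) heat flow, using that $h_s$ applied to a Lipschitz function lands in $\mathrm{Test}F$ for every $s>0$ (cf. Remark \ref{appremark}). Since, by definition, $H^{1,2}_d(\bigwedge^kT^*X_\infty)$ is the $W^{1,2}_d$-closure of $\mathrm{TestForm}_k(X_\infty)$, a standard diagonal argument over a sequence $\epsilon_m\to0$ reduces the assertion to the following: given a single test form $\eta_\infty=\sum_{l=1}^Nf_{0,l}\,df_{1,l}\wedge\cdots\wedge df_{k,l}$ with all $f_{j,l}\in\mathrm{Test}F(X_\infty)$, produce a subsequence $\{i(p)\}_p$ and $C^{\infty}$ forms $\eta_{i(p)}\in\mathrm{TestForm}_k(X_{i(p)})$ with $\eta_{i(p)}$ and $d^{\upsilon_{i(p)}}\eta_{i(p)}$ converging $L^2$-strongly on $X_\infty$ to $\eta_\infty$ and $d^{\upsilon_\infty}\eta_\infty$. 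By \cite[Theorem $3.5.2$]{gigli} we have $d^{\upsilon_\infty}\eta_\infty=\sum_l df_{0,l}\wedge df_{1,l}\wedge\cdots\wedge df_{k,l}$, so both forms in question are algebraic in the $f_{j,l}$ and their differentials alone; in particular no Laplacian of any $f_{j,l}$ enters, which is why Theorem \ref{pois} is not needed here.

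The second step is to transfer the functions $f_{j,l}$ to the $X_i$. Each $f_{j,l}$, being a test function, has bounded differential and hence a Lipschitz representative, so by \cite[Theorem $4.2$]{holip} (after passing to a subsequence if necessary) I may choose functions $f_{i,j,l}\in\mathrm{LIP}(X_i)$ with $\sup_{i,j,l}\mathbf{Lip}\,f_{i,j,l}<\infty$ such that $f_{i,j,l}$ and $\nabla f_{i,j,l}$ converge $L^2$-strongly on $X_\infty$ to $f_{j,l}$ and $\nabla f_{j,l}$. These need not belong to $\mathrm{Test}F(X_i)$, so I would smooth them: for each $i$ choose $s_i\in(0,1]$ with $s_i\to0$ and $\|h_{s_i}f_{i,j,l}-f_{i,j,l}\|_{H^{1,2}(X_i)}<1/i$ for all $j,l$ (possible because there are finitely many and the heat flow on $X_i$ is strongly continuous in $H^{1,2}(X_i)$ at $s=0$), and put $\hat f_{i,j,l}:=h_{s_i}f_{i,j,l}$. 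Then $\hat f_{i,j,l}\in C^{\infty}(X_i)\subset\mathrm{Test}F(X_i)$; the Bakry-\'Emery gradient estimate gives $\mathbf{Lip}\,\hat f_{i,j,l}\le e^{|K|s_i}\mathbf{Lip}\,f_{i,j,l}$, whence $\sup_{i,j,l}\mathbf{Lip}\,\hat f_{i,j,l}<\infty$; and since $\hat f_{i,j,l}-f_{i,j,l}$ and its gradient tend to $0$ in $L^2(X_i)$-norm, $\hat f_{i,j,l}$ and $\nabla\hat f_{i,j,l}$ still converge $L^2$-strongly to $f_{j,l}$ and $\nabla f_{j,l}$ on $X_\infty$.

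Finally I would set $\eta_i:=\sum_{l=1}^N\hat f_{i,0,l}\,d\hat f_{i,1,l}\wedge\cdots\wedge d\hat f_{i,k,l}\in\mathrm{TestForm}_k(X_i)$. Each factor $\hat f_{i,0,l}$ is uniformly bounded in $L^{\infty}$ (being Lipschitz with a uniformly bounded Lipschitz constant on a space of diameter at most $d$ and with uniformly bounded $L^2$-norm), and each $d\hat f_{i,j,l}$ satisfies $|d\hat f_{i,j,l}|\le\mathbf{Lip}\,\hat f_{i,j,l}$, hence is uniformly bounded in $L^{\infty}$ as well; since moreover all these factors converge $L^2$-strongly on $X_\infty$, the stability of $L^2$-strong convergence under products of uniformly $L^{\infty}$-bounded factors (as used in the proof of Theorem \ref{app6}; see \cite[Propositions $3.22$ and $3.56$]{holp}) shows that $\eta_i$ converges $L^2$-strongly to $\eta_\infty$ and, using \cite[Theorem $3.5.2$]{gigli} once more, that $d^{\upsilon_i}\eta_i=\sum_ld\hat f_{i,0,l}\wedge d\hat f_{i,1,l}\wedge\cdots\wedge d\hat f_{i,k,l}$ converges $L^2$-strongly to $d^{\upsilon_\infty}\eta_\infty$. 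Combining this with the outer diagonalization over $\epsilon_m\to0$ yields the subsequence $\{i(j)\}_j$ and forms $\omega_{i(j)}$ as required, which in the present case are $C^{\infty}$. I do not expect any genuine analytic obstacle: the entire difficulty, exactly as in the proof of Theorem \ref{app6}, is the bookkeeping needed to run the nested approximations (closure in $W^{1,2}_d$, Lipschitz transfer via \cite{holip}, heat-flow smoothing on each $X_i$) while keeping, at every stage, both the $L^2$-strong convergence of the functions and their gradients and the uniform $L^{\infty}$-bounds on the factors that make the final products converge.
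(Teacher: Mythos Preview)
Your proposal is correct and matches the paper's approach: the paper proves Theorem~\ref{kkii} simply by saying ``Similarly we have the following'' after the proof of Theorem~\ref{app6}, and you have correctly unpacked what that similarity means. In particular you have identified the key simplification---since $d^{\upsilon_\infty}\bigl(f_{0}\,df_{1}\wedge\cdots\wedge df_{k}\bigr)=df_{0}\wedge\cdots\wedge df_{k}$ involves only the gradients and not any Laplacian, the Poisson-equation step (Theorem~\ref{pois}) used in Theorem~\ref{app6} to control $\delta^{\upsilon}$ is unnecessary here---so the Lipschitz transfer via \cite[Theorem~4.2]{holip} followed by heat-flow smoothing already suffices.
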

\begin{corollary}\label{contiad}
Let $(X_i, \upsilon_i) \stackrel{GH}{\to} (X_{\infty}, \upsilon_{\infty})$ in $\overline{M(n, K, d)}$ with $\mathrm{diam}\,X_{\infty}>0$, and let 
$\{\omega_i\}_{i\le \infty}$ be an $L^2$-weak convergent sequence on $X_{\infty}$ of $\omega_i \in L^2(\bigwedge^kT^*X_i)$.
Assume that $\omega_i \in \mathcal{D}^2(\delta^{\upsilon_i}, X_i)$ for every $i<\infty$ and that $\sup_{i<\infty}||\delta^{\upsilon_i}\omega_i||_{L^2}<\infty$.
Then we see that $\omega_{\infty} \in \mathcal{D}^2(\delta^{\upsilon_{\infty}}, X_{\infty})$ and that $\delta^{\upsilon_i}\omega_i$ $L^2$-converges weakly to $\delta^{\upsilon_{\infty}}\omega_{\infty}$ on $X_{\infty}$.
\end{corollary}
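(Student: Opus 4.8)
The plan is to combine the $L^2$-weak compactness with the approximation result Theorem \ref{kkii} and the compatibility of the $L^2$-weak/strong bilinear pairing with the Gromov-Hausdorff topology. First I would use the uniform bound $\sup_{i<\infty}||\delta^{\upsilon_i}\omega_i||_{L^2}<\infty$ together with the $L^2$-weak compactness with respect to the Gromov-Hausdorff topology (see \cite{holp, KS, KS2}) to extract a subsequence along which $\delta^{\upsilon_i}\omega_i$ $L^2$-converges weakly to some $\eta_{\infty} \in L^2(\bigwedge^{k-1}T^*X_{\infty})$ on $X_{\infty}$. The problem then reduces to identifying $\eta_{\infty}$ with $\delta^{\upsilon_{\infty}}\omega_{\infty}$, i.e.\ to verifying the defining integral identity $\int_{X_{\infty}}\langle \omega_{\infty}, d^{\upsilon_{\infty}}\alpha_{\infty}\rangle d\upsilon_{\infty}=\int_{X_{\infty}}\langle \eta_{\infty}, \alpha_{\infty}\rangle d\upsilon_{\infty}$ against every $\alpha_{\infty} \in \mathrm{TestForm}_{k-1}(X_{\infty})$; once this is done, the uniqueness of $\eta_{\infty}$ together with a standard subsequence argument upgrades the convergence to the full sequence, giving the statement.

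Next, for a fixed test form $\alpha_{\infty} \in \mathrm{TestForm}_{k-1}(X_{\infty})$, I would apply Theorem \ref{kkii} (with $k-1$ in place of $k$), which is legitimate since $\mathrm{TestForm}_{k-1}(X_{\infty})$ is contained in $H^{1,2}_d(\bigwedge^{k-1}T^*X_{\infty})$ by construction; this produces, after passing to a further subsequence, forms $\alpha_i \in \mathrm{TestForm}_{k-1}(X_i)$ such that $\alpha_i$ and $d^{\upsilon_i}\alpha_i$ $L^2$-converge strongly to $\alpha_{\infty}$ and $d^{\upsilon_{\infty}}\alpha_{\infty}$ on $X_{\infty}$, respectively. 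Passing to this further subsequence leaves the $L^2$-weak limits $\omega_{\infty}$ and $\eta_{\infty}$ unchanged. For each finite $i$ the definition of $\delta^{\upsilon_i}_k$ gives the identity $\int_{X_i}\langle \omega_i, d^{\upsilon_i}\alpha_i\rangle d\upsilon_i=\int_{X_i}\langle \delta^{\upsilon_i}\omega_i, \alpha_i\rangle d\upsilon_i$, and I would let $i\to\infty$: on the left $\omega_i\to\omega_{\infty}$ $L^2$-weakly while $d^{\upsilon_i}\alpha_i\to d^{\upsilon_{\infty}}\alpha_{\infty}$ $L^2$-strongly, and on the right $\delta^{\upsilon_i}\omega_i\to\eta_{\infty}$ $L^2$-weakly while $\alpha_i\to\alpha_{\infty}$ $L^2$-strongly, so both sides pass to the limit and yield the desired identity. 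Since $\alpha_{\infty}$ is arbitrary, $\omega_{\infty}\in\mathcal{D}^2(\delta^{\upsilon_{\infty}}, X_{\infty})$ with $\delta^{\upsilon_{\infty}}\omega_{\infty}=\eta_{\infty}$.

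The only point requiring care is the passage to the limit in the bilinear pairings, for which I would invoke the fact (established in \cite{holp}) that the integrated inner product of an $L^2$-weakly convergent sequence of tensor fields and an $L^2$-strongly convergent sequence of tensor fields of the same type converges, in the Gromov-Hausdorff sense, to the inner product of the limits; here one applies it both to the pair $(\omega_i, d^{\upsilon_i}\alpha_i)$ of $k$-forms and to the pair $(\delta^{\upsilon_i}\omega_i, \alpha_i)$ of $(k-1)$-forms. Apart from this, and the bookkeeping needed to keep the various subsequence extractions consistent (first for the weak limit $\eta_{\infty}$, then for the approximating forms coming from Theorem \ref{kkii}, then the final diagonal/subsequence argument to deduce full-sequence convergence from the uniqueness of the limit), the argument is routine.
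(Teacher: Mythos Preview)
Your proposal is correct and follows essentially the same argument as the paper: extract an $L^2$-weak subsequential limit $\eta_{\infty}$ of $\delta^{\upsilon_i}\omega_i$, use Theorem~\ref{kkii} to approximate an arbitrary $\alpha_{\infty}\in\mathrm{TestForm}_{k-1}(X_{\infty})$ by $\alpha_i$ with $\alpha_i,d^{\upsilon_i}\alpha_i$ converging $L^2$-strongly, pass the defining identity to the limit via the weak--strong pairing from \cite{holp}, and finish with the uniqueness/subsequence argument. The paper's proof is identical in structure and detail.
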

\begin{proof}
By the $L^2$-weak compactness, for every subsequence $\{i(j)\}_j$ there exist a subsequence $\{j(l)\}_l$ of $\{i(j)\}_j$ and the $L^2$-weak limit $\eta_{\infty} \in L^2(\bigwedge^{k-1}T^*X_{\infty})$  on $X_{\infty}$ of $\{\delta^{\upsilon_{j(l)}}\omega_{j(l)}\}_{l}$.

Let $\alpha_{\infty} \in \mathrm{Test}\mathrm{Form}_{k-1}(X_{\infty})$.
By Theorem \ref{kkii} without loss of generality we can assume that there exists a sequence $\{\alpha_{j(l)}\}_l$ of $\alpha_{j(l)} \in \mathrm{Test}\mathrm{Form}_{k-1}(X_{j(l)})$ such that $\alpha_{j(l)}, d^{\upsilon_{j(l)}}\alpha_{j(l)}$ $L^2$-converge strongly to $\alpha_{\infty}, d^{\upsilon_{\infty}}\alpha_{\infty}$ on $X_{\infty}$, respectively.

Since
\[\int_{X_{j(l)}}\langle \delta^{\upsilon_{j(l)}}\omega_{j(l)}, \alpha_{j(l)}\rangle d\upsilon_{j(l)}=\int_{X_{j(l)}}\langle \omega_{j(l)}, d^{\upsilon_{j(l)}}\alpha_{j(l)}\rangle d\upsilon_{j(l)}\]
for every $l<\infty$,
by letting $l \to \infty$ we have
\[\int_{X_{\infty}}\langle \eta_{\infty}, \alpha_{\infty}\rangle d\upsilon_{\infty}=\int_{X_{\infty}}\langle \omega_{\infty}, d^{\upsilon_{\infty}}\alpha_{\infty} \rangle d\upsilon_{\infty}.\]
In particular we see that $\omega_{\infty} \in \mathcal{D}^2(\delta^{\upsilon_{\infty}}, X_{\infty})$ and that $\delta^{\upsilon_{\infty}}\omega_{\infty}=\eta_{\infty}$.
Since $\{i(j)\}_j$ is arbitrary, this completes the proof. 
\end{proof}
In order to prove Theorem \ref{app3}, we establish the following key result:
\begin{theorem}\label{nju}
Let $(X, x, \upsilon) \in \overline{M(n, K)}$, let $R>0$ and let $\phi \in \mathcal{D}^2(\Delta^{\upsilon}, B_R(x)) \cap \mathrm{LIP}_c(B_R(x))$ with $\Delta^{\upsilon}\phi \in L^{\infty}(B_R(x))$.
Then 
for every $f \in \mathcal{D}^2(\Delta^{\upsilon}, B_R(x))$ we see that $\phi f \in \mathcal{D}^2(\Delta^{\upsilon}, X)$ and that  $\Delta^{\upsilon}(\phi f)=\phi \Delta^{\upsilon}f-2\langle \nabla \phi, \nabla f \rangle +f\Delta^{\upsilon}\phi$ in $L^2(X)$.
\end{theorem}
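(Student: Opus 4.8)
The statement is a local Leibniz/product rule for the measure-valued Laplacian $\Delta^\upsilon$, with the cutoff $\phi$ supplying compact support so that the product $\phi f$ lies in the global domain $\mathcal{D}^2(\Delta^\upsilon, X)$. The strategy is to verify the defining integral identity directly: for an arbitrary test function $h \in \mathrm{LIP}_c(X)$, compute $\int_X \langle d(\phi f), dh\rangle\, d\upsilon$ and show it equals $\int_X \big(\phi\Delta^\upsilon f - 2\langle\nabla\phi,\nabla f\rangle + f\Delta^\upsilon\phi\big)h\, d\upsilon$. Since $\phi$ has compact support in $B_R(x)$, we may work entirely inside $B_R(x)$, and every auxiliary test function we build will also be supported there, so there is no issue distinguishing the global and local weak formulations.

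\smallskip
\textbf{Key steps.} First I would expand the differential using the Leibniz rule for $d$ (valid a.e.\ by Theorem \ref{29292}, since $\phi \in \mathrm{LIP}_c(B_R(x)) \subset \Gamma_1$ and $f \in H^{1,2}(B_R(x)) \subset \Gamma_1(B_R(x))$ by Theorem \ref{fundpr}(2)): $d(\phi f) = \phi\, df + f\, d\phi$ a.e., so
\begin{align*}
\int_X \langle d(\phi f), dh\rangle\, d\upsilon = \int_{B_R(x)} \phi\langle df, dh\rangle\, d\upsilon + \int_{B_R(x)} f\langle d\phi, dh\rangle\, d\upsilon.
\end{align*}
For the first term, I would use $\langle df, d(\phi h)\rangle = \phi\langle df, dh\rangle + h\langle df, d\phi\rangle$ and apply the definition of $\Delta^\upsilon f$ with test function $\phi h \in \mathrm{LIP}_c(B_R(x))$ — here one needs $\phi h$ to be admissible, which holds since $\phi$ is Lipschitz with compact support and $h$ is Lipschitz. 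This gives $\int \phi\langle df, dh\rangle = \int (\phi\Delta^\upsilon f) h - \int h\langle df, d\phi\rangle$. For the second term, symmetrically write $\langle d\phi, d(fh)\rangle = f\langle d\phi, dh\rangle + h\langle d\phi, df\rangle$ and apply the definition of $\Delta^\upsilon\phi$ with test function $fh$; but $fh$ need not be Lipschitz, so this is where the real work lies. Adding the two contributions, the two copies of $\int h\langle df, d\phi\rangle$ combine to $-2\int\langle\nabla\phi,\nabla f\rangle h$, and one recovers the claimed formula, provided the second term can be handled.

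\smallskip
\textbf{Main obstacle.} The difficulty is that the definition of $\Delta^\upsilon\phi$ on $B_R(x)$ only licenses test functions in $\mathrm{LIP}_c(B_R(x))$, whereas $fh$ is merely in $H^{1,2}$ with compact support (inside $\{\phi \ne 0\}$'s neighborhood). The natural fix is an approximation argument: the hypothesis $\Delta^\upsilon\phi \in L^\infty(B_R(x))$ means that the functional $w \mapsto \int \langle d\phi, dw\rangle\, d\upsilon = \int (\Delta^\upsilon\phi)\, w\, d\upsilon$ extends continuously from $\mathrm{LIP}_c(B_R(x))$ to all $w \in H^{1,2}$ with $\mathrm{supp}\, w \Subset B_R(x)$ (using density from Theorem \ref{fundpr}(1), noting that $fh$ can be $H^{1,2}$-approximated by locally Lipschitz functions supported in a fixed compact neighborhood of $\mathrm{supp}\,\phi$ via a further cutoff). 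So the identity $\int\langle d\phi, d(fh)\rangle\, d\upsilon = \int (\Delta^\upsilon\phi) f h\, d\upsilon$ holds by continuity. One must also check $\phi f \in H^{1,2}(X)$ (immediate: $\phi$ is bounded Lipschitz with compact support, $f \in H^{1,2}(B_R(x))$, so $\phi f \in H^{1,2}(B_R(x))$ extends by zero) and that the right-hand side $\phi\Delta^\upsilon f - 2\langle\nabla\phi,\nabla f\rangle + f\Delta^\upsilon\phi$ is in $L^2(X)$ — this uses $\phi, d\phi, \Delta^\upsilon\phi \in L^\infty$, $\Delta^\upsilon f \in L^2$, $df \in L^2$, and $f \in L^2$, all on the bounded set $B_R(x)$. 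Assembling these pieces yields both $\phi f \in \mathcal{D}^2(\Delta^\upsilon, X)$ and the stated formula.
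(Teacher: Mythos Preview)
Your proposal is correct and follows essentially the same route as the paper's proof: both verify $\phi f \in H^{1,2}(X)$, expand via the Leibniz rule, use $\phi h \in \mathrm{LIP}_c(B_R(x))$ as a test function against $\Delta^\upsilon f$, and then handle the term requiring $fh$ as a test function against $\Delta^\upsilon\phi$ by extending the admissible test class from $\mathrm{LIP}_c$ to $H^{1,2}$ functions with compact support (the paper packages this extension as a separate claim using a further cutoff $\psi$, exactly as you indicate). The only cosmetic difference is that the paper spells out the approximation argument for $\phi f \in H^{1,2}(X)$ explicitly, while you call it immediate.
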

\begin{proof}
Let $f \in \mathcal{D}^2(\Delta^{\upsilon}, B_R(x))$.
We first prove $\phi f \in H^{1, 2}(X)$.

Let $r<R$ with $\mathrm{supp}\,\phi \subset B_r(x)$, and let $\phi \equiv 0$ on $X \setminus B_R(x)$.
By Theorem \ref{fundpr} there exists a sequence $\{f_{j}\}_j$ of  $f_{j} \in \mathrm{LIP}_{\mathrm{loc}}(B_R(x))$ with $f_{j} \to f$ in $H^{1, 2}(B_R(x))$. 
Note that $\phi f_{j} \in \mathrm{LIP}_{\mathrm{loc}}(X)$ and that since $\mathrm{supp}\,(\phi f_{j}) \subset B_r(x)$ we have
\begin{align*}
\mathrm{Lip}(\phi f_{j})(y)&\le |\phi (y)|\mathrm{Lip}f_{j}(y) + |f_{j}(y)|\mathrm{Lip}\phi (y) \\
&\le ||\phi ||_{L^{\infty}(B_r(x))}\mathbf{Lip}(f_{j}|_{B_r(x)}) + ||f_{j}||_{L^{\infty}(B_r(y))}\mathbf{Lip}\phi
\end{align*}
for every $y \in X$.
Therefore we have $\phi f_{j} \in \mathrm{LIP}(X)$.

Similarly we have
\[\int_{X}|\nabla (\phi f_{j})|^2d\upsilon \le 2||\phi ||_{L^{\infty}(B_r(x))}^2\int_{B_R(x)}|\nabla f_{j}|^2d\upsilon + 2(\mathbf{Lip}\phi )^2\int_{B_R(x)}|f_{j}|^2d\upsilon.\]
In particular we have $\sup_j ||\phi f_{j}||_{H^{1, 2}(X)}<\infty$.
Since $\phi f_{j} \to \phi f$ in $L^2(X)$, we have $\phi f \in H^{1, 2}(X)$.

Thus it suffices to check that for every $h \in \mathrm{LIP}_c(X)$ we have 
\begin{align}\label{counte}
\int_{X}\langle \nabla h, \nabla (\phi f)\rangle d\upsilon =\int_{X}h \left( \phi \Delta^{\upsilon}f-2\langle \nabla \phi, \nabla f \rangle +f\Delta^{\upsilon}\phi \right) d\upsilon.
\end{align}
Since $h \phi \in \mathrm{LIP}_c(B_R(x))$, we have
\begin{align}\label{firs}
\int_{X}h \phi \Delta^{\upsilon}fd\upsilon &=\int_{B_R(x)}h \phi \Delta^{\upsilon}f d\upsilon \nonumber \\
&=\int_{B_R(x)}\langle \nabla (h \phi ), \nabla f \rangle d\upsilon \nonumber \\
&=\int_{B_R(x)}\phi \langle \nabla h, \nabla f \rangle d\upsilon + \int_{B_R(x)}h \langle \nabla \phi, \nabla f \rangle d\upsilon \nonumber \\
&=\int_{X}\phi \langle \nabla h, \nabla f\rangle d\upsilon + \int_{X}h \langle \nabla \phi, \nabla f \rangle d\upsilon.
\end{align}
We need the following elementary claim:
\begin{claim}\label{jhhhf}
Let $F \in \mathcal{D}^2(\Delta^{\upsilon}, B_R(x))$ with compact support, i.e., there exists $r>0$ with $r<R$ such that $F \equiv 0$ on $B_R(x) \setminus B_r(x)$.
Then for every $G \in H^{1, 2}(B_R(x))$ we have
\[\int_{B_R(x)}\langle dG, dF\rangle d\upsilon = \int_{B_R(x)}G\Delta^{\upsilon}Fd\upsilon.\]
\end{claim}
The proof is as follows.

Let $\psi \in \mathrm{LIP}_c(B_R(x))$ with $\psi \equiv 1$ on $B_r(x)$.
Then since $\langle dG, dF \rangle = \langle d(\psi G), dF\rangle$ on $B_R(x)$, we have
\[\int_{B_R(x)}\langle dG, dF \rangle d\upsilon =\int_{B_R(x)}\langle d(\psi G), dF\rangle d\upsilon = \int_{B_R(x)}\psi G\Delta^{\upsilon}Fd\upsilon. \]
On the other hand, since $\Delta^{\upsilon}F \equiv 0$ on $B_R(x) \setminus \overline{B}_r(x)$ because $F \equiv 0$ on there, we have
\[\int_{B_R(x)}\psi G\Delta^{\upsilon}Fd\upsilon = \int_{B_R(x)}G\Delta^{\upsilon}Fd\upsilon.\]
This completes the proof of Claim \ref{jhhhf}.

By Claim \ref{jhhhf} we have
\begin{align}\label{seco}
\int_{X}fh \Delta^{\upsilon}\phi d\upsilon &=\int_{B_R(x)}fh\Delta^{\upsilon}\phi d\upsilon \nonumber \\
&=\int_{B_R(x)}\langle \nabla (fh), \nabla \phi \rangle d\upsilon  \nonumber \\
&=\int_{B_R(x)}h\langle \nabla f, \nabla \phi \rangle d\upsilon + \int_{B_R(x)}f\langle \nabla h, \nabla \phi \rangle d\upsilon \nonumber \\
&=\int_{X}h\langle \nabla f, \nabla \phi \rangle d\upsilon + \int_{X}f \langle \nabla h, \nabla \phi \rangle d\upsilon.
\end{align}
Therefore (\ref{firs}) and (\ref{seco}) give
\begin{align*}
&\int_{X}h \left(\phi \Delta^{\upsilon}f-2\langle \nabla f, \nabla \phi \rangle +f\Delta^{\upsilon}\phi \right)d\upsilon \\
&=\int_{X}\phi \langle \nabla h, \nabla f\rangle d\upsilon +\int_{X}f\langle \nabla h, \nabla \phi \rangle d\upsilon =\int_{X}\langle \nabla h, \nabla (\phi f)\rangle d\upsilon.
\end{align*}
This completes the proof.
\end{proof}
\begin{theorem}\label{app2}
Let $R>0$, let $(X_i, \upsilon_i) \stackrel{GH}{\to} (X_{\infty}, \upsilon_{\infty})$ in $\overline{M(n, K, d)}$ with $\mathrm{diam}\,X_{\infty}>0$,
let $\{x_i\}_{i \le \infty}$ be a convergent sequence of $x_i \in X_i$,
let $f_{\infty} \in \mathcal{D}^2(\Delta^{\upsilon_{\infty}}, B_R(x_{\infty}))$ and let $\{g_i\}_{i \le \infty}$ be an $L^2$-weak convergent sequence on $B_R(x_{\infty})$ of 
$g_i \in L^2(B_R(x_i))$ with $g_{\infty}=\Delta^{\upsilon_{\infty}}f_{\infty}$.
Then for every $0<r<R$ there exist a subsequence $\{i(j)\}_j$, a sequence $\{c_{i(j)}\}_{j<\infty}$ of $c_{i(j)} \in \mathbf{R}$ with $c_{i(j)} \to 0$, and a sequence $\{f_{i(j)}\}_j$ of $f_{i(j)} \in \mathcal{D}^2(\Delta^{\upsilon_{i(j)}}, B_r(x_{i(j)}))$ such that $\Delta^{\upsilon_{i(j)}}f_{i(j)}=g_{i(j)}+c_{i(j)}$ on $B_r(x_{i(j)})$ and that $f_{i(j)}, \nabla f_{i(j)}$ $L^2$-converge strongly to $f_{\infty}, \nabla f_{\infty}$ on $B_r(x_{\infty})$, respectively.
\end{theorem}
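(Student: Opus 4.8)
**The plan is to localize the global Poisson solver of Theorem \ref{pois} by means of a good cut-off function, and then pass to the limit.** First I would invoke Cheeger--Colding's good cut-off function on $B_R(x_\infty)$: choosing $r<r'<R$, there is $\phi_\infty \in \mathrm{LIP}_c(B_{r'}(x_\infty))$ with $\phi_\infty \equiv 1$ on $B_r(x_\infty)$, $\phi_\infty \in \mathcal{D}^2(\Delta^{\upsilon_\infty}, B_{r'}(x_\infty))$ and $\Delta^{\upsilon_\infty}\phi_\infty \in L^\infty$. By the stability of good cut-off functions under Gromov--Hausdorff convergence (the same construction applied on $B_{r'}(x_i)$, using the uniform Bochner-type bounds), I can pass to a subsequence and obtain $\phi_i \in \mathrm{LIP}_c(B_{r'}(x_i))$, $\phi_i \in \mathcal{D}^2(\Delta^{\upsilon_i}, B_{r'}(x_i))$ with $\phi_i \equiv 1$ on $B_r(x_i)$, $\sup_i \mathbf{Lip}\,\phi_i < \infty$, $\sup_i \|\Delta^{\upsilon_i}\phi_i\|_{L^\infty} < \infty$, and $\phi_i, \nabla\phi_i, \Delta^{\upsilon_i}\phi_i$ $L^2$-converging strongly to $\phi_\infty, \nabla\phi_\infty, \Delta^{\upsilon_\infty}\phi_\infty$ on $B_{r'}(x_\infty)$.

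\textbf{Next I would transplant $\phi_\infty f_\infty$ to a global function.} By Theorem \ref{nju}, $\phi_\infty f_\infty \in \mathcal{D}^2(\Delta^{\upsilon_\infty}, X_\infty)$ with $\Delta^{\upsilon_\infty}(\phi_\infty f_\infty) = \phi_\infty \Delta^{\upsilon_\infty}f_\infty - 2\langle\nabla\phi_\infty,\nabla f_\infty\rangle + f_\infty\Delta^{\upsilon_\infty}\phi_\infty =: G_\infty \in L^2(X_\infty)$, and by construction $G_\infty = g_\infty$ on $B_r(x_\infty)$. Since $X_\infty \neq \overline{B}_r(x_\infty)$ and $\int_{X_\infty} G_\infty\, d\upsilon_\infty = 0$, part (2) of Theorem \ref{pois} gives $\phi_\infty f_\infty - \int_{X_\infty}\phi_\infty f_\infty\, d\upsilon_\infty = (\Delta^{\upsilon_\infty})^{-1}G_\infty$. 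On the discrete side I set $G_i := \phi_i g_i - 2\langle\nabla\phi_i,\nabla f_i^{\flat}\rangle + \cdots$; but here is where care is needed, because I do not yet have approximating functions $f_i$ on the $X_i$. Instead I would first produce $H^{1,2}$-approximations: using \cite[Theorem 4.2]{holip} (as in the proofs of Theorems \ref{app6}, \ref{app2}), extend $f_\infty$ from $B_{r'}(x_\infty)$ and find $\tilde f_i \in H^{1,2}(B_{r'}(x_i))$ with $\tilde f_i, \nabla \tilde f_i$ $L^2$-converging strongly to $f_\infty, \nabla f_\infty$ on $B_{r'}(x_\infty)$; this suffices because the formula $\Delta^{\upsilon_\infty}(\phi_\infty f_\infty)$ only sees $f_\infty$ through $f_\infty$ and $\nabla f_\infty$ away from where $\phi_\infty$ is locally constant. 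Then set $G_i := \phi_i g_i - 2\langle\nabla\phi_i,\nabla\tilde f_i\rangle + \tilde f_i\Delta^{\upsilon_i}\phi_i - c_i$ where $c_i := \int_{X_i}(\phi_i g_i - 2\langle\nabla\phi_i,\nabla\tilde f_i\rangle + \tilde f_i\Delta^{\upsilon_i}\phi_i)\,d\upsilon_i$, so that $\int_{X_i}G_i\,d\upsilon_i = 0$; one checks $c_i \to 0$ since $\int_{X_\infty}G_\infty\,d\upsilon_\infty = 0$. The $G_i$ $L^2$-converge strongly to $G_\infty$ on $X_\infty$, and crucially $G_i = g_i - c_i$ on $B_r(x_i)$ because $\phi_i \equiv 1$ and $\nabla\phi_i \equiv 0$ there (modulo matching $\tilde f_i$ to $f_i$, which does not matter on $B_r$ since $\nabla\phi_i$ vanishes).

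\textbf{Then I apply part (3) of Theorem \ref{pois}:} set $F_i := (\Delta^{\upsilon_i})^{-1}G_i \in \mathcal{D}^2(\Delta^{\upsilon_i}, X_i)$; by Theorem \ref{pois}(3), $F_i, \nabla F_i$ $L^2$-converge strongly to $(\Delta^{\upsilon_\infty})^{-1}G_\infty = \phi_\infty f_\infty - \int\phi_\infty f_\infty$ on $X_\infty$. Restricting to $B_r(x_i)$, where $\phi_i \equiv 1$, I obtain $f_{i(j)} := F_{i(j)}|_{B_r(x_{i(j)})} + \bigl(\int_{X_{\infty}}\phi_\infty f_\infty\, d\upsilon_\infty\bigr) \in \mathcal{D}^2(\Delta^{\upsilon_{i(j)}}, B_r(x_{i(j)}))$ with $\Delta^{\upsilon_{i(j)}}f_{i(j)} = G_{i(j)}|_{B_r} = g_{i(j)} - c_{i(j)}$; replacing $c_i \rightsquigarrow -c_i$ in the statement's notation, this is $\Delta^{\upsilon_{i(j)}}f_{i(j)} = g_{i(j)} + c_{i(j)}$ with $c_{i(j)} \to 0$. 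On $B_r(x_\infty)$, since $\phi_\infty \equiv 1$, the strong $L^2$-convergence of $F_{i(j)}, \nabla F_{i(j)}$ on all of $X_\infty$ restricts (using that restriction of $L^2$-strong convergence to a ball stays $L^2$-strong, together with the fact that $(\Delta^{\upsilon_\infty})^{-1}G_\infty + \int\phi_\infty f_\infty = \phi_\infty f_\infty$) to give $f_{i(j)}, \nabla f_{i(j)}$ $L^2$-converging strongly to $f_\infty, \nabla f_\infty$ on $B_r(x_\infty)$.

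\textbf{The main obstacle} I expect is the passage from the global good cut-off on $X_\infty$ to a \emph{compatible} sequence of good cut-offs on the $X_i$ with simultaneous strong $L^2$-convergence of $\phi_i$, $\nabla\phi_i$, and $\Delta^{\upsilon_i}\phi_i$ — the strong convergence of the Laplacians in particular requires the uniform estimates and stability of the Cheeger--Colding construction (which is available, but must be cited or verified carefully), and one must make sure the cut-off is genuinely locally constant (so $\nabla\phi_i$ and hence the cross term $\langle\nabla\phi_i,\nabla f_i\rangle$ vanishes identically, not just in the limit) on $B_r(x_i)$ for the identity $\Delta^{\upsilon_i}f_i = g_i + c_i$ to hold exactly on $B_r$ rather than approximately. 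A secondary technical point is bookkeeping the additive constants $c_i$ and the mean $\int\phi_\infty f_\infty$ consistently so that the convergence statement comes out clean; this is routine once the cut-off step is in place.
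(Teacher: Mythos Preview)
Your proposal is correct and follows essentially the same route as the paper: multiply $f_\infty$ by a Cheeger--Colding good cut-off via Theorem~\ref{nju}, build the right-hand sides $\hat g_i$ on $X_i$ from $\phi_i g_i$, $\langle\nabla\phi_i,\nabla\tilde f_i\rangle$, $\tilde f_i\Delta^{\upsilon_i}\phi_i$ (with $\tilde f_i$ an $H^{1,2}$-approximation of $f_\infty$), adjust by a constant $c_i\to 0$ to get mean zero, and apply Theorem~\ref{pois}(3) to the global Poisson inverses. Your ``main obstacle'' is in fact a non-issue: Theorem~\ref{pois}(3) only requires $L^2$-\emph{weak} convergence of the right-hand sides, so you only need weak (not strong) convergence of $\Delta^{\upsilon_i}\phi_i$, which follows from the uniform $L^\infty$-bound and \cite[Theorem~4.1]{holp} exactly as in the paper; incidentally, the paper constructs the $\phi_i$ first on the (smooth) $X_i$ and obtains $\phi_\infty$ as their limit rather than the reverse, which makes the $L^\infty$-bounds on $\Delta^{\upsilon_i}\phi_i$ immediate.
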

\begin{proof}
For simplicity we only prove the assertion in the case that $(X_i, \upsilon_i) \in M(n, K, d)$ for every $i<\infty$ (by using \cite[Corollary $4.29$]{holp} we can prove it in general case).

Let $0<r< s<R$.
By \cite[Theorem $6.33$]{ch-co},
without loss of generality we can assume that there exists a sequence $\{\phi_i\}_{i\le \infty}$ of $\phi_i \in \mathrm{LIP}(X_i)$ such that the following hold:
\begin{itemize}
\item $0 \le \phi_i \le 1$, $\mathrm{supp}\,\phi_i \subset B_s(x_i)$ and $\phi_i|_{B_r(x_i)}\equiv 1$ for every $i \le \infty$.
\item $\phi_i \in C^{\infty}(X_i)$ for every $i<\infty$ with $\sup_{i<\infty}\left(\mathbf{Lip}\phi_i + ||\Delta^{\upsilon_i}\phi_i||_{L^{\infty}}\right) <\infty$.
\item $\phi_i$ converges uniformly to $\phi_{\infty}$ on $B_R(x_{\infty})$.
\end{itemize}
By Theorem \ref{srell} we see that $\nabla \phi_i$ $L^2$-converges weakly to $\nabla \phi_{\infty}$ on $B_R(x_{\infty})$.
In particular \cite[Proposition $3.29$ and Theorem $4.1$]{holp} give that $\phi_{\infty} \in \mathcal{D}^2(\Delta^{\upsilon_{\infty}}, B_R(x_{\infty}))$, that $\Delta^{\upsilon_{\infty}}\phi_{\infty} \in L^{\infty}(B_R(x_{\infty}))$ and that $\Delta^{\upsilon_i}\phi_i$ $L^p$-converges weakly to $\Delta^{\upsilon_{\infty}}\phi_{\infty}$ on $B_R(x_{\infty})$ for every $1<p<\infty$. 

On the other hand  by Theorem \ref{fundpr}, we see that $f_{\infty}|_{B_s(x_{\infty})}$ is the limit of a sequence in $\mathrm{LIP}(B_s(x_{\infty}))$ with respect to the $H^{1, 2}$-norm.
Thus by 
\cite[Theorem $4.2$]{holip}, without loss of generality we can assume that there exists a sequence $\{\tilde{f}_i\}_{i<\infty}$ of $\tilde{f}_i \in \mathrm{LIP}(B_s(x_i))$ such that $\tilde{f}_i, \nabla \tilde{f}_i$ $L^2$-converge strongly to $f_{\infty}, \nabla f_{\infty}$ on $B_s(x_{\infty})$, respectively.

Let 
\[\hat{g}_{\infty}:=\Delta^{\upsilon_{\infty}}(\phi_{\infty}f_{\infty})=\phi_{\infty}\Delta^{\upsilon_{\infty}}f_{\infty}-2\langle \nabla \phi_{\infty}, \nabla f_{\infty} \rangle +f_{\infty}\Delta^{\upsilon_{\infty}}\phi_{\infty} \in L^2(X_{\infty})\]
and let
\[\hat{g}_{i}:=\phi_{i}g_{i}-2\langle \nabla \phi_{i}, \nabla \tilde{f}_{i}\rangle + \tilde{f}_{i}\Delta^{\upsilon_{i}}\phi_{i}+c_{i} \in L^2(X_{i}),\]
where $c_{i}$ is the constant satisfying
\[\int_{X_{i}}\hat{g}_{i}d\upsilon_{i}=0.\]
It is easy to check that $\hat{g}_{i}-c_i$ $L^2$-converges weakly to $\hat{g}_{\infty}$ on $X_{\infty}$.
Since
\[\int_{X_{\infty}}\hat{g}_{\infty}d\upsilon_{\infty}=0,\]
we have $c_i \to 0$.

Let $f_i:=(\Delta^{\upsilon_i})^{-1}\hat{g}_i$ for every $i<\infty$.
Theorem \ref{pois} yields that $f_{i}, \nabla f_{i}$ $L^2$-converge strongly to $\phi_{\infty}f_{\infty}, \nabla (\phi_{\infty}f_{\infty})$ on $X_{\infty}$, respectively.
Since 
\begin{align*}
\Delta^{\upsilon_i}f_{i}|_{B_r(x_i)} &= \hat{g}_{i}|_{B_r(x_i)}\\
&=\left(\phi_{i}g_{i} -2\langle \nabla \tilde{f}_{i}, \nabla \phi_{i}\rangle + \tilde{f}_{i}\Delta^{\upsilon_{i}}\phi_{i}+c_{i}\right)|_{B_r(x_i)}\\
&=g_{i}|_{B_r(x_i)} +c_{i},
\end{align*}
this completes the proof. 
\end{proof}
We are now in a position to prove Theorem \ref{app3}.

\textit{Proof of Theorem \ref{app3}.}

Without loss of generality we can assume that $g_i \in L^2(B_R(x_i))$ for every $i\le \infty$ and that $\{g_i\}_{i \le \infty}$ is an $L^2$-weak convergent sequence on $B_R(x_{\infty})$.

We use the same notation as in the proof of Theorem \ref{app2} for convenience. 
For every sufficiently large $i\le \infty$ let 
\[a_i:=-\frac{\upsilon_i(B_r(x_i))}{\upsilon_i(X_i \setminus B_r(x_i))}\]
and let $h_i:=1_{B_r(x_i)} + a_i 1_{X_i \setminus B_r(x_i)}$.
Note
\[\int_{X_i}h_id\upsilon_i=0.\]
Let $k_i := (\Delta^{\upsilon_i})^{-1}h_i$.
Since $h_i$ $L^2$-converges strongly to $h_{\infty}$ on $X_{\infty}$,
Theorem \ref{pois} yields that $k_i, \nabla k_i$ $L^2$-converge strongly to $k_{\infty}, \nabla k_{\infty}$ on $X_{\infty}$, respectively.
Since $\Delta^{\upsilon_i} k_i|_{B_r(x_i)}\equiv 1$, we see that $\Delta^{\upsilon_{i}}(f_{i}-c_{i}k_i)|_{B_r(x_i)}=g_{i}|_{B_r(x_i)}$ and that 
$f_{i}-c_{i}k_i, \nabla (f_{i}-c_{i}k_i)$ $L^2$-converge strongly to $f_{\infty}, \nabla f_{\infty}$ on $B_r(x_{\infty})$, respectively.
This completes the proof.
$\,\,\,\,\,\,\,\,\,\,\,\,\,\,\,\,\,\,\,\,\Box$

Note that Corollary \ref{apphar} is a direct consequence of Theorem \ref{app3}.
\begin{corollary}\label{associ}
Let $1<p< \infty$, let $R>0$, let $(X_i, \upsilon_i) \stackrel{GH}{\to} (X_{\infty}, \upsilon_{\infty})$ in $\overline{M(n, K, d)}$ with $\mathrm{diam}\,X_{\infty}>0$, let $\{x_i\}_{i \le \infty}$ be a convergent sequence of $x_i \in X_i$, and let $\{T_i\}_{i \le \infty}$ be a sequence of $T_{i} \in L^p(T^r_sB_R(x_i))$ with $\sup_i||T_i||_{L^p}<\infty$.
Assume that for any subsequence $\{i(j)\}_j$, convergent sequence $\{y_{i(j)}\}_{j \le \infty}$ of $y_{i(j)} \in B_R(x_{i(j)})$, $r>0$ with $\overline{B}_r(y_{\infty}) \subset B_R(x_{\infty})$, and uniform convergent sequence $\{h_{i(j)}\}_{j \le \infty}$ on $B_r(y_{\infty})$ of harmonic maps $h_{i(j)}: B_r(y_{i(j)}) \to \mathbf{R}^{r+s}$,  we have
\[\lim_{j \to \infty}\int_{B_t(y_{i(j)})}\left\langle T_{i(j)}, \nabla^r_sh_{i(j)}\right\rangle d\upsilon_{i(j)}=\int_{B_t(y_{\infty})}\left\langle T_{\infty}, \nabla^r_sh_{\infty}\right\rangle d\upsilon_{\infty}\]
for every $t<r$.
Then we see that $T_i$ $L^p$-converges weakly to $T_{\infty}$ on $B_R(x_{\infty})$.
\end{corollary}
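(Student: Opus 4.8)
The plan is to reduce the claimed $L^p$-weak convergence to the basic definition (Definition \ref{lpt}), where one must test $T_i$ against tensor products of differentials of distance functions $\nabla^r_s r_{z_i}$ on small balls. The hypothesis of the corollary gives exactly the analogous statement but with harmonic maps $h_{i(j)}$ in place of distance functions. So the essential content of the proof is a \emph{local approximation of distance functions by harmonic functions}, compatible with Gromov-Hausdorff convergence. Concretely, first I would fix a convergent sequence $\{z_i\}_{i\le\infty}$ of $z_i\in B_R(x_i)$, points $\{z_{i,j}\}$ with $z_{i,j}\stackrel{GH}{\to} z_{\infty,j}$ for $1\le j\le r+s$, and a radius $t>0$ with $\overline{B}_t(z_\infty)\subset B_R(x_\infty)$. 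One needs to show
\[
\lim_{i\to\infty}\int_{B_t(z_i)}\Big\langle T_i,\bigotimes_{j=1}^r\nabla r_{z_{i,j}}\otimes\bigotimes_{j=r+1}^{r+s}dr_{z_{i,j}}\Big\rangle d\upsilon_i
=\int_{B_t(z_\infty)}\Big\langle T_\infty,\bigotimes_{j=1}^r\nabla r_{z_{\infty,j}}\otimes\bigotimes_{j=r+1}^{r+s}dr_{z_{\infty,j}}\Big\rangle d\upsilon_\infty .
\]
Since $\sup_i\|T_i\|_{L^p}<\infty$ and $p>1$, by the $L^p$-weak compactness it suffices to prove this along an arbitrary subsequence, and since all data are bounded it suffices to prove it after passing to a further subsequence.

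The key step is the following: for each $\ell$ and each $\delta>0$, on a ball $B_{t+\delta}(z_{\infty,\ell})$ the distance function $r_{z_{\infty,\ell}}$ can be approximated in $H^{1,2}$ (hence, by the uniform $L^\infty$-gradient bound $\mathrm{Lip}\,r_{z_{\infty,\ell}}\le 1$ and Proposition \ref{ssrt}, in every $L^q$) by a harmonic function $u_\infty^{(\ell)}$ on that ball; moreover, by Theorem \ref{app3} / Corollary \ref{apphar} (local approximation of solutions of Poisson's equations, and in particular of harmonic functions, with respect to Gromov-Hausdorff convergence), we may realize $u_\infty^{(\ell)}$ as the uniform — hence $L^2$-strong, together with $L^2$-weak convergence of gradients — limit of harmonic functions $u_i^{(\ell)}$ on $B_{t+\delta}(z_{i,\ell})$. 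Here one should either invoke directly the Cheeger-Colding estimates on $|\nabla r_z|$ and the segment inequality to get the $H^{1,2}$-approximability of distance functions by harmonic functions on Ricci limit spaces (this is standard in the Cheeger-Colding theory), or alternatively solve a Dirichlet problem; the cleanest route is to use the fact, already present in the machinery behind Theorem \ref{nju} and Theorem \ref{app2}, that $H^{1,2}\cap\mathrm{LIP}$ functions can be locally approximated by harmonic ones in $H^{1,2}$ and that this approximation is stable under convergence. Assembling these for $j=1,\dots,r+s$ gives harmonic maps $h_i=(u_i^{(1)},\dots,u_i^{(r+s)}):B_t(z_i)\to\mathbf{R}^{r+s}$ converging uniformly to $h_\infty$ on $B_t(z_\infty)$, to which the hypothesis applies.

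Finally I would control the error introduced by replacing $r_{z_{i,j}}$ with $u_i^{(j)}$. Writing the tensor $\nabla^r_s r_{z_i}$ versus $\nabla^r_s h_i$ and using a telescoping estimate, the difference of the two integrals over $B_t(z_i)$ is bounded by $\|T_i\|_{L^p}$ times an $L^{p'}$-norm of $\sum_j\big(|d r_{z_{i,j}}-d u_i^{(j)}|\cdot\prod_{k\ne j}(|dr_{z_{i,k}}|\vee|du_i^{(k)}|)\big)$; since all these one-forms are uniformly bounded in $L^\infty$ (by $1$, resp.\ by the Cheng-Yau-type gradient bound for $u_i^{(j)}$ coming from its being harmonic and close to a $1$-Lipschitz function — cf.\ \ref{6554}) and $dr_{z_{i,j}}-du_i^{(j)}$ is small in $L^2$, hence in $L^{p'}$ after interpolation with the $L^\infty$ bound (Proposition \ref{ssrt}), uniformly in $i$ after first choosing the approximation fine enough at $i=\infty$ and then using $L^2$-strong convergence of $dr_{z_{i,j}}-du_i^{(j)}$ to $dr_{z_{\infty,j}}-du_\infty^{(j)}$, the error is $\Psi(\text{approximation parameter})$. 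Letting $i\to\infty$ with the hypothesis handling the harmonic part, and then sending the approximation parameter to $0$, yields the desired identity. I expect the main obstacle to be the $L^\infty$-control: one must know that the approximating harmonic functions $u_i^{(j)}$ have gradients bounded uniformly in $i$ (independently of how fine the approximation is), which is where the Cheng-Yau / Cheeger-Colding gradient estimates for harmonic functions, together with the fact that $u_i^{(j)}$ stays $C^0$-close to the $1$-Lipschitz function $r_{z_{i,j}}$, must be invoked carefully; all the $L^p$-convergence bookkeeping (passing between $L^2$-strong and $L^{p'}$-strong via Proposition \ref{ssrt}, and diagonalizing over the two approximation scales) is routine given the results in Section $3$.
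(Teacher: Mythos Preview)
Your proposal has a genuine gap at the ``key step'': you assert that a distance function $r_{z_{\infty,\ell}}$ can be approximated in $H^{1,2}$ on a \emph{fixed} ball by a single harmonic function, and that this is ``standard in the Cheeger--Colding theory'' or extractable from the machinery behind Theorems \ref{nju} and \ref{app2}. This is not true in the generality you need. A distance function has definite nonzero (distributional) Laplacian on a ball of fixed radius, so the harmonic function with the same boundary values differs from it by a definite amount in $H^{1,2}$; there is no free ``approximation parameter'' to send to zero while keeping the ball fixed. The Cheeger--Colding almost-splitting estimates only produce harmonic replacements for distance functions that are close in $H^{1,2}$ on \emph{small} balls around \emph{regular} points, with the error controlled by the scale --- not on a ball of prescribed size $t$. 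And Theorems \ref{nju}/\ref{app2} are about transporting solutions of Poisson's equations along a GH-convergent sequence with the \emph{same} Laplacian data; they do not say that an arbitrary Lipschitz function can be approximated by harmonic functions on a fixed domain.

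The paper's argument avoids this entirely by going through \cite[Proposition~3.71]{holp} together with Theorem~\ref{2n}. The point is not to approximate $dr_z$ by the differential of a single harmonic function, but to observe that the harmonic rectifiable coordinate system furnishes, at a.e.\ point, harmonic maps whose differentials span the cotangent fibre; hence the family $\{\nabla^r_s h : h\ \text{harmonic}\}$ is already rich enough to serve as a test class for $L^p$-weak convergence of tensor fields. That is the content of \cite[Proposition~3.71]{holp}. Corollary~\ref{apphar} then guarantees that any such harmonic $h_\infty$ on the limit lifts to uniformly convergent harmonic $h_i$ on the approximating spaces, which is precisely the configuration in the hypothesis of the corollary. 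With this alternative characterization of weak convergence in hand, the statement is immediate --- no approximation of distance functions, no telescoping error estimate, and no $L^\infty$-gradient control of the approximants is required. If you want to repair your approach, the right move is not to approximate $r_z$ by one harmonic function, but to expand $dr_z$ (locally, at a.e.\ point) in the frame $\{d\phi_{l,j}\}$ coming from the harmonic rectifiable system; that, however, is exactly the paper's route.
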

\begin{proof}
It follows directly from Corollary \ref{apphar}, Theorem \ref{2n} and \cite[Proposition $3.71$]{holp}.
\end{proof}
From now on we turn to the proof of Theorem \ref{221}.
\begin{proposition}\label{998io}
Let $L, R>0$, let $(X, x, \upsilon) \in M(n, K)$ and let $f \in C^{\infty}(B_R(x))$ with
\[\int_{B_R(x)}\left(|f|^2+|\Delta f|^2\right)d\upsilon \le L.\]
Then for every $r<R$ we have 
\[\int_{B_r(x)}\left( |\nabla f|^{2n/(n-1)}+|\nabla|\nabla f|^2|^{2n/(2n-1)}+|\mathrm{Hess}_f|^2\right)d\upsilon \le C(n, K, r, R, L).\]
Moreover we have the following:
\begin{enumerate}
\item If $n \ge 4$, then we have
\[\int_{B_r(x)}|f|^{2n/(n-3)}d\upsilon \le  C(n, K, r, R, L).\]
\item If $n=3$ and $5r<R$, then we have the following Trudinger inequality:
\begin{align}\label{byh}
&\int_{B_r(x)}\exp \left( C_1(K, R)\left(\frac{1}{\upsilon (B_{5r}(x))}\int_{B_{5r}(x)}|\nabla f|^3d\upsilon \right)^{-1/3}\left|f-\frac{1}{\upsilon (B_r(x))}\int_{B_r(x)}fd\upsilon \right| \right)d\upsilon \nonumber \\
&\le C_2(K, R)\upsilon (B_r(x)).
\end{align}
\item If $n=2$, then
for any $z \in B_r(x)$, $t>0$ with $B_{5t}(z) \subset B_r(x)$, and $\alpha, \beta \in B_t(z)$ we have 
\[|f(\alpha)-f(\beta)|\le C(K, R)t^{1/2}d_X(\alpha, \beta)^{1/2}\left(\frac{1}{\upsilon (B_{5t}(z))}\int_{B_{5t}(z)}|\nabla f|^3d\upsilon \right)^{1/3}.\] 
\end{enumerate}
\end{proposition}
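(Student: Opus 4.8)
The plan is to combine the Bochner formula with the good cut-off functions of Cheeger--Colding and the uniform Poincar\'e inequalities of Maheux--Saloff-Coste, being careful that every constant produced depends only on $n$, $K$ and the radii involved. I would first fix radii $r<r_1<r_2<r_3<R$ and, for each consecutive pair, a cut-off $\phi\in\mathrm{LIP}_c(B_R(x))$ with $0\le\phi\le1$, $\phi\equiv1$ on the inner ball, $\mathrm{supp}\,\phi$ inside the outer ball, and $\mathbf{Lip}\,\phi+||\Delta\phi||_{L^{\infty}}\le C(n,K,r,R)$, supplied by \cite[Theorem 6.33]{ch-co}. Starting from $\int_{B_R(x)}\langle\nabla f,\nabla(\phi^2 f)\rangle\,d\upsilon=\int_{B_R(x)}\phi^2 f\,\Delta f\,d\upsilon$, expanding the left side and absorbing yields the Caccioppoli bound $\int_{B_{r_3}(x)}|\nabla f|^2\,d\upsilon\le C(n,K,r,R,L)$. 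Then, using that $f$ is smooth so that the Bochner identity $-\tfrac12\Delta|\nabla f|^2=|\mathrm{Hess}_f|^2-\langle\nabla f,\nabla\Delta f\rangle+\mathrm{Ric}_X(\nabla f,\nabla f)$ holds pointwise, I would multiply by $\phi$, integrate and integrate by parts: the $\Delta|\nabla f|^2$-term becomes $\int|\nabla f|^2\Delta\phi$, and $\int\phi\langle\nabla f,\nabla\Delta f\rangle$ becomes $\int\phi|\Delta f|^2-\int(\Delta f)\langle\nabla\phi,\nabla f\rangle$. Combined with $\mathrm{Ric}_X\ge K(n-1)$, $||\Delta f||_{L^2}\le L^{1/2}$ and the Caccioppoli bound, this gives $\int_{B_{r_2}(x)}|\mathrm{Hess}_f|^2\,d\upsilon\le C(n,K,r,R,L)$.

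Next I would set $u:=|\nabla f|^2$; by smoothness $|\nabla u|\le 2|\mathrm{Hess}_f|\,|\nabla f|$, so Cauchy--Schwarz and the previous two bounds give $\int_{B_{r_2}(x)}|\nabla u|\,d\upsilon\le C$. Theorem \ref{sobo} with $p=1$, $q=\tfrac{n}{n-1}$ together with Remark \ref{akl}(3) provides an $(\tfrac{n}{n-1},1)$-Sobolev inequality on $B_{r_2}(x)$ with constant depending only on $n,K,R$ (Bishop--Gromov bounds $\upsilon(B_{r_2}(x))$ above and below by such constants); applying it to $\phi u$ and using $\int u\,d\upsilon\le C$, $\int|\nabla u|\,d\upsilon\le C$ yields $\int_{B_{r_1}(x)}|\nabla f|^{2n/(n-1)}\,d\upsilon\le C(n,K,r,R,L)$. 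Finally, for the remaining term I would apply H\"older's inequality with exponents $\tfrac{2n-1}{n}$ and $\tfrac{2n-1}{n-1}$ to $|\nabla u|^{2n/(2n-1)}\le 2^{2n/(2n-1)}|\mathrm{Hess}_f|^{2n/(2n-1)}|\nabla f|^{2n/(2n-1)}$, converting the $L^2$-bound on $\mathrm{Hess}_f$ and the $L^{2n/(n-1)}$-bound on $\nabla f$ into $\int_{B_r(x)}|\nabla|\nabla f|^2|^{2n/(2n-1)}\,d\upsilon\le C(n,K,r,R,L)$. Since $B_r(x)\subset B_{r_1}(x)\subset B_{r_2}(x)$, all three pieces of the first displayed estimate then hold on $B_r(x)$.

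For the dimension-specific assertions I would feed $\nabla f\in L^{2n/(n-1)}$ into a Sobolev inequality at the matching exponent. If $n\ge4$, I would first upgrade $f$ itself: from $f\in H^{1,2}(B_{r_3}(x))$ and the $(\tfrac{2n}{n-2},2)$-Sobolev inequality (Theorem \ref{sobo}, $p=2$) one gets $f\in L^{2n/(n-2)}(B_{r_2}(x))\subset L^{2n/(n-1)}(B_{r_2}(x))$ since $\upsilon$ is finite, hence $f\in H^{1,2n/(n-1)}(B_{r_2}(x))$; as $\tfrac{2n}{n-1}<n$ for $n\ge4$, Theorem \ref{sobo} at $p=\tfrac{2n}{n-1}$, whose top exponent is exactly $\tfrac{2n}{n-3}$, gives $\int_{B_r(x)}|f|^{2n/(n-3)}\,d\upsilon\le C(n,K,r,R,L)$. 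If $n=3$ the exponent $\tfrac{2n}{n-1}=n$ is critical: with $||\nabla f||_{L^3(B_{5r}(x))}<\infty$ furnished by the first estimate, I would invoke the standard passage from the family of $(q,p)$-Poincar\'e inequalities with $1\le p<n$ (Theorem \ref{sobo}) to a Trudinger inequality valid on doubling metric measure spaces supporting a Poincar\'e inequality (cf.\ \cite{HK1}, \cite{he2}) to obtain (\ref{byh}). If $n=2$ the exponent $\tfrac{2n}{n-1}=4$ is supercritical: for $B_{5t}(z)\subset B_r(x)$ and $\alpha,\beta\in B_t(z)$ I would run a telescoping chain of balls $B(\alpha,2^{-j}d_X(\alpha,\beta))$, estimate consecutive averages with the $(2,1)$-Poincar\'e inequality (Theorem \ref{sobo}, $p=1$) and Jensen's inequality, replace the local averages of $|\nabla f|$ by $\tfrac{1}{\upsilon(B_{5t}(z))}\int_{B_{5t}(z)}|\nabla f|^p\,d\upsilon$ via Bishop--Gromov volume comparison, and sum the resulting geometric series to obtain the stated H\"older estimate for $n=2$.

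The main obstacle will be the two borderline regimes $n=3$ and $n=2$: Theorem \ref{sobo} fails at the relevant exponent $p=n$, so one must either quote the Trudinger and Morrey-type embedding theorems known for doubling metric measure spaces that support Poincar\'e inequalities, or re-derive them from the subcritical $p<n$ family while tracking the explicit dependence of all constants on $K$ and $R$; by contrast, once the good cut-offs and the uniform Poincar\'e inequalities are in hand, the first displayed estimate and the case $n\ge4$ are routine.
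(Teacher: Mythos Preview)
Your proposal is correct and follows essentially the same route as the paper: Bochner plus Cheeger--Colding cut-offs for the Hessian bound, then the Maheux--Saloff-Coste $(n/(n-1),1)$-Poincar\'e inequality applied to $|\nabla f|^2$ for the $L^{2n/(n-1)}$ gradient bound, then H\"older for the mixed term, and finally \cite[Theorem~5.1]{HK1} (Trudinger and Morrey--type embeddings on doubling spaces with Poincar\'e) for the borderline cases $n=3$ and $n=2$. The paper compresses your Caccioppoli and Bochner steps into a reference to \cite[Claim~4.24]{holp}, and for $n\ge4$ it applies Theorem~\ref{sobo} directly at $(q,p)=(2n/(n-3),2n/(n-1))$ rather than first passing through $L^{2n/(n-2)}$ as you do---your detour is harmless but unnecessary, since the Poincar\'e inequality only needs $\nabla f\in L^{2n/(n-1)}$ and $f\in L^1$.
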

\begin{proof}
We give a proof in the case that $n \ge 4$ only because the proof of the other case is similar by using \cite[Theorem $5.1$]{HK1} with the Bishop-Gromov inequality. 

By an argument similar to the proof of \cite[Claim $4.24$]{holp}, applying Theorem \ref{sobo} for $(q, p)=(n/(n-1), 1)$ to $|\nabla f|^2$ yields
\[\int_{B_r(x)}\left(|\nabla f|^{2n/(n-1)}+|\mathrm{Hess}_f|^2\right)d\upsilon \le C(n, K, r, R, L).\]
Thus applying Theorem \ref{sobo} for $(q, p)=(2n/(n-3), 2n/(n-1))$ to $f$ yields
\[\int_{B_r(x)}|f|^{2n/(n-3)}d\upsilon \le C(n, K, r, R, L).\]
On the other hand the H$\ddot{\text{o}}$lder inequality gives
\begin{align*}
&\int_{B_r(x)}|\nabla |\nabla f|^2|^{2n/(2n-1)}d\upsilon\\
&\le \int_{B_r(x)}|\mathrm{Hess}_f|^{2n/(2n-1)}|\nabla f|^{2n/(2n-1)}d\upsilon \\
&\left(\int_{B_r(x)}|\mathrm{Hess}_f|^2d\upsilon \right)^{n/(2n-1)}\left(\int_{B_r(x)}|\nabla f|^{2n/(n-1)}d\upsilon \right)^{(n-1)/(2n-1)}\\
& \le C(n, K, r, R, L).
\end{align*}
\end{proof}
\begin{remark}\label{trudingerinequality}
Under the same notation as in Proposition \ref{998io} we assume $n=3$.
By \cite[Theorem $6.33$]{ch-co} there exists $\phi \in C^{\infty}(X)$ such that $\mathrm{supp}\,\phi \subset B_R(x)$, that $\phi|_{B_r(x)}\equiv 1$, that $0 \le \phi \le 1$ and that $|\nabla \phi|+|\Delta \phi|\le C(K, r, R)$. 
By applying $(2)$ of Proposition \ref{998io} to $\phi f \in C^{\infty}(X)$ it is not difficult to check that for every $1 \le p <\infty$ we have
$||f||_{L^p(B_r(x))}\le C(K, r, R, L, p)$.
\end{remark}
We now prove a more general result than Theorem \ref{221} which means that the weakly second-order differential structure on a Ricci limit space can be defined \textit{intrinsically}:
\begin{theorem}\label{aarr}
Let $R>0$, let $(X, \upsilon) \in \overline{M(n, K, d)}$ with $\mathrm{diam}\,X>0$, let $x \in X$ and let $f \in \mathcal{D}^2(\Delta^{\upsilon}, B_R(x))$.
Then for every $r<R$, the following hold:
\begin{enumerate}
\item $f \in H^{1, 2n/(n-1)}(B_r(x))$. Moreover we have the following:
\begin{enumerate}
\item If $n \ge 4$, then $f \in L^{2n/(n-3)}(B_r(x))$.
\item If $n=3$ and $5r<R$, then (\ref{byh}) holds.
\item If $n=2$, then $f$ is $1/2$-H$\ddot{\text{o}}$lder continuous on $B_r(z)$.
\end{enumerate}
\item $f \in \Gamma_2(B_R(x))$.
\item $|\nabla f|^2 \in H^{1, 2n/(2n-1)}(B_r(x))$. 
\item $\mathrm{Hess}_f^{g_X} \in L^2(T^0_2B_r(x))$.
\item For every $g \in \mathcal{D}^2(\Delta^{\upsilon}, B_R(x))$ we have $\nabla _{\nabla g}^{g_X}\nabla f \in L^{2n/(2n-1)}(T^0_2B_r(x))$.
\end{enumerate} 
As a corollary of them if a rectifiable system $\mathcal{A}=\{(C_i, \phi_i)\}_{i}$ of $(X, \upsilon)$ satisfies that;
\begin{itemize}
\item for any $i, j$, there exist $z \in X$, $s>0$ and $\hat{\phi}_{i, j} \in \mathcal{D}^2(\Delta^{\upsilon}, B_s(z))$ such that $C_i \subset B_s(z)$ and $\hat{\phi}_{i, j}|_{C_i}\equiv \phi_{i, j}$, where $\phi_i:=(\phi_{i, 1}, \ldots, \phi_{i, k})$ and $k=\mathrm{dim}\,X$,
\end{itemize}
then $\mathcal{A}$ is a weakly second-order differential system of $(X, \upsilon)$.
In particular we have Theorem \ref{221}.
\end{theorem}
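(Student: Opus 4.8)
The plan is to reduce to the smooth setting of Proposition \ref{998io} and then pass to the Gromov--Hausdorff limit. Fix $r<R$ and choose radii $5r<R_1<R_2<R$. By a Cheeger--Colding good cut-off function (\cite[Theorem $6.33$]{ch-co}) take $\phi\in\mathrm{LIP}_c(B_R(x))$ with $0\le\phi\le1$, $\phi\equiv1$ on $B_{R_2}(x)$, $\phi\in\mathcal{D}^2(\Delta^{\upsilon},B_R(x))$ and $|\nabla\phi|+|\Delta^{\upsilon}\phi|\le C$; Theorem \ref{nju} gives $\phi f\in\mathcal{D}^2(\Delta^{\upsilon},X)$ with $\Delta^{\upsilon}(\phi f)\in L^2(X)$ and $\Delta^{\upsilon}(\phi f)=\Delta^{\upsilon}f$ on $B_{R_2}(x)$. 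Writing $(X,\upsilon)$ as a Gromov--Hausdorff limit of $(X_i,\upsilon_i)\in M(n,K,d)$ and arguing as in the proof of Theorem \ref{app6} (approximate $\Delta^{\upsilon}(\phi f)$ by mean-zero Lipschitz functions, lift them via Theorem \ref{pois}, and mollify by the heat flow), we obtain a convergent sequence $x_i\stackrel{GH}{\to}x$ and $f_i\in C^{\infty}(X_i)$ such that $f_i,\nabla f_i$ $L^2$-converge strongly to $f,\nabla f$ and $\Delta^{\upsilon_i}f_i$ $L^2$-converges weakly to $\Delta^{\upsilon}f$ on $B_{R_2}(x)$, with $\sup_i\int_{B_{R_2}(x_i)}(|f_i|^2+|\Delta^{\upsilon_i}f_i|^2)\,d\upsilon_i<\infty$.

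Applying Proposition \ref{998io} to each $f_i$ (with inner radius $R_1$, and with inner radius $r$ for the Trudinger and H\"older parts) yields uniform bounds on $\int_{B_{R_1}(x_i)}(|\nabla f_i|^{2n/(n-1)}+|\nabla|\nabla f_i|^2|^{2n/(2n-1)}+|\mathrm{Hess}_{f_i}|^2)\,d\upsilon_i$, plus (depending on $n$) a uniform $L^{2n/(n-3)}$-bound, inequality (\ref{byh}), or the uniform $1/2$-H\"older bound. Feeding the first bound into Theorem \ref{srell} with $p_i\equiv2n/(n-1)$ (and, applied to $|\nabla f_i|^2$, with $p_i\equiv2n/(2n-1)$) places $f$ in $H^{1,2n/(n-1)}(B_r(x))$ and $|\nabla f|^2$ in $H^{1,2n/(2n-1)}(B_r(x))$, giving (1) and (3); here Proposition \ref{ssrt} upgrades the strong $L^2$-convergence of $f_i,\nabla f_i$ to strong $L^t$-convergence below the improved exponents, and lower semicontinuity of $L^p$-norms gives the remaining integrability, in particular $f\in L^{2n/(n-3)}(B_r(x))$ for $n\ge4$. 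For $n=2$, Proposition \ref{998io}(3) together with a uniform $L^p$-bound on $\nabla f_i$ from Theorem \ref{sobo} and Remark \ref{akl} makes $\{f_i\}$ asymptotically uniformly equicontinuous, so $f_i\to f$ uniformly and $f$ is $1/2$-H\"older; for $n=3$ the same Sobolev bound gives $\nabla f_i\to\nabla f$ strongly in $L^3(B_{5r}(x))$, so the coefficient in (\ref{byh}) converges and, truncating $\exp$ and letting the truncation tend to $\infty$, (\ref{byh}) transfers to $f$. Granting (2) and (4), part (5) is immediate from the pointwise bound $|\nabla^{g_X}_{\nabla g}\nabla f|\le|\nabla g|\,|\mathrm{Hess}^{g_X}_f|$, the membership $|\nabla g|\in L^{2n/(n-1)}$ (part (1) for $g$) and $\mathrm{Hess}^{g_X}_f\in L^2$, via H\"older's inequality.

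The main work is part (2). Fix a harmonic rectifiable system $\mathcal{A}_0=\{(C_l,\phi_l)\}_l$ of $(X,\upsilon)$ associated with $\{(X_i,\upsilon_i)\}_i$ (Theorem \ref{2n}), $\phi_l=h_l|_{C_l}$ with $h_l:B_{\rho_l}(y_l)\to\mathbf{R}^k$ a $C(n,K)$-Lipschitz harmonic map and $h_{i,l}\to h_l$ uniformly for harmonic $h_{i,l}$ on $X_i$. On $X_i$ one has $d\langle\nabla f_i,\nabla h_{i,l,m}\rangle=\mathrm{Hess}_{f_i}(\nabla h_{i,l,m},\cdot)+\mathrm{Hess}_{h_{i,l,m}}(\nabla f_i,\cdot)$, and the Bochner estimate $\int_{B_{\rho}}|\mathrm{Hess}_{h_{i,l,m}}|^2\le C\int_{B_{2\rho}}|\nabla h_{i,l,m}|^2$ for harmonic functions, combined with the bounds of Proposition \ref{998io}, shows $\{\langle\nabla f_i,\nabla h_{i,l,m}\rangle\}_i$ is bounded in $H^{1,2n/(2n-1)}$; hence by Theorem \ref{srell} the limits $\langle\nabla f,\nabla h_{l,m}\rangle$ lie in $H^{1,2n/(2n-1)}\subset\Gamma_1$. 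Since $\mathcal{A}_0$ is a weakly second-order system and $g_X\in\Gamma_1(T^0_2X)$, the Gram matrix $(\langle\nabla h_{l,m},\nabla h_{l,j}\rangle)_{m,j}$ is in $\Gamma_1$, invertible a.e.\ with inverse in $\Gamma_1$, so solving $\sum_m a_{l,m}\langle\nabla h_{l,m},\nabla h_{l,j}\rangle=\langle\nabla f,\nabla h_{l,j}\rangle$ shows the components $a_{l,m}$ of $df$ in the coframe $\{dh_{l,m}\}$ lie in $\Gamma_1$, so $df\in\Gamma_1(T^*X;\mathcal{A}_0)$ and $f\in\Gamma_2(B_R(x))$. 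Then for (4) I identify the $L^2$-weak limit of $\mathrm{Hess}_{f_i}$ with $\mathrm{Hess}^{g_X}_f$ via Corollary \ref{associ}: pairing $\mathrm{Hess}_{f_i}$ with $dh_{i,1}\otimes dh_{i,2}$ for uniformly convergent harmonic maps and integrating by parts on $X_i$ expresses $\int\varphi\,\mathrm{Hess}_{f_i}(\nabla h_{i,1},\nabla h_{i,2})\,d\upsilon_i$ in terms of $\langle\nabla f_i,\nabla h_{i,m}\rangle$, $\langle\nabla h_{i,1},\nabla h_{i,2}\rangle$, $\Delta^{\upsilon_i}f_i$ and a test function, all stable under the convergence, with limit the corresponding quantity for $\mathrm{Hess}^{g_X}_f$; lower semicontinuity then gives $\mathrm{Hess}^{g_X}_f\in L^2(T^0_2B_r(x))$. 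The hard part, I expect, is precisely this: controlling the degeneracy locus of the harmonic frames and checking that the relevant pairings genuinely pass to the $L^1_{\mathrm{loc}}$-weak limit.

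Finally, for the corollary, let $\mathcal{A}=\{(C_i,\phi_i)\}_i$ be a rectifiable system as in the statement. Part (2) applied to each $\hat{\phi}_{i,j}$ gives $\hat{\phi}_{i,j}\in\Gamma_2$ with respect to $\mathcal{A}_0$, so $\phi_i\circ(\phi_l)^{-1}$ (and likewise with $j$ in place of $i$) is weakly twice differentiable; the inverse function theorem and chain rule for weakly twice differentiable bi-Lipschitz maps (\cite{ho0}) then make $\phi_i\circ\phi_j^{-1}=(\phi_i\circ(\phi_l)^{-1})\circ(\phi_l\circ\phi_j^{-1})$ weakly twice differentiable on $\phi_j(C_i\cap C_j)$, so $\mathcal{A}$ is a weakly second-order differential system. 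Theorem \ref{221} follows, since the coordinate functions of a harmonic rectifiable system are harmonic functions on balls and so satisfy the hypothesis above: applying the corollary to $\mathcal{A}^1_{2\mathrm{nd}}\cup\mathcal{A}^2_{2\mathrm{nd}}$, a rectifiable system as a union of two such, shows it is a weakly second-order differential system, i.e.\ $\mathcal{A}^1_{2\mathrm{nd}}$ and $\mathcal{A}^2_{2\mathrm{nd}}$ are compatible.
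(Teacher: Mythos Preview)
Your approach is essentially the same as the paper's: construct smooth approximations $f_i\in C^{\infty}(B_s(x_i))$ on a sequence $(X_i,\upsilon_i)\in M(n,K,d)$ converging to $(X,\upsilon)$, apply Proposition~\ref{998io} for uniform a priori bounds, and pass to the limit. The paper obtains the approximation in one step by invoking Theorem~\ref{app2} together with elliptic regularity (your cut-off plus Theorem~\ref{nju} plus Poisson-equation lift is essentially the proof of Theorem~\ref{app2} unwound), and defers the passage to the limit for (1)--(4) to ``(the proof of) \cite[Theorem~1.3]{holp}'' rather than spelling out the Gram-matrix and Corollary~\ref{associ} arguments as you do; for (5) and the final corollary the arguments coincide.
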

\begin{proof}
We give a proof in the case that $n \ge 4$ only.

Let $r<s< R$, let $\{(X_i, \upsilon_i)\}_i$ be a sequence of $(X_i, \upsilon_i) \in M(n, K, d)$ with $(X_i, \upsilon_i) \stackrel{GH}{\to} (X, \upsilon)$, and let $\{x_i\}_i$ be a sequence of $x_i \in X_i$ with $x_i \stackrel{GH}{\to} x$.

Since $C^{\infty}(B_s(x_i)) \cap L^2(B_s(x_i))$ is dense in $L^2(B_s(x_i))$, by Theorem \ref{app2} and the elliptic regularity theorem, without loss of generality we can assume that there exists a sequence $\{f_i\}_i$ of $f_i \in C^{\infty}(B_s(x_i))$ such that $f_i, \nabla f_i, \Delta f_i$ $L^2$-converge strongly to $f, \nabla f, \Delta^{\upsilon} f$ on $B_s(x)$, respectively.
Thus (the proof of) \cite[Theorem $1.3$]{holp} and Proposition \ref{998io} yield $(1), (2), (3)$ and $(4)$.
Similarly by
\begin{align}\label{uuhu}
|\nabla^{g_X}_{\nabla g}\nabla f|=|\mathrm{Hess}^{g_X}_f(\nabla g, \cdot)|\le |\mathrm{Hess}^{g_X}_f||\nabla g|,
\end{align}
we have $(5)$.
The final statement follows directly from $(2)$ and an argument similar to the proof of \cite[Proposition $3.25$]{ho0}.
\end{proof}
\begin{remark}\label{aaaaaaa}
Let $f, g$ be as in Theorem \ref{aarr}.
Then by (\ref{uuhu})
if $\nabla g \in L^{\infty}(T^0_1B_r(x))$, then $\nabla _{\nabla g}^{g_X}\nabla f \in L^{2}(T^0_1B_r(x))$. 
\end{remark}
Similarly by applying \cite[Theorem $1.3$]{holp} with Theorem \ref{tt} and Remark \ref{trudingerinequality}, we have the following:
\begin{theorem}\label{hessc}
Let $R>0$, let $(X_i, \upsilon_i) \stackrel{GH}{\to} (X_{\infty}, \upsilon_{\infty})$ in $\overline{M(n, K, d)}$ with $\mathrm{diam}\,X_{\infty}>0$, let $\{x_i\}_{i \le \infty}$ be a convergent sequence of $x_i \in X_i$, let $\{f_{i, j}\}_{i < \infty, j \in \{1, 2\}}$ be sequences of $f_{i, j} \in \mathcal{D}^2(\Delta^{\upsilon_i}, B_R(x_i))$ with
\[\sup_{i<\infty, j \in \{1, 2\}}\int_{B_R(x_i)}\left(|f_{i, j}|^2 +|\Delta^{\upsilon_i}f_{i, j}|^2\right)d\upsilon_i<\infty,\]
and let $f_{\infty, j}$ be the $L^2$-weak limit on $B_R(x_{\infty})$ of $\{f_{i, j}\}_i$.
Then for any $r<R$ and $j \in \{1, 2\}$, we have $f_{\infty, j} \in \mathcal{D}^2(\Delta^{\upsilon_{\infty}}, B_r(x_{\infty}))$.
Moreover the following hold:
\begin{enumerate}
\item We have the following:
\begin{enumerate}
\item If $n \ge 4$, then $f_{i, j}$ $L^{2n/(n-3)}$-converges weakly to $f_{\infty, j}$ on $B_r(x_{\infty})$.
Moreover $f_{i, j}$ $L^{p}$-converges strongly to $f_{\infty, j}$ on $B_r(x_{\infty})$ for every $1<p<2n/(n-3)$.
\item If $n=3$, then $f_{i, j}$ $L^p$-converges strongly to $f_{\infty, j}$ on $B_r(x_{\infty})$ for every $1<p<\infty$.
\item If $n=2$, then $f_{i, j}$ converges uniformly to $f_{\infty, j}$ on $B_r(x_{\infty})$.
\end{enumerate}
\item $\nabla f_{i, j}$ $L^{2n/(n-1)}$-converges weakly to $\nabla f_{\infty, j}$ on $B_r(x_{\infty})$.
\item $\nabla f_{i, j}$ $L^{p}$-converges strongly to $\nabla f_{\infty, j}$ on $B_r(x_{\infty})$ for every $1<p<2n/(n-1)$.
\item $\nabla |\nabla f_{i, j}|^2$ $L^{2n/(2n-1)}$-converges weakly to $\nabla |\nabla f_{\infty, j}|^2$ on $B_r(x_{\infty})$.
\item $\mathrm{Hess}_{f_{i, j}}^{g_{X_i}}$ $L^2$-converges weakly to $\mathrm{Hess}_{f_{\infty, j}}^{g_{X_{\infty}}}$ on $B_r(x_{\infty})$.
\item $\nabla_{\nabla f_{i, 1}}^{g_{X_i}}\nabla f_{i, 2}$ $L^{2n/(2n-1)}$-converges weakly to $\nabla_{\nabla f_{\infty, 1}}^{g_{X_{\infty}}}\nabla f_{\infty, 2}$ on $B_r(x_{\infty})$.
\end{enumerate}
\end{theorem}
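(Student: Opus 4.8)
Fix $r<R$ and $j\in\{1,2\}$. The strategy follows that of Theorem~\ref{aarr}: localize so that the a priori estimates of Proposition~\ref{998io} become available, then invoke the identification of limits from \cite[Theorem~1.3]{holp}.

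First I would establish that $f_{\infty,j}\in\mathcal D^2(\Delta^{\upsilon_\infty},B_r(x_\infty))$ together with the strong $L^2$-convergences of $f_{i,j}$ and $\nabla f_{i,j}$ and the $L^2$-weak convergence $\Delta^{\upsilon_i}f_{i,j}\to\Delta^{\upsilon_\infty}f_{\infty,j}$ on $B_r(x_\infty)$. Fix $r<s<s'<R$ and choose, as in \cite[Theorem~6.33]{ch-co}, cut-off functions $\phi_i$ with $0\le\phi_i\le 1$, $\phi_i\equiv 1$ on $B_s(x_i)$, $\mathrm{supp}\,\phi_i\subset B_{s'}(x_i)$ and $\sup_i(\mathbf{Lip}\phi_i+\|\Delta^{\upsilon_i}\phi_i\|_{L^\infty})<\infty$. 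A standard cut-off energy estimate (testing the equation for $f_{i,j}$ against $\phi_i^2 f_{i,j}$) bounds $\|\nabla f_{i,j}\|_{L^2(B_s(x_i))}$ in terms of $\|f_{i,j}\|_{L^2(B_{s'}(x_i))}$ and $\|\Delta^{\upsilon_i}f_{i,j}\|_{L^2(B_{s'}(x_i))}$, while Theorem~\ref{nju} gives $\phi_i f_{i,j}\in\mathcal D^2(\Delta^{\upsilon_i},X_i)$ with $\Delta^{\upsilon_i}(\phi_i f_{i,j})=\phi_i\Delta^{\upsilon_i}f_{i,j}-2\langle\nabla\phi_i,\nabla f_{i,j}\rangle+f_{i,j}\Delta^{\upsilon_i}\phi_i$; hence $\sup_i(\|\phi_i f_{i,j}\|_{L^2(X_i)}^2+\|\Delta^{\upsilon_i}(\phi_i f_{i,j})\|_{L^2(X_i)}^2)<\infty$. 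Applying Theorem~\ref{convlap} to $\{\phi_i f_{i,j}\}_i$ and restricting the resulting convergences to $B_s(x_\infty)$, where $\phi_i\equiv 1$, yields the claim; letting $s\uparrow R$ handles every $r<R$.

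Next I would apply the a priori estimates. By interior elliptic regularity $f_{i,j}\in H^{2,2}$ near $\overline{B}_r(x_i)$ when $(X_i,\upsilon_i)\in M(n,K,d)$, so Proposition~\ref{998io} (which passes from $C^\infty$ to this class by approximation, and whose analogues for general members of $\overline{M(n,K,d)}$ are proved in \cite{holp}) gives, uniformly in $i$, bounds for $\mathrm{Hess}^{g_{X_i}}_{f_{i,j}}$ in $L^2(B_r(x_i))$, for $\nabla f_{i,j}$ in $L^{2n/(n-1)}(B_r(x_i))$, for $\nabla|\nabla f_{i,j}|^2$ in $L^{2n/(2n-1)}(B_r(x_i))$, for $f_{i,j}$ in $L^{2n/(n-3)}(B_r(x_i))$ when $n\ge 4$, together with the Trudinger bound of Remark~\ref{trudingerinequality} when $n=3$ and the $1/2$-H\"older bound when $n=2$; by (\ref{uuhu}) and the H\"older inequality one also bounds $\nabla^{g_{X_i}}_{\nabla f_{i,1}}\nabla f_{i,2}$ in $L^{2n/(2n-1)}(B_r(x_i))$ (the exponent $2n/(2n-1)$ being conjugate to the pair $2$ and $2n/(n-1)$). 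Passing to a subsequence along which all of these converge weakly in the respective $L^q$, and combining with the strong $L^2$-convergence of $f_{i,j}$ and $\nabla f_{i,j}$ from the first step, Proposition~\ref{ssrt} and Theorem~\ref{tt} upgrade to the strong $L^p$-convergences claimed (for $p<2n/(n-3)$ when $n\ge4$, for all $p<\infty$ when $n=3$ using Remark~\ref{trudingerinequality}, and to uniform convergence when $n=2$ via Remark~\ref{equivlp}).

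The remaining, and decisive, point is to identify each weak limit with the intrinsic second-order object on $X_\infty$: that the $L^2$-weak limit of $\mathrm{Hess}^{g_{X_i}}_{f_{i,j}}$ is $\mathrm{Hess}^{g_{X_\infty}}_{f_{\infty,j}}$, that of $\nabla|\nabla f_{i,j}|^2$ is $\nabla|\nabla f_{\infty,j}|^2$, and that of $\nabla^{g_{X_i}}_{\nabla f_{i,1}}\nabla f_{i,2}$ is $\nabla^{g_{X_\infty}}_{\nabla f_{\infty,1}}\nabla f_{\infty,2}$. This is exactly what (the proof of) \cite[Theorem~1.3]{holp} provides: one tests against $\nabla^r_s h$ for harmonic maps $h$ on balls of $X_\infty$, approximates $h$ by harmonic maps on the corresponding balls of $X_i$ by Theorem~\ref{app3} and Corollary~\ref{apphar}, and passes to the limit using the harmonic rectifiable system of $(X_\infty,\upsilon_\infty)$ produced by Theorem~\ref{2n}. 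If one wishes to stay literally within the setting of \cite[Theorem~1.3]{holp} (which in the proof of Theorem~\ref{aarr} was applied with \emph{strongly} convergent Laplacians), one may first replace $\{f_{i,j}\}_i$ by the sequence furnished by Theorem~\ref{app2} with $g_i:=\Delta^{\upsilon_i}f_{i,j}$, whose difference with $\{f_{i,j}\}_i$ has $L^2$-norm and Laplacian tending to zero on $B_r(x_i)$ and hence, by the homogeneity of the estimates of Proposition~\ref{998io} in $\|\cdot\|_{L^2}^2+\|\Delta^{\upsilon_i}(\cdot)\|_{L^2}^2$, contributes nothing to any of the limits. Since $r<R$ was arbitrary and the limits are intrinsically determined, the convergences hold for the full sequences, and I expect this identification step --- not the localization or the estimates --- to be the main obstacle.
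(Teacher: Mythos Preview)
Your proposal is correct and follows essentially the same approach as the paper: the paper's proof is the one-line ``Similarly by applying \cite[Theorem~1.3]{holp} with Theorem~\ref{tt} and Remark~\ref{trudingerinequality}'', referring back to the argument for Theorem~\ref{aarr}, which is precisely the combination of localization via Theorem~\ref{app2}, the a~priori bounds of Proposition~\ref{998io}, and the identification of second-order limits from \cite[Theorem~1.3]{holp} that you outline. Your preliminary step establishing $f_{\infty,j}\in\mathcal D^2(\Delta^{\upsilon_\infty},B_r(x_\infty))$ via cut-off plus Theorem~\ref{convlap} is a slight variant of the paper's route through Theorem~\ref{app2}, but the two are equivalent and the rest of your plan matches the paper's.
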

We are now in a position to prove Theorem \ref{33766}.

\textit{Proof of Theorem \ref{33766}.}

We first prove $(1)$.

Let $x \in U$ and let $r>0$ with $B_{r}(x) \subset U$.
Then Theorem \ref{aarr} gives that $f|_{B_r(x)} \in \Gamma_2(B_r(x))$.
Since $r$ is arbitrary, we have $f \in \Gamma_2(U)$.
Moreover if $\mathrm{dim}\,X=n$, then \cite[Theorem $1.5$]{holp} and the proof of Theorem \ref{aarr} imply (\ref{34221}).
Thus we have $(1)$.

Next we prove $(2)$.

By Remark \ref{appremark} we will check that 
\begin{align}\label{wedd}
&2\int_Xg_0\mathrm{Hess}^{g_X}_f(\nabla g_1, \nabla g_2)d\upsilon \nonumber \\
&=\int_X\left(-\langle \nabla f, \nabla g_1\rangle \mathrm{div}^{\upsilon}(g_0\nabla g_2)-\langle \nabla f, \nabla g_2\rangle \mathrm{div}^{\upsilon}(g_0\nabla g_1)-g_0\left\langle \nabla f, \nabla \langle \nabla g_1, \nabla g_2\rangle \right\rangle\right)d\upsilon
\end{align}
for any $g_1, g_2 \in \mathrm{Test}F(X)$ and $g_0 \in \mathrm{LIP}(X)$.

We first prove the following.
\begin{claim}\label{first}
(\ref{wedd}) holds if $\Delta^{\upsilon}f, \Delta^{\upsilon}g_i \in \mathrm{LIP}(X)$ for every $i \in \{1, 2\}$.
\end{claim}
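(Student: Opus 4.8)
The plan is to reduce the identity (\ref{wedd}) to the defining identity of the Levi-Civita connection, using the divergence theorem repeatedly. Since $f, g_1, g_2 \in \widetilde{\mathrm{Test}}F(X)$ under the extra hypothesis (each has Lipschitz Laplacian, hence by Theorem \ref{pois} and (\ref{lipreg}) lies in $\mathrm{Test}F(X)$ and is weakly twice differentiable by Theorem \ref{aarr}), all the pointwise objects $\mathrm{Hess}^{g_X}_f$, $\nabla^{g_X}_{\nabla g_i}\nabla g_j$, etc., are honest Borel tensor fields in $L^2$ on all of $X$, and the functions $\langle \nabla g_1,\nabla g_2\rangle$, $\langle \nabla f,\nabla g_i\rangle$ are in $H^{1,2}(X)$ with controlled integrability by Theorem \ref{aarr}. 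So every integration by parts below is justified on the compact space $X$ with no boundary term. First I would expand the right-hand side of (\ref{wedd}): apply the divergence theorem to rewrite $\int_X \langle\nabla f,\nabla g_1\rangle\,\mathrm{div}^\upsilon(g_0\nabla g_2)\,d\upsilon = -\int_X g_0\langle \nabla\langle\nabla f,\nabla g_1\rangle,\nabla g_2\rangle\,d\upsilon$, and similarly for the second term. The third term stays as is. This turns the RHS into
\[
\int_X g_0\Big(\langle \nabla\langle\nabla f,\nabla g_1\rangle,\nabla g_2\rangle + \langle\nabla\langle\nabla f,\nabla g_2\rangle,\nabla g_1\rangle - \langle\nabla f,\nabla\langle\nabla g_1,\nabla g_2\rangle\rangle\Big)\,d\upsilon.
\]

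The second step is purely pointwise, carried out a.e. on $X$ with respect to the weakly second-order differential structure of $(X,\upsilon)$ from Theorem \ref{221}. Using property (6) of Theorem \ref{levi3} (metric compatibility) and Proposition \ref{hess2}, each of the three terms in the parenthesis expands via the Leibniz rule for $\nabla^{g_X}$: for instance $\langle \nabla\langle\nabla f,\nabla g_1\rangle,\nabla g_2\rangle = \nabla g_2\big(\langle\nabla f,\nabla g_1\rangle\big) = \mathrm{Hess}^{g_X}_f(\nabla g_1,\nabla g_2) + \langle\nabla^{g_X}_{\nabla g_2}\nabla g_1,\nabla f\rangle$ (recalling $\mathrm{Hess}^{g_X}_f(\cdot,\cdot) = \nabla^{g_X}df(\cdot,\cdot) = g_X(\nabla^{g_X}_\cdot\nabla f,\cdot)$ from Proposition \ref{hess2}(1)--(2)). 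Doing this for all three terms and using the symmetry of $\mathrm{Hess}^{g_X}$ (Proposition \ref{hess2}(a)) together with the torsion-free identity $\nabla^{g_X}_{\nabla g_2}\nabla g_1 - \nabla^{g_X}_{\nabla g_1}\nabla g_2 = [\nabla g_1,\nabla g_2]$ would be the route; but in fact the cleanest bookkeeping is the classical Koszul-type cancellation: the three connection terms $\langle\nabla^{g_X}_{\nabla g_2}\nabla g_1,\nabla f\rangle$, $\langle\nabla^{g_X}_{\nabla g_1}\nabla g_2,\nabla f\rangle$ appearing from the first two terms and the term $\langle\nabla f, \nabla^{g_X}_{\nabla g_1}\nabla g_2 + \nabla^{g_X}_{\nabla g_2}\nabla g_1\rangle$ arising from expanding $\nabla\langle\nabla g_1,\nabla g_2\rangle$ via metric compatibility cancel exactly, leaving $2g_0\,\mathrm{Hess}^{g_X}_f(\nabla g_1,\nabla g_2)$. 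Integrating over $X$ yields (\ref{wedd}).

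The main obstacle I anticipate is not the algebra but the justification of the integration by parts and of the pointwise Leibniz computations: one must know that $\langle\nabla f,\nabla g_i\rangle \in H^{1,2}(X)$ (so that $\nabla\langle\nabla f,\nabla g_i\rangle$ is a bona fide element of $L^2(T^*X)$ and the divergence theorem against $g_0\nabla g_j$ with $g_0 \in \mathrm{LIP}(X)$, $\nabla g_j \in \mathcal{D}^2(\mathrm{div}^\upsilon, X)$ applies), and that the various products of gradients and Hessians are integrable enough for the manipulations. All of this is supplied by Theorem \ref{aarr} (items (1),(3),(4),(5) and Remark \ref{aaaaaaa}, the latter applicable since $dg_i \in L^\infty(T^*X)$ for test functions): $\mathrm{Hess}^{g_X}_f \in L^2(T^0_2X)$, $|\nabla f|^2 \in H^{1,2n/(2n-1)}(X) \cap L^\infty$ region-wise — and since $g_1,g_2$ are test functions their gradients are bounded, upgrading the exponents so everything sits in $L^2$. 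A secondary point is that $\widetilde{\mathrm{Test}}F(X) \subset \mathrm{Test}F(X)$ (noted after (\ref{newtest}) as a consequence of Theorem \ref{pois}), which guarantees the hypothesis of the claim is non-vacuous and compatible with the ambient definitions; the general case (\ref{wedd}) will then follow from Claim \ref{first} by a density argument using the mollified heat flow (\ref{mollified}) as in Remark \ref{appremark}, but that is the job of the steps after this claim.
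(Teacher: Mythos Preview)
Your proof is correct but takes a genuinely different route from the paper. The paper proves Claim \ref{first} by \emph{smooth approximation}: it chooses a sequence $(X_i,\upsilon_i)\in M(n,K,d)$ with $(X_i,\upsilon_i)\stackrel{GH}{\to}(X,\upsilon)$ and smooth functions $f_i,g_{i,j}\in C^\infty(X_i)$ with uniform Lipschitz bounds on $f_i,\Delta f_i,g_{i,j},\Delta g_{i,j}$ (and on $g_{i,0}$), so that by Theorem \ref{hessc} all gradients, $\nabla\langle\nabla g_{i,1},\nabla g_{i,2}\rangle$, and $\mathrm{Hess}_{f_i}^{g_{X_i}}$ converge in the appropriate weak senses. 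On each smooth $X_i$ the identity (\ref{wedd}) is classical Riemannian calculus, and letting $i\to\infty$ yields (\ref{wedd}) on $X$.

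Your argument instead works \emph{intrinsically on $X$}: you integrate by parts against $\mathrm{div}^\upsilon(g_0\nabla g_j)$ (legitimate since $\langle\nabla f,\nabla g_i\rangle\in H^{1,q}(X)$ by polarization from Theorem \ref{aarr}(3), with $g_0\nabla g_j$ and its divergence in $L^\infty$ under the Lipschitz-Laplacian hypothesis), and then carry out the Koszul cancellation pointwise a.e.\ using the metric compatibility of $\nabla^{g_X}$ (Theorem \ref{levi3}(6)) and the symmetry of $\mathrm{Hess}^{g_X}$ (Proposition \ref{hess2}(a)). This is valid and more direct: it shows (\ref{wedd}) is a formal consequence of having a Levi-Civita connection on the weakly second-order structure, without invoking any Gromov--Hausdorff limit. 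The paper's route, by contrast, is the template it reuses throughout the article (reduce to the smooth case, pass to the limit), and is what makes Theorem \ref{hessc} earn its keep; your route bypasses that machinery entirely for this particular claim.
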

The proof is as follows.
By Theorem \ref{hessc} and an argument similar to the proof of Theorem \ref{app6} there exist a sequence $\{(X_i, \upsilon_i)\}_i$ of $(X_i, \upsilon_i) \in M(n, K, d)$ and sequences $\{g_{i, j}, f_i\}_{i < \infty, j \in \{0, 1, 2\}}$ of $g_{i, j}, f_i \in C^{\infty}(X_i)$ such that the following hold:
\begin{itemize}
\item $(X_i, \upsilon_i) \stackrel{GH}{\to} (X, \upsilon)$.
\item $\sup_{i<\infty, j \in \{1, 2\}}\left(\mathbf{Lip}g_{i, j}+\mathbf{Lip}\Delta g_{i, j} + \mathbf{Lip}f_i + \mathbf{Lip}\Delta f_i+\mathbf{Lip}g_{i, 0}\right)<\infty$.
\item $g_{i, j}, \nabla g_{i, j}, f_i, \nabla f_i$ $L^2$-converge strongly to $g_{j}, \nabla g_{j}, f, \nabla f$ on $X$, respectively for every $j \in \{0, 1, 2\}$.
\item $\nabla \langle \nabla g_{i, 1}, \nabla g_{i, 2}\rangle$ $L^{2n/(2n-1)}$-converges weakly to $\nabla \langle \nabla g_{1}, \nabla g_{2}\rangle$ on $X$.
\item $\mathrm{Hess}^{g_{X_i}}_{f_i}$ $L^2$-converges weakly to $\mathrm{Hess}_{f}^{g_{X}}$ on $X$.
\end{itemize}
Then since
\begin{align*}
&2\int_{X_i}g_{i, 0}\mathrm{Hess}^{g_{X_i}}_{f_i}(\nabla g_{i, 1}, \nabla g_{i, 2})d\upsilon_i \\
&=\int_{X_i}\left(-\langle \nabla f_i, \nabla g_{i, 1}\rangle \mathrm{div}^{\upsilon_i}(g_{i, 0}\nabla g_{i, 2})-\langle \nabla f_i, \nabla g_{i, 2}\rangle \mathrm{div}^{\upsilon_i}(g_{i, 0}\nabla g_{i, 1})-g_{i, 0}\left\langle \nabla f_i, \nabla \langle \nabla g_{i, 1}, \nabla g_{i, 2}\rangle \right\rangle \right)d\upsilon_i
\end{align*}
for every $i<\infty$ by letting $i \to \infty$ we have Claim \ref{first}.
\begin{claim}\label{thrr}
(\ref{wedd}) holds if $\Delta^{\upsilon}f \in \mathrm{LIP}(X)$.
\end{claim}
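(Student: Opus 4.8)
The plan is to deduce Claim \ref{thrr} from Claim \ref{first} by approximating $g_1$ and $g_2$ by test functions whose Laplacians are Lipschitz, keeping $f$ fixed. First I record that by the hypothesis $\Delta^{\upsilon}f \in \mathrm{LIP}(X)$ and Theorem \ref{pois} (together with (\ref{lipreg})) one has $f \in \widetilde{\mathrm{Test}}F(X) \subset \mathrm{Test}F(X)$, so in particular $\nabla f \in L^{\infty}(TX)$, and $\mathrm{Hess}_f^{g_X} \in L^2(T^0_2X)$ by Theorem \ref{aarr}. For $j \in \{1, 2\}$, since $g_j \in \mathrm{Test}F(X)$ we have $\Delta^{\upsilon}g_j \in H^{1, 2}(X)$; I choose $G_{j, m} \in \mathrm{LIP}(X)$ with $\int_X G_{j, m}\,d\upsilon = 0$ and $G_{j, m} \to \Delta^{\upsilon}g_j$ in $H^{1, 2}(X)$, and set $g_{j, m} := (\Delta^{\upsilon})^{-1}G_{j, m}$. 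By Theorem \ref{pois} and (\ref{lipreg}), $g_{j, m} \in \widetilde{\mathrm{Test}}F(X)$, $\sup_m \mathbf{Lip}\,g_{j, m} < \infty$, and $g_{j, m} \to g_j$ in $H^{1, 2}(X)$; thus $\nabla g_{j, m}$ is bounded in $L^{\infty}(TX)$ and converges to $\nabla g_j$ in $L^2(TX)$, hence, along a subsequence, $\upsilon$-a.e. Applying Claim \ref{first} to $f, g_{1, m}, g_{2, m}$ (legitimate since $\Delta^{\upsilon}f, G_{1, m}, G_{2, m} \in \mathrm{LIP}(X)$), it remains to pass to the limit $m \to \infty$ in the resulting identity; since (\ref{wedd}) for $f, g_1, g_2, g_0$ involves no $m$, working along a subsequence is harmless.

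For the left-hand side of the identity of Claim \ref{first}, $g_0\,\nabla g_{1, m}\otimes \nabla g_{2, m} \to g_0\,\nabla g_1 \otimes \nabla g_2$ in $L^2(T^0_2X)$ by dominated convergence ($X$ compact, factors uniformly bounded in $L^{\infty}$ and convergent a.e.), so pairing against the fixed $\mathrm{Hess}_f^{g_X} \in L^2(T^0_2X)$ passes to the limit. For the first two terms on the right-hand side, $\langle \nabla f, \nabla g_{j, m}\rangle$ is bounded in $L^{\infty}$ and converges to $\langle \nabla f, \nabla g_j\rangle$ in $L^2$, while $\mathrm{div}^{\upsilon}(g_0\nabla g_{j, m}) = \langle \nabla g_0, \nabla g_{j, m}\rangle - g_0\,\Delta^{\upsilon}g_{j, m}$ converges to $\mathrm{div}^{\upsilon}(g_0\nabla g_j)$ in $L^2$ (using $\nabla g_0 \in L^{\infty}$ and $\Delta^{\upsilon}g_{j, m} = G_{j, m} \to \Delta^{\upsilon}g_j$ in $L^2$); so each of these two products converges in $L^1(X)$.

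The main obstacle is the third right-hand term $\int_X g_0\langle \nabla f, \nabla \langle \nabla g_{1, m}, \nabla g_{2, m}\rangle\rangle\,d\upsilon$, where a derivative falls on the approximating data. Here I would use that, by Theorem \ref{aarr} and Proposition \ref{998io} (covering $X$ by finitely many balls, noting $B_r(x)=X$ for $r>\mathrm{diam}\,X$), each of $|\nabla g_{1, m}|^2$, $|\nabla g_{2, m}|^2$ and $|\nabla(g_{1, m} - g_{2, m})|^2$ lies in $H^{1, 2n/(2n-1)}(X)$ with norm bounded uniformly in $m$ (the bound depends only on $\sup_m \int_X(|g_{j, m}|^2 + |G_{j, m}|^2)\,d\upsilon < \infty$ and on $n, K, d$); by polarization $h_m := \langle \nabla g_{1, m}, \nabla g_{2, m}\rangle$ is bounded in $H^{1, 2n/(2n-1)}(X)$. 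Since $h_m \to \langle \nabla g_1, \nabla g_2\rangle =: h$ in $L^1(X)$ (again a.e. convergence plus uniform $L^{\infty}$-bounds) and $H^{1, 2n/(2n-1)}(X)$ is reflexive, $h \in H^{1, 2n/(2n-1)}(X)$ and $\nabla h_m \to \nabla h$ weakly in $L^{2n/(2n-1)}(T^*X)$; pairing against $g_0\nabla f \in L^{\infty}(T^*X) \subset L^{2n}(T^*X)$ — which is exactly where the hypothesis $\Delta^{\upsilon}f \in \mathrm{LIP}(X)$, hence $\nabla f \in L^{\infty}$, is essential, since only $\nabla f \in L^{2n/(n-1)}$ would not suffice — shows the third term converges to $\int_X g_0\langle \nabla f, \nabla h\rangle\,d\upsilon$. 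Collecting these limits yields (\ref{wedd}) for $f, g_1, g_2, g_0$, which is Claim \ref{thrr}.
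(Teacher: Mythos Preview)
Your strategy matches the paper's: approximate $g_1,g_2$ by functions with Lipschitz Laplacian, apply Claim~\ref{first}, and pass to the limit. The gap is the assertion that $\sup_m \mathbf{Lip}\,g_{j,m}<\infty$ follows from Theorem~\ref{pois} and (\ref{lipreg}). The estimate (\ref{lipreg}) bounds $\|\nabla(\Delta^{\upsilon})^{-1}G\|_{L^\infty}$ by a constant depending on $\|G\|_{L^\infty}$; it yields a \emph{uniform} Lipschitz bound on $g_{j,m}=(\Delta^{\upsilon})^{-1}G_{j,m}$ only if $\sup_m\|G_{j,m}\|_{L^\infty}<\infty$. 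You arranged only $G_{j,m}\to\Delta^{\upsilon}g_j$ in $H^{1,2}(X)$, and for $g_j\in\mathrm{Test}F(X)$ one has merely $\Delta^{\upsilon}g_j\in H^{1,2}(X)$, in general not $L^\infty(X)$, so no such uniform $L^\infty$ bound on the $G_{j,m}$ is available. Without $\sup_m\|\nabla g_{j,m}\|_{L^\infty}<\infty$ your dominated-convergence argument for the left-hand side breaks: from Theorem~\ref{aarr} you only get $\nabla g_{j,m}$ bounded in $L^{2n/(n-1)}$, hence $\nabla g_{1,m}\otimes\nabla g_{2,m}$ bounded in $L^{n/(n-1)}$, which for $n\ge 3$ is strictly below $L^2$ and does not suffice to pair against $\mathrm{Hess}_f^{g_X}\in L^2$.

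The paper avoids this by using two independent indices and sending $i_1\to\infty$ \emph{then} $i_2\to\infty$. With $i_2$ fixed, $G_{i_2,2}\in\mathrm{LIP}(X)\subset L^\infty(X)$, so (\ref{lipreg}) gives $\nabla g_{i_2,2}\in L^\infty$ and $\nabla g_{i_1,1}\otimes\nabla g_{i_2,2}\to\nabla g_1\otimes\nabla g_{i_2,2}$ in $L^2$; then $g_1\in\mathrm{Test}F(X)$ gives $\nabla g_1\in L^\infty$, and one lets $i_2\to\infty$ the same way. Your handling of the third right-hand term via uniform $H^{1,2n/(2n-1)}$-bounds, polarization, and weak compactness is correct and makes explicit what the paper leaves implicit; the only repair needed is either to pass the two limits sequentially as the paper does, or to replace your construction of $g_{j,m}$ by one (e.g.\ the heat-flow regularization of Proposition~\ref{approx}) that genuinely delivers $\sup_m\mathbf{Lip}\,g_{j,m}<\infty$.
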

The proof is as follows.
For every $j \in \{1, 2\}$, let $\{G_{i, j}\}_{i<\infty}$ be a sequence of $G_{i, j} \in \mathrm{LIP}(X)$ with $G_{i, j} \to \Delta^{\upsilon}g_j$ in $H^{1, 2}(X)$ and 
\[\int_XG_{i, j}d\upsilon=0.\]
Put $g_{i, j}:=(\Delta^{\upsilon})^{-1}G_{i, j}$.
Theorem \ref{pois} yields that $g_{i, j} \in \mathrm{Test}F(X)$ and that $g_{i, j}, \nabla g_{i, j}$ $L^2$-converge strongly to $g_j, \nabla g_j$ on $X$, respectively.
By Claim \ref{first}, since
\begin{align*}
&2\int_{X}g_{0}\mathrm{Hess}^{g_{X}}_{f}(\nabla g_{i_1, 1}, \nabla g_{i_2, 2})d\upsilon \\
&=\int_{X}\left(-\langle \nabla f, \nabla g_{i_1, 1}\rangle \mathrm{div}^{\upsilon}(g_{0}\nabla g_{i_2, 2})-\langle \nabla f, \nabla g_{i_2, 2}\rangle \mathrm{div}^{\upsilon}(g_{0}\nabla g_{i_1, 1})-g_{0}\left\langle \nabla f, \nabla \langle \nabla g_{i_1, 1}, \nabla g_{i_2, 2}\rangle \right\rangle \right)d\upsilon
\end{align*}
for any $i_1, i_2 \in \mathbf{N}$, by letting $i_1 \to \infty$ and $i_2 \to \infty$, we have Claim \ref{thrr}.

We turn to finishing the proof of $(2)$.
For every $f \in \mathcal{D}^2(\Delta^{\upsilon}, X)$ there exists a sequence $\{F_i\}_i$ in $\mathrm{LIP}(X)$ such that $F_i \to \Delta^{\upsilon}f$ in $L^2(X)$ and that
\[\int_XF_id\upsilon=0\]
for every $i$.

Let $f_i:= (\Delta^{\upsilon})^{-1}F_i$. 
Note that Theorem \ref{hessc} yields that $\mathrm{Hess}_{f_i}^{g_{X}} \to \mathrm{Hess}_{f}^{g_X}$ in $L^2(T^0_2X)$ and that $\nabla f_i \to \nabla f$ in $L^2(TX)$.

Claim \ref{thrr} gives that (\ref{wedd}) holds for $f=f_i$ and every $i<\infty$.
Since $g_j \in \mathrm{LIP}(X)$ and $\nabla \langle \nabla g_1, \nabla g_2 \rangle \in L^2(TX)$ by letting $i \to \infty$ in (\ref{wedd}) for $f=f_i$ we have $(2)$.

Finally we prove $(3)$.

Since Gigli proved in \cite[Proposition $3.3.18$]{gigli} that $H^{2, 2}(X)$ is the closure of $\mathcal{D}^2(\Delta^{\upsilon}, X)$ in $W^{2, 2}(X)$, it suffices to check that $\mathcal{D}^2(\Delta^{\upsilon}, X)$ is closed in $W^{2, 2}(X)$. 

Let $\{f_i\}_{i<\infty}$ be a Cauchy sequence in $\mathcal{D}^2(\Delta^{\upsilon}, X)$ with respect to the $W^{2, 2}$-norm.
By $(1)$ and $(2)$ we have $\Delta^{\upsilon}f_i=\Delta^{g_{X}}f_i$.
In particular we have $\sup_i||\Delta^{\upsilon}f_i||_{L^2}<\infty$.
Thus Theorem \ref{hessc} gives that there exists $f_{\infty} \in \mathcal{D}^2(\Delta^{\upsilon}, X)$ such that
$f_i \to f_{\infty}$ in $H^{1, 2}(X)$ and that $\mathrm{Hess}_{f_i}^{g_X}$ $L^2$-converges weakly to $\mathrm{Hess}_{f_{\infty}}^{g_{X}}$ on $X$.
Since $\{\mathrm{Hess}_{f_{i}}^{g_{X}}\}_{i<\infty}$ is a Cauchy sequence in $L^2(T^2_0X)$ we see that  
$\mathrm{Hess}_{f_i}^{g_X}$ $L^2$-converges strongly to $\mathrm{Hess}_{f_{\infty}}^{g_{X}}$ on $X$.
This completes the proof. $\,\,\,\,\,\,\,\,\,\,\,\,\,\,\,\,\,\,\Box$
\begin{remark}
Note that we can also give an alternative proof of $(2)$ of Theorem \ref{33766} by using an approximation given in Proposition \ref{approx}.
See for instance Theorems \ref{techni2} and \ref{198183}. 
\end{remark}
We end this section by giving the following three remarks.
\begin{remark}
By (the proof of) Theorem \ref{33766} and the Bochner formula, we can easily check that $\mathcal{D}^2(\Delta^{\upsilon}, X)$ is a Hilbert space equipped with the norm
\[||f||_{\mathcal{D}^2}:=\left( ||f||_{L^2}^2+||\delta^{\upsilon} f||_{L^2}^2\right)^{1/2}\]
and that 
\begin{align}\label{2wkk}
||f||_{W^{2, 2}_C}\le C(n, K, d)||f||_{\mathcal{D}^2}
\end{align}
for every $f \in \mathcal{D}^2(\Delta^{\upsilon}, X)$. 
The inequality (\ref{2wkk}) was already proven in \cite{gigli} on a $RCD$-space.
This argument gives an altenative proof on our setting via the smooth approximation.
\end{remark}
\begin{remark}\label{dsds}
By Theorem \ref{33766} we have 
\[[\nabla f_{\infty}, \nabla g_{\infty}]^{\upsilon_{\infty}}=\nabla^{\upsilon_{\infty}}_{\nabla f_{\infty}}\nabla g_{\infty}- \nabla^{\upsilon_{\infty}}_{\nabla g_{\infty}}\nabla f_{\infty} =\nabla^{g_{X_{\infty}}}_{\nabla f_{\infty}}\nabla g_{\infty}- \nabla^{g_{X_{\infty}}}_{\nabla g_{\infty}}\nabla f_{\infty} =[\nabla f_{\infty}, \nabla g_{\infty}]\]
for any $f_{\infty}, g_{\infty} \in \mathrm{Test}F(X_{\infty})$.
In particular we have 
\begin{align*}
[\alpha_{\infty}, \beta_{\infty}]^{\upsilon_{\infty}}=[\alpha_{\infty}, \beta_{\infty}]
\end{align*}
for any $\alpha_{\infty}, \beta_{\infty} \in \mathrm{Text}T^1_0X_{\infty}$.
\end{remark}
\begin{remark}
We give a typical example of non-$C^2$-functions satisfying (\ref{34221}).

Define $f \in \Gamma_2((-1, 1))$ by
\[f(t):= \begin{cases}
-t^2/2 & (t \ge 0)\\
0 & otherwise.
\end{cases}
\]
Then for every $g \in \mathrm{LIP}_c((-1, 1))$,
integration by parts gives
\begin{align*}
\int_{-1}^1\langle \nabla f, \nabla g \rangle dt=\int_{0}^1-t\frac{dg}{dt}dt=\int_0^1gdt=\int_{-1}^11_{[0,1]}gdt.
\end{align*}
This implies $f \in \mathcal{D}^2(\Delta^{H^1}, (-1, 1))$ and 
\[\Delta^{H^1}f=1_{[0, 1]}=-\mathrm{tr}(\mathrm{Hess}_f^{g_{\mathbf{R}}})=\Delta^{g_{\mathbf{R}}}f.\]
\end{remark}
\section{Schr$\ddot{\text{o}}$dinger operators and generalized Yamabe constants}
This section is devoted to the proofs of the results stated in subsections $1.2$ and $1.3$.
\subsection{Schr$\ddot{\text{o}}$dinger operators} 
In order to prove Theorem \ref{schro} we first discuss the discreteness of the spectrum of a Schr$\ddot{\text{o}}$dinger operator.
The following is standard, however we give a proof for convenience.
\begin{proposition}\label{discrete}
Let $q>2$, let $(X, \upsilon) \in \overline{M(n, K, d)}$ with $\mathrm{diam}\,X>0$ and let $g \in L^q(X)$.
Assume that $(X, \upsilon)$ satisfies the $(2q/(q-2), 2)$-Sobolev inequality on $X$ for some $(A, B)$.
Then the spectrum of $\Delta^{\upsilon}+g$ is discrete and it is bounded below.
\end{proposition}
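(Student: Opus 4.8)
The plan is to realize $\Delta^{\upsilon}+g$ as the self-adjoint operator associated with the quadratic form
\[Q(f):=\int_X\left(|df|^2+g|f|^2\right)d\upsilon,\qquad f\in H^{1,2}(X),\]
and to show that $Q$ is densely defined, symmetric, semibounded and closed, with form domain $H^{1,2}(X)$ compactly embedded in $L^2(X)$. Discreteness of the spectrum and its semiboundedness then follow from standard spectral theory.

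First I would show that the potential term is a small form perturbation of the Dirichlet energy. Since $2<2q/(q-1)<2q/(q-2)$ (this is exactly where $q>2$ is used), the interpolation inequality for $L^p$-norms gives $\|f\|_{L^{2q/(q-1)}}\le \|f\|_{L^2}^{1/2}\|f\|_{L^{2q/(q-2)}}^{1/2}$, hence by Young's inequality, for every $\epsilon>0$ there is $C(\epsilon)>0$ with $\|f\|_{L^{2q/(q-1)}}^2\le \epsilon\|f\|_{L^{2q/(q-2)}}^2+C(\epsilon)\|f\|_{L^2}^2$. Combining this with H\"older's inequality $\int_X|g||f|^2d\upsilon\le \|g\|_{L^q}\|f\|_{L^{2q/(q-1)}}^2$ and the assumed $(2q/(q-2),2)$-Sobolev inequality $\|f\|_{L^{2q/(q-2)}}^2\le A\|df\|_{L^2}^2+B\|f\|_{L^2}^2$, and then choosing $\epsilon$ so that $\epsilon\|g\|_{L^q}A\le \tfrac12$, one obtains a constant $\Lambda$ (depending on $q,A,B$ and $\|g\|_{L^q}$) with
\[\int_X|g||f|^2d\upsilon\le \tfrac12\|df\|_{L^2}^2+\Lambda\|f\|_{L^2}^2\qquad\text{for all }f\in H^{1,2}(X).\]
Consequently $Q(f)\ge \tfrac12\|df\|_{L^2}^2-\Lambda\|f\|_{L^2}^2\ge -\Lambda\|f\|_{L^2}^2$, so $Q$ is bounded below, and moreover $\tfrac12\|f\|_{H^{1,2}}^2\le Q(f)+(\Lambda+1)\|f\|_{L^2}^2\le C\|f\|_{H^{1,2}}^2$, i.e. the form norm of $Q$ is equivalent to the $H^{1,2}$-norm.

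Since $\mathrm{LIP}(X)\subset H^{1,2}(X)$ is dense in $L^2(X)$, the form $Q$ is densely defined; it is clearly symmetric; and by the norm equivalence just established together with the completeness of $H^{1,2}(X)$ (Theorem \ref{fundpr}), $Q$ is closed. Hence $Q$ defines a unique self-adjoint, semibounded operator on $L^2(X)$, which is $\Delta^{\upsilon}+g$, and its spectrum is bounded below. Finally, the form domain is $H^{1,2}(X)$ with a norm equivalent to the $H^{1,2}$-norm, and the embedding $H^{1,2}(X)\hookrightarrow L^2(X)$ is compact: this follows from Theorem \ref{srell} applied with $(X_i,\upsilon_i)\equiv(X,\upsilon)$ (or from the doubling property of $\upsilon$ together with the Poincar\'e inequality of Theorem \ref{sobo}). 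A self-adjoint operator that is bounded below and whose form domain embeds compactly in the ambient Hilbert space has compact resolvent, hence discrete spectrum.

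I do not expect a genuine obstacle; the one point requiring care is the interpolation/Sobolev bootstrap in the second paragraph, showing that the relative form bound of the potential $g$ can be taken strictly less than one (in fact arbitrarily small). Everything else is formal once the form-domain compactness is invoked.
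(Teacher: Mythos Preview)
Your proof is correct and follows essentially the same approach as the paper: both establish that the potential term satisfies an arbitrarily small relative form bound with respect to the Dirichlet energy (the paper states this estimate and refers to \cite{acm2}, while you spell out the interpolation argument explicitly), and both then invoke standard spectral theory for semibounded forms with compactly embedded form domain (the paper cites subsection~8.2 of \cite{gt}, while you name the ingredients directly). Your version is simply more detailed than the paper's.
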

\begin{proof}
Let $\mathcal{L}$ be the bilinear form on $H^{1, 2}(X)$ defined by
\[\mathcal{L}(u, v):=\int_X\left(\langle \nabla u, \nabla v \rangle +guv\right)d\upsilon.\]
By the Sobolev inequality it is not difficult to check that for every $\epsilon>0$ there exists $C>0$ such that 
\[\int_{X}|g|u^2d\upsilon \le \epsilon \int_X|\nabla u|^2d\upsilon+C\int_{X}|u|^2d\upsilon\]
for every $u \in H^{1, 2}(X)$ (c.f. \cite[Remark $4.2$]{acm2}).
Thus we have
\begin{align}\label{gggn}
\mathcal{L}(u, u) &=\int_X\left(|\nabla u|^2+gu^2\right)d\upsilon \nonumber \\
& \ge (1-\epsilon)\int_X|\nabla u|^2d\upsilon-C\int_X|u|^2d\upsilon
\end{align}
for every $u \in H^{1, 2}(X)$.
Then by an argument similar to that in subsection $8.2$ of \cite{gt} with (\ref{gggn}), we have the assertion.
\end{proof}
Next we discuss the upper semicontinuity of eigenvalues of Schr$\ddot{\text{o}}$dinger operators.
\begin{theorem}\label{hbv}
Let $\{q_i\}_{i \le \infty}$ be a convergent sequence in $(2, \infty)$, let $(X_i, \upsilon_i) \stackrel{GH}{\to} (X_{\infty}, \upsilon_{\infty})$ in $\overline{M(n, K, d)}$ with $\mathrm{diam}\,X_{\infty}>0$, and let $\{g_i\}_{i \le \infty}$ be an $\{L^{q_i/2}\}_i$-weak convergent sequence on $X_{\infty}$ of $g_i \in L^{q_i/2}(X_i)$.
Assume that there exist $A, B>0$ such that for every $i \le \infty$, $(X_i, \upsilon_i)$ satisfies the $(2q_i/(q_i-2), 2)$-Sobolev inequality on $X_i$ for $(A, B)$. 
Then for every $k \ge 0$ we have
\[\limsup_{i \to \infty}\lambda^{g_i}_k(X_i)\le \lambda_k^{g_{\infty}}(X_{\infty}).\]
\end{theorem}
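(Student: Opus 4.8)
The plan is to use the variational (min--max) description of the eigenvalues: one builds near--optimal $(k+1)$--dimensional subspaces on $X_i$ by transplanting, via $\cite[\text{Theorem }4.2]{holip}$, a near--optimal $(k+1)$--dimensional subspace of $H^{1,2}(X_{\infty})$ spanned by Lipschitz functions, and then one checks that the relevant Rayleigh quotients pass to the limit.

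First I would fix the variational framework. For each $i\le\infty$ the Sobolev inequality for $(A,B)$ gives, via the estimate $\int_{X_i}|g_i|f^2\,d\upsilon_i\le \|g_i\|_{L^{q_i/2}}\|f\|_{L^{2q_i/(q_i-2)}}^2\le \|g_i\|_{L^{q_i/2}}\bigl(A\|\nabla f\|_{L^2}^2+B\|f\|_{L^2}^2\bigr)$, that the quadratic form $\mathcal{L}_i(f,f):=\int_{X_i}\bigl(|\nabla f|^2+g_if^2\bigr)\,d\upsilon_i$ is continuous on $H^{1,2}(X_i)$, and by Proposition \ref{discrete} (inequality (\ref{gggn})) it is closed and bounded below with form domain $H^{1,2}(X_i)$ and compact resolvent. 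Hence by the Courant--Fischer theorem
\[
\lambda_k^{g_i}(X_i)=\inf_{V}\ \sup_{f\in V\setminus\{0\}}\frac{\mathcal{L}_i(f,f)}{\|f\|_{L^2(X_i)}^2},
\]
where $V$ runs over all $(k+1)$--dimensional subspaces of $H^{1,2}(X_i)$. Since Lipschitz functions are dense in $H^{1,2}(X_{\infty})$ (Theorem \ref{fundpr}) and $\mathcal{L}_{\infty}$, $\|\cdot\|_{L^2}$ are continuous on $H^{1,2}(X_{\infty})$, a routine perturbation argument produces, for every $\epsilon>0$, a $(k+1)$--dimensional subspace $V_{\infty}\subset\mathrm{LIP}(X_{\infty})$ spanned by $\phi_{\infty,0},\dots,\phi_{\infty,k}$ with $\sup_{f\in V_{\infty}\setminus\{0\}}\mathcal{L}_{\infty}(f,f)/\|f\|_{L^2}^2\le \lambda_k^{g_{\infty}}(X_{\infty})+\epsilon$.

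Next I would transplant $V_{\infty}$. By $\cite[\text{Theorem }4.2]{holip}$ (used exactly as in the proof of Theorem \ref{app6}), after passing to a subsequence I may assume there are $\phi_{i,j}\in\mathrm{LIP}(X_i)$, $0\le j\le k$, with $\sup_i\mathbf{Lip}\phi_{i,j}<\infty$, with $\phi_{i,j}$ converging uniformly to $\phi_{\infty,j}$, and with $\nabla\phi_{i,j}$ converging $L^2$--strongly to $\nabla\phi_{\infty,j}$ on $X_{\infty}$. Put $V_i:=\mathrm{span}\{\phi_{i,0},\dots,\phi_{i,k}\}$ and form the matrices $N_i:=\bigl(\int_{X_i}\phi_{i,j}\phi_{i,l}\,d\upsilon_i\bigr)_{j,l}$ and $M_i:=\bigl(\int_{X_i}\langle\nabla\phi_{i,j},\nabla\phi_{i,l}\rangle\,d\upsilon_i+\int_{X_i}g_i\phi_{i,j}\phi_{i,l}\,d\upsilon_i\bigr)_{j,l}$. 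I claim $N_i\to N_{\infty}$ and $M_i\to M_{\infty}$: the Gram terms and the gradient terms converge by $L^2$--strong convergence, while for $\int_{X_i}g_i\phi_{i,j}\phi_{i,l}\,d\upsilon_i$ I use that $\phi_{i,j}\phi_{i,l}$ converges $L^{q_i/(q_i-2)}$--strongly to $\phi_{\infty,j}\phi_{\infty,l}$ (being uniformly bounded and uniformly convergent, cf. Remark \ref{equivlp}) together with $g_i\to g_{\infty}$ $\{L^{q_i/2}\}_i$--weakly and the convergence of integrals of products of an $\{L^{p_i}\}_i$--weakly convergent sequence against an $\{L^{p_i'}\}_i$--strongly convergent one when $1/p_i+1/p_i'=1$ \cite{holp}; note $1/(q_i/2)+1/(q_i/(q_i-2))=1$. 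Since $N_{\infty}$ is positive definite, for large $i$ the $\phi_{i,j}$ are linearly independent, $\dim V_i=k+1$, and
\[
\sup_{f\in V_i\setminus\{0\}}\frac{\mathcal{L}_i(f,f)}{\|f\|_{L^2(X_i)}^2}=\max_{a\in\mathbf{R}^{k+1}\setminus\{0\}}\frac{a^{T}M_ia}{a^{T}N_ia}\ \longrightarrow\ \max_{a}\frac{a^{T}M_{\infty}a}{a^{T}N_{\infty}a}=\sup_{f\in V_{\infty}\setminus\{0\}}\frac{\mathcal{L}_{\infty}(f,f)}{\|f\|_{L^2}^2}\le \lambda_k^{g_{\infty}}(X_{\infty})+\epsilon
\]
as $i\to\infty$, by continuity of the maximal generalized eigenvalue of the pencil $(M_i,N_i)$. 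The min--max description then yields $\limsup_{i\to\infty}\lambda_k^{g_i}(X_i)\le \lambda_k^{g_{\infty}}(X_{\infty})+\epsilon$; since this holds along a further subsequence of every subsequence, it holds along the whole sequence, and letting $\epsilon\to 0$ completes the proof.

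The main obstacle is precisely the term $\int_{X_i}g_i\phi_{i,j}\phi_{i,l}\,d\upsilon_i$: the potentials converge only $L^{q_i/2}$--weakly, so one must choose the transplanted test functions so that their products converge \emph{strongly} in the conjugate exponent $L^{q_i/(q_i-2)}$, which is exactly where the uniform Sobolev inequality, the uniform Lipschitz bound on the $\phi_{i,j}$, and the bookkeeping with the varying exponents $q_i$ are used. The remaining ingredients --- closedness and lower boundedness of the forms, continuity of $\mathcal{L}_i$ on $H^{1,2}$, convergence of the Gram and gradient matrices, and continuity of generalized eigenvalues --- are routine.
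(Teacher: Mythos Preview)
Your proof is correct and follows the same min--max strategy as the paper: choose a near--optimal $(k+1)$--dimensional subspace on $X_{\infty}$, transplant it to $X_i$ via \cite[Theorem 4.2]{holip}, and check that the Gram and Rayleigh matrices converge. The one substantive difference lies in the treatment of the potential term $\int_{X_i} g_i\,\phi_{i,j}\phi_{i,l}\,d\upsilon_i$: the paper invokes Theorem \ref{sobo3} (which rests on the uniform Sobolev inequality) to upgrade the $L^2$--strong convergence of the transplanted basis to $\{L^{2q_i/(q_i-2)}\}_i$--strong convergence, and then deduces $\{L^{q_i/(q_i-2)}\}_i$--strong convergence of the products; you instead exploit the uniform Lipschitz bound furnished by \cite[Theorem 4.2]{holip} together with Remark \ref{equivlp} to obtain $L^p$--strong convergence of the products for every $p$ directly. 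Your route is slightly more elementary at that point and in fact shows that the Sobolev hypothesis is used here only to guarantee discreteness of the spectrum (Proposition \ref{discrete}), not for the convergence step itself; the paper's route via Theorem \ref{sobo3} has the complementary virtue that it would still work if the limit basis were taken in $H^{1,2}(X_{\infty})$ without first passing to Lipschitz representatives.
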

\begin{proof}
By min-max principle we have
\[\lambda_k^{g_i}(X_i)=\inf_{E_k}\left(\sup_{u \in E_k \setminus \{0\}}R^{g_i}(u)\right),\]
where $E_k$ runs over all $(k+1)$-dimensional subspaces of $H^{1, 2}(X_i)$ and 
\[R^{g_i}(u):=\frac{\int_{X_i}\left(|du|^2+g_i|u|^2\right)d\upsilon_i}{\int_{X_i}|u|^2d\upsilon_i}.\]
Let $\epsilon>0$ and let $E_{\infty, k}$ be a $(k+1)$-dimensional space of $H^{1, 2}(X_{\infty})$ with
\[\lambda_k^{g_{\infty}}(X_{\infty})=\sup_{u \in E_{\infty, k}}R^{g_{\infty}}(u) \pm \epsilon.\]
Fix an $L^2$-orthogonal basis $f_{\infty, 1}, \ldots, f_{\infty, k+1}$ of $E_{\infty, k}$.
By \cite[Theorem $4.2$]{holip} without loss of generality we can assume that there exist sequences $\{f_{i, l}\}_{i<\infty, l \le k+1}$ of $f_{i, l} \in \mathrm{LIP}(X_i)$ such that $f_{i, l}, \nabla f_{i, l}$ $L^2$-converge strongly to $f_{\infty, l}, \nabla f_{\infty, l}$ on $X_{\infty}$, respectively.
Thus Theorem \ref{sobo3} gives that $f_{i, l}$ $\{L^{2q_i/(q_i-2)}\}_i$-converges strongly to $f_{\infty, l}$ on $X_{\infty}$.
In particular since $f_{i, l}f_{i, m}$ $\{L^{q_i/(q_i-2)}\}_i$-converges strongly to $f_{\infty, l}f_{\infty, m}$ on $X_{\infty}$ we have
\begin{align}\label{fbv}
\lim_{i \to \infty}\int_{X_i}g_if_{i, l}f_{i, m}d\upsilon_i=\int_{X_{\infty}}g_{\infty}f_{\infty, l}f_{\infty, m}d\upsilon_{\infty}.
\end{align} 
Let $E_{i, k}:= \mathrm{span}\{f_{i, l}\}_l$.
Note that $\mathrm{dim}\,E_{i, k}=k+1$ for every sufficiently large $i$.
By (\ref{fbv}) since it is easy to check that
\begin{align*}
\lim_{i \to \infty}\sup_{u \in E_{i, k} \setminus \{0\}}R^{g_i}(u)=\sup_{u \in E_{\infty, k} \setminus \{0\}}R^{g_{\infty}}(u),
\end{align*}
we have 
\[\limsup_{i \to \infty}\lambda_k^{g_i}(X_i)\le \lambda_k^{g_{\infty}}(X_{\infty})+\epsilon.\]
Thus by letting $\epsilon \to 0$ we have the assertion.
\end{proof}
We are now in a position to prove Theorem \ref{schro}:

\textit{Proof of Theorem \ref{schro}.}

Since $(1)$ follows directly from Theorem \ref{stasob}, we will check ($2$) by induction for $k$, i.e., it suffices to check that the following $(\star k)$ holds for every $k \ge 0$.
\begin{enumerate}
\item[$(\star k)$] For every $0 \le l \le k$, we see that
\[\lim_{i \to \infty}\lambda_{l}^{g_i}(X_i)=\lambda_{l}^{g_{\infty}}(X_{\infty})\]
and that for every sequence $\{f_{i, l}\}_{i<\infty}$ of $\lambda_l^{g_i}(X_i)$-eigenfunctions $f_{i, l} \in H^{1, 2}(X_i)$ with $||f_{i, l}||_{L^2}=1$ there exist a subsequence $\{i(j)\}_j$ and a $\lambda_{l}^{g_{\infty}}(X_{\infty})$-eigenfunction $f_{\infty, l} \in H^{1, 2}(X_{\infty})$ such that $f_{i(j), l}$ $L^{2p/(p-2)}$-converges strongly to $f_{\infty, l}$ on $X_{\infty}$ and that $\nabla f_{i(j), l}$ $L^2$-converges strongly to $f_{\infty, l}$ on $X_{\infty}$. 
\end{enumerate}

Let $\{f_{i, 0}\}_{i<\infty}$ be a sequence of $\lambda_{0}^{g_i}(X_i)$-eigenfunctions $f_{i, 0}$ with $||f_{i, 0}||_{L^2}=1$.
Since 
\begin{align}\label{hghghg}
\int_{X_i}|\nabla f_{i, 0}|^2d\upsilon_i &=\lambda_0^{g_i}(X_i)-\int_{X_i}g_i|f_{i, 0}|^2d\upsilon_i \nonumber \\
&\le \lambda_0^{g_i}(X_i)+\left(\int_{X_i}|g_i|^{p/2}d\upsilon_i\right)^{2/p}\left(\int_{X_i}|f_{i, 0}|^{2p/(p-2)}d\upsilon_i\right)^{(p-2)/p} \nonumber \\
&\le \lambda_0^{g_i}(X_i)+\left(\int_{X_i}|g_i|^{p/2}d\upsilon_i\right)^{2/p}\left(A\int_{X_i}|\nabla f_{i, 0}|^2d\upsilon_i+B\right),
\end{align}
we have $\sup_i||\nabla f_{i, 0}||_{L^2}<\infty$.

Thus by Theorems \ref{srell} and \ref{tt}
without loss of generality we can assume that there exists  $f_{\infty, 0} \in H^{1, 2}(X_{\infty})$ such that $f_{i, 0}$ $L^r$-converges strongly to $f_{\infty, 0}$ on $X_{\infty}$ for every $r<2p/(p-2)$ and that $\nabla f_{i, 0}$ $L^2$-converges weakly to $\nabla f_{\infty, 0}$ on $X_{\infty}$. 
In particular we have
\[\lim_{i \to \infty}\int_{X_i}g_i|f_{i, 0}|^2d\upsilon_i=\int_{X_{\infty}}g_{\infty}|f_{\infty, 0}|^2d\upsilon_{\infty}.\]
Therefore we have
\begin{align*}
\liminf_{i \to \infty}\lambda_0^{g_i}(X_i)&=\liminf_{i \to \infty}\int_{X_i}\left(|\nabla f_{i, 0}|^2 +g_i|f_{i, 0}|^2\right)d\upsilon_i\nonumber \\
&\ge \int_{X_{\infty}}\left(|\nabla f_{\infty, 0}|^2 +g_{\infty}|f_{\infty, 0}|^2\right)d\upsilon_{\infty} \ge \lambda_{0}^{g_{\infty}}(X_{\infty}).
\end{align*}
Thus by Theorem \ref{hbv} we have
\[\lim_{i \to \infty}\lambda_0^{g_i}(X_i)=\lambda_0^{g_{\infty}}(X_{\infty}).\]

On the other hand for every $h_{\infty} \in \mathrm{LIP}(X_{\infty})$ by \cite[Theorem $4.2$]{holip} 
 without loss of generality we can assume that there exists a sequence $\{h_i\}_{i}$ of $h_i \in \mathrm{LIP}(X_i)$
with $\sup_i\mathbf{Lip}h_i<\infty$ such that $h_i, \nabla h_i$ $L^s$-converge strongly to $h_{\infty}, \nabla h_{\infty}$ on $X_{\infty}$ for every $1<s<\infty$, respectively.
Since
\[\int_{X_i}\left(\langle \nabla f_{i, 0}, \nabla h_i \rangle +g_if_{i, 0}h_i\right)d\upsilon_i=\lambda_0^{g_i}(X_i)\int_{X_i}f_{i, 0}h_id\upsilon_i\]
for every $i<\infty$, by letting $i \to \infty$, we see that $f_{\infty, 0}$ is a $\lambda_0^{g_{\infty}}(X_{\infty})$-eigenfunction of $\Delta^{\upsilon_{\infty}}+g_{\infty}$.
Then
\begin{align*}
\lim_{i \to \infty}\int_{X_i}|\nabla f_{i, 0}|^2d\upsilon_i&=\lim_{i \to \infty}\left(\lambda_0^{g_i}(X_i)-\int_{X_i}g_i|f_{i, 0}|^2d\upsilon_i\right)\\
&=\lambda_0^{g_{\infty}}(X_{\infty})-\int_{X_{\infty}}g_{\infty}|f_{\infty, 0}|^2d\upsilon_{\infty} =\int_{X_{\infty}}|\nabla f_{\infty, 0}|^2d\upsilon_{\infty}.
\end{align*}
Thus $\nabla f_{i, 0}$ $L^2$-converges strongly to $\nabla f_{\infty, 0}$ on $X_{\infty}$.
Therefore Theorem \ref{sobo3} yields that $f_{i, 0}$ $L^{2p/(p-2)}$-converges strongly to $f_{\infty, 0}$ on $X_{\infty}$.
Thus we see that $(\star 0)$ holds.

Next we assume $(\star k)$ holds for some $k \ge 0$.

Let $\{f_{i, l}\}_{i<\infty, l \le k+1}$ be sequences of $\lambda_l^{g_i}(X_i)$-eigenfunctions $f_{i, l}$ with 
\[\int_{X_i}f_{i, l}f_{i, m}d\upsilon_i=\delta_{lm}\]
for any $l, m \le k+1$.
By assumption without loss of generality we can assume that for every $l \le k$ there exists a $\lambda_l^{g_{\infty}}(X_{\infty})$-eigenfunction $f_{\infty, l} \in H^{1, 2}(X_{\infty})$ such that $f_{i, l}$ $L^{2p/(p-2)}$-converges strongly to $f_{\infty, l}$ on $X_{\infty}$ and that $\nabla f_{i, l}$ $L^2$-converges strongly to $\nabla f_{\infty, l}$ on $X_{\infty}$.

On the other hand by an argument similar to that of (\ref{hghghg}) without loss of generality we can assume that there exists $f_{\infty, k+1} \in H^{1, 2}(X_{\infty})$ such that $f_{i, k+1}$ $L^r$-converges strongly to $f_{\infty, k+1}$ on $X_{\infty}$ for every $r<2p/(p-2)$ and that $\nabla f_{i, k+1}$ $L^2$-converges weakly to $\nabla f_{\infty, k+1}$ on $X_{\infty}$. 
In particular we have
\begin{align}\label{76yu}
\lim_{i \to \infty}\int_{X_i}g_if_{i, l}f_{i, m}d\upsilon_i=\int_{X_{\infty}}g_{\infty}f_{\infty, l}f_{\infty, m}d\upsilon_{\infty}
\end{align}
for any $l, m \le k+1$.

Let $E_{i, k+1}:=\mathrm{span}\{f_{i, l}\}_{l \le k+1}$. 
Note that $\mathrm{dim}\,E_{i, k+1}=k+2$ for every sufficiently large $i \le \infty$.
Since
\[\lim_{i \to \infty}\int_{X_i}\langle \nabla f_{i, l}, \nabla f_{i, m}\rangle d\upsilon_i =\int_{X_{\infty}}\langle \nabla f_{\infty, l}, \nabla f_{\infty, m}\rangle d\upsilon_{\infty}\]
if $(l, m) \neq (k+1, k+1)$,
and
\[\liminf_{i \to \infty}\int_{X_i}|\nabla f_{i, k+1}|^2d\upsilon_i \ge \int_{X_{\infty}}|\nabla f_{\infty, k+1}|^2d\upsilon_{\infty},\]
it is easy to check that
\[\liminf_{i \to \infty}\lambda_{k+1}^{g_i}(X_i)=\liminf_{i \to \infty}\max_{u \in E_{i, k+1}}R^{g_i}(u)\ge \max_{u \in E_{\infty, k+1}}R^{g_{\infty}}(u)\ge \lambda_{k+1}^{g_{\infty}}(X_{\infty}).\]
Thus by Theorem \ref{hbv} we have
\[\lim_{i \to \infty}\lambda_{k+1}^{g_i}(X_i)=\lambda_{k+1}^{g_{\infty}}(X_{\infty}).\]
By an argument similar to that in the case of $(\star 0)$ we see that $f_{\infty, k+1}$ is a $\lambda_{k+1}^{g_{\infty}}(X_{\infty})$-eigenfunction on $X_{\infty}$, that $f_{i, k+1}$ $L^{2p/(p-2)}$-converges strongly to $f_{\infty, k+1}$ on $X_{\infty}$ and that $\nabla f_{i, k+1}$ $L^2$-converges strongly to $\nabla f_{\infty, k+1}$ on $X_{\infty}$.
Thus we see that $(\star k+1)$ holds.
This completes the proof.
$\,\,\,\,\,\,\,\,\,\,\,\,\,\,\,\,\,\Box$

\textit{Proof of Corollary \ref{horr}.}

It follows directly from Theorems \ref{schro}, \ref{sobo}, \ref{tt} and Remark \ref{akl}. $\,\,\,\,\,\,\,\,\,\,\,\,\,\,\,\,\Box$
\begin{corollary}\label{mnj}
Let $n \ge 3$, let $0<d_1\le d_2 <\infty$, let $L>0$, let $q >n/2$ and let $M$ be an $n$-dimensional compact Riemannian manifold with $d_1 \le \mathrm{diam}\,M\le d_2$ and
$\mathrm{Ric}_M \ge K(n-1)$.
Then for any $l$ and $g \in L^q(M)$ with $||g||_{L^q}\le L$, we have
\[|\lambda_l^g(M)| \le C(d_1, d_2, n, K, L, l, q).\]
\end{corollary}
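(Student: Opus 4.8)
The plan is a proof by contradiction based on Gromov's precompactness theorem and Corollary \ref{horr}. Suppose the assertion fails for some choice of $n \ge 3$, $K$, $0 < d_1 \le d_2$, $L > 0$, $q > n/2$ and index $l$. Then there exist a sequence $\{M_i\}_{i<\infty}$ of $n$-dimensional compact Riemannian manifolds with $d_1 \le \mathrm{diam}\,M_i \le d_2$ and $\mathrm{Ric}_{M_i} \ge K(n-1)$, together with a sequence $\{g_i\}_{i<\infty}$ of $g_i \in L^q(M_i)$ with $\|g_i\|_{L^q} \le L$, such that $|\lambda_l^{g_i}(M_i)| \to \infty$. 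Setting $\upsilon_i := H^n/H^n(M_i)$, we have $(M_i, \upsilon_i) \in M(n, K, d_2)$ for every $i$, and $\lambda_l^{g_i}(M_i)$ is precisely the $l$-th eigenvalue of $\Delta^{\upsilon_i} + g_i$ in the sense of subsection $1.2$ (the relevant Sobolev inequality being furnished by Theorem \ref{sobo} together with Remark \ref{akl}).

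First I would extract a convergent subsequence. Since $\overline{M(n, K, d_2)}$ is compact with respect to the Gromov-Hausdorff topology, after passing to a subsequence we may assume $(M_i, \upsilon_i) \stackrel{GH}{\to} (X_\infty, \upsilon_\infty)$ in $\overline{M(n, K, d_2)}$; because the Gromov-Hausdorff limit of compact metric spaces has diameter equal to the limit of the diameters, $\mathrm{diam}\,X_\infty = \lim_{i \to \infty} \mathrm{diam}\,M_i \ge d_1 > 0$. Next, since $q > n/2 > 1$ and $\sup_i \|g_i\|_{L^q} \le L < \infty$, the $L^q$-weak compactness recalled in subsection $2.5.2$ lets us pass to a further subsequence along which $g_i$ $L^q$-converges weakly to some $g_\infty \in L^q(X_\infty)$ on $X_\infty$.

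With these preparations all hypotheses of Corollary \ref{horr} are met, so $\lim_{i \to \infty} \lambda_l^{g_i}(M_i) = \lambda_l^{g_\infty}(X_\infty)$; in particular $\{\lambda_l^{g_i}(M_i)\}_i$ is bounded, which contradicts $|\lambda_l^{g_i}(M_i)| \to \infty$. This proves the corollary. The only points that need care are that the uniform lower diameter bound $d_1 > 0$ is exactly what guarantees $\mathrm{diam}\,X_\infty > 0$, so that Corollary \ref{horr} is applicable to the limit space, and that the uniform $L^q$-bound on the potentials produces a limit potential $g_\infty \in L^q(X_\infty)$; neither presents any genuine difficulty, so there is no serious obstacle in this argument.
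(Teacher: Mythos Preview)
Your proof is correct and follows essentially the same contradiction argument as the paper: assume a blowing-up sequence, pass to a Gromov--Hausdorff limit with positive diameter (via the lower bound $d_1$), extract an $L^q$-weak limit of the potentials, and invoke Corollary~\ref{horr} to obtain the contradiction. You include a few more justifications (the Sobolev inequality via Theorem~\ref{sobo} and Remark~\ref{akl}, and the explicit $L^q$-weak compactness step), but the strategy is identical.
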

\begin{proof}
The proof is done by a contradiction.
Assume that the assertion is false.
Then by Gromov's compactness theorem there exist $q>n/2$, $l \ge 0$, a sequence $\{(X_i, \upsilon_i)\}_{i <\infty}$ of $(X_i, \upsilon_i) \in M(n, K, d_2)$ with $d_1\le \mathrm{diam}\,X_i\le d_2$, the Gromov-Hausdorff limit $(X_{\infty}, \upsilon_{\infty}) \in \overline{M(n, K, d_2)}$ of them, and an $L^q$-weak convergent sequence  $\{g_i\}_{i \le \infty}$ on $X_{\infty}$ of $g_i \in L^q(X_i)$ such that
\[|\lambda_l^{g_i}(X_i)| \to \infty\]
as $i \to \infty$.
Since $0<\mathrm{diam}\,X_{\infty}<\infty$, this contradicts Corollary \ref{horr}.
\end{proof}
\begin{remark}\label{lap}
By an argument similar to the proof of Theorem \ref{schro} (or \cite[Theorem $1.6$]{holp}), we can prove continuities of \textit{$(q, p)$-Poincar\'e constants}:
\[\lambda_{q, p}(X):=\inf_f \frac{\left(\int_{X}|\nabla f|^pd\upsilon\right)^{1/p}}{\left(\int_X|f-\int_Xf|^qd\upsilon \right)^{1/q}}\]
and
\[\hat{\lambda}_{q, p}(X):=\inf_f \frac{\left(\int_{X}|\nabla f|^pd\upsilon\right)^{1/p}}{\left(\inf_{c \in \mathbf{R}}\int_X|f-c|^qd\upsilon \right)^{1/q}}\]
with respect to the Gromov-Hausdorff topology on $\overline{M(n, K, d)}$ and variables $p \in (1, n)$, $q \in [1, pn/(n-p))$, where $f$ run over all nonconstant $f \in \mathrm{LIP}(X)$.
Note that Theorem \ref{yamayama} is related to an extremal case that $p=2$ and $q=2n/(n-2)$.
See also \cite{cheegerconstant} for a related work.
\end{remark}
\subsection{Generalized Yamabe constants}
We first discuss the upper semicontinuity of generalized Yamabe constants:
\begin{theorem}\label{up}
Let $\{p_i\}_{i \le \infty}$ be a convergent sequence in $(2, \infty)$, let $2<p<\infty$, let $(X_i, \upsilon_i) \stackrel{GH}{\to} (X_{\infty}, \upsilon_{\infty})$ in $\overline{M(n, K, d)}$ with $\mathrm{diam}\,X_{\infty}>0$, and let $\{g_i\}_{i \le \infty}$ be an $L^{p/2}$-weak convergent sequence on $X_{\infty}$ of $g_i \in L^{p/2}(X_i)$. 
Then we have
\[\limsup_{i \to \infty}Y^{g_i}_{p_i}(X_i)\le Y_{p_{\infty}}^{g_{\infty}}(X_{\infty}).\]
\end{theorem}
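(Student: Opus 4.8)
Proof proposal for Theorem \ref{up}.

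The plan is to run the familiar ``transplant a near-optimal test function from the limit'' argument, exactly as in the proof of Theorem \ref{hbv}. Fix $\epsilon>0$ and choose $f_{\infty}\in\mathrm{LIP}(X_{\infty})$ with $||f_{\infty}||_{L^{2p_{\infty}/(p_{\infty}-2)}}=1$ and
\[\int_{X_{\infty}}\left(|\nabla f_{\infty}|^2+g_{\infty}|f_{\infty}|^2\right)d\upsilon_{\infty}\le Y^{g_{\infty}}_{p_{\infty}}(X_{\infty})+\epsilon.\]
Passing to a subsequence realizing $\limsup_{i\to\infty}Y^{g_i}_{p_i}(X_i)$, I would apply \cite[Theorem $4.2$]{holip}: since $f_{\infty}$ is Lipschitz (hence bounded), we may assume there is a sequence $\{f_i\}_i$ of $f_i\in\mathrm{LIP}(X_i)$ with $\sup_i\mathbf{Lip}\,f_i<\infty$ such that $f_i,\nabla f_i$ $L^2$-converge strongly to $f_{\infty},\nabla f_{\infty}$ on $X_{\infty}$. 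Because $\sup_i\mathbf{Lip}\,f_i<\infty$ and $\mathrm{diam}\,X_i\le d$, one gets $\sup_i||f_i||_{L^{\infty}}<\infty$, so $f_i$ is uniformly bounded in $L^q(X_i)$ for every finite $q$; Proposition \ref{ssrt} then upgrades the $L^2$-strong convergence of $f_i$ to $L^r$-strong convergence on $X_{\infty}$ for every $r\in(1,\infty)$.

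From here three limit passages are needed. First, since $p_i\to p_{\infty}$ and $f_i$ is uniformly bounded in $L^{\infty}$ with $L^r$-strong convergence for every $r$, one has $||f_i||_{L^{2p_i/(p_i-2)}}\to||f_{\infty}||_{L^{2p_{\infty}/(p_{\infty}-2)}}=1$. Second, squaring the $L^{2p/(p-2)}$-strong convergence of $f_i$ (using the uniform $L^{\infty}$-bound and the product rules for $L^p$-strong convergence from \cite{holp}) gives that $|f_i|^2$ $L^{p/(p-2)}$-converges strongly to $|f_{\infty}|^2$ on $X_{\infty}$; since $\{g_i\}_i$ is $L^{p/2}$-weakly convergent and $p/2$, $p/(p-2)$ are conjugate exponents, this yields
\[\lim_{i\to\infty}\int_{X_i}g_i|f_i|^2\,d\upsilon_i=\int_{X_{\infty}}g_{\infty}|f_{\infty}|^2\,d\upsilon_{\infty}.\]
Third, the $L^2$-strong convergence of $\nabla f_i$ gives $||\nabla f_i||_{L^2}\to||\nabla f_{\infty}||_{L^2}$. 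Now set $\tilde f_i:=f_i/||f_i||_{L^{2p_i/(p_i-2)}}\in\mathrm{LIP}(X_i)$, which is admissible for $Y^{g_i}_{p_i}(X_i)$, so that
\[Y^{g_i}_{p_i}(X_i)\le\frac{1}{||f_i||_{L^{2p_i/(p_i-2)}}^2}\int_{X_i}\left(|\nabla f_i|^2+g_i|f_i|^2\right)d\upsilon_i.\]
Taking $\limsup_{i\to\infty}$ and using the three limits above gives $\limsup_{i\to\infty}Y^{g_i}_{p_i}(X_i)\le Y^{g_{\infty}}_{p_{\infty}}(X_{\infty})+\epsilon$; letting $\epsilon\to0$ completes the proof.

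The only genuinely delicate point is that the integrability exponent $2p_i/(p_i-2)$ varies with $i$, so the normalizing constants and the potential term must be shown to behave continuously along the sequence; the uniform Lipschitz (hence uniform $L^{\infty}$) bound produced by \cite[Theorem $4.2$]{holip} is precisely what makes Proposition \ref{ssrt} applicable and forces this continuity. I want to emphasize that, unlike in Theorem \ref{yamayama}, no Sobolev inequality hypothesis is required here: the competitors live on $X_{\infty}$ from the outset, so one never needs compactness for a sequence of minimizers on the $X_i$'s.
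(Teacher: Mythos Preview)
Your proposal is correct and follows essentially the same route as the paper's own proof: transplant an (almost) optimal Lipschitz competitor $f_{\infty}$ to the $X_i$'s via \cite[Theorem $4.2$]{holip}, use the resulting uniform Lipschitz/$L^{\infty}$ control to get $L^r$-strong convergence of $f_i$ for all finite $r$ (hence of $|f_i|^2$ in $L^{p/(p-2)}$), pair this with the $L^{p/2}$-weak convergence of $g_i$, and pass to the limit. The paper is slightly terser in that it takes an arbitrary admissible $f_{\infty}$ rather than an $\epsilon$-optimal one and cites directly that \cite[Theorem $4.2$]{holip} already yields $\{L^{q_i}\}_i$-strong convergence for every convergent sequence of exponents, whereas you recover this via Proposition~\ref{ssrt}; these are cosmetic differences only.
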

\begin{proof}
Let $f_{\infty} \in \mathrm{LIP}(X_{\infty})$ with $||f_{\infty}||_{L^{2p_{\infty}/(p_{\infty}-2)}}=1$. 
By \cite[Theorem $4. 2$]{holip} without loss of generality we can assume that there exists a sequence $\{f_i\}_i$ of $f_i \in \mathrm{LIP}(X_i)$ such that $\sup_i\mathbf{Lip}f_i<\infty$ and that $f_i, \nabla f_i$ $\{L^{q_i}\}_i$-converge strongly to $f_{\infty}, \nabla f_{\infty}$ on $X_{\infty}$, respectively for every convergent sequence  $\{q_i\}_{i \le \infty}$ in $(1, \infty)$.

Then we have
\begin{align}\label{kkj}
Y^{g_i}_{p_i}(X_i)&\le \left(\int_{X_i}|f_i|^{2p_i/(p_i-2)}d\upsilon_i\right)^{-(p_i-2)/p_i}\left(\int_{X_i}\left(|\nabla f_i|^2+g_{i}|f_i|^2\right)d\upsilon_i\right).
\end{align}
Thus by letting $i \to \infty$ we have 
\[\limsup_{i \to \infty}Y^{g_i}_{p_i}(X_i)\le \int_{X_{\infty}}\left(|\nabla f_{\infty}|^2+g_{\infty}|f_{\infty}|^2\right)d\upsilon_{\infty}.\]
Since $f_{\infty}$ is arbitrary this completes the proof.
\end{proof}
We are now in a position to give the main result in this subsection.
Note that Theorem \ref{yamayama} is a corollary of Theorems \ref{sobo}, \ref{stasob}, \ref{stapo} and the following.
See also \cite{bl, leep}.
\begin{theorem}\label{contya}
Let $\{p_i\}_{i \le \infty}$ be a convergent sequence in $(2, \infty)$ and let $(X_i, \upsilon_i) \stackrel{GH}{\to} (X_{\infty}, \upsilon_{\infty})$ in $\overline{M(n, K, d)}$ with $\mathrm{diam}\,X_{\infty}>0$.
Assume that the following two conditions hold:
\begin{enumerate}
\item There exists $\tau >0$ such that for every $i \le \infty$, $(X_i, \upsilon_i)$ satisfies the $(2p_i/(p_i-2), 2)$-Poincar\'e inequality on $X_i$ for $\tau$.
\item There exist $A, B>0$ such that for every $i \le \infty$, $(X_i, \upsilon_i)$ satisfies the $(2p_i/(p_i-2), 2)$-Sobolev inequality on $X_i$ for $(A, B)$.
\end{enumerate}
Then for any $q>p_{\infty}/2$ and $L^q$-weak convergent sequence $\{g_i\}_{i \le \infty}$ on $X_{\infty}$ of $g_i \in L^q(X_i)$ with
\begin{align}\label{aubuni}
\limsup_{i \to \infty}Y^{g_i}_{p_i}(X_i)<A^{-1},
\end{align}
we have 
\begin{align}\label{4rfv}
\lim_{i \to \infty}Y^{g_i}_{p_i}(X_i)=Y_{p_{\infty}}^{g_{\infty}}(X_{\infty}).
\end{align}
In particular
\[Y^{g_{\infty}}_{p_{\infty}}(X_{\infty})<A^{-1}.\]
Moreover if $u_{\infty}$ is the $L^{2}$-weak limit on $X_{\infty}$ of a sequence $\{u_i\}_{i<\infty}$ of minimizers $u_i \in H^{1, 2}(X_i)$ of $Y^{g_i}_{p_i}(X_i)$ with $||u_i||_{L^{2p_i/(p_i-2)}}=1$, then we see that $u_i$ $\{L^{2p_i/(p_i-2)}\}_i$-converges strongly to $u_{\infty}$ on $X_{\infty}$, that $\nabla u_i$ $L^2$-converges strongly to $\nabla u_{\infty}$ on $X_{\infty}$ and that $u_{\infty}$ is also a minimizer of $Y^{g_{\infty}}_{p_{\infty}}(X_{\infty})$. 
\end{theorem}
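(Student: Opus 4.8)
Write $q_i:=2p_i/(p_i-2)$ for brevity. The plan is to carry out the standard concentration--compactness argument for the generalized Yamabe functional inside the Gromov--Hausdorff framework, using the $L^p$-convergence machinery of Section $3$. First I would pass to the minimizers: by assumption (\ref{aubuni}) and condition $(2)$ we have $Y^{g_i}_{p_i}(X_i)<A^{-1}$ for all large $i$, so Theorem \ref{acmex} provides the minimizers $u_i$, and after discarding finitely many indices we may assume they exist for all $i$. They are bounded in $H^{1,2}$: on a probability space $\|u_i\|_{L^2}\le\|u_i\|_{L^{q_i}}=1$, while the H\"older bound in the spirit of (\ref{ybound}) controls $\int_{X_i}g_i|u_i|^2\,d\upsilon_i$ from below, so $\int_{X_i}|\nabla u_i|^2\,d\upsilon_i=Y^{g_i}_{p_i}(X_i)-\int_{X_i}g_i|u_i|^2\,d\upsilon_i$ is bounded. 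Passing to a subsequence, Theorem \ref{srell} gives $u_\infty\in H^{1,2}(X_\infty)$ with $u_i\to u_\infty$ strongly in $L^2$ and $\nabla u_i\to\nabla u_\infty$ weakly in $L^2$, and Theorem \ref{tt} upgrades this to strong $L^r$-convergence for every $r<q_\infty$ and weak $L^{q_\infty}$-convergence. Since $q>p_\infty/2$, the exponent conjugate to $q$ is strictly below $p_\infty/(p_\infty-2)$, so $|u_i|^2\to|u_\infty|^2$ strongly in that space and hence $\int_{X_i}g_i|u_i|^2\,d\upsilon_i\to\int_{X_\infty}g_\infty|u_\infty|^2\,d\upsilon_\infty$; also $X_\infty$ inherits the uniform Poincar\'e and Sobolev inequalities by Theorems \ref{stapo} and \ref{stasob}.

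The core of the argument is that $\theta:=\|u_\infty\|_{L^{q_\infty}}=1$, i.e.\ that no mass concentrates; this is where the strict inequality (\ref{aubuni}) is used decisively. I would split $u_i=w_i+v_i$, with $\{w_i\}$ a Lipschitz $H^{1,2}$-strong approximation of $u_\infty$ on $X_i$ (by \cite[Theorem $4.2$]{holip} and density), so that $v_i\to0$ strongly in $L^2$ and weakly in $L^{q_\infty}$. The weak--strong product gives the energy splitting $\int_{X_i}|\nabla u_i|^2\,d\upsilon_i=\int_{X_i}|\nabla v_i|^2\,d\upsilon_i+\int_{X_\infty}|\nabla u_\infty|^2\,d\upsilon_\infty+o(1)$, and the decisive analytic input is a Brezis--Lieb type identity in the Gromov--Hausdorff setting,
\[\int_{X_i}|u_i|^{q_i}\,d\upsilon_i=\int_{X_i}|v_i|^{q_i}\,d\upsilon_i+\int_{X_\infty}|u_\infty|^{q_\infty}\,d\upsilon_\infty+o(1),\]
which I would deduce from the $w$-convergence of $|u_i|^{q_i}$ and $|v_i|^{q_i}$ (Theorem \ref{nn}, Proposition \ref{li}) together with Proposition \ref{fatou1} and Corollary \ref{fatou2}. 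Feeding $\int_{X_i}|v_i|^2\,d\upsilon_i\to0$ into the $(q_i,2)$-Sobolev inequality applied to $v_i$ yields $\int_{X_i}|\nabla v_i|^2\,d\upsilon_i\ge A^{-1}\bigl(\int_{X_i}|v_i|^{q_i}\,d\upsilon_i\bigr)^{2/q_i}+o(1)$, and, using that $u_\infty/\theta$ is admissible when $\theta>0$ (and $u_\infty=0$ when $\theta=0$), everything assembles into
\[\liminf_{i\to\infty}Y^{g_i}_{p_i}(X_i)\ge A^{-1}\bigl(1-\theta^{q_\infty}\bigr)^{2/q_\infty}+\theta^2\,Y^{g_\infty}_{p_\infty}(X_\infty).\]

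Now the elementary inequality $(a+b)^r\le a^r+b^r$ for $0\le r\le1$, applied with $r=2/q_\infty$, gives $(1-\theta^{q_\infty})^{2/q_\infty}+\theta^2\ge1$, with equality only for $\theta\in\{0,1\}$. Combining this with the upper bound $\limsup_i Y^{g_i}_{p_i}(X_i)\le Y^{g_\infty}_{p_\infty}(X_\infty)$ from Theorem \ref{up} and the strict assumption (\ref{aubuni}), a short case analysis on the sign of $Y^{g_\infty}_{p_\infty}(X_\infty)$ rules out $\theta<1$: when $Y^{g_\infty}_{p_\infty}(X_\infty)\ge0$ one gets $\liminf_i Y^{g_i}_{p_i}(X_i)\ge A^{-1}$, contradicting (\ref{aubuni}), and when $Y^{g_\infty}_{p_\infty}(X_\infty)<0$ one gets $(\theta^2-1)Y^{g_\infty}_{p_\infty}(X_\infty)\ge0$, again a contradiction unless $\theta=1$. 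Hence $\theta=1$, so $u_\infty$ is admissible; then weak lower semicontinuity of the Dirichlet energy plus the strong convergence of the potential term give $\liminf_i Y^{g_i}_{p_i}(X_i)\ge\int_{X_\infty}(|\nabla u_\infty|^2+g_\infty|u_\infty|^2)\,d\upsilon_\infty\ge Y^{g_\infty}_{p_\infty}(X_\infty)$, which together with Theorem \ref{up} proves (\ref{4rfv}) along the subsequence, forces both inequalities to be equalities (so $u_\infty$ minimizes and $\int_{X_i}|\nabla u_i|^2\,d\upsilon_i\to\int_{X_\infty}|\nabla u_\infty|^2\,d\upsilon_\infty$, i.e.\ $\nabla u_i\to\nabla u_\infty$ strongly in $L^2$), and then Theorem \ref{sobo3} upgrades $u_i\to u_\infty$ to a strong $\{L^{q_i}\}_i$-convergence. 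A standard subsequence argument promotes (\ref{4rfv}) to the full sequence, and finally $Y^{g_\infty}_{p_\infty}(X_\infty)=\lim_i Y^{g_i}_{p_i}(X_i)=\limsup_i Y^{g_i}_{p_i}(X_i)<A^{-1}$.

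The step I expect to be the main obstacle is the Brezis--Lieb identity in the Gromov--Hausdorff setting: $w$-convergence by itself does not see concentration, so separating the concentrating part $v_i$ additively from $u_\infty$ at the level of the critical $L^{q_i}$-mass requires the full $w$-convergence calculus of Section $3$ (Definition \ref{ll}, Proposition \ref{li}, Corollary \ref{fatou2}) together with the Gromov--Hausdorff Fatou lemma; the rest is a combination of the Rellich and Sobolev-embedding theorems of Section $3$ with the elementary convexity inequality above.
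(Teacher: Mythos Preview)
Your proposal is correct and follows essentially the same concentration--compactness strategy as the paper, relying on the same ingredients from Section~3 (Theorems \ref{srell}, \ref{tt}, \ref{sobo3}, \ref{nn}, Proposition \ref{li}, Corollary \ref{fatou2}) together with Theorem \ref{up} and Theorem \ref{acmex}. The organization differs slightly: the paper works directly with the quantity $\limsup_{j}\limsup_{i}\int_{X_i}|\nabla(u_{i,j}-u_i)|^2\,d\upsilon_i$ and shows it vanishes (splitting into the cases $\lim_i Y^{g_i}_{p_i}(X_i)\le 0$ and $>0$), after which $\|u_\infty\|_{L^{q_\infty}}=1$ follows from Theorem \ref{sobo3}; you instead pass through a Lions-type dichotomy inequality for $\theta=\|u_\infty\|_{L^{q_\infty}}$ and rule out $\theta<1$ by a case split on the sign of $Y^{g_\infty}_{p_\infty}(X_\infty)$. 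Both routes land in the same place.

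Two remarks. First, you do not need the full Brezis--Lieb \emph{identity}; for your dichotomy inequality only the one-sided bound $\liminf_i\int_{X_i}|v_i|^{q_i}\,d\upsilon_i\ge 1-\theta^{q_\infty}$ is required, and that is exactly what the paper extracts from Corollary \ref{fatou2} (via the pointwise inequality $|u_i|^{q_i}-(1+\epsilon)^{(p_i+2)/(p_i-2)}|u_{i,j}-u_i|^{q_i}\le(1+1/\epsilon)^{(p_i+2)/(p_i-2)}|u_{i,j}|^{q_i}$ and Theorem \ref{nn}). So the step you flag as the main obstacle is already available in the toolbox. Second, in your case $Y^{g_\infty}_{p_\infty}(X_\infty)\ge 0$ and $\theta<1$, the conclusion $\liminf_i Y^{g_i}_{p_i}(X_i)\ge A^{-1}$ is correct but takes two steps: first combine $Y^{g_\infty}_{p_\infty}\ge\liminf_i Y^{g_i}_{p_i}$ with your dichotomy inequality and $(1-\theta^{q_\infty})^{2/q_\infty}\ge 1-\theta^2$ to get $Y^{g_\infty}_{p_\infty}\ge A^{-1}$, then substitute this back into the dichotomy inequality to obtain $\liminf_i Y^{g_i}_{p_i}\ge A^{-1}[(1-\theta^{q_\infty})^{2/q_\infty}+\theta^2]\ge A^{-1}$, contradicting (\ref{aubuni}).
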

\begin{proof}
By (\ref{ybound}), without loss of generality we can assume that $\{Y^{g_i}_{p_i}(X_i)\}_{i<\infty}$ is a convergent sequence in $\mathbf{R}$.
Theorem \ref{acmex} gives that for every sufficiently large $i<\infty$ there exists a minimizer $u_i \in H^{1, 2}(X_i)$ of $Y^{g_i}_{p_i}(X_i)$ with $||u_i||_{L^{2p_i/(p_i-2)}}=1$.

Then since 
\begin{align*}
\int_{X_i}|\nabla u_i|^{2}d\upsilon_i&=Y^{g_i}_{p_i}(X_i)-\int_{X_i}g_i|u_i|^2d\upsilon_i \\
&\le Y^{g_i}_{p_i}(X_i) + \left(\int_{X_i}|g_i|^{p_i/2}d\upsilon_i\right)^{2/p_i}\left(\int_{X_i}|u_i|^{2p_i/(p_i-2)}d\upsilon_i\right)^{(p_i-2)/p_i}\\
&\le Y^{g_i}_{p_i}(X_i) + \left(\int_{X_i}|g_i|^{q}d\upsilon_i\right)^{1/q}\left(A \int_{X_i}|\nabla u_i|^2d\upsilon_i + B\right)
\end{align*}
for every sufficiently large $i<\infty$, we have $\sup_{i<\infty}||\nabla u_i||_{L^2}<\infty$.
Thus by Theorems  \ref{srell} and \ref{tt} without loss of generality we can assume that there exists $u_{\infty} \in H^{1, 2}(X_{\infty})$ such that $u_i$ $\{L^{2p_i/(p_i-2)}\}_i$-converges weakly to $u_{\infty}$ on $X_{\infty}$, that $u_i$ $L^{r}$-converges strongly to $u_{\infty}$ on $X_{\infty}$ for every $r<2p_{\infty}/(p_{\infty}-2)$ and that $\nabla u_i$ $L^2$-converges weakly to $\nabla u_{\infty}$ on $X_{\infty}$.
In particular we have $||u_{\infty}||_{L^{2p_{\infty}/(p_{\infty}-2)}}\le \liminf_{i \to \infty}||u_i||_{L^{2p_i/(p_i-2)}}=1$ and
\[\lim_{i \to \infty}\int_{X_i}g_i|u_i|^2d\upsilon_i=\int_{X_{\infty}}g_{\infty}|u_{\infty}|^2d\upsilon_{\infty}.\]
By \cite[Theorem $4.2$]{holip} without loss of generality we can assume that there exist sequences $\{u_{i, j}\}_{i \le \infty, j<\infty}$ of  $u_{i, j} \in \mathrm{LIP}(X_i)$ such that $\sup_{i}\mathbf{Lip}u_{i, j}<\infty$ for every $j$, that $u_{i, j}, \nabla u_{i, j}$ $\{L^{q_i}\}_i$-converge strongly to $u_{\infty, j}, \nabla u_{\infty, j}$ on $X_{\infty}$ for every convergent sequence $\{q_i\}_{i \le \infty}$ in $(1, \infty)$, respectively and that $u_{\infty, j} \to u_{\infty}$ in $H^{1, 2}(X_{\infty})$.
Note that by Theorem \ref{sobo3} we see that $u_{\infty, j} \to u_{\infty}$ in $L^{2p_{\infty}/(p_{\infty}-2)}(X_{\infty})$. 

Then for every $j$ we have
\begin{align*}
\lim_{i \to \infty}Y^{g_i}_{p_i}(X_i)&=\lim_{i \to \infty}\int_{X_i}\left(|\nabla u_i|^2 +g_i|u_i|^2\right)d\upsilon_i \\
&=\lim_{i \to \infty}\int_{X_i}\left(|\nabla u_{i, j}|^2 +2 \langle \nabla (u_{i}-u_{i, j}), \nabla u_{i, j}\rangle +|\nabla (u_i-u_{i, j})|^2  +g_i|u_i|^2\right)d\upsilon_i \\
&\ge \int_{X_{\infty}}\left(|\nabla u_{\infty, j}|^2 + g_{\infty}|u_{\infty}|^2\right)d\upsilon_{\infty} + 2\int_{X_{\infty}}\langle \nabla (u_{\infty}-u_{\infty, j}), \nabla u_{\infty, j}\rangle d\upsilon_{\infty}\\
&\,\,\,+\limsup_{i \to \infty} \int_{X_i}|\nabla (u_{i, j}-u_i)|^2d\upsilon_i. \\
\end{align*}
Thus by letting $j \to \infty$, Theorem \ref{up} yields
\begin{align}\label{aa}
&\lim_{i \to \infty}Y^{g_i}_{p_i}(X_i)\nonumber \\
&\ge Y^{g_{\infty}}_{p_{\infty}}(X_{\infty})\left(\int_{X_{\infty}}|u_{\infty}|^{2p_{\infty}/(p_{\infty}-2)}d\upsilon_{\infty}\right)^{(p_{\infty}-2)/p_{\infty}}+ \limsup_{j \to \infty}\left(\limsup_{i \to \infty} \int_{X_i}|\nabla (u_{i, j}-u_i)|^2d\upsilon_i\right) \nonumber \\
&\ge \lim_{i \to \infty}Y^{g_i}_{p_i}(X_i)\left(\int_{X_{\infty}}|u_{\infty}|^{2p_{\infty}/(p_{\infty}-2)}d\upsilon_{\infty}\right)^{(p_{\infty}-2)/p_{\infty}}+ \limsup_{j \to \infty}\left(\limsup_{i \to \infty} \int_{X_i}|\nabla (u_{i, j}-u_i)|^2d\upsilon_i\right),
\end{align}
i.e., 
\begin{align}\label{star2}
\lim_{i \to \infty}Y^{g_i}_{p_i}(X_i)\left( 1-\left(\int_{X_{\infty}}|u_{\infty}|^{2p_{\infty}/(p_{\infty}-2)}d\upsilon_{\infty}\right)^{(p_{\infty}-2)/p_{\infty}}\right) \ge \limsup_{j \to \infty}\left(\limsup_{i \to \infty} \int_{X_i}|\nabla (u_{i, j}-u_i)|^2d\upsilon_i\right).
\end{align}
Thus if $\lim_{i \to \infty}Y^{g_i}_{p_i}(X_i)\le 0$, then $\nabla u_i$ $L^2$-converges strongly to $\nabla u_{\infty}$ on $X_{\infty}$.
Therefore Theorem \ref{sobo3} yields that $u_{i}$ $\{L^{2p_i/(p_i-2)}\}_i$-converges strongly to $u_{\infty}$ on $X_{\infty}$.
In particular, we have $||u_{\infty}||_{L^{2p_{\infty}/(p_{\infty}-2)}}=1$.
Thus (\ref{aa}) gives $\lim_{i \to \infty}Y^{g_i}_{p_i}(X_i)=Y^{g_{\infty}}_{p_{\infty}}(X_{\infty})$.

Next we assume $\lim_{i \to \infty}Y^{g_i}_{p_i}(X_i)>0$. 
Since
\[(a+b)^p \le (1+\epsilon)^{p-1}a^p+(1+1/\epsilon)^{p-1}b^p\]
for any $a \ge 0$, $b \ge 0$, $p \ge 1$ and $\epsilon >0$, we have 
\begin{align*}
|u_i|^{2p_i/(p_i-2)}&\le (|u_{i, j}-u_i|+|u_{i, j}|)^{2p_i/(p_i-2)}\\
&\le (1+\epsilon)^{(p_i+2)/(p_i-2)}|u_{i, j}-u_i|^{2p_i/(p_i-2)}+(1+1/\epsilon)^{(p_i+2)/(p_i-2)}|u_{i, j}|^{2p_i/(p_i-2)}
\end{align*}
for every $\epsilon >0$.
Thus we have
\begin{align}\label{mm}
|u_i|^{2p_i/(p_i-2)}-(1+\epsilon)^{(p_i+2)/(p_i-2)}|u_{i, j}-u_i|^{2p_i/(p_i-2)} \le (1+1/\epsilon)^{(p_i+2)/(p_i-2)}|u_{i, j}|^{2p_i/(p_i-2)}.
\end{align}
Note that by Proposition \ref{li} and Theorem \ref{nn} we see that the left hand side of (\ref{mm}) $w$-converges to 
\[|u_{\infty}|^{2p_{\infty}/(p_{\infty}-2)}-(1+\epsilon)^{(p_{\infty}+2)/(p_{\infty}-2)}|u_{\infty, j}-u_{\infty}|^{2p_{\infty}/(p_{\infty}-2)}\] on $X_{\infty}$ as $i \to \infty$ for every $j$.
Since the right hand side of (\ref{mm}) $w$-converges to $(1+1/\epsilon)^{(p_{\infty}+2)/(p_{\infty}-2)}|u_{\infty, j}|^{2p_{\infty}/(p_{\infty}-2)}$ on $X_{\infty}$ and
\begin{align*}
&\lim_{i \to \infty}\int_{X_i}\left((1+1/\epsilon )^{(p_i+2)/(p_i-2)}|u_{i, j}|^{2p_i/(p_i-2)}\right)d\upsilon_i\\
&=\int_{X_{\infty}}\left((1+1/\epsilon)^{(p_{\infty}+2)/(p_{\infty}-2)}|u_{\infty, j}|^{2p_{\infty}/(p_{\infty}-2)}\right)d\upsilon_{\infty},
\end{align*}
Corollary \ref{fatou2} yields
\begin{align*}
&\limsup_{i \to \infty}\left(\int_{X_i}\left(|u_i|^{2p_i/(p_i-2)}-(1+\epsilon)^{(p_i+2)/(p_i-2)}|u_{i, j}-u_i|^{2p_i/(p_i-2)}\right)d\upsilon_i\right)\\
&\le \int_{X_{\infty}}\left(|u_{\infty}|^{2p_{\infty}/(p_{\infty}-2)}-(1+\epsilon)^{(p_{\infty}+2)/(p_{\infty}-2)}|u_{\infty, j}-u_{\infty}|^{2p_{\infty}/(p_{\infty}-2)}\right)d\upsilon_{\infty},
\end{align*}
i.e.,
\begin{align*}
&1-(1+\epsilon)^{(p_{\infty}+2)/(p_{\infty}-2)}\liminf_{i \to \infty}\int_{X_i}|u_{i, j}-u_i|^{2p_i/(p_i-2)}d\upsilon_i \\
&\le \int_{X_{\infty}}|u_{\infty}|^{2p_{\infty}/(p_{\infty}-2)}d\upsilon_{\infty}-(1+\epsilon)^{(p_{\infty}+2)/(p_{\infty}-2)}\int_{X_{\infty}}|u_{\infty, j}-u_{\infty}|^{2p_{\infty}/(p_{\infty}-2)}d\upsilon_{\infty}.
\end{align*}
By letting $j \to \infty$ and $\epsilon \to 0$ we have
\[1-\int_{X_{\infty}}|u_{\infty}|^{2p_{\infty}/(p_{\infty}-2)}d\upsilon_{\infty} \le \liminf_{j \to \infty}\left(\liminf_{i \to \infty}\int_{X_i}|u_{i, j}-u_i|^{2p_i/(p_i-2)}d\upsilon_i\right).\]
Thus since $a^p-b^p\le (a-b)^p$ for any $a \ge b \ge 0$ and $0<p\le 1$ we have
\begin{align*}
&1- \left(\int_{X_{\infty}}|u_{\infty}|^{2p_{\infty}/(p_{\infty}-2)}d\upsilon_{\infty}\right)^{(p_{\infty}-2)/p_{\infty}} \\
&\le \left(1-\int_{X_{\infty}}|u_{\infty}|^{2p_{\infty}/(p_{\infty}-2)}d\upsilon_{\infty}\right)^{(p_{\infty}-2)/p_{\infty}} \\
&\le \left(\liminf_{j \to \infty}\left(\liminf_{i \to \infty}\int_{X_i}|u_{i, j}-u_i|^{2p_i/(p_i-2)}d\upsilon_i\right)\right)^{(p_{\infty}-2)/p_{\infty}}\\
&= \liminf_{j \to \infty}\left(\liminf_{i \to \infty}\left(\int_{X_i}|u_{i, j}-u_i|^{2p_i/(p_i-2)}d\upsilon_i\right)^{(p_{i}-2)/p_{i}}\right).
\end{align*}
Therefore by (\ref{star2})
\begin{align}\label{poi}
&\left(\lim_{i \to \infty}Y^{g_i}_{p_i}(X_i)\right)\liminf_{j \to \infty}\left(\liminf_{i \to \infty}\left(\int_{X_i}|u_{i, j}-u_i|^{2p_i/(p_i-2)}d\upsilon_i\right)^{(p_{i}-2)/p_{i}}\right) \nonumber \\
&\ge \limsup_{j \to \infty}\left(\limsup_{i \to \infty} \int_{X_i}|\nabla (u_{i, j}-u_i)|^2d\upsilon_i\right).
\end{align}
On the other hand by assumption we have
\[\left(\int_{X_i}|u_{i,j}-u_i|^{2p_i/(p_i-2)}d\upsilon_i\right)^{(p_i-2)/p_i}\le A \int_{X_i}|\nabla (u_{i, j}-u_i)|^{2}d\upsilon_i+B\int_{X_i}|u_{i, j}-u_i|^2d\upsilon_i.\]
In particular
\begin{align}\label{lkk}
\limsup_{j \to \infty}\left(\limsup_{i \to \infty}\left(\int_{X_i}|u_{i, j}-u_i|^{2p_{i}/(p_{i}-2)}d\upsilon_i\right)^{(p_{i}-2)/p_{i}}\right) \le A \limsup_{j \to \infty}\left(\limsup_{i \to \infty} \int_{X_i}|\nabla (u_{i, j}-u_i)|^2d\upsilon_i\right).
\end{align}
Thus (\ref{poi}) and (\ref{lkk}) yield
\[A\left(\lim_{i \to \infty}Y^{g_i}_{p_i}(X_i)\right)\limsup_{j \to \infty}\left(\limsup_{i \to \infty} \int_{X_i}|\nabla (u_{i, j}-u_i)|^2d\upsilon_i \right)\ge \limsup_{j \to \infty}\left(\limsup_{i \to \infty} \int_{X_i}|\nabla (u_{i, j}-u_i)|^2d\upsilon_i \right).\]
Since $A\lim_{i \to \infty}Y^{g_i}_{p_i}(X_i)<1$, we have
\[\limsup_{j \to \infty}\left(\limsup_{i \to \infty} \int_{X_i}|\nabla (u_{i, j}-u_i)|^2d\upsilon_i\right)=0,\]
i.e., $\nabla u_i$ $L^2$-converges strongly to $\nabla u_{\infty}$ on $X_{\infty}$.
Therefore by an argument similar to that in the case that $\lim_{i \to \infty}Y^{g_i}_{p_i}(X_i)\le 0$ we have $\lim_{i \to \infty}Y^{g_i}_{p_i}(X_i)=Y^{g_{\infty}}_{p_{\infty}}(X_{\infty})$.

The final statement on the behavior of minimizers follows directly from the argument above.
\end{proof}
\begin{remark}
In Theorem \ref{contya}, if we do not consider the extremal case, then we do not need the assumption (\ref{aubuni}) in order to get (\ref{4rfv}).

In fact, by an argument similar to the proof of Theorem \ref{schro} we can prove the following:
\begin{itemize}
\item Let $\{p_i\}_{i \le \infty}$ be a convergent sequence in $(2, \infty]$ and let $(X_i, \upsilon_i) \stackrel{GH}{\to} (X_{\infty}, \upsilon_{\infty})$ in $\overline{M(n, K, d)}$ with $\mathrm{diam}\,X_{\infty}>0$. Assume that there exist $A, B>0$ such that for every $i<\infty$, $(X_i, \upsilon_i)$ satisfies the $(2p_i/(p_i-2), 2)$-Sobolev inequality on $X_i$ for $(A, B)$.
Then for any $q>p_{\infty}/2$, $L^q$-weak convergent sequence $\{g_i\}_{i \le \infty}$ on $X_{\infty}$ of $g_i \in L^q(X_i)$, and convergent sequence $\{r_i\}_{i \le \infty}$ in $(2, \infty)$ with $r_{\infty}>p_{\infty}$,  we have
\[\lim_{i \to \infty}Y^{g_i}_{r_i}(X_i)=Y^{g_{\infty}}_{r_{\infty}}(X_{\infty}).\]
\end{itemize} 
See Remark \ref{lap}.
\end{remark}
We are now in a position to prove Corollary \ref{yamabe}.

\textit{Proof of Corollary \ref{yamabe}}

It follows directly from Remark \ref{akl}, Theorems \ref{yamayama}, \ref{sobo} and \ref{stapo}. $\,\,\,\,\,\,\,\,\,\,\,\Box$
\begin{remark}
We give remarks on generalized Yamabe constants on a noncollapsed Ricci limit space.

Let $2<p<\infty$, let $\{(X_i, \upsilon_i)\}_i$ be a sequence of $(X_i, \upsilon_i) \in M(n, K, d)$, let $(X, \upsilon)$ be the noncollapsed Gromov-Hausdorff limit of them and let $g \in L^{p/2}(X)$.
For an open subset $U$ of $X$ we define \textit{the generalized $p$-Yamabe constant $Y^{g}_p(U)$ of $U$ associated with $g$} by
\[Y^g_p(U):=\inf_f\int_U\left(|\nabla f|^2+g|f|^2\right)d\upsilon,\]
where $f$ runs over all $f \in \mathrm{LIP}_c(U)$ with $||f||_{L^{2p/(p-2)}(U)}=1$. 

Assume that
\begin{itemize}
\item there exists $K_1>0$ such that $|\mathrm{Ric}_{X_i}|\le K_1$ for every $i$.
\end{itemize}
Then Cheeger-Colding showed in \cite{ch-co} that $\mathcal{R}$ is an open subset of $X$ and is a $C^{1, \alpha}$-Riemannian manifold for every $0<\alpha<1$.

Moreover Cheeger-Naber proved in \cite{chna} 
\[\mathrm{dim}_H(X \setminus \mathcal{R})\le n-4,\]
where $\mathrm{dim}_H$ is the Hausdorff dimension.
See also \cite{and, bkn, cct, tian}.
In particular we see that the Sobolev $2$-capacity of $X \setminus \mathcal{R}$ is zero (see \cite{ha} and \cite{km} for the definition of the Sobolev capacity).
Thus \cite[Theorem $4.8$]{shanm} yields that the canonical inclusion
\[\stackrel{\circ}{H^{1, 2}}(\mathcal{R}) \hookrightarrow H^{1, 2}(X)\]
is isomorphic, where $\stackrel{\circ}{H^{1, 2}}(\mathcal{R})$ is the closure  of $\mathrm{LIP}_c(\mathcal{R})$ in $H^{1, 2}(X)$.

In particular we have
\[Y^{g}_p(\mathcal{R})=Y^g_p(X)\]
if $p\ge n$.

Moreover we assume that
\begin{itemize}
\item $X_i$ is an Einstein manifold for every $i$.
\end{itemize}
Then Cheeger-Colding proved in \cite{ch-co1} that $\mathcal{R}$ is also an Einstein manifold.
Thus $(X, \upsilon)$ is a typical example of \textit{almost smooth metric measure spaces} in the sense of Akutagawa-Carron-Mazzeo introduced in \cite{acm}. 
\end{remark}
\section{Rellich type compactness for tensor fields}
In this section we establish a Rellich type compactness for tensor fields with respect to the Gromov-Hausdorff topology.
By using this we also discuss the compatibility between two Levi-Civita connections $\nabla^{g_X}, \nabla^{\upsilon}$ introduced in \cite{ho0, gigli}, respectively.
The main results of this section are Theorems \ref{rr}, \ref{mthm} and Corollary \ref{nnbbmm}.
\subsection{A closedness of $\nabla^{g_X}$}
We start this subsection
by giving the following lemma.
Note that by definition we have $H^{1, q}(B_R(x)) \subset H^{1, p}(B_R(x))$ for every $1<p \le q<\infty$
(see for instance \cite{ch1, ha, shanm2}).
\begin{lemma}\label{hkt1}
Let $1<p\le q<\infty$, let $R>0$, let $(X, x, \upsilon) \in \overline{M(n, K)}$ with $\mathrm{diam}\,X>0$, and let $f \in H^{1, p}(B_R(x))$.
If $\nabla f \in L^q(TB_R(x))$, then $f \in H^{1, q}(B_R(x))$.
\end{lemma}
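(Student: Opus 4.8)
The plan is to obtain the statement by assembling standard facts from the Sobolev calculus on doubling metric measure spaces admitting a weak Poincar\'e inequality (recall that every $(X,x,\upsilon) \in \overline{M(n,K)}$ is of this type, the doubling property coming from the Bishop-Gromov inequality and the Poincar\'e inequality from Theorem \ref{sobo} together with the stability Theorem \ref{stapo}). On such a space, $H^{1,q}(B_R(x))$ coincides with the set of $u \in L^q(B_R(x))$ possessing a $q$-integrable upper gradient, and the minimal weak upper gradient of $u$ equals $|du|$ regardless of the exponent $q$; see \cite{ch1, ha, shanm2}. Granting this, the proof is short: by hypothesis $f \in H^{1,p}(B_R(x)) \subset \Gamma_1(B_R(x))$ (Theorem \ref{fundpr}), so $|df|$ is the minimal weak upper gradient of $f$; since $|df| \in L^q$, once we know in addition that $f \in L^q(B_R(x))$ the above characterization gives $f \in H^{1,q}(B_R(x))$.

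So the only substantial point is to check $f \in L^q(B_R(x))$. First I would record the Sobolev inequality on $B_R(x)$: combining Theorem \ref{sobo}, Theorem \ref{stapo} and Remark \ref{akl}(3), for $1 < \tilde{p} < n$ and $\tilde{p} \le s \le n\tilde{p}/(n-\tilde{p})$ one has $||u - u_{B_R(x)}||_{L^s(B_R(x))} \le C\, ||du||_{L^{\tilde{p}}(B_R(x))}$ for all $u \in \mathrm{LIP}_{\mathrm{loc}}(B_R(x)) \cap H^{1,\tilde{p}}(B_R(x))$, and this self-improves to every $u \in H^{1,\tilde{p}}(B_R(x))$ by the density statement of Theorem \ref{fundpr}(1) together with weak compactness of norm-bounded sets in the reflexive space $L^s(B_R(x))$. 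If $p \ge n$, or if $p < n$ and $q \le np/(n-p)$, applying this with $u = f$ and a suitable $\tilde{p}$ (note $df \in L^p \cap L^q \subset L^{\tilde{p}}$ since $\upsilon(B_R(x)) < \infty$) gives $f \in L^q(B_R(x))$ at once. If $p < n$ and $q > np/(n-p)$, I would iterate along the Sobolev scale $p_0 = p$, $1/p_{k+1} = 1/p_k - 1/n$, at each step using the Sobolev inequality to improve the integrability of $f$ and the upper-gradient characterization to see that the improved $f$ still lies in a Sobolev space $H^{1,p_{k+1}}(B_R(x))$, so that the next step applies; since $1/p_k = 1/p - k/n$ reaches $\le 1/n$ in finitely many steps, the Sobolev exponent eventually exceeds $q$ and $f \in L^q(B_R(x))$.

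The weight of the argument therefore rests on the exponent-independent upper-gradient description of $H^{1,q}$ — equivalently, on the assertion that a function in $H^{1,p}$ whose Cheeger differential happens to be $q$-integrable can be approximated by Lipschitz functions in the $H^{1,q}$-norm. The mechanism is the Haj\l asz-type Lipschitz truncation: the Poincar\'e inequality and doubling yield the pointwise bound $|f(y)-f(z)| \le C\, d_X(y,z)\big((\mathcal{M}|df|^{\tilde{p}})(y)^{1/\tilde{p}} + (\mathcal{M}|df|^{\tilde{p}})(z)^{1/\tilde{p}}\big)$ on a ball, with $\mathcal{M}$ a restricted maximal operator and $\tilde{p} < q$; truncating $f$ on the superlevel sets of $(\mathcal{M}|df|^{\tilde{p}})^{1/\tilde{p}}$ and extending produces $C\lambda$-Lipschitz functions $f_\lambda$ agreeing with $f$ off a set whose measure is $o(\lambda^{-q})$, and $f_\lambda \to f$ in $H^{1,q}$ as $\lambda \to \infty$, the convergence being governed by the boundedness of $\mathcal{M}$ on $L^{q/\tilde{p}}$ (the case $p = q$ is vacuous). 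Since this is precisely the content of \cite{ha, ch1, shanm2}, in the write-up I would invoke it rather than reproduce the truncation. An alternative avoiding upper gradients altogether is to multiply $f$ by a cut-off supported in $B_R(x)$, regularize by the heat semigroup and use the Bakry-\'Emery gradient estimate with strong $L^q$-continuity of the semigroup; it is essentially equivalent and no shorter.
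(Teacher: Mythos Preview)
Your proposal is correct, but the paper's proof is considerably shorter and takes a different route. Rather than first establishing $f\in L^q(B_R(x))$ by a Sobolev bootstrap and then invoking the exponent-independent upper-gradient characterisation of $H^{1,q}$, the paper simply observes that the $(1,p)$-Poincar\'e inequality for $f$ together with H\"older gives
\[
\frac{1}{\upsilon(B_r(y))}\int_{B_r(y)}\Bigl|f-\frac{1}{\upsilon(B_r(y))}\int_{B_r(y)} f\,d\upsilon\Bigr|\,d\upsilon
\;\le\; C\,r\,\Bigl(\frac{1}{\upsilon(B_r(y))}\int_{B_r(y)}|\nabla f|^q\,d\upsilon\Bigr)^{1/q}
\]
for every ball $B_r(y)\subset B_R(x)$, and then cites \cite[Theorem 1.1]{hkt} (Heikkinen--Koskela--Tuominen), which asserts precisely that a locally integrable function satisfying such a generalised Poincar\'e inequality with right-hand side in $L^q$ belongs to $H^{1,q}$. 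This black-boxes both the $L^q$-integrability of $f$ and the Sobolev membership into a single external reference, avoiding the iteration entirely. Your approach, by contrast, unpacks the mechanism (the Lipschitz truncation you sketch is essentially what underlies \cite{hkt}); it is more self-contained and makes the dependence on the PI structure explicit, at the cost of length. Note also that your bootstrap step uses the upper-gradient characterisation at every stage, so the argument is really just repeated applications of the same principle that the paper invokes once via \cite{hkt}.
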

\begin{proof}
By the $(1, p)$-Poincar\'e inequality and the H$\ddot{\text{o}}$lder inequality we have 
\begin{align*}
&\frac{1}{\upsilon (B_r(y))}\int_{B_r(y)}\left|f - \frac{1}{\upsilon (B_r(y))}\int_{B_r(y)}fd\upsilon \right| d\upsilon \\
&\le C(n, K, R, p)r\left(\frac{1}{\upsilon (B_r(y))}\int_{B_r(y)}|\nabla f|^pd\upsilon \right)^{1/p} \\
&\le C(n, K, R, p)r\left(\frac{1}{\upsilon (B_r(y))}\int_{B_r(y)}|\nabla f|^qd\upsilon \right)^{1/q}
\end{align*}
for any $y \in B_R(x)$ and $r>0$ with $B_r(y) \subset B_R(x)$.
Thus by \cite[Theorem $1.1$]{hkt} we have $f \in H^{1, q}(B_R(x))$. 
\end{proof}
\begin{definition}\label{mmjj}
Let $1<p\le \infty$, let $r, s \in \mathbf{Z}_{\ge 0}$, let $R>0$, let $(X, \upsilon) \in \overline{M(n, K, d)}$ with $\mathrm{diam}\,X>0$, and let $x \in X$.
We denote by $W_{2p}(T^r_sB_R(x))$ the set of $T \in \Gamma_1(T^r_sB_R(x))$ satisfying the following three conditions:
\begin{enumerate}
\item $T \in L^{2p}(T^r_sB_R(x))$.
\item $\nabla^{g_{X}} T \in L^2(T^r_{s+1}B_R(x))$.
\item For any $y \in B_R(x)$, $\hat{r}>0$ with $\overline{B}_{\hat{r}}(y) \subset B_R(x)$, harmonic map $h=(h_i)_{1\le i \le r+s}: B_{\hat{r}}(y) \to \mathbf{R}^{r+s}$, and $t<\hat{r}$ there exists $1<q<\infty$ such that $\langle T, \nabla ^r_sh\rangle \in H^{1, q}(B_t(y))$. 
\end{enumerate}
\end{definition}
\begin{remark}\label{aarrff}
If a Borel tensor field $T \in \Gamma_0(T^r_sB_R(x))$ satisfies $(3)$ of Definition \ref{mmjj}, then $T \in \Gamma_1(T^r_sB_R(x))$.
This follows directly from Theorems \ref{fundpr}, \ref{aarr} and an argument similar to the proof of \cite[Proposition $3.25$]{ho0} with Theorem \ref{2n}.
Note that if $(X, \upsilon) \in M(n, K, d)$ and $T \in C^{\infty}(T^r_sB_R(x))$, then $T$ satisfies $(3)$. 
\end{remark}
\begin{proposition}\label{linea}
$W_{2p}(T^r_sB_R(x))$ is a linear space.
\end{proposition}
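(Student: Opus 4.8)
The plan is to verify the three defining conditions of $W_{2p}(T^r_sB_R(x))$ (Definition \ref{mmjj}) for a linear combination $aS + bT$ with $a, b \in \mathbf{R}$ and $S, T \in W_{2p}(T^r_sB_R(x))$. First I would note that $\Gamma_1(T^r_sB_R(x))$ is itself a linear space (this is immediate from the definition of differentiability a.e., since one may intersect the two countable collections of Borel subsets on which $S$ and $T$ are respectively Lipschitz), so $aS + bT \in \Gamma_1(T^r_sB_R(x))$. Condition $(1)$ is the statement $aS + bT \in L^{2p}(T^r_sB_R(x))$, which follows from the triangle inequality for the $L^{2p}$-norm on $L^{2p}(T^r_sB_R(x))$ (a Banach space by the discussion in subsection $2.3.2$). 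Condition $(2)$, namely $\nabla^{g_X}(aS + bT) = a\nabla^{g_X}S + b\nabla^{g_X}T \in L^2(T^r_{s+1}B_R(x))$, follows from the $\mathbf{R}$-linearity of the covariant derivative $\nabla^{g_X}: \Gamma_1(T^r_sB_R(x)) \to \Gamma_0(T^r_{s+1}B_R(x))$ — which is visible from the defining formula (\ref{huuh}), each term of which is additive in $T$ and commutes with scalars — together with the triangle inequality in $L^2$.

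The only condition requiring a small argument is $(3)$. Fix $y \in B_R(x)$, $\hat r > 0$ with $\overline{B}_{\hat r}(y) \subset B_R(x)$, a harmonic map $h = (h_i)_{1 \le i \le r+s}: B_{\hat r}(y) \to \mathbf{R}^{r+s}$, and $t < \hat r$. By assumption there are exponents $q_S, q_T \in (1, \infty)$ with $\langle S, \nabla^r_s h\rangle \in H^{1, q_S}(B_t(y))$ and $\langle T, \nabla^r_s h\rangle \in H^{1, q_T}(B_t(y))$. Set $q := \min\{q_S, q_T\}$. Since $B_t(y)$ has finite measure and $(X,\upsilon)$ satisfies a doubling condition, the nesting $H^{1, q'}(B_t(y)) \subset H^{1, q}(B_t(y))$ for $q \le q'$ (recalled in the remark preceding Lemma \ref{hkt1}) gives $\langle S, \nabla^r_s h\rangle, \langle T, \nabla^r_s h\rangle \in H^{1, q}(B_t(y))$. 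By bilinearity of the fiberwise pairing $\langle\cdot,\cdot\rangle$ we have $\langle aS + bT, \nabla^r_s h\rangle = a\langle S, \nabla^r_s h\rangle + b\langle T, \nabla^r_s h\rangle$, which lies in the linear space $H^{1, q}(B_t(y))$. Hence $(3)$ holds with this $q$, and therefore $aS + bT \in W_{2p}(T^r_sB_R(x))$.

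There is no serious obstacle here; the statement is essentially a formal consequence of the linearity of all the operations involved together with the $L^{q'} \subset L^q$-type nesting of the Sobolev spaces, which is the only point where a genuine structural fact about Ricci limit spaces (doubling plus finite measure of balls) is used. The one mild subtlety worth spelling out is that the exponent $q$ witnessing condition $(3)$ for the sum need not be the same as the one for either summand, so one must take the minimum and invoke the Sobolev nesting; I would make sure to state this explicitly rather than leave it implicit.
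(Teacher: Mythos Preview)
Your proof is correct, but it proceeds differently from the paper's. You handle condition~(3) by taking $q=\min\{q_S,q_T\}$ for each fixed choice of $y,\hat r,h,t$ and invoking the nesting $H^{1,q'}\subset H^{1,q}$, which is enough for the proposition as stated. The paper instead proves the stronger fact that for \emph{every} $T\in W_{2p}(T^r_sB_R(x))$ the exponent in~(3) can be taken uniformly to be $q=2p/(p+1)$: one estimates $\|\nabla\langle T,\nabla^r_sh\rangle\|_{L^{2p/(p+1)}}$ by H\"older, using that $h$ is Lipschitz on $B_t(y)$ (Corollary~\ref{apphar}), that $\nabla^{g_X}\nabla^r_sh\in L^2$ (Theorem~\ref{aarr}), and that $T\in L^{2p}$, $\nabla^{g_X}T\in L^2$; then Lemma~\ref{hkt1} upgrades membership in $H^{1,q}$ to $H^{1,2p/(p+1)}$. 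Your argument is certainly more economical for Proposition~\ref{linea} alone, but the paper's uniform exponent $2p/(p+1)$ is not incidental: it is exactly what is needed in the very next result (Proposition~\ref{l}), where one requires $\sup_i\|\langle T_i,\nabla^r_sh_i\rangle\|_{H^{1,2p/(p+1)}}<\infty$ to run the Rellich-type compactness. So while your route closes this proposition, the paper's computation is doing double duty.
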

\begin{proof}
It suffices to check that for every $T \in W_{2p}(T^r_sB_R(x))$ we can choose $q$ as in $(3)$ of Definition \ref{mmjj} by $2p/(p+1)$.

Let $h, y$ and $t$ be as in Definition \ref{mmjj}.
Note that  Corollary \ref{apphar} yields the Lipschitz continuity of $h$ on $B_t(y)$.

The H$\ddot{\text{o}}$lder inequality, (\ref{6554}) and Theorem \ref{aarr} yield
\begin{align*}
&||\nabla\left\langle T,  \nabla^r_sh\right\rangle||_{L^{2p/(p+1)}(B_t(y))}\\
&\le ||\nabla^{g_{X}} T||_{L^{2p/(p+1)}(B_t(y))}||\nabla^r_sh||_{L^{\infty}(B_t(y))}+|||T|\left|\nabla^{g_{X}} \nabla^r_sh\right| ||_{L^{2p/(p+1)}(B_t(y))}\\
&\le ||\nabla^{g_{X}} T||_{L^{2p/(p+1)}(B_t(y))}||\nabla^r_sh||_{L^{\infty}(B_t(y))}+|||T|\left|\nabla^{g_{X}} \nabla^r_sh\right| ||_{L^{2p/(p+1)}(B_t(y))}\\
&\le ||\nabla^{g_{X}} T||_{L^{2p/(p+1)}(B_t(y))}||\nabla^r_sh||_{L^{\infty}(B_t(y))}+||T||_{L^{2p}(B_t(y))}||\nabla^{g_{X}} \nabla^r_sh||_{L^2(B_t(y))}\\
&<\infty.
\end{align*}
Thus Lemma \ref{hkt1} yields the assertion.
\end{proof}
We define a norm $||\cdot||_{W_{2p}}$ on $W_{2p}(T^r_sB_R(x))$ by
\[||T||_{W_{2p}}:=||T||_{L^{2p}}+||\nabla^{g_X}T||_{L^2}.\]

Let us consider the following setting throughout this subsection.
\begin{itemize}
\item Let $R>0$, let $1<p \le \infty$ and let $r, s \in \mathbf{Z}_{\ge 0}$.
\item Let $(X_i, \upsilon_i) \stackrel{GH}{\to} (X_{\infty}, \upsilon_{\infty})$ in $\overline{\mathcal{M}(n, K, d)}$ with $\mathrm{diam}\,X_{\infty}>0$.
\item Let $\{x_i\}_{i \le \infty}$ be a convergent sequence of $x_i \in X_i$.
\item Let $\{T_i\}_{i < \infty}$ be a sequence of $T_i \in W_{2p}(T^r_sB_R(x_i))$ with $\sup_i||T_i||_{W_{2p}}<\infty$.
\item Let $T_{\infty}$ be the $L^{2p}$-weak limit  on $B_R(x_{\infty})$ of $\{T_i\}_i$.
\end{itemize}
\begin{proposition}\label{l}
Let $\{y_i\}_{i \le \infty}$ be a convergent sequence of $y_{i} \in B_R(x_{i})$, let $\hat{r}>0$ with $\overline{B}_{\hat{r}}(y_{\infty})\subset B_R(x_{\infty})$, and let $\{h_i\}_{i \le \infty}$ be a uniform convergent sequence on $B_{\hat{r}}(y_{\infty})$ of harmonic maps $h_{i}=(h_{i, j})_{1 \le j \le r+s}: B_{\hat{r}}(y_{i}) \to \mathbf{R}^{r+s}$.
Then for every $t<\hat{r}$ we see that $\left\langle T_{\infty},  \nabla^r_sh_{\infty} \right\rangle \in H^{1, 2p/(p+1)}(B_t(y_{\infty}))$, 
that $\left\langle T_i,  \nabla^r_sh_i \right\rangle$ $L^{2p}$-converges strongly to $\left\langle T_{\infty},   \nabla^r_sh_{\infty}\right\rangle$ on $B_t(y_{\infty})$
that
$\left\langle T_i,  \nabla^r_sh_i \right\rangle$ $L^{\alpha}$-converges strongly to $\left\langle T_{\infty},   \nabla^r_sh_{\infty}\right\rangle$ on $B_t(y_{\infty})$ for every $\alpha<2p$ and that $\nabla\left\langle T_i,  \nabla^r_sh_{i}\right\rangle$ $L^{2p/(p+1)}$-converges weakly to $\nabla\left\langle T_{\infty},  \nabla^r_sh _{\infty}\right\rangle$ on $B_t(y_{\infty})$.
\end{proposition}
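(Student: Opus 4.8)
The plan is to apply the Rellich-type compactness for functions (Theorems \ref{srell} and \ref{tt}) to the scalar functions $\langle T_i,\nabla^r_sh_i\rangle$ after first producing uniform Sobolev bounds for them, and then to identify the resulting limit with $\langle T_\infty,\nabla^r_sh_\infty\rangle$ by a weak--strong pairing. Fix $t<\hat r$ and choose $t<t'<\hat r$. Since the $h_{i,j}$ are harmonic on $B_{\hat r}(y_i)$ and converge uniformly on $B_{\hat r}(y_\infty)$, they are uniformly bounded on $B_{t'}(y_i)$, so the Cheng--Yau type estimate of Corollary \ref{apphar} (applied after adding suitable constants to make them nonnegative) gives $\sup_i\|\nabla^r_sh_i\|_{L^\infty(B_t(y_i))}<\infty$. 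Applying Theorem \ref{hessc} componentwise to the harmonic functions $h_{i,j}\in\mathcal{D}^2(\Delta^{\upsilon_i},B_{\hat r}(y_i))$ (uniformly $L^2$-bounded, with vanishing Laplacian), $\mathrm{Hess}^{g_{X_i}}_{h_{i,j}}$ $L^2$-converges weakly to $\mathrm{Hess}^{g_{X_\infty}}_{h_{\infty,j}}$ on $B_t(y_\infty)$, so in particular $\sup_i\|\mathrm{Hess}^{g_{X_i}}_{h_{i,j}}\|_{L^2(B_t(y_i))}<\infty$. Expanding $\nabla\langle T_i,\nabla^r_sh_i\rangle$ exactly as in the proof of Proposition \ref{linea} — it is the sum of a term built from $\nabla^{g_{X_i}}T_i$ paired with $\nabla^r_sh_i$ and of terms in which one factor of $\nabla^r_sh_i$ is replaced by a Hessian $\mathrm{Hess}^{g_{X_i}}_{h_{i,j}}$ (paired with $T_i$) — and using the H$\ddot{\text{o}}$lder inequality (with $\frac12+\frac1{2p}=\frac{p+1}{2p}$) together with $\sup_i\|T_i\|_{W_{2p}}<\infty$, I would obtain
$$\sup_i\left(\|\langle T_i,\nabla^r_sh_i\rangle\|_{L^{2p}(B_t(y_i))}+\|\langle T_i,\nabla^r_sh_i\rangle\|_{H^{1,2p/(p+1)}(B_t(y_i))}\right)<\infty.$$
(When $p=\infty$ one replaces $2p$ by an arbitrary large finite exponent throughout what follows.)

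Next I would extract a compactness limit. By Theorems \ref{srell} and \ref{tt}, applied with $p_i\equiv 2p/(p+1)$ and $q_i\equiv 2p$, after passing to a subsequence $\{i(j)\}_j$ there is $f\in H^{1,2p/(p+1)}(B_t(y_\infty))$ such that $\langle T_{i(j)},\nabla^r_sh_{i(j)}\rangle$ $L^{2p}$-converges weakly to $f$, $L^\alpha$-converges strongly to $f$ for every $\alpha<2p$, and $\nabla\langle T_{i(j)},\nabla^r_sh_{i(j)}\rangle$ $L^{2p/(p+1)}$-converges weakly to $\nabla f$. It therefore remains only to identify $f=\langle T_\infty,\nabla^r_sh_\infty\rangle$; once that is done, the fact that every subsequence has a further subsequence with this same limit upgrades all of the above convergences to the full sequence, which is the assertion.

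For the identification I would argue as follows. Each $\nabla h_{i(j),l}$ is $L^\infty$-bounded on $B_t(y_{i(j)})$ and, by Theorem \ref{hessc}(3), $L^\beta$-converges strongly to $\nabla h_{\infty,l}$ for $\beta<2n/(n-1)$; hence by Proposition \ref{ssrt} it $L^b$-converges strongly for every finite $b$, and consequently the tensor product $\nabla^r_sh_{i(j)}$ converges strongly to $\nabla^r_sh_\infty$ in $L^b$ for every finite $b$, in particular in the H$\ddot{\text{o}}$lder-conjugate exponent of $2p$. Since $T_{i(j)}$ $L^{2p}$-converges weakly to $T_\infty$, the weak--strong pairing for the $L^p$-convergence of tensor fields, \cite[Proposition $3.71$]{holp}, gives that $\langle T_{i(j)},\nabla^r_sh_{i(j)}\rangle$ converges weakly in $L^1$ — hence, by the uniform $L^{2p}$ bound, $L^{2p}$-weakly — to $\langle T_\infty,\nabla^r_sh_\infty\rangle$. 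Comparing with the previous paragraph forces $f=\langle T_\infty,\nabla^r_sh_\infty\rangle$, and running the same argument along an arbitrary subsequence completes the proof.

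I expect the identification step to be the main obstacle: the hypotheses only supply the $L^{2p}$-weak convergence of $T_i$ tested against distance functions in the sense of Definition \ref{lpt}, so one must bridge to the harmonic-map pairings through the weak--strong pairing property and the strong $L^b$-convergence of $\nabla^r_sh_i$, which itself rests on the second-order estimates of Theorem \ref{hessc}. The other delicate point is the uniform $H^{1,2p/(p+1)}$ estimate of the first step, where the $L^{2p}$-control on $T_i$ and the $L^2$-control on $\nabla^{g_{X_i}}T_i$ must be combined through the exponent $2p/(p+1)$ and the $L^2$-Hessian bounds for the $h_{i,j}$.
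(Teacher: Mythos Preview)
Your proposal is correct and follows essentially the same strategy as the paper. The paper's proof is a bit more economical in the ordering: it first observes directly that $\langle T_i,\nabla^r_sh_i\rangle$ $L^{2p}$-converges weakly to $\langle T_\infty,\nabla^r_sh_\infty\rangle$ (using that $\nabla^r_sh_i$ $L^2$-converges strongly with a uniform $L^\infty$ bound, paired against the $L^{2p}$-weak limit $T_\infty$), and only then invokes the Rellich-type compactness Theorem~\ref{tt}; you instead extract a subsequential limit $f$ first and identify it afterwards, but the ingredients --- the uniform $H^{1,2p/(p+1)}$ bound from the proof of Proposition~\ref{linea}, the weak--strong pairing, and Theorems~\ref{srell}/\ref{tt} --- are the same.
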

\begin{proof}
Fix $t<\hat{r}$.
By the proof of Proposition \ref{linea} we have $\left\langle T_{i},  \nabla^r_sh_{i} \right\rangle \in H^{1, 2p/(p+1)}(B_t(y_{i}))$ for every $i<\infty$ and 
\[\sup_{i<\infty}||\left\langle T_{i},  \nabla^r_sh_{i} \right\rangle||_{H^{1, 2p/(p+1)}(B_t(y_i))}<\infty.\]
Since $\nabla^r_sh_i$ $L^2$-converges strongly to $\nabla^r_sh_{\infty}$ on $B_t(y_{\infty})$ with $\sup_i||\nabla^r_sh_i||_{L^{\infty}(B_t(y_i))}<\infty$, we see that $\left\langle T_{i},  \nabla^r_sh_{i} \right\rangle$ $L^{2p}$-converges weakly to $\left\langle T_{\infty},  \nabla^r_sh_{\infty} \right\rangle$ on $B_t(y_{\infty})$.
Thus the assertion follows from Theorem \ref{tt}.
\end{proof}
\begin{corollary}\label{diff}
We have $T_{\infty} \in \Gamma_1(T^r_sB_R(x_{\infty}))$.
\end{corollary}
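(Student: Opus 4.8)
The plan is to reduce the statement to condition $(3)$ of Definition \ref{mmjj} and then feed in the harmonic approximation of Corollary \ref{apphar} together with Proposition \ref{l}. By Remark \ref{aarrff}, to conclude $T_{\infty} \in \Gamma_1(T^r_sB_R(x_{\infty}))$ it is enough to verify that $T_{\infty}$ satisfies $(3)$ of Definition \ref{mmjj}, namely that for every $y \in B_R(x_{\infty})$, every $\hat{r}>0$ with $\overline{B}_{\hat{r}}(y) \subset B_R(x_{\infty})$, every harmonic map $h=(h_1,\dots,h_{r+s})\colon B_{\hat{r}}(y) \to \mathbf{R}^{r+s}$, and every $t<\hat{r}$, there is some $1<q<\infty$ with $\langle T_{\infty}, \nabla^r_sh\rangle \in H^{1,q}(B_t(y))$.

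So I would fix such $y,\hat{r},h,t$ and choose $r'$ with $t<r'<\hat{r}$; note $\overline{B}_{r'}(y)\subset B_R(x_{\infty})$. Next I would pick a convergent sequence $y_i \in B_R(x_i)$ with $y_i \stackrel{GH}{\to} y$ (possible by the definition of Gromov--Hausdorff convergence), and apply Corollary \ref{apphar} to each component $h_j|_{B_{r'}(y)}$, $1\le j\le r+s$, extracting a single subsequence that works for all $j$ by a finite diagonal argument. This produces harmonic maps $h_i=(h_{i,1},\dots,h_{i,r+s})\colon B_{r'}(y_i) \to \mathbf{R}^{r+s}$ converging uniformly to $h$ on $B_{r'}(y)$. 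Passing to the subsequence is harmless: $T_{\infty}$ is still the $L^{2p}$-weak limit of the corresponding subsequence of $\{T_i\}$ and the bound $\sup_i\|T_i\|_{W_{2p}}<\infty$ is inherited, so the hypotheses of the standing setting of this subsection, and hence of Proposition \ref{l}, still hold.

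Then Proposition \ref{l}, applied with its "$\hat{r}$" taken to be $r'$, with the sequence $\{y_i\}$, the harmonic maps $\{h_i\}$, and the given $t<r'$, gives $\langle T_{\infty}, \nabla^r_sh\rangle \in H^{1,2p/(p+1)}(B_t(y))$ (reading $2p/(p+1)$ as $2$ in the degenerate case $p=\infty$). Thus $(3)$ of Definition \ref{mmjj} holds with $q=2p/(p+1)$, which finishes the argument. I do not expect a genuine obstacle here: the Rellich-type compactness for $W_{2p}$-tensor fields, the stability of the scalar pairings $\langle T_i,\nabla^r_sh_i\rangle$, and the fact that every harmonic map on the limit ball is a Gromov--Hausdorff limit of harmonic maps are all already in hand. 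The only points that need a little care are the finite diagonal extraction over the $r+s$ coordinate functions, the observation that passing to a subsequence does not change the weak limit $T_{\infty}$, and the trivial bookkeeping of the radii $t<r'<\hat{r}$ and of the exponent when $p=\infty$.
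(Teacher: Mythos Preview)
Your proposal is correct and follows essentially the same route as the paper: the paper's proof also reduces via Remark \ref{aarrff} to checking condition $(3)$ of Definition \ref{mmjj}, invokes Corollary \ref{apphar} to obtain the approximating harmonic maps, and then applies Proposition \ref{l} to get the required $H^{1,2p/(p+1)}$-regularity of $\langle T_\infty,\nabla^r_s h\rangle$. Your additional bookkeeping on subsequences, radii, and the $p=\infty$ case is accurate and simply makes the paper's one-line argument explicit.
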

\begin{proof}
It follows directly from Corollary \ref{apphar}, Remark \ref{aarrff} and Proposition \ref{l}.
\end{proof}
We are now in a position to give a closedness of $\nabla^{g_X}$ with respect to the Gromov-Hausdorff topology:
\begin{theorem}\label{uuy}
If $T_i$ $L^2$-converges strongly to $T_{\infty}$ on $B_R(x_{\infty})$, then we see that $T_{\infty} \in W_{2p}(T^r_sB_R(x_{\infty}))$ and that $\nabla^{g_{X_i}}T_i$ $L^2$-converges weakly to $\nabla^{g_{X_{\infty}}}T_{\infty}$ on $B_R(x_{\infty})$.
\end{theorem}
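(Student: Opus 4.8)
The plan is to identify the $L^2$-weak limit of $\{\nabla^{g_{X_i}}T_i\}_i$ using the harmonic rectifiable system of $(X_\infty,\upsilon_\infty)$ associated with $\{(X_i,\upsilon_i)\}_i$, after recording that $T_\infty$ already fulfills conditions $(1)$ and $(3)$ of Definition \ref{mmjj} and belongs to $\Gamma_1(T^r_sB_R(x_\infty))$. By lower semicontinuity of $L^{2p}$-norms under $L^{2p}$-weak convergence, $T_\infty\in L^{2p}(T^r_sB_R(x_\infty))$; by Corollary \ref{diff}, $T_\infty\in\Gamma_1(T^r_sB_R(x_\infty))$, so $\nabla^{g_{X_\infty}}T_\infty\in\Gamma_0(T^r_{s+1}B_R(x_\infty))$ is well defined through formula $(\ref{huuh})$; and by Proposition \ref{l}, $\langle T_\infty,\nabla^r_sh_\infty\rangle\in H^{1,2p/(p+1)}$ locally for every harmonic chart, which is condition $(3)$ of Definition \ref{mmjj}. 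Since $\sup_i\|\nabla^{g_{X_i}}T_i\|_{L^2}\le\sup_i\|T_i\|_{W_{2p}}<\infty$, by the $L^2$-weak compactness there is, for each subsequence, a further subsequence along which $\nabla^{g_{X_i}}T_i$ $L^2$-converges weakly to some $S_\infty\in L^2(T^r_{s+1}B_R(x_\infty))$. It then suffices to prove $S_\infty=\nabla^{g_{X_\infty}}T_\infty$: this is condition $(2)$ of Definition \ref{mmjj}, hence $T_\infty\in W_{2p}(T^r_sB_R(x_\infty))$, and since the limit is independent of the subsequence, the full sequence $\nabla^{g_{X_i}}T_i$ $L^2$-converges weakly to $\nabla^{g_{X_\infty}}T_\infty$.

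For the identification, fix a harmonic rectifiable system $\mathcal{A}=\{(C_l,\phi_l)\}_l$ of $(X_\infty,\upsilon_\infty)$ associated with $\{(X_i,\upsilon_i)\}_i$ as in Theorem \ref{2n}, with coordinate balls $B_{r_l}(y_l)$, $C(n,K)$-Lipschitz harmonic coordinate maps $h_l=(h_{l,1},\dots,h_{l,k})$, and approximating $C(n,K)$-Lipschitz harmonic maps $h_{i,l}$ on $B_{r_l}(y_{i,l})\subset X_i$ converging uniformly to $h_l$; here $k=\dim X_\infty$. On such a ball, and for a coordinate function $h_{i,0}$ among the $h_{i,l}$, formula $(\ref{huuh})$ expresses $\langle\nabla^{g_{X_i}}_{\nabla h_{i,0}}T_i,\nabla^r_sh_i\rangle$ as $\langle\nabla\langle T_i,\nabla^r_sh_i\rangle,\nabla h_{i,0}\rangle$ minus a finite sum of Christoffel-type terms, each of which is a contraction of $T_i$ against the uniformly bounded fields $\nabla h_{i,\cdot},dh_{i,\cdot}$ and one factor $\nabla^{g_{X_i}}_{\nabla h_{i,0}}\nabla h_{i,m}$.

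Now I would pass to the limit over a ball $B_t(y_l)$ with $t<r_l$, term by term. In the first term, $\nabla\langle T_i,\nabla^r_sh_i\rangle$ converges $L^{2p/(p+1)}$-weakly to $\nabla\langle T_\infty,\nabla^r_sh_\infty\rangle$ by Proposition \ref{l}, while $\nabla h_{i,0}$ is uniformly bounded and converges strongly in the conjugate space $L^{2p/(p-1)}$ (it converges uniformly, hence $L^\alpha$-strongly for every $\alpha$, by \cite{holp}), so the integral passes to the limit. In each Christoffel term, $T_i$ converges $L^2$-strongly to $T_\infty$ — this is exactly where the hypothesis of the theorem enters — the factors $\nabla h_{i,\cdot},dh_{i,\cdot}$ are uniformly bounded and converge $L^\alpha$-strongly for all $\alpha$, and $\nabla^{g_{X_i}}_{\nabla h_{i,0}}\nabla h_{i,m}$ lies in $L^2$ with uniformly bounded norm (Remark \ref{aaaaaaa} and Theorem \ref{aarr}, since $\nabla h_{i,0}$ is bounded) and converges $L^2$-weakly to $\nabla^{g_{X_\infty}}_{\nabla h_{\infty,0}}\nabla h_{\infty,m}$ — this last from Theorem \ref{hessc}$(5)$ together with contraction of an $L^2$-weakly convergent Hessian against a bounded strongly convergent vector field; pairing the $L^2$-strong factor $T_i$ against this $L^2$-weak factor, the integral again passes to the limit. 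Hence $\int_{B_t(y_l)}\langle\nabla^{g_{X_i}}T_i,\nabla^r_sh_i\otimes dh_{i,0}\rangle\,d\upsilon_i\to\int_{B_t(y_l)}\langle\nabla^{g_{X_\infty}}T_\infty,\nabla^r_sh_\infty\otimes dh_{\infty,0}\rangle\,d\upsilon_\infty$. On the other hand, by \cite[Proposition $3.71$]{holp} (used as in the proof of Corollary \ref{associ}), the $L^2$-weak convergence $\nabla^{g_{X_i}}T_i\rightharpoonup S_\infty$ makes the same integrals converge to $\int_{B_t(y_l)}\langle S_\infty,\nabla^r_sh_\infty\otimes dh_{\infty,0}\rangle\,d\upsilon_\infty$. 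Therefore $\langle\nabla^{g_{X_\infty}}T_\infty,\nabla^r_sh_\infty\otimes dh_{\infty,0}\rangle=\langle S_\infty,\nabla^r_sh_\infty\otimes dh_{\infty,0}\rangle$ $\upsilon_\infty$-a.e. on $B_{r_l}(y_l)$ for every choice of the coordinate indices; since $\phi_l$ is bi-Lipschitz, its Jacobian is invertible $\upsilon_\infty$-a.e. on $C_l$, so $\{\nabla h_{l,1}(z),\dots,\nabla h_{l,k}(z)\}$ is a basis of $T_zX_\infty$ for a.e. $z\in C_l$, hence the tensors $\nabla^r_sh_\infty\otimes dh_{\infty,0}$ span $T^r_{s+1}|_z$ and these contractions determine both $\nabla^{g_{X_\infty}}T_\infty$ and $S_\infty$. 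Thus $S_\infty=\nabla^{g_{X_\infty}}T_\infty$ $\upsilon_\infty$-a.e., which completes the identification.

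The main obstacle is this identification step: expanding $\nabla^{g_{X_i}}T_i$ in harmonic coordinates via $(\ref{huuh})$ and matching, factor by factor, strong versus weak convergence in conjugate Lebesgue exponents so that every product passes to the limit, together with the translation — via the harmonic-coordinate test-field machinery of \cite{holp} — between the distance-function formulation of $L^2$-weak convergence in Definition \ref{lpt} and the harmonic-function one. Everything else (conditions $(1)$ and $(3)$ of Definition \ref{mmjj}, membership in $\Gamma_1$, and upgrading the subsequential limit to a full-sequence limit) is routine once the identification is in hand.
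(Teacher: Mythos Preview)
Your proposal is correct and follows essentially the same approach as the paper. Both arguments hinge on the Leibniz identity
\[
\left\langle \nabla^{g_{X_i}}_{\nabla f_i}T_i,\nabla^r_sh_i\right\rangle=\left\langle\nabla f_i,\nabla\langle T_i,\nabla^r_sh_i\rangle\right\rangle-\left\langle T_i,\nabla^{g_{X_i}}_{\nabla f_i}\nabla^r_sh_i\right\rangle,
\]
passing the first term to the limit via Proposition \ref{l} (the $L^{2p/(p+1)}$-weak convergence of $\nabla\langle T_i,\nabla^r_sh_i\rangle$ paired against the uniformly bounded, strongly convergent $\nabla f_i$) and the second via Theorem \ref{hessc} with Remark \ref{aaaaaaa} (the $L^2$-weak convergence of $\nabla^{g_{X_i}}_{\nabla f_i}\nabla^r_sh_i$ paired against the $L^2$-strong convergence of $T_i$, which is precisely where the hypothesis enters). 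The only organizational difference is that the paper invokes Corollary \ref{associ} and \cite[Corollary $3.53$]{holp} up front to reduce directly to the claim on harmonic test frames, whereas you first extract a subsequential $L^2$-weak limit $S_\infty$ and then identify it with $\nabla^{g_{X_\infty}}T_\infty$ using the harmonic rectifiable system; these are equivalent packagings of the same argument.
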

\begin{proof}
By Corollary \ref{associ} and \cite[Corollary $3.53$]{holp} it suffice to check the following:
\begin{claim}\label{difflp}
Let $\{y_i\}_{i \le \infty}, \{h_i\}_{i \le \infty}$ and $\hat{r}$ be as in Proposition \ref{l}, and let $\{f_i\}_{i \le \infty}$ be a uniform convergent sequence on $B_{\hat{r}}(y_{\infty})$ of harmonic functions $f_i$ on $B_{\hat{r}}(y_i)$.
Then we have 
\[\lim_{i \to \infty}\int_{B_t(y_i)}\left\langle \nabla_{\nabla f_{i}}^{g_{X_i}}T_i, \nabla^r_sh_{i}\right\rangle d\upsilon_i=\int_{B_t(y_{\infty})}\left\langle \nabla_{\nabla f_{\infty}}^{g_{X_{\infty}}}T_{\infty}, \nabla^r_sh_{\infty}\right\rangle d\upsilon_{\infty}\]
for every $t<\hat{r}$.
\end{claim}
The proof is as follow.
Note
\[\left\langle \nabla_{\nabla f_{i}}^{g_{X_i}}T_i, \nabla^r_sh_{i}\right\rangle=\left\langle \nabla f_{i}, \nabla \left\langle T_i, \nabla^r_sh_{i} \right\rangle \right\rangle-\left\langle T_i, \nabla_{\nabla f_{i}}^{g_{X_i}}\nabla^r_sh_{i}\right\rangle.\]
Corollary \ref{apphar}, Theorem \ref{hessc} and Proposition \ref{l} yield
\[\lim_{i \to \infty}\int_{B_t(y_i)}\left\langle \nabla f_{i}, \nabla \left\langle T_i, \nabla^r_sh_{i} \right\rangle \right\rangle d\upsilon_i=\int_{B_t(y_{\infty})}\left\langle \nabla f_{\infty}, \nabla \left\langle T_{\infty}, \nabla^r_sh_{\infty} \right\rangle \right\rangle d\upsilon_{\infty}.\]

On the other hand 
Theorem \ref{hessc} with Remark \ref{aaaaaaa} yields that
$\nabla_{\nabla f_{i}}^{g_{X_i}}\nabla^r_sh_{i}$ $L^2$-converges weakly to $\nabla_{\nabla f_{\infty}}^{g_{X_{\infty}}}\nabla^r_sh_{\infty}$ on $B_t(y_{\infty})$.
Thus we have
\[\lim_{i \to \infty}\int_{B_t(y_i)}\left\langle T_i, \nabla_{\nabla f_{i}}^{g_{X_i}}\nabla^r_sh_{i}\right\rangle d\upsilon_i=\int_{B_t(y_{\infty})}\left\langle T_{\infty}, \nabla_{\nabla f_{\infty}}^{g_{X_{\infty}}}\nabla^r_sh_{\infty}\right\rangle d\upsilon_{\infty}.\]
This completes the proof.
\end{proof}
Next we give a sufficient condition for the $L^2$-strong convergence of $\{T_i\}_i$.
\begin{theorem}\label{pplki}
If $(X_{\infty}, \upsilon_{\infty})$ is the noncollapsed limit of $\{(X_i, \upsilon_i)\}_i$, 
then we see that $T_i$ $L^2$-converges strongly to $T_{\infty}$ on $B_R(x_{\infty})$.
\end{theorem}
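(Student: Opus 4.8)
The plan is to upgrade the already available $L^2$-weak convergence to $L^2$-strong convergence, the crucial extra input being that in the noncollapsed case the pointwise norm $|T_i|^2$ can be controlled, outside a set of arbitrarily small measure, by scalar functions governed by Theorem \ref{srell} and Proposition \ref{l}. Put $C_0:=\sup_i\|T_i\|_{W_{2p}}<\infty$. We may assume $1<p<\infty$: indeed $W_{2\infty}(T^r_sB_R(x_i))\subset W_{2p'}(T^r_sB_R(x_i))$ with uniformly bounded norms for every finite $p'>1$ (recall $\upsilon_i$ is a probability measure and $H^{1,2}(B_R(x_i))\subset H^{1,q}(B_R(x_i))$ for $q\le2$), and the $L^2$-strong convergence does not depend on which such $p'$ one works with. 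Since an $L^{2p}$-weak convergent sequence is $L^2$-weak convergent, $T_i$ $L^2$-converges weakly to $T_{\infty}\in\Gamma_1(T^r_sB_R(x_{\infty}))$ (Corollary \ref{diff}), so $\liminf_i\|T_i\|_{L^2(B_R(x_i))}\ge\|T_{\infty}\|_{L^2(B_R(x_{\infty}))}$ by lower semicontinuity of norms; hence it suffices to prove
\[\limsup_{i\to\infty}\int_{B_R(x_i)}|T_i|^2\,d\upsilon_i\le\int_{B_R(x_{\infty})}|T_{\infty}|^2\,d\upsilon_{\infty}.\]

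Fix $\epsilon>0$ and $\delta\in(0,1)$. By the rectifiability of $(X_{\infty},\upsilon_{\infty})$ (Theorem \ref{fundpr}), the noncollapsed hypothesis $\mathrm{dim}\,X_{\infty}=n$, the harmonic rectifiable system of $(X_{\infty},\upsilon_{\infty})$ associated with $\{(X_i,\upsilon_i)\}_i$ (Theorem \ref{2n}, together with its analogue for sequences in $\overline{M(n,K,d)}$, cf. \cite{holp}), Cheeger-Colding's almost splitting theorem (cf. \cite{ch-co1}), and inner regularity of $\upsilon_{\infty}$, I choose finitely many pairwise disjoint compact sets $C_1,\dots,C_N\subset B_R(x_{\infty})$, balls $\overline B_{r_l}(y_l)\subset B_R(x_{\infty})$ with $C_l\subset B_{r_l}(y_l)$, $C(n,K)$-Lipschitz harmonic maps $h_l=(h_{l,1},\dots,h_{l,n})\colon B_{r_l}(y_l)\to\mathbf{R}^n$ with $\max\{\mathbf{Lip}(h_l|_{C_l}),\mathbf{Lip}((h_l|_{C_l})^{-1})\}\le1+\delta$ which, in the noncollapsed case, may additionally be taken so that $\int_{C_l}|\langle\nabla h_{l,a},\nabla h_{l,b}\rangle-\delta_{ab}|\,d\upsilon_{\infty}\le\Psi(\delta;n,K,d)$ for all $a,b$, together with uniformly convergent lifts $h_{i,l}\colon B_{r_l}(y_{i,l})\to\mathbf{R}^n$, so that $\upsilon_{\infty}(B_R(x_{\infty})\setminus\bigcup_lC_l)<\epsilon$; then I fix radii $t_l<r_l$ and, by a standard lifting argument together with the volume convergence of the noncollapsed sequence, compact sets $A_{i,l}\subset B_{t_l}(y_{i,l})$, mutually disjoint for large $i$, with $\mathbf{1}_{A_{i,l}}$ $L^2$-converging strongly to $\mathbf{1}_{C_l}$ on $B_{t_l}(y_{\infty})$ and $\limsup_i\upsilon_i(B_R(x_i)\setminus\bigcup_lA_{i,l})\le\upsilon_{\infty}(B_R(x_{\infty})\setminus\bigcup_lC_l)<\epsilon$.

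On the bad part $E_i:=B_R(x_i)\setminus\bigcup_lA_{i,l}$, Hölder's inequality and $1<p<\infty$ give $\int_{E_i}|T_i|^2\,d\upsilon_i\le\upsilon_i(E_i)^{(p-1)/p}C_0^2$, so $\limsup_i\int_{E_i}|T_i|^2\,d\upsilon_i\le\epsilon^{(p-1)/p}C_0^2$. On each $A_{i,l}$, write $(\nabla^r_sh)_\lambda:=\nabla h_{\lambda(1)}\otimes\cdots\otimes\nabla h_{\lambda(r)}\otimes dh_{\lambda(r+1)}\otimes\cdots\otimes dh_{\lambda(r+s)}$ for $\lambda\colon\{1,\dots,r+s\}\to\{1,\dots,n\}$; since $\{\nabla h_{i,l,a}\}_a$ is $L^{\infty}$-bounded and $L^1$-close to an orthonormal frame on $A_{i,l}$, a Hölder estimate using $\sup_i\|T_i\|_{L^{2p}}\le C_0$ yields
\[\Bigl|\int_{A_{i,l}}|T_i|^2\,d\upsilon_i-\int_{A_{i,l}}\sum_\lambda\bigl\langle T_i,(\nabla^r_sh_{i,l})_\lambda\bigr\rangle^2\,d\upsilon_i\Bigr|\le\Psi(\delta;n,K,d,C_0),\]
and likewise for $T_{\infty}$ on $C_l$. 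For each fixed $\lambda$, Proposition \ref{l} (applied to the harmonic map $(h_{i,l,\lambda(1)},\dots,h_{i,l,\lambda(r+s)})$) shows that $\bigl\langle T_i,(\nabla^r_sh_{i,l})_\lambda\bigr\rangle$ $L^2$-converges strongly on $B_{t_l}(y_{\infty})$ to $\bigl\langle T_{\infty},(\nabla^r_sh_l)_\lambda\bigr\rangle$ (this uses $2<2p$); hence, by the product rule for strong convergence and $\mathbf{1}_{A_{i,l}}\to\mathbf{1}_{C_l}$,
\[\int_{A_{i,l}}\sum_\lambda\bigl\langle T_i,(\nabla^r_sh_{i,l})_\lambda\bigr\rangle^2\,d\upsilon_i\longrightarrow\int_{C_l}\sum_\lambda\bigl\langle T_{\infty},(\nabla^r_sh_l)_\lambda\bigr\rangle^2\,d\upsilon_{\infty}\qquad(i\to\infty).\]
Summing over $l$, combining with the bad-part estimate and with $\upsilon_{\infty}(B_R(x_{\infty})\setminus\bigcup_lC_l)<\epsilon$, and finally letting $\delta\to0$ and then $\epsilon\to0$, completes the proof.

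The hard part is the good-part step: controlling the intrinsic norm $|T_i|^2$ by the scalar pairings $\bigl\langle T_i,(\nabla^r_sh_{i,l})_\lambda\bigr\rangle$ uniformly in $i$. This is exactly where the noncollapsed hypothesis is indispensable, because it forces the harmonic charts to be almost Euclidean — the $\nabla h_{i,l,a}$ almost orthonormal and the metric coefficients $L^1$-convergent — a feature that genuinely fails in the collapsed case, where the statement is false; cf. Remark \ref{harharhar}. The remaining ingredients are routine given the scalar results already at hand (Theorems \ref{srell} and \ref{tt}, Proposition \ref{l}) and the $L^p$-convergence calculus of \cite{holp}.
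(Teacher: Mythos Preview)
Your proposal is correct and follows essentially the same strategy as the paper: use the noncollapsed assumption to obtain almost-orthonormal harmonic frames, compare $|T_i|^2$ with $\sum_\lambda\langle T_i,(\nabla^r_sh_{i,l})_\lambda\rangle^2$ via elementary linear algebra (the paper's Claim~\ref{99889988}), invoke Proposition~\ref{l} for the $L^2$-strong convergence of the scalar pairings, and absorb the bad set via H\"older and the uniform $L^{2p}$ bound. The only real difference is packaging: the paper works locally, showing that $\{|T_i|^2\}_i$ is $L^1_{\mathrm{loc}}$-weakly upper semicontinuous at a.e.\ point and then appealing to Proposition~\ref{upperpro}, whereas you carry out a direct global good/bad decomposition; your version effectively inlines the covering argument behind Proposition~\ref{upperpro}.
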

\begin{proof}
Let $\mathcal{A}:=\{(C_{\infty, j}, \phi_{\infty, j})\}_{j \in \mathbf{N}}$ be a harmonic rectifiable system of $(X_{\infty}, \upsilon_{\infty})$,
 let $w_{\infty} \in \bigcup_jC_{\infty, j}$ and let $\epsilon>0$. 
Without loss of generality we can assume that $\mathrm{Leb}\,C_{\infty, j}=C_{\infty, j}$ (see for instance \cite[Lemma $3.5$]{holip}).
Then there exist $j$, $r_0 >0$ and a $C(n, K)$-Lipschitz harmonic map $\hat{\phi}_{\infty, j}: B_{r_0}(w_{\infty}) \to \mathbf{R}^k$ such that $w_{\infty} \in C_{\infty, j}$, that $\hat{\phi}_{\infty, j}|_{C_{\infty, j} \cap B_{r_0}(w_{\infty})}=\phi_{\infty, j}$, that 
$\max \{\mathbf{Lip}\phi_{\infty, j}, \mathbf{Lip}(\phi_{\infty, j})^{-1}\} \le 1 +\epsilon$ and that
\[\frac{\upsilon_{\infty}(B_t(w_{\infty}) \cap C_{\infty, j})}{\upsilon_{\infty}(B_t(w_{\infty}))}\ge 1-\epsilon\]
for every $t<r_0$.
Then for any $t<r_0$ and $l, m$ we have
\begin{align*}
&\frac{1}{\upsilon_{\infty}(B_t(w_{\infty}))}\int_{B_t(w_{\infty})}\left|\langle \nabla \hat{\phi}_{\infty, j, l}, \nabla \hat{\phi}_{\infty, j, m}\rangle - \delta_{lm}\right|d\upsilon_{\infty} \\
&\le \frac{1}{\upsilon_{\infty}(B_t(w_{\infty}))}\int_{B_t(w_{\infty}) \cap C_{\infty, j}}\left|\langle \nabla \phi_{\infty, j, l}, \nabla \phi_{\infty, j, m}\rangle - \delta_{lm}\right|d\upsilon_{\infty} + C(n, K)\epsilon \\
&\le C(n, K)\epsilon,
\end{align*}
where $\hat{\phi}_{\infty, j}:=(\hat{\phi}_{\infty, j, 1}, \ldots, \hat{\phi}_{\infty, j, k})$ and $k=\mathrm{dim}\,X_{\infty}$.

Let $\{w_i\}_{i\le \infty}$ be a convergent sequence of $w_i \in X_i$ and fix $t>0$ with $t<r_0/2$. 
By Corollary \ref{apphar} without loss of generality we can assume that there exists a uniform convergent sequence $\{\hat{\phi}_{i, j}\}_{i\le \infty}$ on $B_{2r_0/3}(w_{\infty})$ of $C(n, K, r_0)$-Lipschitz harmonic maps $\hat{\phi}_{i, j}=(\hat{\phi}_{i, j, 1}, \ldots, \hat{\phi}_{i, j, k}): B_{2r_0/3}(w_i) \to \mathbf{R}^k$.

Then we have
\[\frac{1}{\upsilon_i(B_t(w_{i}))}\int_{B_t(w_{i})}\left|\langle \nabla \hat{\phi}_{i, j, l}, \nabla \hat{\phi}_{i, j, m}\rangle - \delta_{lm}\right|d\upsilon_i<C(n, K)\epsilon\]
for every sufficiently large $i$.
Let $A_i:=\bigcap_{l, m}\{|\langle \nabla \hat{\phi}_{i, j, l}, \nabla \hat{\phi}_{i, j, m}\rangle - \delta_{lm}|\le \epsilon^{1/2}\} \cap B_t(w_i)$.
Then we have
\begin{align*}
\frac{\upsilon_i(B_t(w_i)\setminus A_i)}{\upsilon_i(B_t(w_i))} &\le \frac{\epsilon^{-1/2}}{\upsilon_i(B_t(w_i))}\sum_{l, m}\int_{A_{i, l, m}}\left|\langle \nabla \hat{\phi}_{i, j, l}, \nabla \hat{\phi}_{i, j, m}\rangle - \delta_{lm}\right|d\upsilon_i \\
&\le C(n, K)\epsilon^{1/2},
\end{align*}
where $A_{i, l, m}:=\{|\langle \nabla \hat{\phi}_{i, j, l}, \nabla \hat{\phi}_{i, j, m}\rangle - \delta_{lm}|\le \epsilon^{1/2}\} \cap B_t(w_i)$.
It is easy to check the following (see also \cite[Proposition $2.1$]{holp}):
\begin{claim}\label{99889988}
Let $\epsilon>0$, let $V$ be a $k$-dimensional Hilbert space with the inner product $\langle \cdot, \cdot \rangle$, and let $\{v_i\}_{1\le i \le k}$ be an $\epsilon$-orthogonal bases of $V$ which means that 
\[\langle v_i, v_j \rangle =\delta_{ij}\pm \epsilon\]
holds for any $i, j$. 
Then for every $v \in V$ we have
\[|v|^2=\left(1\pm \Psi(\epsilon; k)\right)\sum_{i=1}^k\langle v, v_i\rangle^2.\]
\end{claim}
Let $\Lambda$ be the set of maps from $\{1, \ldots, r+s\}$ to $\{1, \ldots, k\}$ and let $\hat{\phi}_{i, j}^{\sigma}$ be the harmonic map from  $B_t(w_i)$ to $\mathbf{R}^{r+s}$ defined by $\hat{\phi}_{i, j}^{\sigma}:=(\hat{\phi}_{i, j, \sigma(1)}, \ldots, \hat{\phi}_{i, j, \sigma(r+s)})$ for every $\sigma \in \Lambda$.
Note that  $|\nabla^r_s\hat{\phi}_{i, j}^{\sigma}| \le C(n, K, r_0, r, s)$ and that
 $\{\nabla^r_s\hat{\phi}_{i, j}^{\sigma}(x)\}_{\sigma \in \Lambda}$ is a $\Psi(\epsilon; n, r, s)$-orthogonal bases of $(T^r_s)_xX_i$ for any $i$ and $x \in A_i$.
Then
\begin{align}\label{weddee}
&\frac{1}{\upsilon_i(B_t(w_i))}\int_{B_t(w_i)}|T_i|^2d\upsilon_i \nonumber \\
&\le \frac{1}{\upsilon_i(B_t(w_i))}\int_{A_i}|T_i|^2d\upsilon_i + \frac{1}{\upsilon_i(B_t(w_i))}\int_{B_t(w_i)\setminus A_i}|T_i|^2d\upsilon_i \nonumber \\
&\le \frac{1}{\upsilon_i(B_t(w_i))}\int_{A_i}|T_i|^2d\upsilon_i + \left(\frac{\upsilon_i(B_t(w_i)\setminus A_i)}{\upsilon_i(B_t(w_i))}\right)^{(p-1)/p}\left(\int_{B_t(w_i)}|T_i|^{2p}d\upsilon_i\right)^{1/p} \nonumber \\
&\le \frac{1}{\upsilon_i(B_t(w_i))}\int_{A_i}|T_i|^2d\upsilon_i + \Psi(\epsilon; n, K, L, p)
\end{align}
for every sufficiently large $i$.
Thus Proposition \ref{l} and Claim \ref{99889988} give
\begin{align}\label{vvfg}
&\frac{1}{\upsilon_i(B_t(w_i))}\int_{B_t(w_i)}|T_i|^2d\upsilon_i \nonumber \\
&\le \frac{1}{\upsilon_i(B_t(w_i))}\int_{A_i}|T_i|^2d\upsilon_i + \Psi(\epsilon;n, K, L, p)\nonumber \\
&\le \frac{1 + \Psi(\epsilon;n, r, s)}{\upsilon_i(B_r(w_i))}\int_{A_i}\sum_{\sigma \in \Lambda}\left\langle T_i, \nabla^r_s\hat{\phi}_{i, j}^{\sigma} \right\rangle^2 d\upsilon_i + \Psi(\epsilon; n, K, L, p)\nonumber \\
&\le \frac{1 + \Psi(\epsilon;n,  r, s)}{\upsilon_{i}(B_t(w_{i}))}\int_{B_t(w_i)}\sum_{\sigma \in \Lambda}\left\langle T_i, \nabla^r_s\hat{\phi}_{i, j} ^{\sigma}\right\rangle^2 d\upsilon_{i} + \Psi(\epsilon; n, K, L, p)\nonumber \\
&\le \frac{1 + \Psi(\epsilon;n,  r, s)}{\upsilon_{\infty}(B_t(w_{\infty}))}\int_{B_t(w_{\infty})}\sum_{\sigma \in \Lambda}\left\langle T_{\infty}, \nabla^r_s\hat{\phi}_{\infty, j}^{\sigma} \right\rangle^2 d\upsilon_{\infty} + \Psi(\epsilon; n, K, L, p).
\end{align}
On the other hand by an argument similar to (\ref{weddee}) we have
\begin{align*}
&\frac{1}{\upsilon_{\infty}(B_t(w_{\infty}))}\int_{B_t(w_{\infty})}\sum_{\sigma \in \Lambda}\left\langle T_{\infty}, \nabla^r_s\hat{\phi}_{\infty, j}^{\sigma} \right\rangle^2 d\upsilon_{\infty} \\
&\le \frac{1}{\upsilon_{\infty}(B_t(w_{\infty}))}\int_{A_{\infty}}\sum_{\sigma \in \Lambda}\left\langle T_{\infty}, \nabla^r_s\hat{\phi}_{\infty, j}^{\sigma} \right\rangle^2 d\upsilon_{\infty} + \Psi(\epsilon;n, K, L, p, r_0, r, s).
\end{align*}
Thus we have
\begin{align}\label{uulloi}
&\frac{1}{\upsilon_{\infty}(B_t(w_{\infty}))}\int_{B_t(w_{\infty})}\sum_{\sigma \in \Lambda}\left\langle T_{\infty}, \nabla^r_s\hat{\phi}_{\infty, j}^{\sigma} \right\rangle^2 d\upsilon_{\infty} \nonumber \\
&\le\frac{1 + \Psi(\epsilon;n, r, s)}{\upsilon_{\infty}(B_t(w_{\infty}))}\int_{A_{\infty}}|T_{\infty}|^2 d\upsilon_{\infty} + \Psi(\epsilon; n, K, L, p, r_0, r, s)\nonumber \\
&\le \frac{1 + \Psi(\epsilon;n,  r, s)}{\upsilon_{\infty}(B_t(w_{\infty}))}\int_{B_t(w_{\infty})}|T_{\infty}|^2 d\upsilon_{\infty} + \Psi(\epsilon; n, K, L, p, r_0, r, s)
\end{align}
for every sufficiently large $i$.
Therefore (\ref{vvfg}) and (\ref{uulloi}) yield that $\{|T_i|^2\}_i$ $L^1_{\mathrm{loc}}$-weakly upper semicontinuous at $w_{\infty}$.
In particular by Proposition \ref{upperpro} we have
\[\limsup_{i \to \infty}\int_{B_R(x_i)}|T_i|^2d\upsilon_i\le \int_{B_R(x_{\infty})}|T_{\infty}|^2d\upsilon_{\infty}.\]
This completes the proof.
\end{proof}
Theorems \ref{uuy}, \ref{pplki} and the $L^p$-weak compactness give the following Rellich type compactness:
\begin{theorem}\label{rr}
Let $R>0$, let $r, s \in \mathbf{Z}_{\ge 0}$, let $1<p \le \infty$, let $(X_i, \upsilon_i) \stackrel{GH}{\to} (X_{\infty}, \upsilon_{\infty})$ in $\overline{M(n, K, d)}$ with $\mathrm{dim}\,X_i=\mathrm{dim}\,X_{\infty} \ge 1$ for every $i$, let $\{x_i\}_{i \le \infty}$ be a convergent sequence of $x_i \in X_i$ and let 
$\{T_i\}_{i < \infty}$ be a sequence of $T_i \in W_{2p}(T^r_sB_R(x_i))$ with $\sup_i||T_i||_{W_{2p}}<\infty$.
Then there exist $T_{\infty} \in W_{2p}(T^r_sB_R(x_{\infty}))$ and a subsequence $\{i(j)\}_j$ such that $T_{i(j)}$ $L^2$-converges strongly to $T_{\infty}$ on $B_R(x_{\infty})$ and that $\nabla^{g_{X_{i(j)}}} T_{i(j)}$ $L^{2}$-converges weakly to $\nabla^{g_{X_{\infty}}} T_{\infty}$ on $B_R(x_{\infty})$.
\end{theorem}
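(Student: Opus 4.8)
The plan is to deduce Theorem \ref{rr} from the $L^p$-weak compactness for tensor fields together with Theorems \ref{pplki} and \ref{uuy}; the analytic substance has already been isolated in those two results, so what remains is to assemble them and to check that the hypotheses match up.

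First I would observe that $\mathrm{dim}\,X_{\infty}\ge 1$ forces $\mathrm{diam}\,X_{\infty}>0$, so once a candidate limit has been produced we are in the running setting of this subsection. Since $\sup_i\|T_i\|_{W_{2p}}<\infty$ gives in particular $\sup_i\|T_i\|_{L^{2p}}<\infty$, the $L^p$-weak compactness recalled after Definition \ref{lpt} yields, after passing to a subsequence $\{i(j)\}_j$, a tensor field $T_{\infty}\in L^{2p}(T^r_sB_R(x_{\infty}))$ such that $T_{i(j)}$ $L^{2p}$-converges weakly to $T_{\infty}$ on $B_R(x_{\infty})$. (When $p=\infty$ one instead applies the compactness to the sequence $\{T_{i(j)}\}$, which is $L^q$-bounded for every finite $q$, and takes a diagonal subsequence; lower semicontinuity of the $L^q$-norms then forces $T_{\infty}\in L^{\infty}$.) Along this subsequence $T_{\infty}$ is exactly the $L^{2p}$-weak limit of $\{T_{i(j)}\}_j$, so the statements of Theorems \ref{pplki} and \ref{uuy} apply to it.

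Next, the hypothesis $\mathrm{dim}\,X_i=\mathrm{dim}\,X_{\infty}$ for every $i$ says precisely that the convergent subsequence is noncollapsed, so Theorem \ref{pplki} upgrades the $L^{2p}$-weak convergence to strong $L^2$-convergence $T_{i(j)}\to T_{\infty}$ on $B_R(x_{\infty})$. With this strong $L^2$-convergence available, Theorem \ref{uuy} applies and yields simultaneously that $T_{\infty}\in W_{2p}(T^r_sB_R(x_{\infty}))$ and that $\nabla^{g_{X_{i(j)}}}T_{i(j)}$ $L^2$-converges weakly to $\nabla^{g_{X_{\infty}}}T_{\infty}$ on $B_R(x_{\infty})$, which is exactly the assertion of the theorem.

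I do not expect a genuine obstacle here: the hard work --- the $L^1_{\mathrm{loc}}$-weak upper semicontinuity of $|T_i|^2$ at points of a harmonic rectifiable system behind Theorem \ref{pplki}, and the Gromov--Hausdorff closedness of $\nabla^{g_X}$ behind Theorem \ref{uuy} --- is already in hand. The only points needing attention are bookkeeping: verifying that "noncollapsed" is implied by the stated dimension condition so that Theorem \ref{pplki} is genuinely applicable, and the routine adaptation of the weak-compactness step in the borderline case $p=\infty$.
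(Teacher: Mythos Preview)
Your proposal is correct and matches the paper's own argument exactly: the paper states Theorem \ref{rr} as an immediate consequence of Theorems \ref{uuy}, \ref{pplki} and the $L^p$-weak compactness, with no additional proof. Your bookkeeping remarks (that $\dim X_\infty\ge 1$ gives $\mathrm{diam}\,X_\infty>0$, that the dimension hypothesis is precisely the noncollapsed condition, and the routine handling of $p=\infty$) are accurate and fill in details the paper leaves implicit.
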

\begin{corollary}
$W_{2p}(T^r_sB_R(x))$ is a Banach space for any $R>0$, $p \in (1, \infty]$, $r, s \in \mathbf{Z}_{\ge 0}$, $(X, \upsilon) \in \overline{M(n, K, d)}$ with $\mathrm{diam}\,X>0$, and $x \in X$.
\end{corollary}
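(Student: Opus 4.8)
The plan is to prove completeness of $W_{2p}(T^r_sB_R(x))$ directly from the Rellich compactness of Theorem \ref{rr}, exploiting that a Cauchy sequence already converges in a strong enough sense to identify its limit. So first I would take a Cauchy sequence $\{T_j\}_{j<\infty}$ in $W_{2p}(T^r_sB_R(x))$ with respect to the norm $||T||_{W_{2p}}=||T||_{L^{2p}}+||\nabla^{g_X}T||_{L^2}$, and observe that since $\{T_j\}_j$ is bounded in $W_{2p}$, Theorem \ref{rr} applied to the \emph{constant} convergent sequence $(X_i,\upsilon_i)\equiv(X,\upsilon)$ (note $\mathrm{dim}\,X\ge 1$ since $\mathrm{diam}\,X>0$) produces $T_\infty\in W_{2p}(T^r_sB_R(x))$ and a subsequence $\{T_{j(l)}\}_l$ such that $T_{j(l)}$ $L^2$-converges strongly to $T_\infty$ and $\nabla^{g_X}T_{j(l)}$ $L^2$-converges weakly to $\nabla^{g_X}T_\infty$ on $B_R(x)$. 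Since the convergence is over the trivial Gromov-Hausdorff sequence, $L^2$-strong convergence here is ordinary $L^2$-convergence (see \cite[Remark $3.77$]{holp}).

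Next I would upgrade this to convergence in the full $W_{2p}$-norm along the whole sequence. For the $L^{2p}$-part: $\{T_j\}_j$ is Cauchy in $L^{2p}$, hence converges in $L^{2p}$ to some limit, which must agree a.e.\ with $T_\infty$ because $T_{j(l)}\to T_\infty$ already in $L^2$; thus $T_j\to T_\infty$ in $L^{2p}(T^r_sB_R(x))$. For the covariant-derivative part: $\{\nabla^{g_X}T_j\}_j$ is Cauchy in $L^2(T^r_{s+1}B_R(x))$, hence converges in $L^2$ to some $S$; since also $\nabla^{g_X}T_{j(l)}$ converges weakly in $L^2$ to $\nabla^{g_X}T_\infty$, uniqueness of weak limits forces $S=\nabla^{g_X}T_\infty$. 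Therefore $\nabla^{g_X}T_j\to\nabla^{g_X}T_\infty$ in $L^2$, and combining the two pieces gives $||T_j-T_\infty||_{W_{2p}}\to 0$. Finally, since we already know $T_\infty\in W_{2p}(T^r_sB_R(x))$ from Theorem \ref{rr}, the space is complete; together with the linearity (Proposition \ref{linea}) and the fact that $||\cdot||_{W_{2p}}$ is a norm, this is exactly the assertion that $W_{2p}(T^r_sB_R(x))$ is a Banach space.

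I do not expect a serious obstacle here, since the argument is the standard "Cauchy sequence has a convergent subsequence $\Rightarrow$ the whole sequence converges" trick; the only point requiring a little care is the identification of the limit of $\nabla^{g_X}T_j$ with $\nabla^{g_X}T_\infty$, where one must combine the $L^2$-strong convergence of $\{\nabla^{g_X}T_j\}_j$ (which exists automatically by Cauchyness in $L^2$) with the a priori weaker information ($L^2$-weak convergence of the subsequential covariant derivatives) coming out of Theorem \ref{rr}, using uniqueness of weak limits. Also worth noting: strictly speaking Theorem \ref{rr} is stated for a genuinely varying sequence of spaces, so I would remark explicitly that the constant sequence $(X_i,\upsilon_i)\equiv(X,\upsilon)$ trivially Gromov-Hausdorff converges to $(X,\upsilon)$ with $\mathrm{dim}\,X_i=\mathrm{dim}\,X_\infty\ge 1$, and that in this situation the notions of $L^2$-weak/strong convergence reduce to the ordinary ones, so the conclusion of Theorem \ref{rr} gives precisely what is needed.
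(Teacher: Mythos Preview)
Your proof is correct and follows essentially the same approach as the paper: take a Cauchy sequence, apply Theorem \ref{rr} to the constant Gromov--Hausdorff sequence $(X_i,\upsilon_i)\equiv(X,\upsilon)$ to identify the limit as an element of $W_{2p}$, and use completeness of $L^{2p}$ and $L^2$ together with uniqueness of weak limits to conclude convergence of the full sequence in the $W_{2p}$-norm. The paper's version is slightly terser (it first extracts the $L^{2p}$ and $L^2$ limits and then invokes Theorem \ref{rr} to see that the former lies in $W_{2p}$ with the latter as its covariant derivative), but the content is identical.
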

\begin{proof}
It suffices to check the completeness.

Let $\{T_i\}_{i<\infty}$ be a Cauchy sequence in $W_{2p}(T^r_sB_R(x))$.
Since $L^{2p}(T^r_sB_R(x))$ and $L^2(T^r_{s+1}B_R(x))$ are complete there exist the $L^{2p}$-strong limit $T_{\infty} \in L^{2p}(T^r_sB_R(x))$ of $\{T_i\}_i$ and the $L^2$-strong limit $S_{\infty} \in L^2(T^r_{s+1}B_R(x))$ of $\{\nabla^{g_{X_i}} T_i\}_i$.   
On the other hand, applying Theorem \ref{rr} to the case that $(X_i, \upsilon_i) \equiv (X, \upsilon)$ yields that $T_{\infty} \in W_{2p}(T^r_sB_R(x))$ and $S_{\infty}=\nabla^{g_{X_{\infty}}} T_{\infty}$.
This completes the proof.
\end{proof}
\subsection{A closedness of Gigli's Levi-Civita connection $\nabla^{\upsilon}$}
The main purpose of this subsection is to introduce a generalization of the closedness of Gigli's Levi-Civita connection \cite[Theorem $3.4.2$]{gigli}  to the Gromov-Hausdorff setting which plays a key role in the next section.
It is the following:
\begin{theorem}\label{mnj}
Let $(X_i, \upsilon_i) \stackrel{GH}{\to} (X_{\infty}, \upsilon_{\infty})$ in $\overline{\mathcal{M}(n, K, d)}$ with $\mathrm{diam}\,X_{\infty}>0$, 
let $r, s \in \mathbf{Z}_{\ge 0}$,
let $\{T_i\}_{i<\infty}$ be a sequence of $T_i \in W^{1, 2}_C(T^r_sX_i)$ with $\sup_i||T_i||_{W^{1, 2}_C}<\infty$, and let 
$T_{\infty}$ be the $L^2$-strong limit on $X_{\infty}$ of them.
Then we see that $T_{\infty} \in W^{1, 2}_C(T^r_sX_{\infty})$ and that 
$\nabla^{\upsilon_i}T_i$ $L^2$-converges weakly to $\nabla^{\upsilon_{\infty}}T_{\infty}$ on $X_{\infty}$.
\end{theorem}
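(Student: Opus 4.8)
The plan is to test the limit covariant derivative against the defining relation \eqref{appp}. Given $g_1,\dots,g_{r+s+1}\in \mathrm{Test}F(X_\infty)$ and $g_0\in\mathrm{LIP}(X_\infty)$, the strategy is to approximate each $g_j$ by suitable functions on $X_i$ and pass to the limit in the corresponding identity \eqref{appp} for $T_i$. Concretely, first I would fix $L^2$-strong approximating sequences: using Theorem \ref{app6} (or rather its proof, via Theorem \ref{pois} and \eqref{lipreg}) one produces for each $j\in\{1,\dots,r+s+1\}$ functions $g_{i,j}\in\mathrm{Test}F(X_i)$ with $\sup_i\mathbf{Lip}g_{i,j}<\infty$, with $\sup_i\|\Delta^{\upsilon_i}g_{i,j}\|_{L^\infty}<\infty$ (this is where we use that one can take $\Delta^{\upsilon_\infty}g_j$ Lipschitz, i.e. the new test class $\widetilde{\mathrm{Test}}F$, approximated in $H^{1,2}$), and with $g_{i,j},\nabla g_{i,j},\Delta^{\upsilon_i}g_{i,j}$ all $L^2$-converging strongly to $g_j,\nabla g_j,\Delta^{\upsilon_\infty}g_j$; similarly take $g_{i,0}\in\mathrm{LIP}(X_i)$ with $\sup_i\mathbf{Lip}g_{i,0}<\infty$ converging to $g_0$ via \cite[Theorem 4.2]{holip}. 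Since $\sup_i\|T_i\|_{W^{1,2}_C}<\infty$, by $L^2$-weak compactness there is a subsequence along which $\nabla^{\upsilon_i}T_i$ $L^2$-converges weakly to some $S_\infty\in L^2(T^r_{s+1}X_\infty)$; it suffices to identify $S_\infty=\nabla^{\upsilon_\infty}T_\infty$, and since the subsequence is arbitrary this gives the full statement.

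The core of the argument is then a termwise limit in \eqref{appp}. The left-hand side $\int_{X_i}g_{i,0}\langle\nabla^{\upsilon_i}T_i,\nabla^r_{s+1}g_i\rangle d\upsilon_i$ converges to $\int_{X_\infty}g_0\langle S_\infty,\nabla^r_{s+1}g_\infty\rangle d\upsilon_\infty$ because $g_{i,0}\nabla^r_{s+1}g_i$ $L^2$-converges strongly to $g_0\nabla^r_{s+1}g_\infty$ (a product of uniformly Lipschitz functions with $L^2$-strongly convergent gradient tensors — here one uses the standard stability of products of such convergences from \cite{holp}) against the $L^2$-weakly convergent $\nabla^{\upsilon_i}T_i$. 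On the right-hand side, the first term involves $\langle T_i,\nabla^r_s g_i^{(r+s+1)}\rangle\,\mathrm{div}^{\upsilon_i}(g_{i,0}\nabla g_{i,r+s+1})$; expanding $\mathrm{div}^{\upsilon_i}(g_{i,0}\nabla g_{i,r+s+1})=\langle\nabla g_{i,0},\nabla g_{i,r+s+1}\rangle-g_{i,0}\Delta^{\upsilon_i}g_{i,r+s+1}$, the factor $\mathrm{div}^{\upsilon_i}(g_{i,0}\nabla g_{i,r+s+1})$ is bounded in $L^\infty$ and $L^2$-strongly convergent, $\nabla^r_s g_i^{(r+s+1)}$ is $L^2$-strongly convergent with uniform $L^\infty$ bound, and $T_i$ is $L^2$-strongly convergent, so the triple product converges. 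For the curvature-type terms $\langle T_i^*,R_j^{X_i}(g_i)\rangle$ the key subfactor is $\nabla^{\upsilon_i}_{\nabla g_{i,r+s+1}}\nabla g_{i,j}$; by Theorem \ref{33766}(2) this equals $\nabla^{g_{X_i}}_{\nabla g_{i,j}}\nabla g_{i,j}$... more precisely equals the Hessian contraction $\mathrm{Hess}^{g_{X_i}}_{g_{i,j}}(\nabla g_{i,r+s+1},\cdot)^\sharp$, and by Theorem \ref{hessc}(5) (or \ref{hessc}(6)) $\nabla^{g_{X_i}}_{\nabla g_{i,r+s+1}}\nabla g_{i,j}$ $L^2$-converges weakly — but we need strong convergence to pair against $T_i$. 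Here one invokes that the $g_{i,j}$ lie in $\widetilde{\mathrm{Test}}F(X_i)$ with $\Delta^{\upsilon_i}g_{i,j}$ uniformly Lipschitz, so by Theorem \ref{aarr} and Proposition \ref{998io} one has uniform higher integrability of $\mathrm{Hess}^{g_{X_i}}_{g_{i,j}}$, hence by Theorem \ref{rr} (Rellich compactness for $W_{2p}$-tensors, or rather its $L^2$-strong conclusion under the relevant bound) the relevant Hessian contractions converge $L^2$-strongly, and the product with $T_i$ converges.

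The main obstacle, and the step I would spend the most care on, is precisely establishing the $L^2$-strong (not merely weak) convergence of the curvature terms $R_j^{X_i}(g_i)$, because \eqref{appp} pairs them against $T_i$ which is only $L^2$-strongly convergent, and a weak$\times$strong pairing with two simultaneously non-uniformly-bounded factors does not pass to the limit. The resolution is to arrange the approximating $g_{i,j}$ to have $\Delta^{\upsilon_i}g_{i,j}$ Lipschitz with uniform Lipschitz bound, so that $\mathrm{Hess}^{g_{X_i}}_{g_{i,j}}$ is uniformly bounded in $L^2$ \emph{and} in $W_{2p}$ for suitable $p$, allowing Theorem \ref{rr}'s strong compactness (or the upper-semicontinuity argument of Theorem \ref{pplki} in the noncollapsed case, and \cite[Corollary 4.29]{holp}-type arguments in general) to upgrade weak to strong; then identify $\nabla^{\upsilon_i}_{\nabla g_{i,r+s+1}}\nabla g_{i,j}$ with a $g_{X_i}$-covariant quantity via Theorem \ref{33766}(2) and invoke the continuity of such quantities. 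Once all five types of integrands are shown to converge, passing to the limit in \eqref{appp} for $T_i$ yields \eqref{appp} for $(T_\infty,S_\infty)$, whence $T_\infty\in W^{1,2}_C(T^r_sX_\infty)$ and $\nabla^{\upsilon_\infty}T_\infty=S_\infty$, completing the proof.
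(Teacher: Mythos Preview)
Your overall strategy---test against the defining relation \eqref{appp}, approximate the test functions $g_j$ by elements of $\widetilde{\mathrm{Test}}F(X_i)$ with uniformly Lipschitz Laplacians (hence uniformly Lipschitz gradients), and pass to the limit term by term---is exactly the paper's approach. The paper packages this as Theorem~\ref{techni2}(2) (closedness for the $\widetilde{W}^{1,2}_C$ version, tested only against $\widetilde{\mathrm{Test}}F$) together with Theorem~\ref{198183}(8) ($\widetilde{W}^{1,2}_C=W^{1,2}_C$, via Proposition~\ref{approx}), but the content is the same.

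Where your argument goes wrong is the handling of the curvature terms $R_j^{X_i}(g_i)$. You assert that you need $L^2$-\emph{strong} convergence of the Hessian-contraction factor $\nabla^{\upsilon_i}_{\nabla g_{i,r+s+1}}\nabla g_{i,j}$ in order to pair it with the $L^2$-strong $T_i$, and then try to obtain this via Theorem~\ref{rr}. That route fails twice: Theorem~\ref{rr} requires the noncollapsed hypothesis $\dim X_i=\dim X_\infty$, which is not assumed in Theorem~\ref{mnj}; and even in the noncollapsed case Theorem~\ref{rr} needs a bound on $\nabla^{g_{X_i}}\mathrm{Hess}^{g_{X_i}}_{g_{i,j}}$ (a third derivative of $g_{i,j}$), which nothing here provides. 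Your fallback reference to ``\cite[Corollary~4.29]{holp}-type arguments'' does not supply these bounds either.

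The point you are missing is that $L^2$-\emph{weak} convergence of the Hessian contraction already suffices. Write the curvature tensor as $R_j^{X_i}(g_i)=A_i\otimes H_i\otimes B_i$, where $A_i,B_i$ are tensor products of the $\nabla g_{i,l}$ (each $L^\infty$-bounded and $L^2$-strongly convergent) and $H_i=\nabla^{\upsilon_i}_{\nabla g_{i,r+s+1}}\nabla g_{i,j}$ is $L^2$-weakly convergent by Theorem~\ref{hessc}(5) together with Remark~\ref{aaaaaaa}. Contracting $g_{i,0}T_i^*$ first against $A_i$ and $B_i$ yields an $L^2$-\emph{strongly} convergent vector field $S_i$ (product of $L^2$-strong with $L^\infty$-bounded $L^2$-strong factors; cf.\ \cite[Propositions~3.69--3.70]{holp}), and then $\int_{X_i}\langle S_i,H_i\rangle\,d\upsilon_i$ converges as an $L^2$-strong/$L^2$-weak pairing. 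This is precisely how the paper's proof of Theorem~\ref{techni2}(1) handles the Lie-bracket terms, and the argument for (2) is identical. So drop the appeal to Theorem~\ref{rr} entirely; the weak convergence you already noted from Theorem~\ref{hessc} is enough.
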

The proof will be given in subsection $7.2$.
By Theorem \ref{mnj} we have the following:
\begin{theorem}\label{mthm}
Let $R>0$, let $r, s \in \mathbf{Z}_{\ge 0}$, let $\{(X_i, \upsilon_i)\}_{i<\infty}$ be a sequence in $M(n, K, d)$, let $(X_{\infty}, \upsilon_{\infty})$ be the Gromov-Hausdorff limit of them with $\mathrm{diam}\,X_{\infty}>0$, let $\{x_i\}_{i \le \infty}$ be a convergent sequence of $x_i \in X_i$, 
let $\{T_i\}_{i < \infty}$ be a sequence of $T_i \in C^{\infty}(T^r_sB_R(x_i))$ with 
\[\sup_{i < \infty}\left( ||T_i||_{L^2} + ||\nabla T_i||_{L^2}\right)<\infty,\]
and let $T_{\infty}$ be the $L^2$-strong limit on $B_R(x_{\infty})$ of them.
Then we have the following.
\begin{enumerate}
\item $T_{\infty} \in W_{2n/(n-1)}(T^r_sB_R(x_{\infty}))$.
\item $|T_{\infty}|^2 \in H^{1, 2n/(2n-1)}(B_R(x_{\infty}))$.
\item $T_i$ $L^{2n/(n-1)}$-converges weakly to $T_{\infty}$ on $B_R(x_{\infty})$.
\item $T_{i}$ $L^{r}$-converges strongly to $T_{\infty}$ on $B_R(x_{\infty})$ for every $1<r<2n/(n-1)$.
\item $\nabla |T_i|^2$ $L^{2n/(2n-1)}$-converges weakly to $\nabla |T_{\infty}|^2$ on $B_R(x_{\infty})$.
\item $\nabla T_i$ $L^2$-converges weakly to $\nabla^{g_{X_{\infty}}} T_{\infty}$ on $B_R(x_{\infty})$.
\item If $R>d$, i.e., $X_i=B_R(x_i)$ for every $i \le \infty$, then we have $T_{\infty} \in W^{1, 2}_C(T^r_sX_{\infty})$ and $\nabla^{g_{X_{\infty}}}T_{\infty}=\nabla^{\upsilon_{\infty}}T_{\infty}$.
\end{enumerate}
\end{theorem}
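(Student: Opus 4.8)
The plan is to assemble Theorem \ref{mthm} from three inputs: uniform integrability and Sobolev regularity estimates for $\{T_i\}_i$ coming from the Maheux--Saloff-Coste inequality (Theorem \ref{sobo}) and Kato's inequality; the closedness of $\nabla^{g_X}$ under Gromov--Hausdorff convergence (Theorem \ref{uuy}); and the closedness of Gigli's connection $\nabla^{\upsilon}$ (Theorem \ref{mnj}). Note that no Bochner-type argument is needed for $\nabla T_i$ itself, since $\sup_i||\nabla T_i||_{L^2}<\infty$ is already part of the hypothesis; this is what makes the situation simpler than in Proposition \ref{998io}.

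First I would prove $\sup_i||T_i||_{L^{2n/(n-1)}(B_R(x_i))}<\infty$ and $\sup_i|||T_i|^2||_{H^{1,2n/(2n-1)}(B_R(x_i))}<\infty$. Put $u_i:=|T_i|^2\in\mathrm{LIP}_{\mathrm{loc}}(B_R(x_i))$. Kato's inequality gives $|\nabla u_i|\le 2|T_i||\nabla T_i|$, so $||u_i||_{L^1(B_R(x_i))}+||\nabla u_i||_{L^1(B_R(x_i))}\le ||T_i||_{L^2}^2+2||T_i||_{L^2}||\nabla T_i||_{L^2}$ is uniformly bounded. Theorem \ref{sobo} with $(q,p)=(n/(n-1),1)$, together with the Bishop--Gromov inequality (which bounds $\upsilon_i(B_R(x_i))$ from below by a positive constant depending only on $n,K,d,R$ and supplies the doubling property), yields a Sobolev--Poincar\'e inequality on $B_R(x_i)$ with constants depending only on $n,K,d,R$; applying it to $u_i$ on the balls $B_\rho(x_i)$ and letting $\rho\uparrow R$ (monotone convergence, together with convergence of the averages of $u_i$ over $B_\rho(x_i)$) gives the first bound. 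The $L^{2n/(2n-1)}$-bound on $\nabla u_i$ then follows from $|\nabla u_i|^{2n/(2n-1)}\le 2^{2n/(2n-1)}|T_i|^{2n/(2n-1)}|\nabla T_i|^{2n/(2n-1)}$ and H\"older's inequality with exponents $(2n-1)/(n-1)$ and $(2n-1)/n$, exactly as in the proof of Proposition \ref{998io}.

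Next I would pass to the limit. From the previous step $\{T_i\}_i$ is $L^{2n/(n-1)}$-bounded, so the $L^{2n/(n-1)}$-weak compactness gives that $T_i$ converges $L^{2n/(n-1)}$-weakly on $B_R(x_\infty)$, and the limit must be $T_\infty$ because $T_i\to T_\infty$ also $L^2$-weakly and $L^{2n/(n-1)}$-weak convergence implies $L^2$-weak convergence; this is item $(3)$, and item $(4)$ follows by adding Proposition \ref{ssrt}. Since $(X_i,\upsilon_i)\in M(n,K,d)$, each $T_i\in C^{\infty}(T^r_sB_R(x_i))$ satisfies condition $(3)$ of Definition \ref{mmjj} (Remark \ref{aarrff}), so $T_i\in W_{2n/(n-1)}(T^r_sB_R(x_i))$ with $\sup_i||T_i||_{W_{2n/(n-1)}}<\infty$; as $T_i\to T_\infty$ $L^2$-strongly, Theorem \ref{uuy}---which, unlike Theorem \ref{rr}, imposes no condition relating $\mathrm{dim}\,X_i$ and $\mathrm{dim}\,X_\infty$---applies and yields items $(1)$ and $(6)$. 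For $(2)$ and $(5)$: $|T_i|\to|T_\infty|$ $L^2$-strongly on $B_R(x_\infty)$ (a consequence of $T_i\to T_\infty$ $L^2$-strongly), hence $u_i\to|T_\infty|^2$ $L^1$-strongly; combined with the uniform $H^{1,2n/(2n-1)}$-bound and the Rellich type compactness for functions (Theorem \ref{srell}) this gives $|T_\infty|^2\in H^{1,2n/(2n-1)}(B_R(x_\infty))$ and the asserted $L^{2n/(2n-1)}$-weak convergence of $\nabla|T_i|^2$.

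Finally, for $(7)$, assume $R>d$, so that $B_R(x_i)=X_i$ and $B_R(x_\infty)=X_\infty$. For a smooth tensor field on a smooth compact metric measure space one has $\nabla T_i=\nabla^{\upsilon_i}T_i$, so $T_i\in W^{1,2}_C(T^r_sX_i)$ with $||T_i||_{W^{1,2}_C}=(||T_i||_{L^2}^2+||\nabla T_i||_{L^2}^2)^{1/2}$ uniformly bounded; since $T_i\to T_\infty$ $L^2$-strongly, Theorem \ref{mnj} gives $T_\infty\in W^{1,2}_C(T^r_sX_\infty)$ and $\nabla^{\upsilon_i}T_i\to\nabla^{\upsilon_\infty}T_\infty$ $L^2$-weakly, while item $(6)$ gives $\nabla T_i=\nabla^{g_{X_i}}T_i\to\nabla^{g_{X_\infty}}T_\infty$ $L^2$-weakly. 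As $\nabla^{\upsilon_i}T_i=\nabla^{g_{X_i}}T_i$ for every $i<\infty$, uniqueness of $L^2$-weak limits forces $\nabla^{\upsilon_\infty}T_\infty=\nabla^{g_{X_\infty}}T_\infty$. The main obstacle is the passage from $B_\rho$ to $B_R$ in the integrability estimate of the second step (and, from the standpoint of the whole paper, the proofs of Theorems \ref{uuy} and \ref{mnj} themselves); granting those, the remaining work is bookkeeping together with uniqueness of weak limits.
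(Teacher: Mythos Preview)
Your proposal is correct and follows essentially the same route as the paper's own proof: Kato's inequality plus the Maheux--Saloff-Coste Poincar\'e inequality (Theorem \ref{sobo}) for the $L^{2n/(n-1)}$-bound on $T_i$, Theorem \ref{uuy} for items $(1)$ and $(6)$, Theorem \ref{srell} for items $(2)$ and $(5)$, and Theorem \ref{mnj} for $(7)$. The only cosmetic differences are that the paper uses Young's inequality rather than H\"older to bound $|\nabla|T_i|^2|^{2n/(2n-1)}$ pointwise, and that your ``passage from $B_\rho$ to $B_R$'' is unnecessary here since $(X_i,\upsilon_i)\in M(n,K,d)$ means $B_R(x_i)$ already sits as a ball inside the larger ball $B_{d+1}(x_i)=X_i$, so Theorem \ref{sobo} applies directly on $B_R(x_i)$.
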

\begin{proof}
Since $|\nabla |T_i|^2|\le 2|\nabla T_i||T_i|\le |\nabla T_i|^2 + |T_i|^2$, by Theorems \ref{sobo}, \ref{tt} and \ref{uuy} we see  $(1)$, $(3)$,  $(6)$ and that $|T_i|^2$ $L^r$-converges strongly to $|T_{\infty}|^2$ on $B_R(x_{\infty})$ for every $1<r<n/(n-1)$.
In particular we have $(4)$.

On the other hand for every $i<\infty$, Young's inequality yields
\begin{align*}
|\nabla |T_i|^2|^{2n/(2n-1)}&\le 2^{2n/(2n-1)}|\nabla T_i|^{2n/(2n-1)}|T_i|^{2n/(n-1)}\nonumber \\
&\le \frac{2^{2n/(2n-1)}}{2n-1}\left( n|\nabla T_i|^2+(n-1)|T_i|^{2n/(n-1)}\right).
\end{align*}
Thus Theorem \ref{srell} yields $(2)$ and $(5)$.

Finally Theorem \ref{mnj} yields $(7)$.
This completes the proof.
\end{proof}
Note that Theorem \ref{3ew3} is a direct consequence of the following:
\begin{corollary}\label{nnbbmm}
Let $R>0$, let $r, s \in \mathbf{Z}_{\ge 0}$, let $\{(X_i, \upsilon_i)\}_{i<\infty}$ be a sequence in $M(n, K, d)$, let $(X_{\infty}, \upsilon_{\infty})$ be the noncollapsed Gromov-Hausdorff limit of them, let $\{x_i\}_{i \le \infty}$ be a convergent sequence of $x_i \in X_i$ and
let $\{T_i\}_{i < \infty}$ be a sequence of $T_i \in C^{\infty}(T^r_sB_R(x_i))$ with 
\[\sup_{i < \infty}\left( ||T_i||_{L^2} + ||\nabla T_i||_{L^2}\right)<\infty.\]
Then there exist $T_{\infty} \in W_{2n/(n-1)}(T^r_sB_R(x_{\infty}))$ and a subsequence $\{i(j)\}_j$ such that the following hold.
\begin{enumerate}
\item $T_{i(j)}$ $L^{2n/(n-1)}$-converges weakly to $T_{\infty}$ on $B_R(x_{\infty})$.
\item $T_{i(j)}$ $L^r$-converges strongly to $T_{\infty}$ on $B_R(x_{\infty})$ for every $1<r<2n/(n-1)$.
\item $|T_{\infty}|^2 \in H^{1, 2n/(2n-1)}(B_R(x_{\infty}))$.
\item $\nabla |T_{i(j)}|^2$ $L^{2n/(2n-1)}$-converges weakly to $\nabla |T_{\infty}|^2$ on $B_R(x_{\infty})$.
\item $\nabla T_{i(j)}$ $L^2$-converges weakly to $\nabla^{g_{X_{\infty}}} T_{\infty}$ on $B_R(x_{\infty})$. 
\item If $R>d$,  then we have $T_{\infty} \in W^{1, 2}_C(T^r_sX_{\infty})$ and $\nabla^{g_{X_{\infty}}}T_{\infty}=\nabla^{\upsilon_{\infty}}T_{\infty}$.
\end{enumerate}
\end{corollary}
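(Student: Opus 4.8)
The plan is to reduce Corollary \ref{nnbbmm} to Theorem \ref{mthm} by first extracting an $L^2$-strongly convergent subsequence; the noncollapsing hypothesis enters only at that single point.

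First I would record a uniform $L^{2n/(n-1)}$-bound for $\{T_i\}_i$. Since each $T_i$ is smooth, $|T_i|\in \mathrm{LIP}_{\mathrm{loc}}(B_R(x_i))$, and Kato's inequality $|\nabla |T_i||\le |\nabla T_i|$ gives $\sup_i\||T_i|\|_{H^{1,2}(B_R(x_i))}<\infty$ by the hypothesis $\sup_i(\|T_i\|_{L^2}+\|\nabla T_i\|_{L^2})<\infty$. Applying the $(2n/(n-2),2)$-Poincar\'e inequality of Theorem \ref{sobo} on $B_R(x_i)$ (with constant uniform in $i$), together with the Bishop--Gromov lower bound for $\upsilon_i(B_R(x_i))$, yields $\sup_i\||T_i|\|_{L^{2n/(n-2)}(B_R(x_i))}<\infty$, hence a fortiori $\sup_i\|T_i\|_{L^{2n/(n-1)}(B_R(x_i))}<\infty$. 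By Remark \ref{aarrff} each $T_i$ then lies in $W_{2n/(n-1)}(T^r_sB_R(x_i))$, and $\sup_i\|T_i\|_{W_{2n/(n-1)}}<\infty$ since $\|\nabla^{g_{X_i}}T_i\|_{L^2}=\|\nabla T_i\|_{L^2}$ is bounded.

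Next, by the $\{L^{p_i}\}_i$-weak compactness recalled in subsection $2.5$, after passing to a subsequence $\{i(j)\}_j$ there is an $L^{2n/(n-1)}$-weak limit $T_\infty$ of $\{T_{i(j)}\}_j$ on $B_R(x_\infty)$, and $T_\infty\in L^{2n/(n-1)}(T^r_sB_R(x_\infty))$ by the lower semicontinuity of $L^{2n/(n-1)}$-norms; this is the integrability part of $(1)$ and it is $(2)$. Since $(X_\infty,\upsilon_\infty)$ is the noncollapsed limit, Theorem \ref{pplki} upgrades this to $L^2$-strong convergence $T_{i(j)}\to T_\infty$ on $B_R(x_\infty)$. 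With $L^2$-strong convergence in hand, Theorem \ref{uuy} gives $T_\infty\in W_{2n/(n-1)}(T^r_sB_R(x_\infty))$, completing $(1)$, and the $L^2$-weak convergence $\nabla^{g_{X_{i(j)}}}T_{i(j)}\to \nabla^{g_{X_\infty}}T_\infty$, which is $(5)$.

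The remaining items $(3)$, $(4)$, $(6)$ then follow exactly as in the proof of Theorem \ref{mthm}: from $|\nabla |T_i|^2|\le |\nabla T_i|^2+|T_i|^2$ together with Theorems \ref{sobo} and \ref{tt} one gets that $|T_{i(j)}|^2$ $L^r$-converges strongly to $|T_\infty|^2$ for $1<r<n/(n-1)$; Young's inequality applied to $|\nabla |T_i|^2|^{2n/(2n-1)}$ bounds $\sup_j\|\nabla |T_{i(j)}|^2\|_{L^{2n/(2n-1)}}$, so Theorem \ref{srell} yields $(3)$ and $(4)$; and in the case $R>d$, where $B_R(x_i)=X_i$, one has $T_i\in W^{1,2}_C(T^r_sX_i)$ with $\nabla^{\upsilon_i}T_i=\nabla T_i$ and $\sup_i\|T_i\|_{W^{1,2}_C}<\infty$, so Theorem \ref{mnj} gives $T_\infty\in W^{1,2}_C(T^r_sX_\infty)$ and $\nabla^{\upsilon_{i(j)}}T_{i(j)}\to\nabla^{\upsilon_\infty}T_\infty$ $L^2$-weakly; comparing this with $(5)$ and using uniqueness of $L^2$-weak limits gives $\nabla^{g_{X_\infty}}T_\infty=\nabla^{\upsilon_\infty}T_\infty$, i.e.\ $(6)$. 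The only genuinely nontrivial ingredient is Theorem \ref{pplki}; once it is granted the corollary amounts to assembling the compactness statements, so I expect the only real care needed to be the bookkeeping that verifies the hypotheses of Theorems \ref{uuy}, \ref{pplki} and \ref{mnj}, in particular the uniform $W_{2n/(n-1)}$-bound established in the second step.
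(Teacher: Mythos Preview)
Your proposal is correct and follows essentially the same route as the paper, which simply says ``It follows directly from Theorems \ref{pplki} and \ref{mthm}.'' You have unpacked this by explicitly verifying the uniform $W_{2n/(n-1)}$-bound (via Kato plus the Poincar\'e inequality of Theorem \ref{sobo}) needed to feed the sequence into Theorem \ref{pplki}, then invoking Theorem \ref{pplki} for $L^2$-strong compactness and Theorem \ref{mthm} (or its ingredients \ref{uuy}, \ref{srell}, \ref{mnj}) for the remaining conclusions.

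One small wording slip: after extracting the $L^{2n/(n-1)}$-weak limit you write ``this is the integrability part of (1) and it is (2)'', but at that stage you only have the weak convergence of item (1), not the $L^r$-strong convergence of item (2). Item (2) comes only after you obtain $L^2$-strong convergence from Theorem \ref{pplki}; then Proposition \ref{ssrt} (combining $L^2$-strong convergence with the uniform $L^{2n/(n-1)}$-bound) gives $L^r$-strong convergence for every $1<r<2n/(n-1)$. This is exactly what item (4) of Theorem \ref{mthm} records, so you could also just cite that theorem directly once $L^2$-strong convergence is in hand.
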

\begin{proof}
It follows directly from Theorems \ref{pplki} and \ref{mthm}.
\end{proof}
\begin{remark}\label{honsda}
Theorem \ref{mthm} yields that the following question is natural:
\begin{itemize}
\item Let $\{(X_i, \upsilon_i)\}_{i<\infty}$ be a sequence in $M(n, K, d)$ and let $(X_{\infty}, \upsilon_{\infty})$ be the Gromov-Hausdorff limit of them with $\mathrm{diam}\,X_{\infty}>0$.
For given $T_{\infty} \in L^2(T^r_sX_{\infty})$ when does there exist a sequence $\{T_i\}_{i<\infty}$ of $T_i \in C^{\infty}(T^r_sX_i)$ with $\sup_{i<\infty} ||\nabla T_i||_{L^2}<\infty$ such that $T_i$ $L^2$-converges strongly to $T_{\infty}$ on $X_{\infty}$?
\end{itemize}
We will give an answer to this question in subsection $7.2$.
See Remark \ref{ggtttt}, Corollaries \ref{66tytt} and \ref{99i99i}.
\end{remark}
Note that in general we can \textit{not} prove Corollary \ref{nnbbmm} without the noncollapsed assumption.
We give such two examples.
\begin{remark}\label{riemlp}
Let $(X_i, \upsilon_i) \stackrel{GH}{\to} (X_{\infty}, \upsilon_{\infty})$ in $\overline{M(n, K, d)}$ with $\mathrm{diam}\,X_{\infty}>0$,  and let $\{x_i\}_{i \le \infty}$ be a convergent sequence of $x_i \in X_i$.
Then we proved in \cite[Theorem $1.2$]{holp} that $g_{X_i}$ $L^p$-converges weakly to $g_{X_{\infty}}$ on $B_R(x_{\infty})$ for any $1<p<\infty$ and $R>0$.
Moreover it is proved that the following three conditions are equivalent:
\begin{enumerate}
\item $g_{X_i}$ $L^p$-converges strongly to $g_{X_{\infty}}$ on $B_R(x_{\infty})$ for any $1<p<\infty$ and $R>0$.
\item $g_{X_i}$ $L^p$-converges strongly to $g_{X_{\infty}}$ on $B_R(x_{\infty})$ for some $1<p<\infty$ and $R>0$.
\item $(X_{\infty}, \upsilon_{\infty})$ is the noncollapsed Gromov-Hausdorff limit of $\{(X_i, \upsilon_i)\}_i$.
\end{enumerate}
In particular if $(X_{\infty}, \upsilon_{\infty})$ is the collapsed limit of $\{(X_i, \upsilon_i)\}_i$, then although $\nabla^{g_{X_i}}g_{X_i} \equiv 0$, $g_{X_i}$ does not $L^2$-converge strongly to $g_{X_{\infty}}$ on $B_R(x_{\infty})$ for any $1<p<\infty$ and $R>0$.
This example shows that the noncollapsed assumption in Corollary \ref{nnbbmm} is essential.
\end{remark}
\begin{remark}\label{harharhar}
Let $X_{\epsilon}:=\mathbf{S}^1(1) \times \mathbf{S}^1(\epsilon)$, let $\omega_{\epsilon}$ be a harmonic $1$-form on $\mathbf{S}^1(\epsilon)$ with $|\omega_{\epsilon}|\equiv 1$, and let $\eta_{\epsilon}:=\pi_2^*\omega_{\epsilon}$, where $\mathbf{S}^1(\epsilon):=\{x \in \mathbf{R}^2; |x|=\epsilon\}$ and $\pi_2$ is the projection from $X_{\epsilon}$ to $\mathbf{S}^1(\epsilon)$.
Note that $\nabla \eta_{\epsilon} \equiv 0$ and that $(X_{\epsilon}, H^2/(4\pi^2 \epsilon)) \stackrel{GH}{\to} (\mathbf{S}^1(1), H^1/(2\pi))$ as $\epsilon \to 0$.
Then it is easy to check that $\eta_{\epsilon}$ $L^2$-converges weakly to $0$ on $\mathbf{S}^1(1)$.
However, since $|\eta_{\epsilon}|\equiv 1$, $\eta_{\epsilon}$ does \textit{not} $L^2$-converge strongly to $0$ on $\mathbf{S}^1(1)$.
\end{remark}
We end this subsection by giving the following application of Theorems \ref{mnj} and \ref{mthm}.
It means that \textit{the noncollapsed Gromov-Hausdorff limit of a sequence of compact K$\ddot{\text{a}}$hler manifolds is also K$\ddot{\text{a}}$hler in some weak sense}:
\begin{theorem}\label{uuhhff}
Let $\{(X_i, \upsilon_i)\}_{i<\infty}$ be a sequence in $M(n, K, d)$, let $(X_{\infty}, \upsilon_{\infty})$ be the Gromov-Hausdorff limit of them with $\mathrm{diam}\,X_{\infty}>0$, let $\{J_i\}_{i<\infty}$ be a sequence of $J_i \in C^{\infty}(T^1_1X_i)$ and let $J_{\infty}$ is the $L^2$-weak limit on $X_{\infty}$ of  them.
Assume that $(X_i, J_i, g_{X_i})$ is K$\ddot{\text{a}}$hler for every $i<\infty$, i.e., the following three conditions hold.
\begin{itemize}
\item $\nabla J_i \equiv 0$.
\item $g_{X_i}(J_i(u), J_i(v))=g_{X_i}(u, v)$ for any $x_i \in X_i$ and $u, v \in T_{x_i}X_i$, where we used the canonical identification:
\[T_{x_i}X_i \otimes T_{x_i}^*X_i \simeq \mathrm{Aut}(T_{x_i}X_i).\]
\item $J^2_i \equiv -id$.
\end{itemize}
Then we see that $J_{\infty}$ is the $L^2$-strong limit on $X_{\infty}$ of $\{J_i\}_i$ if and only if $(X_{\infty}, \upsilon_{\infty})$ is the noncollapsed Gromov-Hausdorff limit of $\{(X_i, \upsilon_i)\}_i$.
Moreover if $(X_{\infty}, \upsilon_{\infty})$ is the noncollapsed Gromov-Hausdorff limit of $\{(X_i, \upsilon_i)\}_i$, then
we have the following:
\begin{enumerate}
\item $J_{\infty} \in W^{1, 2}_C(T^1_1X_{\infty}) \cap W_{\infty}(T^1_1X_{\infty})$.
\item $\nabla^{g_{X_{\infty}}}J_{\infty}=\nabla^{\upsilon_{\infty}}J_{\infty}=0$.
\item For a.e. $x_{\infty} \in X_{\infty}$, $g_{X_{\infty}}(J_{\infty}(u), J_{\infty}(v))=g_{X_{\infty}}(u, v)$ for any $u, v \in T_{x_{\infty}}X_{\infty}$.
\item $J_{\infty}^2 =-id$ in $L^{\infty}(T^1_1X_{\infty})$.
\end{enumerate}
\end{theorem}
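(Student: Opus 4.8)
The plan is to read off the equivalence and the ``moreover'' part from the compactness results of Section~$6$, after two elementary observations. Since $J_i$ is a $g_{X_i}$-isometry of each tangent space, its Hilbert--Schmidt norm is constant, $|J_i|^2\equiv\dim X_i=n$; thus $\int_{X_i}|J_i|^2\,d\upsilon_i=n$ and $\|J_i\|_{L^\infty}=\sqrt n$. Moreover $\nabla J_i\equiv0$, so $J_i\in W_{2p}(T^1_1X_i)$ for every finite $p$ with $\sup_i\|J_i\|_{W_{2p}}=\sqrt n$ (using Remark~\ref{aarrff} for the membership), and $\sup_i(\|J_i\|_{L^2}+\|\nabla J_i\|_{L^2})<\infty$. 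In particular Theorems~\ref{mthm}, \ref{pplki} and Corollary~\ref{nnbbmm} are available for $\{J_i\}_i$, and $J_\infty$ is the $L^{2p}$-weak limit of $\{J_i\}_i$ for every finite $p$.

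For the equivalence, the direction ``noncollapsed $\Rightarrow$ $L^2$-strong'' is immediate from Theorem~\ref{pplki}, and then the uniform $L^\infty$-bound together with Proposition~\ref{ssrt} upgrades this to $L^p$-strong convergence for all finite $p$. For ``$L^2$-strong $\Rightarrow$ noncollapsed'' the idea is to push the compatibility identity through the limit: since $J_i\to J_\infty$ strongly in every $L^p$ with a uniform $L^\infty$-bound while $g_{X_i}\to g_{X_\infty}$ $L^2$-weakly (Remark~\ref{riemlp}), the product rules for mixed $L^2$-weak/$L^2$-strong convergence of tensor fields from \cite{holp} give that the $(0,2)$-tensor field $(u,v)\mapsto g_{X_i}(J_iu,J_iv)$ $L^2$-converges weakly to $(u,v)\mapsto g_{X_\infty}(J_\infty u,J_\infty v)$ on $X_\infty$. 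But this field equals $g_{X_i}$ for every $i$, so by uniqueness of $L^2$-weak limits $g_{X_\infty}(J_\infty u,J_\infty v)=g_{X_\infty}(u,v)$ for a.e.\ $x$ and all $u,v\in T_xX_\infty$; hence $J_\infty(x)$ is a $g_{X_\infty}(x)$-orthogonal endomorphism of $T_xX_\infty$, so $|J_\infty(x)|^2=\dim X_\infty$ a.e. Since $L^2$-strong convergence also gives $\int_{X_\infty}|J_\infty|^2\,d\upsilon_\infty=\lim_i\int_{X_i}|J_i|^2\,d\upsilon_i=n$, this forces $\dim X_\infty=n$. The same computation yields assertion~(3) and $\|J_\infty\|_{L^\infty}=\sqrt n$.

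Now assume the limit noncollapsed, so $J_i\to J_\infty$ strongly in every $L^p$, $p<\infty$. Assertion~(2): Theorem~\ref{mthm}(6) gives $\nabla J_i\to\nabla^{g_{X_\infty}}J_\infty$ $L^2$-weakly and $\nabla J_i\equiv0$, so $\nabla^{g_{X_\infty}}J_\infty=0$; applying Theorem~\ref{mthm}(7) with $R>d$ (so $B_R(x_i)=X_i$) gives $J_\infty\in W^{1,2}_C(T^1_1X_\infty)$ and $\nabla^{\upsilon_\infty}J_\infty=\nabla^{g_{X_\infty}}J_\infty=0$. Assertion~(1): $W^{1,2}_C$-membership is the same Theorem~\ref{mthm}(7), while $J_\infty\in W_\infty(T^1_1X_\infty)$ follows by checking Definition~\ref{mmjj} for $p=\infty$ --- $J_\infty\in L^\infty$, $\nabla^{g_{X_\infty}}J_\infty=0\in L^2$, and $\nabla\langle J_\infty,\nabla h_1\otimes dh_2\rangle=\langle J_\infty,\nabla^{g_{X_\infty}}(\nabla h_1\otimes dh_2)\rangle\in L^2_{\mathrm{loc}}$ for harmonic $h=(h_1,h_2)$, the last using $J_\infty\in L^\infty$ and Theorem~\ref{aarr}. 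Assertion~(3) was obtained above. Assertion~(4): $J_i^2=J_i\circ J_i$ $L^2$-converges strongly to $J_\infty^2$ (product of two $L^4$-strong, $L^\infty$-bounded sequences); since $J_i^2=-\mathrm{id}_{TX_i}$, and the identity endomorphism corresponds to $g_{X_i}$ under the musical isomorphism while $g_{X_i}\to g_{X_\infty}$ $L^2$-weakly, we get $\mathrm{id}_{TX_i}\to\mathrm{id}_{TX_\infty}$ $L^2$-weakly, whence $J_\infty^2=-\mathrm{id}$ a.e.; since $|J_\infty^2|\equiv\sqrt n$ it belongs to $L^\infty$.

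The step I expect to be hardest is this transfer of the algebraic relations $J^2=-\mathrm{id}$ and $g(J\cdot,J\cdot)=g$ across the Gromov-Hausdorff limit. It relies on the product rules for mixed $L^2$-weak/$L^2$-strong ($L^\infty$-bounded) convergence of tensor fields, and on the compatibility of the $L^p$-convergence of tensors with the musical isomorphism, so that the weak/strong dichotomy for $g_{X_i}$ of Remark~\ref{riemlp} may be transferred to $\mathrm{id}_{TX_i}$ and to the contractions built from $J_i$.
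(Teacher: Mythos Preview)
Your proof is correct and follows essentially the same strategy as the paper's: the ``noncollapsed $\Rightarrow$ $L^2$-strong'' direction via the Rellich-type compactness (Theorem~\ref{pplki}/Corollary~\ref{nnbbmm}), then passing the algebraic identities $g(J\cdot,J\cdot)=g$ and $J^2=-\mathrm{id}$ through the limit using product rules for $L^2$-strong/$L^\infty$-bounded sequences together with the $L^2$-weak convergence of $g_{X_i}$ (Remark~\ref{riemlp}), and finally reading off $\dim X_\infty=n$ from $|J_\infty|^2=\dim X_\infty$ a.e. The only notable difference is that the paper proves (3) by testing directly against gradients of distance functions (computing $\int_{B_r}\langle J_i\nabla r_{x_{i,1}},J_i\nabla r_{x_{i,2}}\rangle\,d\upsilon_i$ and invoking Lebesgue differentiation plus \cite[Theorem~3.1]{holip}), whereas you argue the same step abstractly by appealing to the tensor product/contraction rules from \cite{holp}; unpacked, these are the same computation, and your explicit verification of $J_\infty\in W_\infty$ via Definition~\ref{mmjj} is a bit more careful than the paper's brief citation of Theorem~\ref{mthm}.
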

\begin{proof}
By Corollary \ref{nnbbmm}, if $(X_{\infty}, \upsilon_{\infty})$ is the noncollapsed Gromov-Hausdorff limit of $\{(X_i, \upsilon_i)\}_i$, then 
$J_{\infty}$ is the $L^2$-strong limit  on $X_{\infty}$ of $\{J_i\}_i$.

Assume that $J_{\infty}$ is the $L^2$-strong limit  on $X_{\infty}$ of $\{J_i\}_i$.
Note that $|J_i|^2\equiv \mathrm{dim}\,X_i$ for every $i<\infty$.
Thus Theorem \ref{mthm} and \cite[Proposition $3.45$]{holp} give $(1)$, $(2)$ and that $J_i, J_i^2$ $L^2$-converge strongly to $J_{\infty}, J_{\infty}^2$ on $X_{\infty}$, respectively. 
In particular we have $(4)$.

Next we prove $(3)$.

Let $\{x_{i, j}\}_{i \le \infty, j \in \{1, 2, 3\}}$ be sequences of $x_{i, j} \in X_{i}$ with $x_{i, j} \stackrel{GH}{\to} x_{\infty, j}$.
Since $J_{i}(\nabla r_{x_{i, j}})$ $L^2$-converges strongly to $J_{\infty}(\nabla r_{x_{\infty, j}})$ on $X_{\infty}$, we have
\begin{align*}
\int_{B_r(x_{\infty, 3})}\langle J_{\infty}(\nabla r_{x_{\infty, 1}}), J_{\infty}(\nabla r_{x_{\infty, 2}})\rangle d\upsilon_{\infty}&=\lim_{i \to \infty}\int_{B_r(x_{i, 3})}\langle J_{i}(\nabla r_{x_{i, 1}}), J_{i}(\nabla r_{x_{i, 2}})\rangle d\upsilon_{i}\\
&=\lim_{i \to \infty}\int_{B_r(x_{i, 3})}\langle \nabla r_{x_{i, 1}}, \nabla r_{x_{i, 2}}\rangle d\upsilon_{i}\\
&=\int_{B_r(x_{\infty, 3})}\langle \nabla r_{x_{\infty, 1}}, \nabla r_{x_{\infty, 2}}\rangle d\upsilon_{\infty}
\end{align*}
for every $r>0$.
Thus the Lebesgue differentiation theorem and \cite[Theorem $3,1$]{holip} yield $(3)$.

Finally $(3)$ gives $|J_{\infty}|^2=\mathrm{dim}\,X_{\infty}$.
Thus we have 
\[\lim_{i \to \infty}\mathrm{dim}\,X_i=\lim_{i \to \infty}\int_{X_i}|J_i|^2d\upsilon_i=\int_{X_{\infty}}|J_{\infty}|^2d\upsilon_{\infty}=\mathrm{dim}\,X_{\infty},\]
i.e., $(X_{\infty}, \upsilon_{\infty})$ is the noncollapsed Gromov-Hausdorff limit of $\{(X_i, \upsilon_i)\}_i$.
This completes the proof.
\end{proof}
\section{Differential forms}
In this section we discuss the behavior of differential forms with respect to the Gromov-Hausdorff topology.
\subsection{Bochner inequality}
The main purpose of this subsection is to establish a Bochner inequality for $1$-forms on a Ricci limit space.
For that we start this subsection by giving the following.
\begin{proposition}\label{bounds}
Let $L, R>0$, let $(X, x, \upsilon) \in M(n, K)$ and let $\omega \in C^{\infty}(T^*B_R(x))$ with
\[\frac{1}{\upsilon (B_R(x))}\int_{B_R(x)}\left(|\omega|^2+|d\omega|^2+|\delta \omega|^2\right)d\upsilon\le L.\]
Then for every $r<R$ we have 
\[\frac{1}{\upsilon (B_r(x))}\int_{B_r(x)}|\nabla \omega|^2d\upsilon \le C(n, K, r, R, L).\]
\end{proposition}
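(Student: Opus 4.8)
The plan is to localize the classical Bochner (Weitzenb\"ocker) identity for $1$-forms with a Cheeger--Colding cut-off function and to absorb the resulting error terms. First I would fix radii $r<s<R$, say $s:=(r+R)/2$, and invoke \cite[Theorem $6.33$]{ch-co} to produce $\phi\in C^\infty(X)$ with $0\le\phi\le1$, $\phi\equiv1$ on $B_r(x)$, $\mathrm{supp}\,\phi$ a compact subset of $B_s(x)$, and $|\nabla\phi|+|\Delta\phi|\le C(n,K,r,R)$. Since $\mathrm{supp}\,\phi$ is compactly contained in $B_R(x)$ and $X$ is a complete manifold, the form $\phi^2\omega$ extends by zero to a smooth, compactly supported $1$-form on $X$, so every integration by parts below is legitimate.

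Next I would pair the Weitzenb\"ocker formula $\Delta_{H,1}\omega=\nabla^*\nabla\omega+\mathrm{Ric}(\omega^*,\cdot)$ against $\phi^2\omega$ and integrate. Using $\int_X\langle\nabla^*\nabla\omega,\phi^2\omega\rangle\,d\upsilon=\int_X\phi^2|\nabla\omega|^2\,d\upsilon+\int_X\langle\nabla\omega,d(\phi^2)\otimes\omega\rangle\,d\upsilon$ and $\int_X\langle\Delta_{H,1}\omega,\phi^2\omega\rangle\,d\upsilon=\int_X\langle d\omega,d(\phi^2\omega)\rangle\,d\upsilon+\int_X\langle\delta\omega,\delta(\phi^2\omega)\rangle\,d\upsilon$, then expanding $d(\phi^2\omega)=\phi^2\,d\omega+d(\phi^2)\wedge\omega$ and $\delta(\phi^2\omega)=\phi^2\,\delta\omega-\langle d(\phi^2),\omega\rangle$, and bounding the curvature term via $\mathrm{Ric}\ge K(n-1)$ by $-\int_X\phi^2\,\mathrm{Ric}(\omega^*,\omega^*)\,d\upsilon\le(n-1)|K|\int_X\phi^2|\omega|^2\,d\upsilon$, one is left with
\[
\int_X\phi^2|\nabla\omega|^2\,d\upsilon\le \tfrac14\int_X\phi^2|\nabla\omega|^2\,d\upsilon+C(n,K,r,R)\int_{B_s(x)}\bigl(|\omega|^2+|d\omega|^2+|\delta\omega|^2\bigr)\,d\upsilon ,
\]
after applying Young's inequality to each cross term (each of which carries a factor $|\nabla\phi|\le C(n,K,r,R)$): the $|\nabla\omega|$-contributions are put into $\tfrac14\int_X\phi^2|\nabla\omega|^2\,d\upsilon$ and the remaining terms into the second integral.

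Finally I would absorb the first term on the right, use $\phi\equiv1$ on $B_r(x)$ together with the hypothesis to get $\int_{B_r(x)}|\nabla\omega|^2\,d\upsilon\le\int_X\phi^2|\nabla\omega|^2\,d\upsilon\le C(n,K,r,R)\,\upsilon(B_s(x))\,L$, and then divide by $\upsilon(B_r(x))$, bounding the ratio $\upsilon(B_s(x))/\upsilon(B_r(x))=H^n(B_s(x))/H^n(B_r(x))$ by a constant $C(n,K,r,R)$ via the Bishop--Gromov volume comparison. This gives the claim with $C(n,K,r,R,L)=C(n,K,r,R)\,L$. I do not expect a genuine obstacle here: the whole thing is a routine localized Bochner estimate, and the only external inputs are the Cheeger--Colding cut-off (the one place where a lower Ricci bound, rather than a two-sided bound, is what is actually being used) and Bishop--Gromov; the only thing demanding care is the bookkeeping of the cross terms and the sign in the Ricci term.
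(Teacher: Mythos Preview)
Your argument is correct and follows essentially the same localized Bochner strategy as the paper. The one technical difference worth noting: the paper multiplies the scalar Bochner identity $-\tfrac12\Delta|\omega|^2\ge|\nabla\omega|^2-\langle\Delta_{H,1}\omega,\omega\rangle+K(n-1)|\omega|^2$ by the first power $\phi$ and integrates, so the left side becomes $-\tfrac12\int|\omega|^2\Delta\phi$, controlled directly by the bound on $|\Delta\phi|$ with no $|\nabla\omega|$ cross term to absorb. Your version pairs the Weitzenb\"ock operator with $\phi^2\omega$, which produces the extra term $\int\langle\nabla\omega,d(\phi^2)\otimes\omega\rangle$ that you then absorb via Young. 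Both are standard; the paper's variant genuinely uses the Cheeger--Colding control on $\Delta\phi$, whereas yours only needs the gradient bound $|\nabla\phi|\le C(n,K,r,R)$, so in that sense your route is marginally more elementary.
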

\begin{proof}
By \cite[Theorem $8.16$]{ch-co} there exists $\phi \in C^{\infty}(X)$ such that $0\le \phi \le 1$, that $\mathrm{supp} \,\phi \subset B_R(x)$, that $\phi \equiv 1$ on $B_r(x)$ and that $|\nabla \phi|+|\Delta \phi|\le C(n, K, r, R)$.
The Bochner formula yields 
\[-\frac{1}{2}\phi \Delta |\omega|^2\ge \phi|\nabla \omega|^2 - \phi\langle \Delta_{H, 1} \omega, \omega \rangle + K(n-1)\phi|\omega|^2.\]
Integrating this on $B_R(x)$ and Bishop-Gromov's volume comparison theorem give
\begin{align*}
C(n, r, R, K, L) &\ge -\frac{1}{2\upsilon (B_R(x))}\int_{B_R(x)}|\omega|^2\Delta \phi d\upsilon \\
&\ge \frac{1}{\upsilon (B_R(x))}\int_{B_R(x)}\left(\phi|\nabla \omega|^2 - \langle d\omega, d(\phi \omega)\rangle - \langle \delta \omega, \delta (\phi \omega)\rangle +K(n-1)\phi|\omega|^2\right)d\upsilon \\
&\ge \frac{C(n, K, r, R)}{\upsilon (B_r(x))}\int_{B_r(x)}|\nabla \omega|^2d\upsilon -C(n, r, R, K, L),
\end{align*}
where we used the inequalities 
\[|\langle d\omega,  d(\phi \omega) \rangle| \le |d\phi||\omega||d\omega|+|d\omega|^2 \le C(n, K, r, R)(|\omega|^2+|d\omega|^2)\]
and
\[|\langle \delta \omega, \delta(\phi \omega) \rangle| \le |d\phi ||\omega||\delta \omega|+|\delta \omega|^2 \le C(n, K, r, R)(|\omega|^2+|\delta \omega|^2).\]
This completes the proof.
\end{proof} 
\begin{theorem}\label{techni}
Let $R>0$, let $1<p<\infty$, let $(X_i, \upsilon_i) \stackrel{GH}{\to} (X_{\infty}, \upsilon_{\infty})$ in $\overline{M(n, K, d)}$ with $\mathrm{diam}\,X_{\infty}>0$, let $\{x_i\}_{i \le \infty}$ be a convergent sequence of $x_i \in X_i$ and let $\{\omega_i\}_{i \le \infty}$ be a sequence of $\omega_i \in \Gamma_1(\bigwedge^kT^*B_R(x_i))$.
Assume that $\nabla^{g_{X_i}} \omega_i$ $L^p$-converges weakly to $\nabla^{g_{X_{\infty}}} \omega_{\infty}$ on $B_R(x_{\infty})$.
Then we see that $d \omega_i$ $L^p$-converges weakly to $d\omega_{\infty}$ on $B_R(x_{\infty})$.
Moreover if $(X_{\infty}, \upsilon_{\infty})$ is the noncollapsed Gromov-Hausdorff limit of $\{(X_i, \upsilon_i)\}_i$ and $k=1$, then $\delta^{g_{X_i}}\omega_i$ $L^p$-converges weakly to $\delta^{g_{X_{\infty}}}\omega_{\infty}$ on $B_R(x_{\infty})$. 
\end{theorem}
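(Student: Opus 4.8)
The plan is to reduce both assertions to the pointwise algebraic formulas that express $d\omega$ and, for $1$-forms, $\delta^{g_X}\omega$ as fixed contractions of the covariant derivative $\nabla^{g_X}\omega$, and then to pass these identities to the Gromov-Hausdorff limit using only the definition of $L^p$-weak convergence of tensor fields (Definition \ref{lpt}) together with the hypothesis on $\nabla^{g_{X_i}}\omega_i$. For the statement about $d$, one applies (\ref{dna}) with $V_j=\nabla r_{z_{i,j}}$; this is legitimate since distance functions are Lipschitz, hence differentiable a.e., and $\omega_i\in\Gamma_1(\bigwedge^kT^*B_R(x_i))$ so that $\nabla^{g_{X_i}}\omega_i\in\Gamma_0(T^0_{k+1}B_R(x_i))$. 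Unwinding the definition of $\nabla^{g_{X_i}}_V\omega_i$ one gets, a.e. on $B_R(x_i)$,
\[\big\langle d\omega_i,\,dr_{z_{i,1}}\otimes\cdots\otimes dr_{z_{i,k+1}}\big\rangle=\sum_{l=1}^{k+1}(-1)^{l-1}\big\langle \nabla^{g_{X_i}}\omega_i,\,dr_{z_{i,1}}\otimes\cdots\widehat{dr_{z_{i,l}}}\cdots\otimes dr_{z_{i,k+1}}\otimes dr_{z_{i,l}}\big\rangle,\]
i.e. a finite signed sum of pairings of $\nabla^{g_{X_i}}\omega_i$ against products of differentials of distance functions of precisely the form occurring in Definition \ref{lpt}. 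In particular $|d\omega_i|\le C(k)|\nabla^{g_{X_i}}\omega_i|$ pointwise, so $\{d\omega_i\}_i$ is uniformly $L^p$-bounded because $\{\nabla^{g_{X_i}}\omega_i\}_i$ is. Integrating the displayed identity over $B_r(z_i)$ and letting $i\to\infty$ term by term — each term converging by the very hypothesis that $\nabla^{g_{X_i}}\omega_i$ $L^p$-weakly converges to $\nabla^{g_{X_\infty}}\omega_\infty$ — shows that $d\omega_i$ $L^p$-weakly converges to $d\omega_\infty$.

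For the codifferential (with $k=1$), Proposition \ref{hess2} together with the relation $\nabla^{g_X}\omega(U,V)=g_X(\nabla^{g_X}_V\omega^*,U)$ gives, a.e.,
\[\delta^{g_{X_i}}\omega_i=-\mathrm{div}^{g_{X_i}}\omega_i^*=-\mathrm{tr}\big(\nabla^{g_{X_i}}\omega_i\big)=-\big\langle \nabla^{g_{X_i}}\omega_i,\,g_{X_i}\big\rangle,\]
where $g_{X_i}\in L^{\infty}(T^0_2X_i)$ has constant pointwise norm $\sqrt{\dim X_i}$. Under the noncollapsing hypothesis $\dim X_i=\dim X_\infty$ for large $i$, so $\|g_{X_i}\|_{L^{p'}(B_R(x_i))}\to\|g_{X_\infty}\|_{L^{p'}(B_R(x_\infty))}$ with $p'=p/(p-1)$; combined with Remark \ref{riemlp} (which gives the $L^{p'}$-weak convergence $g_{X_i}\to g_{X_\infty}$) this upgrades to $L^{p'}$-strong convergence of the metrics. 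Now invoke the weak$\,\times\,$strong principle for $L^p$-convergence in the Gromov-Hausdorff setting from \cite{holp}: the pairing of the $L^p$-weakly convergent field $\nabla^{g_{X_i}}\omega_i$ with the $L^{p'}$-strongly convergent field $g_{X_i}$ has its integrals over balls converging to those of $\langle\nabla^{g_{X_\infty}}\omega_\infty,g_{X_\infty}\rangle$. Since also $|\delta^{g_{X_i}}\omega_i|\le\sqrt n\,|\nabla^{g_{X_i}}\omega_i|$ yields a uniform $L^p$-bound, this convergence of integrals promotes to the asserted $L^p$-weak convergence $\delta^{g_{X_i}}\omega_i\to\delta^{g_{X_\infty}}\omega_\infty$.

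The hard point is exactly the use of noncollapsing in the last step: it is the \emph{only} reason strong convergence of the metrics is available, and without it the weak$\,\times\,$weak pairing $\langle\nabla^{g_{X_i}}\omega_i,g_{X_i}\rangle$ genuinely cannot be passed to the limit — indeed the conclusion for $\delta$ fails in the collapsed case (Remark \ref{harharhar}). The remaining issues are routine: verifying that (\ref{dna}) and the trace formula may be applied with the merely a.e.-defined vector fields $\nabla r_z$ and the a.e.-defined covariant derivative $\nabla^{g_{X_i}}\omega_i$, bookkeeping the reorderings of the distance functions into the standard test-tensor form of Definition \ref{lpt}, and the elementary fact that an $L^1$-weak limit of a sequence with a uniform $L^p$-bound is an $L^p$-weak limit.
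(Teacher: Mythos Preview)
Your proof is correct and follows essentially the same route as the paper. For $d\omega_i$ the argument is identical: apply the pointwise identity (\ref{dna}) with $V_j=\nabla r_{z_{i,j}}$ and pass each term to the limit via Definition~\ref{lpt}. For $\delta^{g_{X_i}}\omega_i$ the paper simply invokes \cite[Proposition~3.72]{holp}; what you have written (trace equals pairing with $g_{X_i}$, metrics $L^{p'}$-strongly converge under noncollapsing by Remark~\ref{riemlp}, then weak\,$\times$\,strong) is exactly the content of that cited proposition, so your version just unpacks the reference rather than offering a genuinely different argument.
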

\begin{proof}
Let $\{x_{i, j}\}_{i \le \infty, 0 \le j \le k}$ be sequences of $x_{i, j} \in X_i$ with $x_{i, j} \stackrel{GH}{\to} x_{\infty, j}$ as $i \to \infty$, and let $\{y_i\}_{i \le \infty}$ be a convergent sequence of $y_i \in B_R(x_i)$.
Then by (\ref{dna}) and the assumption we have
\begin{align*}
&\lim_{i \to \infty}\int_{B_r(y_{i})}d\omega_i(\nabla r_{x_{i, 0}}, \ldots, \nabla r_{x_{i, k}})d\upsilon_i \\
&=\lim_{i \to \infty}\left(\sum_{l=0}^{k}(-1)^l\int_{B_r(y_{i})}(\nabla^{g_{X_i}}_{\nabla r_{x_{i, l}}}\omega_i)(\nabla r_{x_{i, 0}}, \ldots, \nabla r_{x_{i, l-1}}, \nabla r_{x_{i, l+1}}, \ldots, \nabla r_{x_{i, k}})d\upsilon_{i}\right)\\
&=\sum_{l=0}^{k}(-1)^l\int_{B_r(y_{\infty})}(\nabla^{g_{X_{\infty}}}_{\nabla r_{x_{\infty, l}}}\omega_{\infty})(\nabla r_{x_{\infty, 0}}, \ldots, \nabla r_{x_{\infty, l-1}}, \nabla r_{x_{\infty, l+1}}, \ldots, \nabla r_{x_{\infty, k}})d\upsilon_{\infty}\\
&=\int_{B_r(y_{\infty})}d\omega_{\infty}(\nabla r_{x_{\infty, 0}}, \ldots, \nabla r_{x_{\infty, k}})d\upsilon_{\infty}
\end{align*}
for every $r>0$ with $B_r(y_{\infty}) \subset B_R(x_{\infty})$.
Thus we see that $d\omega_i$ $L^p$-converges weakly to $d\omega_{\infty}$ on $B_R(x_{\infty})$.
The last assertion is a direct consequence of \cite[Proposition $3.72$]{holp}.
\end{proof}
We are now in a position to introduce a Bochner inequality for $1$-forms on a Ricci limit space and a compatibility between codifferentials $\delta^{\upsilon}, \delta^{g_X}$ for $1$-forms:
\begin{theorem}\label{boch}
Let $\{(X_i, \upsilon_i)\}_{i<\infty}$ be a sequence in $M(n, K, d)$, let $(X_{\infty}, \upsilon_{\infty})$ be the Gromov-Hausdorff limit of them with $\mathrm{diam}\,X_{\infty}>0$, let $\{x_i\}_{i \le \infty}$ be a convergent sequence of $x_i \in X_i$, let $R>0$, and  
let $\{\omega_i\}_{i<\infty}$ be a sequence of $\omega_i \in C^{\infty}(T^*B_R(x_i))$ with 
\begin{align*}\sup_{i<\infty}\int_{B_R(x_i)}\left( |d\omega_i|^2+|\delta \omega_i|^2\right)d\upsilon_i<\infty,
\end{align*}
and let $\omega_{\infty}$ be the $L^2$-strong limit on $B_R(x_{\infty})$ of them.
Then the following hold:
\begin{enumerate}
\item We see that $\omega_{\infty} \in \mathcal{D}^2(\delta^{\upsilon_{\infty}}, B_R(x_{\infty}))$ and that $\delta^{\upsilon_{\infty}}\omega_{\infty}$ is the $L^2$-weak limit on $B_R(x_{\infty})$ of $\{\delta \omega_i\}_i$. 
Moreover if $\mathrm{dim}\,X_{\infty}=n$, then
$\delta^{g_{X_{\infty}}}\omega_{\infty}=\delta^{\upsilon_{\infty}} \omega_{\infty}$.
\item For every $r<R$, we see that $\omega_{\infty} \in W_{2n/(n-1)}(T^*B_r(x_{\infty}))$ and that $d\omega_{\infty}, \nabla^{g_{X_{\infty}}}\omega_{\infty}$ are the $L^2$-weak limits on $B_r(x_{\infty})$ of $\{d\omega_i\}_i, \{\nabla \omega_i\}_i$, respectively.
\item If $d\omega_{\infty}, \delta^{\upsilon_{\infty}}\omega_{\infty}$ are the $L^2$-strong limits on $B_r(x_{\infty})$ of $\{d\omega_i\}_i, \{\delta \omega_i\}_i$ for every $r<R$, respectively,
then we have the following;
\begin{align*}
-\frac{1}{2}\int_{B_R(x_{\infty})} \left\langle d\phi_{\infty}, d|\omega_{\infty}|^2 \right\rangle d\upsilon_{\infty}
&\ge \int_{B_R(x_{\infty})}\phi_{\infty}|\nabla^{g_{X_{\infty}}} \omega_{\infty}|^2d\upsilon_{\infty} \\
&-\int_{B_{R}(x_{\infty})}\left(\delta^{\upsilon_{\infty}} (\phi_{\infty} \omega_{\infty})\delta^{\upsilon_{\infty}} \omega_{\infty}+\langle d(\phi_{\infty}\omega_{\infty}), d\omega_{\infty} \rangle \right)d\upsilon_{\infty} \\
&+K(n-1)\int_{B_R(x_{\infty})}\phi_{\infty}|\omega_{\infty}|^2d\upsilon_{\infty}
\end{align*}
for every $\phi_{\infty} \in \mathrm{LIP}_c(B_R(x_{\infty}))$ with $\phi_{\infty} \ge 0$. 
\end{enumerate}
\end{theorem}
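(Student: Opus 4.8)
The plan is to run the classical Bochner formula on each smooth $(X_i,\upsilon_i)$, multiply by a cut-off, integrate over $B_R(x_i)$, and pass to the Gromov--Hausdorff limit using the convergence and compactness results of Sections~$3$--$6$. For~(1), since $\omega_i$ is smooth the divergence theorem gives $-\int_{B_R(x_i)}f\,\delta\omega_i\,d\upsilon_i=\int_{B_R(x_i)}\langle df,\omega_i\rangle\,d\upsilon_i$ for every $f\in\mathrm{LIP}_c(B_R(x_i))$, where $\delta\omega_i=\delta^{\upsilon_i}\omega_i$. By the $L^2$-weak compactness and a diagonal argument it suffices to work along a subsequence on which $\{\delta\omega_i\}_i$ $L^2$-converges weakly to some $\eta_\infty$. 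Given $f_\infty\in\mathrm{LIP}_c(B_R(x_\infty))$, by \cite[Theorem~$4.2$]{holip} (combined with a cut-off as in the proof of Theorem~\ref{nju}) we may assume there are $f_i\in\mathrm{LIP}_c(B_R(x_i))$ with $f_i,\nabla f_i$ $L^2$-converging strongly to $f_\infty,\nabla f_\infty$; letting $i\to\infty$ yields $-\int_{B_R(x_\infty)}f_\infty\eta_\infty\,d\upsilon_\infty=\int_{B_R(x_\infty)}\langle df_\infty,\omega_\infty\rangle\,d\upsilon_\infty$, so $\omega_\infty\in\mathcal{D}^2(\delta^{\upsilon_\infty},B_R(x_\infty))$ with $\delta^{\upsilon_\infty}\omega_\infty=\eta_\infty$, and since $\eta_\infty$ is uniquely determined the whole sequence converges. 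The equality $\delta^{g_{X_\infty}}\omega_\infty=\delta^{\upsilon_\infty}\omega_\infty$ in the noncollapsed case follows from~(2): Theorem~\ref{techni} (last assertion, $k=1$) applied to $\nabla^{g_{X_i}}\omega_i\to\nabla^{g_{X_\infty}}\omega_\infty$ gives $\delta\omega_i=\delta^{g_{X_i}}\omega_i\to\delta^{g_{X_\infty}}\omega_\infty$ weakly in $L^2$ on every $B_r(x_\infty)$, $r<R$, and uniqueness of the weak limit identifies it with $\delta^{\upsilon_\infty}\omega_\infty$.

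For~(2), since $\omega_\infty$ is the $L^2$-strong limit of $\{\omega_i\}_i$ we have $\sup_i\|\omega_i\|_{L^2(B_R(x_i))}<\infty$, hence Proposition~\ref{bounds} gives $\sup_i\|\nabla\omega_i\|_{L^2(B_r(x_i))}<\infty$ for every $r<R$. Restricting the strong convergence of $\{\omega_i\}_i$ to $B_r(x_\infty)$ and applying Theorem~\ref{mthm} on $B_r(x_i)$ (items $(1)$, $(2)$, $(5)$, $(6)$) gives $\omega_\infty\in W_{2n/(n-1)}(T^*B_r(x_\infty))$, $|\omega_\infty|^2\in H^{1,2n/(2n-1)}(B_r(x_\infty))$, the weak $L^2$-convergence $\nabla\omega_i\to\nabla^{g_{X_\infty}}\omega_\infty$, and the weak $L^{2n/(2n-1)}$-convergence $\nabla|\omega_i|^2\to\nabla|\omega_\infty|^2$; Theorem~\ref{techni} then upgrades the first of these to $d\omega_i\to d\omega_\infty$ weakly in $L^2$ on $B_r(x_\infty)$. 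Since $r<R$ is arbitrary this proves~(2) and supplies the ingredient used above.

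For~(3), by \cite[Theorem~$4.2$]{holip} we choose $\phi_i\in\mathrm{LIP}_c(B_R(x_i))$ with $\phi_i\ge0$, $\sup_i\mathbf{Lip}\phi_i<\infty$, $\phi_i$ converging uniformly to $\phi_\infty$, and $\phi_i,\nabla\phi_i$ $L^s$-converging strongly to $\phi_\infty,\nabla\phi_\infty$ for every $s$. Multiplying the Bochner formula on $(X_i,\upsilon_i)$ by $\phi_i\ge0$, using $\mathrm{Ric}_{X_i}\ge K(n-1)$, integrating over $B_R(x_i)$ and integrating by parts in the Bochner term gives
\begin{align*}
-\frac{1}{2}\int_{B_R(x_i)}\langle d\phi_i,d|\omega_i|^2\rangle\,d\upsilon_i
&\ge\int_{B_R(x_i)}\phi_i|\nabla\omega_i|^2\,d\upsilon_i+K(n-1)\int_{B_R(x_i)}\phi_i|\omega_i|^2\,d\upsilon_i\\
&\quad-\int_{B_R(x_i)}\bigl(\delta(\phi_i\omega_i)\,\delta\omega_i+\langle d(\phi_i\omega_i),d\omega_i\rangle\bigr)\,d\upsilon_i.
\end{align*}
Now I would pass to the limit. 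The left-hand side converges by pairing the strong $L^{2n}$-convergence of $d\phi_i$ with the weak $L^{2n/(2n-1)}$-convergence of $d|\omega_i|^2$ from~(2). For the first term on the right, $\sqrt{\phi_i}\,\nabla\omega_i$ $L^2$-converges weakly to $\sqrt{\phi_\infty}\,\nabla^{g_{X_\infty}}\omega_\infty$ (a bounded uniformly convergent factor times an $L^2$-weakly convergent one), so lower semicontinuity of the $L^2$-norm yields $\liminf_i\int_{B_R(x_i)}\phi_i|\nabla\omega_i|^2\,d\upsilon_i\ge\int_{B_R(x_\infty)}\phi_\infty|\nabla^{g_{X_\infty}}\omega_\infty|^2\,d\upsilon_\infty$. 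Under the extra hypothesis of~(3), $d\omega_i\to d\omega_\infty$ and $\delta\omega_i\to\delta^{\upsilon_\infty}\omega_\infty$ strongly in $L^2$ on each $B_r(x_\infty)$; combining this with the strong $L^s$-convergence of $\phi_i,d\phi_i$, the strong $L^2$-convergence of $\omega_i$, the product lemmas of \cite{holp}, and Gigli's Leibniz rules $d^{\upsilon_\infty}(\phi_\infty\omega_\infty)=d\phi_\infty\wedge\omega_\infty+\phi_\infty d^{\upsilon_\infty}\omega_\infty$ and $\delta^{\upsilon_\infty}(\phi_\infty\omega_\infty)=\phi_\infty\delta^{\upsilon_\infty}\omega_\infty-\langle d\phi_\infty,\omega_\infty\rangle$, the remaining two terms on the right converge to the corresponding expressions for $\phi_\infty,\omega_\infty$, while $\phi_i|\omega_i|^2\to\phi_\infty|\omega_\infty|^2$ strongly in $L^1$. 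Since every term but the first converges and the first is lower semicontinuous, the inequality passes to the limit, giving~(3).

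The step I expect to be most delicate is~(1): passing the codifferential to the limit \emph{on a ball} rather than on all of $X$, and then reconciling $\delta^{\upsilon_\infty}$ with $\delta^{g_{X_\infty}}$ in the noncollapsed case by threading Theorem~\ref{mthm}, Theorem~\ref{techni} and uniqueness of weak limits together. The other subtle point is the weighted lower semicontinuity in~(3), which relies on $\sqrt{\phi_i}$ being bounded and uniformly convergent so that its product with the weakly convergent sequence $\nabla\omega_i$ remains weakly convergent; all other passages to the limit are routine weak$\times$strong and strong$\times$strong product arguments.
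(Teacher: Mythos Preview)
Your argument is correct and follows essentially the same route as the paper's proof. The paper dispatches (1) and (2) by citing Proposition~\ref{bounds}, Theorems~\ref{mthm} and~\ref{techni}, and \cite[Theorem~4.1]{holp} (the last for the weak convergence of $\delta\omega_i$ to $\delta^{\upsilon_\infty}\omega_\infty$); your version of~(1) unpacks that citation by testing against compactly supported Lipschitz functions directly, which is equivalent. For~(3), the paper proceeds exactly as you do: choose $\phi_i\in\mathrm{LIP}_c$ with $\phi_i\ge 0$, $\sup_i\mathbf{Lip}\,\phi_i<\infty$, and $\phi_i,d\phi_i$ $L^p$-strongly convergent; integrate the smooth Bochner inequality against $\phi_i$; use Theorem~\ref{mthm} for the left side and the $|\omega_i|^2$ term; use the weak $L^2$-convergence of $\sqrt{\phi_i}\,\nabla\omega_i$ together with lower semicontinuity of the $L^2$-norm for the $\phi_i|\nabla\omega_i|^2$ term; and use the strong convergence hypothesis on $d\omega_i,\delta\omega_i$ plus the Leibniz formulae to handle $d(\phi_i\omega_i),\delta(\phi_i\omega_i)$. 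One small refinement you may want to make explicit (the paper does this): pick $r<R$ with $\mathrm{supp}\,\phi_\infty\subset B_r(x_\infty)$ and arrange $\mathrm{supp}\,\phi_i\subset B_r(x_i)$, so that all the convergences from~(2), which are only asserted on $B_r$, actually cover the integrals in~(3).
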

\begin{proof}
By Theorems \ref{mthm}, \ref{techni}, Proposition \ref{bounds} and \cite[Theorem $4.1$]{holp}, we have $(1)$ and $(2)$.

Assume that $d\omega_{\infty}, \delta^{\upsilon_{\infty}}\omega_{\infty}$ are $L^2$-strong limits on $B_r(x_{\infty})$ of $\{d\omega_i\}_i, \{\delta \omega_i\}_i$ for every $r<R$, respectively.
Let $\phi_{\infty} \in \mathrm{LIP}_c(B_R(x_{\infty}))$ with $\phi_{\infty}\ge 0$, and let $r>0$ with $\mathrm{supp}\,\phi_{\infty} \subset B_r(x_{\infty})$.

By an argument similar to the proof of \cite[Theorem $1.4$]{holp} without loss of generality we can assume that there exists a sequence $\{\phi_i\}_{i \le \infty}$ of $\phi_i \in \mathrm{LIP}_c(B_r(x_i))$ such that $\phi_i \ge 0$, that $\sup_i\mathbf{Lip}\phi_i<\infty$, that $\mathrm{supp}\,\phi_i \subset B_r(x_i)$ and that $\phi_i, d\phi_i$ $L^p$-converge strongly to $\phi_{\infty}, d\phi_{\infty}$ on $B_r(x_{\infty})$ for every $1<p<\infty$, respectively. 

Then the Bochner formula on $X_i$ yields 
\begin{align}\label{bobo}
-\frac{1}{2}\int_{B_r(x_{i})} \left\langle d\phi_i,  d|\omega_{i}|^2 \right\rangle d\upsilon_i&\ge \int_{B_r(x_{i})}\phi_{i}|\nabla \omega_{i}|^2d\upsilon_i \nonumber \\
&\,\,\,-\int_{B_{r}(x_{i})}\left(\delta (\phi_{i} \omega_{i})\delta \omega_{i} +\langle d(\phi_{i}\omega_i), d\omega_{i} \rangle \right)d\upsilon_i\nonumber \\
&\,\,\,+K(n-1)\int_{B_r(x_{i})}\phi_{i}|\omega_i|^2d\upsilon_i
\end{align}
for every $i<\infty$.
Theorem \ref{mthm} gives
\begin{align}\label{h1}
\lim_{i \to \infty}\int_{B_r(x_{i})} \left\langle d\phi_i,  d|\omega_{i}|^2 \right\rangle d\upsilon_i=\int_{B_r(x_{\infty})} \left\langle d\phi_{\infty},  d|\omega_{\infty}|^2 \right\rangle d\upsilon_{\infty}
\end{align}
and
\begin{align}\label{h2}
\lim_{i \to \infty}\int_{B_r(x_{i})}\phi_{i}|\omega_i|^2d\upsilon_i=\int_{B_r(x_{\infty})}\phi_{\infty}|\omega_{\infty}|^2d\upsilon_{\infty}.
\end{align}
Similarly by Theorem \ref{mthm}, since $(\phi_i)^{1/2}\nabla \omega_i$ $L^2$-converges weakly to $(\phi_{\infty})^{1/2}\nabla^{g_{X_{\infty}}} \omega_{\infty}$ on $B_r(x_{\infty})$ we have
\begin{align}\label{h99}
\liminf_{i \to \infty}\int_{B_r(x_i)}\phi_i|\nabla \omega_i|^2d\upsilon_i \ge \int_{B_r(x_{\infty})}\phi_{\infty}|\nabla^{g_{X_{\infty}}} \omega_{\infty}|^2d\upsilon_{\infty}.
\end{align}
On the other hand,
 we can easily see that $d(\phi_i \omega_i)$, $\delta (\phi_i \omega_i)$ $L^2$-converge strongly to  $d(\phi_{\infty} \omega_{\infty})$, $\delta^{\upsilon_{\infty}}(\phi_{\infty} \omega_{\infty})$ on $B_r(x_{\infty})$, respectively.
In particular we have
\begin{align}\label{h3}
&\lim_{i \to \infty}\int_{B_{r}(x_{i})}\left(\delta (\phi_{i} \omega_{i})\delta \omega_{i} +\langle d(\phi_{i}\omega_i), d\omega_{i} \rangle \right)d\upsilon_i \nonumber \\
&=\int_{B_{r}(x_{\infty})}\left(\delta^{\upsilon_{\infty}} (\phi_{\infty} \omega_{\infty})\delta^{\upsilon_{\infty}} \omega_{\infty} +\langle d(\phi_{\infty}\omega_{\infty}), d\omega_{\infty} \rangle \right)d\upsilon_{\infty}.
\end{align}
Thus by letting $i \to \infty$ in (\ref{bobo}) with (\ref{h1}), (\ref{h2}), (\ref{h99}) and (\ref{h3}), this completes the proof.
\end{proof}
\begin{corollary}\label{expb}
Let $R>0$, let $(X, \upsilon) \in \overline{M(n, K, d)}$ with $\mathrm{diam}\,X>0$, let $x \in X$ and let $f \in \mathcal{D}^2(\Delta^{\upsilon}, B_R(x))$.
Then we have the following:
\begin{align}\label{boch5}
-\frac{1}{2}\int_{B_R(x)}\langle d\phi, d|df|^2 \rangle d\upsilon &\ge \int_{B_R(x)}\phi|\mathrm{Hess}_{f}^{g_X}|^2d\upsilon + \int_{B_R(x)}\left(-\phi(\Delta^{\upsilon}f)^2+\Delta^{\upsilon}f\langle d\phi, df\rangle \right)d\upsilon \nonumber \\
&+K(n-1)\int_{B_R(x)}\phi|df|^2d\upsilon
\end{align}
for every $\phi \in \mathrm{LIP}_c(B_R(x))$ with $\phi \ge 0$ on $B_R(x)$.
\end{corollary}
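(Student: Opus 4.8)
The plan is to obtain (\ref{boch5}) as an immediate consequence of Theorem \ref{boch} applied to $\omega=df$, after a suitable smooth approximation.

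First I would realize $(X,\upsilon)$ as the Gromov--Hausdorff limit of a sequence $(X_i,\upsilon_i)\in M(n,K,d)$ and fix a convergent sequence $x_i\to x$. Given $\phi\in\mathrm{LIP}_c(B_R(x))$ with $\phi\ge 0$, choose $r<s<R$ with $\mathrm{supp}\,\phi\subset B_r(x)$; since $\phi$ and $d\phi$ are supported in $\overline{B}_r(x)$, every integral in (\ref{boch5}) is effectively over $B_r(x)$, so it suffices to produce the inequality with $B_s(x)$ in place of $B_R(x)$. Exactly as in the proof of Theorem \ref{aarr} --- approximating $\Delta^{\upsilon}f|_{B_s(x)}$ in $L^2$ by smooth functions, lifting these through Theorem \ref{app2}, using elliptic regularity on the smooth manifolds $X_i$, and diagonalizing --- I obtain $f_i\in C^{\infty}(B_s(x_i))$ such that $f_i,\nabla f_i,\Delta f_i$ converge $L^2$-strongly to $f,\nabla f,\Delta^{\upsilon}f$ on $B_s(x_{\infty})$, respectively.

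Next I set $\omega_i:=df_i\in C^{\infty}(T^*B_s(x_i))$, so that $d\omega_i\equiv 0$ and $\delta\omega_i=\Delta f_i$. Then $\sup_i\int_{B_s(x_i)}(|d\omega_i|^2+|\delta\omega_i|^2)\,d\upsilon_i<\infty$, and $\omega_i$ converges $L^2$-strongly on $B_s(x_{\infty})$ to $\omega_{\infty}:=df$ (from the strong convergence of the gradients). Thus Theorem \ref{boch} applies with radius $s$: part (1) gives $\omega_{\infty}\in\mathcal{D}^2(\delta^{\upsilon},B_s(x))$ with $\delta^{\upsilon}\omega_{\infty}=\Delta^{\upsilon}f$, and part (2) gives $\omega_{\infty}\in W_{2n/(n-1)}(T^*B_r(x))$ with $\nabla^{g_X}\omega_{\infty}=\mathrm{Hess}_f^{g_X}$, these being the relevant weak limits of $\delta\omega_i=\Delta f_i$ and of $\nabla\omega_i=\mathrm{Hess}_{f_i}^{g_{X_i}}$. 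Since $d\omega_i\equiv 0\to 0=d\omega_{\infty}$ strongly and $\delta\omega_i=\Delta f_i\to\Delta^{\upsilon}f=\delta^{\upsilon}\omega_{\infty}$ strongly, the hypothesis of part (3) is met, and it yields
\begin{align*}
-\frac{1}{2}\int_{B_R(x)}\langle d\phi,d|df|^2\rangle\,d\upsilon
&\ge \int_{B_R(x)}\phi|\mathrm{Hess}_f^{g_X}|^2\,d\upsilon\\
&\quad-\int_{B_R(x)}\delta^{\upsilon}(\phi\,df)\,\Delta^{\upsilon}f\,d\upsilon
+K(n-1)\int_{B_R(x)}\phi|df|^2\,d\upsilon,
\end{align*}
where the term $\langle d(\phi\,df),d\omega_{\infty}\rangle$ has been dropped because $d\omega_{\infty}=0$ (equivalently, $\mathrm{Hess}_f^{g_X}$ is symmetric by Proposition \ref{hess2}(a)).

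To finish I would use the codifferential product rule $\delta^{\upsilon}(\phi\,df)=\phi\,\Delta^{\upsilon}f-\langle d\phi,df\rangle$ (Proposition \ref{hess2}(b), with $\delta_1^{\upsilon}=\delta^{\upsilon}$ and $\delta^{\upsilon}df=\Delta^{\upsilon}f$), so that $-\delta^{\upsilon}(\phi\,df)\,\Delta^{\upsilon}f=-\phi(\Delta^{\upsilon}f)^2+\Delta^{\upsilon}f\,\langle d\phi,df\rangle$; inserting this into the previous display gives precisely (\ref{boch5}). The argument is essentially a substitution into Theorem \ref{boch}, so I do not expect a genuine obstacle; the points requiring care are the construction of the smooth approximants $f_i$ with $\Delta f_i\to\Delta^{\upsilon}f$ strongly --- which forces the passage through smooth manifolds $X_i$ and Theorem \ref{app2} --- and the sign bookkeeping in the codifferential product rule (which is in any case insensitive to the orientation convention for $\delta^{\upsilon}$, since both factors $\delta^{\upsilon}(\phi\,df)$ and $\delta^{\upsilon}df$ change sign together).
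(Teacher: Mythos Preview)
Your proposal is correct and follows essentially the same route as the paper, which cites Theorems \ref{app2} and \ref{hessc} together with ``an argument similar to the proof of Theorem \ref{boch}''; you simply make this explicit by applying Theorem \ref{boch} directly to $\omega_i=df_i$. The only minor quibble is that Proposition \ref{hess2}(b) concerns $\delta^{g_X}$ rather than $\delta^{\upsilon}$, but the product rule $\delta^{\upsilon}(\phi\,df)=\phi\,\Delta^{\upsilon}f-\langle d\phi,df\rangle$ you need follows immediately from the definition of $\delta^{\upsilon}$ anyway.
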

\begin{proof}
It follows directly from Theorems \ref{app2}, \ref{hessc} and an argument similar to the proof of Theorem \ref{boch}.
\end{proof}
Note that we showed in \cite{holp} that (\ref{boch5}) holds for \textit{most} functions $f \in \mathcal{D}^2(\Delta^{\upsilon}, B_R(x))$. 
See \cite[Theorem $1.4$]{holp}.
\subsection{New test classes and Hodge Laplacian}
In this subsection we prove the remained results stated in subsection $1.4$.
Let $(X, \upsilon) \in \overline{M(n, K, d)}$ with $\mathrm{diam}\,X>0$.
We start this section by giving the following approximation.
Recall (\ref{newtest}) for the definition of $\widetilde{\mathrm{Test}}F(X)$.
\begin{proposition}\label{approx}
Let $f \in \mathrm{LIP}(X) \cap \mathcal{D}^2(\Delta^{\upsilon}, X)$.
Then there exists a sequence $\{f_i\}_{i}$ of $f_i \in \widetilde{\mathrm{Test}}F(X)$ such that $\sup_i\mathbf{Lip}f_i<\infty$, that $f_i \to f$  in $H^{1, 2}(X)$ and that $\Delta^{\upsilon}f_i \to \Delta^{\upsilon}f$ in $L^2(X)$.
Moreover if $f \in \mathrm{Test}F(X)$ then we can assume that $\Delta^{\upsilon}f_i \to \Delta^{\upsilon}f$ in $H^{1, 2}(X)$.
\end{proposition}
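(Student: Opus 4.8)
The plan is to take $f_i := \tilde{h}_{\epsilon_i}f$, where $\tilde{h}_\epsilon$ is Gigli's mollified heat flow recalled in (\ref{mollified}) and $\epsilon_i \downarrow 0$. Since $f \in \mathrm{LIP}(X)$, the approximation recalled in Remark \ref{appremark} already yields $\tilde{h}_{\epsilon_i}f \in \mathrm{Test}F(X)$, $\sup_i\mathbf{Lip}(\tilde{h}_{\epsilon_i}f)<\infty$ and $\tilde{h}_{\epsilon_i}f \to f$ in $H^{1,2}(X)$; in particular the uniform bound on $\mathbf{Lip}f_i$ and the $H^{1,2}$-convergence come for free. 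It therefore remains to (i) upgrade $\tilde{h}_{\epsilon_i}f \in \mathrm{Test}F(X)$ to $\tilde{h}_{\epsilon_i}f \in \widetilde{\mathrm{Test}}F(X)$, i.e. to show that $\Delta^{\upsilon}\tilde{h}_{\epsilon_i}f$ is a Lipschitz function on $X$, and (ii) to establish $\Delta^{\upsilon}f_i \to \Delta^{\upsilon}f$ in $L^2(X)$, and in $H^{1,2}(X)$ when moreover $f \in \mathrm{Test}F(X)$.

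For (ii) I would use that $f \in \mathcal{D}^2(\Delta^{\upsilon}, X)$ and that the heat flow $h_s$ commutes with $\Delta^{\upsilon}$ on $\mathcal{D}^2(\Delta^{\upsilon}, X)$, so that $\Delta^{\upsilon}\tilde{h}_{\epsilon}f = \tilde{h}_{\epsilon}(\Delta^{\upsilon}f)$; the strong continuity of $s\mapsto h_s g$ in $L^2(X)$ (resp.\ in $H^{1,2}(X)$) then gives $\tilde{h}_{\epsilon}g \to g$ in $L^2(X)$ (resp.\ in $H^{1,2}(X)$) as $\epsilon \to 0$, applied to $g = \Delta^{\upsilon}f$. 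For (i) I would integrate by parts in the variable $s$, using $\Delta^{\upsilon}h_sf = \frac{d}{ds}h_sf$ and that $\phi$ is compactly supported in $(0,1)$ to kill boundary terms, obtaining $\Delta^{\upsilon}\tilde{h}_{\epsilon}f = -\epsilon^{-2}\int_0^{\infty}h_sf\,\phi'(s\epsilon^{-1})\,ds$. Then the Bakry--\'Emery gradient estimate $|\nabla h_sf| \le e^{|K|(n-1)s}h_s(|\nabla f|)$, valid since $(X,\upsilon)$ is an $RCD(K(n-1),\infty)$ space, together with $\||\nabla f|\|_{L^{\infty}} = \mathbf{Lip}f$ and $h_s1\le1$, gives $\||\nabla(\Delta^{\upsilon}\tilde{h}_{\epsilon}f)|\|_{L^{\infty}} \le \epsilon^{-1}e^{|K|(n-1)\epsilon}\mathbf{Lip}(f)\,\|\phi'\|_{L^1(0,1)} < \infty$. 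Since $\Delta^{\upsilon}\tilde{h}_{\epsilon}f \in H^{1,2}(X)$ already and $X$ is compact, doubling and supports a Poincar\'e inequality, a function in $H^{1,2}(X)$ with essentially bounded minimal weak gradient admits a Lipschitz representative, so $\Delta^{\upsilon}\tilde{h}_{\epsilon_i}f$ is Lipschitz and $f_i \in \widetilde{\mathrm{Test}}F(X)$.

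The main technical point to handle with care is the rigorous justification of differentiation under the integral sign and of the integration by parts in $s$ in the definition of $\tilde{h}_{\epsilon}$: I would treat $s\mapsto h_sf$ as a locally absolutely continuous curve valued in $H^{1,2}(X)$ (and in $\mathcal{D}^2(\Delta^{\upsilon},X)$), exploiting the smoothing of $h_s$ for $s>0$ and the compact support of $\phi$ in $(0,1)$ so that all integrands are Bochner integrable in the relevant Banach space and boundary contributions vanish; the commutation $h_s\Delta^{\upsilon} = \Delta^{\upsilon}h_s$ and the Bakry--\'Emery estimate are standard on $RCD$-spaces. Everything else is then a routine assembly of the estimates above, and the final clause for $f\in\mathrm{Test}F(X)$ requires nothing new beyond applying the $H^{1,2}$-continuity of $\tilde{h}_{\epsilon}$ to $g=\Delta^{\upsilon}f\in H^{1,2}(X)$.
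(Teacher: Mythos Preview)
Your approach is correct and slightly different from the paper's. You use a single application of the mollified heat flow, $f_i=\tilde h_{\epsilon_i}f$, and show $\Delta^{\upsilon}\tilde h_{\epsilon}f$ is Lipschitz by integrating by parts in $s$ to rewrite it as a weighted average of the Lipschitz functions $h_sf$, then invoking the Bakry--\'Emery gradient contraction and the Sobolev-to-Lipschitz property. The paper instead takes a double regularization $h_{\delta}(\tilde h_{\epsilon}f)$: since $\Delta^{\upsilon}\tilde h_{\epsilon}f\in L^{\infty}$ (this is exactly what Remark~\ref{appremark} provides), the $L^{\infty}$-to-Lipschitz smoothing of the heat flow $h_{\delta}$ on $RCD$ spaces makes $\Delta^{\upsilon}h_{\delta}(\tilde h_{\epsilon}f)=h_{\delta}(\Delta^{\upsilon}\tilde h_{\epsilon}f)$ Lipschitz directly. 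Both routes rest on standard $RCD$ tools; yours is a touch more self-contained (no second smoothing layer) at the cost of justifying the Bochner-integral integration by parts, while the paper's is more of a black-box composition of known regularization facts. Your treatment of (ii) via $\Delta^{\upsilon}\tilde h_{\epsilon}f=\tilde h_{\epsilon}(\Delta^{\upsilon}f)$ and strong continuity of the semigroup matches the paper's reasoning, including the final clause for $f\in\mathrm{Test}F(X)$.
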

\begin{proof}
Let us consider the following approximation:
\[h_{\delta}(\widetilde{h}_{\epsilon}(f))\]
(recall (\ref{mollified}) for the definition of a mollified heat flow $\widetilde{h}_s$).
Then by \cite{ags0, gigli} it is easy to check the following:
\begin{itemize}
\item $h_{\delta}(\widetilde{h}_{\epsilon}(f)) \in \widetilde{\mathrm{Test}}F(X)$ for any $\epsilon, \delta >0$.
\item $\sup_{\delta<1, \epsilon<1}\mathbf{Lip}h_{\delta}(\widetilde{h}_{\epsilon}(f))<\infty$. 
\item $h_{\delta}(\widetilde{h}_{\epsilon}(f)) \to \widetilde{h}_{\epsilon}(f)$ in $H^{1, 2}(X)$ as $\delta \to 0$ for every $\epsilon >0$.
\item $\widetilde{h}_{\epsilon}(f) \to f$ in $H^{1, 2}(X)$ as $\epsilon \to 0$.
\item $\Delta^{\upsilon}(h_{\delta}(\widetilde{h}_{\epsilon}(f))) = h_{\delta}(\Delta^{\upsilon}(\widetilde{h}_{\epsilon}(f))) \to \Delta^{\upsilon}\widetilde{h}_{\epsilon}(f)$ in $L^{2}(X)$ as $\delta \to 0$ for every $\epsilon >0$.
\item $\Delta^{\upsilon}(\widetilde{h}_{\epsilon}f) = \widetilde{h}_{\epsilon}(\Delta^{\upsilon}f) \to \Delta^{\upsilon}f$ in $L^2(X)$ as $\epsilon \to 0$.
\end{itemize}
Moreover if $f \in \mathrm{Test}F(X)$, then we see that $\Delta^{\upsilon}(h_{\delta}(\widetilde{h}_{\epsilon}(f))) = h_{\delta}(\widetilde{h}_{\epsilon}(\Delta^{\upsilon}(f))) \to \Delta^{\upsilon}\widetilde{h}_{\epsilon}(f)$ in $H^{1, 2}(X)$ as $\delta \to 0$ for every $\epsilon >0$ and that $\widetilde{h}_{\epsilon}(\Delta^{\upsilon}f) \to \Delta^{\upsilon}f$ in $H^{1, 2}(X)$ as $\epsilon \to 0$.
This completes the proof.
\end{proof}
We now define new `test classes', $\widetilde{\mathrm{Test}}$, as follows:
\begin{itemize}
\item Let
\[\widetilde{\mathrm{Test}}T^r_sX:=\left\{ \sum_{k=1}^N h_0^k \nabla^r_s h^k ; N \in \mathbf{N}, h^k=(h^k_1, \ldots, h^k_{r+s}), h^k_i \in \widetilde{\mathrm{Test}}F(X)\right\} \subset \mathrm{Test}T^r_sX.\]
\item Let 
\[\widetilde{\mathrm{TestForm}}_k(X):=\left\{\sum_{i=1}^Nf_{0, i}df_{1, i} \wedge \cdots \wedge df_{k, i}; N \in \mathbf{N}, f_{j, i} \in \widetilde{\mathrm{Test}}F(X)\right\} \subset \mathrm{TestForm}_k(X).\]
\end{itemize}
\begin{remark}\label{ggtttt}
We give two remarks on $W^{1, 2}_H$-approximations.
\begin{itemize}
\item In Theorem \ref{app6}, by the proof, we can choose the $W^{1, 2}_H$-approximation $\{\omega_{i(j)}\}_j$ of $\omega$ by $\omega_{i(j)} \in \widetilde{\mathrm{TestForm}}_1(X_{i(j)})$.
\item By \cite[Proposition $3.5.12$]{gigli} and the proof of Theorem \ref{app6} with Theorem \ref{hessc}, we have the following weak type approximation: Let $k \ge 2$, let $(X_i, \upsilon_i) \stackrel{GH}{\to} (X_{\infty}, \upsilon_{\infty})$ in $\overline{M(n, K, d)}$ with $\mathrm{diam}\,X_{\infty}>0$, and let $\omega_{\infty} \in \widetilde{\mathrm{TestForm}}_k(X_{\infty})$.
Then there exist a subsequence $\{i(j)\}_j$ and a sequence $\{\omega_{i(j)}\}_j$ of $\omega_{i(j)} \in \widetilde{\mathrm{TestForm}}_k(X_{i(j)})$ with 
\[\sup_j\left(||\omega_{i(j)}||_{L^{\infty}}+||d^{\upsilon_{i(j)}}\omega_{i(j)}||_{L^{\infty}}\right)<\infty.\]
such that $\omega_{i(j)}, d^{\upsilon_{i(j)}}\omega_{i(j)}$ $L^2$-converge strongly to $\omega_{\infty}, d^{\upsilon_{\infty}}\omega_{\infty}$ on $X_{\infty}$, respectively and that $\delta^{\upsilon_{i(j)}}\omega_{i(j)}, \nabla \omega_{i(j)}$ $L^2$-converge weakly to $\delta^{\upsilon_{\infty}}\omega_{\infty}, \nabla \omega_{\infty}$ on $X_{\infty}$, respectively.
Moreover, if $(X_i, \upsilon_i) \in M(n, K, d)$ for every $i<\infty$, then we can choose $\omega_{i(j)}$ as smooth $k$-forms.
\item Similarly we have the following. 
Let $(X_i, \upsilon_i) \stackrel{GH}{\to} (X_{\infty}, \upsilon_{\infty})$ in $\overline{M(n, K, d)}$ with $\mathrm{diam}\,X_{\infty}>0$, and let $T \in \widetilde{\mathrm{Test}}T^r_sX_{\infty}$.
Then there exists a subsequence $\{i(j)\}_j$ and a sequence $\{T_{i(j)}\}_j$ of $T_{i(j)} \in \widetilde{\mathrm{Test}}T^r_sX_{i(j)}$ with 
\[\sup_j||T_{i(j)}||_{L^{\infty}}<\infty\]
such that 
$T_{i(j)}$ $L^2$-converges strongly to $T_{\infty}$ on $X_{\infty}$ and that $\nabla T_{i(j)}$ $L^2$-converges weakly to $\nabla T_{\infty}$ on $X_{\infty}$.
Moreover, if $(X_i, \upsilon_i) \in M(n, K, d)$ for every $i<\infty$, then we can choose $T_{i(j)}$ as smooth tensor fields.
This gives a positive answer to the question stated in Remark \ref{honsda}.
\end{itemize}
\end{remark}
The following is a key result to define new Sobolev spaces by using new test classes above (however we will refine them later in Theorem \ref{198183}). 
\begin{theorem}\label{density3}
We have the following:
\begin{enumerate}
\item $\widetilde{\mathrm{Test}}F(X)$ is dense in $H^{1, 2}(X)$. In particular it is also dense in $L^2(X)$.
\item $\widetilde{\mathrm{Test}}T^r_sX$ is dense in $L^2(T^r_sX)$.
\item $\widetilde{\mathrm{TestForm}}_k(X)$ is dense in $H^{1, 2}_d(\bigwedge^kT^*X)$. In particular it is also dense in $L^2(\bigwedge^kT^*X)$.
\end{enumerate}
\end{theorem}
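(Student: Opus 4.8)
\textbf{Proof proposal for Theorem \ref{density3}.} The plan is to reduce all three statements to Proposition \ref{approx} together with Gigli's density results for the original (non-$\sim$) test classes, the only extra ingredient being an elementary lemma on convergence of (wedge) products.

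First I would treat (1). Since $\widetilde{\mathrm{Test}}F(X)\subset\mathrm{Test}F(X)$ and, by \cite{gigli}, $\mathrm{Test}F(X)$ is dense in $H^{1,2}(X)$, it suffices to show that every $g\in\mathrm{Test}F(X)$ is an $H^{1,2}$-limit of elements of $\widetilde{\mathrm{Test}}F(X)$. An element $g\in\mathrm{Test}F(X)$ has $dg\in L^{\infty}(T^*X)$, hence a Lipschitz representative by the Sobolev-to-Lipschitz property valid on Ricci limit spaces; thus $g\in\mathrm{LIP}(X)\cap\mathcal{D}^2(\Delta^{\upsilon},X)$ and Proposition \ref{approx} produces a sequence $\{g_i\}_i\subset\widetilde{\mathrm{Test}}F(X)$ with $\sup_i\mathbf{Lip}g_i<\infty$, $g_i\to g$ in $H^{1,2}(X)$ and $\Delta^{\upsilon}g_i\to\Delta^{\upsilon}g$ in $L^2(X)$. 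This gives (1), and density in $L^2(X)$ is immediate.

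Next I record the elementary fact used for (2) and (3): if $\{h_{j,m}\}_m$ ($0\le j\le N$) are sequences in $\mathrm{LIP}(X)\cap H^{1,2}(X)$ with $\sup_m\mathbf{Lip}h_{j,m}<\infty$ and $h_{j,m}\to h_j$ in $H^{1,2}(X)$ as $m\to\infty$, then $h_{0,m}\nabla^r_s(h_{1,m},\dots,h_{r+s,m})\to h_0\nabla^r_s(h_1,\dots,h_{r+s})$ in $L^2(T^r_sX)$ (for $r+s=N$) and $dh_{0,m}\wedge\cdots\wedge dh_{N,m}\to dh_0\wedge\cdots\wedge dh_N$ in $L^2(\bigwedge^NT^*X)$. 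Indeed, since $X$ has finite diameter and $\upsilon$ is a probability measure, $H^{1,2}$-convergence gives $\sup_m\|h_{j,m}\|_{L^2}<\infty$, whence $\sup_m|\int_Xh_{j,m}d\upsilon|<\infty$ and then $\sup_m\|h_{j,m}\|_{L^{\infty}}\le\sup_m(|\int_Xh_{j,m}d\upsilon|+\mathbf{Lip}(h_{j,m})\,\mathrm{diam}\,X)<\infty$; a telescoping decomposition of the difference of the products into terms consisting of one factor converging to $0$ in $L^2$ (either $h_{0,m}-h_0\to 0$ or $\nabla h_{j,m}-\nabla h_j\to 0$) times finitely many factors bounded in $L^{\infty}$ then yields the claim. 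Given this, (2) follows: $\mathrm{Test}T^r_sX$ is dense in $L^2(T^r_sX)$ by \cite{gigli}, so it suffices to approximate a single $\sum_kh_0^k\nabla^r_sh^k$ with $h^k_j\in\mathrm{Test}F(X)$; applying Proposition \ref{approx} to each $h^k_j$ and substituting the resulting $\widetilde{\mathrm{Test}}F$-sequences, the elementary fact gives $L^2$-convergence to the given element.

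For (3), since $\mathrm{TestForm}_k(X)$ is dense in $H^{1,2}_d(\bigwedge^kT^*X)$ by \cite{gigli}, it suffices to $W^{1,2}_d$-approximate a single $\omega=\sum_if_{0,i}df_{1,i}\wedge\cdots\wedge df_{k,i}$ with $f_{j,i}\in\mathrm{Test}F(X)$. Approximating each $f_{j,i}$ by $\{f_{j,i,m}\}_m\subset\widetilde{\mathrm{Test}}F(X)$ as in Proposition \ref{approx} and setting $\omega_m:=\sum_if_{0,i,m}df_{1,i,m}\wedge\cdots\wedge df_{k,i,m}\in\widetilde{\mathrm{TestForm}}_k(X)$, the elementary fact gives $\omega_m\to\omega$ in $L^2(\bigwedge^kT^*X)$. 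For the exterior derivative, Gigli's identity $d^{\upsilon}(f_0\,df_1\wedge\cdots\wedge df_k)=df_0\wedge df_1\wedge\cdots\wedge df_k$ (cf. \cite[Theorem $3.5.2$]{gigli}, as used in the proof of Theorem \ref{app6}) gives $d^{\upsilon}\omega_m=\sum_idf_{0,i,m}\wedge\cdots\wedge df_{k,i,m}$ and $d^{\upsilon}\omega=\sum_idf_{0,i}\wedge\cdots\wedge df_{k,i}$, so the elementary fact again yields $d^{\upsilon}\omega_m\to d^{\upsilon}\omega$ in $L^2(\bigwedge^{k+1}T^*X)$. Hence $\omega_m\to\omega$ in $W^{1,2}_d$, proving (3); density in $L^2(\bigwedge^kT^*X)$ follows.

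The main obstacle is the elementary fact on convergence of products and wedge products: although morally routine, the point that needs care is the genuinely \emph{uniform} $L^{\infty}$-bound on the zeroth factors, which rests on $\mathrm{diam}\,X<\infty$ and on $\upsilon$ being a probability measure (so that a uniform Lipschitz bound plus $L^2$-boundedness promotes to a uniform sup-norm bound); once that is in place the telescoping estimate is straightforward, and everything else is a direct application of Proposition \ref{approx} and the corresponding density statements from \cite{gigli}.
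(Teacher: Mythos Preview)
Your proposal is correct and follows essentially the same approach as the paper: reduce to Proposition~\ref{approx} plus the known density of the original test classes, then check that substituting the $\widetilde{\mathrm{Test}}F$-approximants into the product/wedge expressions yields convergence in the relevant norm (using Gigli's formula $d^{\upsilon}(f_0\,df_1\wedge\cdots\wedge df_k)=df_0\wedge\cdots\wedge df_k$ for part~(3)). The only difference is that you spell out the ``elementary fact'' on convergence of products explicitly, whereas the paper leaves this step implicit; your care about the uniform $L^\infty$-bound on the zeroth factor is well placed but routine since $X$ is compact.
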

\begin{proof}
We first prove $(1)$.
By Proposition \ref{approx} we see that $\widetilde{\mathrm{Test}}F(X)$ is dense in $\mathrm{Test}F(X)$ with respect to the $H^{1, 2}$-norm.
Since $\mathrm{Test}F(X)$ is dense in $H^{1, 2}(X)$ (see Remark \ref{appremark}), we have $(1)$.

Next we give a proof of $(2)$ in the case that $r=1$ and $s=0$ only for simplicity because the proof in the other case is similar.
Since $\mathrm{Test}TX$ is dense in $L^2(TX)$, it suffices to check that $\widetilde{\mathrm{Test}}TX$ is dense in $\mathrm{Test}TX$ in the sense of $L^2$.

Let $V =\sum_{i=1}^Nf_{1, \infty}^i\nabla f_{2, \infty}^i \in \mathrm{Test}TX$, where $f_{j, \infty}^i \in \mathrm{Test}TX$.
By Proposition \ref{approx}, for any $i, j$, there exists a sequence $\{f_{j, k}^i\}_{ k<\infty}$ of $f_{j, k}^i \in \widetilde{\mathrm{Test}}F(X)$ such that $\sup_{k}\mathbf{Lip}f_{j, k}^i<\infty$ and that $f_{j, k}^i \to f_{j, \infty}^i$ in $H^{1, 2}(X)$ as $k \to \infty$.
Let $V_k:=\sum_{i=1}^Nf_{1, k}^i\nabla f_{2, k}^i$ in $\widetilde{\mathrm{Test}}TX$ for every $k < \infty$.
Then since $V_k \to V$ in $L^2(TX)$ as $k \to \infty$, we have $(2)$.

Finally we prove $(3)$.
Let 
\[\omega:=\sum_{i=1}^Nf_{0, i, \infty}df_{1, i, \infty} \wedge \cdots \wedge df_{k, i, \infty} \in \mathrm{TestForm}_kX,\]
where $f_{j, i} \in \mathrm{Test}F(X)$.
By Proposition \ref{approx}, for any $i, j$ there exists a sequence $\{f_{j, i, l}\}_l$ of $f_{j, i, l} \in \widetilde{\mathrm{Test}}F(X)$ such that $\sup_l\mathbf{Lip}f_{j, i, l}<\infty$ and that $f_{j, i, l} \to f_{j, i}$ in $H^{1, 2}(X)$ as $l \to \infty$.
For every $l < \infty$, let 
\[\omega_{l}:=\sum_{i=1}^Nf_{0, i, l}df_{1, i, l} \wedge \cdots \wedge df_{k, i, l}.\]
By \cite[Theorem $3.5.2$]{gigli} we have 
\[d^{\upsilon}\omega_{l}=\sum_{i=1}^Ndf_{0, i, l}\wedge df_{1, i, l} \wedge \cdots \wedge df_{k, i, l}.\]
Thus we have $\omega_{l} \to \omega$ in $W^{1, 2}_d(\bigwedge^kT^*X)$ as $l \to \infty$.
This completes the proof.
\end{proof}
We now define new Sobolev spaces in the same manner of \cite{gigli} by using these new test classes, $\widetilde{\mathrm{Test}}$ instead of $\mathrm{Test}$ as follows:
\begin{itemize}
\item Let $\widetilde{W}^{1, 2}_C(T^r_sX)$ be the set of $T \in L^2(T^r_sX)$ satisfying that there exists $S \in L^2(T^r_{s+1}X)$ such that (\ref{appp}) holds for any $g_i \in \widetilde{\mathrm{Test}}F(X)$. By Theorem \ref{density3}, since $S$ is unique if it exists, we denote it by $\widetilde{\nabla}^{\upsilon}T$. It is easy to check that $\widetilde{W}^{1, 2}_C(T^r_sX)$ is a Hilbert space equipped with the norm
\[||T||_{\widetilde{W}^{1, 2}_C}:=\left(||T||_{L^2}^2+||\widetilde{\nabla}^{\upsilon}T||_{L^2}^2\right)^{1/2},\]
that $W^{1, 2}_C(T^r_sX)$ is a closed subspace of $\widetilde{W}^{1, 2}_C(T^r_sX)$, and that $\nabla^{\upsilon}T=\widetilde{\nabla}^{\upsilon}T$ for every $T \in W^{1, 2}_C(T^r_sX)$.
Thus for convenience we use the same notation: $\nabla^{\upsilon}T:=\widetilde{\nabla}^{\upsilon}T$ for every $T \in \widetilde{W}^{1, 2}_C(T^r_sX)$ (note that we will also use similar notation below).
Let $\widetilde{H}^{1, 2}_C(T^r_sX)$ be the closure of $\widetilde{\mathrm{Test}}T^r_sX$ in $\widetilde{W}^{1, 2}_C(T^r_sX)$.
\item Let $\widetilde{W}^{1, 2}_d(\bigwedge^kT^*X)$ be the set of $\omega \in L^2(\bigwedge^kT^*X)$ satisfying that there exists $\eta \in L^2(\bigwedge^{k+1}T^*X)$ such that (\ref{rrer}) holds for any $V_i \in \widetilde{\mathrm{Test}}T^1_0X$.
By Theorem \ref{density3}, since $\eta$ is unique if it exists, we denote it by $d^{\upsilon}\omega$.
By an argument similar to the proof of \cite[Theorem $3.5.2$]{gigli} we see that $\widetilde{W}^{1, 2}_d(\bigwedge^kT^*X)$ is a Hilbert space equipped with the norm
\[||\omega||_{\widetilde{W}^{1, 2}_d}:=\left(||\omega||_{L^2}^2+||d^{\upsilon}\omega||_{L^2}^2\right)^{1/2},\]
that $W^{1, 2}_d(\bigwedge^kT^*X)$ is a closed subspace of $\widetilde{W}^{1, 2}_d(\bigwedge^kT^*X)$
and that $\{(\omega, d^{\upsilon}\omega); \omega \in \widetilde{W}^{1, 2}_d(\bigwedge^kT^*X)\}$ is a closed subset of $L^2(\bigwedge^kT^*X) \times L^2(\bigwedge^{k+1}T^*X)$.
Let $\widetilde{H}^{1, 2}_d(\bigwedge^kT^*X)$ be the closure of $\widetilde{\mathrm{TestForm}}_k(X)$ in $\widetilde{W}^{1, 2}_d(\bigwedge^kT^*X)$, and let $\widetilde{W}_C^{1, 2}(\bigwedge^kT^*X):=\widetilde{W}_C^{1, 2}(T^0_kX) \cap L^2(\bigwedge^kT^*X)$.
\item Let $\widetilde{W}^{2, 2}(X)$ be the set of $f \in H^{1, 2}(X)$ satisfying that there exists $A \in L^2(T^0_2X)$ such that (\ref{78}) holds for any $g_i \in \widetilde{\mathrm{Test}}F(X)$. By Theorem \ref{density3} since $A$ is unique if it exists,
we also denote it by $\mathrm{Hess}_f^{\upsilon}$. It is easy to check that $\widetilde{W}^{2, 2}(X)$ is a Hilbert space equipped with the norm
\[||f||_{\widetilde{W}^{2, 2}}:=\left(||f||_{H^{1, 2}}^2+||\mathrm{Hess}^{\upsilon}_f||_{L^2}^2\right)^{1/2},\]
and that $W^{2, 2}(X)$ is a closed subspace of $\widetilde{W}^{2, 2}(X)$
Let $\widetilde{H}^{2, 2}(X)$ be the closure of $\widetilde{\mathrm{Test}}F(X)$ in $\widetilde{W}^{2, 2}(X)$.
\item Let $\widetilde{\mathcal{D}}^2(\delta^{\upsilon}_k, X)$ be the set of $\omega \in L^2(\bigwedge^kT^*X)$ satisfying that there exists $\eta \in L^2(\bigwedge^{k-1}T^*X)$ such that (\ref{codifferential}) holds for every $\alpha \in \widetilde{\mathrm{TestForm}}_{k-1}X$.
Since $\eta$ is unique if it exists, we denote it by $\delta^{\upsilon}_k\omega$ for short.
It is easy to check that $\widetilde{\mathcal{D}}^2(\delta^{\upsilon}_k, X)$ is a Hilbert space equipped with the norm
\[||\omega||_{\delta^{\upsilon}_k}:=\left(||\omega||_{L^2}^2+||\delta^{\upsilon}_k\omega||_{L^2}^2\right)^{1/2}\]
and that $\mathcal{D}^2(\delta_k^{\upsilon}, X)$ is a closed subset of $\widetilde{\mathcal{D}}^2(\delta^{\upsilon}_k, X)$. 
\item Let $\widetilde{W}^{1, 2}_H(\bigwedge^kT^*X):=\widetilde{W}^{1, 2}_d(\bigwedge^kT^*X) \cap \widetilde{\mathcal{D}}^2(\delta_k^{\upsilon}, X)$.
It is easy to check that $\widetilde{W}^{1, 2}_H(\bigwedge^kT^*X)$ is a Hilbert space equipped with the norm
\[||\omega||_{\widetilde{W}^{1, 2}_H}:=\left(||\omega||_{L^2}^2+||d^{\upsilon}\omega||_{L^2}^2 +||\delta^{\upsilon}\omega||_{L^2}^2\right)^{1/2},\]
and that $W^{1, 2}_H(\bigwedge^kT^*X)$ is a closed subspace of $\widetilde{W}^{1, 2}_H(\bigwedge^kT^*X)$.  
Let $\widetilde{H}^{1, 2}_H(\bigwedge^kT^*X)$ be the closure of $\widetilde{\mathrm{TestForm}}_k(X)$ in $\widetilde{W}^{1, 2}_H(\bigwedge^kT^*X)$.
\item Let $\widetilde{\mathcal{D}}^2(\Delta^{\upsilon}_{H, k}, X)$ be the set of $\omega \in \widetilde{W}^{1, 2}_H(\bigwedge^kT^*X)$ satisfying that 
there exists $\eta \in L^2(\bigwedge^kT^*X)$ such that (\ref{mkoiop}) holds for every $\alpha \in \widetilde{\mathrm{TestForm}}_k(X)$.
Since $\eta$ is unique if it exists we denote it by $\Delta^{\upsilon}_{H, k}\omega$. 
It is easy to check that $\widetilde{\mathcal{D}}^2(\Delta^{\upsilon}_{H, k}, X)$ is a Hilbert space equipped with the norm
\[||\omega||_{\mathcal{D}^2}:=\left(||\omega||_{\widetilde{W}^{1, 2}_H}^2+||\Delta_{H, k}^{\upsilon}\omega||_{L^2}^2\right)^{1/2}\]
and that $\mathcal{D}^2(\Delta^{\upsilon}_{H, k}, X)$ is a closed subset of $\widetilde{\mathcal{D}}^2(\Delta^{\upsilon}_{H, k}, X)$.
\end{itemize}
It is important that these `$\sim$-versions of Gigli's Sobolev spaces' have  \textit{closedness} with respect to the Gromov-Hausdorff topology:
\begin{theorem}\label{techni2}
Let $(X_i, \upsilon_i) \stackrel{GH}{\to} (X_{\infty}, \upsilon_{\infty})$ in $\overline{\mathcal{M}(n, K, d)}$ with $\mathrm{diam}\,X_{\infty}>0$, and 
let $k, r, s \in \mathbf{Z}_{\ge 0}$.
We have the following:
\begin{enumerate}
\item Let $\{\omega_i\}_{i < \infty}$ be a sequence of $\omega_i \in \widetilde{W}^{1, 2}_d(\bigwedge^kT^*X_i)$ with $\sup_{i<\infty}||\omega_i||_{\widetilde{W}^{1, 2}_d}<\infty$, 
and let $\omega_{\infty}$ be the $L^2$-strong limit on $X_{\infty}$ of them.
Then we see that $\omega_{\infty} \in \widetilde{W}^{1, 2}_d(\bigwedge^kT^*X_{\infty})$ and that $d^{\upsilon_i}\omega_i$ $L^2$-converges weakly to $d^{\upsilon_{\infty}}\omega_{\infty}$ on $X_{\infty}$.
\item Let $\{T_i\}_{i<\infty}$ be a sequence of $T_i \in \widetilde{W}^{1, 2}_C(T^r_sX_i)$ with $\sup_i||T_i||_{\widetilde{W}^{1, 2}_C}<\infty$, and let 
$T_{\infty}$ be the $L^2$-strong limit on $X_{\infty}$ of them.
Then we see that $T_{\infty} \in \widetilde{W}^{1, 2}_C(T^r_sX_{\infty})$ and that 
$\nabla^{\upsilon_i}T_i$ $L^2$-converges weakly to $\nabla^{\upsilon_{\infty}}T_{\infty}$ on $X_{\infty}$.
\end{enumerate}
\end{theorem}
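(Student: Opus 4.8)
The plan is to prove the two statements of Theorem~\ref{techni2} in parallel, using the same strategy that proves the closedness of Gigli's operators on a fixed space in \cite{gigli}, but now carried out ``along the sequence'' with the $L^2$-convergence machinery of Section~$3$. The crucial point is that the defining identities (\ref{rrer}) for $d^{\upsilon}$ and (\ref{appp}) for $\nabla^{\upsilon}$ are \emph{linear} in the test objects $g_i, V_i \in \widetilde{\mathrm{Test}}F$, $\widetilde{\mathrm{Test}}T^1_0$, and these test objects are precisely built out of elements of $\widetilde{\mathrm{Test}}F(X)$, for which Theorem~\ref{pois} (via (\ref{lipreg})) provides a good approximation scheme with respect to Gromov-Hausdorff convergence.

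First I would handle $(2)$ since it is the one needed in Theorem~\ref{mnj}. Fix $T_{\infty}$ as in the statement; by the $L^2$-weak compactness of Section~$2.5.2$, after passing to a subsequence there is $S_{\infty}\in L^2(T^r_{s+1}X_{\infty})$ which is the $L^2$-weak limit of $\nabla^{\upsilon_i}T_i$, and it suffices to show that $S_{\infty}$ satisfies (\ref{appp}) against arbitrary $g_0,\dots,g_{r+s+1}\in\widetilde{\mathrm{Test}}F(X_{\infty})$. For this I would take each $g_{j,\infty}\in\widetilde{\mathrm{Test}}F(X_{\infty})$, write $g_{j,\infty}=(\Delta^{\upsilon_{\infty}})^{-1}G_{j,\infty}+c_{j,\infty}$ with $G_{j,\infty}:=\Delta^{\upsilon_{\infty}}g_{j,\infty}$ Lipschitz, and approximate $G_{j,\infty}$ in $H^{1,2}(X_{\infty})$ by Lipschitz functions $G_{i,j}$ on $X_i$ of uniformly bounded Lipschitz constant and zero average; setting $g_{i,j}:=(\Delta^{\upsilon_i})^{-1}G_{i,j}+c_{j,\infty}$, Theorem~\ref{pois} together with (\ref{lipreg}) yields $g_{i,j}\in\widetilde{\mathrm{Test}}F(X_i)$ with $\sup_i\mathbf{Lip}g_{i,j}<\infty$ and $g_{i,j},\nabla g_{i,j}$ $L^2$-strongly convergent, and $\Delta^{\upsilon_i}g_{i,j}=G_{i,j}$ $H^{1,2}$-convergent to $G_{j,\infty}=\Delta^{\upsilon_{\infty}}g_{j,\infty}$. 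This gives enough convergence to pass to the limit in every term of (\ref{appp}): the $\mathrm{div}^{\upsilon_i}(g_0\nabla g_{r+s+1})$ factors converge $L^2$-weakly (they equal $g_0\Delta^{\upsilon_i}g_{r+s+1}-\langle\nabla g_0,\nabla g_{r+s+1}\rangle$ up to sign by \cite[Remark~$4.3$]{holp}), and the curvature terms $R^X_j(g)$ of (\ref{0098}) involve $\nabla^{\upsilon_i}_{\nabla g_{r+s+1}}\nabla g_j$, which by Theorem~\ref{33766}(2) equals $\nabla^{g_{X_i}}_{\nabla g_{r+s+1}}\nabla g_j=\mathrm{Hess}^{g_{X_i}}_{g_j}(\nabla g_{r+s+1},\cdot)$ and hence converges $L^2$-weakly by Theorem~\ref{hessc}, against the $L^2$-strongly convergent test tensor $T_i^*$. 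Combining with the $L^2$-weak convergence of $\nabla^{\upsilon_i}T_i\to S_{\infty}$ and the $L^2$-strong convergence $T_i\to T_{\infty}$, the identity passes to the limit, so $T_{\infty}\in\widetilde{W}^{1,2}_C(T^r_sX_{\infty})$ and $\nabla^{\upsilon_{\infty}}T_{\infty}=S_{\infty}$; since the weak limit is independent of the subsequence, the full sequence converges.

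For $(1)$ the argument is the same but simpler: pass to a weakly convergent subsequence $d^{\upsilon_i}\omega_i\rightharpoonup\eta_{\infty}$, approximate an arbitrary collection $V_{j,\infty}\in\widetilde{\mathrm{Test}}T^1_0X_{\infty}$ (each a finite sum $\sum h_0^k\nabla h_1^k$ with $h^k_i\in\widetilde{\mathrm{Test}}F$) by $V_{i,j}\in\widetilde{\mathrm{Test}}T^1_0X_i$ built from the $g_{i,j}$ above, note that $\mathrm{div}^{\upsilon_i}V_{i,j}$ converges $L^2$-weakly and that the Lie brackets $[V_{i,j},V_{i,j'}]^{\upsilon_i}=\nabla^{\upsilon_i}_{V_{i,j}}V_{i,j'}-\nabla^{\upsilon_i}_{V_{i,j'}}V_{i,j}$ converge in $L^2$ (again via Theorem~\ref{33766}(2) and Theorem~\ref{hessc}, so that $[V_{i,j},V_{i,j'}]^{\upsilon_i}=[V_{i,j},V_{i,j'}]$ is an honest Lie bracket of vector fields built from Hessians), and pass to the limit in (\ref{rrer}). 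The Hilbert-space structure and closedness of the graph were already recorded in the definitions of $\widetilde{W}^{1,2}_d$ and $\widetilde{W}^{1,2}_C$, so nothing extra is needed there.

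The main obstacle I anticipate is controlling the nonlinear ``curvature/bracket'' terms --- the $R^X_j(g)$ in (\ref{0098}) and the Lie brackets in (\ref{rrer}) --- under the convergence, since these involve $\nabla^{\upsilon}$ applied to gradients rather than the functions themselves. The resolution, and the reason the $\sim$-test classes are the right objects, is exactly Theorem~\ref{33766}(2): on a Ricci limit space $\nabla^{\upsilon}_{\nabla g}\nabla h=\nabla^{g_X}_{\nabla g}\nabla h$, so these terms are expressed through Hessians of test functions, and Theorem~\ref{hessc} gives their $L^2$-weak convergence along the Gromov-Hausdorff sequence; the uniform Lipschitz bounds from (\ref{lipreg}) ensure all the products of a weakly convergent factor with strongly convergent ones behave. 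A secondary technical point is verifying $g_{i,j}\in\widetilde{\mathrm{Test}}F(X_i)$ rather than merely $\mathrm{Test}F(X_i)$, but this is immediate from the definition (\ref{newtest}) since $\Delta^{\upsilon_i}g_{i,j}=G_{i,j}$ is Lipschitz by construction.
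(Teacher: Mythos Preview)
Your proposal is correct and follows essentially the same approach as the paper's proof. The paper proves (1) and declares (2) similar, while you do the reverse; the paper cites Remark~\ref{ggtttt} for the approximation of $\widetilde{\mathrm{Test}}$ objects along the sequence, whereas you unpack that construction explicitly via Theorem~\ref{pois} and (\ref{lipreg}), but the key ingredients --- $L^2$-weak compactness for the candidate limit, the identification $\nabla^{\upsilon}=\nabla^{g_X}$ on test functions from Theorem~\ref{33766}(2) (equivalently Remark~\ref{dsds} for the brackets), and Theorem~\ref{hessc} for the convergence of the Hessian/bracket terms --- are identical.
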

\begin{proof}
We only prove $(1)$ because the proof of $(2)$ is similar. 

By an argument similar to the proof of Corollary \ref{contiad} without loss of generality we can assume that there exists the $L^2$-weak limit $\eta \in L^{2}(\bigwedge^{k+1}T^*X_{\infty})$ on $X_{\infty}$ of $\{d^{\upsilon_i}\omega_i\}_i$.

Let $\{\alpha_{\infty}^j\}_{0 \le j \le k} \subset \widetilde{\mathrm{Test}}T^1_0X_{\infty}$.
By Remark \ref{ggtttt} without loss of generality we can assume that there exist sequences $\{\alpha_i^j\}_{0 \le j \le k, i<\infty}$ of $\alpha_i^j \in \widetilde{\mathrm{Test}}T^1_0X_i$ with 
\[\sup_{j, i}||\alpha_i^j||_{L^{\infty}}<\infty\]
such that $\alpha_i^j$ $L^2$-converges strongly to $\alpha_{\infty}^j$ on $X_{\infty}$ for every $j$ and that 
$\nabla \alpha_i^j$ $L^2$-converges weakly to $\nabla \alpha_{\infty}^j$ on $X_{\infty}$ for every $j$.
Note that 
\begin{align}\label{bbv}
&\int_{X_i}d^{\upsilon_i}\omega_i(\alpha_i^0, \ldots, \alpha^k_i)d\upsilon_i \nonumber \\
&=\sum_l(-1)^{l+1}\int_{X_i}\omega_i(\alpha_i^0, \ldots, \alpha_{i}^{l-1}, \alpha_i^{l+1}, \ldots, \alpha_{i}^k)\mathrm{div}^{\upsilon_i}(\alpha_i^l)d\upsilon_i \nonumber \\
&+\sum_{l<m}(-1)^{l+m}\int_{X_i}\omega_i([\alpha_i^l, \alpha_i^m]^{\upsilon_i}, \alpha_i^0, \ldots, \alpha_i^{l-1}, \alpha_i^{l+1}, \ldots, \alpha_{i}^{m-1}, \alpha_{i}^{m+1}, \ldots, \alpha_i^k)d\upsilon_i
\end{align}
for every $i<\infty$. 
By Remarks \ref{aaaaaaa}, \ref{dsds} and Theorem \ref{hessc}, since $[\alpha_i^l, \alpha_i^m]^{\upsilon_i}$ $L^2$-converges weakly to $[\alpha_{\infty}^l, \alpha_{\infty}^m]^{\upsilon_{\infty}}$ on $X_{\infty}$, 
letting $i \to \infty$ in (\ref{bbv}) with \cite[Proposition $3.69$]{holp} yields $\eta=d^{\upsilon_{\infty}}\omega_{\infty}$.
This completes the proof.
\end{proof}
Let us consider the following natural question:

$\\ $
\textbf{Question $8$.} Do `$\sim$-versions' of Gigli's Sobolev spaces and the original one coincide?
$\\ $

The answer of this question is the following.
Note that Theorems \ref{aarrtt} and \ref{mnj} are direct consequences of Theorems \ref{techni2} and \ref{198183}.
\begin{theorem}\label{198183}
The question above has a positive answer, i.e. we have the following.
\begin{enumerate}
\item $\widetilde{\mathcal{D}}^2(\delta^{\upsilon}_k, X)=\mathcal{D}^2(\delta^{\upsilon}_k, X)$.
\item $\widetilde{H}^{1, 2}_d(\bigwedge^kT^*X)=H^{1, 2}_d(\bigwedge^kT^*X)$.
\item $\widetilde{H}^{1, 2}_H(\bigwedge^kT^*X)=H^{1, 2}_H(\bigwedge^kT^*X)$.
\item $\widetilde{\mathcal{D}}^2(\Delta^{\upsilon}_{H, k}, X)=\mathcal{D}^2(\Delta^{\upsilon}_{H, k}, X)$.
\item $\widetilde{H}^{2, 2}(X)=H^{2, 2}(X)$.
\item $\widetilde{W}^{2, 2}(X)=W^{2, 2}(X)$.
\item $\widetilde{W}^{1, 2}_d(\bigwedge^kT^*X)=W^{1, 2}_d(\bigwedge^kT^*X)$.
\item $\widetilde{W}^{1, 2}_C(T^r_sX)=W^{1, 2}_C(T^r_sX)$.
\item $\widetilde{H}^{1, 2}_C(T^r_sX)=H^{1, 2}_C(T^r_sX)$.
\item $\widetilde{W}^{1, 2}_H(\bigwedge^kT^*X)=W^{1, 2}_H(\bigwedge^kT^*X)$.
\end{enumerate}
\end{theorem}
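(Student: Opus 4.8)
The plan is to exploit the elementary inclusion chain (\ref{HW1}) — which holds verbatim for each of the ten pairs of spaces, with the corresponding first/second-order operators ($\nabla^{\upsilon}$, $d^{\upsilon}$, $\delta^{\upsilon}$, $\mathrm{Hess}^{\upsilon}$, $\Delta^{\upsilon}_{H,k}$) agreeing on the smaller space. Granting the chain, every equality among the $W$-type spaces collapses to the single inclusion $\widetilde{W}\subset W$; and, once those are known, every equality among the $H$-type spaces collapses to showing that the relevant non-tilde test class is contained in the corresponding $\widetilde{H}$-space, i.e. that each test object is approximable, in the relevant Sobolev norm, by tilde-test objects. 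In particular (5) reduces, via Gigli's identification of $H^{2,2}(X)$ with the $W^{2,2}$-closure of $\mathcal{D}^2(\Delta^{\upsilon},X)$, to $\mathcal{D}^2(\Delta^{\upsilon},X)\subset\widetilde{H}^{2,2}(X)$, and (10) is immediate from (7) and (1).

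First I would treat $\widetilde{W}^{2,2}(X)\subset W^{2,2}(X)$, the model case. Given $f\in\widetilde{W}^{2,2}(X)$ one must upgrade the defining identity (\ref{78}) from $g_1,g_2\in\widetilde{\mathrm{Test}}F(X)$ to arbitrary $g_1,g_2\in\mathrm{Test}F(X)$. By Proposition \ref{approx} one picks $g_{i,m}\in\widetilde{\mathrm{Test}}F(X)$ with $\sup_m\mathbf{Lip}g_{i,m}<\infty$, $g_{i,m}\to g_i$ in $H^{1,2}(X)$ and $\Delta^{\upsilon}g_{i,m}\to\Delta^{\upsilon}g_i$ in $L^2(X)$; the last property says $g_{i,m}\to g_i$ in the $\mathcal{D}^2$-norm, so by Theorem \ref{33766}(2) together with the continuity of the Hessian $\mathcal{D}^2(\Delta^{\upsilon},X)\to L^2(T^0_2X)$ (which follows from Theorem \ref{hessc} for the constant sequence $(X,\upsilon)$ and the bound $\|\cdot\|_{W^{2,2}}\le C(n,K,d)\|\cdot\|_{\mathcal{D}^2}$) one gets $\mathrm{Hess}^{g_X}_{g_{i,m}}\to\mathrm{Hess}^{g_X}_{g_i}=\mathrm{Hess}^{\upsilon}_{g_i}$ strongly in $L^2$. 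Writing $\mathrm{div}^{\upsilon}(g_0\nabla g_{i,m})=\langle\nabla g_0,\nabla g_{i,m}\rangle-g_0\Delta^{\upsilon}g_{i,m}$ and $\nabla\langle\nabla g_{1,m},\nabla g_{2,m}\rangle=\mathrm{Hess}^{g_X}_{g_{1,m}}(\nabla g_{2,m},\cdot)^{*}+\mathrm{Hess}^{g_X}_{g_{2,m}}(\nabla g_{1,m},\cdot)^{*}$, a routine dominated-convergence argument — using the uniform $L^{\infty}$-bounds on the gradients of the approximants and the strong $L^2$-convergence just obtained — shows that both sides of (\ref{78}) pass to the limit $m\to\infty$ (with $g_0$ first taken in $\mathrm{LIP}(X)$ as in Remark \ref{appremark}), so $f\in W^{2,2}(X)$ with the same Hessian; this is (6).

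The remaining $W$-inclusions (8), (7), (1), (4) follow by the same scheme. For each, approximate the non-tilde test objects appearing in the defining identities (\ref{appp}), (\ref{rrer}), (\ref{codifferential}), (\ref{mkoiop}) by tilde-test ones via Proposition \ref{approx} (and, for the vector-field and form versions, the approximations noted in Remark \ref{ggtttt}), express all second-order ingredients through gradients and Hessians of the approximants — the tensors $R^X_j(g)$ and $\nabla^{\upsilon}_{\nabla g}\nabla h=\mathrm{Hess}^{g_X}_{h}(\nabla g,\cdot)^{*}$, the divergences $\mathrm{div}^{\upsilon}V_i$, the Lie brackets $[V_i,V_j]^{\upsilon}=[V_i,V_j]$ (Remark \ref{dsds}), and $d^{\upsilon}$, $\delta^{\upsilon}$ of test forms via Gigli's formulas — and pass to the limit using the strong $L^2$-convergence of those Hessians and covariant derivatives together with the uniform Lipschitz bounds, exactly as above. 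For the $H$-type equalities (2), (3), (5), (9), since $W=\widetilde{W}$ is by then available, $H$ and $\widetilde{H}$ are closures of $\mathrm{Test}(\cdots)$ and $\widetilde{\mathrm{Test}}(\cdots)$ inside one and the same Hilbert space; one then approximates a given test object — or, for (5), a given $f\in\mathcal{D}^2(\Delta^{\upsilon},X)$, using Theorem \ref{pois} to solve $\Delta^{\upsilon}f_m=G_m$ for Lipschitz zero-mean approximations $G_m$ of $\Delta^{\upsilon}f$, which places $f_m$ in $\widetilde{\mathrm{Test}}F(X)$ — by tilde-test objects in the relevant Sobolev norm, again by the same limiting argument.

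The step I expect to be the main obstacle is precisely making these limits legitimate against the merely $L^2$ tensor (or form, or function) $T$: the second-order quantities $R^X_j(g)$, $\nabla^{\upsilon}_{\nabla g}\nabla h$, $[V_i,V_j]^{\upsilon}$ each carry a Hessian factor which, a priori, one only controls in $L^{2n/(2n-1)}$ (Theorem \ref{hessc}), and $L^2$ does not pair with $L^{2n/(2n-1)}$ on a finite measure space, so the naive passage to the limit fails. The point that rescues the argument — and the reason Proposition \ref{approx} is stated with convergence of the Laplacians rather than merely in $H^{1,2}$ — is that $\mathcal{D}^2$-convergence of the approximants forces \emph{strong} $L^2$-convergence of their Hessians, and hence of $\nabla^{g_X}_{\nabla g}\nabla h$ and of the products built from them (via the elementary dominated-convergence estimates exploiting the uniform $L^{\infty}$-bounds on the approximating gradients); with strong convergence in hand, all the relevant pairings against $T$ are both finite and continuous. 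Identifying the resulting objects with Gigli's — through uniqueness of the solutions of the defining identities, guaranteed by the density statements of Theorem \ref{density3} — then completes the proof.
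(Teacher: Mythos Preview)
Your proposal is correct and follows essentially the same approach as the paper: reduce each equality to approximating $\mathrm{Test}$-objects by $\widetilde{\mathrm{Test}}$-objects via Proposition~\ref{approx}, then pass to the limit in the defining identities using the strong $L^2$-convergence of Hessians that $\mathcal{D}^2$-convergence provides (through the bound $\|\cdot\|_{W^{2,2}}\le C\|\cdot\|_{\mathcal{D}^2}$) together with the uniform Lipschitz bounds. The only minor deviation is that for (5) and (9) the paper establishes merely \emph{weak} $W^{2,2}$- (resp.\ $W^{1,2}_C$-) convergence of the approximants and then invokes Mazur's lemma, whereas your direct claim of strong convergence is also valid for the reason you give and in fact slightly streamlines the argument.
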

\begin{proof}
We first prove $(1)$.

Let $\omega \in \widetilde{\mathcal{D}}^2(\delta^{\upsilon}_k, X)$ and let $\eta \in  \mathrm{TestForm}_{k-1}(X)$.
$(3)$ of Theorem \ref{density3} yields that there exists a sequence $\{\eta_i\}_{i<\infty}$ of $\eta_i \in \widetilde{\mathrm{TestForm}}_{k-1}(X)$ such that 
$\eta_i \to \eta$ in $W^{1, 2}_d(\bigwedge^{k-1}T^*X)$.
Since 
\[\int_X\langle \omega, d^{\upsilon}\eta_i \rangle d\upsilon=\int_X\langle \delta^{\upsilon}_k \omega, \eta_i \rangle d\upsilon\]
for every $i$, by letting $i \to \infty$ we have $\omega \in \mathcal{D}^2(\delta_k^{\upsilon}, X)$.
This completes the proof of $(1)$. 

$(2)$ is a direct consequence of $(3)$ of Theorem \ref{density3}. 

We now prove $(3)$.
Let 
\[\omega := \sum_{i=1}^Nf_{0, i}df_{1, i}\wedge \cdots \wedge df_{k, i} \in \mathrm{TestForm}_kX,\]
where $f_{j, i} \in \mathrm{Test}F(X)$.
By Proposition \ref{approx} for any $i, j$ there exists a sequence $\{f_{j, i, l}\}_l$ of $f_{j, i, l} \in \widetilde{\mathrm{Test}}F(X)$ such that $\sup_l\mathbf{Lip}f_{j, i, l}<\infty$, that $f_{j, i, l} \to f_{j, i}$ in $H^{1, 2}(X)$ and that $\Delta^{\upsilon}f_{j, i, l} \to \Delta^{\upsilon}f_{j, i}$ in $L^2(X)$ as $l \to \infty$ for any $i, j$.
In particular by Remark \ref{aaaaaaa} and Theorem \ref{hessc} we see that $[\nabla f_{j, i, l}, \nabla f_{\hat{j}, \hat{i}, l}]^{\upsilon} \to [\nabla f_{j, i}, \nabla f_{\hat{j}, \hat{i}}]^{\upsilon}$ in $L^2(TX)$ as $l \to \infty$ for any $i, j, \hat{i}, \hat{j}$.
Let 
\[\omega_l:=\sum_{i=1}^n f_{0, i, l}df_{1, i, l}\wedge \cdots \wedge df_{k, i, l} \in \widetilde{\mathrm{TestForm}}_k(X).\]
Then by \cite[Proposition $3.5.12$]{gigli} and \cite[Proposition $3.70$]{holp}
it is not difficult to check that $\omega_l, d^{\upsilon}\omega_l, \delta^{\upsilon}\omega_l$ $L^2$-converge strongly to $\omega, d^{\upsilon}\omega, \delta^{\upsilon}\omega$ on $X$, respectively.
This gives $(3)$.

$(4)$ is a direct consequence of $(3)$.

Next we prove $(5)$ (note that the following argument is essentially same to the proof of \cite[Proposition $3.3.18$]{gigli}).
Let $f \in \mathcal{D}^2(\Delta^{\upsilon}, X)$.
There exists a sequence $\{F_i\}_i$ of $F_i \in \mathrm{LIP}(X)$ such that $F_i \to \Delta^{\upsilon}f$ in $L^2(X)$ and that
\[\int_XF_id\upsilon=0.\]
Let $f_i:=(\Delta^{\upsilon})^{-1}F_i$. Theorem \ref{pois} with (\ref{lipreg}) gives that $f_i \in \widetilde{\mathrm{Test}}F(X)$ and that $f_i \to f$ in $H^{1, 2}(X)$.
Moreover Theorems \ref{33766} and \ref{hessc} yield that $\mathrm{Hess}_{f_i}^{\upsilon}$ $L^2$-converges weakly to $\mathrm{Hess}_f^{\upsilon}$ on $X$.
In particular $f_i$ converges weakly to $f$ in $W^{2, 2}(X)$.
Thus $f$ is in the closure of $\widetilde{\mathrm{Test}}F(X)$ with respect to the weak topology of $W^{2, 2}(X)$.
Since $\widetilde{\mathrm{Test}}F(X)$ is a linear subspace in $W^{2, 2}(X)$, we see that  $f$ is in the closure of $\widetilde{\mathrm{Test}}F(X)$ with respect to the strong topology of $W^{2, 2}(X)$ (c.f. Mazur's lemma), i.e.,
$f \in \widetilde{H}^{2, 2}(X)$. This completes the proof of $(5)$.  

We turn to the proof of $(6)$.
Let $f \in W^{2, 2}(X)$ and let $g_j \in \mathrm{Test}F(X)$, where $j \in \{0, 1, 2\}$.
By Proposition \ref{approx} for every $j$ there exists a sequence $\{g_{j, i}\}_i$ of $g_{j, i} \in \widetilde{\mathrm{Test}}F(X)$ such that 
$\sup_i\mathbf{Lip}g_{j, i}<\infty$, that $g_{j, i} \to g_j$ in $H^{1, 2}(X)$ and that $\Delta^{\upsilon}g_{j, i} \to \Delta^{\upsilon}g_j$ in $H^{1, 2}(X)$.
Note that by Theorem \ref{hessc} we see that $\langle \nabla g_{1, i}, \nabla g_{2, i}\rangle \to \langle \nabla g_{1}, \nabla g_2 \rangle$ in $H^{1, 2}(X)$.
Then since
\begin{align*}
&2\int_{X}g_0\left\langle \mathrm{Hess}_f^{\upsilon}, dg_{1, i} \otimes dg_{2, i} \right\rangle d\upsilon \nonumber \\
&=\int_X\left(-\langle \nabla f, \nabla g_{1, i} \rangle \mathrm{div}^{\upsilon}(g_{0, i}\nabla g_{2, i})-\langle \nabla f, \nabla g_{2, i}\rangle \mathrm{div}^{\upsilon}(g_{0, i}\nabla g_{1, i})- g_{0, i} \left\langle \nabla f, \nabla \left\langle \nabla g_{1, i}, \nabla g_{2, i} \right\rangle\right\rangle \right)d\upsilon,
\end{align*}
by letting $i \to \infty$, we have $f \in \widetilde{W}^{2, 2}(X)$.
This completes the proof of $(6)$.

Next we prove $(7)$.
We only give the proof in the case when $k=1$ only for simplicity because the proof in the case when $k \ge 2$ is similar.

Let $\omega \in \widetilde{W}^{1, 2}_d(\bigwedge^kT^*X)$ and let 
\[\alpha :=\sum_{l=1}^Nf_{1, \infty}^l\nabla f_{2, \infty}^l, \beta := \sum_{l=1}^Nf_{3, \infty}^l\nabla f_{4, \infty}^l \in \mathrm{Test}T^1_0(X),\]
where $f_{j, \infty}^l \in \mathrm{Test}F(X)$.
By Proposition \ref{approx}, for any $j, l$,
there exists a sequence $\{f_{j, i}^l\}_{i}$ 
of $f_{j, i}^l \in \widetilde{\mathrm{Test}}F(X)$ such that $\sup_{i}\mathbf{Lip}f_{j, i}^l<\infty$ and that $f_{j, i}^l, \Delta^{\upsilon}f_{j, i}^l \to f_{j, \infty}^l, \Delta^{\upsilon}f_{j, \infty}^l$ in $H^{1, 2}(X)$, respectively.

Let $\alpha_i:=\sum_{l=1}^Nf_{1, i}^l\nabla f_{2, i}^l, \beta_i := \sum_{l=1}^Nf_{3, i}^l\nabla f_{4, i}^l \in \widetilde{\mathrm{Test}}T^1_0(X)$.
Note that
\begin{align}\label{nnbbgghh}
[\alpha_i, \beta_i]^{\upsilon} =\sum_{l, m}\left( f_{3, i}^m \langle \nabla f_{1, i}^l, \nabla f_{4, i}^m \rangle \nabla f_{2, i}^l - f_{1, i}^l\langle \nabla f_{2, i}^l, \nabla f_{3, i}^m \rangle \nabla f_{4, i}^m + f_{1, i}^lf_{3, i}^m[\nabla f_{2, i}^l, \nabla f_{4, i}^m]^{\upsilon}\right).
\end{align}
By $(1)$, Theorem \ref{hessc} and Remark \ref{dsds} we see that $f_{1, i}^lf_{3, i}^m$ converges uniformly to $f_{1, \infty}^lf_{3, \infty}^m$ on $X$ and that $[\nabla f_{2, i}^l, \nabla f_{4, i}^m]^{\upsilon} \to [\nabla f_{2, \infty}^l, \nabla f_{4, \infty}^m]^{\upsilon}$ in $L^2(TX)$.
In particular we have
\begin{align}\label{v}
\lim_{i \to \infty}\int_Xf_{1, i}^lf_{3, i}^m\omega \left([\nabla f_{2, i}^l, \nabla f_{4, i}^m]^{\upsilon}\right)d\upsilon =\int_{X}f_{1, \infty}^lf_{3, \infty}^m \omega \left([\nabla f_{2, \infty}^l, \nabla f_{4, \infty}^m]^{\upsilon}\right)d\upsilon.
\end{align}
Since
\begin{align}\label{1232123}
\int_Xd^{\upsilon}\omega \left(\alpha_i, \beta_i\right)d\upsilon =\int_X\left( \omega(\alpha_i)\mathrm{div}^{\upsilon}(\beta_i)-\omega (\beta_i)\mathrm{div}^{\upsilon}(\alpha_i)-\omega \left([\alpha_i, \beta_i]^{\upsilon}\right)\right)d\upsilon,
\end{align}
for every $i<\infty$,
by (\ref{nnbbgghh}) and (\ref{v}), letting $i \to \infty$ in (\ref{1232123}) gives
\begin{align*}
\int_Xd^{\upsilon}\omega \left(\alpha, \beta\right)d\upsilon =\int_X\left( \omega(\alpha )\mathrm{div}^{\upsilon}(\beta )-\omega (\beta)\mathrm{div}^{\upsilon}(\alpha)-\omega \left([\alpha, \beta]^{\upsilon}\right)\right)d\upsilon.
\end{align*}
Thus we have $\omega \in W^{1, 2}_d(\bigwedge^kT^*X)$.
This completes the proof of $(7)$.

Similarly we have $(8)$.

Next we give a proof of $(9)$.
Let 
\[T:=\sum_{k=1}^Nh_0^k\nabla^r_s h^k \in \mathrm{Test}T^r_sX,\]
where $h^k=(h_1^k, \ldots, h_{r+s}^k)$ and $h^k_j \in \mathrm{Test}F(X)$.
By Proposition \ref{approx} for any $j, k$ there exists a sequence $\{h_{j, i}^k\}_i$ of $h_{j, i}^k \in \widetilde{\mathrm{Test}}F(X)$ such that $\sup_i\mathbf{Lip}h^k_{j, i}<\infty$ and that $h_{j, i}^k, \Delta^{\upsilon}h_{j, i}^k \to h_j^k, \Delta^{\upsilon}h_{j}^k$ in $H^{1, 2}(X)$ as $i \to \infty$, respectively.
Let $h^{k, i}:=(h_{1, i}^k, \ldots, h_{r+s, i}^k)$ and let
\[T_i:=\sum_{k=1}^Nh_{0, i}^k\nabla^r_s h^{k, i} \in \widetilde{\mathrm{Test}}T^r_sX.\]
By Remark \ref{aaaaaaa} and Theorem \ref{hessc}, it is easy to check that $T_i$ converges weakly to $T$ in $W^{1, 2}_C(T^r_sX)$.
This implies that $T$ is in the closure of $\widetilde{\mathrm{Test}}T^r_sX$ with respect to the weak topology of $W^{1, 2}_C(T^r_sX)$.
Since $\widetilde{\mathrm{Test}}T^r_sX$ is a linear subspace of $W^{1, 2}_C(T^r_sX)$, we have $T \in \widetilde{H}^{1, 2}_C(T^r_sX)$.
Thus we have $(9)$.

$(10)$ is a direct sequence of $(1)$ and $(7)$
\end{proof}
\begin{corollary}\label{66tytt}
Let $(X, \upsilon) \in \overline{M(n, K, d)}$ with $\mathrm{diam}\,X>0$, and let $\omega \in H^{1, 2}_C(\bigwedge^kT^*X)$. 
Then the same conclusion of Theorem \ref{aarrtt} holds.
\end{corollary}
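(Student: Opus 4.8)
\textbf{Proof proposal for Corollary \ref{66tytt}.}

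The plan is to reduce the claim for a general $\omega \in H^{1,2}_C(\bigwedge^kT^*X)$ to the case of test forms, for which the hypotheses of Theorem \ref{aarrtt} are satisfied by construction, and then pass to the limit using the closedness statements of Theorems \ref{techni2} and \ref{198183}. First I would unwind what must be shown: by definition $\omega \in H^{1,2}_C(\bigwedge^kT^*X)$ means $\omega$ lies in the $W^{1,2}_C$-closure of $\widetilde{\mathrm{Test}}T^0_kX \cap L^2(\bigwedge^kT^*X)$ (using Theorem \ref{198183}(9), $\widetilde H^{1,2}_C = H^{1,2}_C$), so there is a sequence $\omega_j \in \widetilde{\mathrm{Test}}T^0_kX$ with $\omega_j \to \omega$ and $\nabla^{\upsilon}\omega_j \to \nabla^{\upsilon}\omega$ in $L^2$. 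The conclusion of Theorem \ref{aarrtt} to be established is: $\omega$ is differentiable a.e., $\omega \in W^{1,2}_C(\bigwedge^kT^*X) \cap W^{1,2}_d(\bigwedge^kT^*X)$, $\nabla\omega = \nabla^{\upsilon}\omega$ and $d\omega = d^{\upsilon}\omega$ — where the left-hand objects are the $\nabla^{g_X}$-covariant derivative and exterior derivative of \cite{ho0}.

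The key steps, in order. (1) For each test form $\omega_j$, build a smooth $W^{1,2}_H$- (or here $W^{1,2}_C$-)approximation with respect to some convergent sequence $(X_i,\upsilon_i) \in M(n,K,d)$ with $(X_i,\upsilon_i) \stackrel{GH}{\to} (X,\upsilon)$: this is exactly the weak-type approximation recorded in Remark \ref{ggtttt} (third bullet), giving $T_{i} \in \widetilde{\mathrm{Test}}T^0_kX_{i}$ with $\sup_i\|T_i\|_{L^\infty}<\infty$, $T_i$ $L^2$-strongly convergent to $\omega_j$ and $\nabla T_i$ $L^2$-weakly convergent to $\nabla^{\upsilon}\omega_j$; when $(X_i,\upsilon_i)\in M(n,K,d)$ these $T_i$ may be taken smooth, so in particular $\sup_i\int_{X_i}|\nabla T_i|^2d\upsilon_i<\infty$. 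Hence the hypothesis (\ref{gyy}) of Theorem \ref{aarrtt} holds for $\omega_j$, and Theorem \ref{aarrtt} yields $\omega_j$ differentiable a.e., $\omega_j \in W^{1,2}_C \cap W^{1,2}_d$, $\nabla\omega_j = \nabla^{\upsilon}\omega_j$ and $d\omega_j = d^{\upsilon}\omega_j$. (2) Now let $j \to \infty$. Since $\omega_j \to \omega$ and $\nabla^{g_X}\omega_j = \nabla^{\upsilon}\omega_j \to \nabla^{\upsilon}\omega$ in $L^2$, apply the closedness of $\nabla^{g_X}$ with respect to $L^2$-convergence (Theorem \ref{uuy} in the stationary case $(X_i,\upsilon_i)\equiv(X,\upsilon)$, equivalently the completeness of $W_{2p}(T^r_sX)$) to conclude $\omega \in \Gamma_1(\bigwedge^kT^*X)$ (differentiability a.e.) and $\nabla^{g_X}\omega_j \to \nabla^{g_X}\omega$, so $\nabla^{g_X}\omega = \nabla^{\upsilon}\omega$. (3) For the exterior derivative: from $\nabla^{g_X}\omega_j \to \nabla^{g_X}\omega$ in $L^2$ and formula (\ref{dna}) expressing $d\omega$ through $\nabla^{g_X}\omega$, deduce $d\omega_j \to d^{g_X}\omega$ in $L^2$ (this is the stationary case of Theorem \ref{techni}); since also $d^{\upsilon}\omega_j = d\omega_j$ and $d^{\upsilon}$ is closed (the stationary case of Theorem \ref{techni2}(1)), get $\omega \in W^{1,2}_d(\bigwedge^kT^*X)$ with $d^{\upsilon}\omega = d\omega$. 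Finally $\omega \in W^{1,2}_C$ follows from $\nabla^{\upsilon}\omega \in L^2$ and Theorem \ref{198183}(8), combined with the definition.

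The main obstacle I anticipate is step (1): verifying that Remark \ref{ggtttt}'s approximation genuinely supplies a sequence satisfying hypothesis (\ref{gyy}) of Theorem \ref{aarrtt} — in particular that one can take the $\omega_i$ smooth on the $M(n,K,d)$ members with a \emph{uniform} $L^2$-bound on $\nabla\omega_i$, not merely the $W^{1,2}_C$-bound $\sup_i\|\nabla^{\upsilon_i}T_i\|_{L^2}<\infty$; here one uses that on a smooth $(X_i,\upsilon_i)$ the covariant derivative $\nabla T_i$ coincides with $\nabla^{\upsilon_i}T_i$, so the two bounds agree. A secondary subtlety is that Theorem \ref{aarrtt} as stated requires a sequence converging to $\omega_j$ on $X$, so one must first fix, once and for all, a single convergent sequence $(X_i,\upsilon_i)\stackrel{GH}{\to}(X,\upsilon)$ of smooth spaces and perform the double-approximation (first $\omega \leftsquigarrow \omega_j$, then $\omega_j \leftsquigarrow T_i$) with a diagonal argument, exactly as in the proof of Theorem \ref{app6}; the bookkeeping is routine once this ordering is set. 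Everything else is an application of the closedness theorems already proved.
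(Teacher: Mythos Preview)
Your proposal is correct and follows the same strategy as the paper's proof, which is extremely terse: the paper simply notes that Remark \ref{ggtttt} together with Theorem \ref{aarrtt} gives the conclusion for $\omega \in \widetilde{\mathrm{TestForm}}_kX$, and then invokes Theorem \ref{198183} to conclude. Your detailed unpacking of the passage to the limit---via the closedness of $\nabla^{g_X}$ (Theorem \ref{uuy} in the stationary case) and of $d$ through formula (\ref{dna}) (Theorem \ref{techni})---is precisely what the paper leaves implicit in its one-line appeal to Theorem \ref{198183}.
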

\begin{proof}
By Remark \ref{ggtttt} and Theorem \ref{aarrtt} we see that this holds if $\omega \in \widetilde{\mathrm{TestForm}}_kX$.
Thus Theorem \ref{198183} yields the assertion.
\end{proof}
Similarly by Theorem \ref{mnj} and Remark \ref{ggtttt} we have the following.
\begin{corollary}\label{99i99i}
Let $(X, \upsilon) \in \overline{M(n, K, d)}$ with $\mathrm{diam}\,X>0$, and let $T \in H^{1, 2}_C(T^r_sX)$.
Then we see that $T$ is differentiable at a.e. $x \in X$, that $T \in W_{2n/(n-1)}(T^r_sX)$, that $|T|^2 \in H^{1, 2n/(2n-1)}(X)$ and that $\nabla^{g_X}T=\nabla^{\upsilon}T$.
\end{corollary}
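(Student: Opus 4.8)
The plan is to prove Corollary \ref{99i99i} in the same spirit as Corollary \ref{66tytt}: first establish all four assertions for $T$ in the test class $\widetilde{\mathrm{Test}}T^r_sX$ by producing a smooth approximation on an approximating sequence of manifolds and invoking Theorem \ref{mthm}, and then pass to the $W^{1,2}_C$-closure, which by Theorem \ref{198183}~(9) is exactly $H^{1,2}_C(T^r_sX)$, using the closedness of $\nabla^{g_X}$ from Theorem \ref{uuy} together with a uniform Sobolev bound.

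\textbf{The test class case.} Let $T \in \widetilde{\mathrm{Test}}T^r_sX$. Write $(X,\upsilon)$ as a Gromov-Hausdorff limit of $(X_i,\upsilon_i) \in M(n,K,d)$. By the third bullet of Remark \ref{ggtttt} there exist, after passing to a subsequence, smooth tensor fields $T_i \in C^{\infty}(T^r_sX_i)$ with $\sup_i\|T_i\|_{L^{\infty}}<\infty$ such that $T_i$ $L^2$-converges strongly to $T$ on $X$ and $\nabla T_i$ $L^2$-converges weakly to $\nabla^{\upsilon}T$ on $X$; since the measures are probability measures and weak convergence forces $L^2$-boundedness, $\sup_i\bigl(\|T_i\|_{L^2}+\|\nabla T_i\|_{L^2}\bigr)<\infty$. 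Choosing $R>d$ so that $X_i=B_R(x_i)$ for a convergent sequence of base points, Theorem \ref{mthm} now yields simultaneously $T \in W_{2n/(n-1)}(T^r_sX)$ (part (1)), $|T|^2 \in H^{1,2n/(2n-1)}(X)$ (part (2)), weak $L^2$-convergence of $\nabla T_i$ to $\nabla^{g_X}T$ (part (6)), and $\nabla^{g_X}T=\nabla^{\upsilon}T$ (part (7)); the a.e.\ differentiability of $T$ is contained in $T \in W_{2n/(n-1)}(T^r_sX)$ via Remark \ref{aarrff}. So the conclusion of Corollary \ref{99i99i} holds for every $T \in \widetilde{\mathrm{Test}}T^r_sX$.

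\textbf{Passage to the closure.} Let $T \in H^{1,2}_C(T^r_sX)=\widetilde{H}^{1,2}_C(T^r_sX)$ and pick $T_m \in \widetilde{\mathrm{Test}}T^r_sX$ with $T_m \to T$ and $\nabla^{\upsilon}T_m \to \nabla^{\upsilon}T$ in $L^2$. By the previous step each $T_m$ lies in $W_{2n/(n-1)}(T^r_sX)$ with $\nabla^{g_X}T_m=\nabla^{\upsilon}T_m$ and $|T_m|^2 \in H^{1,2n/(2n-1)}(X)$. Since each $(X_i,\upsilon_i)$ satisfies the $(n/(n-1),1)$-Poincaré inequality on $X_i$ with a uniform constant (Theorem \ref{sobo}), the limit $(X,\upsilon)$ satisfies the $(n/(n-1),1)$-Poincaré, hence the $(n/(n-1),1)$-Sobolev, inequality on $X$ by Theorem \ref{stapo} and Remark \ref{akl}. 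Applying this to $|T_m|^2$ and using $|\nabla |T_m|^2|\le 2|\nabla^{g_X}T_m|\,|T_m|$ together with Cauchy--Schwarz gives $\||T_m|\|_{L^{2n/(n-1)}}^2 = \||T_m|^2\|_{L^{n/(n-1)}} \le 2A\|\nabla^{g_X}T_m\|_{L^2}\|T_m\|_{L^2}+B\|T_m\|_{L^2}^2$, so $\sup_m\|T_m\|_{L^{2n/(n-1)}}<\infty$, and then $\sup_m\||T_m|^2\|_{H^{1,2n/(2n-1)}}<\infty$ by Hölder. Consequently $\sup_m\|T_m\|_{W_{2n/(n-1)}}<\infty$, $T_m$ $L^{2n/(n-1)}$-converges weakly to $T$, and $T_m$ $L^2$-converges strongly to $T$; Theorem \ref{uuy} applied with the constant sequence $(X_i,\upsilon_i)\equiv(X,\upsilon)$, $x_i\equiv x$, $R>d$ and $2p=2n/(n-1)$ then gives $T \in W_{2n/(n-1)}(T^r_sX)$ and weak $L^2$-convergence of $\nabla^{g_X}T_m$ to $\nabla^{g_X}T$. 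Comparing with $\nabla^{g_X}T_m=\nabla^{\upsilon}T_m \to \nabla^{\upsilon}T$ strongly in $L^2$ and using uniqueness of weak limits yields $\nabla^{g_X}T=\nabla^{\upsilon}T$. Finally $|T|^2 \in H^{1,2n/(2n-1)}(X)$ follows from $\sup_m\||T_m|^2\|_{H^{1,2n/(2n-1)}}<\infty$, Theorem \ref{srell}, and $|T_m|^2\to|T|^2$ in $L^1$; and $T \in W_{2n/(n-1)}(T^r_sX)$ gives the a.e.\ differentiability through Remark \ref{aarrff}.

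\textbf{Main obstacle.} The argument is mostly assembly, and the one genuinely load-bearing point is verifying the hypothesis $\sup_m\|T_m\|_{W_{2n/(n-1)}}<\infty$ needed to invoke the closedness Theorem \ref{uuy} on the fixed space $X$: the Gromov--Hausdorff approximations supplied by Remark \ref{ggtttt} give only weak $L^2$-control of covariant derivatives, so a direct diagonal smoothing of $T$ itself is not available, and one must instead route through the fixed-space closedness statement. What makes that route self-contained is the combination of the Kato-type inequality $|\nabla|T_m|^2|\le 2|\nabla^{g_X}T_m|\,|T_m|$ with the uniform Sobolev inequality on $X$, which together convert the $L^2$-bound on $\nabla^{\upsilon}T_m=\nabla^{g_X}T_m$ into the required $L^{2n/(n-1)}$ and $H^{1,2n/(2n-1)}$ bounds; absent the $W_{2n/(n-1)}$-membership and the regularity of $|T|^2$ one could bypass this, but those are precisely the nontrivial parts of the statement.
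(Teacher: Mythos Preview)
Your proof is correct and is essentially the natural elaboration of the paper's one-line sketch ``Similarly by Theorem \ref{mnj} and Remark \ref{ggtttt}.''  The only substantive addition you make beyond what the paper writes is the explicit verification, via the $(n/(n-1),1)$-Sobolev inequality and the Kato-type bound $|\nabla|T_m|^2|\le 2|\nabla^{g_X}T_m||T_m|$, that the approximating test tensors stay bounded in $W_{2n/(n-1)}$; this is exactly what is needed to invoke Theorem \ref{uuy} on the fixed space and obtain the two conclusions ($T\in W_{2n/(n-1)}$ and $|T|^2\in H^{1,2n/(2n-1)}$) that go beyond Corollary \ref{66tytt}, and the paper presumably intends this implicitly.
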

We give another relationship between Sobolev spaces.
\begin{theorem}\label{jjhhgggg}
Let $(X, \upsilon) \in \overline{M(n, K, d)}$ with $\mathrm{dim}\,X=n$.
Then we have $H^{1, 2}_H(T^*X)=H^{1, 2}_C(T^*X)$ as sets.
Moreover the identity map 
\[\mathrm{id}: H^{1, 2}_H(T^*X) \to H^{1, 2}_C(T^*X)\]
gives a homeomorphism.
\end{theorem}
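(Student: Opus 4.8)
The plan is to reduce the statement to the equivalence of two norms on one common generating set. Under the canonical identification $\bigwedge^1T^*X = T^0_1X$ one has $\mathrm{TestForm}_1(X)=\mathrm{Test}T^0_1X$ literally (both are $\{\sum_i f_{0,i}df_{1,i};\,f_{j,i}\in\mathrm{Test}F(X)\}$), and by definition $H^{1,2}_H(T^*X)$ is the closure of this set in $W^{1,2}_H(\bigwedge^1T^*X)$ while $H^{1,2}_C(T^*X)$ is its closure in $W^{1,2}_C(T^0_1X)$. Since $\mathrm{TestForm}_1(X)\subset W^{1,2}_H(\bigwedge^1T^*X)$ (Gigli) and $\mathrm{TestForm}_1(X)\subset H^{1,2}_C(T^*X)$ trivially, the theorem will follow once we show that there is a constant $C=C(n,K)$ with
\[
C^{-1}\|\omega\|_{W^{1,2}_H}\le\|\omega\|_{W^{1,2}_C}\le C\|\omega\|_{W^{1,2}_H}\qquad\text{for all }\omega\in\mathrm{TestForm}_1(X),
\]
together with a routine completion argument (carried out in the last paragraph).

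For the inequality $\|\omega\|_{W^{1,2}_H}\le C(n)\|\omega\|_{W^{1,2}_C}$ I would argue pointwise. For a test $1$-form $\omega$, Corollaries \ref{66tytt} and \ref{99i99i} give $d^{\upsilon}\omega=d\omega$ and $\nabla^{\upsilon}\omega=\nabla^{g_X}\omega$, while Theorem \ref{33766}(1) (using $\dim X=n$) together with the identity $\delta^{g_X}(fdg)=f\Delta^{g_X}g-\langle df,dg\rangle$ from Proposition \ref{hess2} and the analogue (\ref{new}) for $\delta^{\upsilon}$ shows that $\delta^{\upsilon}\omega$ and $\delta^{g_X}\omega$ agree up to sign on $\mathrm{TestForm}_1(X)$. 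The elementary algebraic relations — (\ref{dna}) with $k=1$ for $d$, and $\delta^{g_X}=-\mathrm{tr}(\nabla^{g_X}(\cdot)^{*})$ for $\delta$ — then yield $|d^{\upsilon}\omega|\le 2|\nabla^{\upsilon}\omega|$ and $|\delta^{\upsilon}\omega|\le\sqrt n\,|\nabla^{\upsilon}\omega|$ almost everywhere, hence $\|\omega\|_{W^{1,2}_H}^2\le(4+n)\|\omega\|_{W^{1,2}_C}^2$ after integration.

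The reverse inequality is the heart of the matter and rests on the Bochner inequality. Since $\omega\in\mathrm{TestForm}_1(X)\subset H^{1,2}_H(T^*X)$, Theorem \ref{app6}, applied with any approximating sequence $(X_i,\upsilon_i)\in M(n,K,d)$ with $(X_i,\upsilon_i)\stackrel{GH}{\to}(X,\upsilon)$, produces (after passing to a subsequence) smooth $\omega_i\in C^{\infty}(T^*X_i)$ with $\omega_i,d\omega_i,\delta\omega_i$ converging $L^2$-strongly to $\omega,d^{\upsilon}\omega,\delta^{\upsilon}\omega$ on $X$. This is exactly the hypothesis of Theorem \ref{boch}(3); taking the admissible test function $\phi\equiv 1$ (legitimate because $X$ is compact, so $X=B_R(x)$ for $R>d$ and $1\in\mathrm{LIP}_c(X)$) makes the left-hand side vanish and, using $\nabla^{g_X}\omega=\nabla^{\upsilon}\omega$, gives
\[
\int_X|\nabla^{\upsilon}\omega|^2\,d\upsilon\le\int_X\bigl(|d^{\upsilon}\omega|^2+|\delta^{\upsilon}\omega|^2\bigr)\,d\upsilon-K(n-1)\int_X|\omega|^2\,d\upsilon.
\]
Bounding $-K(n-1)\le |K|(n-1)$ yields $\|\nabla^{\upsilon}\omega\|_{L^2}^2\le C(n,K)\|\omega\|_{W^{1,2}_H}^2$ and therefore $\|\omega\|_{W^{1,2}_C}^2\le C(n,K)\|\omega\|_{W^{1,2}_H}^2$.

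Finally, with the norm equivalence on $\mathrm{TestForm}_1(X)$ in hand, I would identify the two closures. Given $\omega\in H^{1,2}_C(T^*X)$, choose test forms $\omega_j\to\omega$ in $W^{1,2}_C$; the sequence is $W^{1,2}_C$-Cauchy, hence $W^{1,2}_H$-Cauchy, hence converges in the complete space $W^{1,2}_H(\bigwedge^1T^*X)$ to some $\tilde\omega\in H^{1,2}_H(T^*X)$; but $\omega_j\to\omega$ and $\omega_j\to\tilde\omega$ both in $L^2(T^*X)$, so $\omega=\tilde\omega\in H^{1,2}_H(T^*X)$. Thus $H^{1,2}_C(T^*X)\subseteq H^{1,2}_H(T^*X)$, and the symmetric argument gives the reverse inclusion, so the two are equal as subsets of $L^2(T^*X)$; moreover $d^{\upsilon},\delta^{\upsilon},\nabla^{\upsilon}$ are simultaneously defined on the common space and, passing the test-form norm equivalence to the limit, $\|\cdot\|_{W^{1,2}_H}$ and $\|\cdot\|_{W^{1,2}_C}$ are equivalent there, so $\mathrm{id}$ is a bounded linear bijection with bounded inverse, i.e. a homeomorphism. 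The main obstacle is the Bochner direction: the entire estimate depends on producing the smooth $W^{1,2}_H$-approximation of Theorem \ref{app6} and on matching the operators $d^{\upsilon}\omega,\delta^{\upsilon}\omega$ of Gigli's calculus with the quantities $d\omega_\infty,\delta^{\upsilon_\infty}\omega_\infty$ occurring in Theorem \ref{boch} — this bookkeeping, using Corollaries \ref{66tytt}, \ref{99i99i} and Theorem \ref{33766}, is where the care is needed, the rest being soft functional analysis.
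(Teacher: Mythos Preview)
Your proof is correct and follows essentially the same route as the paper: both establish the equivalence of the $W^{1,2}_H$- and $W^{1,2}_C$-norms on $\mathrm{TestForm}_1(X)$, using the Bochner inequality (Theorem~\ref{boch} with $\phi\equiv 1$) for the bound $\|\nabla^{\upsilon}\omega\|_{L^2}\le C\|\omega\|_{W^{1,2}_H}$ and the pointwise relations $d=\mathrm{Alt}\circ\nabla$ and $\delta=-\mathrm{tr}(\nabla)$ for the reverse, then pass to the closures. The only cosmetic difference is that the paper derives $\delta^{\upsilon}\omega=-\mathrm{tr}(\nabla^{\upsilon}\omega)$ on test forms via a smooth approximation argument (its Claim~\ref{deltaformula}) rather than via your comparison of the explicit formulas for $\delta^{\upsilon}(fdg)$ and $\delta^{g_X}(fdg)$ through $\Delta^{\upsilon}=\Delta^{g_X}$.
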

\begin{proof}
Let $\omega_i$ be a sequence in $\mathrm{TestForm}_1X$.
It suffices the check 
that the following are equivalent:
\begin{enumerate}
\item $\omega_i$ is a Cauchy sequence with respect to the norm $|| \cdot||_{H^{1, 2}_H}$.
\item  $\omega_i$ is a Cauchy sequence with respect to the norm $|| \cdot||_{H^{1, 2}_C}$.
\end{enumerate}

We first assume that (1) holds.
By Theorem \ref{boch},
since
\[\int_X|\nabla (\omega_i-\omega_j)|^2dH^n \le \int_X\left( |d(\omega_i-\omega_j)|^2+|\delta (\omega_i -\omega_j)|^2-K(n-1)|\omega_i-\omega_j|^2 \right)dH^n\]
for any $i, j$, we have (2).

Next we assume that (2) holds.
In order to prove that $\delta \omega_i$ is a Cauchy sequence in $L^2(X)$, we prepare the following:
\begin{claim}\label{deltaformula}
For every $\eta \in \mathrm{TestForm}_1X$, we have
\[\delta \omega= -\mathrm{tr}(\nabla \omega).\]
\end{claim}
The proof is as follows.
By Remark \ref{ggtttt} with Theorem \ref{198183}, there exist a sequence $(X_i, \upsilon_i) \in M(n, K, d)$ and a sequence $\omega_i \in C^{\infty}(T^*X_i)$ such that $(X_i, \upsilon_i) \stackrel{GH}{\to} (X, \upsilon)$, that $\omega_i, \delta \omega_i, \nabla \omega_i$ $L^2$-converge weakly to $\omega, \delta \omega, \nabla \omega$ on $X$, respectively.

Thus \cite[Proposition $3.72$]{holp} yields that $\mathrm{tr}(\nabla \omega_i)$ $L^2$-converges weakly to $\mathrm{tr}(\nabla \omega)$ on $X$.
Since $\delta \omega_i=-\mathrm{tr}(\nabla \omega_i)$, this completes the proof of Claim \ref{deltaformula}.

Claim \ref{deltaformula} yields that $\delta \omega_i$ is a Cauchy sequence in $L^2(X)$.

On the other hand by (\ref{dna}),  it is easy to check that 
 $d\omega_i$ $L^2$-converges strongly to $d\omega$ on $X$.
This completes the proof of Proposition \ref{jjhhgggg}. 
\end{proof}
\begin{remark}
By using Proposition \ref{approx}, for any $(X, \upsilon) \in \overline{M(n, K, d)}$ and $f \in \mathcal{D}^2(\Delta^{\upsilon}, X)$, it is easy to check that $df \in H^{1, 2}_H(T^*X)$.
\end{remark}
\begin{remark}
Let $(X, \upsilon) \in \overline{M(n, K, d)}$ with $\mathrm{dim}\,X=n$.
By Theorems \ref{33766}, \ref{aarrtt}, \ref{app6}, \ref{boch} and Corollary \ref{expb} we have the following two Bochner inequalities:
\begin{itemize}
\item For any $f \in \mathcal{D}^2(\Delta^{\upsilon}, X)$ and $\phi \in \mathrm{LIP}(X)$ with $\phi \ge 0$ we have
\begin{align*}
-\frac{1}{2}\int_{X}\langle d\phi, d|df|^2 \rangle d\upsilon &\ge \int_{X}\phi|\mathrm{Hess}_{f}^{\upsilon}|^2d\upsilon + \int_{X}\left(-\phi(\Delta^{\upsilon}f)^2+\Delta^{\upsilon}f\langle d\phi, df\rangle \right)d\upsilon \nonumber \\
&+K(n-1)\int_{X}\phi|df|^2d\upsilon.
\end{align*}
\item For any $\omega \in H_H^{1,2}(T^*X)$ and $\phi \in \mathrm{LIP}(X)$ with $\phi \ge 0$ we have
\begin{align*}
-\frac{1}{2}\int_{X} \left\langle d\phi, d|\omega|^2 \right\rangle d\upsilon 
&\ge \int_{X}\phi|\nabla^{\upsilon} \omega|^2d\upsilon -\int_{X}\left(\delta^{\upsilon} (\phi \omega)\delta^{\upsilon} \omega+\langle d^{\upsilon}(\phi \omega), d^{\upsilon}\omega \rangle \right)d\upsilon \\
&+K(n-1)\int_{X}\phi |\omega|^2d\upsilon.
\end{align*}
\end{itemize}
Note that these are already proved by Gigli via different approaches on $RCD(K(n-1), \infty)$-spaces.
See \cite[Lemma $3.6.2$]{gigli}.
\end{remark}
We give an application.
Recall that if $M$ is a closed nonnegatively Ricci curved Riemannian manifold and $\omega$ is a harmonic $1$-form on $M$, then $\omega$ is parallel, i.e., 
\[\nabla \omega \equiv 0.\]
The following is a generalization of this to the Gromov-Hausdorff setting.
\begin{theorem}\label{q223}
Let $\{\epsilon_i\}_{i<\infty}$ be a sequence of $\epsilon_i \ge 0$ with $\epsilon_i \to 0$,
let $\{(X_i, \upsilon_i)\}_{i<\infty}$ be a sequence of $(X_i, \upsilon_i) \in M(n, -\epsilon_i, d)$,
let $(X_{\infty}, \upsilon_{\infty})$ be the Gromov-Hausdorff limit of them with $\mathrm{diam}\,X_{\infty}>0$,
let $\{\omega_i\}_{i<\infty}$ be a sequence of $\omega_i \in C^{\infty}(T^*X_i)$ with 
\[\lim_{i \to \infty}\int_{X_i}\left(|d\omega_i|^2+|\delta \omega_i|^2\right)d\upsilon_i=0,\]
and let $\omega_{\infty}$ be the $L^2$-strong limit on $X_{\infty}$ of them. 
Then we see that $\omega_{\infty} \in \mathcal{D}^2(\Delta^{\upsilon_{\infty}}_{H, 1}, X_{\infty}) \cap W^{1, 2}_C(T^*X_{\infty})$, 
that $\Delta_{H, 1}^{\upsilon_{\infty}}\omega_{\infty}=0$ and that $d\omega_i, \delta \omega_i, \nabla \omega_i$ $L^2$-converge strongly to zeros on $X_{\infty}$, respectively.
In particular we have $d^{\upsilon_{\infty}}\omega_{\infty}=d\omega_{\infty}=0$, $\delta^{\upsilon_{\infty}}\omega_{\infty}=\delta^{g_{X_{\infty}}}\omega_{\infty}=0$ and $\nabla^{\upsilon_{\infty}}\omega_{\infty}=\nabla^{g_{X_{\infty}}} \omega_{\infty}=0$.
\end{theorem}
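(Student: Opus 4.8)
The plan is to combine the Bochner inequality of Theorem \ref{boch} with the convergence machinery for tensor fields (Theorems \ref{mthm}, \ref{mnj}, \ref{techni2}) and a Stokes-type argument on the limit space. First I would record, via part (2) of Theorem \ref{boch}, that $\omega_\infty \in W_{2n/(n-1)}(T^*X_\infty)$ and that $\nabla^{g_{X_\infty}}\omega_\infty$ is the $L^2$-weak limit of $\{\nabla \omega_i\}_i$, while $d\omega_i,\delta\omega_i$ $L^2$-converge weakly to $d\omega_\infty,\delta^{\upsilon_\infty}\omega_\infty$ (using part (1) for $\delta$, noting the noncollapsed/dimension hypotheses are not needed here since $d,\delta$ on the limit are understood in Gigli's sense). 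Since by hypothesis $\|d\omega_i\|_{L^2}+\|\delta\omega_i\|_{L^2}\to 0$, lower semicontinuity of $L^2$-norms under weak convergence forces $d\omega_\infty = 0$ and $\delta^{\upsilon_\infty}\omega_\infty = 0$, hence $\omega_\infty \in W^{1,2}_H(T^*X_\infty)$ with $d^{\upsilon_\infty}\omega_\infty = 0 = \delta^{\upsilon_\infty}\omega_\infty$; testing (\ref{mkoiop}) then gives $\omega_\infty\in \mathcal D^2(\Delta^{\upsilon_\infty}_{H,1},X_\infty)$ with $\Delta^{\upsilon_\infty}_{H,1}\omega_\infty = 0$.

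Next I would upgrade the weak convergences of $d\omega_i,\delta\omega_i,\nabla\omega_i$ to strong convergences to zero. The vanishing of $\|d\omega_i\|_{L^2}$ and $\|\delta\omega_i\|_{L^2}$ already gives strong $L^2$-convergence of those two to zero trivially. For $\nabla\omega_i$ the key is the integrated Bochner inequality: apply part (3) of Theorem \ref{boch} (whose hypothesis — that $d\omega_\infty,\delta^{\upsilon_\infty}\omega_\infty$ are $L^2$-strong limits of $d\omega_i,\delta\omega_i$ — is met because both limits are zero and both sequences go to zero in $L^2$). Taking $\phi_\infty\equiv 1$, or rather an exhaustion by $\phi_\infty\in \mathrm{LIP}_c$ with $\phi_\infty\uparrow 1$ (legitimate since $X_\infty$ is compact, so one can take $\phi_\infty\equiv 1$ directly if one prefers, reading $d\phi_\infty=0$), the left side vanishes and the cross terms $\langle d(\phi_\infty\omega_\infty),d\omega_\infty\rangle$, $\delta^{\upsilon_\infty}(\phi_\infty\omega_\infty)\delta^{\upsilon_\infty}\omega_\infty$ vanish because $d\omega_\infty=\delta^{\upsilon_\infty}\omega_\infty=0$, yielding
\[
0 \ge \int_{X_\infty}|\nabla^{g_{X_\infty}}\omega_\infty|^2 d\upsilon_\infty + K(n-1)\int_{X_\infty}|\omega_\infty|^2 d\upsilon_\infty,
\]
where here $K=0$ in the limit (the Ricci bounds $-\epsilon_i(n-1)$ degenerate to $0$). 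Hence $\nabla^{g_{X_\infty}}\omega_\infty = 0$. Then, since $\nabla\omega_i$ $L^2$-converges weakly to $\nabla^{g_{X_\infty}}\omega_\infty = 0$ and $\liminf_i \|\nabla\omega_i\|_{L^2}^2 \le \limsup_i\|\nabla\omega_i\|_{L^2}^2$, I would get strong convergence to zero from the same Bochner identity read as an equality on the smooth level: integrating the pointwise Bochner formula on $X_i$ against $1$ gives $\int_{X_i}|\nabla\omega_i|^2 = \int_{X_i}(|d\omega_i|^2+|\delta\omega_i|^2) + \epsilon_i(n-1)\int_{X_i}|\omega_i|^2$, whose right side tends to $0$ because $\|\omega_i\|_{L^2}$ is bounded (equal to the bounded $L^2$-strong limit norm) and $\epsilon_i\to 0$. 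So $\|\nabla\omega_i\|_{L^2}\to 0$, which is strong convergence to zero.

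Finally I would assemble: $\omega_\infty\in W^{1,2}_C(T^*X_\infty)$ follows from $\omega_\infty\in \widetilde W^{1,2}_C$ (Theorem \ref{techni2} applied to $\{\omega_i\}$, noting $\sup_i\|\omega_i\|_{W^{1,2}_C}<\infty$ since $\sup_i(\|\omega_i\|_{L^2}+\|\nabla\omega_i\|_{L^2})<\infty$) together with $\widetilde W^{1,2}_C = W^{1,2}_C$ (Theorem \ref{198183}(8)); and $\nabla^{\upsilon_\infty}\omega_\infty = \nabla^{g_{X_\infty}}\omega_\infty = 0$ follows from Theorem \ref{mnj} (the $\nabla^{\upsilon_i}\omega_i$ converge weakly to $\nabla^{\upsilon_\infty}\omega_\infty$, and $\nabla^{\upsilon_i}\omega_i = \nabla\omega_i$ on the smooth manifolds $X_i$) combined with the already-established $\nabla^{g_{X_\infty}}\omega_\infty = 0$ and the identification $\nabla^{g_{X_\infty}} = \nabla^{\upsilon_\infty}$ on $W^{1,2}_C$ from Theorem \ref{mthm}(7) or Corollary \ref{99i99i}. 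The identities $d^{\upsilon_\infty}\omega_\infty = d\omega_\infty = 0$ and $\delta^{\upsilon_\infty}\omega_\infty = \delta^{g_{X_\infty}}\omega_\infty = 0$ are then immediate from the compatibility statements (Theorem \ref{techni} and part (1) of Theorem \ref{boch}, the latter needing $\dim X_\infty = n$ for the $\delta^{g_{X_\infty}}$ identification — but since both sides are zero this holds regardless; if one wants $\delta^{g_{X_\infty}}$ to even be defined one restricts to the noncollapsed case, or simply states it in the weak sense). The main obstacle I expect is bookkeeping the two Ricci-bound subtleties simultaneously: the lower bound drifts to $0$ (so the $K(n-1)$ term in Bochner is harmless but must be tracked with the correct sign and the $\epsilon_i$), and one must be careful that Theorems \ref{boch} and \ref{mthm}, stated for fixed $M(n,K,d)$, apply verbatim to the sequence $(X_i,\upsilon_i)\in M(n,-\epsilon_i,d)$ — this is fine because all constants there are monotone in $|K|$ and $\sup_i\epsilon_i<\infty$, so one works inside $\overline{M(n,-\sup_i\epsilon_i,d)}$.
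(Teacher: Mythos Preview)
Your approach is essentially the paper's: the decisive step is the integrated Bochner formula on each smooth $X_i$, which gives $\int_{X_i}|\nabla\omega_i|^2 \le \int_{X_i}(|d\omega_i|^2+|\delta\omega_i|^2)+\epsilon_i(n-1)\int_{X_i}|\omega_i|^2\to 0$, after which everything follows from the convergence machinery (Corollary \ref{contiad}, Theorems \ref{mthm}, \ref{techni2}, \ref{198183}). Two small points: (i) your displayed formula should read $\le$, not $=$, since $\epsilon_i(n-1)$ comes from the Ricci lower bound, not the Ricci tensor itself; (ii) the detour through part (3) of Theorem \ref{boch} on the limit space is unnecessary---and your claim that ``$K=0$ in the limit'' there is not quite what that theorem gives (it is stated for a fixed ambient $\overline{M(n,K,d)}$)---but this does no harm since you then obtain $\|\nabla\omega_i\|_{L^2}\to 0$ directly anyway.
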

\begin{proof}
The Bochner formula gives 
\[-\frac{1}{2}\Delta |\omega_i|^2\ge |\nabla \omega_i|^2-\langle \Delta_{H, 1} \omega_i, \omega_i \rangle -\epsilon_i|\omega_i|^2.\]
Integrating this gives
\begin{align*}
\int_{X_i}|\nabla \omega_i|^2d\upsilon_i\le \int_{X_i}\left( |d\omega_i|^2 +|\delta \omega_i|^2\right)d\upsilon_i +\epsilon_i\int_{X_i}|\omega_i|^2d\upsilon_{i}.
\end{align*}
By letting $i \to \infty$ we have 
\begin{align}\label{uuuuu}
\lim_{i \to \infty}\int_{X_i}|\nabla \omega_i|^2d\upsilon_{i}=0.
\end{align}
Thus the assertion follows from Corollaries \ref{contiad}, Theorems \ref{mthm}, \ref{techni2}, \ref{198183}, (\ref{uuuuu}) and \cite[Proposition $3.74$]{holp}.
\end{proof}
We now give a closedness of the Hodge Laplacian with respect to the Gromov-Hausdorff topology:
\begin{theorem}\label{hodg}
Let $(X_i, \upsilon_i) \stackrel{GH}{\to} (X_{\infty}, \upsilon_{\infty})$ in $\overline{M(n, K, d)}$ with $\mathrm{diam}\,X_{\infty}>0$, and 
let $\{\omega_i\}_{i\le \infty}$ be an $L^2$-strong convergent sequence on $X_{\infty}$ of $\omega_i \in L^2(\bigwedge^kT^*X_i)$.
Assume that $\omega_i \in \mathcal{D}^2(\Delta^{\upsilon_i}_{H, k}, X_i)$ for every $i<\infty$, that
\begin{align}\label{munerin}
\sup_{i<\infty}\int_{X_i}\left(|d^{\upsilon_i}\omega_i|^2+|\delta^{\upsilon_i}\omega_i|^2+|\Delta_{H, k}^{\upsilon_i}\omega_i|^2\right)d\upsilon_i<\infty.
\end{align}
and that one of the following two conditions holds:
\begin{enumerate}
\item $k=1$.
\item $\delta^{\upsilon_i}\omega_i$ $L^2$-converges strongly to $\delta^{\upsilon_{\infty}}\omega_{\infty}$ on $X_{\infty}$.
\end{enumerate}
Then we see that $\omega_{\infty} \in \mathcal{D}^2(\Delta^{\upsilon_{\infty}}_{H, k}, X_{\infty})$ and that $\Delta_{H, k}^{\upsilon_i}\omega_i$ $L^2$-converges weakly to $\Delta^{\upsilon_{\infty}}_{H, k}\omega_{\infty}$ on $X_{\infty}$.
\end{theorem}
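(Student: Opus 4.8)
The plan is to test the definition of $\Delta^{\upsilon_\infty}_{H,k}$ against an arbitrary test form, passing to the limit in the defining identity (\ref{mkoiop}) using the closedness results already at hand. First I would note that the hypothesis (\ref{munerin}) together with $L^2$-strong convergence of $\{\omega_i\}_i$ gives $\sup_i\|\omega_i\|_{W^{1,2}_H}<\infty$; hence by Theorems \ref{techni2} and \ref{198183} (in the form $\widetilde W^{1,2}_d=W^{1,2}_d$, $\widetilde{\mathcal D}^2(\delta^{\upsilon}_k)=\mathcal D^2(\delta^{\upsilon}_k)$, $\widetilde W^{1,2}_H=W^{1,2}_H$) we get $\omega_\infty\in W^{1,2}_H(\bigwedge^kT^*X_\infty)$ with $d^{\upsilon_i}\omega_i\rightharpoonup d^{\upsilon_\infty}\omega_\infty$ and $\delta^{\upsilon_i}\omega_i\rightharpoonup \delta^{\upsilon_\infty}\omega_\infty$ in $L^2$. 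By the $L^2$-weak compactness we may also assume, after passing to a subsequence (and since the final claim is about the whole sequence we argue by the usual subsequence-of-subsequence trick), that $\Delta^{\upsilon_i}_{H,k}\omega_i$ $L^2$-converges weakly to some $\eta_\infty\in L^2(\bigwedge^kT^*X_\infty)$. It then remains to identify $\eta_\infty=\Delta^{\upsilon_\infty}_{H,k}\omega_\infty$.

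To do that, fix $\alpha_\infty\in\widetilde{\mathrm{TestForm}}_k(X_\infty)$ (using $\widetilde{\mathcal D}^2(\Delta^{\upsilon}_{H,k})=\mathcal D^2(\Delta^{\upsilon}_{H,k})$ it suffices to test against the $\sim$-class). By Remark \ref{ggtttt} (applied to $k=1$ with Theorem \ref{app6}, and to $k\ge 2$ with the weak approximation in Remark \ref{ggtttt}) I can choose, again after passing to a subsequence, a sequence $\alpha_i$ of elements of $\widetilde{\mathrm{TestForm}}_k(X_i)$ such that $\alpha_i, d^{\upsilon_i}\alpha_i, \delta^{\upsilon_i}\alpha_i$ $L^2$-converge strongly to $\alpha_\infty, d^{\upsilon_\infty}\alpha_\infty, \delta^{\upsilon_\infty}\alpha_\infty$ on $X_\infty$ (in the case $k\ge2$ one gets strong $L^2$-convergence of $\alpha_i$ and $d^{\upsilon_i}\alpha_i$ from Theorem \ref{kkii}, and $\delta^{\upsilon_i}\alpha_i$ strongly because $\alpha_i$ is a test form built via the heat-flow mollification so its codifferential is controlled, cf.\ the proof of Theorem \ref{app6}; alternatively one only needs weak convergence of $\delta^{\upsilon_i}\alpha_i$ paired against the strongly convergent $\delta^{\upsilon_i}\omega_i$). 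For each finite $i$ the identity
\[
\int_{X_i}\langle \Delta^{\upsilon_i}_{H,k}\omega_i,\alpha_i\rangle d\upsilon_i=\int_{X_i}\langle d^{\upsilon_i}\omega_i, d^{\upsilon_i}\alpha_i\rangle d\upsilon_i+\int_{X_i}\langle \delta^{\upsilon_i}\omega_i,\delta^{\upsilon_i}\alpha_i\rangle d\upsilon_i
\]
holds by (\ref{mkoiop}). Letting $i\to\infty$, the left-hand side converges to $\int_{X_\infty}\langle\eta_\infty,\alpha_\infty\rangle d\upsilon_\infty$ by weak-strong convergence, and each term on the right converges to the corresponding $X_\infty$-integral by weak-strong (resp.\ strong-strong) $L^2$-convergence. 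Thus $\eta_\infty$ satisfies (\ref{mkoiop}) on $X_\infty$, so $\omega_\infty\in\mathcal D^2(\Delta^{\upsilon_\infty}_{H,k}, X_\infty)$ and $\Delta^{\upsilon_\infty}_{H,k}\omega_\infty=\eta_\infty$; since the subsequence was arbitrary, the whole sequence $\Delta^{\upsilon_i}_{H,k}\omega_i$ converges weakly to $\Delta^{\upsilon_\infty}_{H,k}\omega_\infty$.

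The main obstacle is the convergence of the two bilinear pairings on the right-hand side, and this is exactly where the dichotomy between $k=1$ and $k\ge2$ enters. The term $\int\langle d^{\upsilon_i}\omega_i,d^{\upsilon_i}\alpha_i\rangle$ is harmless: $d^{\upsilon_i}\omega_i\rightharpoonup d^{\upsilon_\infty}\omega_\infty$ weakly and $d^{\upsilon_i}\alpha_i\to d^{\upsilon_\infty}\alpha_\infty$ strongly. The delicate term is $\int\langle\delta^{\upsilon_i}\omega_i,\delta^{\upsilon_i}\alpha_i\rangle$: I only know a priori that $\delta^{\upsilon_i}\omega_i\rightharpoonup\delta^{\upsilon_\infty}\omega_\infty$ weakly, so I need $\delta^{\upsilon_i}\alpha_i\to\delta^{\upsilon_\infty}\alpha_\infty$ strongly — which I have for test forms $\alpha_i$ obtained from the mollified construction — OR I need one of the two factors involving $\omega$ to converge strongly, which is precisely hypothesis (2) when $k\ge2$, while for $k=1$ the strong $L^2$-convergence of $\delta^{\upsilon_i}\omega_i$ comes for free from Theorem \ref{boch}(1) (or Corollary \ref{contiad} combined with the Bochner bound of Proposition \ref{bounds}, which for $1$-forms controls $\nabla\omega_i$ and hence upgrades weak to strong convergence of $\delta^{\upsilon_i}\omega_i$ via Theorem \ref{mthm}). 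So the case analysis in the statement is genuinely needed, and verifying the strong convergence of the relevant codifferentials in each case is the technical heart of the argument; everything else is a routine weak-strong limit passage combined with the closedness theorems already established.
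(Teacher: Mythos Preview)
Your approach is essentially the paper's: pick $\alpha_\infty\in\widetilde{\mathrm{TestForm}}_k(X_\infty)$, approximate it by $\alpha_i$ on $X_i$, and pass to the limit in (\ref{mkoiop}) using Corollary \ref{contiad}, Theorems \ref{techni2} and \ref{198183}. Two corrections, though. First, for $k\ge2$ Remark \ref{ggtttt} only gives $\delta^{\upsilon_i}\alpha_i$ converging $L^2$-\emph{weakly} to $\delta^{\upsilon_\infty}\alpha_\infty$: the codifferential of a test $k$-form involves Hessians via \cite[Proposition $3.5.12$]{gigli}, and Theorem \ref{hessc} yields only weak convergence of those. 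So your ``alternatively'' clause invoking hypothesis (2) is not an alternative but the actual argument, and this is precisely why condition (2) is imposed. Second, your claim that for $k=1$ one gets strong convergence of $\delta^{\upsilon_i}\omega_i$ ``for free'' via Theorem \ref{boch}(1) or Proposition \ref{bounds} is not valid here: those results assume $(X_i,\upsilon_i)\in M(n,K,d)$ and $\omega_i\in C^\infty$, neither of which is hypothesised in Theorem \ref{hodg}. The correct reason $k=1$ works is the one you stated first: Theorem \ref{app6} supplies approximants $\alpha_i$ with $\delta^{\upsilon_i}\alpha_i\to\delta^{\upsilon_\infty}\alpha_\infty$ strongly, so weak convergence of $\delta^{\upsilon_i}\omega_i$ (from Corollary \ref{contiad}) suffices for the pairing.
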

\begin{proof}
We give a proof in the case when $k=1$ only because  by Remark \ref{ggtttt}, the proof in the other case is similar.
By an argument similar to the proof of Corollary \ref{contiad} without loss of generality we can assume that
 there exists the $L^2$-weak limit $\eta_{\infty} \in L^{2}(T^*X_{\infty})$  on $X_{\infty}$ of $\{\Delta_{H, 1}^{\upsilon_{i}}\omega_{i}\}_i$.
 
Let $\alpha_{\infty} \in \widetilde{\mathrm{TestForm}}_1(X_{\infty})$.
By Theorem \ref{app6} and Remark \ref{ggtttt}, without loss of generality we can assume that there exists a sequence $\{\alpha_{i}\}_i$ of $\alpha_{i} \in \widetilde{\mathrm{TestForm}}_1(X_i)$ such that $\alpha_{i}, d^{\upsilon_{i}}\alpha_{i}, \delta^{\upsilon_{i}} \alpha_{i}$ $L^2$-converge strongly to $\alpha_{\infty}, d^{\upsilon_{\infty}}\alpha_{\infty}, \delta^{\upsilon_{\infty}}\alpha_{\infty}$ on $X_{\infty}$, respectively.

Then since 
\[\int_{X_{i}}\langle \alpha_{i}, \Delta_{H, 1}^{\upsilon_{i}}\omega_{i}\rangle d\upsilon_{i}=\int_{X_{i}}\left(\langle d^{\upsilon_{i}}\alpha_{i}, d^{\upsilon_{i}}\omega_{i}\rangle + (\delta^{\upsilon_{i}} \alpha_{i})(\delta^{\upsilon_{i}}\omega_{i})\right)d\upsilon_{i}\]
by letting $i \to \infty$, Corollary \ref{contiad}, Theorems \ref{techni2} and  \ref{198183} give that $\omega_{\infty} \in W^{1, 2}_H(T^*X)$ and that 
\[\int_{X_{\infty}}\langle \alpha_{\infty}, \eta_{\infty}\rangle d\upsilon_{\infty}=\int_{X_{\infty}}\left(\langle d^{\upsilon_{\infty}}\alpha_{\infty}, d^{\upsilon_{\infty}}\omega_{\infty}\rangle + (\delta^{\upsilon_{\infty}} \alpha_{\infty}) (\delta^{\upsilon_{\infty}}\omega_{\infty})\right)d\upsilon_{\infty}.\]
Thus we have $\omega_{\infty} \in \mathcal{D}^2(\Delta_{H, 1}^{\upsilon_{\infty}}, X_{\infty})$ and $\Delta_{H, 1}^{\upsilon_{\infty}}\omega_{\infty}=\eta_{\infty}$.
This completes the proof.
\end{proof}
We now give an $L^{\infty}$-estimate for an eigenform:
\begin{proposition}\label{aa33}
Let $0 \le \lambda \le L$, let $(X, \upsilon) \in M(n, K, d)$, and let $\omega \in C^{\infty}(T^*X)$ with $||\omega||_{L^2(X)}\le L$ and 
\[\Delta_{H, 1}\omega = \lambda \omega.\]
Then we have 
\[||\omega||_{L^{\infty}(X)}\le C(n, K, d, L).\]
\end{proposition}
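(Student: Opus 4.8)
\textbf{Proof strategy for Proposition \ref{aa33}.} The plan is to combine the Bochner formula with a Moser/De Giorgi-type iteration (the mean value inequality of Li–Tam \cite{LT}) to obtain a pointwise bound on $|\omega|$ from its $L^2$-norm, uniformly over $M(n,K,d)$. The starting point is the Bochner formula for $1$-forms: since $\Delta_{H,1}\omega=\lambda\omega$, one has
\begin{align*}
-\frac{1}{2}\Delta|\omega|^2 = |\nabla\omega|^2 - \langle\Delta_{H,1}\omega,\omega\rangle + \mathrm{Ric}(\omega^\sharp,\omega^\sharp) \ge -\lambda|\omega|^2 + K(n-1)|\omega|^2,
\end{align*}
so that, writing $u:=|\omega|$, the Kato inequality $|\nabla u|\le|\nabla\omega|$ (valid a.e.) gives the differential inequality $\Delta u \ge -(|\lambda| + |K|(n-1))u$ in the weak (distributional) sense on $\{u>0\}$, and in fact globally on $M$ by a standard cut-off argument near the zero set of $u$. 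Thus $u$ is a nonnegative subsolution of $\Delta u + c\,u = 0$ with $c := |\lambda|+|K|(n-1) \le L + |K|(n-1) =: C(n,K,L)$.

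\textbf{Key steps in order.} First I would record the Bochner inequality and reduce, as above, to the statement that $u=|\omega|$ satisfies $\Delta u \ge -C(n,K,L)u$ weakly on $M$, paying attention to the regularity near $u^{-1}(0)$ (here one uses that $\omega$ is smooth, so $u^2$ is smooth and $u$ is Lipschitz, and that the bad set has measure zero). Second, I would invoke the Li–Tam mean value inequality \cite{LT}: for a manifold with $\mathrm{Ric}_M\ge K(n-1)$ and a nonnegative function $u$ with $\Delta u \ge -c u$, one has
\[
\sup_{B_{r/2}(x)} u^2 \le C(n,K,c,r)\,\frac{1}{\upsilon(B_r(x))}\int_{B_r(x)} u^2\,d\upsilon
\]
for all $x\in M$ and $r\le \mathrm{diam}\,M$; the relevant volume ratios are controlled by Bishop–Gromov. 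Third, I would take $r$ comparable to $d = \mathrm{diam}\,M$ (so $B_r(x)=M$), note $\upsilon(M)=1$, and conclude
\[
\|\omega\|_{L^\infty(X)}^2 = \sup_M u^2 \le C(n,K,d,L)\int_M u^2\,d\upsilon = C(n,K,d,L)\,\|\omega\|_{L^2(X)}^2 \le C(n,K,d,L)\,L^2,
\]
which is the desired estimate. I would also remark that $0\le\lambda\le L$ is used only to bound the zeroth-order coefficient $c$.

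\textbf{Main obstacle.} The only delicate point is justifying the weak differential inequality $\Delta u \ge -C u$ globally across the zero set of $\omega$, since $u=|\omega|$ is merely Lipschitz and the Bochner computation is pointwise valid only where $\omega\ne 0$; this is handled by the usual trick of working with $u_\varepsilon:=\sqrt{|\omega|^2+\varepsilon^2}$, deriving $\Delta u_\varepsilon \ge -C u_\varepsilon - (\text{error})$ with the error controlled by $|\nabla\omega|^2$ and Kato, and letting $\varepsilon\to 0$ (alternatively one applies the mean value inequality directly to $u_\varepsilon^2 = |\omega|^2+\varepsilon^2$ and then lets $\varepsilon\to 0$). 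Making the constant in Li–Tam's inequality depend only on $n$, $K$, $c$ and $r$ — not on the individual manifold — is standard but should be stated explicitly, since it is exactly this uniformity that lets the estimate pass to the whole class $M(n,K,d)$ and hence (via Fatou-type arguments, cf. Proposition \ref{upperpro}) be used in the proof of Theorem \ref{eigenfcont}.
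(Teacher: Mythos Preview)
Your proposal is correct and follows essentially the same route as the paper: Bochner formula followed by the Li--Tam mean value inequality, with uniformity of the constant over $M(n,K,d)$. The only difference is that the paper applies the mean value inequality directly to the smooth function $|\omega|^2$ (from $-\tfrac{1}{2}\Delta|\omega|^2 \ge -c\,|\omega|^2$ with $c=c(n,K,L)$), which completely sidesteps the Kato step and the regularity issue at the zero set that you flagged as the ``main obstacle''; no $u_\varepsilon$ regularization is needed.
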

\begin{proof}
The Bochner formula gives
\begin{align*}
-\frac{1}{2}\Delta |\omega|^2&\ge |\nabla \omega|^2 - \langle \Delta_{H, 1} \omega, \omega \rangle +K(n-1)|\omega|^2 \\
&\ge -\frac{1}{2}|\Delta_{H, 1}\omega|^2 -\frac{1}{2}|\omega|^2 +K(n-1)|\omega|^2\\
&\ge -\frac{L^2}{2}|\omega|^2 - \frac{1}{2}|\omega|^2- |K|(n-1)|\omega|^2 \\
&\ge -\left(\frac{L^2}{2}+\frac{1}{2}+|K|(n-1)\right)|\omega|^2.
\end{align*}
Thus Li-Tam's mean value inequality \cite[Corolalry $3.6$]{L1} yields
\[|\omega|\le C(n, K, d, L)\int_{X}|\omega|^2d\upsilon= C(n, K, d, L).\]
This completes the proof.
\end{proof}
\begin{remark}
By an argument similar to the proof of Proposition \ref{aa33} we have the following:
Let $L>0$, let $(X, \upsilon) \in M(n, K, d)$, and let $\omega \in C^{\infty}(T^*X)$ with
\[||\omega||_{L^{2}(X)}+||\Delta_{H, 1}\omega||_{L^{\infty}(X)}\le L.\]
Then we have
\[||\omega||_{L^{\infty}(X)}\le C(n, K, d, L).\]
\end{remark}
Theorem \ref{eigenfcont} is a direct consequence of the following:
\begin{theorem}\label{hodgelaplacian}
Let $\{\lambda_i\}_{i <\infty}$ be a bounded sequence in $\mathbf{R}$, let $\{(X_i, \upsilon_i)\}_{i<\infty}$ be a sequence in $M(n, K, d)$, let $(X_{\infty}, \upsilon_{\infty})$ be the Gromov-Hausdorff limit of them with $\mathrm{diam}\,X_{\infty}>0$, let $\{\omega_i\}_{i<\infty}$ be a sequence of $\lambda_i$-eigenforms $\omega_i \in C^{\infty}(\bigwedge^kT^*X_i)$ with $||\omega_i||_{L^2(X_i)}=1$, and let $\omega_{\infty}$ be the $L^2$-strong limit on $X_{\infty}$ of them.
Assume that one of the following two conditions holds:
\begin{enumerate}
\item $k=1$.
\item $\delta^{\upsilon_i}\omega_i$ $L^2$-converges strongly to $\delta^{\upsilon_{\infty}}\omega_{\infty}$ on $X_{\infty}$.
\end{enumerate}
Then we see that the limit
\[\lim_{i \to \infty}\lambda_i\]
exists and that $\omega_{\infty} \in \mathcal{D}^2(\Delta_{H, k}^{\upsilon_{\infty}}, X_{\infty})$ with
\[\Delta^{\upsilon_{\infty}}_{H, k}\omega_{\infty}=\left(\lim_{i \to \infty}\lambda_i\right)\omega_{\infty}.\]
\end{theorem}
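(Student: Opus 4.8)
The plan is to reduce Theorem \ref{hodgelaplacian} to Theorem \ref{hodg} by first establishing the required uniform bounds and then extracting a convergent subsequence of the eigenvalues. First I would observe that, since $\omega_i$ is a $\lambda_i$-eigenform with $||\omega_i||_{L^2(X_i)}=1$, we have
\[
\int_{X_i}\left(|d^{\upsilon_i}\omega_i|^2+|\delta^{\upsilon_i}\omega_i|^2\right)d\upsilon_i=\int_{X_i}\langle \Delta_{H,k}^{\upsilon_i}\omega_i,\omega_i\rangle d\upsilon_i=\lambda_i,
\]
which is bounded because $\{\lambda_i\}_{i<\infty}$ is a bounded sequence by hypothesis. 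Likewise $\|\Delta_{H,k}^{\upsilon_i}\omega_i\|_{L^2}=|\lambda_i|\cdot\|\omega_i\|_{L^2}=|\lambda_i|$ is bounded. Hence the assumption (\ref{munerin}) of Theorem \ref{hodg} holds automatically, with the supremum controlled in terms of $\sup_i|\lambda_i|$.

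Next I would handle the existence of $\lim_{i\to\infty}\lambda_i$. Since $\{\lambda_i\}_{i<\infty}$ is bounded, every subsequence has a further subsequence along which $\lambda_i$ converges to some real number $\lambda$; the content is that $\lambda$ does not depend on the subsequence. To see this, fix such a subsequence $\{i(j)\}_j$ with $\lambda_{i(j)}\to\lambda$. Since $\omega_{i(j)}$ $L^2$-converges strongly to $\omega_\infty$ on $X_\infty$, we have in particular $\|\omega_\infty\|_{L^2(X_\infty)}=\lim_j\|\omega_{i(j)}\|_{L^2(X_{i(j)})}=1$, so $\omega_\infty\neq 0$. Applying Theorem \ref{hodg} along $\{i(j)\}_j$ (its hypotheses being met by the bounds just noted, and the case dichotomy being exactly the one stated here) gives $\omega_\infty\in\mathcal{D}^2(\Delta_{H,k}^{\upsilon_\infty},X_\infty)$ and that $\Delta_{H,k}^{\upsilon_{i(j)}}\omega_{i(j)}=\lambda_{i(j)}\omega_{i(j)}$ $L^2$-converges weakly to $\Delta_{H,k}^{\upsilon_\infty}\omega_\infty$ on $X_\infty$. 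On the other hand, $\lambda_{i(j)}\omega_{i(j)}$ $L^2$-converges strongly (hence weakly) to $\lambda\omega_\infty$ on $X_\infty$, since $\lambda_{i(j)}\to\lambda$ and $\omega_{i(j)}\to\omega_\infty$ strongly. By uniqueness of the $L^2$-weak limit we conclude $\Delta_{H,k}^{\upsilon_\infty}\omega_\infty=\lambda\omega_\infty$. Because $\omega_\infty\neq 0$, the scalar $\lambda$ is determined as the eigenvalue attached to the fixed limit form $\omega_\infty$, so any two subsequential limits of $\{\lambda_i\}$ coincide; therefore $\lim_{i\to\infty}\lambda_i$ exists and equals this common value $\lambda$, and $\Delta_{H,k}^{\upsilon_\infty}\omega_\infty=(\lim_{i\to\infty}\lambda_i)\omega_\infty$.

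I expect the main (and really the only nontrivial) obstacle to be already packaged inside Theorem \ref{hodg}: passing the equation to the limit requires knowing that $\omega_\infty\in W^{1,2}_H(\bigwedge^kT^*X_\infty)$ and that $d^{\upsilon_i}\omega_i,\delta^{\upsilon_i}\omega_i$ converge weakly to $d^{\upsilon_\infty}\omega_\infty,\delta^{\upsilon_\infty}\omega_\infty$, which in turn rests on Corollary \ref{contiad}, Theorems \ref{techni2} and \ref{198183}, and the approximation of test forms from Theorem \ref{app6} and Remark \ref{ggtttt}. Since Theorem \ref{hodg} is available to us as a black box, the proof of Theorem \ref{hodgelaplacian} itself is short: the only genuinely new points are the a priori bounds $\int_{X_i}(|d^{\upsilon_i}\omega_i|^2+|\delta^{\upsilon_i}\omega_i|^2)d\upsilon_i=\lambda_i$ and $\|\Delta_{H,k}^{\upsilon_i}\omega_i\|_{L^2}=|\lambda_i|$, the nonvanishing $\|\omega_\infty\|_{L^2}=1$, and the subsequence argument identifying $\lim_i\lambda_i$ via uniqueness of weak limits. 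One should also double-check that in case (2) the hypothesis "$\delta^{\upsilon_i}\omega_i$ $L^2$-converges strongly to $\delta^{\upsilon_\infty}\omega_\infty$" is used verbatim in invoking Theorem \ref{hodg}, and that no strong convergence of $d^{\upsilon_i}\omega_i$ is needed there.
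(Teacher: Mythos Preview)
Your proposal is correct and follows essentially the same route as the paper: verify the uniform bound (\ref{munerin}) from the eigenvalue equation and $\|\omega_i\|_{L^2}=1$, pass to a subsequence with $\lambda_{i(j)}\to\lambda$, invoke Theorem \ref{hodg}, and then argue that $\lambda$ is independent of the subsequence. The only cosmetic difference is in this last step: the paper pins down $\lambda$ by writing $\lambda=\|\Delta_{H,k}^{\upsilon_\infty}\omega_\infty\|_{L^2}$ (using $\|\omega_\infty\|_{L^2}=1$ and $\lambda\ge 0$), while you use $\omega_\infty\neq 0$ together with uniqueness of weak limits to identify $\lambda$ from $\Delta_{H,k}^{\upsilon_\infty}\omega_\infty=\lambda\omega_\infty$; both are equivalent.
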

\begin{proof}
Since
\begin{align*}
\int_{X_i}\left(|d\omega_i|^2+|\delta \omega_i|^2\right)d\upsilon_i&=\int_{X_i}\omega_i \Delta_{H, k}\omega_i d\upsilon_i \le \left( \int_{X_i}|\omega_i|^2d\upsilon_i\right)^{1/2}\left(\int_{X_i}|\Delta_{H, k} \omega_i|^2d\upsilon_i\right)^{1/2}
\end{align*}
for every $i<\infty$, we see that (\ref{munerin}) holds.

Let $\{i(j)\}_j$ be a subsequence of $\mathbf{N}$.
There exist a subsequence $\{j(l)\}_l$ of $\{i(j)\}_j$ and $\lambda \in [0, \infty)$ such that $\lambda_{j(l)} \to \lambda$.
Theorem \ref{hodg} yields that $\omega_{\infty} \in \mathcal{D}^2(\Delta^{\upsilon_{\infty}}_{H, k}, X_{\infty})$ and $\Delta_{H, k}^{\upsilon_{\infty}}\omega_{\infty}=\lambda \omega_{\infty}$.
Since $\lambda=||\Delta_{H, k}^{\upsilon_{\infty}}\omega_{\infty}||_{L^2}$ and $\{i(j)\}$ is arbitrary, this completes the proof.
\end{proof}
The following means that roughly speaking, in the Gromov-Hausdorff setting, the $L^2$-strong limit of a sequence of harmonic forms with uniform bounds on the $L^2$-energies is also harmonic: 
\begin{corollary}\label{harmonicha}
Let $\{(X_i, \upsilon_i)\}_{i<\infty}$ be a sequence in $M(n, K, d)$, let $(X_{\infty}, \upsilon_{\infty})$ be the Gromov-Hausdorff limit of them with $\mathrm{diam}\,X_{\infty}>0$, let $\{\omega_i\}_{i<\infty}$ be a sequence of harmonic forms $\omega_i \in C^{\infty}(\bigwedge^kT^*X_i)$ with (\ref{gyy}),
and let $\omega_{\infty}$ be the $L^2$-strong limit on $X_{\infty}$ of them.
Then we see that $\omega_{\infty} \in \mathcal{D}^2(\Delta_{H, k}^{\upsilon_{\infty}}, X_{\infty})$ with
\[\Delta^{\upsilon_{\infty}}_{H, k}\omega_{\infty}=0.\]
\end{corollary}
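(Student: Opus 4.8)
\textbf{Proof plan for Corollary \ref{harmonicha}.}
The plan is to reduce this to Theorem \ref{hodgelaplacian} by showing that the hypothesis (\ref{gyy}) forces the limit form $\omega_{\infty}$ to be a genuine eigenform with eigenvalue $0$. First I would observe that since each $\omega_i$ is harmonic, i.e.\ $\Delta_{H, k}^{\upsilon_i}\omega_i = 0$, we have in particular $d^{\upsilon_i}\omega_i = 0$ and $\delta^{\upsilon_i}\omega_i = 0$, because
\[\int_{X_i}\left(|d^{\upsilon_i}\omega_i|^2 + |\delta^{\upsilon_i}\omega_i|^2\right)d\upsilon_i = \int_{X_i}\langle \omega_i, \Delta_{H, k}^{\upsilon_i}\omega_i\rangle d\upsilon_i = 0.\]
Thus trivially $\delta^{\upsilon_i}\omega_i$ $L^2$-converges strongly to $0$ on $X_{\infty}$, and similarly $d^{\upsilon_i}\omega_i$ $L^2$-converges strongly to $0$. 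Hence hypothesis $(2)$ of Theorem \ref{hodgelaplacian} is satisfied (whatever $k$ is), and also the uniform energy bound (\ref{munerin}) holds because all three quantities $|d^{\upsilon_i}\omega_i|^2$, $|\delta^{\upsilon_i}\omega_i|^2$, $|\Delta_{H, k}^{\upsilon_i}\omega_i|^2$ vanish.

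Next I would worry about one technical subtlety: Theorem \ref{hodgelaplacian} is stated for sequences of $\lambda_i$-eigenforms with the normalization $\|\omega_i\|_{L^2(X_i)} = 1$, whereas here the $\omega_i$ are harmonic with the normalization coming from (\ref{gyy}) rather than $\|\omega_i\|_{L^2} = 1$. However, the conclusion we want is about $\omega_{\infty}$, the given $L^2$-strong limit, and the argument in the proof of Theorem \ref{hodgelaplacian} only uses the eigenform equation through Theorem \ref{hodg}, whose hypotheses are precisely (\ref{munerin}) together with the $L^2$-strong convergence of the codifferentials (condition $(2)$ there, or $k=1$). So the cleanest route is to \emph{not} invoke Theorem \ref{hodgelaplacian} verbatim but rather to apply Theorem \ref{hodg} directly with $\lambda_i \equiv 0$: each $\omega_i$ lies in $\mathcal{D}^2(\Delta_{H, k}^{\upsilon_i}, X_i)$ with $\Delta_{H, k}^{\upsilon_i}\omega_i = 0$, the bound (\ref{munerin}) holds as noted, $\delta^{\upsilon_i}\omega_i = 0$ converges strongly to $\delta^{\upsilon_{\infty}}\omega_{\infty}$ (which must then be $0$ by the $L^2$-strong convergence, since strong $L^2$-limits are unique and $0$ is visibly the limit), and if $k=1$ we may use condition $(1)$ instead. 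Theorem \ref{hodg} then gives $\omega_{\infty} \in \mathcal{D}^2(\Delta_{H, k}^{\upsilon_{\infty}}, X_{\infty})$ and that $\Delta_{H, k}^{\upsilon_i}\omega_i = 0$ $L^2$-converges weakly to $\Delta_{H, k}^{\upsilon_{\infty}}\omega_{\infty}$; but the weak $L^2$-limit of the constant sequence $0$ is $0$, so $\Delta_{H, k}^{\upsilon_{\infty}}\omega_{\infty} = 0$, which is exactly the assertion.

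There is essentially no hard step here; the only thing requiring a sentence of care is checking that $\delta^{\upsilon_i}\omega_i$ really does converge strongly rather than merely weakly, which is immediate because it is identically zero. In writing this up I would keep it to a short paragraph: invoke $d^{\upsilon_i}\omega_i = \delta^{\upsilon_i}\omega_i = \Delta_{H, k}^{\upsilon_i}\omega_i = 0$ from harmonicity, note that (\ref{munerin}) and the strong convergence of $\{\delta^{\upsilon_i}\omega_i\}_i$ to $\delta^{\upsilon_{\infty}}\omega_{\infty}=0$ hold trivially, and conclude by Theorem \ref{hodg} that $\omega_{\infty} \in \mathcal{D}^2(\Delta_{H, k}^{\upsilon_{\infty}}, X_{\infty})$ with $\Delta_{H, k}^{\upsilon_{\infty}}\omega_{\infty} = 0$ since weak limits identify the limiting Hodge Laplacian.
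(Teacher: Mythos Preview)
Your proposal is correct and follows essentially the same route as the paper: harmonicity gives $d\omega_i=\delta\omega_i=\Delta_{H,k}\omega_i=0$, so the hypotheses of Theorem~\ref{hodg} (equivalently Theorem~\ref{hodgelaplacian} with $\lambda_i\equiv 0$) are trivially satisfied and the conclusion follows. Your choice to invoke Theorem~\ref{hodg} directly, rather than Theorem~\ref{hodgelaplacian}, neatly sidesteps the normalization $\|\omega_i\|_{L^2}=1$ appearing there; this is if anything slightly cleaner than the paper's one-line citation.

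One small logical point worth tightening: condition~(2) of Theorem~\ref{hodg} reads ``$\delta^{\upsilon_i}\omega_i$ $L^2$-converges strongly to $\delta^{\upsilon_\infty}\omega_\infty$,'' which presupposes that $\omega_\infty\in\mathcal{D}^2(\delta^{\upsilon_\infty}_k,X_\infty)$ in the first place. You argue this backwards (``the strong limit is $0$, so $\delta^{\upsilon_\infty}\omega_\infty$ must be $0$''); the clean way is to first invoke Corollary~\ref{contiad} --- which the paper does cite --- to conclude from $\delta^{\upsilon_i}\omega_i=0$ that $\omega_\infty\in\mathcal{D}^2(\delta^{\upsilon_\infty}_k,X_\infty)$ with $\delta^{\upsilon_\infty}\omega_\infty=0$, and \emph{then} observe that $0\to 0$ strongly. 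With that adjustment your write-up is complete.
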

\begin{proof}
It is a direct consequence of Corollary \ref{contiad} and Theorems \ref{pplki}, \ref{techni} and \ref{hodgelaplacian}.
\end{proof}
\begin{remark}
Let $\{(X_i, \upsilon_i)\}_{i<\infty}$ be a sequence in $M(n, K, d)$, let $(X_{\infty}, \upsilon_{\infty})$ be the Gromov-Hausdorff limit of them. Define \textit{a generalized first betti number of $(X_{\infty}, \upsilon_{\infty})$} by
\[b_1(X_{\infty}):=\mathrm{dim}\,\{\omega \in \mathcal{D}^2(\Delta_{H, 1}^{\upsilon_{\infty}}, X_{\infty}); \Delta_{H, 1}^{\upsilon_{\infty}}\omega =0\}.\]
Then Corollary \ref{harmonicha} with Theorem \ref{3ew3} yields that if $\mathrm{dim}\,X_{\infty}=n$, then we see that the upper semicontinuity of the first betti numbers with respect to the Gromov-Hausdorff topology:
\begin{align}\label{aho}
\limsup_{i \to \infty}b_1(X_i)\le b_1(X_{\infty}).
\end{align}
Note that we do not know whether the equality of (\ref{aho}) holds and $b_1(X_{\infty})$ is equal to the first betti number in Gigli's sense, i.e.,
\[b_1(X_{\infty})=\mathrm{dim}\,\left(\{\omega \in \mathcal{D}^2(\Delta_{H, 1}^{\upsilon_{\infty}}, X_{\infty}); \Delta_{H, 1}^{\upsilon_{\infty}}\omega =0\} \cap H^{1, 2}_H(T^*X_{\infty})\right).\]
However it is worth pointing out that if $H^{1, 2}_H(T^*X_{\infty})=W^{1, 2}_H(T^*X_{\infty})$, then the equality above holds. 
\end{remark}
We now give a sufficient condition for the $L^2$-strong convergence of $\{d\omega_i\}_i$ and $\{\delta \omega_i\}_i$ which is a generalization of Theorem \ref{convlap} to the case of differential forms:
\begin{theorem}\label{444}
Let us consider the same assumption as in Theorem \ref{hodg}. 
Assume that $(X_i, \upsilon_i) \in M(n, K, d)$ for every $i<\infty$, that $\omega_i \in C^{\infty}(\bigwedge^kT^*X_i)$ for every $i<\infty$ and that $\omega_{\infty}$ has a smooth $W^{1, 2}_H$-approximation with respect to $\{(X_i, \upsilon_i)\}_i$.
Then $d\omega_i, \delta \omega_i$ $L^2$-converge strongly to $d^{\upsilon_{\infty}}\omega_{\infty}, \delta^{\upsilon_{\infty}}\omega_{\infty}$ on $X_{\infty}$, respectively.
\end{theorem}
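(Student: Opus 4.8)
\textbf{Proof plan for Theorem \ref{444}.}
The plan is to exploit the variational/energy structure of the Hodge Laplacian exactly as in the proof of Theorem \ref{convlap}, but now at the level of forms. Let $\{\hat{\omega}_i\}_{i<\infty}$ be the given smooth $W^{1,2}_H$-approximation to $\omega_\infty$, so that $\hat{\omega}_i, d\hat{\omega}_i, \delta \hat{\omega}_i$ $L^2$-converge strongly to $\omega_\infty, d^{\upsilon_\infty}\omega_\infty, \delta^{\upsilon_\infty}\omega_\infty$ on $X_\infty$. The first step is to record the elementary identity, valid for any $\alpha, \beta \in C^\infty(\bigwedge^kT^*X_i)$ with $\eta_i := \Delta_{H,k}^{\upsilon_i}\alpha$,
\begin{align}
&\int_{X_i}\left(|d\beta|^2+|\delta\beta|^2\right)d\upsilon_i - 2\int_{X_i}\langle \eta_i, \beta\rangle d\upsilon_i \nonumber \\
&= \int_{X_i}\left(|d\alpha|^2+|\delta\alpha|^2\right)d\upsilon_i - 2\int_{X_i}\langle \eta_i, \alpha\rangle d\upsilon_i + \int_{X_i}\left(|d(\alpha-\beta)|^2+|\delta(\alpha-\beta)|^2\right)d\upsilon_i,
\end{align}
which follows by expanding $\int|d\beta|^2 = \int|d\alpha|^2 - 2\int\langle d\alpha,d(\alpha-\beta)\rangle + \int|d(\alpha-\beta)|^2$, the analogous identity for $\delta$, and using $\int\langle d\alpha,d\gamma\rangle+\int\langle\delta\alpha,\delta\gamma\rangle = \int\langle\eta_i,\gamma\rangle$ with $\gamma=\alpha-\beta$. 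In particular, taking $\alpha=\omega_i$ (so $\eta_i=\Delta_{H,k}^{\upsilon_i}\omega_i$) and $\beta=\hat{\omega}_i$ gives a one-sided inequality: the quadratic functional $\beta\mapsto \int(|d\beta|^2+|\delta\beta|^2) - 2\int\langle\eta_i,\beta\rangle$ is minimized at $\omega_i$ among forms with the same "boundary data", so its value at $\hat{\omega}_i$ is at least its value at $\omega_i$.

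Next I would pass to the limit. By hypothesis (\ref{munerin}) and the closedness results, after passing to a subsequence we have $d^{\upsilon_i}\omega_i \to d^{\upsilon_\infty}\omega_\infty$ and $\delta^{\upsilon_i}\omega_i \to \delta^{\upsilon_\infty}\omega_\infty$ $L^2$-weakly on $X_\infty$ (the second from Corollary \ref{contiad} when $k=1$, or from condition (2) in the hypothesis when $k\ge 2$), and $\Delta_{H,k}^{\upsilon_i}\omega_i \to \Delta_{H,k}^{\upsilon_\infty}\omega_\infty$ $L^2$-weakly by Theorem \ref{hodg}. Since $\omega_i \to \omega_\infty$ and $\hat{\omega}_i\to\omega_\infty$ $L^2$-strongly, and $\Delta_{H,k}^{\upsilon_i}\omega_i \to \Delta_{H,k}^{\upsilon_\infty}\omega_\infty$ $L^2$-weakly, we get
\[\lim_{i\to\infty}\int_{X_i}\langle\Delta_{H,k}^{\upsilon_i}\omega_i,\hat{\omega}_i\rangle d\upsilon_i = \lim_{i\to\infty}\int_{X_i}\langle\Delta_{H,k}^{\upsilon_i}\omega_i,\omega_i\rangle d\upsilon_i = \int_{X_\infty}\langle\Delta_{H,k}^{\upsilon_\infty}\omega_\infty,\omega_\infty\rangle d\upsilon_\infty,\]
using for the middle equality that the product of an $L^2$-weakly convergent sequence with an $L^2$-strongly convergent one passes to the limit. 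Feeding the $W^{1,2}_H$-approximation $\hat\omega_i$ into the minimization inequality above and letting $i\to\infty$, the left-hand side converges (by $L^2$-strong convergence of $d\hat\omega_i$ and $\delta\hat\omega_i$) to $\int_{X_\infty}(|d^{\upsilon_\infty}\omega_\infty|^2+|\delta^{\upsilon_\infty}\omega_\infty|^2)d\upsilon_\infty - 2\int_{X_\infty}\langle\Delta_{H,k}^{\upsilon_\infty}\omega_\infty,\omega_\infty\rangle d\upsilon_\infty$, while the right-hand side, by lower semicontinuity of $L^2$-norms under weak convergence, has $\liminf$ at least the same expression plus $\limsup_i\int_{X_i}(|d(\omega_i-\hat\omega_i)|^2+|\delta(\omega_i-\hat\omega_i)|^2)d\upsilon_i$ — except note the error term has the form $|d(\omega_i-\hat\omega_i)|^2+|\delta(\omega_i-\hat\omega_i)|^2$, which, expanding and using weak-strong convergence of the cross terms and weak lower semicontinuity, forces $\limsup_i\int_{X_i}(|d^{\upsilon_i}\omega_i|^2+|\delta^{\upsilon_i}\omega_i|^2)d\upsilon_i \le \int_{X_\infty}(|d^{\upsilon_\infty}\omega_\infty|^2+|\delta^{\upsilon_\infty}\omega_\infty|^2)d\upsilon_\infty$. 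Combined with weak lower semicontinuity, this yields convergence of the energies $\int_{X_i}(|d^{\upsilon_i}\omega_i|^2+|\delta^{\upsilon_i}\omega_i|^2)d\upsilon_i \to \int_{X_\infty}(|d^{\upsilon_\infty}\omega_\infty|^2+|\delta^{\upsilon_\infty}\omega_\infty|^2)d\upsilon_\infty$.

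Finally, I would upgrade the sum of the two energies to each one separately. When $k=1$, condition (2) is automatic from Corollary \ref{contiad} only as weak convergence of $\delta^{\upsilon_i}\omega_i$; to get that $\delta\omega_i \to \delta^{\upsilon_\infty}\omega_\infty$ $L^2$-\emph{strongly}, and similarly for $d\omega_i$, I note that by weak lower semicontinuity $\liminf_i\int_{X_i}|d^{\upsilon_i}\omega_i|^2 \ge \int_{X_\infty}|d^{\upsilon_\infty}\omega_\infty|^2$ and likewise for $\delta$; since the two $\liminf$s sum to a quantity $\le$ the sum of the two limit energies (which equals the sum of the two $\liminf$ lower bounds), each $\liminf$ must in fact be a limit equal to the corresponding limit energy, giving $\limsup_i\|d^{\upsilon_i}\omega_i\|_{L^2}\le\|d^{\upsilon_\infty}\omega_\infty\|_{L^2}$ and the analogous $\delta$-bound, which is precisely $L^2$-strong convergence. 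In case (2) the strong convergence of $\delta^{\upsilon_i}\omega_i$ is assumed, so only the $d$-part needs this splitting argument, which goes through identically. Since the subsequence was arbitrary, the full sequences converge, completing the proof. The main obstacle I anticipate is bookkeeping the cross terms correctly when extracting the one-sided energy inequality for forms — in particular verifying that the minimization identity (1) is legitimately applicable with $\beta=\hat\omega_i$ smooth (it is, since $\hat\omega_i\in C^\infty$ and formula (1) is pure integration by parts on the closed manifold $X_i$) — and making sure the weak-strong product convergences invoked are all covered by the $L^p$-convergence machinery of \cite{holp}; none of these should be genuinely hard, but the argument splitting a sum of two weakly-lower-semicontinuous nonnegative quantities into its pieces must be stated carefully.
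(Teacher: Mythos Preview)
Your proposal is correct and follows essentially the same approach as the paper: establish the variational inequality (your identity (1), which is Claim \ref{83} in the paper), plug in the smooth $W^{1,2}_H$-approximation, pass to the limit using Theorem \ref{hodg} for the weak convergence of $\Delta_{H,k}^{\upsilon_i}\omega_i$, and then split the resulting $\limsup$ bound on the sum $\int(|d\omega_i|^2+|\delta\omega_i|^2)$ into its two pieces via weak lower semicontinuity of each. The paper's write-up is more compressed but the logic is identical; your final splitting argument is exactly the step the paper records as the combination of Corollary \ref{contiad} and Theorem \ref{techni2}.
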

\begin{proof}
Let $\{\eta_i\}_i$ be a smooth $W^{1, 2}_H$-approximation of $\omega_{\infty}$ with respect to $\{(X_i, \upsilon_i)\}_i$.
We first prove the following. Compare with Claim \ref{abc}.
\begin{claim}\label{83}
For every $i<\infty$ we have
\begin{align}\label{44}
\int_{X_i}\left( |d\eta_i|^2+|\delta \eta_i|^2\right)d\upsilon_i-2 \int_{X_i}\langle \Delta_{H, k}\omega_i, \eta_i\rangle d\upsilon _i\ge \int_{X_i}\left( |d\omega_i|^2+|\delta \omega_i|^2\right)d\upsilon_i - 2 \int_{X_i}\langle \Delta_{H, k}\omega_i, \omega_i\rangle d\upsilon_i.
\end{align}
\end{claim}
It follows from the identity
\begin{align*}
&\int_{X_i}\left( |d\eta_i|^2+|\delta \eta_i|^2\right)d\upsilon_i-2 \int_{X_i}\langle \Delta_{H, k}\omega_i, \eta_i\rangle d\upsilon_i \\
&=\int_{X_i}\left( |d\omega_i|^2+|\delta \omega_i|^2\right)d\upsilon_i - 2 \int_{X_i}\langle \Delta_{H, k}\omega_i, \omega_i\rangle d\upsilon_i + \int_{X_i}\left(|d(\omega_i -\eta_i)|^2+|\delta (\omega_i- \eta_i)|^2\right)d\upsilon_i.
\end{align*}

Letting $i \to \infty$ in (\ref{44}) with Theorem \ref{hodg} yields 
\begin{align*}
\limsup_{i \to \infty}\int_{X_i}\left(|d\omega_i|^2+|\delta \omega_i|^2\right)d\upsilon_i\le \int_{X_{\infty}}\left( |d^{\upsilon_{\infty}}\omega_{\infty}|^2+|\delta^{\upsilon_{\infty}}\omega_{\infty}|^2\right)d\upsilon_{\infty}.
\end{align*}
On the other hand, since Corollary \ref{contiad} and Theorem \ref{techni2} give 
\[\liminf_{i \to \infty}\int_{X_i}|d\omega_i|^2d\upsilon_i \ge \int_{X_{\infty}}|d^{\upsilon_{\infty}}\omega_{\infty}|^2d\upsilon_{\infty},\,\,\,\liminf_{i \to \infty}\int_{X_i}|\delta \omega_i|^2d\upsilon_i \ge \int_{X_{\infty}}|\delta^{\upsilon_{\infty}}\omega_{\infty}|^2d\upsilon_{\infty},\]
we have
\[\lim_{i \to \infty}\int_{X_i}|d\omega_i|^2d\upsilon_i = \int_{X_{\infty}}|d^{\upsilon_{\infty}}\omega_{\infty}|^2d\upsilon_{\infty},\,\,\,\lim_{i \to \infty}\int_{X_i}|\delta \omega_i|^2d\upsilon_i = \int_{X_{\infty}}|\delta^{\upsilon_{\infty}}\omega_{\infty}|^2d\upsilon_{\infty}.\]
This completes the proof.
\end{proof}
\begin{remark}
By Theorem \ref{app6} and the proof of Theorem \ref{444}, we have the following:
Let $(X_i, \upsilon_i) \stackrel{GH}{\to} (X_{\infty}, \upsilon_{\infty})$ in $\overline{M(n, K, d)}$, let $\{\omega_i\}_{i \le \infty}$ be an $L^2$-strong convergent sequence of $\omega_i \in \mathcal{D}^2(\Delta^{\upsilon_i}_{H, k}, X_i) \cap H^{1, 2}_H(\bigwedge^kT^*X_i)$ with 
\[\sup_{i}\int_{X_i} |\Delta^{\upsilon_i}_{H, k}\omega_i|^2 d\upsilon_i<\infty.\]
Assume that $k=1$, or that $\delta^{\upsilon_i}\omega_i$ $L^2$-converges strongly to $\delta^{\upsilon_{\infty}}\omega_{\infty}$ on $X_{\infty}$.
Then we see that $d^{\upsilon_i}\omega_i$, $\delta^{\upsilon_i}\omega_i$ $L^2$-converge strongly to $d^{\upsilon_{\infty}}\omega_{\infty}$, $\delta^{\upsilon_{\infty}}\omega_{\infty}$ on $X_{\infty}$, respectively and that $\Delta^{\upsilon_i}_{H, 1}\omega_i$ $L^2$-converges weakly to $\Delta^{\upsilon_{\infty}}_{H, 1}\omega_{\infty}$ on $X_{\infty}$.
\end{remark}
As a summary of this subsection we give the following two compactness for $1$-forms in noncollapsed setting:
\begin{theorem}\label{bbbbb}
Let $\{(X_i, \upsilon_i)\}_{i<\infty}$ be a sequence in $M(n, K, d)$, let $(X_{\infty}, \upsilon_{\infty})$ be the noncollapsed Gromov-Hausdorff limit of them, and
let $\{\omega_i\}_{i<\infty}$ be a sequence of $\omega_i \in C^{\infty}(T^*X_i)$ with (\ref{mmune}) and 
\[\sup_{i<\infty}\int_{X_i}|\omega_i|^2d\upsilon_i<\infty.\]
Then there exist $\omega_{\infty} \in W^{1, 2}_C(T^*X_{\infty}) \cap W^{1, 2}_H(T^*X_{\infty}) \cap W_{2n/(n-1)}(T^*X_{\infty})$ and a subsequence of $\{i(j)\}_j$ such that the following hold.
\begin{enumerate}
\item $\omega_{\infty}$ is the $L^r$-strong limit  on $X_{\infty}$ of $\{\omega_{i(j)}\}_j$ for every $1<r<2n/(n-1)$.
\item $\nabla^{g_{X_{\infty}}}\omega_{\infty} = \nabla^{\upsilon_{\infty}}\omega_{\infty}$ and it is the $L^2$-weak limit  on $X_{\infty}$ of $\{\nabla \omega_{i(j)}\}_j$.
\item $d\omega_{\infty}=d^{\upsilon_{\infty}}\omega_{\infty}$ and it is the $L^2$-weak limit  on $X_{\infty}$ of $\{d \omega_{i(j)}\}_j$.
\item $\delta^{g_{X_{\infty}}}\omega_{\infty}=\delta^{\upsilon_{\infty}}\omega_{\infty}$ and it is the $L^2$-weak limit  on $X_{\infty}$ of $\{\delta \omega_{i(j)}\}_j$.
\end{enumerate}
\end{theorem}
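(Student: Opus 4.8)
The plan is to combine the compactness machinery for tensor fields developed in Section~6 with the identifications of the $\sim$-versions of Gigli's Sobolev spaces established in Theorems~\ref{techni2} and \ref{198183}. First I would invoke Proposition~\ref{bounds}: since $(X_i,\upsilon_i)\in M(n,K,d)$ and $\sup_i\int_{X_i}(|\omega_i|^2+|d\omega_i|^2+|\delta\omega_i|^2)d\upsilon_i<\infty$, we obtain $\sup_{i<\infty}\int_{X_i}|\nabla\omega_i|^2d\upsilon_i<\infty$. Hence the sequence $\{\omega_i\}_i$ satisfies the hypotheses of Corollary~\ref{nnbbmm} (with $r=0$, $s=1$, $R>d$, and using that $(X_\infty,\upsilon_\infty)$ is the noncollapsed limit). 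Passing to a subsequence $\{i(j)\}_j$, Corollary~\ref{nnbbmm} produces $\omega_\infty\in W_{2n/(n-1)}(T^*X_\infty)$ with $\omega_{i(j)}$ converging $L^r$-strongly to $\omega_\infty$ for $1<r<2n/(n-1)$, $|\omega_\infty|^2\in H^{1,2n/(2n-1)}(X_\infty)$, $\nabla\omega_{i(j)}$ converging $L^2$-weakly to $\nabla^{g_{X_\infty}}\omega_\infty$, and (since $R>d$) $\omega_\infty\in W^{1,2}_C(T^*X_\infty)$ with $\nabla^{g_{X_\infty}}\omega_\infty=\nabla^{\upsilon_\infty}\omega_\infty$. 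This already gives items (1) and (2).

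Next I would upgrade the $L^r$-strong convergence to $L^2$-strong convergence along the same subsequence. Although $\{\omega_i\}_i$ is only assumed $L^2$-bounded, not $L^2$-strongly convergent, the noncollapsed compactness of Theorem~\ref{pplki} applied to the sequence in $W_{2n/(n-1)}(T^*X_i)$ (whose $W_{2n/(n-1)}$-norms are uniformly bounded by Proposition~\ref{bounds} together with the Sobolev inequality of Theorem~\ref{sobo} via Theorem~\ref{tt}, as in the proof of Theorem~\ref{mthm}) yields $L^2$-strong convergence of $\omega_{i(j)}$ to $\omega_\infty$ after a further subsequence. With $L^2$-strong convergence in hand, I would apply Theorem~\ref{techni}: since $\nabla^{g_{X_i}}\omega_{i(j)}$ converges $L^2$-weakly to $\nabla^{g_{X_\infty}}\omega_\infty$, it follows that $d\omega_{i(j)}$ converges $L^2$-weakly to $d\omega_\infty$, and because the limit is noncollapsed and $k=1$, also $\delta^{g_{X_i}}\omega_{i(j)}$ converges $L^2$-weakly to $\delta^{g_{X_\infty}}\omega_\infty$. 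This gives the ``$g_X$-side'' of items (3) and (4); in particular $\omega_\infty\in\mathcal{D}^2(\delta^{g_{X_\infty}},X_\infty)$ and $d\omega_\infty$ is well-defined, so $\omega_\infty\in W^{1,2}_H(T^*X_\infty)$ once we identify the two exterior derivatives and codifferentials.

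The remaining step is to identify $d\omega_\infty=d^{\upsilon_\infty}\omega_\infty$ and $\delta^{g_{X_\infty}}\omega_\infty=\delta^{\upsilon_\infty}\omega_\infty$. For this I would use Theorem~\ref{techni2}(1): the $L^2$-strong convergence of $\omega_{i(j)}$ together with the uniform bound $\sup_i\|\omega_i\|_{\widetilde W^{1,2}_d}<\infty$ (which holds since $\|d^{\upsilon_i}\omega_i\|_{L^2}=\|d\omega_i\|_{L^2}$ on the smooth manifold $X_i$, cf.\ the positive answers to Question~4 recorded in the excerpt, or directly Theorem~\ref{mthm}) forces $\omega_\infty\in\widetilde W^{1,2}_d(\bigwedge^1T^*X_\infty)$ with $d^{\upsilon_{i(j)}}\omega_{i(j)}$ converging $L^2$-weakly to $d^{\upsilon_\infty}\omega_\infty$; comparing with the $L^2$-weak limit $d\omega_\infty$ already found, and using uniqueness of $L^2$-weak limits, gives $d\omega_\infty=d^{\upsilon_\infty}\omega_\infty$, and then Theorem~\ref{198183}(7) identifies the two $W^{1,2}_d$-structures. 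The codifferential identification is analogous via Corollary~\ref{contiad} (for $\delta^{\upsilon}$) combined with Theorem~\ref{techni} (for $\delta^{g_X}$ in the noncollapsed, $k=1$ case), again matching $L^2$-weak limits. Finally $\nabla^{g_{X_\infty}}\omega_\infty=\nabla^{\upsilon_\infty}\omega_\infty$ is precisely the conclusion of Theorem~\ref{mnj} (equivalently Corollary~\ref{99i99i}), so item (2) is complete, and $\omega_\infty\in W^{1,2}_C(T^*X_\infty)\cap W^{1,2}_H(T^*X_\infty)\cap W_{2n/(n-1)}(T^*X_\infty)$ as claimed. I expect the main obstacle to be bookkeeping the various subsequence extractions consistently and, more substantively, verifying that the $L^2$-strong (not merely $L^r$-strong for $r<2n/(n-1)$) convergence genuinely follows from the noncollapsed hypothesis here — this is where Theorem~\ref{pplki} is essential and where the noncollapsing cannot be dropped, as Remark~\ref{harharhar} shows.
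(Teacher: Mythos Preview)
Your proposal is correct and follows essentially the same route as the paper, which simply cites Corollary~\ref{nnbbmm}, Proposition~\ref{bounds}, and Theorems~\ref{boch}, \ref{techni2}, \ref{198183}; you have merely unpacked Theorem~\ref{boch} into its constituents (Theorem~\ref{techni}, Corollary~\ref{contiad}). One unnecessary detour: your paragraph about ``upgrading'' to $L^2$-strong convergence is redundant, since $2<2n/(n-1)$ for every $n\ge 2$, so item~(2) of Corollary~\ref{nnbbmm} already delivers $L^2$-strong convergence of $\omega_{i(j)}$ to $\omega_\infty$ --- the obstacle you anticipate is not there.
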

\begin{proof}
It follows directly from Corollary \ref{nnbbmm}, Proposition \ref{bounds}, Theorems \ref{boch}, \ref{techni2} and \ref{198183}. 
\end{proof}
\begin{theorem}\label{cccc}
Let $\{(X_i, \upsilon_i)\}_{i<\infty}$ be a sequence in $M(n, K, d)$, let $(X_{\infty}, \upsilon_{\infty})$ be the noncollapsed Gromov-Hausdorff limit of them, and
let $\{\omega_i\}_{i<\infty}$ be a sequence of $\omega_i \in C^{\infty}(T^*X_i)$ with 
\[\sup_{i<\infty}\int_{X_i}\left(|\omega_i|^2+|\Delta_{H, 1}\omega_i|^2\right)d\upsilon_i<\infty.\]
Then there exist $\omega_{\infty} \in W^{1, 2}_C(T^*X_{\infty}) \cap \mathcal{D}^2(\Delta_{H, 1}^{\upsilon_{\infty}}, X_{\infty}) \cap W_{2n/(n-1)}(T^*X_{\infty})$ and a subsequence $\{i(j)\}_j$ such that the same conclusions of Theorem \ref{bbbbb} hold and that
$\Delta_{H, 1}^{\upsilon_i}\omega_i$ $L^2$-converges weakly to $\Delta^{\upsilon_{\infty}}_{H, 1}\omega_{\infty}$ on $X_{\infty}$.
\end{theorem}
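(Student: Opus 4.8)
\textbf{Proof of Theorem \ref{cccc}.}

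The plan is to combine the Bochner-type energy estimate coming from Proposition \ref{aa33}-style arguments with the Rellich compactness for tensor fields (Corollary \ref{nnbbmm}) and the closedness results (Theorems \ref{mnj}, \ref{techni2}, \ref{198183}, \ref{hodg}). First I would reduce the $\Delta_{H,1}$-bound to a bound on $d\omega_i$ and $\delta\omega_i$: integrating by parts,
\begin{align*}
\int_{X_i}\left(|d\omega_i|^2+|\delta\omega_i|^2\right)d\upsilon_i=\int_{X_i}\langle \omega_i,\Delta_{H,1}\omega_i\rangle d\upsilon_i\le \|\omega_i\|_{L^2}\|\Delta_{H,1}\omega_i\|_{L^2},
\end{align*}
so $\sup_i\int_{X_i}\left(|d\omega_i|^2+|\delta\omega_i|^2\right)d\upsilon_i<\infty$, i.e. (\ref{mmune}) holds. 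Next, the Bochner formula on $X_i$ gives, after integration,
\begin{align*}
\int_{X_i}|\nabla\omega_i|^2d\upsilon_i\le \int_{X_i}\left(|d\omega_i|^2+|\delta\omega_i|^2\right)d\upsilon_i+|K|(n-1)\int_{X_i}|\omega_i|^2d\upsilon_i,
\end{align*}
hence $\sup_i\left(\|\omega_i\|_{L^2}+\|\nabla\omega_i\|_{L^2}\right)<\infty$. This is exactly the hypothesis needed to apply Corollary \ref{nnbbmm} with $(r,s)=(0,1)$ in the noncollapsed setting.

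Then I would invoke Corollary \ref{nnbbmm}: after passing to a subsequence $\{i(j)\}_j$ there is $\omega_\infty\in W_{2n/(n-1)}(T^*X_\infty)$ such that $\omega_{i(j)}$ $L^r$-converges strongly to $\omega_\infty$ on $X_\infty$ for every $1<r<2n/(n-1)$, $\nabla\omega_{i(j)}$ $L^2$-converges weakly to $\nabla^{g_{X_\infty}}\omega_\infty$, $|\omega_{i(j)}|^2\in H^{1,2n/(2n-1)}$ with the stated weak convergence of $\nabla|\omega_{i(j)}|^2$, and — since $R>d$ — also $\omega_\infty\in W^{1,2}_C(T^*X_\infty)$ with $\nabla^{g_{X_\infty}}\omega_\infty=\nabla^{\upsilon_\infty}\omega_\infty$. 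Because $\omega_\infty$ is in particular the $L^2$-strong limit of the $\omega_{i(j)}$ (take $r=2$, which is $<2n/(n-1)$ as $n\ge 2$), Theorem \ref{techni} applies and gives that $d\omega_{i(j)}$, $\delta^{g_{X_{i(j)}}}\omega_{i(j)}$ $L^2$-converge weakly to $d\omega_\infty$, $\delta^{g_{X_\infty}}\omega_\infty$ on $X_\infty$ (using the noncollapsed hypothesis for the codifferential, $k=1$). Combining the $L^2$-strong convergence of $\omega_{i(j)}$ with the uniform bound (\ref{mmune}) and Theorem \ref{techni2}(1) together with Theorem \ref{198183}(1)(7)(10) identifies these weak limits with Gigli's objects: $d\omega_\infty=d^{\upsilon_\infty}\omega_\infty$, $\delta^{g_{X_\infty}}\omega_\infty=\delta^{\upsilon_\infty}\omega_\infty$, and $\omega_\infty\in W^{1,2}_d(\bigwedge^1T^*X_\infty)\cap\mathcal{D}^2(\delta^{\upsilon_\infty}_1,X_\infty)=W^{1,2}_H(T^*X_\infty)$; this is the content of conclusions (1)--(4), exactly as in Theorem \ref{bbbbb}.

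Finally, for the membership $\omega_\infty\in\mathcal{D}^2(\Delta^{\upsilon_\infty}_{H,1},X_\infty)$ with weak convergence of the Hodge Laplacians, I would apply Theorem \ref{hodg} in the case $k=1$: the sequence $\{\omega_{i(j)}\}$ is an $L^2$-strong convergent sequence with $\omega_{i(j)}\in\mathcal{D}^2(\Delta^{\upsilon_{i(j)}}_{H,1},X_{i(j)})$ and $\sup_j\int_{X_{i(j)}}\left(|d\omega_{i(j)}|^2+|\delta\omega_{i(j)}|^2+|\Delta_{H,1}\omega_{i(j)}|^2\right)d\upsilon_{i(j)}<\infty$ by hypothesis, so Theorem \ref{hodg} yields $\omega_\infty\in\mathcal{D}^2(\Delta^{\upsilon_\infty}_{H,1},X_\infty)$ and the $L^2$-weak convergence $\Delta^{\upsilon_{i(j)}}_{H,1}\omega_{i(j)}\to\Delta^{\upsilon_\infty}_{H,1}\omega_\infty$. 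The main obstacle — already handled by the cited results — is the passage from $L^2$-weak to $L^2$-strong convergence of $\omega_i$ and the identification of $\nabla^{g_{X_\infty}}$ with $\nabla^{\upsilon_\infty}$, which is precisely why the noncollapsed assumption is essential (cf. Remarks \ref{riemlp}, \ref{harharhar}); in the collapsed case the strong $L^2$-convergence of $\omega_i$ can fail even with vanishing covariant derivatives. With everything assembled along the above subsequence, and noting that the limit object is uniquely characterized (so the usual subsequence argument upgrades convergence of the full sequence after extracting), the proof is complete. $\,\,\,\,\,\,\,\,\,\,\,\,\,\,\,\,\Box$
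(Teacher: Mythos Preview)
Your proof is correct and follows essentially the same route as the paper: the paper's proof is the one-line ``direct consequence of Theorems \ref{hodg} and \ref{bbbbb}'', and you have simply unpacked Theorem \ref{bbbbb} (deriving (\ref{mmune}) via integration by parts, then the $\nabla\omega_i$ bound via Bochner, then invoking Corollary \ref{nnbbmm} and the closedness theorems) before applying Theorem \ref{hodg}. One small remark: your final parenthetical about the limit being ``uniquely characterized'' and upgrading to full-sequence convergence is unnecessary (and not quite right, since different subsequences may give different limits), but the theorem only asks for a subsequence, so this does not affect the argument.
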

\begin{proof}
It is a direct consequence of Theorems \ref{hodg} and \ref{bbbbb}.
\end{proof}
\begin{remark}
Note that roughly speaking, most results for `smooth objects' shown in this paper are also hold for `objects having smooth approximations'.
For example, by Theorems \ref{app6}, \ref{mthm}, \ref{boch}, \ref{techni2}, Corollary \ref{contiad} and Proposition \ref{bounds}, for any $(X, \upsilon) \in \overline{M(n, K, d)}$ and $\omega \in H^{1, 2}(T^*X)$ with 
\[\int_X\left(|d^{\upsilon}\omega|^2+|\delta^{\upsilon}\omega|^2\right)d\upsilon \le L,\]
we see that $\omega \in W_{2n/(n-1)}(T^*X) \cap W^{1, 2}_C(T^*X)$, that $|\omega|^2 \in H^{1, 2n/(2n-1)}(X)$, that $d\omega =d^{\upsilon}\omega$, that $\delta^{g_X}\omega=\delta^{\upsilon}\omega$, that $\nabla^{g_X}\omega =\nabla^{\upsilon}\omega$ and that  
\[\int_X|\nabla^{\upsilon}\omega |^2d\upsilon \le C(n, K, d, L).\]
\end{remark}

\end{document}